\newcommand\gobbleone[1]{}
\newcommand*{\seeonly}[2]{\ \emph{\seename} #1}
\definecolor{grey}{rgb}{0.5,0.5,0.5}
\definecolor{darkred}{rgb}{0.5,0,0}
\definecolor{darkpurple}{rgb}{0.5,0,.5}
\definecolor{darkpink}{rgb}{0,.5,0.5}
\definecolor{darkgreen}{rgb}{0,0.5,0}
\definecolor{darkblue}{rgb}{0,0,0.5}
\newtheorem*{rep@theorem}{\rep@title}
\newcommand{\newreptheorem}[2]{%
\newenvironment{rep#1}[1]{%
 \def\rep@title{#2 \ref{##1}}%
 \begin{rep@theorem}}%
 {\end{rep@theorem}}}
\newenvironment{subproof}[1][\proofname]{%
  \begin{proof}[#1]%
}{%
  \end{proof}%
}
\newtheorem{theorem}{Theorem}[chapter]
\newtheorem{corollary}[theorem]{Corollary}
\newtheorem{proposition}[theorem]{Proposition}
\newtheorem{lemma}[theorem]{Lemma}
\theoremstyle{definition}
\newtheorem{definition}[theorem]{Definition}
\theoremstyle{remark}
\newtheorem{remark}[theorem]{Remark}
\newtheorem{example}[theorem]{Example}
\newtheorem{notation}[theorem]{Notation}
\newtheorem{claim}[theorem]{Claim}
\newtheorem*{claim*}{Claim}
\numberwithin{section}{chapter}
\numberwithin{equation}{chapter}
\newcounter{mparcnt}
\newcommand\M{\mathcal{M}}
\newcommand\D{\mathbb{D}}
\newcommand\mL{\mathcal{L}}
\renewcommand\S{\mathcal{S}}
\newcommand{\W}{\mathcal{W}}
\newcommand{\T}{\mathcal{T}}
\newcommand{\J}{\mathcal{J}}
\newcommand{\U}{\mathcal{U}}
\newcommand{\F}{\mathcal{F}}
\newcommand{\N}{\mathbb{N}}
\newcommand{\R}{\mathbb{R}}
\renewcommand{\H}{\mathbb{H}}
\newcommand{\C}{\mathbb{C}}
\newcommand{\CC}{C\kern-1.3ex|}
\newcommand{\bB}{\mathcal{B}}
\newcommand{\Z}{\mathbb{Z}}
\newcommand{\Q}{\mathbb{Q}}
\newcommand{\ddt}{\frac{d}{dt}}
\newcommand{\dds}{\frac{d}{ds}}
\renewcommand{\P}{\mathbb{P}}
\newcommand{\e}{{\mathbb{e}}}
\newcommand{\PP}{\mathcal{P}}
\newcommand{\QQ}{\mathcal{Q}}
\newcommand{\pp}{\mathfrak{p}}
\newcommand{\zz}{\mathfrak{z}}
\newcommand{\Pe}{\mathfrak{p}}
\newcommand{\PPe}{\mathscr{P}}
\newcommand\lie[1]{\mathfrak{#1}}
\renewcommand{\k}{\lie{k}}
\newcommand{\li}{\lie{i}}
\newcommand{\g}{\lie{g}}
\newcommand{\m}{\lie{m}}
\renewcommand{\t}{\lie{t}}
\renewcommand{\u}{\lie{u}}
\newcommand{\on}{\operatorname}
\newcommand{\Crit}{\on{Crit}} 
\newcommand{\Fl}{\on{Fl}} 
\newcommand{\Diff}{\on{Diff}}
\newcommand{\ann}{\on{ann}}
\newcommand{\aug}{{\on{aug}} }
\newcommand{\univ}{\on{univ}}
\newcommand{\ainfty}{{$A_\infty$\ }}
 \newcommand{\pre}{{\on{pre}}}
\newcommand{\fr}{{\on{fr}}}
\newcommand{\red}{{\on{red}}}
 \newcommand{\adj}{{\on{adj}}}
\newcommand{\dual}{\vee}
\newcommand\Mod[1]{\lVert #1 \rVert}
\newcommand\abs[1]{\lvert #1 \rvert}
\newcommand{\ver}{{\on{vert}}}
\newcommand{\Edge}{\on{Edge}}
\newcommand{\loc}{{\on{loc}}}
\newcommand{\Area}{{\on{Area}}}
\newcommand{\Ver}{\on{Vert}}
\newcommand{\st}{\on{st}}
\newcommand{\nl}{\delta}
\newcommand{\internal}{{\on{int}}}
\newcommand\B{\mathcal{B}}
\newcommand{\End}{\on{End}}
\newcommand{\Aut}{ \on{Aut} }
\newcommand{\Ad}{ \on{Ad} } 
\newcommand{\ad}{ \on{ad} } 
\newcommand{\Hol}{ \on{Hol} }
\newcommand{\Hom}{ \on{Hom}}
\renewcommand{\ker}{ \on{ker}}
\newcommand{\coker}{ \on{coker}}
\newcommand{\im}{ \on{im}}
\newcommand{\ind}{ \on{ind}}
\newcommand{\Vol}{  \on{Vol}}
\newcommand{\mult}{  \on{mult}}
\newcommand{\supp}{\on{supp}}
\newcommand{\glue}{{\on{glue}}}
\newcommand{\neck}{{\on{neck}}}
\newcommand{\prim}{{\on{prim}}}
\newcommand{\Facets}{{\on{Facets}}}
\newcommand{\cu}{u^0}
\newcommand{\dvol}{  \on{dvol}}
\newcommand{\Neck}{  \on{Neck}}
\newcommand{\codim}{\on{codim}}
\newcommand{\ssm}{-}
\newcommand\qu{/\kern-.7ex/} 
\newcommand\lqu{\backslash \kern-.7ex \backslash} 
\newcommand\bs{\backslash}
\newcommand\lldots{\hbox to 1em{.\hss.\hss.}}
\newcommand{\triv}{{\on{triv}}}
\newcommand{\lag}{L}
\newcommand{\refcomment}[1]   {{}}
\newcommand{\labell}\label
\newcommand{\hra}{\hookrightarrow}
\renewcommand{\d}{{\on{d}}}
\newcommand{\ol}{\overline}
\newcommand{\olp}{\ol{\partial}}
\newcommand\Phinv{\Phi^{-1}}
\newcommand\pinv{\pi^{-1}}
\newcommand\lam{\lambda}
\newcommand\Lam{\Lambda}
\newcommand\tiu{\tilde u}
\newcommand\sig{\sigma}
\newcommand\eps{\epsilon}
\newcommand\veps{\varepsilon}
\newcommand\Om{\Omega}
\newcommand\om{\omega}
\newcommand{\lan}{\langle}
\newcommand{\ran}{\rangle}
\newcommand{\hh}{{\tfrac{1}{2}}}
\newcommand{\sqq}{{\tfrac{1}{4}}}
\newcommand{\tM}{\M}
\newcommand{\tP}{{\tilde P}}
\newcommand{\tQ}{{\tilde Q}}
\newcommand{\tR}{{\tilde R}}
\newcommand\cE{\mathcal{E}}
\newcommand\E{\mathcal{E}}
\newcommand\cF{\mathcal{F}}
\newcommand\cG{\mathcal{G}}
\newcommand\cP{\mathcal{P}}
\newcommand\cT{\mathcal{T}}
\newcommand\pT{\mathbb{T}}
\newcommand\cW{\mathcal{W}}
\newcommand\TT{\mathcal{T}}
\newcommand\cI{\mathcal{I}}
\newcommand\cQ{\mathcal{Q}}
\newcommand\mE{\mathcal{E}}
\newcommand\curv{\on{curv}}
\newcommand\Gr{\on{Gr}}
\newcommand\Map{\on{Map}}
\newcommand\ev{\on{ev}}
\newcommand\Vect{\on{Vect}}
\newcommand\ul{\underline}
\newcommand\mO{\mathcal{O}}
\newcommand\G{\mathcal{G}}
\newcommand\grad{\on{grad}}
\newcommand\reg{{\on{reg}}}
\newcommand\bran[1]{ \lan {#1} \ran}
 \newcommand\cwl[1]{{}}
 \newcommand{\dge}{\rotatebox[origin=c]{45}{$\ge$}}
 \newcommand{\uge}{\rotatebox[origin=c]{315}{$\ge$}}
 \newcommand{\deq}{\rotatebox[origin=c]{45}{$=$}}
 \newcommand{\ueq}{\rotatebox[origin=c]{315}{$=$}}
 \newcommand{\updots}{\hbox to1.65em{\rotatebox[origin=c]{45}{$\cdots$}}}
 \newcommand{\dndots}{\hbox to1.65em{\rotatebox[origin=c]{315}{$\cdots$}}}
\newcommand{\Phid}[2]{\hbox to1.65em{$ \Phi_{#1, #2}$}}
\newcommand\Cone{\on{Cone}}
\newcommand\NCone{\on{NCone}}
\newcommand\Id{\on{Id}}
\newcommand\XX{\mathfrak{X}} 
\newcommand\XC{\XX} 
\newcommand\XB{X} 
\newcommand\oX{X^{\whitesq}} 
\newcommand\oZ{Z} 
\newcommand\wX{X^{\blacksq}} 
\newcommand\wP{P^{\blacksq}} 
\newcommand\wQ{Q^{\blacksq}} 
\newcommand\wR{R^{\blacksq}} 
\newcommand\bD{\mathfrak{D}} 
\newcommand\DD{\mathfrak{D}} 
\newcommand\cD{D} 
\newcommand\JJ{\mathfrak{J}}
\newcommand\BB{\mathbb{B}}
\newcommand\fj{\Delta j}
\newcommand\crit{{\on{crit}}}
\newcommand\sx{*\kern-.5ex_X}
\newcommand\white{{\includegraphics[width=.05in]{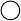}}}
\newcommand\black{{\includegraphics[width=.05in]{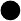}}}
\newcommand\whitet{{\includegraphics[width=.07in]{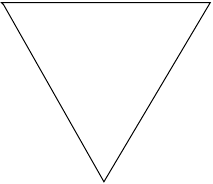}}}
\newcommand\greyt{\includegraphics[width=.07in]{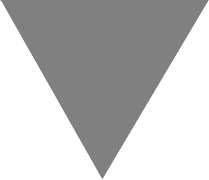}}
\newcommand\blackt{{\includegraphics[width=.07in]{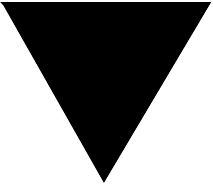}}}
\newcommand\whitesq{{\includegraphics[width=.05in]{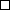}}}
\newcommand\blacksq{{\includegraphics[width=.05in]{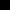}}}
\newcommand\ldiag{{\includegraphics[width=.1in]{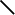}}}
\newcommand\rdiag{{\includegraphics[width=.1in]{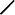}}}
\newcommand\ldiags{{\includegraphics[width=.05in]{ldiag.pdf}}}
\newcommand\rdiags{{\includegraphics[width=.05in]{rdiag.pdf}}}
\newcommand{\tJJ}{\tilde \JJ}
\newcommand{\tX}{\tilde X}
\newcommand{\tC}{\tilde C}
\newcommand{\tS}{\tilde S}
\newcommand{\bT}{\mathbb{T}}
\newcommand{\ssum}{{\textstyle \sum}}
\newcommand{\Bl}{ \on{Bl}}
\newcommand\mN{\mathcal{N}}
\newcommand{\tensor}{\otimes}
\renewcommand{\rm}{\rmfamily}
\renewcommand{\em}{\textit}
\newcommand{\cyl}{{\on{cyl}}}
\newcommand{\Cyl}{{\on{Cyl}}}
\newcommand{\RR}{\mathcal{R}}
\newcommand{\Hof}{{\on{Hof}}}
\newcommand{\co}{c} 
\newcommand\br{{\on{brok}}}
\newcommand\quilt{{q}}
\newcommand\qq{{qq}}
\newcommand{\tr}{{\on{tr}}}
\newcommand{\trop}{{\on{trop}}}
\newcommand{\Morse}{{\on{Morse}}}
\newcommand\tGam{{\tilde \Gamma}}
\newcommand\bGam{\Gamma}
\newcommand\Gam{\Gamma}
\newcommand\fw{w}
\newcommand\delbar{\ol{\partial}}
\renewcommand\em{\textit}
\newcommand\fillblack{\blacksq}
\def\mathunderaccent#1{\let\theaccent#1\mathpalette\putaccentunder}
\def\putaccentunder#1#2{\oalign{$#1#2$\crcr\hidewidth \vbox
to.2ex{\hbox{$#1\theaccent{}$}\vss}\hidewidth}}
\begin{document}

\frontmatter

\title{Tropical Fukaya algebras}


\author{Sushmita Venugopalan and Chris T. Woodward}

\maketitle

\tableofcontents
\mainmatter

\thanks{We thank Jonny Evans, Mohammad F. Tehrani, and Nick Sheridan for discussions and helpful emails. We gratefully thank the referees for their careful reading and detailed suggestions,
which dramatically improved the exposition. We also thank Paul Seidel and the rest of
the EMS Monographs in Mathematics editorial board, whose advice at various stages
was extremely useful.
 S.V. acknowledges support from the Matric grant MTR/2017/000920 administered by the Science and Engineering Research Board.
  C.W.  was partially supported by NSF grant DMS 2105417.  Any   opinions, findings, and conclusions or recommendations expressed in  this material are those of the author(s) and do not necessarily  reflect the views of the National Science Foundation.}

\vskip .1in

\chapter{Statement of results}\label{chap:state}

In this book, we study the behavior of holomorphic disks under a
multiple symplectic cut.  In particular, we study the behavior of the
Fukaya algebra associated to a Lagrangian submanifold of a symplectic
manifold.  The Fukaya algebra is an \ainfty algebra defined using
counts of holomorphic disks whose boundaries lie on the Lagrangian
submanifold.  The main result of the book is a \ainfty homotopy
equivalence between two versions of the Fukaya algebra: the standard
version defined on the symplectic manifold and the \em{broken} version
obtained by cutting the manifold.  The Lagrangian submanifold is
assumed to be disjoint from the cuts.

We use the homotopy equivalence to compute the disk potentials of
Lagrangians in various examples.  An important application is the
weak unobstructedness of almost toric Lagrangians, 
which we carry out in a
sequel \cite{vw:split} which was split off from the original
manuscript.  In this introductory chapter, we give a low-tech tour of
the book, interwoven with motivations, context, and also limitations,
of our results.

\section{Multiple cuts}

The \em{cut} \index{Cut} operation introduced by Lerman \cite{le:sy2}
cuts a symplectic manifold along a regular level set of a moment map
for a circle action, and then quotients the boundary by the circle
action to produce a smooth symplectic manifold. The inverse operation
is called a \em{symplectic sum}.  The \em{multiple cut}
\index{Cut!Multiple cut} is a generalization where the symplectic
manifold is cut along intersecting hypersurfaces. The set-up for a
multiple cut is that of a tropical Hamiltonian action, which we now define.

\begin{definition}\label{def:polydec}
  \index{Polyhedral decomposition $\PP$} {\rm(Polyhedral
    decomposition)} Let $n >0$ be an integer and $T \simeq (S^1)^n$ a
  torus with Lie algebra $\t \cong \R^n$.  A \em{simplicial
    polyhedral decomposition} of $\t^\dual$ is a collection
  \[ \PP = \{ P \subset \t^\dual \} \]
  of simple polytopes\footnote{By a polytope we mean a finite
    intersection of half-planes, as in Definition \ref{def:delz}.  As
    such, our polytopes are closed but not necessarily compact.} such
  that
  \begin{enumerate}
  \item {\rm (Covering property)} the interiors $P^\circ$ of the
    polytopes $P \in \PP$ cover $\t^\dual$; that is,
    \[ \t^\dual= \cup_{P \in \PP} P^\circ; \]
  \item {\rm (Face property)} and for any $\sig_1, \dots, \sig_k \in \PP$,
    the intersection
    $\sig_1 \cap \dots \cap \sig_k$ is a polytope in $\PP$ that is a
    face of each of the polytopes $\sig_1,\dots,\sig_k$.
  \end{enumerate}
\end{definition}
Any polytope $P$ in a simplicial polyhedral decomposition $\PP$
corresponds to a sub-torus
\begin{equation}
  \label{eq:deftp}
  T_P \subseteq T, \quad \text{defined by} \quad \t_P:=\ann(TP).  
\end{equation}
Thus $P$ and $T_P$ have complementary dimensions, and $T_P=\{\Id\}$
for top-dimensional polytopes $P \in \PP$.
  
\begin{definition} \label{def:tropmanifold}
  {\rm(Tropical Hamiltonian action)} \index{Tropical Hamiltonian action} A \em{tropical
    Hamiltonian action} $(X,\PP,\Phi)$ consists of 
  \begin{enumerate}
  \item \label{part:trop1} a simplicial polyhedral decomposition $\PP$ of $\t^\dual$, and
  \item \label{part:trop2} a compact symplectic manifold $(X,\om_X)$
    with a 
    \index{Tropical moment map} 
    \em{tropical moment map}
    \[\Phi:X \to \t^\dual, \]
    such for any $P \in \PP$ there is a neighborhood $U_P \subset X$
    of $\Phinv(P)$ on which the projection
    \[U_P \to \t^\dual \to \t_P^\dual\]
    is a moment map for a free Hamiltonian action of a torus $T_P$.
  \end{enumerate}
\end{definition}

Given a tropical Hamiltonian action $X$ the output of a multiple cut is a
collection of
\index{Cut space} 
\em{cut spaces}
\begin{equation}
  \label{eq:cutspace}
  X_P:=\Phinv(P^\circ),
\end{equation}
where $P^\circ$ is the complement of the faces of $P$; the cut space $X_P$ compactifies to 
\[\ol X_P=\Phinv(P)/\sim,\]
where $\Phinv(P)$ is a manifold with corners and the equivalence
relation $\sim$ quotients any codimension one boundary
$\Phinv(Q) \subset \Phinv(P)$, $\codim_P(Q)=1$ by the action of
$S^1 \simeq T_Q/T_P$. The spaces $\ol X_P$, $P \in \PP$ are orbifolds,
whose local structure is given by an iterative application of Lerman's
construction \cite{le:sy2}.  In a multiple cut with polyhedral
decomposition $\PP$, for any pair of facets $Q_1$, $Q_2 \in \PP$ of a
polytope $P \in \PP$, $\ol X_{Q_1}$ and $\ol X_{Q_2}$ are embedded as
divisors in the compactified cut space $\ol X_P$, called
\index{Relative divisor} 
\em{relative
  divisors}, that intersect each other normally along
$\ol X_{Q_1 \cap Q_2}$.  See Figure \ref{fig:break}.  The space $X_P$
is the complement of the relative divisors in $\ol X_P$.
\begin{figure}[ht]
  {
\begingroup%
  \makeatletter%
  \providecommand\color[2][]{%
    \errmessage{(Inkscape) Color is used for the text in Inkscape, but the package 'color.sty' is not loaded}%
    \renewcommand\color[2][]{}%
  }%
  \providecommand\transparent[1]{%
    \errmessage{(Inkscape) Transparency is used (non-zero) for the text in Inkscape, but the package 'transparent.sty' is not loaded}%
    \renewcommand\transparent[1]{}%
  }%
  \providecommand\rotatebox[2]{#2}%
  \newcommand*\fsize{\dimexpr\f@size pt\relax}%
  \newcommand*\lineheight[1]{\fontsize{\fsize}{#1\fsize}\selectfont}%
  \ifx\svgwidth\undefined%
    \setlength{\unitlength}{185.03764235bp}%
    \ifx\svgscale\undefined%
      \relax%
    \else%
      \setlength{\unitlength}{\unitlength * \real{\svgscale}}%
    \fi%
  \else%
    \setlength{\unitlength}{\svgwidth}%
  \fi%
  \global\let\svgwidth\undefined%
  \global\let\svgscale\undefined%
  \makeatother%
  \begin{picture}(1,0.11876923)%
    \lineheight{1}%
    \setlength\tabcolsep{0pt}%
    \put(0,0){\includegraphics[width=\unitlength,page=1]{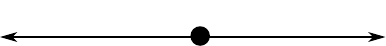}}%
    \put(0.07653387,0.04580504){\color[rgb]{0,0,0}\makebox(0,0)[lt]{\lineheight{1.25}\smash{\begin{tabular}[t]{l}$P_+$\end{tabular}}}}%
    \put(0.83399432,0.05348175){\color[rgb]{0,0,0}\makebox(0,0)[lt]{\lineheight{1.25}\smash{\begin{tabular}[t]{l}$P_-$\end{tabular}}}}%
    \put(0.49155663,0.07770917){\color[rgb]{0,0,0}\makebox(0,0)[lt]{\lineheight{1.25}\smash{\begin{tabular}[t]{l}$P_0$\end{tabular}}}}%
  \end{picture}%
\endgroup%
}
  \caption{A single cut.}
  \label{fig:poly-single}
\end{figure}
\begin{example}
  For a single cut, the polyhedral decomposition $\PP$ partitions
  $\t^\dual \simeq \R$ into two semi-infinite lines intersecting at a
  point as in Figure \ref{fig:poly-single}.

  The cut spaces are
  $X_{P_+}$, $X_{P_-}$ and $X_{P_0}$, of which the first two are
  top-dimensional.  The simplest example of a multiple cut consists of
  two single cuts along hypersurfaces
  \[Z_1:=\Phinv(P_{12} \cup P_{30}), \quad Z_2:=\Phinv(P_{01} \cup
    P_{23}).\]
  %
  %
See Figure \ref{fig:break}. The cut spaces are
\[X_{P_i}, \quad i=1,\dots,4; \quad X_{P_{ij}}, \quad j=(i+1) \mod 4; \quad X_{P_\cap}.\]
The hypersurface $\Phinv(P_{ij})$ is an $S^1$-bundle over $X_{P_{ij}}$ and $\Phinv(P_{\cap})$ is an $(S^1)^2$ bundle over $X_{P_\cap}$. This finishes the example.
\end{example}
\begin{figure}[h]
\centering
    \scalebox{.8}{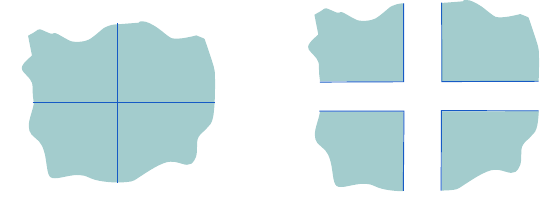} 
  \caption{A multiple cut of $\R^2$.}
  \label{fig:break}
\end{figure}
\begin{example}
  {\rm(On orbifolds)} Although, the definition of a tropical Hamiltonian action
  requires torus actions to be free in neighborhoods of cut loci, cut
  spaces may be orbifolds.  For example, the multiple cut on the right
  side of Figure \ref{fig:orb-cut} (which is a part of the multiple
  cut of a cubic surface in Figure \ref{fig:27polys}) is allowed,
  although two of the points $X_{Q_1}$, $X_{Q_2}$ in the cut space
  $\ol X_{P_2}$ are $A_1$-singularities. However, the single cut on
  the left side of Figure \ref{fig:orb-cut} is not allowed since the
  circle action corresponding to the cut is not free.  Requiring a
  free torus action in the neighborhoods of cut loci is needed to
  ensure the transversality of moduli spaces of broken maps as
  explained
  %
  at the end of Section \ref{sec:bfa-intro}. 
\end{example}

\begin{figure}[h]
  \centering \scalebox{.8}{
\begingroup%
  \makeatletter%
  \providecommand\color[2][]{%
    \errmessage{(Inkscape) Color is used for the text in Inkscape, but the package 'color.sty' is not loaded}%
    \renewcommand\color[2][]{}%
  }%
  \providecommand\transparent[1]{%
    \errmessage{(Inkscape) Transparency is used (non-zero) for the text in Inkscape, but the package 'transparent.sty' is not loaded}%
    \renewcommand\transparent[1]{}%
  }%
  \providecommand\rotatebox[2]{#2}%
  \newcommand*\fsize{\dimexpr\f@size pt\relax}%
  \newcommand*\lineheight[1]{\fontsize{\fsize}{#1\fsize}\selectfont}%
  \ifx\svgwidth\undefined%
    \setlength{\unitlength}{318.12933856bp}%
    \ifx\svgscale\undefined%
      \relax%
    \else%
      \setlength{\unitlength}{\unitlength * \real{\svgscale}}%
    \fi%
  \else%
    \setlength{\unitlength}{\svgwidth}%
  \fi%
  \global\let\svgwidth\undefined%
  \global\let\svgscale\undefined%
  \makeatother%
  \begin{picture}(1,0.41524144)%
    \lineheight{1}%
    \setlength\tabcolsep{0pt}%
    \put(0,0){\includegraphics[width=\unitlength,page=1]{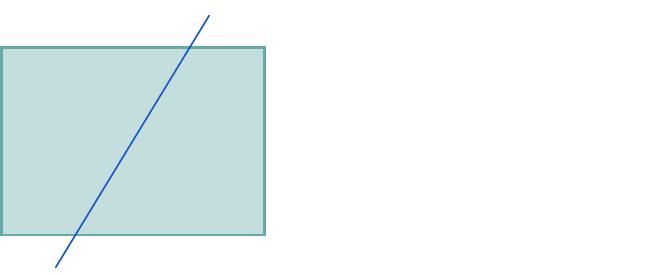}}%
    \put(0.32405375,0.37229678){\makebox(0,0)[lt]{\lineheight{1.25}\smash{\begin{tabular}[t]{l}$y=2x$\end{tabular}}}}%
    \put(0,0){\includegraphics[width=\unitlength,page=2]{orb-cut.pdf}}%
    \put(0.73629152,0.09441867){\color[rgb]{0,0,0}\makebox(0,0)[lt]{\lineheight{1.25}\smash{\begin{tabular}[t]{l}$P_2$\end{tabular}}}}%
    \put(0,0){\includegraphics[width=\unitlength,page=3]{orb-cut.pdf}}%
    \put(0.82350388,0.13644917){\color[rgb]{0,0,0}\makebox(0,0)[lt]{\lineheight{1.25}\smash{\begin{tabular}[t]{l}$Q_2$\end{tabular}}}}%
    \put(0.77471098,0.18553012){\color[rgb]{0,0,0}\makebox(0,0)[lt]{\lineheight{1.25}\smash{\begin{tabular}[t]{l}$Q_1$\end{tabular}}}}%
  \end{picture}%
\endgroup%
}
  \caption{The cut in the left is disallowed, since the torus action is not free; the cut on the right is allowed.}
  \label{fig:orb-cut}
\end{figure}

Our definition of tropical Hamiltonian actions is similar to Gross-Siebert
\cite{gross:book}. A limitation of our set-up is that we require
integral affine singularities to lie away from the cut locus.  We
expect that this requirement can be weakened by replacing a single cut
passing through a singularity by two parallel cuts straddling the
singularity, as explained below in Section \ref{sec:degen}.  On the
other hand, our definitions include manifolds that are not integral
affine, because we require a toric structure only in the neighborhood
of the cut locus, rather than globally.

\section{Neck-stretching}

To build a correspondence between curves in the manifold and curves in
the cut spaces, we construct a sequence of ``large almost complex
structures'', in the language of Kontsevich-Soibelman
\cite{KS:mirror}, which degenerate to almost complex structures in the
cut spaces.  We call this limiting process \em{neck stretching},
because it amounts to modifying the complex structure in the
neighborhoods of the cut loci, which we call \em{necks}.

As an example of neck-stretching, consider a single cut.  Let
$\Phi: X \to \R$ be a moment map for a Hamiltonian circle action, so
that the zero level set
\[Z:=\Phinv(0)\]
is a separating hypersurface.  Choose a tubular neighborhood
\[U_Z \subset X, \quad U_Z \cong Z \times I \]
where $I \subset \R$ is an interval.  For a sequence $\{J^\nu\}_\nu$ of
\em{neck-stretching} almost complex structures on $X$,
\index{Almost complex structure! Neck-stretching}
the fibers of
the projection $U_Z \to Z/S^1$ are $J_\nu$-holomorphic cylinders
$[\frac {-\nu} 2, \frac {\nu} 2] \times S^1$ for any $\nu \in \N$, as
shown in Figure \ref{fig:neck1}.
\begin{figure}[h]
  {\centering\scalebox{.8}{
\begingroup%
  \makeatletter%
  \providecommand\color[2][]{%
    \errmessage{(Inkscape) Color is used for the text in Inkscape, but the package 'color.sty' is not loaded}%
    \renewcommand\color[2][]{}%
  }%
  \providecommand\transparent[1]{%
    \errmessage{(Inkscape) Transparency is used (non-zero) for the text in Inkscape, but the package 'transparent.sty' is not loaded}%
    \renewcommand\transparent[1]{}%
  }%
  \providecommand\rotatebox[2]{#2}%
  \newcommand*\fsize{\dimexpr\f@size pt\relax}%
  \newcommand*\lineheight[1]{\fontsize{\fsize}{#1\fsize}\selectfont}%
  \ifx\svgwidth\undefined%
    \setlength{\unitlength}{339.99604208bp}%
    \ifx\svgscale\undefined%
      \relax%
    \else%
      \setlength{\unitlength}{\unitlength * \real{\svgscale}}%
    \fi%
  \else%
    \setlength{\unitlength}{\svgwidth}%
  \fi%
  \global\let\svgwidth\undefined%
  \global\let\svgscale\undefined%
  \makeatother%
  \begin{picture}(1,0.56513354)%
    \lineheight{1}%
    \setlength\tabcolsep{0pt}%
    \put(0,0){\includegraphics[width=\unitlength,page=1]{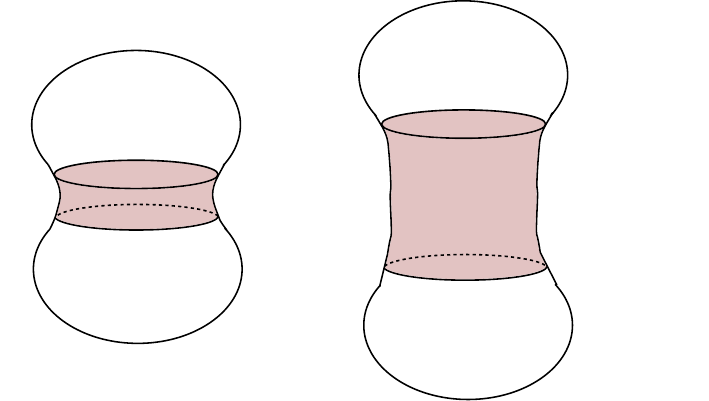}}%
    \put(-0.00255297,0.27604604){\color[rgb]{0,0,0}\makebox(0,0)[lt]{\lineheight{1.25}\smash{\begin{tabular}[t]{l}$(X,\om)$\end{tabular}}}}%
    \put(0.31166642,0.28050258){\color[rgb]{0,0,0}\makebox(0,0)[lt]{\lineheight{1.25}\smash{\begin{tabular}[t]{l}$Z \times [-\eps,\eps]$\end{tabular}}}}%
    \put(0.64811159,0.26490451){\color[rgb]{0,0,0}\makebox(0,0)[lt]{\lineheight{1.25}\smash{\begin{tabular}[t]{l}$J^\nu$\end{tabular}}}}%
    \put(0.77119708,0.27359936){\color[rgb]{0,0,0}\makebox(0,0)[lt]{\lineheight{1.25}\smash{\begin{tabular}[t]{l}$Z \times [-\frac \nu 2, \frac \nu 2]$\end{tabular}}}}%
    \put(0.58946584,0.20532983){\color[rgb]{0,0,0}\rotatebox{90}{\makebox(0,0)[lt]{\lineheight{1.25}\smash{\begin{tabular}[t]{l}Neck region\end{tabular}}}}}%
  \end{picture}%
\endgroup%
}}
  \caption{Neck-stretching for a single cut.}
  \label{fig:neck1}
\end{figure}

To define neck-stretching for a multiple cut, we need an additional
datum of a \em{dual complex}, \index{Dual complex} which encodes the
proportion in which the neck is stretched in different directions.
The dual complex $B^\dual$ consists of a complementary dimensional polytope
denoted $P^\dual$ for every $P \in \PP$.  Additionally, we need a
\em{cutting datum} corresponding to the dual complex, that consists of
a thickening of the cut locus; that is, for each polytope $P \in \PP$,
there is a polytope $\tP \subset \t^\dual$ that has a projection
$\tP \to P$ with fibers isomorphic to the dual complex $P^\dual$.  See
Definition \ref{def:cut-datum} for details and Figure
\ref{fig:broken-a3} for an example.  Curve counts will depend on the
choice of dual complex, but as with other choices (such as tamed
almost complex structures), the curve counts corresponding to any two
dual complexes will produce Fukaya algebras that are \ainfty homotopy
equivalent.

\begin{figure}[h]
  {
\begingroup%
  \makeatletter%
  \providecommand\color[2][]{%
    \errmessage{(Inkscape) Color is used for the text in Inkscape, but the package 'color.sty' is not loaded}%
    \renewcommand\color[2][]{}%
  }%
  \providecommand\transparent[1]{%
    \errmessage{(Inkscape) Transparency is used (non-zero) for the text in Inkscape, but the package 'transparent.sty' is not loaded}%
    \renewcommand\transparent[1]{}%
  }%
  \providecommand\rotatebox[2]{#2}%
  \newcommand*\fsize{\dimexpr\f@size pt\relax}%
  \newcommand*\lineheight[1]{\fontsize{\fsize}{#1\fsize}\selectfont}%
  \ifx\svgwidth\undefined%
    \setlength{\unitlength}{130.83621408bp}%
    \ifx\svgscale\undefined%
      \relax%
    \else%
      \setlength{\unitlength}{\unitlength * \real{\svgscale}}%
    \fi%
  \else%
    \setlength{\unitlength}{\svgwidth}%
  \fi%
  \global\let\svgwidth\undefined%
  \global\let\svgscale\undefined%
  \makeatother%
  \begin{picture}(1,0.12242)%
    \lineheight{1}%
    \setlength\tabcolsep{0pt}%
    \put(0,0){\includegraphics[width=\unitlength,page=1]{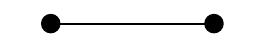}}%
    \put(-0.00317219,0.09316004){\color[rgb]{0,0,0}\makebox(0,0)[lt]{\lineheight{1.25}\smash{\begin{tabular}[t]{l}$P_+^\dual$\end{tabular}}}}%
    \put(0.36789575,0.08548239){\color[rgb]{0,0,0}\makebox(0,0)[lt]{\lineheight{1.25}\smash{\begin{tabular}[t]{l}$P_0^\dual$\end{tabular}}}}%
    \put(0.77119077,0.09338484){\color[rgb]{0,0,0}\makebox(0,0)[lt]{\lineheight{1.25}\smash{\begin{tabular}[t]{l}$P_-^\dual$\end{tabular}}}}%
  \end{picture}%
\endgroup%
}
\caption{The dual complex for a single cut.} 
\label{fig:dualcomplex} 
\end{figure}

\begin{example} For the single cut in Figure \ref{fig:poly-single},
  the dual complex is a line segment and is shown in Figure
  \ref{fig:dualcomplex}.  For the multiple cut consisting of two single
  cuts in Figure \ref{fig:break}, the dual complex is a rectangle
  shown in Figure \ref{fig:sqdual} with side lengths, say $l_1$,
  $l_2$.  In the neck-stretched almost complex manifold $(X,J^\nu)$, a
  neighborhood $U_{P_\cap}$ of $\Phinv(P_{\cap})$ fibers over
  $X_{P_\cap}$.  The fibers of $U_{P_\cap} \to X_{P_\cap}$ are
  biholomorphic to the product of cylinders
  $((\frac {-\nu l_1} 2, \frac {\nu l_1} 2) \times S^1) \times ((\frac
  {-\nu l_2} 2, \frac {\nu l_2} 2) \times S^1)$, as shown in Figure
  \ref{fig:pcoord}. The ratio $\frac {l_1} {l_2}$ is not required to
  be rational.
\end{example}

\begin{figure}[ht]
  \centering \scalebox{.8}{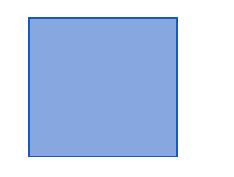}
  \caption{Dual complex $B^\dual$ for the cut in Figure
    \ref{fig:break}.}
  \label{fig:sqdual}
\end{figure}
\begin{figure}[ht]
  \centering \scalebox{.8}{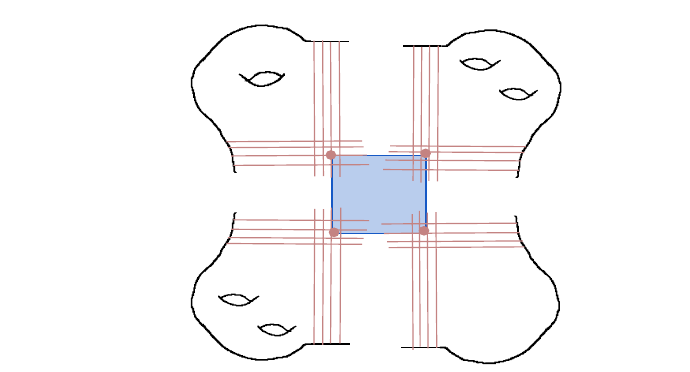}
  \caption{Neck-stretched manifold $(X,J_\nu)$ for the double cut in
    Figure \ref{fig:break} using the dual complex $B^\dual$.}
  \label{fig:pcoord}
\end{figure}

A \em{broken manifold} \index{Broken manifold} corresponding to a
multiple cut $(X,\PP)$ is a disjoint union of thickenings 
of cut spaces
\begin{equation} \label{eq:xx} 
\XX=\bigsqcup_{P \in \PP}\XC_P.\end{equation}
For top-dimensional polytopes $P \in \PP$, $\XX_P:=X_P$ is the cut space
from \eqref{eq:cutspace}, and for
lower dimensional polytopes $P$, $\XC_P$ is a
toric fibration over the 
cut space $X_P$ and is called a \em{neck piece}.\index{Neck piece} 
The manifold $\XX_P$ has a compactification $\ol \XX_P$ obtained by adding relative divisors to $\XX_P$.

For example, in the case of a single cut as in Figure
\ref{fig:poly-single}, the neck piece denoted by $\XC_{P_0}$ arises as
the limit of the neighborhoods of the separating hypersurface and
corresponds to a zero-dimensional polytope $P_0 \in \PP$.  The space
$\XC_{P_0}$ is a $(\R \times S^1)$-bundle over the relative divisor
$X_{P_0}$, and its compactification $X_{P_0}$ is a $\P^1$-bundle over
$X_{P_0}$.  The analog for multiple cuts is the following:  For a polytope $P \in \PP$,
the neck piece $\XC_P$ is a torus bundle
\[T_{P,\C} \to \XC_P \to \XB_P \]
over the cut space $\XB_P$. The fiber  of the bundle is isomorphic to
the complexification $T_{P,\C}$ of the compact torus $T_P$.  This
torus is part of the data of the tropical Hamiltonian action $X$, and was
defined in \eqref{eq:deftp}; see
Figure \ref{fig:introxx}.  The compactification of $\XC_P$ is a
fibration $\ol \XC_P \to \ol X_P$ with toric orbifold fibers.

\begin{figure}[ht]
  \centering \scalebox{.8}{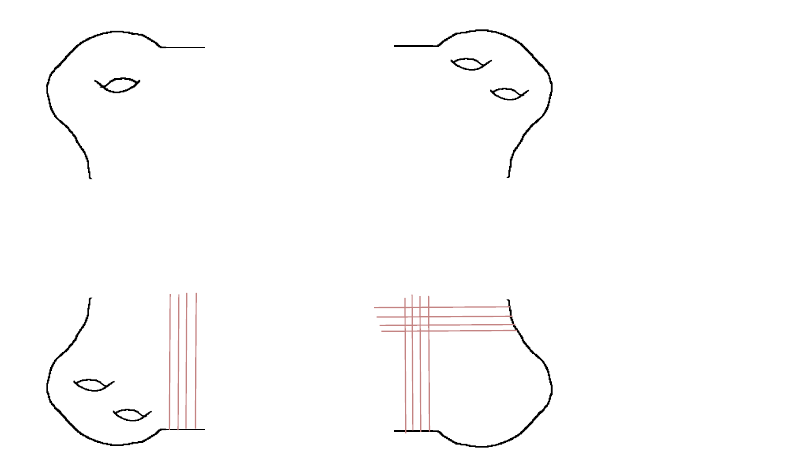}
  \caption{Broken manifold arising from the neck-stretching in Figure
    \ref{fig:pcoord}.}
  \label{fig:introxx}
\end{figure}

We will study the behavior of holomorphic disks bounding a Lagrangian
submanifold under a family of neck-stretching almost complex
submanifolds.  To simplify the situation somewhat, we assume that the
Lagrangian submanifold $L \subset (X,\om)$ is disjoint from the cuts.
The Lagrangian in $X$ therefore descends to a Lagrangian submanifold, also
denoted by $L$, in a cut space $\XB_P$ for a top-dimensional polytope
$P$.

Broken manifolds have orbifold components, and the reader may be
concerned that these present additional technical issues.  However, we
consider maps on punctured curves whose images lie in the
uncompactified spaces $\XX_P$, which are manifolds with cylindrical
ends.  Broken maps have extensions over punctures, with puncture
points mapping to relative divisors, but the compactification is just
a useful technical tool in some places and
no analysis on orbifolds is actually used.

\section{Broken maps}
A broken map is modelled on a graph and consists of a ``map part'' and
a ``tropical part''.  For the map part, 
in the case without boundary 
the domain is the normalization
of a nodal curve $C$ whose irreducible components denoted
\[ C_v, v \in \Ver(\Gamma) \] 
correspond to vertices of a graph $\Gamma$, and whose nodes denoted
$w_e \in C$ correspond to edges $e \in \Edge(\Gamma)$ of $\Gamma$.
The broken map is a collection of holomorphic maps on punctured curves
\[ u_v:C_v^\circ \to \XC_{P(v)}, \quad v \in \Ver(\Gamma), \]
satisfying certain matching conditions (explained later in the
paragraph) on the lifts of nodal points.  Each of the domain components
$C_v^\circ$ is an irreducible curve component $C_v \subset C$
(possibly with boundary) punctured at interior nodal points, that is,
\[C_v^\circ :=C_v \bs \{\text{interior nodes}\},\]
the target space $\XC_{P(v)} $ is a piece of the broken manifold
corresponding to the polytope
\[P(v) \in \PP,\]
\ and the punctures in the domain are removable singularities when
$u_v$ is viewed as a map to the compactification $\ol \XC_{
  P(v)}$.\footnote{Here for the sake of exposition we assume that
  $\XX_{P(v)}$ has a manifold compactification.  The general
  case is treated in Chapter \ref{chap:brokendisks}, where the quantity $\cT(e)$ is defined using the asymptotic behavior of the map near the nodal point.} Thus
a broken map can be evaluated at nodal lifts. In particular, in a broken map $u$ modelled on a graph $\Gamma$, consider a node $w$ corresponding to an edge $e=(v_+,v_-) \in \Edge(\Gamma)$ which 
lifts to $w^\pm \in C_{v_\pm}$ in the normalization $\tC$ of $C$. The map $u$ can be
evaluated at the nodal lift $w^+ \in C_{v_+}$, and the point $u(w^+)$
lies on the intersection
\[Y_+=\cap_{i=1}^k D_i^+\]
of a collection of relative divisors $D_1^+, \dots, D_k^+$ in
$\ol X_{P(v_+)}$. Assume that each $D_i^+$ is the fixed point set of a
one-dimensional torus generated by $\mu_i \in \t$, and $n_i$ is the
intersection multiplicity of the map $u_{v_+}$ at $w^+$.  Then the sum
\[\cT(w^+) :=\sum n_i \mu_i \in \t_\Z\]
lies in the integer lattice $\t_\Z \subset \t$. 
Define
$\cT(w^-) \in \t_\Z$ for the lift $w^-$ similarly.  For any holomorphic
coordinate $z_\pm$ on a neighborhood of $w^\pm$ in $C_{v_\pm}$, the
limit
\[\lim_{z_\pm \to 0}z_\pm^{-\cT(w^\pm)}u \]
exists.  (See Section \ref{sec:mbrokdisks} for more details.)  The
\em{matching condition} at the node $w$ says that
\index{Matching condition at nodes}
\begin{itemize}
\item $\cT(w^+)=\cT(w^-)$, and
\item there exist holomorphic coordinates $z_+$, $z_-$ in neighborhoods of the nodal lifts $w_+$, $w_-$ such that
  \begin{equation} \label{eq:nodematch-intro} \lim_{z_+ \to
      0}z_+^{-\cT(w^+)}u=\lim_{z_- \to 0}z_-^{-\cT(w^-)}u
    .\end{equation}
\end{itemize}
The quantity $\cT(w^+)=\cT(w^-)$ is called the \em{{direction} of the node
  $w$} or the \em{{direction} of the edge $e$} in $\Gamma$ corresponding to
the node $w$; and is denoted by
\index{Direction! of an edge}
\index{Direction! of a node} 
\begin{equation}
  \label{eq:nodeslope}
  \cT(e).   
\end{equation}
The quantities in the left-hand side and right-hand side of
\eqref{eq:nodematch-intro} are called the \em{tropical evaluations}
\index{Tropical evaluation map} 
at $w^+$ and $w^-$ respectively.  An equivalent formulation of the
matching condition states that, for any node, the evaluations of the
lifts are equal modulo the action of the torus generated by the {direction}
of the node: For a node $w \in C$ corresponding to an edge $e$, the
evaluations of the lifts $w^\pm$ lie in $\XC_{P(e)}$ which has the
structure of a $T_{P(e),\C}$-bundle, where
\[P(e) := P(v_+) \cap P(v_-) \in \PP,\]
is the polytope assigned to the edge $e=(v_+,v_-)$.  The {direction} of the
node $\cT(e)$ lies in the lattice $\t_{P(e),\Z}$ and generates a
one-dimensional torus $T_{\cT(e),\C}$. The matching condition is then
that the images of the evaluations match in the base of the
$T_{\cT(e),\C}$-fibration:
\begin{equation}
  \label{eq:match-intro-proj}
  (\pi_{\cT(e)}^\perp \circ u)(z_+)=(\pi_{\cT(e)}^\perp \circ u)(z_-) \in \XC_{P(e)}/T_{\cT(e),\C},
\end{equation}
where $\pi_{\cT(e)}^\perp : \XC_{P(e)} \to \XC_{P(e)}/T_{\cT(e),\C}$
is the projection to the quotient. The quantities in the left-hand
side and right-hand side of \eqref{eq:match-intro-proj} are called
\em{projected tropical evaluations}.
\index{Projected tropical evaluation map} 
Thus the space
$\XC_{P(e),\C}/T_{\cT(e),\C}$ in which the matching condition of an
edge $e$ is defined is dependent on $e$. The matching condition is
simpler in the special case of a single cut, because the space
$\ol \XC_{P(e),\C}/T_{\cT(e),\C}$ is the relative divisor $X_{P_0}$
(using notation from Figure \ref{fig:poly-single}).  The matching
condition at a node $w$ is
\[u_{v_+}(w^+)=u_{v_-}(w^-) \in X_{P_0}. \]

A \em{broken map}
\index{Map! Broken maps} 
is a collection of maps
$u_v: C_v^\circ \to \XC_{P(v)} $ described above together with a
tropical structure on the graph $\Gamma$ underlying the domain nodal
curve.  
In the case with Lagrangian boundary condition, the components $C_v^\circ$
with boundary have additional gradient treed segments in the domain 
explained below in Section \ref{chap:brokendisks}, used to work in the Morse model for Floer cohomology.
A \em{tropical structure} on a graph $\Gamma$ is a collection
of edge {direction}s
  \index{Tropical structure} 
\[ \cT(e) \in \t_{P(e),\Z} \subset \t \simeq \t^\dual , \] 
and polytope assignments $P(v) \in \PP$ for vertices
$v \in \Ver(\Gamma)$ so that the graph is realizable in the dual
complex $B^\dual$ of the neck-stretching in the following sense: There
exist \em{tropical positions} of the vertices 
 \index{Tropical vertex positions|seeonly{Vertex positions}}
in the dual complex
\[\cT : \Ver(\Gamma) \to B^\dual \subset \t^\dual, \quad \cT(v) \in P(v)^\dual\]
that satisfy the following: For any edge $e=(v_+,v_-)$ the {direction} of
the line segment joining $\cT(v_+)$ to $\cT(v_-)$ is equal to the
{direction} $\cT(e)$ of the node $w$ corresponding to $e$.  
That is,
\index{Direction condition! for tropical graphs}
\begin{equation} \label{slopecond} \cT(e) \in \R_{> 0} ( \cT(v_+) -
  \cT(v_-) ).
\end{equation}
The image of $\cT(\Gamma)$ under the map to $B^\dual$ induced by
$v \mapsto \cT(v)$ is called a \em{realization of a tropical
  graph},   \index{Realizability of a tropical graph}
and the underlying graph equipped with just the edge {direction}s
$\{\cT(e)\}_e$ and vertex polytopes $\{P(v)\}_v$ is called a \em{
  tropical graph}.
\index{Tropical graph}
Thus, changing the tropical vertex positions
$\{\cT(v)\}_v$ produces a different realization of the same tropical
graph; an example is shown in Figure \ref{fig:movetrop}.  Broken maps
may also contain nodes corresponding to the standard nodal
degeneration encountered in Gromov-Witten theory. The edges
corresponding to such nodes do not appear in the tropical graph.

Broken maps arise naturally as limits of sequences of holomorphic maps
in neck-stretched manifolds.  A converging sequence of maps consists
of pockets of high symplectic area separated by long cylinders in neck
regions.  Each such sequence $u_\nu$ of long cylinders maps to a neck
piece
\[u_\nu : \left[\tfrac {-\nu} 2, \tfrac \nu 2 \right] \times S^1 \to
  \XC_P\]
for some $P \in \PP$ in the polytopal decomposition, and is
asymptotically close to a \em{trivial cylinder}.
\index{Trivial cylinder} 
A trivial cylinder
is a holomorphic cylinder which lies in a fiber of the projection
\[T_{P,\C} \to \XC_P \to \XB_P, \]
and is thus diffeomorphic to a subtorus $T_{\mu,\C} \subset T_{P,\C}$
generated by a rational element $\mu \in \t_{P,\Z}$.  The pockets of
high area converge to spheres or disks, and (roughly speaking) the
long cylinders connecting these pockets converge to trivial cylinders.
We drop the rational cylinders from our description of the ``map
part'' of a broken map, and instead encode them as edges of the
tropical graph, as shown in Figure \ref{fig:trop}.  The generator
\[ \cT(e) \in \t_{P(e),\Z} \]  
of the trivial cylinder corresponding to an edge $e$ is the {direction} of
the edge in the tropical graph.  In particular, each edge {direction} is
integral.  Every component $C_v$, $v \in \Ver(\Gamma)$ of the limit
curve has a natural position $\cT(v) \in B^\dual$ in the dual complex
as follows: For a limit curve component mapping to a neck piece
$\XC_P$, $P \in \PP$ with fibers $T_{P,\C} \simeq (\C^\times)^n$, the
convergence is modulo $\R^n$-translation in the target space $\XC_P$.  More
specifically, for a limit curve component $u:C \to \XC_P$ there is a
sequence $t_\nu \in \nu P^\dual \subset \R^n \subset \t_{P,\C}$ such
that $u$ is the limit of the translated maps $e^{-t_\nu} u_\nu$. Thus,
the component $u$ inherits a \em{tropical coordinate}
\[ \cT(v) := \lim_{\nu \to \infty} \frac {t_\nu} \nu \in P^\dual . \]
For an edge $e=(v_+,v_-)$, the line segment connecting $\cT(v_+)$ and
$\cT(v_-)$ has {direction} $\cT(e)$.

\begin{figure}[ht]
  \centering\scalebox{.8}{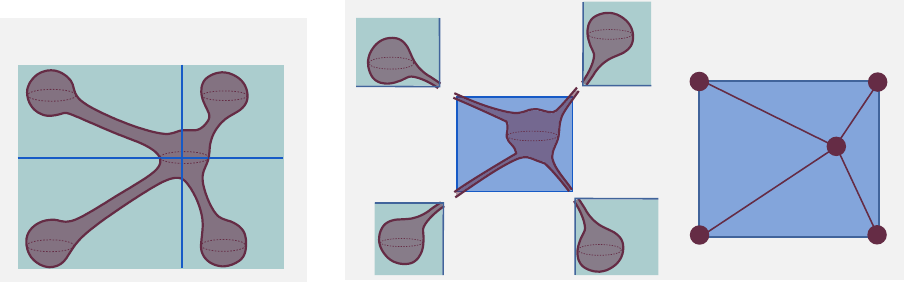}
  \caption{Left : Map in a neck-stretched manifold. Right : A broken map and its tropical graph.}
  \label{fig:trop}
\end{figure}

The realization of the broken map as a limit of maps in neck-stretched
manifolds also explains the matching condition at nodes.  A node $w$
corresponding to an edge $e$ in a limit broken map arises from the
convergence of a sequence of long cylinders
\[u_\nu: S^1 \times [-l_\nu, l_\nu] \to (X,J_\nu), \quad l_\nu \to \infty\]
with small area, each of which is asymptotically close to a trivial
cylinder
\[ u_{\triv}: S^1 \times [-l_\nu, l_\nu] \to (X,J_\nu), \quad (s,t)
  \mapsto e^{\cT(e)(s+it)}x_0, \quad x_0 \in X \]
generated by an integral element $\cT(e) \in \t_{P(e),\Z}$. As a
result, the evaluation of both lifts of the node $w_e^\pm$ are equal
modulo the action of the one-dimensional complex torus
$T_{\cT(e),\C}$.  This is a re-statement of the matching condition
\eqref{eq:match-intro-proj}.

\index{Tropical symmetry} The set of broken maps of a fixed type
$\Gamma$ has a free action of a \em{tropical symmetry} group
$T_{\on{trop}}(\Gamma)$ arising out of the torus action on neck
pieces. The tropical symmetry group $T_{\on{trop}}(\Gamma)$ is
generated by the degrees of freedom of the tropical graph $\cT$: Each
element of the real part of this group corresponds to ways of moving
the vertex positions $\{\cT(v)\}_v$ without changing edge {direction}s
$\{\cT(e)\}_e$, as shown in Figure \ref{fig:rigid}.  In particular,
the symmetry group $T_{\on{trop}}(\Gamma)$ is finite if in the
tropical graph vertex positions $\cT(v)$ are uniquely \index{Rigid!
  tropical graph} determined by the edge {direction}s $\cT(e)$.  Such
tropical graphs are called \em{rigid}.  In the second graph
$\Gamma_2$ in Figure \ref{fig:rigid}, there is one degree of freedom
moving the edges of the tropical graph, and this generates a
one-dimensional complex torus $T_\trop(\Gamma_2)$. The quotient of the
moduli space $\M_\Gamma^\br(\XX,L)$ of broken maps of type $\Gamma$
 \index{Moduli space!of broken maps $\M_\Gamma^\br(L)$}
by
the action of the tropical symmetry group $T_\trop(\Gamma)$ is called
the \em{reduced moduli space} (see \eqref{eq:reduced-def}), and is
denoted by
\index{Moduli space!of broken maps, reduced $\M_{\Gamma,\red}^\br(L)$} 
\[ \M_{\Gamma,\red}^\br(\XX,L) := \tM_\Gamma^\br(\XX,L) /
T_{\on{trop}}(\Gamma). \]

A feature of broken maps is that ``nodes do not lower the index of a
map'', which is in contrast to the behavior of stable maps in smooth
manifolds. Here, the \em{index} of a broken map $u$,\footnote{We use
  the notation $i(u)$ for the index of a map $u$, which is equal to
  the expected dimension of the moduli space containing $u$; and the
  notation $I(u)$ for Maslov index of a map $u$ with disk as domain
  with Lagrangian boundary conditions.}
\index{Index! of a broken map}
denoted by $i^\br(u)$, refers to the expected dimension of the
moduli space component containing $u$, and is given by the index of
the linearization of the Fredholm operator cutting out the moduli
space.  The index is equal to the actual dimension
\[ i^\br(u) = \dim T_u \M^\br(\XX,L)  \]
if the moduli space $\M^\br(\XX,L)$ is regular.  Nodes do not contribute
negatively to the expected dimension formula for broken maps because
the matching condition at an edge $e \in \Edge(\Gamma)$ has
codimension $(\dim X - 2)$, since the matching condition is a
condition on the quotient of $\XC_{P(e)}$ by a complex one-dimensional
torus $T_{\cT(e),\C}$ as in \eqref{eq:match-intro-proj}.  In contrast,
the codimension of the matching condition for ordinary stable maps is
$\dim(X)$.  Therefore, whereas in nodes for stable maps occur only in
strata whose expected codimension is at least two, broken maps $u$
with components in neck pieces may have index zero, $i^\br(u)=0$,
though that can happen only if the tropical graph is rigid.  Indeed,
if for a type $\Gamma$ the tropical graph is not rigid, then the
tropical symmetry group $T_\trop(\Gamma)$ is at least two-dimensional.
Since $T_\trop(\Gamma)$ has a free action on the moduli space
$\M_\Gamma^\br(\XX,L)$ of maps of type $\Gamma$, the dimension of the
moduli space $\M_\Gamma^\br(\XX,L)$ in this case must be at least $2$.

The Gromov limit of a sequence of broken disks of type $\Gamma$ may
have configurations with sphere and disk bubbling, and may also have
configurations with a different tropical graph $\Gamma'$.  Such a
graph $\Gamma$ can be recovered from $\Gamma'$ by collapsing a subset
of edges (Theorem \ref{thm:cpt-broken}). The tropical graph $\Gamma'$,
when it is distinct from $\Gamma$, necessarily has a larger tropical
symmetry group. By the previous paragraph, the dimensions of the
moduli spaces $\M^\br_{\Gamma'}(\XX,L)$ and $\M^\br_\Gamma(\XX,L)$ are
the same, and therefore, the dimension of the quotient
$\M^\br_{\red,\Gamma'}(\XX,L)$ is at least two lower than the
dimension of $\M^\br_{\red,\Gamma}(\XX,L)$. Thus, the moduli space
$\M^\br_{\red,\Gamma}(\XX,L)$ has a compactification consisting of
codimension one strata made up of configurations with broken treed
segments, and strata of codimension at least two with additional tropical
nodes, as in Remark \ref{rem:maslov4-moduli}.  In this book, we only
deal with moduli spaces of broken maps of expected dimension zero and one; and
therefore codimension-two strata do not occur.  Reduced moduli spaces of broken maps consisting of tropical symmetry orbits of broken maps are the usual objects
of study in the literature on relative and log Gromov-Witten theory
\cite{abram}, \cite{chen:log}, \cite{teh:deg}.  However, we choose to
use the unquotiented moduli space $\M^\br_{\Gamma}(\XX,L)$ because in
the zero-dimensional strata, it has a bijection to unbroken maps, as
we explain
%
following the statement of
Theorem \ref{thm:bfuk}.

Symplectic cuts have been used in the literature to give formulas for
Gromov-Witten invariants in works of many different authors.  The case
of a single cut was studied by Ionel-Parker \cite{io:rel} and, in the
algebro-geometric context, J.~Li \cite{li:degen}.  They obtained a
symplectic sum formula for Gromov-Witten invariants on $X$ in terms of
relative Gromov-Witten invariants of the cut spaces, relative to the
relative divisor.  Their work is a special case of symplectic field
theory of Bourgeois-Eliashberg-Hofer-Wysocki-Zehnder \cite{bo:com}, in
which the analogs of broken maps are known as ``holomorphic
buildings''.  Eleny Ionel \cite{ion:nc} first studied
compactifications of moduli spaces of maps relative to a normal
crossing divisor.  She used a generalization of holomorphic buildings
with levels and chambers to describe the maps appearing in the moduli
space. Brett Parker \cite{bp1,bp2,bp3,bp4,bp5,bp6,bp7,bp8,bp9} used
the tropical approach to study this problem. Parker defined a category
of exploded manifolds, which combined the map part and tropical part
into a single space, in a way that convergence of broken maps is
continuous. The corresponding Gromov-Witten invariants in algebraic
geometry are studied in, for example, Abramovich-Chen-Gross-Siebert
\cite{abram}.  Tehrani gives an alternate compactification of
holomorphic curves relative to a normal crossing symplectic divisor
\cite{teh:relcpt, teh:reldef} and uses it to give a degeneration
formula \cite{teh:deg} for Gromov-Witten invariants in the almost
K\"ahler category.  Our approach is essentially a version of Parker's,
except that we use Morse chain models rather than de Rham theory and
work on the chain level to define counts of pseudoholomorphic maps
with Lagrangian boundary.

\section{Broken Fukaya algebras} \label{sec:bfa-intro}

Counts of disks in the broken symplectic manifold lead to a definition
of a broken Fukaya algebra.  We use Cieliebak-Mohnke perturbations
\cite{cm:trans} to regularize the various moduli spaces occurring in
the book.  These are domain-dependent perturbations; the domain of
the map is stabilized by treating intersection points of the map with
a Donaldson-type divisor as marked points.  In order to construct a
Donaldson-type divisor, which we call a \em{stabilizing divisor},
    \index{Stabilizing divisor}
we assume the Lagrangian $L \subset X$ and the symplectic manifold
$(X,\om)$ are compact, connected, and \em{rational}.  Rationality
means that $X$ admits a line bundle whose curvature is the symplectic
form and some tensor power of the line bundle is flat over $L$.  The
perturbation datum is a collection
\[ \ul{\Pe} = \{ \Pe_\Gamma = (J_\Gamma,F_\Gamma ) \}_\Gamma \] 
of domain-dependent perturbations $\Pe_\Gamma$ for each type $\Gamma$
of treed disk, each consisting of a domain-dependent almost complex
structure $J_\Gamma$ and Morse function $F_\Gamma$ on the Lagrangian
$L$.

Using these perturbations, we construct the geometric Fukaya algebra
$CF^{\on{geom}}(L)$ of the Lagrangian $L$ using the Morse model.  The
constructions are similar to those of Seidel \cite{se:bo} in the exact
case and Charest-Woodward \cite{cw:flips} in the rational case.  Let $\Lambda$ denote the universal Novikov field in a formal variable $q$, 
and let $\Lam_{\geq 0} \subset \Lam$ be the subring with only non-negative exponents of $q$, called the \em{Novikov ring}.  
The
underlying vector space of Floer cochains is generated by critical
points of the Morse function $f: L \to \R$ so that
\[ CF^{\on{geom}}(L, \ul {\Pe}) = \bigoplus_{ x  \in \crit(f)} \Lambda_{\geq 0} x , \] 
where we sometimes drop the perturbation data $\ul \Pe$ from the
notation.  The structure maps
\[ m^d: (CF^{\on{geom}}(L))^{\otimes d} \to CF^{\on{geom}}(L) \] 
are defined by counting zero-dimensional components in the moduli space $\M(X, L)$ of
holomorphic treed disks
\[ \M(X, L) = \{ u: C \to X: \enspace u(\partial C) \subset L , \text{ $u$ is a holomorphic treed disk} \}/ \sim .\]
Here $C$ is a union of disks, spheres, and line segments; the map $u$
is pseudoholomorphic on the disk and sphere components and satisfies a
gradient flow equation on the segments.  See Figure
\ref{fig:treed-disk} for a representation and Definition
\ref{def:treedholdisk} for the details.
\begin{figure}[ht]
  \centering\scalebox{.8}{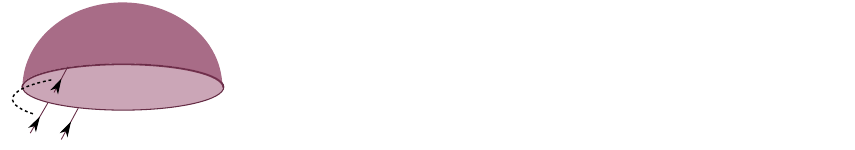}
  \caption{Holomorphic treed disks counted by the composition map $m^d$. Here, segments with arrows are gradient flow lines of the Morse function $f:L \to \R$, and $x_0,\dots,x_d \in \crit(f)$.}
  \label{fig:treed-disk}
\end{figure}

The geometric Fukaya algebra $CF^{\on{geom}}(L)$ is enlarged to an
enhanced space of cochains $CF(L)$ by a \em{homotopy unit
  construction}
\index{Homotopy units} 
so that a strict unit $1_L \in CF(L)$ exists, similar
to the construction in Fukaya-Oh-Ohta-Ono \cite[(3.3.5.2)]{fooo} (see
Section \ref{sec:units} for an exposition on the lines of
\cite{cw:flips}).  Analogously, counts of zero-dimensional components
of the moduli space of broken disks in $\XX$ with boundary in $L$ lead to a
broken Fukaya algebra, denoted $CF_{\br}(L)$, with \ainfty structure
maps denoted
\[ m^d_{\br}
: CF_{\br}(L)^{\otimes d} \to CF_{\br}(L), \quad d \ge 0 
  ;\]
see Chapter \ref{chap:bfa}.  In defining the composition maps
$m^d_\br$, we only need to consider broken maps with rigid tropical
graphs, since otherwise tropical symmetry orbits are at least
two-dimensional.  The single cut version of this \ainfty algebra has
been constructed by Charest-Woodward \cite{cw:flips}. Our main result
is the following, proved in Chapter \ref{chap:bfa}:
\begin{theorem} \label{thm:bfuk} For a rational Lagrangian submanifold
  $L \subset X$ and a polyhedral decomposition $\cP$ equipped with a cutting datum
  as above, the unbroken Fukaya
  algebra $CF(L)$ admits a curved $A_\infty$ homotopy equivalence to
  the broken Fukaya algebra $CF_{\br}(L)$.
\end{theorem}

\noindent
The above result presumably holds in the more general situation where the tropical moment map $\Phi$ takes values not in $\t^\dual$, but in a tropical affine manifold with singularities as in Definition \ref{def:tropham}. 

Theorem \ref{thm:bfuk} implies that the broken Fukaya algebras associated
to different polyhedral decompositions are homotopy-equivalent.  The
proof of Theorem \ref{thm:bfuk} is completed in Proposition
\ref{prop:unbreak-break}. The ingredients of the proof are a
convergence result and its converse, which is a gluing result. The
convergence result is a generalization of compactness in symplectic
field theory (\cite{bo:com}, \cite{cm:com}) for a single cut and is
proved in Chapter \ref{chap:cpt}: Given a sequence of maps
$u_\nu : C \to X$ that are holomorphic with respect to almost
structures $J_\nu$ that are stretched along multiple necks, there is a
subsequence of $u_\nu$ that converges to a broken map. The limit is
unique up to the action of the identity component of the tropical symmetry group.  The gluing
result in Chapter \ref{chap:glue}, which is proved only for broken
maps of index zero, states that an index zero regular broken map gives
rise to a family of $J^\nu$-holomorphic maps $u_\nu:C \to X$ in the
unbroken manifold.

\label{page:unquot}
We emphasize that the bijection produced by the gluing construction
involves broken maps, and not tropical symmetry orbits of broken
maps. The distinction is significant even for broken maps of index zero, because 
such maps may have a finite non-trivial tropical symmetry
group, as in Example \ref{eg:rigidfr}. The distinction is relevant
even in case of a single cut: If at a node, a map intersects the
relative divisor with multiplicity $k$, then the node has $k$
different framings, where \em{framing} is part of the data of a broken
map in Definition \ref{def:bmap}. Thus there are $k$ broken maps,
which differ from each other only in their framing, but they each
produce a distinct unbroken map when glued at the neck. This set of
$k$ broken maps lies in the same orbit of the tropical symmetry group.

\label{page:orb-noreg}
To regularize the moduli spaces of broken maps, we require that the
torus actions in the neighborhood of cut loci (in Definition
\ref{def:tropmanifold} \eqref{part:trop2}) be free. In the absence of
this condition, we would need to allow orbifold domain components for
broken maps, and the node matching condition \eqref{eq:nodematch}
would not be transversally cut out, since the evaluation map at the
orbifold points would be constrained to lie on a stratum of
$\XX_{P(e)}/T_{P(e),\C}$ that has positive codimension. 

\section{Unobstructedness, disk potentials, and cohomology} 

Homotopy equivalence of \ainfty algebras preserves the Floer
cohomology and disk potentials, when each of these is interpreted as a
function on the Maurer-Cartan moduli space.

The cohomology $H(A)$ of an \ainfty algebra $A$ is well-defined if the
first order composition map $m^1$ satisfies $(m^1)^2=0$. The condition
$(m^1)^2=0$ may fail to hold if the \em{curvature} $m^0(1) \in A$ is
not a $\Lam$-multiple of the unit $1_L$, which is an ``obstruction''
to the definition of Floer cohomology.  \em{Weak unobstructedness} is
a more general condition under which the Floer cohomology of a
Lagrangian brane can be defined. A Lagrangian brane $L$ is \em{weakly
  unobstructed} if the projective Maurer-Cartan equation
\index{Maurer-Cartan equation} \index{Weakly bounding cochain|seeonly {Maurer-Cartan
    equation}}
\begin{equation} \label{eq:mc-intro} 
m^0(1) + m^1(b) + m^2(b,b) + \ldots  \in \Lam_{>0} 1_L
\end{equation}
has an odd solution $b \in CF(L)$; here, $\Lambda_{> 0}$ is the subspace
of $\Lambda$ with positive $q$-valuation.  Such a solution $b$ is called a
\em{weakly bounding cochain}, and the set of all the odd solutions is
denoted $MC(L)$.  Given a weakly bounding cochain, the Fukaya algebra
$CF(L)$ may be ``deformed'' by $b$ (see \eqref{eq:mnb}) to yield an
\ainfty-algebra $CF(L,b)$ with composition maps $(m^n_b)_{n \geq 0}$
satisfying $m^0_b \in \Lam_{>0}1_L$.  As a consequence, $(m^1_b)^2=0$, and
the Floer cohomology
 \[H(CF(L,b)):=\ker(m^1_b)/\im(m^1_b)\]
 is well-defined. 

 The homotopy equivalence of Theorem \ref{thm:bfuk} gives an
 isomorphism of Floer cohomology in the following sense.  The \ainfty
 homotopy equivalence in Theorem \ref{thm:bfuk} is given by curved
 \ainfty functors
\begin{equation}
  \label{eq:fghom}
  \F: CF(L) \to CF_\br(L), \quad \G: CF_\br(L) \to
  CF(L),   
\end{equation}
The functors $\F$, $\G$ induce maps on the spaces of Maurer-Cartan
solutions
\[\F : MC(CF(L)) \to MC(CF_\br(L)),\quad \G:MC(CF_\br(L)) \to MC(CF(L))\]
and maps on Floer cohomology, that are isomorphisms
\begin{align*}
  H(\F) : H(CF(L,b_0)) &\to H(CF_\br(L,\F(b_0))), \\
  H(\G) : H(CF_\br(L,b_1)) &\to H(CF(L,\G(b_1)))  
\end{align*}
 for any Maurer-Cartan solution $b_0 \in MC(CF(L))$, $b_1 \in MC(CF_\br(L))$.
See \cite[Section 5.1]{cw:flips} for details.

The \em{disk potential} of a Lagrangian brane $L$ is defined as a
count of disks with no input and a single output.  
It is a function on
the space of local systems on $L$ given by 
\begin{equation}
  \label{eq:wnaive}
W : \Hom(\pi_1(L),\C^\times) \to \Lam, \quad y \mapsto \sum_u \eps(u) y([\partial u]) q^{\om(u)},  
\end{equation}
where $\eps(u) \in \pm 1$ is an orientation sign, $q$ is the
Novikov formal variable,
$\om(u)$ is the area,
and the sum ranges over all disk maps $u$ with Maslov index $2$ that
pass through a fixed (output) point on the boundary.

The homotopy equivalence between Fukaya algebras does not preserve the
disk potential, but preserves a \em{generalized disk potential}
defined by Fukaya-Oh-Ohta-Ono \cite{fooo} as a function on the space
of solutions of the projective Maurer-Cartan equation
\[ W_{\on{gen}}: MC(L) \to \Lambda , \quad b \mapsto W_{\on{gen}}(b), \]
 \index{Potential}
\index{Disk potential|seeonly{Potential}}
 where the quantity $W_{\on{gen}}(b) \in \Lam_{\geq 0}$ is given by the Maurer-Cartan equation
 \[m^0(1) + m^1(b) + m^2(b,b) + \ldots =
W_{\on{gen}}(b) 1_{L}. \]
The potential is preserved by the homotopy equivalence
with the broken Fukaya algebra:
For any $b \in MC(CF(L))$, 
$\cF(b)$ is a solution of the Maurer-Cartan equation on $CF_\br(L)$, that is, $\F(b) \in MC(CF_\br(L))$, and
$W_{\on{gen}}(b)=W_{\on{gen}}(\F(b))$.  If 
 $b=0$ is a
solution of the Maurer-Cartan equation,
we say that $W_{\on{gen}}(0)$ is the \em{naive disk potential} as in \eqref{eq:wnaive}.
The naive potential 
is not preserved by neck-stretching because the \ainfty functors $\F$,
$\G$ occurring in the homotopy equivalence are curved, that is,
$\F^0=\G^0$ may not equal zero, 
and therefore, $\F(b=0) \neq 0$.  See example
\ref{ex:h2}.

For all our applications, we use the definition of the disk potential
as a naive disk count, in spite of the limitations of this
definition.  We consider various examples in Chapter \ref{chap:apps};
in all of these $b=0$ is a solution of the Maurer-Cartan equation,
\footnote{Technically speaking, once the Fukaya algebra is enhanced with homotopy units,
in the examples of Chapter \ref{chap:apps}, we actually obtain that the naive disk count is a multiple of $x^\blackt$, which implies weak unobstructedness by Lemma \ref{lem:unobs-cond}.}
and therefore the disk potential is meaningful. In the monotone case, the
\ainfty functors are not curved, and in the semi-Fano case, we work
with perturbations for which \ainfty functors are flat, so that in
both cases, disk potentials are preserved by neck-stretching. In
the sequel \cite{vw:split}, we use neck-stretching and a further
degeneration to show weak unobstructedness of toric Lagrangians and to
compute disk potentials for almost toric manifolds.

\section{Almost toric manifolds and toric degenerations}\label{sec:degen}

We relate our results to those for toric degenerations as in
Gross-Siebert (\cite{grsi:toric}, \cite{gross:invite}) and to
computation of disk potentials in almost toric manifolds. Both of
these involve generalizing broken manifolds by allowing the moment map
to take values in singular tropical affine manifolds. A \em{tropical
  affine manifold} $B$ is a topological manifold with coordinate
charts whose transition functions take values in
$\R^n \ltimes GL(n,\Z)$.

\em{Almost toric manifolds} in the sense of Symington \cite{sym:2to4}
are a class of examples in dimension four; these are fibrations over
tropical affine two-manifolds with dimension zero singular loci whose
fibers are Lagrangian tori. Thus, they are generalizations of toric
manifolds, where the completely integrable system is allowed to have
isolated focus-focus singularities, with the monodromy of the
Lagrangian torus fibration around the singular point given by a shear
map; see \cite{evans:almosttoric} for recent exposition.  An almost
toric manifold is equipped with a \em{base diagram} which is the
image of the tuple of Hamiltonians, with the additional data of the
locations of the singularities on the base diagram, and the directions
of the eigenvector of the monodromy indicated by a dotted line
emanating from the singular value, as in Figure \ref{fig:Ad-sing}.
The complement of the singularities and boundary has a torus fibration
that projects to the complement of the dotted line.  The affine
structure is glued along the dotted line by the map of tori induced by the
transpose of the shear above.  The fiber above the singular point is a
pinched torus.

As an example, for integers $d>1$ the resolution of an $A_d$-singularity $\C^2/\Z_d$ has an almost toric structure whose base diagram is as in Figure
\ref{fig:Ad-sing}.  The fibration may be deformed by a ``nodal slide'' operation
(without changing the total space, as a symplectic manifold) that
pushes the singular values in the base diagram to a vertex; see
\cite[Theorem 6.5]{sym:2to4}.  In the limit when the singular value in
the base diagram coincides with a vertex $v$, the resulting fibration
has a non-toric singularity at the vertex.  The fiber over the vertex
$v$ is a path of $d-1$ Lagrangian spheres.\footnote{For $t \in (0,1]$
  the hypersurface $\PP_t:=\{z_1z_2 + P_t(z_3) =0\} \subset \C^3$
  (where $P_t$ is a polynomial of degree $d$ with distinct real roots
  for $t >0$ and $P_0(z)=z^d$) has an almost toric structure with $d$
  focus-focus singularities (\cite[Section
  7.3]{evans:almosttoric}). There is a path of $d-1$ Lagrangian
  spheres, each passing through a pair of the singular points
  (\cite[Remark 7.7]{evans:almosttoric}). The path of spheres gets
  collapsed to an $A_{d-1}$-singularity in the limit $t \to 0$.}
Via a deformation of the symplectic form, the Lagrangian spheres may
be deformed to symplectic spheres with self-intersection number $-2$, as in
the rightmost polytope in Figure \ref{fig:Ad-sing}.  We compute the
disk potentials of resolutions of $A_1$ and $A_2$-singularities using
multiple cuts in Sections \ref{sec:a1} and \ref{sec:cubic-intro}.
$A_2$-singularities occur in toric degenerations of cubic surfaces.
Similar multiple cuts to the one in Section \ref{sec:cubic-intro} can
presumably be used to analyze spheres and disks in resolutions of
$A_d$-singularities for $d \ge 2$, which would give alternate proofs
of results in Chan-Lau \cite{chanlau}.  In upcoming work
\cite{vw:torus}, we compute disk potentials of resolutions of cyclic
quotient $T$-singularities, which are quotients of $A_d$-singularities
by finite groups, and which model del Pezzo surfaces locally.

\begin{figure}[ht]
   \scalebox{.8}{
\begingroup%
  \makeatletter%
  \providecommand\color[2][]{%
    \errmessage{(Inkscape) Color is used for the text in Inkscape, but the package 'color.sty' is not loaded}%
    \renewcommand\color[2][]{}%
  }%
  \providecommand\transparent[1]{%
    \errmessage{(Inkscape) Transparency is used (non-zero) for the text in Inkscape, but the package 'transparent.sty' is not loaded}%
    \renewcommand\transparent[1]{}%
  }%
  \providecommand\rotatebox[2]{#2}%
  \newcommand*\fsize{\dimexpr\f@size pt\relax}%
  \newcommand*\lineheight[1]{\fontsize{\fsize}{#1\fsize}\selectfont}%
  \ifx\svgwidth\undefined%
    \setlength{\unitlength}{334.71248561bp}%
    \ifx\svgscale\undefined%
      \relax%
    \else%
      \setlength{\unitlength}{\unitlength * \real{\svgscale}}%
    \fi%
  \else%
    \setlength{\unitlength}{\svgwidth}%
  \fi%
  \global\let\svgwidth\undefined%
  \global\let\svgscale\undefined%
  \makeatother%
  \begin{picture}(1,0.33794937)%
    \lineheight{1}%
    \setlength\tabcolsep{0pt}%
    \put(0,0){\includegraphics[width=\unitlength,page=1]{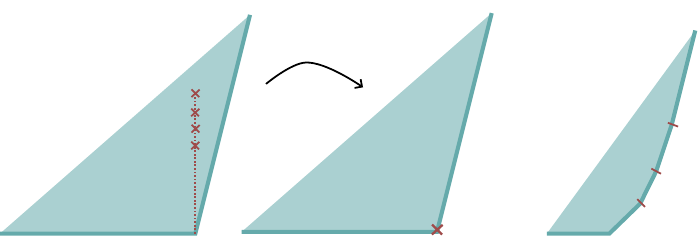}}%
    \put(0.40623785,0.30838149){\color[rgb]{0,0,0}\makebox(0,0)[lt]{\lineheight{1.25}\smash{\begin{tabular}[t]{l}Nodal \\slide\end{tabular}}}}%
    \put(0,0){\includegraphics[width=\unitlength,page=2]{Ad-sing.pdf}}%
    \put(0.74301214,0.28444306){\color[rgb]{0,0,0}\makebox(0,0)[lt]{\lineheight{1.25}\smash{\begin{tabular}[t]{l}Deform \\symplectic\\form\end{tabular}}}}%
  \end{picture}%
\endgroup%
}
   \caption{Resolution of the $A_{d-1}$-singularity $\C^2/\Z_d$ for $d=4$.  Left and middle polytope: $\{y \geq 0, y \geq dx\}$. In the middle figure $\Phinv(0)$ is a path of $(d-1)$ Lagrangian spheres.
The right figure is a toric smoothing of the $A_{d-1}$-singularity whose moment polytope has 
     additional facets $y=kx + \eps_k$, $k=1,\dots, (d-1)$.}
\label{fig:Ad-sing}
\end{figure}

Our results also apply to some toric degenerations. A \em{toric degeneration}, as in Gross and Siebert
(\cite{grsi:toric}, \cite{gross:invite}), is a flat family
\[ \pi : \mathcal{X} \to \C \]
whose fiber
\[X_t:=\pinv(t)\]
is regular and diffeomorphic to an irreducible projective variety $X$
for $t \in \C \bs \{0\}$, and whose central fiber $X_0$ is a union of
toric varieties glued pair-wise along torus-invariant divisors. The
central fiber $X_0$ has a tropical moment map $\Phi : X_0 \to B$ to a
singular integral affine manifold $B$, with a singular set
$\Delta \subset B$ of codimension $2$, known as the \em{discriminant
  locus}.  The integral structure of $B$ has a non-trivial monodromy
in $SL(n,\Z)$ around points in the discriminant locus.  In the
complement of the discriminant locus, the central fiber $X_0$
looks exactly like a union of cut spaces obtained from a multiple cut.
Because of the discriminant locus, the gluing of the toric varieties
in the degeneration along torus-invariant divisors is not toric.

Conjecturally, Gross-Siebert's toric degenerations may be viewed as
multiple cuts where the cuts are allowed to pass through singular
Lagrangian fibers in an almost toric manifold, as can be seen from the
toric degeneration of a cubic surface: Given a generic homogeneous
polynomial $f \in \C[x_0,x_1,x_2,x_3]$ of degree $3$, the toric
degeneration of 
 $\{f=0\} \subset \P^3$ 
is given by the family
  \[Y:=\{tf + x_0x_1x_2=0 : t \in \C\} \xmapsto{\pi} t \in \C. \]
  The fibers of $\pi$ are smooth except for the central fiber
  $X_0:=\pinv(0)$, which is the union of $3$ copies of $\P^2$, namely
  $\{x_i=0\}, i=0,1,2$, glued pairwise along lines $\{x_i=x_j=0\}$,
  $i \neq j$. The line $\{x_i=x_j=0\}$ has singular points at its
  intersection points with $\{f=0\}$. Thus, the central fiber $X_0$ has the structure of a tropical Hamiltonian action, where any two copies of $\P^2$ are glued along   a line containing three points in the discriminant locus, and the
  underlying integral affine manifold $B$ is as shown in the left side
  of Figure \ref{fig:cubic-deg}.
 
\begin{figure}[h]
  \centering\scalebox{.8}{
\begingroup%
  \makeatletter%
  \providecommand\color[2][]{%
    \errmessage{(Inkscape) Color is used for the text in Inkscape, but the package 'color.sty' is not loaded}%
    \renewcommand\color[2][]{}%
  }%
  \providecommand\transparent[1]{%
    \errmessage{(Inkscape) Transparency is used (non-zero) for the text in Inkscape, but the package 'transparent.sty' is not loaded}%
    \renewcommand\transparent[1]{}%
  }%
  \providecommand\rotatebox[2]{#2}%
  \newcommand*\fsize{\dimexpr\f@size pt\relax}%
  \newcommand*\lineheight[1]{\fontsize{\fsize}{#1\fsize}\selectfont}%
  \ifx\svgwidth\undefined%
    \setlength{\unitlength}{444.03389533bp}%
    \ifx\svgscale\undefined%
      \relax%
    \else%
      \setlength{\unitlength}{\unitlength * \real{\svgscale}}%
    \fi%
  \else%
    \setlength{\unitlength}{\svgwidth}%
  \fi%
  \global\let\svgwidth\undefined%
  \global\let\svgscale\undefined%
  \makeatother%
  \begin{picture}(1,0.25181261)%
    \lineheight{1}%
    \setlength\tabcolsep{0pt}%
    \put(0,0){\includegraphics[width=\unitlength,page=1]{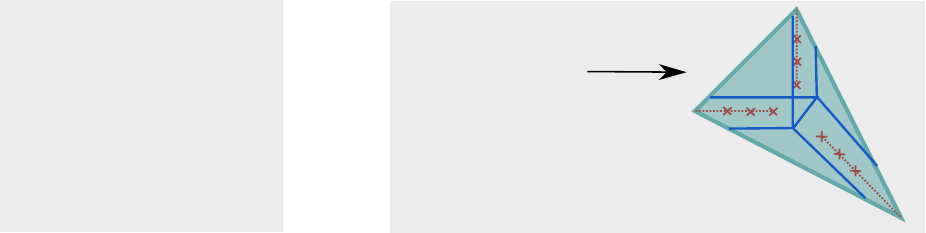}}%
    \put(0.62957034,0.19666175){\color[rgb]{0,0,0}\makebox(0,0)[lt]{\lineheight{1.25}\smash{\begin{tabular}[t]{l}Straddling cut\end{tabular}}}}%
    \put(0,0){\includegraphics[width=\unitlength,page=2]{cubic-deg.pdf}}%
    \put(0.06679883,0.15164604){\color[rgb]{0,0,0}\makebox(0,0)[lt]{\lineheight{1.25}\smash{\begin{tabular}[t]{l}$x_0=0$\end{tabular}}}}%
    \put(0.16093933,0.14398778){\color[rgb]{0,0,0}\makebox(0,0)[lt]{\lineheight{1.25}\smash{\begin{tabular}[t]{l}$x_1=0$\end{tabular}}}}%
    \put(0.07917518,0.09005625){\color[rgb]{0,0,0}\makebox(0,0)[lt]{\lineheight{1.25}\smash{\begin{tabular}[t]{l}$x_2=0$\end{tabular}}}}%
    \put(0.01488989,0.0485284){\color[rgb]{0,0,0}\makebox(0,0)[lt]{\lineheight{1.25}\smash{\begin{tabular}[t]{l}$B$\end{tabular}}}}%
  \end{picture}%
\endgroup%
}
  \caption{Left: The integral affine manifold $B$ underlying $X_0$, the central fiber in the toric degeneration of the cubic surface. Right: A straddling multiple cut on the almost toric base diagram of the cubic surface.}
  \label{fig:cubic-deg}
\end{figure}

\begin{figure}[h]
  \centering\scalebox{.8}{
\begingroup%
  \makeatletter%
  \providecommand\color[2][]{%
    \errmessage{(Inkscape) Color is used for the text in Inkscape, but the package 'color.sty' is not loaded}%
    \renewcommand\color[2][]{}%
  }%
  \providecommand\transparent[1]{%
    \errmessage{(Inkscape) Transparency is used (non-zero) for the text in Inkscape, but the package 'transparent.sty' is not loaded}%
    \renewcommand\transparent[1]{}%
  }%
  \providecommand\rotatebox[2]{#2}%
  \newcommand*\fsize{\dimexpr\f@size pt\relax}%
  \newcommand*\lineheight[1]{\fontsize{\fsize}{#1\fsize}\selectfont}%
  \ifx\svgwidth\undefined%
    \setlength{\unitlength}{285.89654156bp}%
    \ifx\svgscale\undefined%
      \relax%
    \else%
      \setlength{\unitlength}{\unitlength * \real{\svgscale}}%
    \fi%
  \else%
    \setlength{\unitlength}{\svgwidth}%
  \fi%
  \global\let\svgwidth\undefined%
  \global\let\svgscale\undefined%
  \makeatother%
  \begin{picture}(1,0.22301974)%
    \lineheight{1}%
    \setlength\tabcolsep{0pt}%
    \put(0,0){\includegraphics[width=\unitlength,page=1]{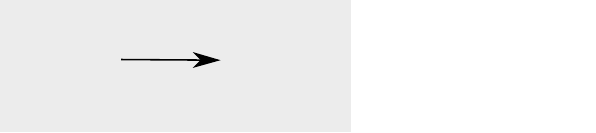}}%
    \put(0.20082604,0.18491624){\color[rgb]{0,0,0}\makebox(0,0)[lt]{\lineheight{1.25}\smash{\begin{tabular}[t]{l}Straddling \\cut\end{tabular}}}}%
    \put(0,0){\includegraphics[width=\unitlength,page=2]{strad.pdf}}%
    \put(0.92042503,0.11548436){\color[rgb]{0,0,0}\makebox(0,0)[lt]{\lineheight{1.25}\smash{\begin{tabular}[t]{l}$(-1)$\\sphere\end{tabular}}}}%
  \end{picture}%
\endgroup%
}
  \caption{Left: A straddling cut. Right: Part of a broken sphere. }
  \label{fig:strad}
\end{figure}

\begin{figure}[h]
  \centering\scalebox{.8}{
\begingroup%
  \makeatletter%
  \providecommand\color[2][]{%
    \errmessage{(Inkscape) Color is used for the text in Inkscape, but the package 'color.sty' is not loaded}%
    \renewcommand\color[2][]{}%
  }%
  \providecommand\transparent[1]{%
    \errmessage{(Inkscape) Transparency is used (non-zero) for the text in Inkscape, but the package 'transparent.sty' is not loaded}%
    \renewcommand\transparent[1]{}%
  }%
  \providecommand\rotatebox[2]{#2}%
  \newcommand*\fsize{\dimexpr\f@size pt\relax}%
  \newcommand*\lineheight[1]{\fontsize{\fsize}{#1\fsize}\selectfont}%
  \ifx\svgwidth\undefined%
    \setlength{\unitlength}{444.72584461bp}%
    \ifx\svgscale\undefined%
      \relax%
    \else%
      \setlength{\unitlength}{\unitlength * \real{\svgscale}}%
    \fi%
  \else%
    \setlength{\unitlength}{\svgwidth}%
  \fi%
  \global\let\svgwidth\undefined%
  \global\let\svgscale\undefined%
  \makeatother%
  \begin{picture}(1,0.34154923)%
    \lineheight{1}%
    \setlength\tabcolsep{0pt}%
    \put(0,0){\includegraphics[width=\unitlength,page=1]{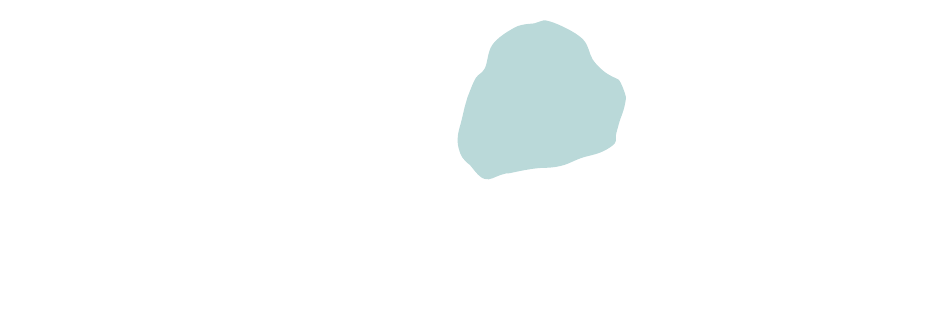}}%
    \put(0.58346741,0.20949621){\color[rgb]{0,0,0}\makebox(0,0)[lt]{\lineheight{1.25}\smash{\begin{tabular}[t]{l}$P$\end{tabular}}}}%
    \put(0,0){\includegraphics[width=\unitlength,page=2]{quartic.pdf}}%
    \put(0.18106877,0.31385792){\color[rgb]{0,0,0}\makebox(0,0)[lt]{\lineheight{1.25}\smash{\begin{tabular}[t]{l}$P$\end{tabular}}}}%
    \put(0.02850173,0.28726793){\color[rgb]{0,0,0}\makebox(0,0)[lt]{\lineheight{1.25}\smash{\begin{tabular}[t]{l}(A)\end{tabular}}}}%
    \put(0.41009779,0.30089648){\color[rgb]{0,0,0}\makebox(0,0)[lt]{\lineheight{1.25}\smash{\begin{tabular}[t]{l}(B)\end{tabular}}}}%
    \put(0.7644369,0.30771064){\color[rgb]{0,0,0}\makebox(0,0)[lt]{\lineheight{1.25}\smash{\begin{tabular}[t]{l}(C)\end{tabular}}}}%
    \put(0,0){\includegraphics[width=\unitlength,page=3]{quartic.pdf}}%
  \end{picture}%
\endgroup%
}
  \caption{(a) Integral affine manifold corresponding to the toric
    degeneration $X_0$ of a quartic surface. There are 24 singular
    points on the cut locus.  (b) Base diagram for the almost toric
    fibration on the gluing $X_0^\glue$ in a neighborhood of the
    vertex $P$.  (c) A part of the straddling cut near $P$.}
  \label{fig:quartic}
\end{figure}

In some cases, we may apply our results to these kinds of singular
toric degenerations if we replace a single cut by two parallel
non-singular cuts \em{straddling} the singular point as shown in
Figure \ref{fig:strad}. The ``non-toric gluing'' in toric
degenerations is thus replaced by an extra cut space that is
non-toric. Multiple singular cuts may also be replaced by straddling
cuts, as in Figure \ref{fig:cubic-deg} for the example of the cubic
surface.  Straddling cuts have the feature that (for a carefully chosen
multiple cut), a holomorphic curve component intersecting the singular
point is necessarily a cover of a fixed $(-1)$-curve.  We use such cuts
to compute disk potentials of resolutions of $T$-singularities in
\cite{vw:torus}.

\begin{example}\label{ex:quartic}
  A quartic surface in $\P^3$ can be degenerated in a similar way to
  the cubic surface, as explained in Gross \cite[Section
  2.1]{gross:invite}. The degenerated variety consists of four copies
  of $\P^2$ glued pairwise along toric divisors, and there are $4$
  singular points along each of the glued divisors.  See Figure
  \ref{fig:quartic}.  Analogously to the example of the cubic surface,
  via a straddling cut we obtain a multiply cut manifold whose
  (symplectic) sum is conjecturally symplectomorphic to the quartic.
  Our set-up assumes that the polyhedral decomposition in 
   the tropical Hamiltonian action is embedded in a vector space, while in the
  quartic example the topological manifold $B=\cup_{P \in \PP}P^\circ$
  underlying the polyhedral decomposition is a two-sphere, and has the
  structure of an affine manifold with singularities as defined in
  Definition \ref{def:affwsing} below.  It would be interesting to
  compute the disk potential of a Lagrangian using broken disks. This
  ends the Example.
\end{example}

\chapter{Applications to disk counting} 
\label{chap:apps}

In this Chapter, we describe several examples of disk potential and
sphere count computations that the reader might keep in mind during
the reading of the theory.
\label{page:monotone}  Computationally, the easy applications of the theory are in cases
where $(X,L)$ is monotone. A pair $(X,L)$ is \em{monotone} if there
is a constant $\tau>0$ for any disk $u$ in $X$ with boundary on $L$,
the Maslov index of $u$ is equal to the multiple $\tau \om(u)$
of the area $\om(u)$.  For a monotone pair $(X,L)$, the Fukaya algebra
is weakly unobstructed since any holomorphic disk has Maslov index $\geq
2$. The potential $m^0(1)$ is independent of the choice of almost
complex structure.  Indeed, for any two almost complex structures $J_0$
and $J_1$, and a generic path $\J:=\{J_t\}_t$ connecting them, the
one-dimensional component of the moduli space of $\J$-holomorphic
disks with a single boundary constraint does not have any disk
bubbling in its compactification, since a non-constant disk has Maslov
index $\geq 2$. As a consequence, the signed count of $J_0$ and
$J_1$-holomorphic disks is equal.  In other words, for a monotone pair
$(X,L)$ for any $J$, $b=0$ is a solution of the Maurer-Cartan
equation.

The disk potential in \eqref{eq:wnaive} is preserved by the
neck-stretching for a monotone pair $(X,L)$, because the functors
$\F$, $\G$ in the homotopy equivalence in \eqref{eq:fghom} are not
curved, that is, $\F^0=\G^0=0$. Indeed, $\F^0$, $\G^0$ are given by
the count of disks of Maslov index $0$ that are pseudoholomorphic with
respect to some $J_t$ in the family $\{J_t\}_{t \in [0,\infty]}$ of
neck-stretched almost complex structures, and there are no such disks
by monotonicity.  In Section \ref{sec:flag} we work with flag
varieties, which are monotone, and we compute the disk potential of a
monotone Lagrangian torus.

In all the curve counting examples that we consider, the problem is
reduced to curve counting in toric manifolds with a toric
Lagrangian.  For the convenience of the reader, we recall the
classification result for holomorphic disks to a toric manifold $X$
with boundary on a toric Lagrangian $L$ from Cho-Oh
\cite{chooh:toric}.  For the standard complex structure on the toric
manifold, such disks are regular by Cho-Oh \cite{chooh:toric}.  We view
the toric manifold as a geometric invariant theory (git) quotient of a
vector space $\C^N$ where each hyperplane $\{z_i=0\}$ projects to a
torus-invariant divisor in $X$.  The Lagrangian torus $L \subset X$ is
a quotient of the standard torus $\{|z_i|=1, 1 \leq i \leq N\}$ in
$\C^N$. Then the holomorphic disks in $X$ bounding $L$ are projections
of Blaschke disks in $\C^N$; each such
disk is a product, and is of the form
  \begin{equation} \label{eq:blaschke}
 u: \D \to \C^N, \quad z \mapsto
    \left( \zeta_i \prod_{j=1}^{d_i} \frac{ z - a_{i,j}}{1 - z
        \ol{a_{i,j}}} \right)_{i = 1,\dots,N} .\end{equation}
  for some constants
  $a_{i,j}, \zeta_i \in \C, |a_{i,j}| \leq 1, |\zeta_i|=1$.  The
  moduli space of Blaschke disks of Maslov index two whose boundary
  passes through a fixed point on the Lagrangian $L$ has dimension
  zero. There is one such disk intersecting the $i$-th torus-invariant
  divisor of $X$, corresponding to the index two Blaschke disk in
  $\C^N$ with single non-vanishing degree $d_i$ equal to $1$.

  Blaschke disks of Maslov index two account for all disks of Maslov
  index two in the case when $X$ is Fano (that is, all torus-invariant
  spheres have positive Chern class). 
  This includes the case when $X$ is monotone; and there are no disks
  of Maslov index less than two.  As a result the Fukaya algebras of
  toric Lagrangians in Fano toric manifolds are weakly unobstructed.  In
  general, one also needs to consider disk classes that are sums of
  Blaschke disk classes and sphere classes.

  In the first two sections of this Chapter, we consider semi-Fano toric surfaces.
  \begin{definition}{\rm(Semi-Fano toric)}\label{def:semiFano}
     \index{Semi-Fano toric surface}
    A toric manifold $X$ is \em{semi-Fano} if for any torus invariant
    sphere of positive symplectic area $S \subset X$, the first Chern
    number $c_1(TX|S)$ is non-negative.
  \end{definition}
  \noindent Toric resolutions of
  $A_n$-singularities $\C^2/\Z_{n+1}$ are semi-Fano toric surfaces,
  since the spheres added by the resolution have Chern number $0$.

  In a semi-Fano toric surface, the standard torus-invariant almost
  complex structure is not regular.  We show that for any regular almost
  complex structure arbitrarily close to the torus-invariant complex
  structure, the Fukaya algebra of a toric Lagrangian is weakly unobstructed,
  and the disk count is independent of the exact choice of the almost
  complex structure.  We prove a limited result (Proposition
  \ref{prop:semiFano-break}) describing multiple-cuts for which
  neck-stretching does not alter the disk count.  For an almost
  complex structure close to the standard one, the sum of an index two
  Blaschke disk class and a sphere class of Chern number zero has
  Maslov index two. The sphere class may be replaced by its multiples
  without affecting the expected dimension of the moduli space, so it
  is not clear at the outset which classes contribute to the disk
  potential.

  \section{Counting disks in the second Hirzebruch surface}
\label{sec:a1}
In this Section, we compute the potential of a toric Lagrangian in the
second Hirzebruch surface $F_2$ using a cut that splits up $F_2$ into
Fano surfaces.  Here, $F_2$ is the toric symplectic manifold with
moment map $\Phi : F_2 \to \R^2$ whose image is the polytope
\begin{equation}
  \label{eq:f2mom}
  \Delta_{F_2}:=\{(x,y) \in \R^2: -1 \leq y \leq 1, -1-t+y \leq x \leq 1+t-x\}.  
\end{equation}
for a fixed $t>0$. The neighborhood of the divisor $\Phinv(\{y=1\})$
in the second Hirzebruch surface is the toric smoothing of an
$A_1$-singularity, as in Figure \ref{fig:Ad-sing}.  The space $F_2$ is
a semi-Fano toric surface and the sphere $\Phinv(\{y=1\})$ has Chern
number $0$.  This example is a warm-up for the calculation of the disk
potential of the cubic surface which contains smoothings of
$A_2$-singularities.
\begin{proposition}\label{prop:A1}
  {\rm(Disk potential of $F_2$)} Let $X:=F_2$ be the second Hirzebruch
  surface with a Lagrangian
  $L$.  Let $\XX_\PP$ be the broken manifold with a single cut $\PP$
  that splits $X$ into two copies of $\Bl_{pt}\P^2$ as in Figure
  \ref{fig:h2-cut}.  In both $(X,L)$ and $(\XX_\PP, L)$, the
  Lagrangian is weakly unobstructed and the naive disk potential of $L$ (see
  \eqref{eq:wnaive}) is (taking $q=1$)
  \[ W(y_1,y_2) = y_2 y_1^{-1} + y_2 y_1 + 2 y_2 + y_2^{-1}.  \] 
\end{proposition}

\begin{figure}[h]
    \centering \scalebox{.8}{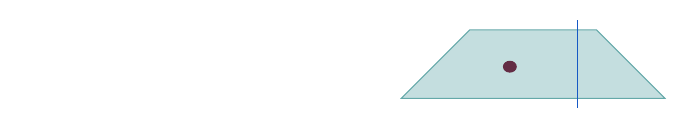}
\caption{A cut $\PP$ on the second Hirzebruch surface.}
\label{fig:h2-cut}
\end{figure}

\begin{figure}[h]
    \centering \scalebox{.8}{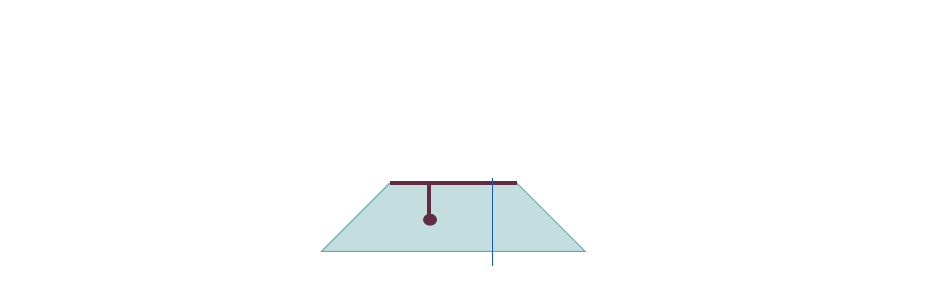}
\caption{Broken disks with Maslov index $2$ on the second Hirzebruch surface, along with their contributions to the potential.}
\label{fig:h2-disks}
\end{figure}

\begin{proof}
  [Prelude to the proof of Proposition \ref{prop:A1}] We describe the
  perturbation used for counting broken spheres and broken disks on
  the multiply cut cubic surface.  The perturbation depends both on
  the domain curve and the tropical graph.  Such perturbations, called
  \em{split perturbations}, are defined rigorously in Section
  \ref{sec:decouple}. In Section \ref{sec:decouple} we also show that
  split perturbations yield the same Gromov-Witten invariants and
  \ainfty homotopy equivalent Fukaya algebras as coherent domain
  dependent almost complex structures.  We recall that a
  domain-dependent perturbation has an underlying \em{background
    almost complex structure}
\label{re:balmost} 
(see Definition \ref{def:domdep} \eqref{part:domdepJ}) which is
perturbed in a compact subset of the complement of the nodal points in
the domain. In a split perturbation, the background almost complex
structure is allowed to vary across components $C_v$,
$v \in \Ver(\Gamma)$ of the domain.  For a tropical graph $\Gamma$ we
denote the background almost complex structures by
\[J_v^b, \quad v \in \Ver(\Gamma),\]
where $J_v^b$ is a compatible almost complex structure on $\ol \XX_{P(v)}$ 
that is cylindrical in the neighborhoods of relative divisors.
For our example of the second Hirzebruch surface, in cases where $P(v)$ is top-dimensional (that is, $P(v)=P_\pm$), $J_v^b$ is defined as 
\begin{equation}
  \label{eq:jvbdef}
  J_v^b:=\phi_v^*J_{\on{std}},   
\end{equation}
where $J_{\on{std}}$ is the standard almost complex structure on the toric
manifold $\ol \XX_{P(v)}$; and
$\phi_v: \ol \XX_{P(v)} \to \ol \XX_{P(v)}$ is a Hamiltonian
diffeomorphism that is supported away from the relative divisors.  If
$P(v)$ is not top-dimensional, define $J_v^b:=J_{\on{std}}$.  Observe
that the tropical graph provides a necessary condition for a map
component to be homologous to a multiple of a $(-1)$-curve, such as
$E'$, $E''$, in Figure \ref{fig:h2-cut}.  For example, a map component
$u_v$ with $P(v)=P_+$ (see Figure \ref{fig:27polys} for notation) is
homologous to $kE'$ only if the sum of multiplicities of the edges
incident on $v$ is $k$.  We call a map component $u_v$ \em{
  {univalent}}, if in the tropical graph there is at most one edge
incident on $v$, and the multiplicity of the edge is $1$.  Choose
perturbations that
\begin{enumerate}
\item \label{item:const-disk}
  {\rm(Constant on disks)}
  vanish on disk components $C_v$, $v \in \Ver_\white(\Gamma)$, and 
\item \label{item:cosc}
  {\rm(Constant on univalent components)} also vanish on components where the map is univalent.
\end{enumerate}
%
%
That is, on such components $C_v$ the perturbed almost complex
structure is equal to the background $J_v^b$. On other components,
where the map may be a multiple cover of a $(-1)$-sphere, the
background $J_v^b$ is perturbed in a domain-dependent way. The proof
of the disk count is carried out below.
\end{proof}

\begin{notation}{\rm(Pseudoholomorphic $(-1)$-spheres)}
  Suppose $\ol \XX_P$ is the cut space corresponding to one of the
  top-dimensional polytopes $P \in \PP$. (In the current case
  $\ol \XX_P$ is either $\XX_+$ or $\XX_-$.)  Suppose
  $E \subset \ol \XX_P$ is a $(-1)$-sphere that is
  $J_{\on{std}}$-holomorphic.  For any tamed almost complex structure $J$ on
  $\ol \XX_P$, there is a $J$-holomorphic sphere homologous to $E$
  which we denote by
  \begin{equation}
    \label{eq:ejsphere}
    E_J \subset \ol \XX_P.
  \end{equation}
  Observe that in this notation $E$ is the same as $E_{J_{\on{std}}}$. 
\end{notation}

\begin{remark}\label{rem:nominus2}
  {\rm(There are no broken $(-2)$-spheres)} A split perturbation as
  above ensures that, with respect to the background almost complex
  structure, there are no pseudoholomorphic spheres whose
  self-intersection number is $-2$ by the following reason: We need to rule
  out spheres that are homologous to the torus-invariant divisor $E$
  whose broken version has components $E' \subset \ol \XX_+$,
  $E'' \subset \ol \XX_-$.  For any tropical graph $\Gamma$, and
  $e =(v_0,v_1) \in \Edge(\Gamma)$, with $P(v_0)=P_+$, $P(v_1)=P_-$,
  for generic background almost complex structures $J_{v_0}^b$,
  $J_{v_1}^b$ the spheres $E'_{J_{v_0}}$ and $E''_{J_{v_1}}$ (using
  notation from \eqref{eq:ejsphere}) intersect $Y$ at different
  points. Therefore there is no broken pseudoholomorphic sphere
  homologous to $E' \cup E''$ for the background almost complex
  structures.  We may assume the domain-dependent perturbations of
  $J_{v_0}^b$, $J_{v_1}^b$ are small enough that the evaluations
  $\ev_{w_e^-}(u_{v_0})$, $\ev_{w_e^+}(u_{v_1}) \in Y$ lie in disjoint
  neighborhoods of the points $E'_{J_{v_0}} \cap Y$ and
  $E''_{J_{v_1}} \cap Y$, thereby ruling out perturbed $(-2)$-spheres
  also.
\end{remark}

\begin{proof}[Proof of Proposition \ref{prop:A1}]
  The cut $\PP$ satisfies the hypothesis of Proposition
  \ref{prop:semiFano-break} and therefore, the potential of $L$ in the
  broken manifold $\XX_\PP$ is the same as the potential in the
  unbroken manifold $X$. The set of disk classes in $(X,L)$ that have
  Maslov index $2$ is
  \[\delta_E, \quad \delta_{D_i}, \quad i=1,\dots,3, \quad \delta_{E}+ k[E], \enspace k \geq 1.  \]
  Indeed, any disk class in $(X,L)$ is a sum
  $\sum_D \delta_D + \sum_{D'} [D']$, where $D$, $D'$ range over
  torus-invariant divisors of $X$, the first summation is non-empty;
  and the Maslov index of such a disk class is
  $\sum_D 2 + \sum_{D'} c_1(D')$.  We will work in the broken manifold
  $\XX_\PP$ to show that these classes are the only relative homology
  classes that contain a regular broken holomorphic disk. The classes
  $\delta_E$, $\delta_{D_1}$, $\delta_{D_2}$ are each represented by a
  perturbation of a Blaschke disk in $\ol \XX_+$, see
  \eqref{eq:blaschke}. The class $\delta_{D_3}$ is represented by a
  broken disk as shown in Figure \ref{fig:h2-disks}.  Next, we
  consider a broken disk $u$ in $\XX_\PP$ whose gluing has homology
  class $\delta_{E} + k[E]$ for some $k \geq 1$.  We will show that
  $k=1$ is the only possibility by the following steps.

  \vskip .1in \noindent \textsc{ Step 1}: \em{The disk component
    $u_+ : C_{v_+} \to \ol \XX_+$ of $u$ has homology class $\delta_{E} + [E']$, and has a simple intersection with the relative divisor $Y$ at a node $w$.}
  The possibility that $[u_+]=\delta_{E} + m[E']$, $m >1$, is ruled
  out as follows: The condition
  \hyperref[item:const-disk]{(Constant on disks)}
   implies that $u_+$ is holomorphic
  with respect to a domain-independent almost complex structure
  $J_{v_+}$ on $\ol \XX_+$. Using notation in \eqref{eq:ejsphere} we
  denote by $E_{J_{v_+}}$ the $J_{v_+}$-holomorphic sphere in
  $\ol \XX_+$ that is homologous to the $(-1)$-sphere $E'$. If $m>1$,
  the intersection number $[u_+].[E_{J_{v_+}}]$ is negative.  This is
  not possible since $[u_+]$ can only have positive intersections with
  $E_{J_{v_+}}$.

  \vskip .1in \noindent  \textsc{ Step 2}: \em{The component $u_- : C_{v_-} \to \ol \XX_-$  that is incident on the node $w$ is homologous to $E''$.}  
  Since $[u]=\delta_{E}+ k[E]$ and $[u_+]=\delta_{E}+ [E']$, we have
  $[u_-]=m[E'']$ for some $m \geq 1$.  By the matching condition at
  the node $w$, $u_-$ has a simple intersection with $Y$ at the node
  $w$. If $m>1$, $u_-$ has another intersection with $Y$ at a node
  $w' \neq w$.  If $u' : C_{v'} \to \ol \XX_+$ is the other component
  incident at the node $w'$ then $[u']$ is a multiple of
  $[E']$. Indeed, that this is so is forced by homology requirements
  since $[u]=\delta_{E}+ k[E]$.  Since both $[u_-]$ and $[u']$ are
  multiples of $(-1)$-spheres the matching condition at the node $w'$
  is not satisfied for our choice of split perturbation.  A generic
  choice of background almost complex structures $J_{v_-}$, $J_{v'}$
  ensures that the $(-1)$-spheres $E_{J_{v_-}} \subset \ol \XX_-$,
  $E_{J_{v'}} \subset \ol \XX_+$ do not intersect $Y$ at the same
  point; \footnote{This is a point in the proof which requires the perturbation to depend on the tropical graph, since we need $J_{v_-}$, $J_{v'}$ to be distinct.}
  and for small enough domain-dependent perturbations the
  points $\ev_{w'}(u_-) \in Y$ and $\ev_{w'}(u') \in Y$ are
  distinct. Therefore we conclude that $k=1$.

  The five disks of Maslov index $2$ yield the potential claimed by
  the Proposition.
\end{proof}

\begin{example}
  \label{ex:h2}
  We give an example where a multiple cut changes the potential of a
  Lagrangian brane.  Consider the second Hirzebruch surface $X:=F_2$
  with two single cuts as in Figure \ref{fig:h2} and a toric
  Lagrangian $L$. The potential for the broken manifold is
  \[ W_2(y_1,y_2) = y_2 y_1^{-1} + y_2 + y_2^{-1}. \]
  It does not have disks in two out of the five classes of disks of
  Maslov index two, namely $\delta_{D_3}$ and $\delta_{E} +[E]$.  For
  example, if $\XX_{P_2}$ were to contain a regular broken disk $u$ in
  the class $\delta_{D_1} +[D_1]$, the homology classes of its pieces
  would be as shown in Figure \ref{fig:h2}, and $u$ would satisfy a
  matching condition along the relative divisor $Y_2$ between two
  curves of self-intersection number $-1$.  Indeed, for a generic
  perturbation the two $(-1)$-curves will not be incident at the same
  point on $Y_2$.  By the same argument, broken disks of class
  $\delta_{D_4}$ are also ruled out in the broken manifold
  $\XX_{P_2}$.  The discrepancy between the disk potentials of $(X,L)$
  and $(\XX_\PP,L)$ is explained by the fact that there is a
  $J_t$-holomorphic disk of Maslov index zero for some $J_t$ in the
  family of neck-stretched almost complex structures.  See Remark
  \ref{rem:ctrh2}.
\end{example}
  
  \begin{figure}[h]
  \centering \scalebox{.8}{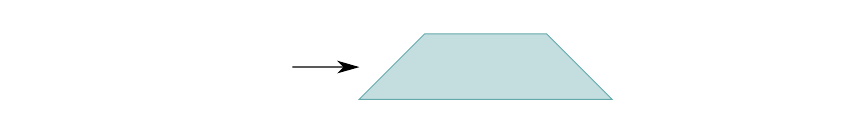}
  \caption{Moving a cut in the second Hirzebruch surface changes the disk count.}
  \label{fig:h2}
\end{figure}

\section{Counting disks in cubic surfaces}\label{sec:cubic-intro}
We use broken maps to give a proof of the classic result that there
are twenty seven lines on the cubic surface, originally proved by
Salmon and Cayley, see Mumford \cite[Section 8D]{mumford}.  We also
compute its ``open'' analog, namely the disk potential of a toric
Lagrangian in the cubic surface.  The number of disks is independent
of perturbations because the Lagrangian is monotone.  We show that
there are twenty one disks contributing to the potential as
conjectured by Sheridan \cite[Appendix B]{sh:hmsfano}. This example
also appears in Pascaleff-Tonkonog \cite{pasc:wall} and can also be
addressed using the method of Chan-Lau-Cho-Tseng \cite{grossfib}.  The
paper Bardwell-Evans-Hong-Lin \cite{bchl} describes the disks in terms
of scattering diagrams.

The cubic surface is a monotone almost toric manifold equipped with a
monotone Lagrangian torus.  Let
\[(X_0,\om_0,J_0) \subset \P^3\]
be a non-singular cubic surface with the Fubini-Study symplectic
form.  Since any cubic surface is biholomorphic to the del Pezzo
surface $\Bl_6\P^2$, we view $(X_0,\om_0)$ as $\Bl_6 \P^2$ with a
monotone symplectic form, which is $(\P^2,\om_{FS})$ blown up at six
points with a blow-up parameter of $1$. McDuff \cite[Corollary
1.5]{mcduff:sd} shows that up to symplectic isotopy, there is exactly
one way of performing such a blow-up.  By Vianna \cite{vianna:dp},
$(X_0,\om_0)$ has an almost toric structure whose base diagram
\begin{equation}
  \label{eq:x0toric}
\Delta_0:=\on{hull}\{ (-1,-1), (0,1), (1,0) \} 
\end{equation}
is given in Figure \ref{fig:defcubic}. Using a multiple cut, we count
the number of holomorphic spheres with Chern number $1$, which is the
same as the number of $(-1)$-curves in $\Bl_6 \P^2$, and which is the
same as the number of lines in the cubic surface.  Using the same
multiple cut, we will also compute the number of rigid disks
(Proposition \ref{twentyoneex}) bounding the monotone Lagrangian
\begin{equation}
  \label{eq:L0def}
  L_0:=\Phinv(0,0).  
\end{equation}

\begin{figure}[h]
  \centering \scalebox{.8}{
\begingroup%
  \makeatletter%
  \providecommand\color[2][]{%
    \errmessage{(Inkscape) Color is used for the text in Inkscape, but the package 'color.sty' is not loaded}%
    \renewcommand\color[2][]{}%
  }%
  \providecommand\transparent[1]{%
    \errmessage{(Inkscape) Transparency is used (non-zero) for the text in Inkscape, but the package 'transparent.sty' is not loaded}%
    \renewcommand\transparent[1]{}%
  }%
  \providecommand\rotatebox[2]{#2}%
  \newcommand*\fsize{\dimexpr\f@size pt\relax}%
  \newcommand*\lineheight[1]{\fontsize{\fsize}{#1\fsize}\selectfont}%
  \ifx\svgwidth\undefined%
    \setlength{\unitlength}{314.40896582bp}%
    \ifx\svgscale\undefined%
      \relax%
    \else%
      \setlength{\unitlength}{\unitlength * \real{\svgscale}}%
    \fi%
  \else%
    \setlength{\unitlength}{\svgwidth}%
  \fi%
  \global\let\svgwidth\undefined%
  \global\let\svgscale\undefined%
  \makeatother%
  \begin{picture}(1,0.43114605)%
    \lineheight{1}%
    \setlength\tabcolsep{0pt}%
    \put(0,0){\includegraphics[width=\unitlength,page=1]{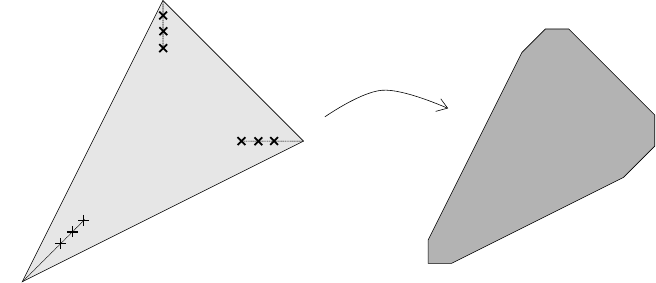}}%
    \put(0.27072993,0.21533652){\color[rgb]{0,0,0}\makebox(0,0)[lt]{\lineheight{1.25}\smash{\begin{tabular}[t]{l}$L$\end{tabular}}}}%
    \put(-0.00109606,0.25568795){\color[rgb]{0,0,0}\makebox(0,0)[lt]{\lineheight{1.25}\smash{\begin{tabular}[t]{l}$(X_0,\om_0)$\end{tabular}}}}%
    \put(0.85346051,0.06863277){\color[rgb]{0,0,0}\makebox(0,0)[lt]{\lineheight{1.25}\smash{\begin{tabular}[t]{l}$(X_\eps,\om_\eps)$\end{tabular}}}}%
    \put(0,0){\includegraphics[width=\unitlength,page=2]{defcubic.pdf}}%
  \end{picture}%
\endgroup%
}
    \caption{Deformation of the monotone symplectic form to
a toric symplectic form on the cubic surface.}
\label{fig:defcubic}
\end{figure}

To perform curve counts on the cubic surface, we symplectically deform
it to a semi-Fano toric surface
(Definition \ref{def:semiFano}).
In the following result (Proposition
\ref{prop:sympdef}) we show that the cubic surface $(X_0,\om_0)$ is
symplectically deformation equivalent to a toric manifold
$(X_\eps,\om_\eps)$ whose moment polytope is
\begin{multline}\label{eq:xepstoric}
    \Delta_\eps:=\{(x,y) \in \R^2 : 2x-y+1, -x+2y+1, -x-y+1 \geq \eps,\\
    -1 \leq x,y,y-x \leq 1\},
  \end{multline}
  for any small $\eps>0$.  Note that $(X_\eps,\om_\eps)$ is a
  semi-Fano toric surface.
  
  We observe that $(X_\eps,\om_\eps)$ is obtained by blowing up the
  toric monotone del Pezzo surface $M:=\Bl_3\P^2$ at three points with blow-up
  parameter $1-\eps$, see Figure \ref{fig:defcubic4}. Here, the moment
  polytope of $M$ is $\Delta_M:=\{-1 \leq x,y,y-x \leq 1\}$ and the
  blow-up is performed at the torus fixed points corresponding to
  $(1,1),(0,-1),(-1,0) \in \Delta_M$.  The torus-invariant divisors of
  $X_\eps$ corresponding to the facets
  \begin{equation}
    \label{eq:shortfacets}
    y=\pm 1, \quad x=\pm 1, \quad x-y=\pm 1
  \end{equation}
  of $\Delta_\eps$ are called \em{short divisors}, and those corresponding to
  \begin{equation}
    \label{eq:longfacets}
    x+y=1+\eps, \quad y-1-\eps=2x, \quad x-1-\eps=2y.
  \end{equation}
  are called \em{long divisors}. The short resp. long divisors are
  denoted by $E_i$ resp. $D_i$ in Figure \ref{fig:twentyseven}.  Note
  that the short facets of $\Delta_\eps$ are collapsed to points in
  the moment polytope $\Delta_0$ of the almost toric $X_0$. However
  this is not a proof of the fact that $X_0$ is symplectic deformation
  equivalent to $X_\eps$, $\eps>0$, because in Figure
  \ref{fig:defcubic4} the blow-up parameter can not be equal to
  $1$.  So in Proposition \ref{prop:sympdef} we take a more indirect
  approach to show the symplectic deformation equivalence.

  \begin{figure}[h]
    \centering \scalebox{.8}{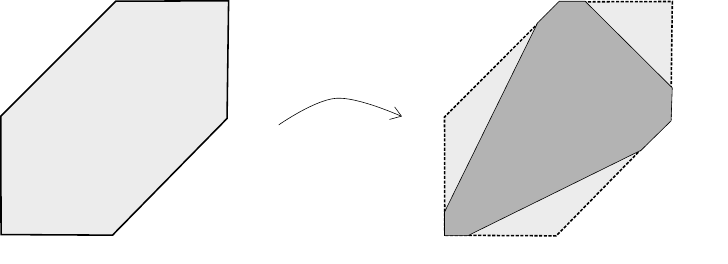}
    \caption{From three to six toric blowups.}
\label{fig:defcubic4}
\end{figure} 
  
\begin{proposition}\label{prop:sympdef}{\rm(Symplectic deformation equivalence)}
  Let $(X_0,\om_0)$ be the cubic surface with an almost toric
  structure \eqref{eq:x0toric}, and for any $\eps>0$, let
  $(X_\eps,\om_\eps)$ be the toric manifold \eqref{eq:xepstoric}.
  There is a continuous family of diffeomorphisms
  $\phi_\eps : X_0 \to X_\eps$ with $\phi_0=\Id_{X_0}$, so that
  $\phi_\eps^*\om_\eps$ is a continuous family of symplectic forms on
  $X_0$ and $L_\eps:=\phi_\eps(L_0)$ is a toric Lagrangian for all
  $\eps>0$.
 \end{proposition}

 \begin{proof}
   To prove that $X_0$ and $X_\eps$ are symplectic deformation
   equivalent, it is enough to show the same for the manifolds with
   moment polytopes in Figure \ref{fig:defcubic2}. The first space in
   Figure \ref{fig:defcubic2} is an almost toric manifold whose base
   diagram is a polytope $\Delta_0':=\{ -x+2y+1, -x-y+1 \geq 0\}$ and
   three focus-focus singularities along $y=0$. This space is the
   smoothing of the $A_2$ singularity, see \cite[Example
   7.6]{evans:almosttoric}. The right side $Y_\eps$ in Figure
   \ref{fig:defcubic2} is a toric manifold with moment polytope
 \[\Delta_\eps':=\{-x+2y, -x-y \geq 0, x,x-y \leq -\eps\}. \]
 \begin{figure}[h]
  \centering \scalebox{.8}{
\begingroup%
  \makeatletter%
  \providecommand\color[2][]{%
    \errmessage{(Inkscape) Color is used for the text in Inkscape, but the package 'color.sty' is not loaded}%
    \renewcommand\color[2][]{}%
  }%
  \providecommand\transparent[1]{%
    \errmessage{(Inkscape) Transparency is used (non-zero) for the text in Inkscape, but the package 'transparent.sty' is not loaded}%
    \renewcommand\transparent[1]{}%
  }%
  \providecommand\rotatebox[2]{#2}%
  \newcommand*\fsize{\dimexpr\f@size pt\relax}%
  \newcommand*\lineheight[1]{\fontsize{\fsize}{#1\fsize}\selectfont}%
  \ifx\svgwidth\undefined%
    \setlength{\unitlength}{193.41480142bp}%
    \ifx\svgscale\undefined%
      \relax%
    \else%
      \setlength{\unitlength}{\unitlength * \real{\svgscale}}%
    \fi%
  \else%
    \setlength{\unitlength}{\svgwidth}%
  \fi%
  \global\let\svgwidth\undefined%
  \global\let\svgscale\undefined%
  \makeatother%
  \begin{picture}(1,0.44842532)%
    \lineheight{1}%
    \setlength\tabcolsep{0pt}%
    \put(0,0){\includegraphics[width=\unitlength,page=1]{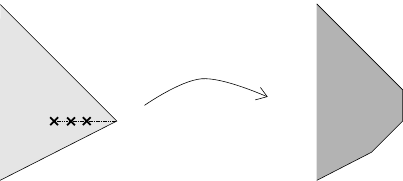}}%
    \put(0.45667177,0.43200503){\color[rgb]{0,0,0}\makebox(0,0)[lt]{\lineheight{1.25}\smash{\begin{tabular}[t]{l}Deform  the \\symplectic \\form\end{tabular}}}}%
  \end{picture}%
\endgroup%
}
  \caption{A deformation of the symplectic form}
\label{fig:defcubic2}
\end{figure}
\begin{figure}[h]
  \centering \scalebox{.8}{
\begingroup%
  \makeatletter%
  \providecommand\color[2][]{%
    \errmessage{(Inkscape) Color is used for the text in Inkscape, but the package 'color.sty' is not loaded}%
    \renewcommand\color[2][]{}%
  }%
  \providecommand\transparent[1]{%
    \errmessage{(Inkscape) Transparency is used (non-zero) for the text in Inkscape, but the package 'transparent.sty' is not loaded}%
    \renewcommand\transparent[1]{}%
  }%
  \providecommand\rotatebox[2]{#2}%
  \newcommand*\fsize{\dimexpr\f@size pt\relax}%
  \newcommand*\lineheight[1]{\fontsize{\fsize}{#1\fsize}\selectfont}%
  \ifx\svgwidth\undefined%
    \setlength{\unitlength}{409.79683789bp}%
    \ifx\svgscale\undefined%
      \relax%
    \else%
      \setlength{\unitlength}{\unitlength * \real{\svgscale}}%
    \fi%
  \else%
    \setlength{\unitlength}{\svgwidth}%
  \fi%
  \global\let\svgwidth\undefined%
  \global\let\svgscale\undefined%
  \makeatother%
  \begin{picture}(1,0.2989172)%
    \lineheight{1}%
    \setlength\tabcolsep{0pt}%
    \put(0,0){\includegraphics[width=\unitlength,page=1]{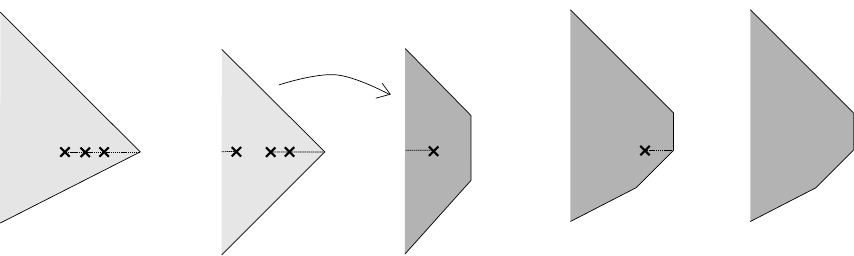}}%
    \put(0.34115309,0.28210361){\color[rgb]{0,0,0}\makebox(0,0)[lt]{\lineheight{1.25}\smash{\begin{tabular}[t]{l}Deform  the \\symplectic \\form\end{tabular}}}}%
    \put(0,0){\includegraphics[width=\unitlength,page=2]{defcubic3.pdf}}%
    \put(0.53560027,0.28574708){\color[rgb]{0,0,0}\makebox(0,0)[lt]{\lineheight{1.25}\smash{\begin{tabular}[t]{l}Transferring\\the cut\end{tabular}}}}%
    \put(0,0){\includegraphics[width=\unitlength,page=3]{defcubic3.pdf}}%
    \put(0.73564328,0.289678){\color[rgb]{0,0,0}\makebox(0,0)[lt]{\lineheight{1.25}\smash{\begin{tabular}[t]{l}Nodal trade\end{tabular}}}}%
    \put(0,0){\includegraphics[width=\unitlength,page=4]{defcubic3.pdf}}%
    \put(0.09924655,0.28539201){\color[rgb]{0,0,0}\makebox(0,0)[lt]{\lineheight{1.25}\smash{\begin{tabular}[t]{l}Transferring\\the cut\end{tabular}}}}%
  \end{picture}%
\endgroup%
}
\caption{Symplectic deformation from an almost toric to a toric manifold.}
\label{fig:defcubic3}
\end{figure} 
The deformation is carried out by the series of steps in Figure
\ref{fig:defcubic3}. All the steps except the second one leave the
symplectic form unchanged.  In the second step of ``deforming the
symplectic form'', the resolution of an $A_1$-singularity is replaced
by the total space of $\mO_{\P^1}(-2)$.
To see that such a deformation exists
recall that the $A_1$-singularity can be smoothed to $T^*\P^1$
\cite[p138]{evans:almosttoric}.  The symplectic form on the total
space $\on{Tot}(\mO_{\P^1}(-2))$ is given by
$\om_{T^*\P^1} + \eps \pi^*\om_{\P^1}$ where
$\pi : \on{Tot}(\mO_{\P^1}(-2)) \to \P^1$ is the projection map and
$\om_{P^1}$ is the Fubini-Study form on $\P^1$.  We leave the details
of constructing the map $\phi_\eps$ to the reader.
 \end{proof}

 As a second step in the curve count, we fix $\eps>0$ and apply a
 multiple cut on
 \begin{equation}
   \label{eq:Xeps-def}
  X:=(X_\eps,\om_\eps) 
 \end{equation}
 that splits it into Fano pieces as shown in Figure
 \ref{fig:cubiccut}.  The resulting broken manifold is called $\XX$.
 We count broken maps consisting of disks and spheres in $\XX$, and
 prove that the count is the same as the unbroken count in the cubic
 surface. Under the multiple cut in Figure \ref{fig:cubiccut}, the
 homology class of any short sphere degenerates to the homology class
 of a union of two spheres in the pieces $X_{P_i}$, each of which has
 self-intersection number equal to $-1$.

\begin{figure}[ht]
  \centering \scalebox{.8}{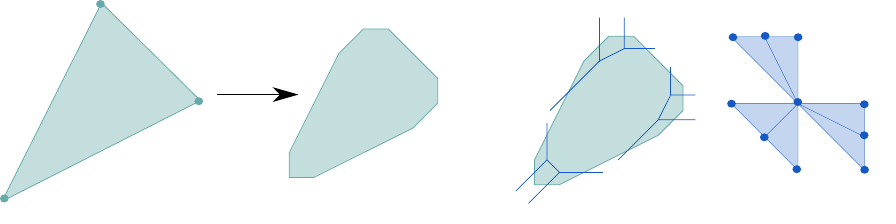}
\caption{Multiple cut on the cubic surface.}
\label{fig:cubiccut}
\end{figure}

\subsection{Spheres in the cubic surface}
\begin{figure}[ht]
  \centering \scalebox{.8}{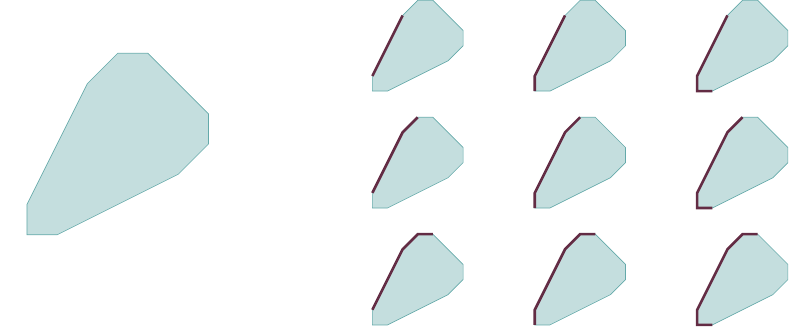}
\caption{Nine of the twenty-seven lines on a (deformation of) a cubic
  surface, in the homology classes $[D_1]+\sum_{i=1,2,5,6} n_i[E_i]$.}
\label{fig:twentyseven}
\end{figure} 

In the following result, we show that a holomorphic sphere in
$X:=X_\eps$ with Chern number one consists of one long divisor and a
non-self crossing path of short divisors. There is one configuration
corresponding to each of the pictures in Figure \ref{fig:twentyseven}.
By symmetry there are twenty-seven such configurations in total.

\begin{proposition}\label{prop:spherecount}
  {\rm(Twenty seven lines in the cubic)} 
  Any sphere of index zero in the manifold $X:=X_\eps$ is of
  the form shown in Figure \ref{fig:twentyseven}, and each such broken
  sphere has multiplicity one. Therefore, a cubic has $27$ lines.
\end{proposition}

\begin{remark}\label{rem:gw1}{\rm(Disks versus spheres)} 
  We point out that the results in this book are about Fukaya
  algebras (defined using counts of disks), whereas Proposition
  \ref{prop:spherecount} uses analogous results in Gromov-Witten
  theory (defined using counts of spheres).  Broken Gromov-Witten
  invariants are well-defined in cases when markings are constrained
  to lie on homology classes that are represented by cycles that are
  contained in a single top-dimensional cut space. In the case of
  lines in the cubic surface there are no markings.  Whenever the
  broken Gromov-Witten invariants are well-defined, they are equal to
  the ordinary GW invariants on the unbroken manifold. The proofs are
  easier than the \ainfty counterpart because the coherence condition
  on the perturbation datum is weaker.  We elaborate on this difference
  in Remark \ref{rem:gwpert}.
\end{remark}

\begin{figure}[h]
\centering
\scalebox{.8}{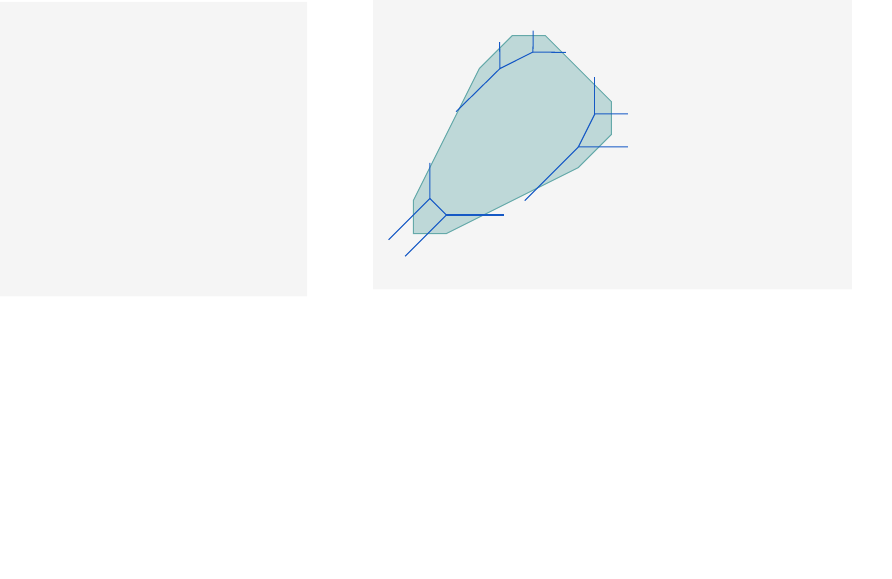} 
  \caption{Top left: Broken versions of some divisors in the cubic surface. Other figures: Broken maps $u^1$, $u^2$, $u^3$ whose gluing is homologous to $[D_1]+k_1[E_1]+k_2[E_2]$ for some $k_1,k_2 \geq 0$.}
  \label{fig:27polys}
\end{figure}

We now prove the result on spheres on the cubic surface.

\begin{proof}[Proof of Proposition \ref{prop:spherecount}]
  We recall from the discussion at the beginning of the section that
  the homology class of a sphere of index zero in $X$ is the sum of a
  long divisor class $[D_i]$ and an arbitrary number of short divisor
  classes.  Consider a broken sphere $u$ such that its gluing has
  homology class
  \[[u_{\glue}]=[D_1] + n_1 [E_1] + n_2 [E_2] + n_5 [E_5] + n_6 [E_6],\]
  where $D_1$, $E_i$ are defined in Figure \ref{fig:twentyseven}.  We
  aim to prove that $u$ is of the form shown in Figure
  \ref{fig:twentyseven}.  We first assume that that $n_5=n_6=0$, and
  so,
  \begin{equation}
    \label{eq:ugluehom}
  [u_{\glue}]=[D_1] + n_1 [E_1] + n_2 [E_2].
  \end{equation}
  We will show that a regular configuration is one of the types
  (a)-(c) in Figure \ref{fig:twentyseven}.  The conclusion for the
  general case ($n_5,n_6 \neq 0$) follows in a similar way. Indeed,
  the arguments given below for the ``$E_1$ -end'' of the long divisor
  $D_1$ can be applied to the ``$E_6$-end'' also.
  
  Given a broken map $u$ with homology class as in \eqref{eq:ugluehom}
  and a tropical graph $\Gamma$, we will show that $n_1,n_2 \leq 1$.
  The proof is carried out in steps, where in every step, we uncover
  the possible maps $u_v$ corresponding to a vertex $v$ of the
  tropical graph.  In the following analysis we refer to Figure
  \ref{fig:27polys} for notation.

   \vskip .1in \noindent  \textsc{ Step 1} ($v_0$): Since the long divisor is a
  summand in the homology class represented by the map $u$, there is a
  component of $u_{v_0}$ of the broken map that maps to $D_1'$ (see
  Figure \ref{fig:27polys} for notations). By the assumption that
  $n_5=n_6=0$, $v_0$ has
  \begin{itemize}
  \item  an edge to $v_{-1}$ with $P(v_{-1})=P_{9}$, and the homology class of the map $u_{v_{-1}}$ is $[D_1''']$; and
    \item an edge to $v_1$ with $P(v_1)=P_1$; 
  \end{itemize}
  and no other edges.

   \vskip .1in \noindent  \textsc{ Step 2} ($v_1$): So far, we have shown that the
  broken map has vertices $v_0$, $v_{-1}$ and further, $v_0$ has an
  edge of multiplicity one to a vertex $v_1$ with $P(v_1)=P_1$. Given
  that the homology class of the gluing of $u$ is given by
  \eqref{eq:ugluehom}, the homology class of $u_{v_1}$ is
  \begin{equation}
    \label{eq:dke}
    [D_1''] + k[E_1'] \quad \text{for some $k \geq 0$.}
  \end{equation}
  We proceed to show that $k=0$ or $1$: Recall that $E'_{1,J_{v_1}}$
  is a $J_{v_1}$-holomorphic sphere in $\ol X_{P_1}$ that is
  homologous to the $(-1)$-sphere $E_1' \subset \ol X_{P_1}$, using
  the notation in \eqref{eq:ejsphere}.  Since the map $u_{v_1}$
  intersects the relative divisor $\ol X_{P_{01}}$, the homology class
  $[u_{v_1}]$ can not be a multiple of the class $[E_1']$.  The
  property
  \hyperref[item:cosc]{(Constant on univalent components)} 
  then implies that the
  domain-dependent almost complex structure for $u_{v_1}$ is equal to
  the constant $J_{v_1}$.  In \eqref{eq:dke} if $k>1$, then
  $[u_{v_1}].[E'_{1,J_{v_1}}] <0$. This implies that the image of
  $u_{v_1}$ is contained in $E'_{1,J_{v_1}}$, contradicting the fact
  that $u_{v_1}$ intersects the relative divisor $\ol X_{P_{01}}$. In case
  $k=0$, the broken map $u$ is $u^{1}$ represented in Figure
  \ref{fig:twentyseven} (a) and Figure \ref{fig:27polys} (a),
  $[u_\glue^1]=[D_1]$ and there are no more vertices in the tropical
  graph.  If $k=1$, $v_1$ has an edge to a vertex $v_2$ with
  $P(v_2)=P_2$.

   \vskip .1in \noindent  \textsc{ Step 3} ($v_2$): So far, we have shown that for a
  broken map $u$ whose gluing has homology class \eqref{eq:ugluehom},
  either $[u_\glue]=[D_1]$ or the tropical graph of $u$ has vertices
  $v_{-1}$, $v_0$, $v_1$ and $v_1$ has an edge of multiplicity $1$ to
  a vertex $v_2$ with $P(v_2)=P_2$.  In the latter case, the homology
  class \eqref{eq:ugluehom} of the glued map $[u_\glue]$ implies that
  the homology of $u_{v_2}$ is
  \[[u_{v_2}]=k_1[E_1''] + k_2 [E_2'],\]
  and since $v_2$ has an edge to $v_1$, $k_1\geq 1$.

  First, consider the case $k_2=0$. If $k_1=1$, then there are no more
  vertices, $u=u^{2}$, and the map is represented in Figure
  \ref{fig:twentyseven} (b) and Figure \ref{fig:27polys} (b).  Now
  suppose $k_2=0$ and $k_1>1$. Then $u_{v_2}$ is a multiple cover of
  the sphere $E''_{1,J_{v_2}}$ (using notation from
  \eqref{eq:ejsphere}).  Besides the edge $e$ to $v_1$, $v_2$ has at
  least one more edge to a vertex, say $\ol v_3$, with
  $P(\ol v_3)=P_1$, see Figure \ref{fig:hypo1}.  Indeed, at the node
  $w_{e'}$ corresponding to the edge $e':=(v_1,v_2)$, the map
  $u_{v_2}$ has an intersection of order $1$ with the relative divisor
  $\ol X_{P_{12}}$.  More edges are required to account for the $k$
  intersections that $u_{v_2}$ has with $\ol X_{P_{12}}$.  The
  homology class of $u_\glue$ in \eqref{eq:ugluehom} implies that the
  homology class of the map $u_{\ol v_3}$ is a multiple of
  $[E_1']$.  For generic background almost complex structures
  $J_{v_2}^b$, $J_{\ol v_3}^b$ and small enough domain-dependent
  perturbations, the evaluations
\[ \ev_{w_e}(u_{v_2}), \quad \ev_{w_e}(u_{\ol v_3}) \in \ol X_{P_{12}} \] 
will not agree; see Remark \ref{rem:nominus2} which explains a similar
result. Therefore, matching conditions can not be simultaneously
satisfied at the node $(v_1,v_2)$ and $(v_1,\ol v_3)$.
This argument rules out the possibility that $k_1>1$ when $k_2=0$.

\begin{figure}[ht]
  \centering \scalebox{.8}{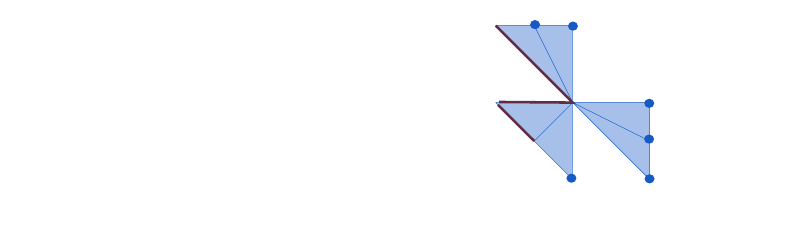}
  \caption{A hypothetical broken map (ruled out in proof of Proposition \ref{prop:spherecount}).}
  \label{fig:hypo1}
\end{figure}  

Next, assuming $k_2>0$, we show that $k_1=k_2$. The argument is
similar to the one used in Step 2.  For example, if $k_2>k_1$, then
$[u_{v_2}].[E_2']<0$.  It follows that the image of $u_{v_2}$ lies in
the $J_{v_2}$-holomorphic sphere homologous to $E_2'$, contradicting
the fact that $k_1 \geq 1$. The case $k_1>k_2$ is ruled out by a
similar argument.

\begin{figure}[h]
  \centering \scalebox{.8}{
\begingroup%
  \makeatletter%
  \providecommand\color[2][]{%
    \errmessage{(Inkscape) Color is used for the text in Inkscape, but the package 'color.sty' is not loaded}%
    \renewcommand\color[2][]{}%
  }%
  \providecommand\transparent[1]{%
    \errmessage{(Inkscape) Transparency is used (non-zero) for the text in Inkscape, but the package 'transparent.sty' is not loaded}%
    \renewcommand\transparent[1]{}%
  }%
  \providecommand\rotatebox[2]{#2}%
  \newcommand*\fsize{\dimexpr\f@size pt\relax}%
  \newcommand*\lineheight[1]{\fontsize{\fsize}{#1\fsize}\selectfont}%
  \ifx\svgwidth\undefined%
    \setlength{\unitlength}{122.53679204bp}%
    \ifx\svgscale\undefined%
      \relax%
    \else%
      \setlength{\unitlength}{\unitlength * \real{\svgscale}}%
    \fi%
  \else%
    \setlength{\unitlength}{\svgwidth}%
  \fi%
  \global\let\svgwidth\undefined%
  \global\let\svgscale\undefined%
  \makeatother%
  \begin{picture}(1,0.87522084)%
    \lineheight{1}%
    \setlength\tabcolsep{0pt}%
    \put(0,0){\includegraphics[width=\unitlength,page=1]{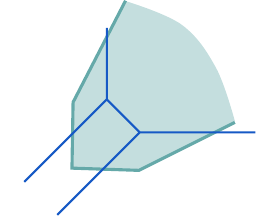}}%
    \put(0.29600085,0.41651948){\color[rgb]{0,0,0}\makebox(0,0)[lt]{\lineheight{1.25}\smash{\begin{tabular}[t]{l}$x$\end{tabular}}}}%
    \put(0.49895848,0.23440855){\color[rgb]{0,0,0}\makebox(0,0)[lt]{\lineheight{1.25}\smash{\begin{tabular}[t]{l}$f(x)$\end{tabular}}}}%
    \put(0,0){\includegraphics[width=\unitlength,page=2]{fdef.pdf}}%
    \put(-0.00337453,0.2389846){\color[rgb]{0.08627451,0.34901961,0.77254902}\makebox(0,0)[lt]{\lineheight{1.25}\smash{\begin{tabular}[t]{l}$P_{12}$\end{tabular}}}}%
    \put(0.2789244,0.0180517){\color[rgb]{0.08627451,0.34901961,0.77254902}\makebox(0,0)[lt]{\lineheight{1.25}\smash{\begin{tabular}[t]{l}$P_{23}$\end{tabular}}}}%
    \put(0.49751722,0.4301393){\color[rgb]{0.08627451,0.34901961,0.77254902}\makebox(0,0)[lt]{\lineheight{1.25}\smash{\begin{tabular}[t]{l}$P_{02}$\end{tabular}}}}%
    \put(0,0){\includegraphics[width=\unitlength,page=3]{fdef.pdf}}%
    \put(0.09405325,0.08867746){\color[rgb]{0,0,0}\makebox(0,0)[lt]{\lineheight{1.25}\smash{\begin{tabular}[t]{l}$P_2$\end{tabular}}}}%
  \end{picture}%
\endgroup%
}
  \caption{The space $\ol X_{P_2} \bs (\ol X_{P_{02}} \cup E''_{1,J_{v_2}}\cup E'_{2,J_{v_2}})$ is a $\P^1$-fibration. The map $f$ maps an end-point of a $\P^1$-fiber to its other end-point.}
  \label{fig:fdef}
\end{figure}  

Next, we rule out the case $k_1=k_2>1$ using the genericity of $J$.
Gluing the $(-1)$-spheres $E''_{1,J_{v_2}}$ and $E'_{2,J_{v_2}}$
produces a $J_{v_2}$-holomorphic sphere with a trivial normal bundle
in $\ol X_{P_2}$.  In this case, there is a fibration
  \[\pi: \ol X_{P_2} \bs (\ol X_{P_{02}} \cup E''_{1,J_{v_2}} \cup E'_{2,J_{v_2}}) \to \C^\times\]
  where the fibers are $J_{v_2}$-holomorphic spheres.  When $k_1=k_2$,
  the map $u_{v_2}$ is a cover of one of the fibers of $\pi$.  We
  define a diffeomorphism
  \[f: \ol X_{P_{12}}\bs \ol X_{P_{012}} \to \ol X_{P_{23}} \bs \ol X_{P_{023}}\]
  by the condition that $x$ and $f(x)$ lie on the same fiber of $\pi$,
  see Figure \ref{fig:fdef}.  As we argued in the previous paragraph,
  the inequality $k_1>1$ implies that $v_2$ has an edge to a vertex,
  say, $\ol v_3$ with $P(\ol v_3)=P_1$ and $u_{\ol v_3}$ is a cover of
  the sphere $E'_{1,J_{\ol v_3}}$ (see Figure \ref{fig:hypo2}).  Since
  $k_2>0$, $v_2$ also has an edge to a vertex, say, $\ol v_4$ with
  $P(\ol v_4)=P_3$ and $u_{\ol v_4}$ is a cover of the sphere
  $E''_{2, J_{\ol v_4}}$.  For generic background almost complex
  structures $J_{\ol v_3}$, $J_{\ol v_4}$,
  \[f(E'_{1,J_{\ol v_3}} \cap \XB_{P_{12}}) \neq E''_{2,J_{\ol v_4}} \cap \XB_{P_{23}}. \]
  The domain-dependent perturbations are small enough that the
  evaluations $\ev_{w_{e''}}(u_{v_2})$,
  $\ev_{w_{e''}}(u_{\ol v_4}) \in \ol X_{P_{23}}$ lie in disjoint
  neighborhoods of $f(E'_{1,J_{\ol v_3}} \cap \ol X_{P_{12}})$,
  $E''_{2,J_{\ol v_4}} \cap \ol X_{P_{23}}$. As a consequence, there is no
  broken map $u$ for which $[u_\glue]=D_1+k(E_1+E_2)$, $k>1$.
\begin{figure}[h]
  \centering \scalebox{.8}{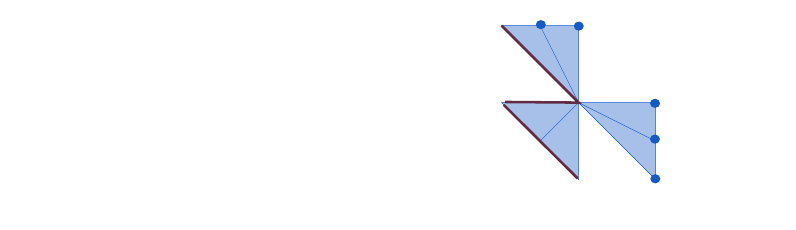}
  \caption{A hypothetical broken map (ruled out in proof of Proposition \ref{prop:spherecount}).}
  \label{fig:hypo2}
\end{figure}  
  
Finally, we are left with the case $k_1=k_2=1$. In this case $u_{v_2}$
is a simple map to a fiber of $\pi$, and $v_2$ has an edge to a vertex
$v_3$ with $P(v_3)=0$.

 \vskip .1in \noindent  \textsc{ Step 4} ($v_3$) : So far, we have shown that if a
broken map $u$ whose gluing has homology class \eqref{eq:ugluehom},
then either $[u_\glue]=[D_1]$ or $[u_\glue] = [D_1]+[E_1]$ or the
tropical graph of $u$ has vertices $v_{-1}$, $v_0$, $v_1$, $v_2$, and
$v_2$ has an edge of multiplicity $1$ to a vertex $v_3$ with
$P(v_3)=P_3$.  In this last case, the only possibility is that
$u_{v_3}$ is a simple map of class $[E_2'']$. The resulting broken
map, of class $[D_1]+[E_1]+[E_2]$ is shown in Figure
\ref{fig:twentyseven} (c) and Figure \ref{fig:27polys} (c).  Multiple
covers of $[E_2'']$ are ruled out using arguments similar to those in
Step 2: If $u_{v_3}$ is a multiple cover of $E''_{2}$ then there is a
path $P$ in $\Gamma$ emanating out of $v_3$ (which does not contain
$v_2$) whose end-point is a $(-1)$-curve.  Let $e \ni v_3$ be the first
edge in the path $P$.  Since the path ends in a $(-1)$-curve, there is
a point constraint on $u_{v_3}$ at the node $w_e$ corresponding to
$e$.  For a generic background $J$ and a small enough perturbation, the
point constraint will be distinct from
$E''_{2,J_{v_3}} \cap \ol X_{P_{23}}$, contradicting the existence of
such a multiple cover $m[E_2'']$ in the broken map.

Finally, we show that each tropical graph in Figure
\ref{fig:twentyseven} contributes $+1$ to the curve count.  Since the
sphere $u_v$ corresponding to each vertex $v \in \Ver(\Gamma)$ is
pseudoholomorphic with respect to an integrable almost complex
structure $J_v^b$ (see \eqref{eq:jvbdef}), and matching conditions are
cut out by diagonals on the relative divisors, each of the broken maps
has positive orientation in the moduli space.  In each of these cases,
there is no automorphism of the tropical graph.  Therefore, a broken
map with $n$ interior markings contributes $\frac 1 {n!}$ to the curve
count (see \eqref{eq:wtwu}). Assuming that the components of the
broken map have $n_1,\dots,n_m$ markings (that add up to $n$),
permuting the labellings of marked points within each component gives
$(n_1!\dots n_m!)$ curves.  Finally, there are
$\frac {n!} {n_1!\dots n_m!}$ ways of assigning interior markings to
components -- each of these would give rise to a different almost
complex structure.  Our proof shows that in all the cases, the number
of broken maps corresponding to each tropical graph is
unaffected. This finishes the proof of the curve count in broken
maps.  Since Gromov-Witten invariants are not altered by
neck-stretching and deformation of the symplectic form, we have shown
that there are $27$ lines on a cubic surface.
\end{proof}

\subsection{Disks in the cubic surface}

In the next result, we compute the disk potential of the monotone
Lagrangian torus $L_0$ (see \eqref{eq:L0def}) in the cubic surface.
The monotone Lagrangian, with its trivial local system split-generates
the sub-category of the Fukaya category corresponding to the ``small
eigenvalue'' in Sheridan's language in \cite{sh:hmsfano}.  To compute
the potential of the Lagrangian $L_0 \subset X_0$, we work in
$(X_\eps,\om_\eps)$ which is symplectic deformation equivalent to
$(X_0,\om_0)$, and compute the potential of the Lagrangian torus
$L_\eps \subset X_\eps$ (see Proposition \ref{prop:sympdef}).  We show
that a disk in the deformed cubic surface $X_\eps$ with boundary in
$L_\eps$ and of Maslov index $I(u) = 2$ either
  \begin{enumerate}
  \item intersects a long divisor $D_i$ (there are 3 such disks), or
  \item is a nodal disk consisting of a disk $u_0: S \to X$
    intersecting a short divisor $E_i$ and a path of spheres
    $u_1,\ldots, u_k: \P^1 \to X$, each mapping to a short divisor
    $E_j$, and thus having Maslov index zero.  The maps $u_i$,
    $u_{i+1}$ intersect at a nodal point; and the maps $u_i$, $u_j$ do
    not intersect if $j-i \geq 2$.  There are $18$ such nodal
    disks.  See Figure \ref{fig:twentyone}.
  \end{enumerate}

\begin{figure}[ht]
  \centering \scalebox{.8}{
\begingroup%
  \makeatletter%
  \providecommand\color[2][]{%
    \errmessage{(Inkscape) Color is used for the text in Inkscape, but the package 'color.sty' is not loaded}%
    \renewcommand\color[2][]{}%
  }%
  \providecommand\transparent[1]{%
    \errmessage{(Inkscape) Transparency is used (non-zero) for the text in Inkscape, but the package 'transparent.sty' is not loaded}%
    \renewcommand\transparent[1]{}%
  }%
  \providecommand\rotatebox[2]{#2}%
  \newcommand*\fsize{\dimexpr\f@size pt\relax}%
  \newcommand*\lineheight[1]{\fontsize{\fsize}{#1\fsize}\selectfont}%
  \ifx\svgwidth\undefined%
    \setlength{\unitlength}{351.91432202bp}%
    \ifx\svgscale\undefined%
      \relax%
    \else%
      \setlength{\unitlength}{\unitlength * \real{\svgscale}}%
    \fi%
  \else%
    \setlength{\unitlength}{\svgwidth}%
  \fi%
  \global\let\svgwidth\undefined%
  \global\let\svgscale\undefined%
  \makeatother%
  \begin{picture}(1,0.27849071)%
    \lineheight{1}%
    \setlength\tabcolsep{0pt}%
    \put(0,0){\includegraphics[width=\unitlength,page=1]{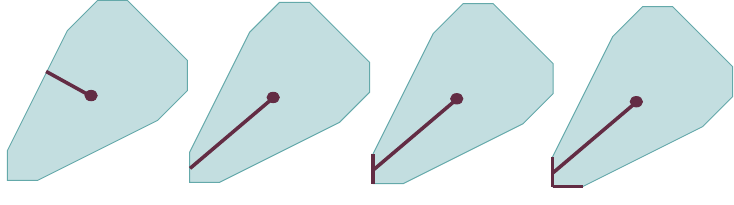}}%
    \put(0.04519704,0.24205185){\color[rgb]{0,0,0}\makebox(0,0)[lt]{\lineheight{1.25}\smash{\begin{tabular}[t]{l}(a)\end{tabular}}}}%
    \put(0.29911432,0.23657161){\color[rgb]{0,0,0}\makebox(0,0)[lt]{\lineheight{1.25}\smash{\begin{tabular}[t]{l}(b)\end{tabular}}}}%
    \put(0.55303172,0.24205185){\color[rgb]{0,0,0}\makebox(0,0)[lt]{\lineheight{1.25}\smash{\begin{tabular}[t]{l}(c)\end{tabular}}}}%
    \put(0.78502808,0.24205185){\color[rgb]{0,0,0}\makebox(0,0)[lt]{\lineheight{1.25}\smash{\begin{tabular}[t]{l}(d)\end{tabular}}}}%
    \put(0.21101843,0.04659032){\color[rgb]{0,0,0}\makebox(0,0)[lt]{\lineheight{1.25}\smash{\begin{tabular}[t]{l}{\small $E_1$}\end{tabular}}}}%
    \put(0.26513525,0.00562938){\color[rgb]{0,0,0}\makebox(0,0)[lt]{\lineheight{1.25}\smash{\begin{tabular}[t]{l}{\small $E_2$}\end{tabular}}}}%
    \put(-0.00156921,0.12619437){\color[rgb]{0,0,0}\makebox(0,0)[lt]{\lineheight{1.25}\smash{\begin{tabular}[t]{l}{\small $D_1$}\end{tabular}}}}%
  \end{picture}%
\endgroup%
}
\caption{There are twenty-one Maslov-index-two-disks
on a cubic surface: 3 of type (a), and 6 each of type (b), (c) and (d).} 
\label{fig:twentyone}\end{figure}

\begin{proposition}\label{prop:diskcount} \label{twentyoneex}
  {\rm(Twenty one disks in the cubic surface)} Any holomorphic disk in
  $X:=X_\eps$ bounding the Lagrangian $L_\eps$ with Maslov index two
  is of the form shown in Figure \ref{fig:twentyone}; there are twenty
  one such disks and each disk has a multiplicity of one.  
  
  The cubic surface $(X_0,\om_0)$ has twenty one disks of Maslov index
  two that bound the monotone Lagrangian $L_0 \subset X_0$ (see
  \eqref{eq:L0def}).  The disk potential of $(X_0,L_0)$ is
 \begin{equation}
   \label{eq:cubicpot}
   W(y_1,y_2) = y_1 y_2 + y_1/y_2^2 + y_2/y_1^2 + 3(  y_1 +  y_1/y_2 +
   y_2 +  y_2/y_1 + 1/y_1 + 1/y_2)     
 \end{equation}
\end{proposition}

\begin{remark} \label{21pot} The disk potential of $L_0$ in the cubic
  surface was originally obtained by Pascaleff-Tonkonog
  \cite{pasc:wall} using mutations.  This formula is equivalent to the
  one in \cite[Table 1]{pasc:wall} by the change of variables
\[ z_1= y_1^{-1} , z_2= y_2^{-1}  \] 
after which the formula becomes
\[ W(z_1,z_2) = (1 + z_1 + z_2)^3/z_1 z_2 - 6.  \]
The formula is also obtained in Galkin-Usnich \cite{gu};
c.f. \cite[Table 5.1]{barrott}.
\end{remark}

\begin{proof}[Proof of Proposition \ref{prop:diskcount}]
  As in the proof of the sphere count, we first count the number of
  broken disks of Maslov index $2$ in the broken manifold, and then
  argue that the disk count is the same for the cubic surface.  Let
  $\XX_\eps$ be the broken manifold obtained by first deforming the
  symplectic form on the cubic surface to produce a toric manifold
  $X^\eps$, and then by multiple cutting.

  The homology classes of the disks of Maslov index two in $X$
  (equipped with a torus-invariant almost complex structure) must
  consist of a single Blaschke product (i.e. a disk having exactly one
  intersection with one of the toric divisors) and a collection of
  short divisors, each of which have Chern number zero:
  \begin{equation}
    \label{eq:h2xl}
    H_2(X_\eps,L) \ni [u] =
    \begin{cases}
      \delta_{\on{long}} \quad \text{or}\\
      \delta_{\on{short}} + \sum_i a_i[E_i],
    \end{cases}
  \end{equation}
  where $\delta_{\on{long}}$ resp. $\delta_{\on{short}} \in H_2(X,L)$
  is the class of a disk which is a single Blaschke product
  intersecting a long resp. short divisor, $a_i \in \Z_+$, and any
  $E_i \in H_2(X)$ is the homology class of a short divisor.

  A semi-Fano toric surface has a well-defined disk potential in the
  following sense: The standard torus-invariant complex structure
  $J_\eps$ on $X_\eps$ is not regular, but for any (domain-dependent)
  almost complex structure $J_\eps'$ that is sufficiently close to
  $J_\eps$, the disk count is independent of $J_\eps'$, see
  Proposition \ref{prop:semiFanopot}.  Furthermore, the potential is
  unchanged under neck-stretching if the broken manifold, with the
  standard almost complex structure (componentwise torus-invariant)
  $\JJ_\eps$, does not have disks of Maslov index zero, as explained
  in Proposition \ref{prop:semiFano-break}. This condition is indeed
  satisfied in the multiple cut we consider on the cubic surface, as
  explained in Proposition \ref{prop:cubic0disks}. Therefore, for any
  regular domain-dependent almost complex structure $J_\eps'$
  resp. $\JJ_\eps'$ close enough to $J_\eps$ resp. $\JJ_\eps$, the
  potential on $(X_\eps,J_\eps')$ and $(\XX_\eps,\JJ_\eps')$ is the
  same. In particular, the contribution to the potential from any disk
  homology class $\beta \in H_2(X_\eps,L)$ is the same for
  $(X_\eps,J_\eps')$ and $(\XX_\eps,\JJ_\eps')$. Thus we may assume
  that the homology class of any broken disk in $\XX_\eps$ is as in
  \eqref{eq:h2xl}.

  Next, we show that out of all the homology classes, only twenty one
  classes, corresponding to $a_i=0,1$ are represented by
  $\JJ_\eps'$-holomorphic broken disks, with a single disk in each
  class.  Let $u$ be a broken disk in $\XX_\eps$ whose gluing has
  relative homology class
  \[[u_{\glue}]=[\delta_{E_1}] + \sum_i n_i[E_i] \in H_2(X_\eps,L),\]
  where $[\delta_{E_1}]$ is the class of a Maslov index two disk in
  $(X_\eps,L_\eps)$ that intersects the short divisor $E_1$ (using
  notation in Figure \ref{fig:twentyone}).  The other cases where
  $\delta_{E_1}$ is replaced by $\delta_{E_i}$, $i=2,\dots,6$, can be
  analyzed similarly.  Now we need to show that the gluing of $u$ is
  either (b), (c) or (d) in Figure \ref{fig:twentyone}.  Since the
  class $[\partial u] \in H_1(L_\eps)$ is preserved by neck
  stretching, we conclude that the disk component of $u$ is a Maslov
index two disk in $\ol X_{P_0}$ that intersects the relative divisor
  $\ol X_{P_{01}}$.  Indeed $\ol X_{P_0}$ is a toric manifold; and no
  other positive integral combination of outward normal vectors of
  facets adds up to the outward normal $\nu(\ol X_{P_{01}})=(-1,0)$,
  besides $(-1,0)$ itself.  The rest of the proof that $u$ is of the
  expected form is exactly same as Steps 2, 3, 4 in the proof of the
  sphere count (Proposition \ref{prop:spherecount}).  Each of the
  tropical graphs contributes $+1$ to the curve
  count: \label{rep:orientation-discuss} Using the standard spin
  structure, isolated Blaschke disks have a positive orientation sign
  (see \cite[p22]{chooh:toric}), the maps $u_v$ are pseudoholomorphic
  with respect to an integrable almost complex structure $J_v^b$ (see
  \eqref{eq:jvbdef}), matching conditions are cut out by diagonals on
  the relative divisors, and each of the broken maps has positive
  orientation in the moduli space. 
The combinatorics involving marked points carries over
  from the sphere case.

  So far, we have proved that for any regular perturbation $\ul \Pe_0$
  of the standard almost complex structure $J_0$ on the toric manifold
  $X$, the potential of $CF(L,\ul \Pe_0)$ has twenty one terms. It remains
  to show that the potential is the same for the cubic surface
  $(X_0,\om_0)$.  We take an indirect route to construct the bijection
  of disks in $X_\eps$ and $X_0$, namely, we use a different multiple
  cut $\PP_1$ (Figure \ref{fig:P1cut}) than the one above, and show
  that there is a bijection between the disks contributing to the
  potential in the broken manifolds $\XX_{\PP_1}$ and $\XX_{0,\PP_1}$.
  \begin{figure}[ht] \centering
      \scalebox{.8}{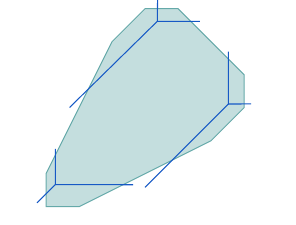} 
    \caption{The cut $\PP_1$.}
    \label{fig:P1cut}
  \end{figure}
  Thus we need to prove
  the three bijections of disks,
  corresponding to the degenerations
  \begin{equation}
    \label{eq:3bij}
  X \longleftrightarrow \XX_{\PP_1} \longleftrightarrow
  \XX_{0,\PP_1} \longleftrightarrow X_0.  
  \end{equation}
  The first of these bijections between the set of disks in $X$ and
  $\XX_{\PP_1}$ is a consequence of Proposition
  \ref{prop:semiFano-break}.  To prove the second bijection, denote by
  \[\M^{<E_0}_{\beta,\JJ}(\XX)\]
  the zero-dimensional component of the moduli space of broken disks
  with symplectic area $\leq E_0$ and $[\partial u]=\beta$ with a
  point constraint on the boundary, assuming that the broken disks are
  defined with respect to a perturbation $\pp$ of $\JJ$.  Let
  $\{\tJJ_t\}_{t \in [0,\eps]}$ be a generic path of almost complex
  structures with end-points $\tJJ_\eps:=\JJ_\eps'$ and $\tJJ_0$
  compatible with the monotone form $\om_0$. Also, let
  $\{\pp_t\}_{t \in [0,\eps]}$ be a family of perturbations of
  $\{\tJJ_t\}_t$.  Finally, the moduli space
  \[\M^{<E_0}_{\beta,\tJJ}(\XX):=\cup_t \M^{<E_0}_{\beta,\tJJ_t}(\XX)\]
  is one-dimensional whose end-points are
  $\M^{<E_0}_{\beta,\tJJ_0}(\XX)$ and
  $\M^{<E_0}_{\beta,\tJJ_\eps}(\XX)$.  Indeed,
  for $\M^{<E_0}_{\beta,\tJJ}(\XX)$ there is no disk bubbling, because
  each $t$, the torus-invariant divisors of $\ol X_{P-0}$ are
  $\tJJ_t$-holomorphic; and for any broken map in the moduli space,
  the disk component has Maslov index two and intersects the
  divisor $\ol X_{P_{01}}$. Therefore, the boundary strata of
  $\M^{<E_0}_{\beta,\tJJ}(\XX)$ consist only of
  $\M^{<E_0}_{\beta,\tJJ_0}(\XX)$ and
  $\M^{<E_0}_{\beta,\tJJ_\eps}(\XX)$.  It follows that these last two
  moduli spaces are bijective.
  This proves the second bijection in \eqref{eq:3bij}. The last
  bijection in \eqref{eq:3bij} follows from the fact that
  neck-stretching does not alter the potential of a monotone
  symplectic manifold, since there are no disks of Maslov index zero
  in the family of neck-stretched manifolds.
  %
  This finishes the proof of the Proposition.
\end{proof}

\section{Counting disks in flag varieties}
\label{sec:flag}
In this Section, we use our results to compute the disk potential of
partial flag varieties.  The Gelfand-Cetlin system \cite{gs:gc} is a
collection of functions on a partial flag variety that forms a
completely integrable system. The image of these functions is the
\em{Gelfand-Cetlin} polytope $\Delta$, and the toric variety
corresponding to this polytope is called the \em{Gelfand-Cetlin} toric
variety. In the complement of faces of $\Delta$ of codimension
$\geq 2$, the flag variety and Gelfand-Cetlin toric variety are
symplectomorphic.  We use this observation to compute the disk
potential of the monotone Lagrangian torus in Theorem
\ref{thm:flagpot}; we use a multiple cut consisting of a collection of
intersecting single cuts (Figure \ref{fig:fl3cut}) each of which is
parallel to, and very close to a facet of the polytope $\Delta$.  The
original approach of computing disk potentials in flag varieties by
Nishinou-Nohara-Ueda \cite{nnu:degen} and Nohara-Ueda \cite{no:degen}
uses small resolutions.

We start by defining partial flag varieties. 
Given a tuple $\ul n:=(n_1,\dots,n_r)$ with $\sum_i n_i=n$, a \em{
  flag variety} $\Fl(n_1,\dots,n_r)$ is the set of flags
\begin{multline}\label{eq:fldef}
 \Fl(\ul n):= \Fl(n_1,\dots,n_r) := \{ \{ 0 \} \subset V_{n_1}
 \subset V_{n_1+n_2} \subset \ldots V_{n_1+\dots+n_r}= \C^n \\
 : \dim(V_i)=i \enspace \forall i\}.  
\end{multline}
A flag variety may be identified with a coadjoint orbit of the action
of $U(n)$ on $u(n)^\dual$. In particular,
\[\Fl(\ul n)\simeq \mO_\Lam \subset \u(n)^\dual\]
is the
coadjoint orbit of an element $\Lam$ defined as 
\begin{equation}
  \label{eq:coad}
\Lam:=  \on{diag}(\sqrt{-1}(\underbrace{\Lam_1,\dots, \Lam_1}_{n_1 \text{ times}},
  \underbrace{\Lam_2,\dots, \Lam_2}_{n_2 \text{ times}},\dots,
  \underbrace{\Lam_r,\dots, \Lam_r}_{n_r \text{ times}})) \in
  \u(n)^\dual,
\end{equation}
where 
\[\Lam_i \in \R, \quad \Lam_1 > \dots > \Lam_r,\]
and $\u(n)$ is identified to its dual via the $\Ad_{U(n)}$-invariant
inner product
\[(\cdot,\cdot): \u(n) \times \u(n) \to \R, \quad (A,B) \mapsto
  \on{trace}(A^*B).\]
Indeed, an element of \eqref{eq:fldef} corresponds to an orthogonal decomposition
\[u(n)^\dual = W_{n_1} \oplus \dots \oplus W_{n_r}\]
into subspaces $W_{n_i}$ of dimension $n_i$ for all $i=1,\dots,r$; and
such a decomposition gives a unique element in the coadjoint orbit of \eqref{eq:coad} for which the $\sqrt{-1} \Lam_i$-eigenspace is $W_{n_i}$ for each $i$.

The symplectic form $\om_\Lam$ on $\Fl(\ul n)$ is given by the Kostant-Kirillov form: A tangent vector at a point
 $x \in \u(n)$ is of the form $\ad_\xi(x)$ for some $\xi \in \u(n)$ and the symplectic form at $x$ is given by $(\om_\Lam)_x(\ad_{\xi_1}(x), \ad_{\xi_2}(x)):=\bran{x,[\xi_1,\xi_2]}$.

We recall the construction of the 
Gelfand-Cetlin system on a flag
variety from Guillemin-Sternberg \cite{gs:gc}. The \em{Gelfand-Cetlin system} is a
collection of functions that forms a completely integrable system on an
open set of $\Fl(\ul n)$.  For any $m = 1,\ldots, n$, let 
\begin{equation}
  \label{eq:pimdef}
  \pi_m: \Fl(\ul n) \to \u(m)^\dual   
\end{equation}
be a map sending a matrix $A$ to its  
upper-left $m \times m$
submatrix $\pi_m(A)$. 
Let
\begin{equation}
  \label{eq:gcmap}
  \Phi: \Fl(\ul n) \to \t^\dual := (\sqrt{-1} \R)^{n(n-1)/2}
\end{equation}
be a collection of maps $\Phi=\{\sqrt{-1} \Phi_{i,k}\}_{1 \leq k \leq i \leq n}$ where for any $m \leq n$ 
and $A \in \Fl(\ul n)$, 
\[ \Phi_{m,1}(A) \geq \Phi_{m,2}(A) \geq \ldots \geq \Phi_{m,m}(A), \]
and $\sqrt{-1} \Phi_{m,k}$, $1 \leq k \leq m$ are the eigenvalues of $\pi_m(A)$. 
\label{rep:eigen}
For any $i$, the eigenvalue functions $\Phi_{i,j}$ are smooth
functions whenever they are distinct and are continuous everywhere.
By the min-max theorem for Hermitian matrices \cite[Theorem
8.10]{zhang:book}, the eigenvalues satisfy the interlacing inequalities
\[ \Phi_{i+1,k} \leq \Phi_{i,k} \leq \Phi_{i,k+1}, \quad \forall i, k
  < n \]
 and they fit into the following diagram:
\begin{equation}
  \begin{alignedat}{17}
    \Lam_1 &&&& \Lam_2 &&&& \Lam_3 && \cdots && \Lam_{n-1} &&&& \Lam_n  \\
    & \uge && \dge && \uge && \dge &&&&&& \uge && \dge & \\
    && \Phid {n-1}1 &&&& \Phid {n-1}2 &&&&&&&& \Phid{n-1}{n-1} && \\
    &&& \uge && \dge &&&&&&&& \dge &&& \\
    &&&& \Phid {n-2}1 &&&&&&&& \Phid{n-2}{n-2} &&&& \\
    &&&&& \uge &&&&&& \dge &&&&& \\
    &&&&&& \dndots &&&& \updots &&&&&& \\
    &&&&&&& \uge && \dge &&&&&&& \\
    &&&&&&&& \Phid 11 &&&&&&&&&
  \end{alignedat}
  \label{eq:GCineq}
\end{equation}

We give a proof of the min-max theorem for completeness.

\begin{lemma}{\rm(Interlacing inequality)}
  Let $A$ be an $(n+1) \times (n+1)$ Hermitian matrix with  
  eigenvalues
  $a_1 \geq \dots \geq a_{n+1}$. 
  Suppose the upper left $n \times n$ submatrix $\pi_n(A)$
  has eigenvalues $b_1 \geq \dots \geq b_n$. Then,
  \[a_1 \geq b_1 \geq a_2 \geq \dots \geq a_{n-1} \geq b_n \geq a_{n+1}.\]
\end{lemma}

\begin{proof}
  It is enough to prove the result assuming that $b_1 > \dots >b_n$,
  since the general result follows by continuity.  Since $b_1,\dots, b_n$ are the eigenvalues of $\pi_n(A)$, the  $\Ad_{U(n)}$-orbit of $A$ contains a matrix
  \[A':=
    \begin{pmatrix}
      b_1 & & 0 & \bar{z}_1\\
      & \ddots & & \vdots\\
      0 & & b_n & \bar{z}_n\\
      z_1 & \hdots & z_n & r_{n+1},
    \end{pmatrix}.
  \]
where $z_i \in \C$, $r_{n+1} \in \R$. Then,
  \begin{equation*}
    \det(A-t\Id)= \det(A'-t\Id)=  \prod_{i=1}^n (b_i - t) f(t), \quad f(t)= \left(r_{n+1} - t - \sum_{j=1}^n \frac {|z_j|^2} {b_j-t}\right).
  \end{equation*}
  The function 
  $f(t)$ is decreasing in $t$ and discontinuous at
  $b_1,\dots,b_n$.  Since the real numbers
  $b_j$'s are distinct, the function $f(t)$ has exactly
  one root each in the intervals $(-\infty,b_n)$, $(b_{i+1},b_i)$,
  $i=1,\dots,n-1$, $(b_1,\infty)$, and the roots are $a_{n+1}, \dots, a_1$.
\end{proof}

For a partial flag manifold, certain values in the Gelfand-Cetlin
system are fixed by the interlacing inequalities. In particular, for $k \leq i \leq n-1$,
\[\Phi_k=\Phi_{n+k-i}=\Lam_j \implies \Phi_{i,k}=\Lam_j. \]
The Gelfand-Cetlin system thus consists of exactly
\[N:=\sum_{1 \leq i<j \leq r} n_in_j\]
variables, which is half the dimension of $\Fl(\ul n)$.  See Figure
\ref{fig:gc-partial}.  We denote the set of variables in the
Gelfand-Cetlin system of the partial flag variety $\Fl(\ul n)$ by
\[\on{Free}(\ul n):=\{(i,j): \Phid i j  \text{ is not a
    constant on }\Fl(\ul n)\}. \]
The image $\Phi(\Fl(\ul n))$ is contained in a polytope
\[\Delta_{\Fl(\ul n)} \subset (\sqrt{-1} \R)^N\]
cut out by the Gelfand-Cetlin inequalities involving free variables
$\{\Phid i j : (i,j) \in \on{Free}(\ul n)\}$, which we call the \em{Gelfand-Cetlin polytope of $\Fl(\ul n)$}.

\begin{figure}[h]
  \centering\scalebox{.8}{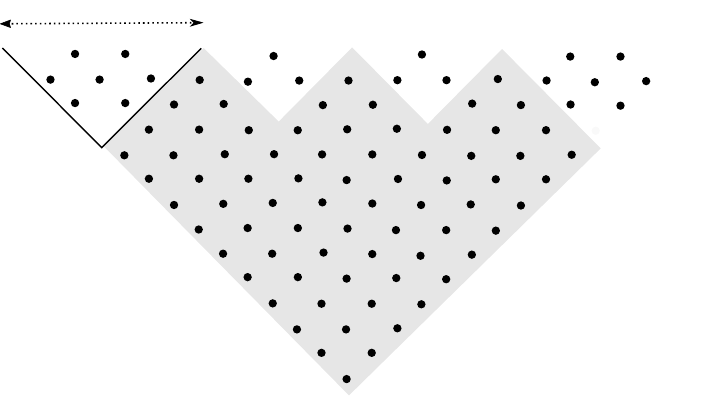}
  \caption{For a partial flag, the variables of the Gelfand-Cetlin system lie in the shaded region.  Each of the unshaded triangles is a block of non-free variables. In any block, all the entries are equal to $\Lam_i$ for some $i$.}
  \label{fig:gc-partial}
\end{figure}

\begin{remark}\label{rem:gclowest}
  In the Gelfand-Cetlin system corresponding to the flag variety
  $\Fl(n_1,\dots,n_r)$, the non-free variables can be partitioned into
  $r$ disjoint sets
  \[\{(i,k) : 1 \leq k \leq i \leq n-1\} \bs \on{Free}(\Fl(\ul n)) =
    N_1 \cup \dots N_r\]
  where each $N_j$ consists of the collection of indices for maps $\Phid i k$ which are fixed to be equal to $\Lam_j$ by the interlacing inequalities.  We call $N_j$ a \em{block} of non-free variables. A block $N_j$ is non-empty if $n_j > 1$.  Each non-empty block is triangular and has a \em{lowest} element whose location in the Gelfand-Cetlin diagram is the lowest. The lowest element in the block $N_j$ is
  \begin{equation}
    \label{eq:blocklowest}
  \Phi_{\mathfrak{i}_j, \mathfrak k_j}, \quad \text{ where } 
(\mathfrak i_j, \mathfrak k_j)=\left(n-n_j+1,  \sum_{\ell=1}^jn_\ell + 1\right).  
  \end{equation}
  In Figure \ref{fig:gc-partial} the lowest position in each block is in red. 
\end{remark}

The face structure of the Gelfand-Cetlin polytope can be read off from
ladder diagrams \cite{AnChoKim}.  For our purposes, we explicitly
describe the faces of codimension zero and one.
Let $\Delta_{\Fl(\ul n)}$ be the Gelfand-Cetlin polytope of the flag
variety $\Fl(\ul n)$.
\begin{enumerate}
\item {\rm(Interior)} A point $\Phi \in \Delta_{\Fl(\ul n)}$ is an
  interior point exactly if every Gelfand-Cetlin inequality involving at
  least one free variable $\Phid i j$ is strict.  We denote the inverse image of the interior by 
  \[F^{(0)}:=\Phinv(\Delta_{\Fl(\ul n)}^\circ) \subset \Fl(\ul n).\]
\item {\rm(Facet)} A subset of the polytope $\Delta_{\Fl(\ul n)}$
  \begin{equation}
    \label{eq:facetdef}
    \F_{i,k,\ldiag}:=\{\Phi_{i,k}=\Phi_{i+1,k}\} \quad \text{resp.} \quad \F_{i,k,\rdiag}:=\{\Phi_{i,k}=\Phi_{i+1,k+1}\}   
  \end{equation}
  is a facet of $ \Delta_{\Fl(\ul n)}$ if either both variables in the
  equality are free or the non-free variable is the lowest variable in
  a block of non-free variables (as defined in Remark
  \ref{rem:gclowest}).  We remark that if the non-free variable in
  \eqref{eq:facetdef} were not the lowest in the block, then the
  codimension of the subset would be more than $1$, because the
  interlacing inequalities would fix the values of some free variables
  not occurring in \eqref{eq:facetdef}.  We denote the set of indices of facets by
  \[\on{Facets}(\Delta_{\Fl(\ul n)}) \subset \Z_+^2 \times \{\ldiag, \rdiag\}.\]
  A point $\Phi \in \F_{i,k,\ldiag}$ resp. $\F_{i,k,\rdiag}$ is in the interior of the facet if all the other Gelfand-Cetlin inequalities are strict.  The set of points in $\Fl(\ul n)$ that map to the interiors of facets is denoted by
  \[F^{(1)} := \cup_{I \in \on{Facets}(\Delta(\Fl(\ul n)))}\Phinv(\F_I).\]
\item {\rm(Regular points)}
  We denote by
  \begin{equation}
    \label{eq:fregdef}
    F^{\on{reg}} \subset \Fl(\ul n)    
  \end{equation}
the set of points $x$ for which for every $m <n$ the set of
eigenvalues of $\pi_m(x)$ has the maximum cardinality. In other words,
$x \in F^{\on{reg}}$ exactly when the following holds: For all positive
integers $i \leq n-1$, $k<i$,
 \[\Phi_{i,k}(x)=\Phi_{i,k+1}(x) \Leftrightarrow (i,k), (i,k+1) \notin \on{Free}(\ul n).\]
The Gelfand-Cetlin map is smooth on $F^{\reg}$ (Proposition \ref{prop:gcresult}). 
\end{enumerate}

\begin{remark} \label{rep:nonsimplicial}
  {\rm(Non-simplicial points of the Gelfand-Cetlin polytope)}
  The Gelfand-Cetlin polytope contains non-simplicial points.  A face $\F$ of a polytope $\Delta$ is \em{non-simplicial} if it is the intersection of $n_\F$ facets where $n_\F > \codim(\F)$.
  A point $A \in \Fl(\ul n)$ is mapped to a non-simplicial point the polytope $\Delta_{\Fl(\ul n)}$ if it satisfies a loop of equalities (see \cite[Example 3.8]{nnu:degen}),
  \begin{equation}
    \label{eq:loopeq}
    \begin{alignedat}{5}
      &&\Phid {i+1} {k+1}&&\\
      &\deq&&\ueq&\\
      \Phid i k &&&& \Phid i {k+1}\\
      &\ueq&&\deq&\\
       &&\Phid {i-1} k&&
     \end{alignedat}
\end{equation}
 and each inequality in the loop corresponds to a facet. 
Indeed, the loop of equalities in \eqref{eq:loopeq} gives a subset
  $S$ of codimension three, whereas $S$ is the intersection of four
  facets corresponding to each of the equalities in the loop.  
  Observe that $S$ is the intersection of four faces of
  codimension two, each given by a pair of equalities as in the diagram of inequalities below:
  \begin{equation*}
    \begin{alignedat}{5}
      &&\Phid {i+1} {k+1}&&\\
      &\dge&&\ueq&\\
      \Phid i k &&&& \Phid i {k+1}\\
      &\ueq&&\dge&\\
       &&\Phid {i-1} k&&
     \end{alignedat}
     \qquad
      \begin{alignedat}{5}
      &&\Phid {i+1} {k+1}&&\\
      &\deq&&\uge&\\
      \Phid i k &&&& \Phid i {k+1}\\
      &\uge&&\deq&\\
       &&\Phid {i-1} k&&
     \end{alignedat}
     \qquad
      \begin{alignedat}{5}
      &&\Phid {i+1} {k+1}&&\\
      &\dge&&\ueq&\\
      \Phid i k &&&& \Phid i {k+1}\\
      &\uge&&\deq&\\
       &&\Phid {i-1} k&&
     \end{alignedat}
     \qquad
      \begin{alignedat}{5}
      &&\Phid {i+1} {k+1}&&\\
      &\deq&&\uge&\\
      \Phid i k &&&& \Phid i {k+1}\\
      &\ueq&&\dge&\\
       &&\Phid {i-1} k&&
     \end{alignedat}
   \end{equation*}
\end{remark}

\begin{example}\label{ex:singstrat}
  The full flag $F^{(3)}=\Fl(1,1,1)$ resp. the Grassmannian
  $\Gr(2,4)=\Fl(2,2)$ has a singular point resp. a singular line given
  by a loop of equalities (see Figure \ref{fig:fl3}) 
  \begin{equation}
    \begin{alignedat}{5}
      &&\Lam_2&&\\
      &\deq&&\ueq&\\
      \Phid 2 1 &&&& \Phid 2 2\\
      &\ueq&&\deq&\\
      &&\Phid 1 1&&
    \end{alignedat}
    \quad \text{resp.} \quad 
    \begin{alignedat}{5}
      &&\Phid 3 2&&\\
      &\deq&&\ueq&\\
      \Phid 2 1 &&&& \Phid 2 2\\
      &\ueq&&\deq&\\
      &&\Phid 1 1&&
    \end{alignedat}.
  \end{equation}
\end{example}

\begin{figure}[h]
  \centering\scalebox{.8}{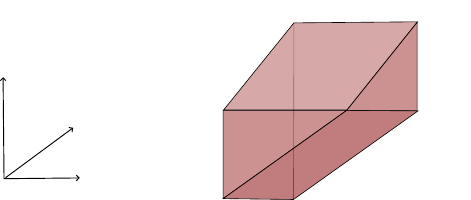}
  \caption{There is a non-simplicial point in $\Phi(\Fl(1,1,1))$
    corresponding to a loop of equalities.}
  \label{fig:fl3}
\end{figure}

\begin{remark}{\rm(Gelfand-Cetlin toric variety)}
  For any tuple $\ul n$, the toric variety $M_{\ul n}$ corresponding
  to the polytope $\Phi(\Fl(\ul n))$, called the \em{Gelfand-Cetlin
    toric variety}, is a toric degeneration of the flag manifold
  $\Fl(\ul n)$ \cite{bckv:mirror}.  The Gelfand-Cetlin toric variety
  has conifold singularities of codimension three (see Batyrev et
  al. \cite{bckv:mirror}) corresponding to the non-simplicial points
  in the polytope $\Delta_{\Fl(\ul n)}$.
  The fibers of $\Phi|\Fl(\ul n)$ over non-simplicial points has larger dimension
  than the corresponding fibers in the toric variety.  For example, the inverse image of the
  singular point in $\Phi(\Fl(1,1,1))$ in Figure \ref{fig:fl3} is a
  Lagrangian $S^3$ \cite[Example 3.8]{nnu:degen}. A detailed
  description of the singular fibers in partial flag varieties, and
  their comparison with the Gelfand-Cetlin toric variety is carried
  out in \cite{cko:lag}.
\end{remark}

The following result shows that on the regular locus
$F^{\on{reg}} \subset \Fl(\ul n)$ (defined in \eqref{eq:fregdef}), the
Gelfand-Cetlin system is smooth and is a moment map for a torus
action. As a symplectic toric manifold, this set is isomorphic to the
corresponding subset in the Gelfand-Cetlin toric variety.  The regular
locus $F^{\on{reg}}$ includes $F^{(0)} \cup F^{(1)}$, that is, the inverse
image of the interior and the facets of the Gelfand-Cetlin polytope.

\begin{proposition}{\rm(\cite{gs:gc})}
  \label{prop:gcresult}
  Let $\Phi : \Fl(\ul n) \to \R^N$ be the Gelfand-Cetlin system on the
  flag variety.
  \begin{enumerate}
  \item The restriction $\Phi|F^{\on{reg}}$ is smooth and is a moment map for the action of the torus $\bT^N$, where $F^{\on{reg}} \subset \Fl(\ul n)$ is defined in \eqref{eq:fregdef}. 
  \item The map $\Phi$ surjects onto the polytope
    $\Delta_{\Fl(\ul n)}$.
  \item \label{part:gc3} As a symplectic toric manifold,
    $F^{\on{reg}} \subset \Fl(\ul n)$ is isomorphic to the corresponding
    subset in the Gelfand-Cetlin toric variety.  Furthermore, the
    inverse image $F^{(0)} \cup F^{(1)}$ of the interior and the
    facets of $\Delta_{\Fl(\ul n)}$ is contained in $F^{\on{reg}}$.
  \item Consequently, for any point $\lam$ in the interior of
    $\Delta_{\Fl(\ul n)}$, $\Phinv(\lam)$ is a Lagrangian torus.
  \end{enumerate}
\end{proposition}
\begin{proof}
  The map $\Phi$ is smooth on the set of regular points, namely
  $F^{\on{reg}}$.  Indeed, the map $A \mapsto \lam_i(A)$ that sends a
  symmetric $m \times m$ matrix to its $i$-th eigenvalue in
  non-increasing order is smooth at a point if
  $\lam_i(A) \neq \lam_{i+1}(A)$, $\lam_{i-1}(A)$.
  Any pair of functions in the Gelfand-Cetlin system Poisson commute :
  \[\{\Phid i j, \Phid k l\}=0 \]
  by \cite[Section 5]{gs:gc}, \cite[Lemma 3.3]{nnu:degen}.  So, the
  system $\{\Phid i k\}_{i,k}$ generates a Hamiltonian $\R^n$-action
  on $\Fl(\ul n)$.
  Any non-constant Hamiltonian trajectory generated by $\Phid i k$ has
  a period of $2\pi$ by \cite[Lemma 3.4]{gs:gc}.  Consequently, the
  system $\{\Phid i k\}_{i,k}$ generates an action of the torus
  $(\R/2\pi \Z)^N$ on $F^{\on{reg}}$.
  
  The map $\Phi$ surjects onto $\Delta_{\Fl(\ul n)}$ by \cite[Section
  5]{gs:gc}. The proof is by an inductive application of Lemma
  \ref{lem:invphi}, which constructs a matrix $A \in \u(n)$ for a
  prescribed collection of eigenvalues $\{\Phi_{i,j}\}_{i,j}$. The
  inductive construction gives a map $\{\Phi_{i,j}\}_{i,j} \mapsto A$
  that is smooth strata-wise.  Since this map is a right inverse of
  $\Phi$, we have shown that $\d\Phi$ has the maximum rank on every
  stratum $S$ of $F^{\on{reg}}$,\footnote{The partial flag manifold
    $\Fl(\ul n)$ is stratified on the basis of which of the
    Gelfand-Cetlin inequalities are equalities.}  that is,
  $\on{rank}(\d\Phi)=\hh \dim(S)$.  Therefore, on each stratum
  $S \subset F^{\on{reg}}$, $\{\Phi_{i,j}\}_{i,j}$ is a complete integrable
  system; proving the first statement in \eqref{part:gc3}. The
  definition of $F^{\on{reg}}$ implies that
  $F^{(0)} \cup F^{(1)} \subset F^{\on{reg}}$.  By the Arnol'd-Liouville
  theorem (proved in, for example, Duistermaat \cite{Duist}),
  $\Phi|F^{(0)}$ is a Lagrangian torus fibration. This finishes the
  proof.  As an aside, we remark that the rank of $\d\Phi$ is smaller
  than half the dimension of the stratum, in the case of a stratum mapping
  to a non-simplicial point of $\Delta_{\Fl(\ul n)}$; see Example
  \ref{ex:singstrat}.
\end{proof}

\begin{lemma} \label{lem:invphi}
  Let
  \[\Delta:=\{(a,b)=((a_1,\dots,a_n),(b_1,\dots,b_{n-1})) \in \R^{2n-1}: a_1 \geq b_1 \geq \dots \geq b_{n-1}\geq a_n\}.\]

  For any $(a,b) \in \Delta$, there is a unique tuple
  \[r=(r_1,\dots,r_{n+1}) \in (\R_{\geq 0})^n \times \R\]
  such that any matrix $A \in \sqrt{-1} \u(n)$ \label{rep:anymatrix} with eigenvalues
  $(a_1,\dots,a_n)$, and for whom  $\pi_{n-1}(A)$ (defined in \eqref{eq:pimdef}) 
  is the diagonal matrix with entries $b_1,\dots, b_n$ is of the form
  \[A:=
    \begin{pmatrix}
      b_1 & & 0 & \bar{z}_1\\
      & \ddots & & \vdots\\
      0 & & b_n & \bar{z}_n\\
      z_1 & \hdots & z_n & r_{k+1}
    \end{pmatrix}
  \]
  with $|z_i|^2=r_i$ for $i=1,\dots,n-1$.
  \begin{enumerate}
  \item The map
    \[r :\Delta \to (\R_{\geq 0})^{n-1} \times \R, \quad (a,b) \mapsto
      (r_1,\dots,r_n) \]
    is continuous.
  \item For any $i$, $r_i=0$ exactly when either $b_i=a_i$ or
    $b_i=a_{i+1}$.
  \item For any subset $I \subseteq \{1,\dots,2n\}$, let
    $\Delta_I \subset \Delta$ be the set of tuples
    $(a_1,b_1,\dots,b_{n-1},a_n)$ with equalities at the positions in
    $I$, and strict inequalities everywhere else.  Then, $r|\Delta_I$
    is smooth.
  \end{enumerate}
\end{lemma}

\begin{proof}
  We first consider the open stratum of $\Delta$, that is,
  $I=\emptyset$ and
  \[a_1 > b_1 > a_2 > \dots > a_{n-1} > b_{n-1} > a_{n}.\]
  Observe that
  \begin{equation}
    \label{eq:detexp}
    \det(A-t\Id)=  \left(\prod_{i=1}^{n-1} (b_i - t)\right)
 \left(r_{n} - t - \sum_{j=1}^{n-1} \frac {|z_j|^2} {b_j-t}  \right). 
  \end{equation}
  We set
  \[r_{n}:= \sum_i a_i - \sum_i b_i,\]
  and solve the system of equations
  \begin{equation}
    \label{eq:deteig}
    \det(A-a_i\Id)=0 \quad \text{for} \enspace i=1,\dots,n-1 
  \end{equation}
  for $z_1,\dots,z_{n-1}$.  Substituting $|z_i|^2=r_i$, the system
  \eqref{eq:detexp} is linear in $r_1,\dots,r_{n-1}$, and the
  coefficient matrix for the variables $r_i$, $i=1,\dots,n-1$ is
  \begin{equation} \label{eq:cmat}
  \left(\prod_{1 \leq k \leq n:k \neq j}(b_k-a_i) \right)_{i,j} .\end{equation} %
  The determinant   of  \eqref{eq:cmat}
  is
  \[ \det( \eqref{eq:cmat}) = \pm \prod_{i<j} (a_i-a_j) \prod_{i<j} (b_i-b_j) \neq 0. \]
  Therefore,
  we can uniquely solve \eqref{eq:detexp} for $(r_1,\dots,r_{n-1})$.

  It remains to show that $r_1,\dots,r_{n-1}$ are non-negative.  Firstly,
  we observe that
  $r^{-1}(\partial((\R_{\geq 0})^{n-1} \times \R)) \subset \partial
  \Delta_b$. Indeed, in the matrix $A$, if $z_i=0$ then $b_i$ is an
  eigenvalue, and so $b_i$ is equal to either $a_i$ or
  $a_{i+1}$.  Secondly, by the interlacing property the image $r(\Delta_b^\circ)$ of the interior
  $\Delta_b^\circ$ of $\Delta_b$ intersects the interior of
  $(\R_{\geq 0})^{n-1} \times \R$.  We may then conclude that
  $r(\Delta_b^\circ)$ is in fact contained in $(\R_+)^{n-1} \times \R$.

  Finally, we define the map $r$ on lower dimensional strata of
  $\Delta$.  If $b_i=a_i$ resp. $b_i=a_{i+1}$, then we set $r_i=0$.
  Define $\hat a:=(a_1,\dots \hat a_i,\dots, a_n)$ resp.
  $(a_1,\dots \hat a_{i+1},\dots, a_n)$, and
  $\hat b:=(b_1,\dots \hat b_i,\dots, b_n)$ to be the tuples deleted
  in each.  The above construction applied to $(\hat a, \hat b)$
  produces $\hat r:=(r_1,\dots,\hat r_i, \dots, r_{n+1})$. The map
  $(\hat a, \hat b) \mapsto \hat r$ is smooth, which shows that the
  map $(a,b) \mapsto r$ is smooth strata-wise.
\end{proof}

In the rest of the Section, we prove the following result, which was originally proved by Nishinou-Nohara-Ueda \cite{nnu:degen}.

\begin{theorem}\label{thm:flagpot}
  Let $X:=\Fl(\ul n)$ be a flag variety. There is a symplectic form on
  $X$ for which the Gelfand-Cetlin system of $X$ has a unique monotone
  Lagrangian fiber $L_0$.  The \ainfty algebra $CF(L_0)$ is
  weakly unobstructed and the naive potential of $CF(L_0)$ consists of one term
  for every facet of the Gelfand-Cetlin polytope of $X$.
\end{theorem}

As part of the proof of Theorem \ref{thm:flagpot}, we compute the disk
potential (defined in \eqref{eq:wnaive}) of $\Fl(\ul n)$ as
    
\begin{multline}\label{eq:grpot}
  W(y_{i,k})_{i,k}= \left (\sum_{k \leq i : (i+1,k) \in
      \on{Free}(\Fl(\ul n))} \frac{y_{i+1,k}}{y_{i,k}}
    + \sum_{k \leq i : (i+1,k+1) \in \on{Free}(\Fl(\ul n))}
    \frac{y_{i,k}}{y_{i+1,k+1}} \right.\\ \left. + \sum_{1 \leq j \leq
      r-1} y_{\li_j, \k_j}
    + \sum_{2 \leq j \leq r} \frac 1 {y_{\li_j,\k_j}} \right)q.
\end{multline}
Here the index $(\li_j,\k_j)$ is the position of the lowest element in
the $j$-th block, see \eqref{eq:blocklowest}. In the above expression
\eqref{eq:grpot} for the potential, each term corresponds to a facet
\eqref{eq:facetdef} of the Gelfand-Cetlin polytope -- the first
resp. second summation corresponds to facets $\F_{i,k,\ldiag}$
resp. $\F_{i,k,\rdiag}$ where both variables in the equality defining
the facet are free, the third resp. fourth summation corresponds to
facets $\F_{i,k,\ldiag}$ resp. $\F_{i,k,\rdiag}$ where one of the
variables in the defining equality is non-free and is the lowest
element of the $j$-th block. The exponent of the Novikov formal
variable $q$ is $1$, because the symplectic form (given by a choice of
$(\Lam_i)_i$) and Lagrangian torus $L_0$ are chosen so that the pair
$(\Fl(\ul n),L_0)$ is monotone and the area of any disk of Maslov
index $2$ is $1$.  We will show for every facet, there is a single
holomorphic disk of Maslov index two intersecting the corresponding
divisor in $\Fl(\ul (n))$.

\begin{example}
  The Gelfand-Cetlin inequalities of the Grassmannian $\Gr(2,4)=\Fl(2,2)$ are given by
  \begin{equation}
    \begin{alignedat}{13}
      \Lam_1 &&&& \Lam_1 &&&& \Lam_2 &&&& \Lam_2  \\
      & \uge && \dge && \uge && \dge && \uge && \dge & \\
      && \Lam_1 &&&& \Phid 3 2 &&&& \Lam_2 && \\
      &&& {\color{grey} \uge} && {\color{grey} \dge} && {\color{grey} \uge} && {\color{grey} \dge} &&& \\
      &&&& \Phid 2 1 &&&& \Phid 2 2 &&&& \\
      &&&&& {\color{grey} \uge} && {\color{grey} \dge} &&&&& \\
      &&&&&& \Phid 11 &&&&&
    \end{alignedat}
    \label{eq:GCineq1}
  \end{equation}
  and the inequalities in grey correspond to facets.
  The potential is 
  \[W(y)=\left(\left(\frac{y_{2,1}}{y_{1,1}} + \frac{y_{3,2}}{y_{2,2}} \right) + \left(\frac{y_{1,1}}{y_{2,2}} + \frac{y_{2,1}}{y_{3,2}} \right) + \left(\frac 1 {y_{2,2}} \right) + \left(y_{2,1}\right)\right)q,\]
  where the terms are parenthesized to reflect the summation groupings from \eqref{eq:grpot}.
\end{example}
    
 \begin{remark}
   In the case of a full flag variety ($r=n$), the least order terms
   of the potential in \eqref{eq:grpot} coincide with the potential
   introduced by Givental in \cite{giv}. In the case of the
   Grassmannian ($r=2$), the analysis of the potential
   \eqref{eq:grpot} by Castronovo in \cite{Castr:Gr} implies that the
   Lagrangian $L_0$ has non-trivial Floer homology.
\end{remark}

\begin{figure}[h]
  \centering\scalebox{.8}{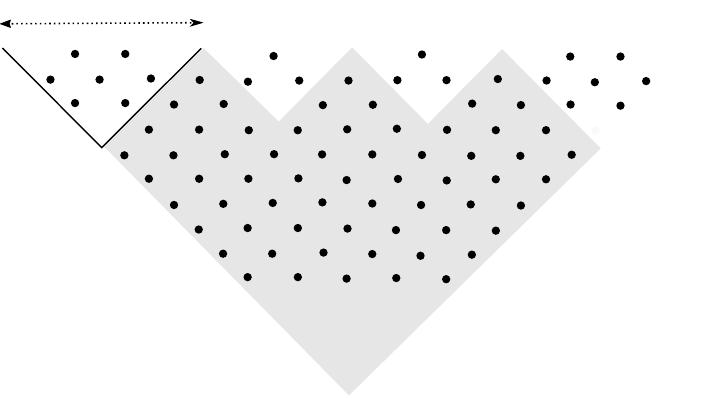}
  \caption{Center of the Gelfand-Cetlin polytope $\Delta_{\Fl(\ul n)}$.}
  \label{fig:gc-center}
\end{figure}

\begin{proof}[Proof of Theorem \ref{thm:flagpot}]
  First, we describe the monotone symplectic form and the monotone
  Lagrangian in the flag variety $\Fl(\ul n)$.  
  Let $\Fl(\ul n)$ be the coadjoint $U(n)$-orbit of the element
  \[\on{diag}(\sqrt{-1}(\underbrace{\Lam_1,\dots, \Lam_1}_{n_1 \text{
        times}}, \underbrace{\Lam_2,\dots, \Lam_2}_{n_2 \text{
        times}},\dots, \underbrace{\Lam_r,\dots, \Lam_r}_{n_r \text{
        times}})) \in \u(n)^\dual,\]
  where
  \[\Lam_j:=\li_j-1-2(\k_j-1),\]
  and $(\li_j,\k_j)$ is the index of the lowest element in the $j$-th
  block of non-free variables, see \eqref{eq:blocklowest}.  The
  Kostant-Kirillov form on $\Fl(\ul n)$ is monotone (see
  \cite[p653-654]{nnu:degen}) satisfying
  \begin{equation}
    \label{eq:c1om-monot}
    c_1(T\Fl(\ul n))=[\om_\Lam]. 
  \end{equation}
  The Gelfand-Cetlin polytope $\Delta_{\Fl(\ul n)}$ has a center
  $\lam$ (see Figure \ref{fig:gc-center}) given by
  \[\lam_{i,k}=(i-1)-2(k-1)\]
  with coordinates satisfying
  \[\lam_{i,k} - \lam_{i,k-1} = 1, \quad \forall i,k.\]
  The fiber
  \[L_\lam:=\Phinv(\lam)\]
  is a monotone Lagrangian with monotonicity constant $\hh$, that is,
  \begin{equation}
    \label{eq:monot-constt}
    \forall u:(\mathbb D^2, \partial \mathbb D^2) \to (X,L_\lam) \enspace \om(u)=\hh I(u).   
  \end{equation}
  Indeed, using \eqref{eq:c1om-monot} and 
  $\pi_2(X,L_\lam)/\pi_2(X) = \pi_1(L_\lam)$ from the homotopy long exact
  sequence, it is enough to verify \eqref{eq:monot-constt} on a set
  $S$ of disks $u$ bounding $L_\lam$ for which $\{[\partial u]: u \in S\}$
  generates $\pi_1(L_\lam)$; and for a disk of Maslov index two that has a single intersection with a divisor corresponding to a facet of the
  Gelfand-Cetlin polytope, the symplectic area is $1$.  Cho-Kim \cite[Theorem B]{chokim} give an alternate proof of the monotonicity of the Lagrangian torus $L_\lam$.

\begin{figure}[ht]
  \centering \scalebox{.8}{
\begingroup%
  \makeatletter%
  \providecommand\color[2][]{%
    \errmessage{(Inkscape) Color is used for the text in Inkscape, but the package 'color.sty' is not loaded}%
    \renewcommand\color[2][]{}%
  }%
  \providecommand\transparent[1]{%
    \errmessage{(Inkscape) Transparency is used (non-zero) for the text in Inkscape, but the package 'transparent.sty' is not loaded}%
    \renewcommand\transparent[1]{}%
  }%
  \providecommand\rotatebox[2]{#2}%
  \newcommand*\fsize{\dimexpr\f@size pt\relax}%
  \newcommand*\lineheight[1]{\fontsize{\fsize}{#1\fsize}\selectfont}%
  \ifx\svgwidth\undefined%
    \setlength{\unitlength}{261.06822337bp}%
    \ifx\svgscale\undefined%
      \relax%
    \else%
      \setlength{\unitlength}{\unitlength * \real{\svgscale}}%
    \fi%
  \else%
    \setlength{\unitlength}{\svgwidth}%
  \fi%
  \global\let\svgwidth\undefined%
  \global\let\svgscale\undefined%
  \makeatother%
  \begin{picture}(1,0.33897905)%
    \lineheight{1}%
    \setlength\tabcolsep{0pt}%
    \put(0,0){\includegraphics[width=\unitlength,page=1]{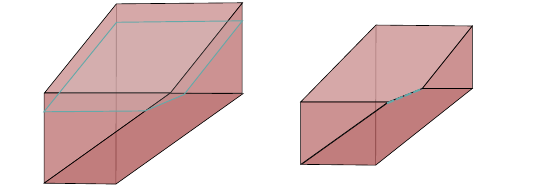}}%
    \put(-0.00289126,0.0914975){\color[rgb]{0,0,0}\makebox(0,0)[lt]{\lineheight{1.25}\smash{\begin{tabular}[t]{l}$X$\end{tabular}}}}%
    \put(0.45845338,0.31251509){\color[rgb]{0,0,0}\makebox(0,0)[lt]{\lineheight{1.25}\smash{\begin{tabular}[t]{l}$\eps_{1,1,\ldiags}$\end{tabular}}}}%
    \put(0.81238897,0.07259712){\color[rgb]{0,0,0}\makebox(0,0)[lt]{\lineheight{1.25}\smash{\begin{tabular}[t]{l}$X_{P_0}$\end{tabular}}}}%
    \put(0,0){\includegraphics[width=\unitlength,page=2]{fl3cut.pdf}}%
    \put(0.45903433,0.16888685){\color[rgb]{0,0,0}\makebox(0,0)[lt]{\lineheight{1.25}\smash{\begin{tabular}[t]{l}$\eps_{1,1,\rdiags}$\end{tabular}}}}%
    \put(0,0){\includegraphics[width=\unitlength,page=3]{fl3cut.pdf}}%
  \end{picture}%
\endgroup%
}
  \caption{Left : Two of the cuts in the multiple-cut of
    $X:=\Fl(1,1,1)$.  There is a similar cut for each of the facets of
    the Gelfand-Cetlin polytope.  Right:
    The Delzant polytope of the
    cut space $X_{P_0}$.}
  \label{fig:fl3cut}
\end{figure}

We consider a multiple cut $\PP$ of the flag variety given by a
collection of transversally intersecting single cuts along the
hypersurfaces
\begin{equation}
  \label{eq:gccuts}
  Y_{i,k,\rdiags}:=\{\Phi_{i,k} - \Phi_{i+1,k+1} = \eps_{i,k,\rdiags}\} \quad \text{resp.}  \quad Y_{i,k,\ldiags}:=\{\Phi_{i,k} - \Phi_{i+1,k} = \eps_{i,k,\ldiags}\} 
\end{equation}
for each $(i,k,\rdiag)$ resp.
$(i,k,\ldiag) \in \on{Facets}(\Fl(\ul n))$, where the parameters
\[(\eps_{i,k,\ldiags})_{i,k},(\eps_{i,k,\rdiags})_{i,k} \]
are positive, generic and small enough that the cuts in \eqref{eq:gccuts} bound a polytope $P_0$ such that 
\begin{itemize}
\item $P_0$ contains the Lagrangian $L_\lam$; and
\item the facets of $P_0$ are bijective to facets of the
  Gelfand-Cetlin polytope $\Delta_{\Fl(\ul n)}$, with each cut being
  parallel to the corresponding facet of $\Delta_{\Fl(\ul n)}$.
\end{itemize}
The cuts are indeed well-defined because the hypersurfaces in
\eqref{eq:gccuts} lie in $F^{\on{reg}}$, where the torus action is
well-defined.  The polytope $P_0$ is a Delzant polytope (see Example \ref{ex:fl3cut} below) and the cut space $\ol X_{P_0}$ is a toric manifold.  For any $\theta \in \on{Facets}(\Fl(\ul n))$, let
\begin{equation}
  \label{eq:ptheta}
  P_\theta \in \PP  
\end{equation}
denote the top-dimensional polytope whose intersection with $P_0$ is
the facet parallel to $\theta$.  We also denote
$P_{0\theta}:=P_0 \cap P_\theta \in \PP$.  For a generic tamed almost
complex structure, the component $\ol X_{P_\theta}$ is a fibration
\begin{equation}
  \label{eq:xptheta-fib}
  \P^1 \to \ol X_{P_\theta} \xrightarrow{\pi_\theta} \ol X_{P_{0\theta}}
\end{equation}
by $J$-holomorphic spheres, which we justify below after Example \ref{ex:fl3cut}.
The map $\pi_\theta$ is such that for any
$x \in \ol X_{P_{0\theta}}$, $\pinv(x)$ is the unique $J$-holomorphic sphere through $x$ 
of minimal area, for cuts along hyperplanes sufficiently close to the facets.  Indeed, 
the fan for the polytope $P_{\theta}$ fibers over 
the fan for $P_{0 \theta}$, with fiber the fan
for $\P^1$. 

\begin{example}
  \label{ex:fl3cut}
  The genericity of the cutting parameters
  $\{\eps_{i,k,\ldiags},\eps_{i,k,\rdiags}\}$ ensures that the moment
  polytope of the cut space $X_{P_0}$ containing $L_\lam$ is Delzant,
  that is, it does not have any non-simplicial faces.  For example, in
  $\Fl(1,1,1)$ the non-simplicial corner corresponds to the
  simultaneous solution of
  \[\Lam_2=\Phi_{2,1}, \quad \Lam_2=\Phi_{2,2}, \quad
    \Phi_{2,1}=\Phi_{1,1}, \quad \Phi_{2,2}=\Phi_{1,1}, \]
  (see Example \ref{ex:singstrat}). The genericity of
  $\{\eps_{i,k,\ldiags},\eps_{i,k,\rdiags}\}$ implies that the set of
  equations
  \[\Lam_2-\Phi_{2,1}=\eps_{2,1,\rdiags}, \quad
    \Lam_2-\Phi_{2,2}=\eps_{2,2,\ldiags}, \quad
    \Phi_{2,1}-\Phi_{1,1}=\eps_{1,1,\ldiags}, \quad
    \Phi_{2,2}-\Phi_{1,1}=\eps_{1,1,\rdiags} \]
  does not have a solution. If, for example, we assume
  $\eps_{2,1,\rdiags}, \eps_{2,2,\ldiags} \ll \eps_{1,1,\ldiags},
  \eps_{1,1,\rdiags}$, then the polytope $P_0$ is as in Figure
  \ref{fig:fl3cut}.
\end{example}

\begin{remark}
  Note that $\ol X_{P_{0\theta}}$ is not the inverse image of the
  facet $\theta$ of the Gelfand-Cetlin polytope. Rather, the space
  $\ol X_{P_{0\theta}}$ is the inverse image of a facet of the toric manifold
  $\ol X_{P_0}$, which is a toric smoothing of the Gelfand-Cetlin toric variety.
  The subspace $\Phinv(\theta) \subset \Fl(\ul n)$
  is, in general, not a smooth divisor.
\end{remark}

We continue the proof of Theorem \ref{thm:flagpot}.  Unobstructedness is a consequence of the monotonicity of the Lagrangian: Let $J^\nu$ be a family of neck-stretching almost complex structures on the flag manifold $X=\Fl(\ul n)$.  Let $u_\nu$ be a $J^\nu$-holomorphic disk with a single output $x \in \crit(F:L_\lam \to \R)$. By monotonicity of $(X,L_\lam)$ the disk part of $u_\nu$ has Maslov index $I(u_\nu) \geq 2$. The index of
$u_\nu$ (including the treed trajectory) is
\[\dim(L_\lam) + 1 - (\dim(L_\lam) - i(x)) + I(u_\nu) - \Aut(\mathbb
  D^2)=0.\]
Here $i(x)$ is the Morse index of the critical point $x$ of a Morse
function $F : L_\lam \to \R$ on the Lagrangian. The only possibility
then is $I(u_\nu)=2$ and $i(x)=\dim(L_\lam)$, that is, $x^\blackt$ is
the maximum point of $F$. Therefore
\[\m^0_{CF(L_\lam,J_\nu)}=W x^\blackt, \quad \text{for some
    $W \in \Lam_{>0}$}.\]
Weak unobstructedness now follows from Lemma \ref{lem:unobs-cond}.  Note
that since $(X,L_\lam)$ is monotone, the 
count of disks contributing to
$\m^0$ is the same for all $J^\nu$.   The
count also stays the same in the limit broken almost complex structure $\JJ$ on $\XX_\PP$.

We compute the potential $W$ by counting $\JJ$-holomorphic broken disks in $\XX$.  For any $\theta \in \on{Facets}(\Fl(\ul n))$, there is
a broken holomorphic disk $u_\theta$ of Maslov index $2$, that
intersects the divisor $\Phinv(\theta)$.  Indeed,
\begin{itemize}
\item  there is a single holomorphic sphere
  $u_{\theta,1} : \P^1 \to \ol X_{P_\theta}$ through $p$ of area 
  $\eps_\theta$, 
 where $\eps_\theta$ is the area of the fibers
 in the fibration in \eqref{eq:xptheta-fib}; and
\item there is a disk $u_{\theta,0}$ in
  the toric manifold $\ol X_{P_0}$
  of the form in Cho-Oh \cite{chooh:toric}, 
    with $I(u_{\theta,0})=2$,
  $\om(u_{\theta,0})=1-\eps_\theta$, and that intersects the relative
  divisor $\ol X_{P_{0\theta}}$ at a single point, say $p$.
\end{itemize}
Therefore $u_\theta=(u_{\theta,0},u_{\theta,1})$ is a broken disk.  See
Figure \ref{fig:fl3disk}.  Each of these disks contributes $+1$ to the
potential, since the corresponding unbroken disk is similar to a
Blaschke disk in a toric variety, which for the appropriate choice of
spin structure on $L_\lam$, has a positive orientation sign, see
\cite[p22]{chooh:toric}.

 \begin{figure}[ht]
   \centering \scalebox{.8}{
\begingroup%
  \makeatletter%
  \providecommand\color[2][]{%
    \errmessage{(Inkscape) Color is used for the text in Inkscape, but the package 'color.sty' is not loaded}%
    \renewcommand\color[2][]{}%
  }%
  \providecommand\transparent[1]{%
    \errmessage{(Inkscape) Transparency is used (non-zero) for the text in Inkscape, but the package 'transparent.sty' is not loaded}%
    \renewcommand\transparent[1]{}%
  }%
  \providecommand\rotatebox[2]{#2}%
  \newcommand*\fsize{\dimexpr\f@size pt\relax}%
  \newcommand*\lineheight[1]{\fontsize{\fsize}{#1\fsize}\selectfont}%
  \ifx\svgwidth\undefined%
    \setlength{\unitlength}{297.59657834bp}%
    \ifx\svgscale\undefined%
      \relax%
    \else%
      \setlength{\unitlength}{\unitlength * \real{\svgscale}}%
    \fi%
  \else%
    \setlength{\unitlength}{\svgwidth}%
  \fi%
  \global\let\svgwidth\undefined%
  \global\let\svgscale\undefined%
  \makeatother%
  \begin{picture}(1,0.30640679)%
    \lineheight{1}%
    \setlength\tabcolsep{0pt}%
    \put(0,0){\includegraphics[width=\unitlength,page=1]{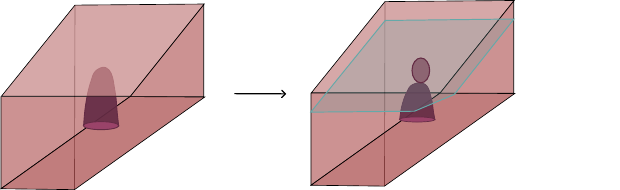}}%
    \put(0.40041429,0.17431833){\color[rgb]{0,0,0}\makebox(0,0)[lt]{\lineheight{1.25}\smash{\begin{tabular}[t]{l}Cut\end{tabular}}}}%
    \put(0.71831761,0.1185096){\color[rgb]{0,0,0}\makebox(0,0)[lt]{\lineheight{1.25}\smash{\begin{tabular}[t]{l}$u_{\theta,0}$\end{tabular}}}}%
    \put(0.69475435,0.22148313){\color[rgb]{0,0,0}\makebox(0,0)[lt]{\lineheight{1.25}\smash{\begin{tabular}[t]{l}$u_{\theta,1}$\end{tabular}}}}%
  \end{picture}%
\endgroup%
}
   \caption{A broken disk of Maslov index two formed by cutting the
     complete flag variety $\Fl(1,1,1)$.}
   \label{fig:fl3disk}
 \end{figure}

 Next, we show that there are no other broken disks whose gluing has
 Maslov index two by using the fact that the symplectic area of the
 broken disk is half the Maslov index of its gluing.  Let $u$ be any
 broken disk whose gluing has Maslov index $2$.  The disk component
 $u_0:\mathbb D^2 \to \ol X_{P_0}$ has Maslov index two, otherwise the
 area of the disk is at least $2-\eps_{\theta_1}-\eps_{\theta_2}$ for
 some $\theta_1,\theta_2 \in \on{Facets}(\Fl(\ul n))$. Assuming
 $\eps_{\theta_1}, \eps_{\theta_2} \ll 1$, we get
 $\om(u) \geq \om(u_{\theta_0}) >1$, which contradicts monotonicity
 \eqref{eq:monot-constt}. If $u_0$ intersects the relative divisor
 $\ol X_{P_{0\theta}}$, then the sphere in $\ol X_{P_\theta}$ has to
 be $u_{\theta,1}$ as above. Indeed, if $\eps_\theta$ is small enough,
 all other $\JJ$-holomorphic spheres in $\ol X_{P_\theta}$ have larger
 area, and then $\om(u)>2$.  This contradicts monotonicity.

 The disk potential of $CF_\br(L_\lam)$
on the broken manifold $\XX$  is
 \begin{multline}
   \label{eq:grwmonot}
   W(y_{i,k})_{i,k}= \left (\sum_{k \leq i : (i+1,k) \in
       \on{Free}(\Fl(\ul n))} \frac{y_{i+1,k}}{y_{i,k}}
     + \sum_{k \leq i : (i+1,k+1) \in \on{Free}(\Fl(\ul n))}
     \frac{y_{i,k}}{y_{i+1,k+1}} \right.\\ \left. + \sum_{1 \leq j
       \leq r-1} y_{\li_j, \k_j}
     + \sum_{2 \leq j \leq r} \frac 1 {y_{\li_j,\k_j}} \right)q.
 \end{multline}
 Here, we recall that $\on{Free}(\Fl(\ul n))$ is the set of variables
 in the Gelfand-Cetlin system of $\Fl(\ul n)$ that are not fixed by
 the interlacing inequalities, and for any $j$, $(\li_j,\k_j)$ is the
 lowest element in the $j$-th block of non-free variables in the
 Gelfand-Cetlin system, as in \eqref{eq:blocklowest}.  This finishes
 the proof of Theorem \ref{thm:flagpot}.
\end{proof}
 
\section{Counting curves in the plane}
\label{sec:mikh} 

Mikhalkin's tropical curve counting \cite{Mikh:R2} is the first
example of applying tropical techniques to solving an enumerative
problem in algebraic geometry.  A tropical curve in \cite{Mikh:R2} is
a map from a graph to $\R^2$ that satisfies a balancing condition at
nodes; and these curves correspond to holomorphic curves in $\P^2$. In
this section, we show a correspondence between Mikhalkin's tropical
curves and broken maps.  We consider a degeneration of $\P^2$ by a
multiple cut so that Mikhalkin's tropical curves correspond to
tropical graphs of broken maps. In particular, Mikhalkin's curve
represents the part of the tropical graph that lies in the dual
polytope $P_0^\dual$ of a zero-dimensional polytope $P_0$ occurring in
the multiple cut.
In a sequel, we extend the formula to counts in almost toric
manifolds.

\begin{definition}\label{def:mikh-curve}
  A \em{tropical curve} is a map 
\[ h: \Gamma \to \R^2 \] %
from a graph $\Gamma$ (some of whose edges $e \in \Edge(\Gamma)$, called \em{{leaves}},  are incident on just one vertex
$v \in \Ver(\Gamma)$ instead of two) to $\R^2$ such that
\begin{itemize}
\item any edge $e \in \Edge(\Gamma)$ maps to a  line
    parallel  to a vector 
  $\cT(e) \in \Z^2$ which is called the \em{{direction} of the edge};  and
\item at any vertex $v \in \Ver(\Gamma)$ a \em{balancing
      condition} is satisfied, namely that the sum of the {direction}s
    $\cT(e)$ of the edges $e \ni v$ emanating out of $v$ is equal to $0$ :
  \begin{equation}
    \label{eq:orig-balance}
    \sum_{v \in e}\cT(e)=0.
  \end{equation}
\end{itemize}
The \em{multiplicity $\mu_e$ of an edge} $e$ is a positive integer
such that $\frac {\cT(e)}{\mu_e}$ is a primitive vector
$w_e \in \Z^2$, which is called the \em{primitive {direction}}.
\footnote{In Mikhalkin's notation in \cite{Mikh:R2}, the primitive
  {direction} $w_e$ is called \em{slope}.} \index{Primitive direction}
\index{Multiplicity of an edge} If $e \in \Edge(\Gamma)$ is a leaf,
$h(e)$ is a semi-infinite line, otherwise $h(e)$ is a line segment.
\end{definition}

We introduce basic terminology for tropical 
curves in the plane. Let $\Delta \subset \R^2$ be a simple polytope. A tropical curve
$h: \Gamma \to \R^2$ is \em{adapted to $\Delta$} if the {direction}
$\cT(e)$ of any leaf $e \in \Edge(\Gamma)$ is an outward normal of a
facet of $\Delta$. The discussion in Mikhalkin
focuses \label{rep:focuses} on $\P^2$ with moment polytope
$\Delta_{\P^2}$ whose outward normals are $\nu=(-1,0), (0,-1),
(1,1)$.  For a tropical curve generically adapted to $\Delta_{\P^2}$,
the \em{degree} of the curve is defined as the number of {leaves}
(counted with multiplicity) that are normal to a fixed facet.  By the
balancing condition, the number is the same for any of the three
facets. The \em{genus} of a tropical curve is the first Betti number
$b_1(\Gamma)$ of the domain graph $\Gamma$.

\begin{definition} \label{def:mi-generic} A set of points
  $x_1,\dots,x_k$ is \em{tropically generic} if any genus $g$
  tropical curve $h: \Gamma \to \R^2$ whose image $h(\Gamma)$ contains
  $x_1,\dots,x_k$,
  \begin{enumerate}
\item all the vertices of $\Gamma$ are trivalent, 
\item the images of vertices $h(\Ver(\Gamma))$ are disjoint from
  $x_1,\dots,x_k$, and
\item the {direction} $\cT(e)$ of any leaf $e \in \Edge(\Gamma)$ is
  primitive.
\end{enumerate}  
\end{definition}
\begin{definition}
  {\rm(Multiplicity of Mikhalkin's graphs)} Let $\Gamma$ be a
  Mikhalkin graph with only trivalent vertices, and let $v$ be a
  vertex in $\Gamma$ whose incident edges have {direction}s $\mu_1$,
  $\mu_2$, $\mu_3$. The multiplicity of the vertex $v$ is
  \[\mult(v):=|\det(\mu_1 \mu_2)|,\]
  which is the area of the parallelogram spanned by the vectors
  $\mu_1$, $\mu_2$. \footnote{Any two of the three vectors $\mu_1$,
    $\mu_2$, $\mu_3$ may be used for the definition. The result is the
    same because of the balancing condition $\sum_i \mu_i=0$.}  The
  multiplicity of the graph $\Gamma$ is
  \[\mult(\Gamma):=\prod_{v \in \Ver(\Gamma)} \mult(v).\]
\end{definition}

Mikhalkin \cite{Mikh:R2} shows that the number of curves in $\P^2$ of
degree $d$ and genus $g$ passing through $3d - 1 + g$ tropically
generic points can be computed by counting tropical curves with
multiplicity.

\begin{figure}[ht]
  \centering \scalebox{.8}{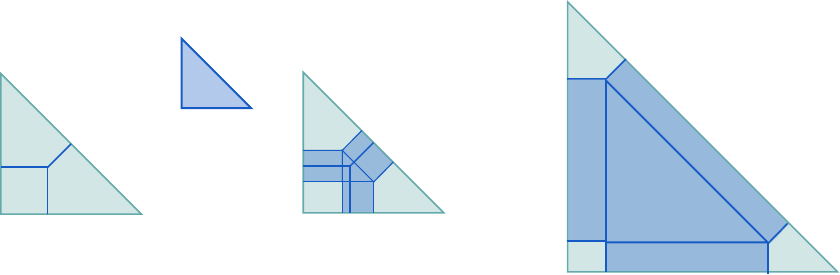}
  \caption{A multiple cut of $\P^2$.}
  \label{fig:stretchp2}
\end{figure}

The following result, Proposition 
\ref{prop:tropbrok} below,  says that in the genus zero case, there is a
bijective correspondence between Mikhalkin graphs adapted to
$\Delta_{\P^2}$ and broken maps in $\P^2$ with respect to the multiple
cut shown in Figure \ref{fig:stretchp2}.  This multiple cut is not
``allowed'' because the torus action corresponding to the cut $P_1$ is
not free at the $S^1$-orbit where $P_1$ intersects the toric divisor; a similar example was
given in Figure \ref{fig:orb-cut}.  However, the broken maps
corresponding to Mikhalkin graphs produced in our proof are not
affected by this issue, as we later explain in Remark
\ref{rem:orb-fix}.
  
Before stating the bijection, we describe it intuitively and 
introduce notation for point constraints in a broken manifold.   The degeneration corresponding to $\PP$ has the effect of enlarging the almost complex structure in the neighbourhood of $\Phinv(P_0)$ where $P_0 \in \PP$ is the point polytope. The dual polytope of $P_0$ is
\begin{equation}
  \label{eq:mikhpoly}
  \Delta_{\P^2} \simeq P_0^\dual.  
\end{equation}
For a broken map in $\P^2$, the Mikhalkin graph is the purely
tropical part, that is, it consists of the part of the subgraph of the
tropical graph lying in $P_0^\dual$.  For these components the map part
is fully determined by the graph since $P_0$ is $0$-dimensional.  For
each of the one-dimensional polytopes $P_i$, $i=1,2,3$, in the
multiple cut, the cut space $X_{P_i}$ is a toric divisor of $\P^2$
minus a neighborhood of fixed points. The thickening $\XX_{P_i}$ is a
trivial $\P^1$-fibration over $X_{P_i}$, and corresponding to a leaf
of the Mikhalkin graph normal to the $i$-th facet, the
broken map has a map component mapping to a fiber of $\XX_{P_i}$.
The point constraints for the broken map lie in the neck piece
$\XX_{P_0}$; these points in the broken manifold are written using the
following notation:

\begin{definition}
  {\rm(An $\XX$-point)} Given a broken manifold $\XX$, an \em{
    $\XX$-point} is a tuple $\ul x:=(P_x,\cT_x,x)$ consisting of a
  polytope $P_x \in \PP$, a tropical position $\cT_x \in P_x^\dual$,
  and a point $x \in \XX_{P_x}$.  We say that a broken map
  $u:(C,z) \to \XX$ satisfies the point constraint
  $u(z)=\ul x=(P_x,\cT_x,x)$ if the component $C_v \subset C$
  containing the marking $z$ satisfies
\[P(v)=P_x, \quad \cT(v)=\cT_x, \]
and $u(z)=x$.
\end{definition}

\begin{proposition} \label{prop:tropbrok} {\rm(Tropical curves as
    broken maps)} Let $\XX$ be the broken manifold obtained by
  applying the multiple cut in Figure \ref{fig:stretchp2} to $\P^2$.
  Consider a set of $\XX$-points $\ul x_1,\dots, \ul x_{3d-1} \in \XX$
  lying in the piece $\XC_{P_0} \subset \XX$. That is,
  $\ul x_i=(P_0,\cT_{x_i},x_i)$.  There is a bijective correspondence
  between the set of genus zero broken maps passing through the
  $\XX$-points $\{\ul x_i\}_i$ and the set of genus zero Mikhalkin
  graphs in $\R^2$ passing through the points
  $\{\cT_{x_i} \in \R^2\}_i$.
\end{proposition}

\begin{proof}
  Given a Mikhalkin graph $h: \Gamma \to \R^2$ adapted to
  $\Delta_{\P^2}$, we first construct a tropical graph (as in
  Definition \ref{def:tropgraph}) for the polyhedral decomposition
  $\PP$ of $\P^2$ from Figure \ref{fig:stretchp2}.  The tropical graph
  is an augmentation of $\Gamma$ and is denoted by
  \begin{equation}
    \label{eq:gamma-aug}
  \Gamma_{\aug} \supset \Gamma.  
  \end{equation}
  For all vertices $v \in \Ver(\Gamma)$, we assign $P(v):=P_0$. 
  For every leaf $e \in \Edge(\Gam)$, the
  augmented graph $\Gamma_{\aug}$ contains an extra vertex $v_e$ on
  which $e$ is incident. If $e$ in $\Gamma$ intersects the facet
  $F_i \subset \Delta_{\P^2}$, then the polytope $P(v_e)$ is $P_i$.
  The set of univalent vertices thus added is denoted by
  \[\Ver_1(\Gamma_\aug).\]
  Lastly, if an edge $e \in \Edge(\Gamma)$ of the Mikhalkin graph
  passes through a point constraint $x \in \R^2$ then in $\Gamma_\aug$
  we subdivide $e$ into two edges $e_1$ and $e_2$ by inserting a new
  vertex $v_x$ with $P(v_x):=P_0$.    The set of vertices
  $v \in \Ver(\Gamma_\aug)$ with markings is denoted by
  \[\Ver_\to(\Gamma_\aug).\]
  Thus,
  \[\Ver(\Gamma_\aug)= \Ver(\Gamma) \cup \Ver_\to(\Gamma_\aug) \cup \Ver_1(\Gamma_\aug).\]
  The {direction}s of edges in $\Gamma_\aug$ are the same as their {direction}s in
  $\Gamma$. The tropical positions for the vertices in $\Gamma_\aug$
  are given by the map $h:\Gamma \to \R^2$ on the Mikhalkin graph.  See
  Figure \ref{fig:mikh} for an example.

  We describe a way of orienting the edges in $\Gamma_\aug$ which is
  useful in the rest of the proof, called the \em{marking
    orientation},  that satisfies the
  following conditions:
  \begin{itemize}
  \item For a vertex $v \in \Ver_{\to}(\Gamma_\aug)$ containing a
    marking, both incident edges (corresponding to nodes) are
    outgoing.
  \item For a trivalent vertex $v \in \Ver(\Gamma)$ occurring in
    Mikhalkin's graph, there are two incoming and one outgoing edge.
  \item For a vertex $v \in \Ver_1(\Gamma_\aug)$ the only incident
    edge is incoming.
  \end{itemize}
  It is easy to verify that marking orientations can be assigned to
  all the edges of $\Gamma_\aug$, see Figure \ref{fig:mikh} for an
  example.

  The marking orientation has the following interpretation: Consider
  an ordering of the vertices of $\Gamma_\aug$ that respects the
  orientation, that is, an edge $(v_+,v_-)$ points towards the vertex
  $v_-$ that occurs later in the ordering.  For a vertex $v$, if the
  tropical positions of vertices prior to $v$ are fixed, then there is
  a unique possible tropical position for $v$.

  Next, we describe the map at each vertex.  Choose an ordering of the
  vertices $v \in \Ver(\Gamma_\aug)$ that respects the marking
  orientation as described in the previous paragraph, and define the
  maps $(u_v)_v$ in that order.  For a vertex $v \in \Ver(\Gamma)$
  whose incident edges have {direction}s $\mu_1$, $\mu_2$, $\mu_3$ the map
  is
  \[u_v : \P^1 \bs \{0,1,2\} \to \XC_{P_0} \simeq (\C^\times)^2, \quad z \mapsto c z^{\mu_1}(z-1)^{\mu_2}(z-2)^{\mu_3},\]
  where the domain is parametrized so that $0,1,2 \in \P^1$ are lifts
  of nodal points. The constant $c \in (\C^\times)^2$ is chosen so
  that the map satisfies the matching constraint at the nodal points
  corresponding to the two incoming edges. The constant $c$ is not
  unique; but, by Lemma \ref{lem:mikhsym}, two distinct values produce
  broken maps that are related to each other by an element of the
  tropical symmetry group.  For a vertex $v \in \Ver_\to(\Gamma)$ with
  a marking, and incident edges with {direction}s $\mu,-\mu$, the map is
  a trivial cylinder 
  \[u_v : \P^1 \bs \{0,\infty\} \to \XC_{P_0} \simeq (\C^\times)^2, \quad z \mapsto c z^\mu\]
  with direction $\mu \in \Z^2$, 
  and the constant $c$ is determined by the tropical point constraint
  at the marking. To define the map $u_v$ for a vertex
  $v \in \Ver_1(\Gam_\aug)$ corresponding to a leaf of the Mikhalkin
  graph, we first observe that the manifold $\XC_{P(v)}$ is a
  $\C^\times$-fibration 
  \begin{equation}
    \label{eq:xpifib}
    \C^\times \to \XC_{P(v)} \xrightarrow{\pi_{P(v)}} X_{P(v)}  
  \end{equation}
  where the manifold $X_{P(v)}$ is a subset of a torus-invariant
  divisor of $X$.  The map $u_{v_e}$ corresponding to the vertex $v_e$
  is an injective map to the fiber of \eqref{eq:xpifib}.

\begin{figure}[ht]
  \centering \scalebox{.8}{
\begingroup%
  \makeatletter%
  \providecommand\color[2][]{%
    \errmessage{(Inkscape) Color is used for the text in Inkscape, but the package 'color.sty' is not loaded}%
    \renewcommand\color[2][]{}%
  }%
  \providecommand\transparent[1]{%
    \errmessage{(Inkscape) Transparency is used (non-zero) for the text in Inkscape, but the package 'transparent.sty' is not loaded}%
    \renewcommand\transparent[1]{}%
  }%
  \providecommand\rotatebox[2]{#2}%
  \newcommand*\fsize{\dimexpr\f@size pt\relax}%
  \newcommand*\lineheight[1]{\fontsize{\fsize}{#1\fsize}\selectfont}%
  \ifx\svgwidth\undefined%
    \setlength{\unitlength}{310.17977016bp}%
    \ifx\svgscale\undefined%
      \relax%
    \else%
      \setlength{\unitlength}{\unitlength * \real{\svgscale}}%
    \fi%
  \else%
    \setlength{\unitlength}{\svgwidth}%
  \fi%
  \global\let\svgwidth\undefined%
  \global\let\svgscale\undefined%
  \makeatother%
  \begin{picture}(1,0.36219194)%
    \lineheight{1}%
    \setlength\tabcolsep{0pt}%
    \put(0,0){\includegraphics[width=\unitlength,page=1]{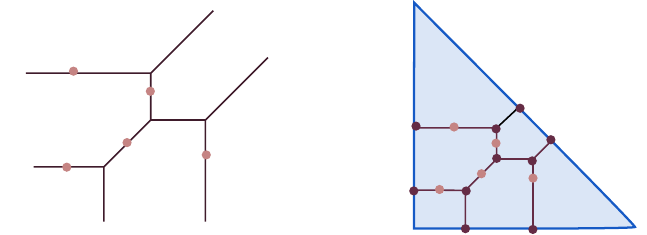}}%
    \put(-0.00374657,0.1708673){\color[rgb]{0,0,0}\makebox(0,0)[lt]{\lineheight{1.25}\smash{\begin{tabular}[t]{l}$\Gamma$\end{tabular}}}}%
    \put(0.89620863,0.16394636){\color[rgb]{0,0,0}\makebox(0,0)[lt]{\lineheight{1.25}\smash{\begin{tabular}[t]{l}$\Gamma_{\on{aug}}$\end{tabular}}}}%
    \put(0,0){\includegraphics[width=\unitlength,page=2]{mikh.pdf}}%
  \end{picture}%
\endgroup%
}
  \caption{A Mikhalkin graph $\Gamma$ of degree $2$ in $\R^2$ through
    5 generic points and the corresponding tropical
    graph $\Gamma_\aug$ of a broken map in the dual complex $B^\dual$
    from Figure \ref{fig:stretchp2}.  In $\Gamma_\aug$ the lighter vertices contain marked points mapping to the point constraints, and the arrows on edges indicate the marking orientation. The lighter vertices are stable since they have two incident edges and a marking.}
  \label{fig:mikh}
\end{figure}

Conversely, consider a broken map with tropical graph $\Gamma_u$
passing through the tropical point constraints. To prove that the
broken map arises from a Mikhalkin graph, it is enough to show that
\begin{enumerate}
\item there are no components mapping to $\XX_P$ with $\dim(P)=2$; and 
\item \label{part:p1map} if a component $u_v$ maps to $\XX_P$ with
  $\dim(P)=1$, then it is a simple map to a $\P^1$-fiber of
  \eqref{eq:xpifib}, that is, in the trivial $\P^1$-fibration
  $\ol \XX_P \simeq \ol X_P \times F$, the compactified map
  $u_v : \P^1 \to \ol \XX_P $ is homologous to
  $\ol X_P \times \{ \on{point}\}$.
\end{enumerate}
\label{rep:bothclaims} Both claims follow from considering dimensions
of moduli spaces
as follows: Markings lie on components mapping to $\ol \XC_{P_0}$.
Let $\Gamma_0 \subset \Gamma_u$ be the subgraph spanned by vertices
$v$ with $P(v)=P_0$.  Each connected component of $\Gamma_0$ is a
Mikhalkin graph (where vertices in $\Ver_{\to}$ are treated as point
constraints), and therefore, $\Gamma_0$ is necessarily connected.
Consider a vertex $v \notin \Gamma_0$ that has an edge $e$ incident on
$v' \in \Ver(\Gamma_0)$. Then, the map $u_v$ has a point constraint at
the node $w_e$ and no other point constraints.  Furthermore, the
intersections with toric divisors are simple, so the only possibility
is that $v$ does not have any incident edge besides $e$,
$\dim(P(v))=1$ and $v$ is of the form \eqref{part:p1map} above.  Thus,
we have shown that $\Gamma_0$ is the Mikhalkin graph corresponding to
the map $u$, vertices of form \eqref{part:p1map} lie in
$\Ver_1((\Gamma_0)_\aug)$, and $\Gamma_u=(\Gamma_0)_\aug$.

 Finally, by Lemma \ref{lem:mikhsym}, 
 the multiplicity \cite{Mikh:R2} of the Mikhalkin graph $\Gamma$ is equal 
 to the size of the tropical symmetry group of the broken
 map. 
\end{proof}

\begin{remark}
  \label{rem:orb-fix}
  The multiple cut of $\P^2$ used in Proposition \ref{prop:tropbrok}
  is not an ``allowed'' cut because at the shaded corner in Figure
  \ref{fig:fix-orb}, the $S^1$-action corresponding to the cut $P_1$
  is not free.
However, since none of the broken maps have a node at this orbifold point, our results are still valid. That is, the moduli space of all the broken maps we encountered are transversally cut out, and the set of broken maps (since they are index zero) correspond bijectively to unbroken maps. To avoid this issue altogether, one may alter the multiple cut to $\PP_1$ shown in Figure \ref{fig:fix-orb}. The broken maps corresponding to Mikhalkin graphs in $\XX_{\PP}$ naturally correspond to broken maps $\XX_{\PP_1}$; an example is shown in Figure \ref{fig:fix-orb2}. 
\end{remark}

\begin{figure}[ht]
  \centering \scalebox{.8}{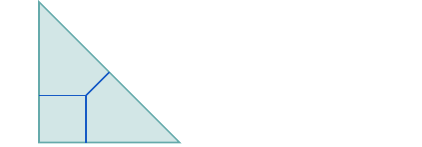}
  \caption{The shaded orbifold point in the multiple cut $\PP$ is avoided in the multiple cut $\PP_1$.}
  \label{fig:fix-orb}
\end{figure}
\begin{figure}[ht]
  \centering \scalebox{.8}{
\begingroup%
  \makeatletter%
  \providecommand\color[2][]{%
    \errmessage{(Inkscape) Color is used for the text in Inkscape, but the package 'color.sty' is not loaded}%
    \renewcommand\color[2][]{}%
  }%
  \providecommand\transparent[1]{%
    \errmessage{(Inkscape) Transparency is used (non-zero) for the text in Inkscape, but the package 'transparent.sty' is not loaded}%
    \renewcommand\transparent[1]{}%
  }%
  \providecommand\rotatebox[2]{#2}%
  \newcommand*\fsize{\dimexpr\f@size pt\relax}%
  \newcommand*\lineheight[1]{\fontsize{\fsize}{#1\fsize}\selectfont}%
  \ifx\svgwidth\undefined%
    \setlength{\unitlength}{407.70397757bp}%
    \ifx\svgscale\undefined%
      \relax%
    \else%
      \setlength{\unitlength}{\unitlength * \real{\svgscale}}%
    \fi%
  \else%
    \setlength{\unitlength}{\svgwidth}%
  \fi%
  \global\let\svgwidth\undefined%
  \global\let\svgscale\undefined%
  \makeatother%
  \begin{picture}(1,0.21773126)%
    \lineheight{1}%
    \setlength\tabcolsep{0pt}%
    \put(0,0){\includegraphics[width=\unitlength,page=1]{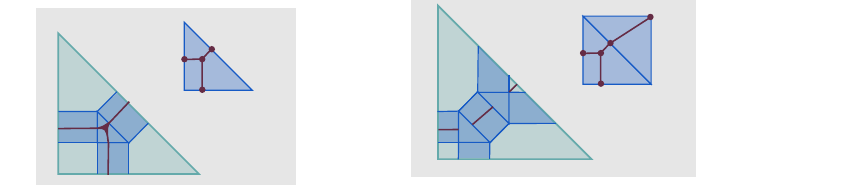}}%
    \put(0.24857699,0.01874322){\makebox(0,0)[lt]{\lineheight{1.25}\smash{\begin{tabular}[t]{l}$u$\end{tabular}}}}%
    \put(0.18175329,0.15499711){\makebox(0,0)[lt]{\lineheight{1.25}\smash{\begin{tabular}[t]{l}$P_0^\dual$\end{tabular}}}}%
    \put(0.65043542,0.160156){\makebox(0,0)[lt]{\lineheight{1.25}\smash{\begin{tabular}[t]{l}$P_0^\dual$\end{tabular}}}}%
    \put(0.7710661,0.1605165){\makebox(0,0)[lt]{\lineheight{1.25}\smash{\begin{tabular}[t]{l}$P_2^\dual$\end{tabular}}}}%
    \put(0.71128846,0.03177241){\makebox(0,0)[lt]{\lineheight{1.25}\smash{\begin{tabular}[t]{l}$u'$\end{tabular}}}}%
    \put(0,0){\includegraphics[width=\unitlength,page=2]{fix-orb2.pdf}}%
    \put(-0.00355101,0.09094677){\makebox(0,0)[lt]{\lineheight{1.25}\smash{\begin{tabular}[t]{l}$\PP$\end{tabular}}}}%
    \put(0.83450154,0.09167684){\makebox(0,0)[lt]{\lineheight{1.25}\smash{\begin{tabular}[t]{l}$\PP_1$\end{tabular}}}}%
  \end{picture}%
\endgroup%
}
  \caption{A broken map $u$ in $\XX_{\PP}$ corresponds to a broken map $u'$ in $\XX_{\PP_1}$. In both cases, the Figure shows a representation of the map and the tropical graph.}
  \label{fig:fix-orb2}
\end{figure}

\begin{remark}\label{rem:whytropical}
  {\rm(Origin of the word ``tropical'')} The connection between broken
  maps and Mikhalkin graphs explains why the objects of study in this
  book are called ``tropical'', as we elaborate in this remark.  The
  \em{tropical semi-ring} $\R_{\on{trop}}$ is the set $\R$ with the
  addition and multiplication operations defined as $\max$ and $+$
  respectively.  For example, a tropical quadratic polynomial in
  $\R_{\on{trop}}$ in variables $x$ and $y$ has the form
  \begin{equation}
    \label{eq:deg2poly}
    f(x,y)=\max\{a_{00}, a_{10} + x, a_{01}+y, a_{11} + x +y, a_{20}+2x, a_{02} +2y\}
  \end{equation}
for some real constants 
$a_{00}, a_{10}, a_{01}, a_{11}, a_{20}, a_{02}.$
  A \em{tropical hypersurface} is the zero set of a tropical
  polynomial in $\R^n$, and is defined to be the set of points where
  the function is not linear. Thus tropical hypersurfaces are
  complexes of polytopes.
  \begin{figure}[ht]
    \centering \scalebox{.8}{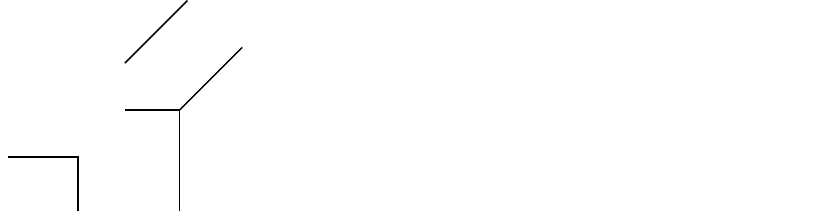}
    \caption{Tropical curves of degree $2$ that arise as zero sets of
      $f$ in \eqref{eq:deg2poly}, and the maximum monomial in regions
      of $\R^2$.}
    \label{fig:degree2}
  \end{figure}
  Mikhalkin's tropical curve in $\R^2$ is the parametrized version of
  a tropical hypersurface in $\R^2$. 
  \end{remark}

\chapter{Broken manifolds}\label{chap:bsymp}
In this Chapter, we review the multiple cut construction and the
associated degenerations of almost complex structures.  Our approach
is much less general than, for example, Parker \cite{bp1}, but we wish
to be completely explicit.

In the first half of the chapter we describe cut spaces and broken
manifolds as symplectic manifolds.  The multiple cut is a
generalization of the symplectic cut operation of Lerman \cite{le:sy2}, which we
call ``single cut''.  \label{rep:gompf} A multiple cut is defined
on a symplectic manifold equipped with a tropical moment map and a
polyhedral decomposition $\PP$, and yields a symplectic cut space
$X_P^\om$ \footnote{We use a superscript $\om$ to denote symplectic
  cut spaces and symplectic broken manifold to distinguish them from
  the corresponding almost complex cut spaces and broken manifolds
  defined in the second half of the chapter.} corresponding to every
polytope $P$ in the decomposition $\PP$.  The broken manifold consists
of top-dimensional cut spaces and thickenings of the lower-dimensional
cut spaces, denoted by $\XX_P^\om$ (see Section
\ref{sec:symp-broken}). The definition of the thickenings $\XX_P^\om$
requires the additional datum of a dual complex, Definition
\ref{def:dualcomplex} below.

In the second half of the chapter (starting from Section
\ref{sec:cylacs}), we describe broken manifolds and cut spaces as
almost complex manifolds with cylindrical almost complex
structures. The manifold $X$ is equipped with a family of
neck-stretched almost complex structures $J^\nu$. In the infinite neck
length limit $\nu \to \infty$, the almost complex manifolds
$X^\nu:=(X,J^\nu)$ degenerate into broken almost complex manifolds,
denoted by $\XC_P$. The broken almost complex manifold $\XC_P$ is
diffeomorphic to the symplectic broken manifold $\XX_P^\om$, but there
is no canonical embedding where the symplectic form tames the almost
complex structure. 

The definition of the broken manifold $\XC_P$ as the degenerate limit
gives a natural family of identifications 
preserving the almost complex structures and 
fibered structures 
between subsets of 
the neck-stretched manifold $X^\nu$ with a $P$-cylindrical  
 almost complex structure 
and the broken manifold
$\XX_P$.  
These identifications, called \em{translations},
are defined in Section \ref{sec:trans-def}, and are analogous to
`target rescalings' of Ionel \cite{ion:nc}.

Finally, in Section \ref{sec:sympcyl} we prove the existence of a
cylindrical structure on the symplectic form in the neighborhood of
cut loci in $(X,\om_X)$. The cylindrical structure underlying
neck-stretched almost complex structure is chosen to be the same as
the cylindrical structure on the symplectic form.  Choosing the
cylindrical structures in this manner will allow us (later in Chapter
\ref{chap:hof}) to construct families of diffeomorphisms
from $X$ to itself for which the pullback of $\om_X$ tames $J^\nu$.

\index{X@$X_P$, $\ol X_P$, $X_P^\om$, $\ol X_P^\om$|seeonly {Cut space}}
\index{X@$\XX$, $\XX_\PP$|seeonly {Broken manifold}}
\index{X@$\XX_P$, $\ol \XX_P$, $\XX_P^\om$, $\ol \XX_P^\om$|seeonly {Broken manifold}}
\index{Z@$Z_P$, $\ol Z_P^\om$|seeonly {Torus bundle}}

We point out that for most of the book, we treat the almost complex
manifold $X_P$ and the symplectic manifold $X_P^\om$ as distinct
spaces. This distinction is required in proving the main result
Theorem \ref{thm:bfuk}. Later, in applications where one is working
exclusively with broken manifolds, the distinction between $X_P$ and
$X_P^\om$ may be dropped, as we explain in Section
\ref{sec:brokenind}.

\section{Symplectic cut}\label{sec:singleneck}

A multiple cut is the generalization of the \em{symplectic cut}
\index{Cut!Symplectic cut} construction of Lerman \cite{le:sy2} which
we now review.  We call this construction a \em{single cut},
\index{Cut!Single cut} to distinguish it from a multiple cut.  The
construction of symplectic cuts uses Hamiltonian circle actions on
symplectic manifolds.

\begin{definition} \label{def:lerm} {\rm(Lerman's symplectic cut
    construction)}
  \begin{enumerate}
  \item {\rm(Hamiltonian circle actions)} Let $(X,\om_X)$ be a
    compact symplectic manifold.  Let
    %
    \[ S^1 = \{ z \in \C \ : \ | z | = 1 \} \]
    denote the circle group; we identify its Lie algebra
    $\on{Lie}(S^1)$ with $\R$ by division by $i$.  A \em{Hamiltonian
      action} of the circle group $S^1$ on $X$ is an action
    $S^1 \times X \to X$ generated by the Hamiltonian flow of a \em{
      moment map}
    \[ \Phi :X \to \R, \quad \omega_X(\xi_X, \cdot) = - \d \Phi \]
    where the generating vector field of an element $\xi \in \R$
    \begin{equation} \label{genvec} \xi_X \in \Vect(X) , \quad
      \xi_X(x) = \ddt |_{t = 0} \exp( it\xi ) x .\end{equation}
    In particular, the affine line $\C$ has symplectic form
    \[ \omega_\C = \frac{-i}{2} \d z \wedge \d \ol{z} \in \Omega^2(\C)
      .\]
    The Hamiltonian action of $S^1$ is given by scalar multiplication
    and has moment map
    \[ \Phi_\C : \C \to \R, z \mapsto \frac{-|z|^2}{2} . \]
  \item {\rm (Global symplectic cuts)} Let $X$ be a symplectic
    manifold with symplectic form $\om_X$ and a free Hamiltonian
    $S^1$-action with moment map $\Phi$.  The product
    $\hat{X} = X \times \C$ has product symplectic form
    $\hat{\om} = \pi_1^* \om_X + \pi_2^* \omega_\C$.  The diagonal
    action of $S^1$ has moment map
    \[ \hat{\Phi}: \hat{X} \to \R, \quad (x,z) \mapsto \Phi(x) -
      \frac{|z|^2}{2}. \]
    The zero level set is the union
    %
    \[ \hat{\Phi}^{-1}(0) = ( \Phi^{-1}(0) \times \{ 0 \} ) \sqcup
      \{ (x,z) : \Phi(x) = \tfrac{|z|^2}{2} > 0 \} \]
    of two pieces where both $\Phi$ and $z$ are zero and the piece
    where both $\Phi$ and $z$ are non-zero.  The action on $z \neq 0$
    has a natural slice given by $z \in \R_{> 0}$ so that
    %
    \[ \{ (x,z) : \Phi(x) = |z|^2/2 > 0 \} \cong \Phi^{-1}(\R_{> 0})
      .\]
    The symplectic quotient $\ol X_+:=\hat{\Phi}^{-1}(0)/ S^1 $ is
    called the \em{symplectic cut space}. Alternatively, the symplectic
    cut space is viewed as the compactification of
    \[X_+:=\{\Phi > 0\} \subset (X,\om_X),\]
    given by   \label{rep:phigeq}
    \begin{equation*}  \ol X_+ := \hat{\Phi}^{-1}(0)/
      S^1 \simeq \{\Phi \geq 0\}/\sim,\end{equation*}
where $\sim$ is the equivalence relation on the boundary
    $\Phinv(0)$ given by the $S^1$-action.  The cut space is the
    union of $\{\Phi>0\}$ and the symplectic quotient $\Phinv(0)/S^1$.
    One has a similar construction of a cut space
    \[\ol X_-:=\{\Phi \leq 0\}/\sim,\]
    which is the union of $\{\Phi<0\}$ and the symplectic quotient
    $\Phinv(0)/S^1$.  The symplectic manifolds $\ol X_-, \ol X_+$ both
    contain a copy of $\Phinv(0)/S^1$ via the embeddings
    \[ i_-: \Phinv(0)/S^1 \to \ol X_-, \quad i_+: \Phinv(0)/S^1 \to
      \ol X_+ \]
    with opposite normal bundles $N_\pm \to \Phinv(0)/S^1$. \label{rep:npm}
  \item {\rm (Local symplectic cuts)} Given an open subset
    $U \subset X$ with a free Hamiltonian $S^1$-action with moment map
    $\Phi: U \to \R$, such that $X \bs U$ is disconnected, gluing
    together the cut $U_+ \cup U_-$ with $X - \Phi^{-1}(0)$ produces
    cut spaces $\ol X_+$, $\ol X_-$.
  \end{enumerate}
\end{definition}

\section{Multiple cuts in a symplectic
  manifold}\label{sec:symp-multcut}
\index{Cut!Multiple cut}

We recall that the input datum for a single cut consists of a
hypersurface and a Hamiltonian $S^1$-action in the neighborhood of the
hypersurface. The input datum for a multiple cut consists of a
collection of intersecting hypersurfaces with Hamiltonian
$S^1$-actions in their neighborhoods. Neighborhoods of intersections
of hypersurfaces have Hamiltonian torus actions, whose restrictions
coincide with the $S^1$-actions corresponding to individual
hypersurfaces.  The various Hamiltonian actions are recorded by a \em{
  tropical moment map} with target space $\t^\dual$, and a \em{polyhedral decomposition} of $\t^\dual$.  These polyhedral
decompositions appeared in, for example, Meinrenken
\cite{mein}.

\begin{definition} \label{def:delz} {\rm(Simple resp. Delzant
    polytopes)} Let $T$ be a torus with Lie algebra $\t$.  Let
  $\t_\Z \subset \t$ denote the coweight lattice of points that map to
  the identity under the exponential map, so that
  $ T \cong \t/\t_\Z .$ A convex polytope $P$ in $\t^\dual$ is
  described by a collection of linear inequalities determined by
  constants $c_F \in \R$ and normal vectors $\nu_F \in \t$:
  %
  \[ P = \{ \lambda \in \t^\dual \ : \ \lan \lambda,\nu_F \ran \ge
  c_F, \quad \forall F \subset P \ \text{facets} \} .\]
We allow polytopes to be unbounded.  By the \em{interior} of a
polytope, we mean the complement of its proper faces, so that in
particular the interior of a zero-dimensional polytope (a point) is
itself.
  \label{page:intrem}
  The polytope $P$ is \em{simple} if \index{Polytope!Simple polytope}
  for each point $v \in P$, the normal primitive vectors
  $\nu_F \in \t_\Z$ to the facets $F \subset P$ containing $v$
 form a basis for the span of the vectors $\nu_F$ in $\t$.  \label{rep:delzant}
 For a simple polytope $P$, a face $F_0 \subset P$ which is the
 intersection of facets $F_1,\dots,F_m$ is \em{smooth} if the
 primitive normal vectors form a lattice basis:
  \begin{equation}
    \label{eq:zspan}
   \on{span}_\Z(\nu_{F_i}, i=1,\dots,m) = \on{span}(\nu_{F_i}, i=1,\dots,m) \cap \t_\Z.   
  \end{equation}
  A simple polytope is \em{Delzant} if all of its faces are
  smooth. \index{Polytope!Delzant polytope}
\end{definition}

By Delzant \cite{de:ha}, there is a bijection between compact,
connected Delzant polytopes with trivial generic stabilizer and
compact symplectic toric manifolds. A Delzant polytope $P$ corresponds
to a symplectic manifold $V_P$ with an effective Hamiltonian action of
a torus $T\simeq (S^1)^{(\dim(V_P)/2}$ and moment map and polytope
  \[ \Psi:V_P \to \t^\dual , \quad \Psi(V_P) = P . \]
  We remark that in a simple polytope $P$,
  if a face $F_0 \subset P$, which is the
 intersection of facets $F_1,\dots,F_m$, is not smooth,  then the
 primitive normal vectors span a sublattice 
\[ \on{span}_\Z(\nu_{F_i}, i=1,\dots,m) \subset  \on{span}(\nu_{F_i}, i=1,\dots,m) \cap \t_\Z. \]
 In this case $V_P$ is an
  \em{orbifold}, that is, it is covered by charts that are finite
  quotients of $\R^n$.

For later use, we define cones at faces of polytopes.
\begin{definition} \label{def:normcones} 
{\rm(Cones at faces of polytopes)} Let $P$ be a
 simple polytope in a vector space $V$. 
   For a face $Q \subset P$, the \em{cone} of $P$ at $Q$ is 
   \index{Cone!$\Cone_Q P$}
    \begin{equation}
      \label{eq:conedef}
      \Cone_Q(P):=\{\lam(p-q) : p \in P, q \in Q, \lam \in \R_{\geq 0}\} \subset V.
    \end{equation}
    For any interior, non-singular point $q$ in $Q$, let $V_Q:=T_qQ$ be the tangent space at $q$, independent up to isomorphism of the choice of $q$ using the affine structure.   
     The
    \em{normal cone} of $P$ at $Q$ is
       \index{Cone!Normal cone $\NCone_Q P$}
       \begin{equation}
      \label{eq:nconedef}
      \NCone_Q(P):= \Cone_Q(P)/V_Q \subset V/V_Q,
    \end{equation}
    which is the image of $\Cone_Q(P)$ under the projection
    $V \to V/V_Q$.  See Figure \ref{fig:ncone}.
\end{definition}
\begin{figure}[h]
  \centering \scalebox{.8}{
\begingroup%
  \makeatletter%
  \providecommand\color[2][]{%
    \errmessage{(Inkscape) Color is used for the text in Inkscape, but the package 'color.sty' is not loaded}%
    \renewcommand\color[2][]{}%
  }%
  \providecommand\transparent[1]{%
    \errmessage{(Inkscape) Transparency is used (non-zero) for the text in Inkscape, but the package 'transparent.sty' is not loaded}%
    \renewcommand\transparent[1]{}%
  }%
  \providecommand\rotatebox[2]{#2}%
  \newcommand*\fsize{\dimexpr\f@size pt\relax}%
  \newcommand*\lineheight[1]{\fontsize{\fsize}{#1\fsize}\selectfont}%
  \ifx\svgwidth\undefined%
    \setlength{\unitlength}{364.64935884bp}%
    \ifx\svgscale\undefined%
      \relax%
    \else%
      \setlength{\unitlength}{\unitlength * \real{\svgscale}}%
    \fi%
  \else%
    \setlength{\unitlength}{\svgwidth}%
  \fi%
  \global\let\svgwidth\undefined%
  \global\let\svgscale\undefined%
  \makeatother%
  \begin{picture}(1,0.22031348)%
    \lineheight{1}%
    \setlength\tabcolsep{0pt}%
    \put(0,0){\includegraphics[width=\unitlength,page=1]{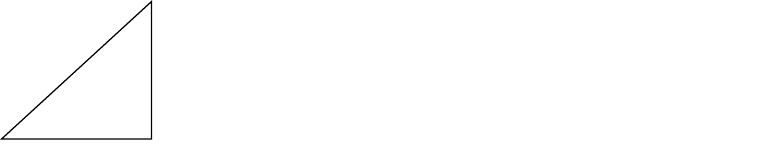}}%
    \put(0.08438585,0.00451418){\color[rgb]{0,0,0}\makebox(0,0)[lt]{\lineheight{1.25}\smash{\begin{tabular}[t]{l}$P$\end{tabular}}}}%
    \put(0.11729415,0.10323914){\color[rgb]{0,0,0}\makebox(0,0)[lt]{\lineheight{1.25}\smash{\begin{tabular}[t]{l}$Q$\end{tabular}}}}%
    \put(0,0){\includegraphics[width=\unitlength,page=2]{ncone.pdf}}%
    \put(0.38056075,0.18550991){\color[rgb]{0,0,0}\makebox(0,0)[lt]{\lineheight{1.25}\smash{\begin{tabular}[t]{l}$\Cone_P Q$\end{tabular}}}}%
    \put(0.78360421,0.19697771){\color[rgb]{0,0,0}\makebox(0,0)[lt]{\lineheight{1.25}\smash{\begin{tabular}[t]{l}$\NCone_P Q$\end{tabular}}}}%
    \put(0,0){\includegraphics[width=\unitlength,page=3]{ncone.pdf}}%
  \end{picture}%
\endgroup%
}
  \caption{The cone and normal cone of $Q$ at a face $P$.}
  \label{fig:ncone}
\end{figure}

Multiple cuts are defined on symplectic manifolds with a tropical Hamiltonian
action.
\index{Cut!Multiple cut}
\begin{definition} \label{def:tham}
  {\rm(Tropical Hamiltonian action)} A \em{tropical Hamiltonian
    action} \index{Tropical Hamiltonian action}
  of a torus $T$ with Lie algebra $\t$ is a triple
  $(X,\Phi, \PP)$ consisting of a
  \begin{enumerate}
  \item compact symplectic manifold $X$
  \item {\rm(Polyhedral decomposition)} \label{part:thamb} a
    decomposition
    \[ \t^\dual = \bigcup_{P \in \PP} P^\circ, \quad \PP = \{ P
      \subset \t^\dual \} \]
    of $\t^\dual$ into the disjoint union of the interiors $P^\circ$
    of simple polytopes $P \in \PP$ such that
    \begin{itemize}
    \item if $P_0,P_1 \in \PP$ have non-empty intersection, then
      $P_0 \cap P_1 \in \PP$ and $P_0 \cap P_1$ is a face of both
      $P_0$ and $P_1$,
    \item any polytope $P$ has at least one vertex $v \in \PP$; and
    \end{itemize}
  \item {\rm (Tropical moment map)}  a \em{tropical moment map} \index{Tropical moment map}
    compatible with $\PP$
    \[ \Phi:X \to \t^\dual \]
    in the following sense.  For any $P \in \PP$, we denote by
    \[ \t_P : = \on{ann} (TP) \subset \t \]
    the annihilator of the tangent space of $P$ at any point
    $p \in P$, and by
    \[ T_P = \exp(\t_P) \]
    the torus whose Lie algebra is $\t_P$.  Let $\t_{P,\Z} $ be the
    coweight lattice in $\t_P$ so that
    \[ T_P \cong \t_P/\t_{P,\Z} .\]
    For any $P \in \PP$, there exists an open neighbourhood $U_P$ of
    $\Phinv(P)$ such that the composition
    \[ \pi_{\t_P^\dual} \circ \Phi : U_P \to \t_P^\dual \]
    is a moment map for a free action of $T_P$ on $U_P$, where
    $\pi_{\t_P^\dual}: \t^\dual \to \t_P^\dual$ is the projection dual
    to the inclusion $\t_P \hra \t$.
  \end{enumerate}
\end{definition}
\begin{notation}
 For a polyhedral decomposition $\PP$ and any $k \in \Z_{\geq 0}$, we denote by 
 \begin{equation}
   \label{eq:pkk}
   \PP_{(k)} \subset \PP \quad \text{resp.} \quad \PP^{(k)}\subset \PP
 \end{equation}
 the set of polytopes of dimension resp. codimension $k$.
\end{notation}
\begin{remark}
  The definition of the tropical moment map implies that for any
  zero-dimensional polytope $R \in \PP$, $\t_R=\t$, and for any pair
  $Q \subset P$ in $\PP$, there is a canonical inclusion
  $\t_P \subset \t_Q$.
\end{remark}
\begin{remark}{\rm(Tropical manifold for a single cut)}
  \label{rem:singlecut}
  In the single breaking case,  a tropical Hamiltonian action consists of
  a map $\Phi:X \to \R$ and a decomposition
  $\R:=(-\infty,c] \cup [c,\infty)$, such that $\Phi$ generates a free
  $S^1$-action in the neighborhood of $\Phinv(c)$. Thus the set of
  polytopes is
\[ \PP=\{(-\infty,c],\{c\}, [c,\infty)\} . \]
In the case of a single cut, we denote the tropical Hamiltonian action by the
triple $(X,\Phi,c)$.
\end{remark}

A multiple cut operation on a tropical manifold produces cut spaces of various dimensions, each corresponding to a polytope in the polyhedral decomposition.
\begin{definition}
  {\rm(Cut space for a multiple cut)} \index{Cut space!Symplectic cut space $X_P^\om$, $\ol X_P^\om$}
\label{def:cutspace} 
Given a tropical symplectic manifold $(X,\Phi,\PP)$, for any polytope
$P \in \PP$, the \em{cut space} is a symplectic manifold (or
orbifold, if $P$ is not Delzant) that is a compactification of
\[X^\om_P:=\Phinv(P^\circ)/T_P\]
given by 
  \begin{equation}
    \label{eq:cutsp}
    \ol X^\om_P := \Phinv(P)/\sim,     
  \end{equation}
where the equivalence $\sim$ mods out by the following torus actions: 
  \begin{equation} \label{appropriate} x \sim tx, \quad \forall x \in
    \Phinv(Q^\circ), t \in T_Q
  \end{equation} 
  for all polytopes $Q \subseteq P$ contained in $\PP$.  The space
  $\ol X^\om_P$ is a manifold (or orbifold if $P$ is not Delzant) by
  an iterative application of Lerman's cut. \label{smoothstr2} Cut
  spaces have natural inclusions
  \[\ol X^\om_Q \subset \ol X^\om_P, \quad Q \subset P.\]
  For a face $Q \subset P$ with $\codim_P(Q)=1$, the corresponding
  subset $\ol X^\om_Q$ is called a \em{relative divisor of
    $\ol X^\om_P$}. The relative divisors $\ol X^\om_Q$ intersect
  $\om$-orthogonally, and the intersections correspond to a cut space
  $\ol X^\om_R$ for some polytope $R \in \PP$. This ends the
  Definition.
\end{definition}
%

\begin{notation}{\rm($X_P^\om$ versus $X_P$)}
  \label{note:symp-vs-ac} We use the superscript $\om$ in the notation
  for symplectic cut spaces and symplectic broken manifolds in order
  to to distinguish them from the corresponding almost complex
  manifolds defined in Section \ref{sec:cylbrokenmfd}.  Almost complex
  cut spaces (and broken manifolds) are diffeomorphic to the
  symplectic cut spaces (and symplectic broken manifolds), but the
  diffeomorphisms are not canonical. The almost complex versions occur
  much more frequently in the text and they do not have any
  superscript.  See Lemma \ref{lem:immdiffeo} and Remark
  \ref{rem:nosymp} for related discussion.
  %
  The spaces $X_P$ and
  $X_P^\om$ need to be distinguished only in the course of the proof
  of Theorem \ref{thm:bfuk}, where the almost complex structures on
  broken manifolds are required to be gluable. Later, in applications,
  this distinction can be dropped as we explain in the beginning of
  Section \ref{sec:brokenind}.
\end{notation}

\begin{example}
  For a tropical Hamiltonian action $(X,\Phi,c)$ with a single cut (using
  notation as in Remark \ref{rem:singlecut}), the cut spaces are
  \[\{\Phi \geq c\}/\sim, \quad \Phinv(c)/\sim, \quad \{\Phi \leq c\}/\sim,\]
  and in all three spaces, 
  the relation $\sim$ quotients $\Phinv(c)$ by the Hamiltonian
  $S^1$-action.
\end{example}
\begin{example} 
  The multiple cut in Figure \ref{fig:break1} is made up of two
  simultaneous single cuts along hypersurfaces that intersect
  $\om$-orthogonally.  The set of polytopes is
\[ \PP=\{P_i, 0 \leq i \leq 3, P_{ij}, j=(i+1)\mod 4, P_\cap\}. \]  
The manifolds $\ol X_{P_{i(i+1)}}$, $\ol X_{P_{(i-1)i}}$ are relative
divisors of $\ol X_{P_i}$.
\end{example}
\begin{figure}[ht]
  \centering \input{break.pdf_tex}
  \caption{A multiple cut of $\R^2$.}
  \label{fig:break1}
\end{figure} 

\subsection{Some generalizations}

More generally, \label{integralaffine} we allow the moment map to take
values in affine manifolds as follows, continuing the discussion in Section \ref{sec:degen}.

\begin{definition}
  Let $B$ be a topological manifold of dimension $n$.  An \em{affine
    structure} on $B$ is an atlas for which the transition maps
  between coordinate charts  take values in
  the group of affine transformations, namely 
  $\R^n \ltimes GL(n,\R)$. A \em{tropical affine structure}
  in the language of Gross \cite{gross:book} is one for which
  transition maps take values in
  $\R^n \ltimes GL(n,\Z)$; a manifold with such a structure is a \em{
    tropical affine manifold} and the functions provided by the local
  coordinates are \em{affine coordinates}.  A map $B_1 \to B_2$
  between tropical affine manifolds is a \em{tropical affine
    morphism} if the map is given in local charts by affine linear
  combinations of the affine local coordinates.
\end{definition}
\noindent The transition maps preserve the trivial flat connection in affine
local coordinates, and these glue together to a flat connection on the
smooth locus.  We also introduce a definition which allows certain kinds of singularities in the affine structure.  Let $B$ be a topological manifold.

\begin{definition} \label{def:affwsing}
A \em{tropical affine
    structure with singularities} is a decomposition $B = \cup_i B_i$
  into topological submanifolds $B_i$ of dimension $d(i)$ and a tropical
  affine structure on each $B_i$ satisfying the following
  compatibility condition: Each $B_i$ has a topological tubular
  neighborhood $U_i$, homeomorphic to a topological vector bundle over
  $B_i$, equipped with a projection $\pi_i : U_i \to B_i$ so that if
  $\pi_{i,j}$ denotes the restriction of $\pi_i$ to $U_i \cap B_j$
  then each $\pi_{i,j}$ is a morphism of tropical affine manifolds,
  and the projections $\pi_{i,j}$ are compatible in the following sense: If $d(i) < d(j) < d(k)$ then
  \[\pi_{i,j} \circ \pi_{j,k}=\pi_{i,k}\]
  %
  whenever the maps are defined, which is to say on the intersection of $U_i, B_k, U_j$ and $\pi_j^{-1}(U_i)$.
    A topological manifold with the structure
  of a tropical affine structure with singularities is a \em{tropical
    affine manifold with singularities}.
\end{definition} 

\begin{example} Almost toric manifolds in
  the sense of Symington \cite{sym:2to4} 
  and Leung-Symington \cite{leungsym}
  have fibrations over tropical
  affine manifolds with singularities in codimension two.
\end{example}

In this setting where the moment map takes values in an affine manifold, the notion of polytope is generalized as follows:
\begin{definition} \label{def:poly} Given a tropical affine manifold
  with singularities $B$, a subset $P \subset B$ is a \em{polytope}
  if it is contractible and defined locally by some finite collection
  of inequalities satisfying the following property: Locally on each
  tubular neighborhood $U_i$ the set $P$ is defined by a finite
  collection of linear inequalities in pull-backs of affine
  coordinates on $B_i$. \end{definition}

The following generalizes the notion of a Hamiltonian torus action:

\begin{definition} \label{def:tropham} A tropical Hamiltonian action
  with moment map valued in $B$ is a tuple $(X,\Phi, \PP)$ where $\PP$
  is a polyhedral decomposition of $B$ and $\Phi: X \to B$ is a
  continuous map with the following property: For each polytope $P$,
  the collection of local functions $\lan \Phi, \nu \ran $ defining
  $P$ generate a locally free torus action.
\end{definition}
\noindent That is, for each polytope $P \subset B$, the pull-backs of the
functions $\lan \Phi, \nu \ran $ defining $P$ generate a locally free
torus action.

\begin{example} Any Hamiltonian action with a circle-valued moment map
  $(X,\omega, \Phi:X \to S^1)$ is a tropical Hamiltonian action for
  the polyhedral decomposition
  \[ \PP = \{ \{\theta_1\} ,[\theta_1, \theta_2 ], \{ \theta_2 \},
  [\theta_2,\theta_1] \} \] 
  for which $\theta_1,\theta_2$ are regular
  values.  For example, one can take $X = S^1 \times S^1$ with moment map given by
  projection and the decomposition corresponding to any two choices $\theta_1,\theta_2$ of regular value.
\end{example}

\begin{example} We given a example of an action mapping to an affine
  manifold with singularities.  Let $B = \R$ be equipped with the
  stratification into $B_0 =\{ 0\} $ and $B_1 = \R - \{ 0 \} $.  Let
  $X = T^* S^2$ equipped with the function
  $\Phi:X \to B, v \mapsto \Vert v \Vert$ using the standard metric on
  $S^2$.  Let $\cP = \{ (-\infty,c], \{ c \}, [c,\infty) \}$ where
  $c \neq 0$.  The map $\Phi$ is continuous and smooth away from $0$,
  and the triple $(X,\Phi,\cP)$ is a tropical Hamiltonian action.
\end{example}

With these Definitions, Theorem\ref{thm:bfuk} holds as in the case
that the codomain of the moment map is a vector space.

\section{Symplectic broken manifolds} \label{sec:symp-broken}
In this Section, we describe the broken manifold as a symplectic space.
A component $\XX_P^\om$ of the symplectic broken manifold
corresponding to a polytope $P$ is a thickening of the corresponding
cut space $X_P^\om$ into a toric fibrations.  The dimension of the
toric fiber is complementary to the dimension of $P$, therefore for a
top-dimensional polytope $P$, $X_P^\om=\XX_P^\om$.  The toric
fibrations are defined by considering neighborhoods of cut loci in $X$
and modding out the boundaries as in Lerman's construction.  This
construction requires the additional datum of a dual complex
associated to the polyhedral decomposition $\PP$. The \em{dual
  complex} $B^\dual$ (Definition \ref{def:dualcomplex}) is a union of
dual polytopes $\cup_{P \in \PP}P^\dual$, and $P^\dual$ is the moment
polytope of the fibers of the toric fibration $\XC_P^\om$ over the cut
space $X_P^\om$.

\begin{definition} \label{def:dualcomplex} {\rm(Dual complex)}
  \index{Dual complex $B^\dual$} \index{Dual polytope} For a
  polyhedral decomposition $\PP$ of $\t^\dual$, the \em{dual complex}
  is a topological space
  \begin{equation}
    \label{eq:bdef}
    B^\dual=\left(\cup_{P \in \PP}P^\dual\right)/\sim   
  \end{equation}
  defined by a dual polytope $P^\dual \subset \t_P$
  corresponding to every $P \in \PP$ satisfying
  \begin{enumerate}
  \item $P^\dual$ is top-dimensional in $\t_P$, that is,
    $\dim(P^\dual)=\dim(\t_P)=\dim(\t^\dual) - \dim(P)$;
  \item for any $Q \in \PP$, there is a bijection
    \[\{P \in \PP : P \supset Q\}  \to \text{Proper faces of $Q^\dual$}, \quad P \mapsto Q_P^\dual,\]
    with $\dim(Q_P^\dual)=\dim(P^\dual)$;
  \item \label{part:dualpair}
    for any pair $Q,P \in \PP$ with
    $Q \subset P$, the natural 
    inclusion $\t_P \to \t_Q$ maps $P^\dual$ isomorphically to a
    translate of $Q_P^\dual$; and 
  \item
    for any pair $Q,P \in \PP$ as in \eqref{part:dualpair},
    the equivalence relation $\sim$ in \eqref{eq:bdef} identifies
    $Q_P^\dual$ to $P^\dual$ via the isomorphism in \eqref{part:dualpair}. 
  \end{enumerate}
\end{definition}
As a consequence of the inclusions $\t_P \hra \t$, there is a natural inclusion $B^\dual \hra \t$ which is fixed up to translation in $\t$.

\begin{example}
\label{ex:singlecutdual}
  In the case of a single cut $(X,\Phi,c)$ (see Remark \ref{rem:singlecut} for notation), the set of polytopes is
  \[\PP = \{ P_-:=(-\infty,c],P_0:=\{ c \}, P_+:=[c, \infty) \},\]
  and the dual polytopes are
  \[P_-^\dual=\{ -\eps \}, \quad P_0^\dual=[-\eps,\eps], \quad P_+^\dual=\{ \eps \},\]
  for any $\eps>0$, with identifications $P_\pm^\dual \hra P_0^\dual$
  given by inclusion of the endpoints. Thus, the dual complex is an
  interval: $B^\dual \cong [-\eps,\eps]$.
\end{example} 

\begin{figure}[ht]
  \centering \scalebox{.8}{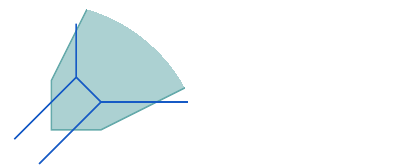}
  \caption{A polyhedral decomposition (left) and its dual complex (right).}
  \label{fig:mulcuteg1}
\end{figure}

\begin{example}
  Figure \ref{fig:mulcuteg1} shows the dual complex corresponding to a
  multiple cut on a toric surface.  We recall that we used three such
  multiple cuts to study disks in a cubic surface in Section
  \ref{sec:cubic-intro}.
\end{example}

We prove some properties of the dual complex:

\begin{lemma}\label{lem:dual-simple}
  For a polyhedral decomposition $\PP$ of $\t^\dual$, the dual
  polytope $P^\dual$ corresponding to any $P \in \PP$ is a compact
  simple polytope.
\end{lemma}
\begin{proof}
  Consider a top-dimensional dual polytope $Q^\dual$, which, we
  recall, corresponds to a point $Q \in \PP$.  Since the polytopes in
  $\PP$ cover $\t^\dual$,
  \begin{equation}
    \label{eq:span-t}
    \bigcup_{P \in \PP^{(0)}: P \ni Q}\Cone_QP=\t^\dual.  
  \end{equation}
  We observe that $\Cone_QP$ is the normal cone of
  $\Cone_{P^\dual}Q^\dual$, that is, $\Cone_QP$ is spanned by all the
  outward normal vectors to the facets of $\Cone_{P^\dual}Q^\dual$.
  By \eqref{eq:span-t}, we conclude that $Q^\dual$ is compact. Any
  dual polytope $P^\dual$ is a face of a top-dimensional dual
  polytope, and is therefore compact. To show that a top-dimensional
  dual polytope $Q^\dual$ is simple, consider a vertex $P_0^\dual$,
  which corresponds to a top-dimensional $P_0 \in \PP$.  The
  simplicity of $P_0$ implies the set of normal vectors
  $\nu_{F_i} \in \t$ to the facets $F_i \in \PP$ of $P_0$ span
  $\t$. The set of one-dimensional edges emanating from $P_0^\dual$ in
  the polytope $Q^\dual$ are precisely $F_i^\dual$, which are parallel
  to $\nu_{F_i}$, and hence span $\t$.  By a similar argument applied
  to every vertex $P_0^\dual$ of $Q^\dual$, we conclude that $Q^\dual$
  is simple.
\end{proof}

To define symplectic broken manifolds, dual polytopes $P^\dual \subset \t_P$ need to be viewed as subsets of $\t_P^\dual$. We fix inner products on $\t_P$ for this purpose:
\begin{definition}\label{def:x-idtt}
  An \em{$X$-inner product} is a collection of inner products  
  \begin{equation}
    \label{eq:idtt}
    g_P: \t_P \times \t_P \to \R
  \end{equation}
  for all $P \in \PP$ such that for any pair $Q \subset P$,
  $g_Q|\t_P = g_P$.  \index{Inner product! $X$-inner product} The
  $X$-inner product $(g_P)_P$ is \em{polytope-independent} if there is
  a inner product $g$ on $\t$ and for all $P \in \PP$, $g_P$ is the
  restriction of $g$ to $\t_P$.
\end{definition}

The broken manifold is a collection of manifolds $\ol \XC_P$ corresponding to polytopes $P \in \PP$, each of which is a toric fibration over the cut space $\ol X_P$.
The manifold $\ol \XC_P$ is modelled on a \em{fibered polytope} $\tP$
which is a thickening of the polytope $P \in \PP$, and is defined next. 
The fibers of the toric fibration are modelled on dual polytopes $P^\dual$ viewed as subsets of $\t_P^\dual$ via the $X$-inner product. 
\begin{definition}{\rm(Fibered polytope)} \label{def:fibpoly}
  Let
  $P \subset \t^\dual$ be a polytope in $\PP$, and let
  $P^\dual \subset \t_P$ be a complementary dimensional
  polytope.  A polytope
  \begin{equation}
    \label{eq:fibpoly}
   \tP \subset \t^\dual 
 \end{equation}
 is \em{fibered over $P$ with fiber $P^\dual$} if it is equipped with
 a diffeomorphism
   \[\tP \xrightarrow{(\pi_P, \pi_{P^\dual})} P \times P^\dual, \]
   where, viewing the fibers of $\pi_P$, $\pi_{P^\dual}$ and the
   polytope $P$ as subsets of $\t^\dual$, we have
   \begin{itemize}
    \item $\pi_{P^\dual}$ is the restriction of the projection $\t^\dual \to \t_P^\dual$ composed with the identification $\t_P \simeq \t_P^\dual$ from the $\t_P$-inner product from \eqref{eq:idtt}, and 
      %
%
  \item $P=\pinv_{P^\dual}(c_{P^\dual})$ for some interior point $c_{P^\dual} \in P^\dual$.
  \end{itemize}
  The facets of the fibered polytope $\tP$ are
  \begin{equation}
    \label{eq:facetsolp}
    \Facets(\tP)=\{\pi_P^{-1}(Q) : Q \in \Facets(P), Q \in \PP\} \cup \{\pi_{P^\dual}^{-1}(Q) : Q \in \Facets(P^\dual)\}.  
  \end{equation}
\end{definition}

\begin{figure}[ht]
  \centering \scalebox{.8}{
\begingroup%
  \makeatletter%
  \providecommand\color[2][]{%
    \errmessage{(Inkscape) Color is used for the text in Inkscape, but the package 'color.sty' is not loaded}%
    \renewcommand\color[2][]{}%
  }%
  \providecommand\transparent[1]{%
    \errmessage{(Inkscape) Transparency is used (non-zero) for the text in Inkscape, but the package 'transparent.sty' is not loaded}%
    \renewcommand\transparent[1]{}%
  }%
  \providecommand\rotatebox[2]{#2}%
  \newcommand*\fsize{\dimexpr\f@size pt\relax}%
  \newcommand*\lineheight[1]{\fontsize{\fsize}{#1\fsize}\selectfont}%
  \ifx\svgwidth\undefined%
    \setlength{\unitlength}{287.47522549bp}%
    \ifx\svgscale\undefined%
      \relax%
    \else%
      \setlength{\unitlength}{\unitlength * \real{\svgscale}}%
    \fi%
  \else%
    \setlength{\unitlength}{\svgwidth}%
  \fi%
  \global\let\svgwidth\undefined%
  \global\let\svgscale\undefined%
  \makeatother%
  \begin{picture}(1,0.30589578)%
    \lineheight{1}%
    \setlength\tabcolsep{0pt}%
    \put(0,0){\includegraphics[width=\unitlength,page=1]{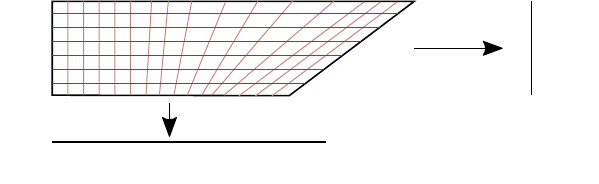}}%
    \put(0.30664938,0.10006531){\color[rgb]{0,0,0}\makebox(0,0)[lt]{\lineheight{1.25}\smash{\begin{tabular}[t]{l}$\pi_P$\end{tabular}}}}%
    \put(0.72907619,0.24605295){\color[rgb]{0,0,0}\makebox(0,0)[lt]{\lineheight{1.25}\smash{\begin{tabular}[t]{l}$\pi_{P^\dual}$\end{tabular}}}}%
    \put(-0.00148884,0.22523462){\color[rgb]{0,0,0}\makebox(0,0)[lt]{\lineheight{1.25}\smash{\begin{tabular}[t]{l}$\ol P$\end{tabular}}}}%
    \put(0.3160133,0.01560336){\color[rgb]{0,0,0}\makebox(0,0)[lt]{\lineheight{1.25}\smash{\begin{tabular}[t]{l}$P$\end{tabular}}}}%
    \put(0.08790434,0.01148529){\color[rgb]{0,0,0}\makebox(0,0)[lt]{\lineheight{1.25}\smash{\begin{tabular}[t]{l}$Q_0$\end{tabular}}}}%
    \put(0.90804833,0.22596065){\color[rgb]{0,0,0}\makebox(0,0)[lt]{\lineheight{1.25}\smash{\begin{tabular}[t]{l}$P^\dual$\end{tabular}}}}%
    \put(0,0){\includegraphics[width=\unitlength,page=2]{fibered-poly.pdf}}%
    \put(0.5433768,0.01460834){\color[rgb]{0,0,0}\makebox(0,0)[lt]{\lineheight{1.25}\smash{\begin{tabular}[t]{l}$Q_1$\end{tabular}}}}%
  \end{picture}%
\endgroup%
}
  \caption{A fibered polytope $\tP$. The fibers of $\pi_P$ are in red, and those of $\pi_{P^\dual}$ are in blue.}
  \label{fig:fibered-poly}
\end{figure}

The fibered polytopes corresponding to all the polytopes in $\PP$ fit into a \em{cutting datum} in $\t^\dual$ that is defined next.
\begin{definition}
  {\rm(Cutting datum)} \label{def:cut-datum} Given a polyhedral
  decomposition $\PP$ of $X$, and an $X$-inner product
  $g=(g_P)_{P \in \PP}$ on the spaces $\{\t_P\}_{P \in \PP}$
  (Definition \ref{def:x-idtt}), a \em{cutting datum} consists of a
  dual complex $B^\dual$, and inclusions of fibered polytopes (as in
  \eqref{eq:fibpoly})
  \begin{equation}
    \label{eq:ipdef}
    i_{\tP} : \tP \hra \t^\dual
  \end{equation}
  for all $P \in \PP$, such that for any $x \in P$ in a neighborhood
  of a face $Q \subset P$ the fiber $\pi_P^{-1}(x)$
  is a polytope
  contained in
  $\pi_Q^{-1}(\pi_Q(x))$ and is orthogonal to
  $P \cap \pi_Q^{-1}(\pi_Q(x))$ with respect to the inner product
  $g_Q$ on $\t_Q^\dual$. Here, the fibers of $\pi_P$, $\pi_Q$ are
  viewed as subsets of $\t^\dual$.
\end{definition}

\begin{figure}[h]
  \centering \scalebox{.8}{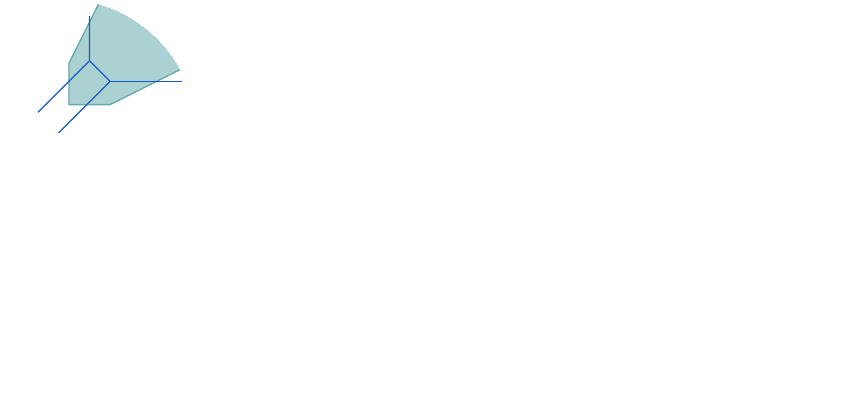}
  \caption{Top left: Polyhedral decomposition from Figure \ref{fig:mulcuteg1}. Bottom left: Cutting datum corresponding to the dual complex in Figure \ref{fig:mulcuteg1}. Right: The symplectic broken manifold is isomorphic to the inverse image of these polytopes under the tropical moment map.}
  \label{fig:broken-a3}
\end{figure}

\begin{remark}{\rm(The case of a polytope-independent $X$-inner product)}
  \label{rem:const-g}
  For many examples of multiple cuts, a dual complex exists when the
  $X$-inner product $g=(g_P)_P$ is polytope-independent as in
  Definition \ref{def:x-idtt}, that is, $g_P$ is the restriction of a
  $P$-independent inner product $g$ on $\t$. In this case, there is an
  embedding $i:B^\dual \hra \t^\dual$ such that any $P \in \PP$ is
  orthogonal to $P^\dual$.  For example, the dual complex in Figure
  \ref{fig:mulcuteg1} respects a polytope-independent inner product.
  Furthermore, in this case, the fibered polytopes are rectangles,
  that is, $\tP=P \times P^\dual$, where $P$ and $P^\dual$ are
  orthogonal in $\t^\dual$.  For the cutting datum in Figure
  \ref{fig:broken-a3}, the $X$-inner product is polytope-independent
  and is equal to the standard inner product on $\R^2$. The fibered
  polytopes $\tP_{01}$, $\tP_{12}$, $\tP_{23}$, $\tP_{30}$ are
  infinite rectangles.
\end{remark}

\begin{example}\label{ex:nonconst-metric}
  In Figure \ref{fig:fibered-poly}, suppose the vertices $Q_0, Q_1$ of
  $P$ are elements of $\PP$, and suppose that, in the $X$-inner
  product $g=(g_P)_P$, $g_{Q_0}$ is the standard inner product and
  $g_{Q_1}$ is such that $\pi_P^{-1}(Q_1)$ is orthogonal to $P$. Then,
  the fibered polytope in the figure respects the inner
  product. Indeed, in a neighborhood of $Q_i$, $i=0,1$, the fibers of
  $\pi_P$ are orthogonal to $P$ with respect to the inner product
  $g_{Q_i}$.
\end{example}

We expect that cutting data exist for a wide class of polyhedral
decompositions for suitable choices of $X$-inner products. In
Proposition \ref{prop:cutdat}, Examples \ref{ex:nodual} and \ref{ex:singles}, we list
some cases where the cutting data can be explicitly constructed.

\begin{proposition}\label{prop:cutdat}
  {\rm(Examples where a cutting datum exists)}
  Let $\PP$ be a polyhedral decomposition of $\t^\dual$. 
  \begin{enumerate}
  \item \label{part:cut1} Suppose the $1$-skeleton
    $\PP_{(\leq 1)}:=\cup_{P \in \PP, \dim(P) \leq 1}P$ of $\PP$ does
    not have a cycle. A cutting datum exists for the
    polytope-independent $X$-inner product. (A polytope-independent
    $X$-inner product is the restriction of an inner product on $\t$
    to $\t_P$ for all $P \in \PP$ as in Definition \ref{def:x-idtt}.)
  \item \label{part:cut2} Suppose $\dim(\t)=2$.  Suppose the edges in
    the $1$-skeleton can be oriented so that any vertex has at most
    two incoming edges and these two edges are not parallel. Then,
    there is an $X$-inner product for which a cutting datum exists.
 \end{enumerate}
\end{proposition}
\begin{proof}
  First, consider the case that there is a single vertex $v$ in
  $\PP$.  Fix any inner product on $\t$.  For each edge
  $e \in \PP_{(1)}$, choose any plane perpendicular to $e$ as the dual
  polytope $e^\dual$.  Since each of the polytopes in $\PP$ are
  simple, for any top dimensional polytope $P$ spanned by edges
  $\{e_i\}_i$, the planes $e_i^\dual$ intersect at a single point
  $P^\dual$. The other vertices of the polytope $v^\dual$ are
  similarly determined.

  Next to prove \eqref{part:cut1}, orient the edges and order the
  vertices in $\PP^{(0)}$ as $v_1,\dots,v_n$ so that for any $v_i$,
  there is at most one edge $(v_j, v_i)$ with $j<i$.
  It is enough to fix the fibered polytope  
  $\tilde v_i \subset \t^\dual$ so that the projections to faces
  match up, that is, if $P \in \PP$ contains vertices $v_i$, $v_j$, then
  the faces $v_{i,P}^\dual \subset v_i^\dual$, $v_{j,P}^\dual \subset v_j^\dual$
  are translates of each other. (Here, the notation $v_{i,P}^\dual$ is from Definition
  \ref{def:dualcomplex} \eqref{part:dualpair}, and note that since $v_i$ is $0$-dimensional $\tilde v_i \simeq v_i^\dual$.)
  The fibered polytopes $\tilde v_i$ are constructed as follows. 
  Assume  that $\tilde v_i \subset \t^\dual$ is fixed for
  every $i<k$.  Suppose for $v_k$, $e$ is an incoming edge. The facet
  $v_{k,e}^\dual$ of $v_k^\dual$ is fixed up to translation in the direction $e$, we
  translate $v_{k,e}^\dual$ so that it is close enough to $v_k$; this automatically fixes
  the facets of $\tilde v_k$ that intersect $v_{k,e}^\dual$, but we
  are free to choose the others, and fix $\tilde v_k$.

  To prove \eqref{part:cut2}, consider a vertex $v$ with incoming
  edges $e_1=(v_1,v)$, $e_2=(v_2,v)$. Assume that the fibered
  polytopes $\tilde v_1$, $\tilde v_2 \subset \t^\dual$ are fixed, and
  $\tilde w$ is not fixed for any other neighbor $w \in \PP^{(0)}$ of
  $v$.  We will choose a inner product $g_v$ for which $e_1$ and $e_2$
  are orthogonal, thus the facets $v^\dual_{e_1}$, $v^\dual_{e_2}$ are
  fixed, in particular, $v^\dual_{e_1}$ resp. $v^\dual_{e_2}$ is
  parallel to $e_2$ resp. $e_1$.  If there is only one other incident
  edge $e_3$ on $v$, then the face $v^\dual_{e_3}$ is also determined,
  and there is a unique choice of inner product $g_v$ for which
  $v^\dual_{e_3}$ is orthogonal to $e_3$. In case, there are four or
  more edges incident on $v$, then, we may fix any inner product for
  which $e_1$ and $e_2$ are orthogonal, and the polygon
  $\tilde v \subset \t$ can be completed with the edges $v^\dual_e$
  perpendicular to $e$ for all edges $e$ incident on $v$.
\end{proof}

\begin{figure}[ht]
  \centering \scalebox{.8}{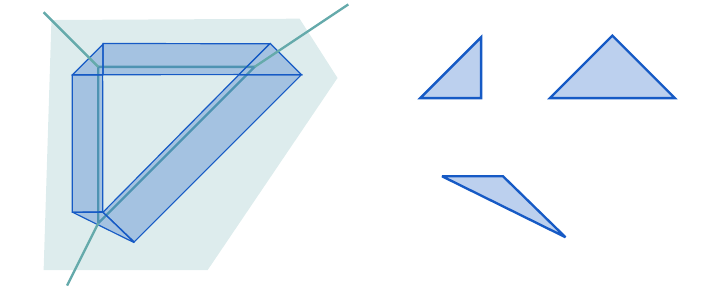}
  \caption{Dual complex with varying inner products.}
  \label{fig:dual-complex2}
\end{figure}

\begin{example}\label{ex:nodual}
  The cutting datum in Figure \ref{fig:dual-complex2} is given by
  applying the algorithm in the proof of Proposition \ref{prop:cutdat}
  \eqref{part:cut2}.  For vertices $P_1, P_2 \in \PP$, we take the
  inner products $g_{P_0}$, $g_{P_1}$ to be the standard ones, and for
  the vertex $P_3 \in \PP$, $g_{P_3}$ is the inner product for which
  $\{(1,0), (1,1)\}$ is an orthonormal basis. Note that in this
  example, it is not possible to construct a dual complex if the
  $X$-inner product $(g_P)_P$ is polytope-invariant (as in Definition
  \ref{def:x-idtt}); see Figure \ref{fig:dual-complex}.
\end{example}

\begin{figure}[ht]
  \centering \scalebox{.8}{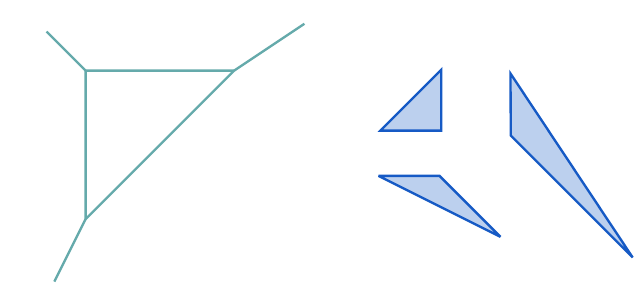}
  \caption{Failed attempt at constructing a dual complex assuming each of the inner products $g_{P_1}$, $g_{P_2}$, $g_{P_3}$ to be standard.}
  \label{fig:dual-complex}
\end{figure}

\begin{example}\label{ex:singles} 
  {\rm(Intersection of polyhedral decompositions)} Cutting data for a pair of
  transversely intersecting polyhedral decompositions can be combined
  to produce a cutting datum for the intersection. A pair of  polyhedral decompositions
  $\PP_0$, $\PP_1$ of $\t^\dual$ intersect \em{transvsersely} if any pair of polytopes $P_0 \in \PP_0$, $P_1 \in \PP_1$ intersect transversely. For such a pair,  we define a new polyhedral decomposition
  \[\PP:=\PP_0 \cap \PP_1:=\{P_{01}:=P_0 \cap P_1 : P_0 \in \PP_0, P_1 \in \PP_1\}. \]
Note that the cut locus of $\PP$ is the union of the cut loci of $\PP_0$ and $\PP_1$. 
Assuming both $\PP_0$, $\PP_1$ have a cutting datum, a cutting datum for $\PP$ is given as follows. For any minimal dimensional polytope $P_{01}$, the dual complex $P_{01}^\dual$ is the product $P_0^\dual \times P_1^\dual$. The inner product $g_{P_{01}}$ is defined as the orthogonal product of $g_{P_0}$ and $g_{P_1}$ so that $P_0^\dual$ and $P_1^\dual$ are orthogonal faces of $P_{01}^\dual$. The thickened fibered polytope is $\tilde P_{01}:= i_{P_0}(\tilde P_0) \cap i_{P_1}(\tilde P_1)$ where $i_{P_0}$, $i_{P_1}$ are from \eqref{eq:ipdef}. See Figure \ref{fig:singles2}.

  This method produces a family of dual complexes $B^\dual_\delta$ (with cutting data) for the polyhedral decomposition $\PP$, where $\delta >0$ is the ratio between the sizes of the dual complexes $B^\dual_{\PP_0}$ and $B^\dual_{\PP_1}$. Indeed, assuming $\delta<1$, we may perform the construction in the previous  paragraph by replacing the duals $P_0^\dual$ of the polytopes $P_0 \in \PP_0$ by $\delta P_0^\dual$.

  The simplest instance of this construction is a multiple cut consisting of two single cuts shown in Figure \ref{fig:break1}. The dual complex is a rectangle, and we obtain a family of possible complexes by varying the ratio of the sides of the rectangle. This ends the Example. 
\end{example}

\begin{figure}[ht]
  \centering \scalebox{.8}{
\begingroup%
  \makeatletter%
  \providecommand\color[2][]{%
    \errmessage{(Inkscape) Color is used for the text in Inkscape, but the package 'color.sty' is not loaded}%
    \renewcommand\color[2][]{}%
  }%
  \providecommand\transparent[1]{%
    \errmessage{(Inkscape) Transparency is used (non-zero) for the text in Inkscape, but the package 'transparent.sty' is not loaded}%
    \renewcommand\transparent[1]{}%
  }%
  \providecommand\rotatebox[2]{#2}%
  \newcommand*\fsize{\dimexpr\f@size pt\relax}%
  \newcommand*\lineheight[1]{\fontsize{\fsize}{#1\fsize}\selectfont}%
  \ifx\svgwidth\undefined%
    \setlength{\unitlength}{427.09990734bp}%
    \ifx\svgscale\undefined%
      \relax%
    \else%
      \setlength{\unitlength}{\unitlength * \real{\svgscale}}%
    \fi%
  \else%
    \setlength{\unitlength}{\svgwidth}%
  \fi%
  \global\let\svgwidth\undefined%
  \global\let\svgscale\undefined%
  \makeatother%
  \begin{picture}(1,0.35192442)%
    \lineheight{1}%
    \setlength\tabcolsep{0pt}%
    \put(0,0){\includegraphics[width=\unitlength,page=1]{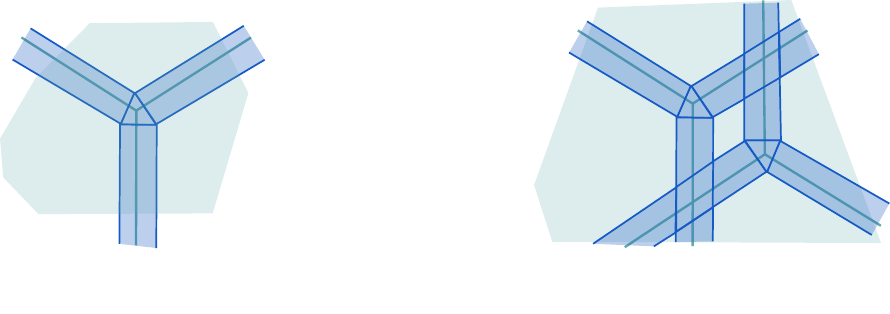}}%
    \put(0.05987592,0.08435534){\color[rgb]{0,0,0}\makebox(0,0)[lt]{\lineheight{1.25}\smash{\begin{tabular}[t]{l}$\PP_0$\end{tabular}}}}%
    \put(0,0){\includegraphics[width=\unitlength,page=2]{singles2.pdf}}%
    \put(0.48981407,0.25965001){\color[rgb]{0,0,0}\makebox(0,0)[lt]{\lineheight{1.25}\smash{\begin{tabular}[t]{l}$\PP_1$\end{tabular}}}}%
    \put(0.74684461,0.03706334){\color[rgb]{0,0,0}\makebox(0,0)[lt]{\lineheight{1.25}\smash{\begin{tabular}[t]{l}$\PP_0 \cap \PP_1$\end{tabular}}}}%
  \end{picture}%
\endgroup%
}
  \caption{Intersection of cuts $\PP_0$, $\PP_1$, and the cutting datum of $\PP_0 \cap \PP_1$.}
  \label{fig:singles2}
\end{figure}

With a cutting datum in hand, we define the symplectic broken manifold.
\begin{definition}
  \label{def:brokenxxsymp}
  \index{Broken manifold! Symplectic broken manifold $\ol \XX_P^\om$}
  {\rm(Symplectic broken manifold)} Suppose $(X,\PP,\Phi)$ is a
  tropical Hamiltonian manifold, and the polyhedral decomposition
  $\PP$ has a cutting datum (Definition \ref{def:cut-datum}) given by
  inclusions of fibered polytopes $\tP \subset \t^\dual$ for all
  $P \in \PP$.

  \begin{enumerate}
  \item The \em{symplectic broken manifold} corresponding to the
    tropical Hamiltonian action $(X,\Phi,\PP)$ is a disjoint union
  \[\XX_\PP\quad \text{or} \quad \XX:=\sqcup_{P \in \PP} \XX_P^\om,\]
  where we use the notation $\XX$ when $\PP$ is clear from the context.
  Here, 
  \begin{equation}
    \label{eq:XolP}
     \XX_P^\om :=\Phinv(\tP)
  \end{equation}
  is a symplectic manifold with corners.  In the case when the inner
  product \eqref{eq:idtt} for $\PP$ is rational, compactifications of
  the components of the broken manifold may be defined as
  \begin{equation}
    \label{eq:olXolP}
    \ol \XX_P^\om:=\XX_P^\om/\sim,  
  \end{equation}
  where the equivalence relation $\sim$ quotients any boundary
  component $\Phinv(Q)$, $Q \in \Facets(\tP)$ by the $S^1$-action
  generated by the vector $\nu_Q \in \t_\Z$ normal to the hyperplane
  $Q \in \t^\dual$.  The space $\ol \XC_P^\om$ is a smooth symplectic
  manifold if all faces $F \subset \tP$ are smooth (as in Definition
  \ref{def:delz}).  Otherwise $\ol \XC_P^\om$ is a symplectic
  orbifold.
  See Notation \ref{note:symp-vs-ac} regarding the superscript $\om$.
\item {\rm(Relative divisors of a symplectic broken manifold)}
  \label{part:bdrydiv}
  \index{Relative divisor} Suppose the inner product \eqref{eq:idtt}
  for $\PP$ is rational, and compactifications $\ol \XX_P^\om$ of the
  components of the broken manifold $\XX$ are orbifolds (see
  \eqref{eq:olXolP}).  Then, for any $P \in \cP$ and any facet $Q$ of the thickening $\tP$, the quotient
  \[Y_Q:=\Phinv(Q)/\exp(\R \nu_Q) \subset \ol \XX_P^\om \]
  is a \em{relative divisor} of $\ol \XC_P^\om$.  Here, the normal
  $\nu_Q \in \t_\Z$ to the facet $Q$ generates a subgroup
  $\exp(\R \nu_Q) \simeq S^1$.  The relative divisor $Y_Q$ is \em{
    horizontal} resp. \em{vertical} if the facet $Q \subset \tP$
  corresponds to a facet of $P$ resp. $P^\dual$ (see
  \eqref{eq:facetsolp}).  Thus
  \[  \XC_P^\om = \ol \XC_P^\om \ssm \bigcup_{ Q \subset \tP} Y_Q \]
  is the complement of all the relative divisors $Y_Q, Q \subset \tP$
  of $\ol \XC_P^\om$.
\end{enumerate} 
\end{definition}

\begin{example}\label{ex:1cut-symbr}
  In the case of a single cut $(X,\Phi,c)$ the symplectic broken
  manifold consists of the components
  \[\{\Phi>c\}/\sim, \quad \{\Phi<c\}/\sim, \quad \Phinv([c-\eps,c+\eps])/\sim,\]
  where $\sim$ mods out the boundaries $\Phinv(c-\eps)$ and $\Phinv(c+\eps)$ by $S^1$-actions.
\end{example}

\begin{remark} {\rm(Components of the symplectic broken manifold as thickenings)}
  \label{rem:fibxp} 
  Components of the symplectic broken manifold are fibrations over cut
  spaces whose fibers are compactifications of the complex tori. That
  is, there is a fibration
  \begin{equation}
    \label{eq:fibxp}
    V_{P^{\dual}} \to \XC_P^\om \xrightarrow{\pi_P} X_P^\om,
  \end{equation}
  whose fiber $V_{P^{\dual}}$ is a symplectic manifold with corners
  and a $T_P$-action whose moment map has image $P^\dual$.  Therefore,
  the symplectic broken manifold $\XX_P^\om$ they can be regarded as
  the thickening of the cut space $X_P^\om$ for any $P \in \PP$.  In
  the particular case when the $X$-inner product (Definition
  \ref{def:x-idtt}) is rational, the fiber $V_{P^{\dual}}$ has a
  compactification $\ol V_{P^{\dual}}$ obtained by modding out
  boundaries by $S^1$-actions. The compactification $\ol V_{P^\dual}$
  is a $T_P$-orbifold whose moment map is $P^\dual$.
\end{remark}

\begin{remark}\label{rem:symptorus}
  {\rm(Torus bundles on cut spaces)} \index{Torus bundle! on symplectic cut space $\ol Z_P^\om$}
  Torus bundles over cut spaces are used in the following sections to
  define neck-stretched almost complex structures. In the symplectic
  setting, for any $P \in \PP$,
  \[\Phinv(P^\circ) \to X_P^\om,\]
  is a principal $T_P$-bundle. It extends to a $T_P$-orbifold bundle
  \begin{equation}
    \label{eq:zpom}
    \ol Z_P^\om \to \ol X_P^\om  
  \end{equation}
  for which $\ol \XC_P^\om \to \ol X_P^\om$ is the associated
  $V_{P^\dual}$-bundle, that is there is a $T_P$-diffeomorphism
  \[\ol \XC_P^\om=\ol Z_P^\om \times_{T_P} V_{P^\dual},\]
  where $V_{P^\dual}$ is a toric variety whose moment polytope is
  $P^\dual$ (Remark \ref{rem:fibxp}).  We also observe that the
  complement of the vertical divisors of $\ol \XX_P^\om$ is
  $T_P$-diffeomorphic to $\ol Z_P^\om \times P^\dual$.
\end{remark}

\begin{example}
  For the multiple cut in Figure \ref{fig:break1}, the dual complex is
  a rectangle and the symplectic broken manifold $\XX$ is as in Figure
  \ref{fig:multpiece}. Relative submanifolds $\ol X_{P_{ij}}^\om$ and
  $\ol X_{P_\cap}^\om$ are thickened into toric fibrations
  $\ol \XC_{P_{ij}}^\om$ and $\ol \XC_{P_\cap}^\om$
  \[\P^1 \to \ol \XC^\om_{P_{ij}} \to \ol X^\om_{P_{ij}}, \quad
    (\P^1)^2 \to \ol \XC^\om_{P_\cap} \to \ol X^\om_{P_\cap}\]
  in the symplectic broken manifold.
\end{example}

\begin{figure}[ht]
  \centering \scalebox{.8}{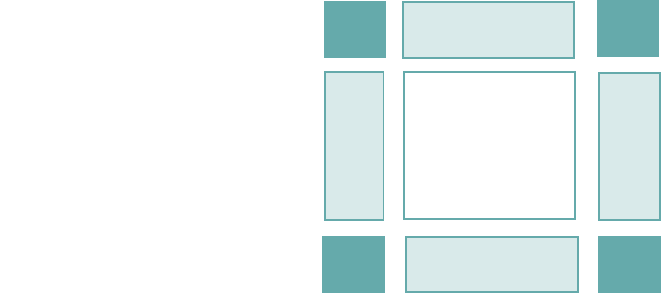}
  \caption{Dual complex and broken manifold for the multiple cut in Figure \ref{fig:break1}}.
  \label{fig:multpiece}
\end{figure}

\section{Neck-stretched almost complex structures}
\label{sec:cylacs}

We define a family of almost complex structures on the manifold $X$,
called neck-stretched almost complex structures. These almost complex
structures are ``cylindrical'' in the sense of the following
Definition:
\begin{definition}\label{def:cylp}
  {\rm(Cylindrical almost complex structure)}
  \begin{enumerate}
  \item {\rm($P$-cylinder)} Given a polytope $P \in \PP$, a $T_{P,\C}$-principal bundle
    \[Z_\C \to M\]
    on a manifold $M$ is called a $P$-\em{cylinder}.  By choosing a
    reduction of structure group to the maximal compact subgroup
    $T_P \subset T_{P,\C}$ we obtain a $T_{P,\C}$-equivariant
    diffeomorphism
    \[Z_{\C} \simeq Z \times i\t_P\]
    where $Z \to M$ is a principal $T_P$-bundle and the
    $T_{P,\C}$-action on $Z \times i\t_P$ is
    \[te^{is} (z,s_0) = (tz,s_0+s), \quad t \in T_P, s \in \t_P.\]
  \item Let $Z_\C \to M$ be a $P$-cylinder.  An almost complex
    structure $J$ is \em{$P$-cylindrical} if and only if
    \begin{enumerate}
    \item there exists an almost complex structure $J_M$ on the base
      manifold $M$ such that the projection
      \[ \pi :Z_\C \to M \]
      is $\left(J, J_{M}\right)$ holomorphic almost complex, that is,
      $d\pi \circ J = J_M \circ d\pi$, and we denote
      \begin{equation}
        \label{eq:dpij}
      D\pi(J):=J_M;  
      \end{equation}
       \label{rep:jm} 
    \item there exists a connection one-form
      $\alpha \in \Om^1(Z,\t_P)$ on the $T_P$-bundle $Z \to M$
      such that the horizontal sub-bundle
      \begin{equation}
        \label{eq:connJ}
      H:=\ker(\alpha) \subset TZ \cong TZ \times \{ 0 \} \subset TZ \times \R \cong TZ_{\C}  
      \end{equation}
      is $J$-invariant; and
    \item on any fiber $\pinv(m) \subset Z_\C$, $J$ is standard in the
      following sense:  For any point $z \in \pinv(m)$ the map
      \[T_{P,\C} \to \pinv(m), t \mapsto tz\]
      is a biholomorphism.
    \end{enumerate}
    As a result, $J$ is invariant under the $T_{P,\C}$-action on
    $Z_{\C}$. Denote by 
    \[ \J^{\cyl}(Z_{\C}):= \Set{ J \in \J(Z_{\C}) | (i) - (iii) } \]
    \index{J@$\J^\cyl$}the space of $P$-cylindrical almost complex
    structures on $Z_\C$. Note that a $P$-cylindrical almost complex
    structure $J$ is determined by its projection $d\pi(J)$ to the
    base $M$ called the \em{base almost complex structure}
    \index{Base almost complex structure} and its \em{associated
      connection one-form} $\alpha(J)$. \index{Connection one-form
      associated to a cylindrical almost complex structure}
  \end{enumerate}
\end{definition}

Given a tropical Hamiltonian action $(X,\PP,\Phi)$, we aim to define
``neck-stretched manifolds'' where the neck region associated to
$P \in \PP$ has a $P$-cylindrical almost complex structure. To achieve
this end, a subset of the manifold $X$ with the action of the torus
$T_P$ has to be identified with a subset of a $P$-cylinder. This
identification is made via a symplectic cylindrical structure defined
below, where a sufficiently small $T_P$-invariant neighborhood of $\Phinv(P)$ in $X$ 
is identified to a product $\Phinv(P) \times P^\dual$. In
what follows the identification between $\t_P$ and $\t_P^\dual$ from
\eqref{eq:idtt} is crucial.  For the symplectic structure, we view
$P^\dual$ as a subset of $\t_P^\dual$
 via the identification $\t_P \simeq \t_P^\dual$ from \eqref{eq:idtt} 
 and the $T_P$-moment map on
$\Phinv(P) \times P^\dual$ is given by the projection to
$P^\dual$.  For the $P$-cylindrical complex structure, we view
$P^\dual$ as a subset of $\t_P$ so that the fibers of the projection
\[ \Phinv(P) \times P^\dual \to \Phinv(P)/T_P \]
are subsets of $T_{P,\C}$ via the map 
\begin{equation}
  \label{eq:tp-id}
  T_P \times P^\dual \subset 
  T_P \times \t_P \xrightarrow{(t,\xi) \mapsto t \exp(i\xi)} T_{P,\C}. 
\end{equation}
   Consequently
  $\Phinv(P) \times P^\dual$ has a partial $T_{P,\C}$-action.  That
  is, there is an infinitesimal action
\begin{equation} \label{eq:pact} \t_{P,\C} \to \Vect(\Phinv(P) \times
  P^\dual) \end{equation} 
whose flows satisfy the axioms for an action of $T_{P,\C}$ wherever
they are defined.

\begin{definition}
  \label{def:sympcylstr}
  {\rm(Symplectic cylindrical structure on tropical Hamiltonian actions)}
  \index{Cylindrical! Symplectic cylindrical structure} Let
  $(X,\PP, \Phi)$ be a symplectic manifold with a tropical Hamiltonian
  action. A \em{symplectic cylindrical structure}
  $\ul \phi=(\phi_P)_{P \in \PP}$ consists of a $T_P$-equivariant
  symplectomorphism $\phi_P$
  \begin{equation}
    \label{eq:sympcylX}
    \Phinv(\tP) \xrightarrow{\phi_P} (\Phinv(P) \times
    P^\dual, \ol \om), \quad \ol \om := (\om_X|\Phinv(P)) +
    \d\bran{\alpha_P, \Pi_{P^\dual} - c_{P^\dual}} 
  \end{equation}
for each polytope $P \in \PP$. Here 
\begin{enumerate}
\item $\alpha_P \in \Om^1(\Phinv(P),\t_P)$ is a $T_P$-connection
  one-form;
\item \label{part:sympstr2}
  $\Pi_{P^\dual}: \Phinv(P) \times P^\dual \to P^\dual$ is the projection to the second factor,
  $c_{P^\dual}\in P^\dual$ is the constant that is the image of the
  polytope $P \subset \t^\dual$ under the projection
  $\pi_{P^\dual} : \tP \to P^\dual$ from Definition \ref{def:fibpoly},
  \label{rep:under}
  and $\bran{\cdot,\cdot}$ is the inner product on $\t_P$ from \eqref{eq:idtt}, 
  and so,
  $\bran{\alpha_P, \pi_{P^\dual} - c_{P^\dual}} $ is a one-form on
  $\Phinv(P) \times P^\dual$;
\item the dual polytopes $P^\dual$ are assumed to be small enough that
  the forms in the right hand side of \eqref{eq:sympcylX} are
  symplectic;
\item \label{part:sympstr4} $\phi_P$ satisfies
  $\Phi \circ \phi_P=\pi_P \circ \Phi$, where in the left hand side,
  $\Phi : \Phinv(P) \times P^\dual \to P$ is independent of the second
  domain component;
\end{enumerate}
and the maps $\ul \phi$ satisfy the following (Patching) condition:
  \begin{itemize}
  \item[] {\rm(Patching)}\label{item:patching} 
    For any pair $Q \subset P$, in the overlap
    region $\tQ \cap \tP$ the $T_{P,\C}$-action induced by $\phi_P$
    (see the explanation preceding this definition) is the restriction
    of the $T_{Q,\C}$-action induced by $\phi_Q$.
  \end{itemize}
  This ends the Definition.
\end{definition}
\begin{remark}\label{rem:moment}
  For any $P \in \PP$,
  viewing $P^\dual$ as a subset of $\t_P^\dual$ via the $\t_P$-inner product in \eqref{eq:idtt}, 
  the second component of the symplectic
  cylindrical structure map, namely $\Pi_P \circ \phi_P$, satisfies
  \[\Pi_{P^\dual} \circ \phi_P=\pi_{P^\dual} \circ \Phi, \]
  and therefore, it is a $T_P$-moment map. Here, the projection
  $\pi_{P^\dual} : \tP \to P^\dual$ is from Definition
  \ref{def:fibpoly}.
\end{remark}
In the above definition of the symplectic cylindrical structure maps,
the \hyperref[item:patching]{(Patching)} 
condition is equivalent to the following consistency
condition on connection one-forms: $(\alpha_P)_{P \in \PP}$.
\begin{lemma}\label{lem:conn-consis}
  {\rm(Consistency for connection one-forms)} The collection of
  symplectic cylindrical structure maps $(\phi_P)_P$ satisfy the
  \textup{\hyperref[item:patching]{(Patching)}}
  condition if and only if for any
  $x \in \Phinv(\tQ \cap \tP)$,
  \begin{equation}
    \label{eq:consis}
    \ker(\alpha_P(x))=\ker(\alpha_Q(x)) \oplus (\t_Q/\t_P)x,
  \end{equation}
  where we view the quotient $\t_Q/\t_P$ as a subspace of $\t_Q$ by
  the pairing \eqref{eq:idtt} on $\t_Q$.
\end{lemma}
\begin{proof}
  For any $x \in \Phinv(\tQ \cap \tP)$,
  \[T_xX = \t_{Q,\C} x \oplus \ker(\alpha_Q(x))= \t_{P,\C} x \oplus \ker(\alpha_P(x)),\]
  and in both decompositions, the summands are $\om$-complements. The
  (Patching) condition implies that $\t_{P,\C} x \subset \t_{Q,\C} x$,
  which is equivalent to \eqref{eq:consis}.
\end{proof}

\begin{remark}
  In Definition \ref{def:sympcylstr}, condition \eqref{part:sympstr4}
  exists in order to simplify exposition. This condition allows us to
  state the properties of fibered polytopes purely at the level of
  polytopes without using the map $\Phi$. In the construction of the
  symplectic cylindrical structure, we modify the non-essential
  components of the tropical moment map to achieve this condition, see
  \eqref{eq:modify}.
\end{remark}

For the moment, we assume the existence of symplectic cylindrical
structures and use them to define a family of neck-stretched almost
complex structures.  The proof of the existence of symplectic
cylindrical structures on tropical Hamiltonian actions is deferred to
Proposition \ref{prop:symcyl} at the end of Chapter \ref{chap:bsymp}.
The cylindrical structure maps $\{\phi_P\}_{P \in \PP}$ in
\eqref{eq:sympcylX} are fixed throughout the book.
\begin{remark}
  {\rm(A decomposition of polytopes)} We describe a polyhedral decomposition that is used to define neck-stretched manifolds.
  Let $(X,\PP,\Phi)$ be a tropical Hamiltonian action and let $B^\dual$ be a
  dual complex.  For any $P \in \PP$, let
  \begin{equation}
    \label{eq:pblack}
    P^\fillblack:=P \bs (\cup_{Q \subset P}\tQ),  
  \end{equation}
  be the complement of fibered neighborhoods of proper faces of
  $P$. Here, we recall from \eqref{eq:ipdef} that the cutting datum
  gives an embedding $\tQ \hra \t^\dual$ for any fibered polytope
  $\tQ$ corresponding to $Q \in \PP$.  Corresponding to any facet
  $Q \subset P$, $P^\fillblack$ has a facet $Q^\fillblack$.  Let
  \begin{equation}
    \label{eq:olpblack}
    \tP^\fillblack:=\pi_P^{-1}(P^\fillblack) \subset \tP
  \end{equation}
  be the thickening of $P^\fillblack$.  For a pair $Q \subset P$ with
  $\codim_P(Q)=1$, the fibered polytopes
  $\tP^\fillblack ,\tQ^\fillblack \subset \t^\dual$ share a facet,
  which is isomorphic to $Q^\fillblack \times P^\dual$.  The
  image of $\Phi$ is covered by the union of thickenings
  \begin{equation}
    \label{eq:phicover}
    \im(\Phi)=\cup_{P \in \PP} i_{\tP}(\tP^\fillblack); 
  \end{equation}
  see Figure \ref{fig:stretch}.  The partition of $\im(\Phi)$ pulls
  back to a partition of the symplectic manifold $(X,\om_X)$
  \begin{equation}
    \label{eq:xnupre}
    X:=\left( \bigsqcup_{P \in \PP} \Phinv( \tP^\fillblack) \right) / \sim
  \end{equation}
  into manifolds with corners, where the identifications are by the
  equivalence relation $\sim$ along the boundaries and are induced by
  the inclusions $\Phinv( \tP^\fillblack) \to X$.   The
  symplectic cylindrical structure map $\ul \phi=(\phi_P)_P$ may be
  used to rewrite the decomposition in \eqref{eq:xnupre} as
  \begin{equation}
    \label{eq:xnupre1}
    X:=\left( \bigsqcup_{P \in \PP} \Phinv( P^\fillblack)  \times P^\dual \right) / \sim
  \end{equation}
  as in Figure \ref{fig:stretch}.  In \eqref{eq:xnupre1}, the
  equivalence $\sim$ identifies the boundary components
  \begin{equation}
    \label{eq:bdryphicover}
    \Phinv( Q^\fillblack) \times Q^\dual \supset \Phinv(Q^\fillblack) \times P^\dual \xrightarrow{\sim} \Phinv(Q^\fillblack) \times P^\dual \subset \Phinv( P^\fillblack)
    \times P^\dual    
  \end{equation}
  for all pairs $Q \subset P$, $\codim_P(Q)=1$.
\end{remark}
\begin{definition}\label{def:neckstr}
  {\rm(Neck-stretched manifolds)} \index{Neck-stretched manifold} Let
  $(X,\PP,\Phi)$ be a tropical Hamiltonian action with a symplectic cylindrical
  structure.  For any $\nu \in \R_{\geq 1}$, define a \em{
    neck-stretched manifold} $X^\nu$ as
  \begin{equation}
    \label{eq:xnudef}
    X^\nu=   \left( \bigsqcup_{P \in \PP}( \Phinv(P^\fillblack) \times \nu P^\dual) \right) / \sim,
  \end{equation}
  where the equivalence relation $\sim$ is exactly as in
  \eqref{eq:bdryphicover}, that is, for all pairs $Q \subset P$,
  $\codim_P(Q)=1$, the boundary component
  $\Phinv(Q^\fillblack) \times \nu P^\dual$ in
  $\Phinv( P^\fillblack) \times \nu P^\dual$ is identified with the
  corresponding boundary component in
  $\Phinv( Q^\fillblack) \times \nu Q^\dual$ by the identity map.
This ends the Definition.
\end{definition}

\begin{remark}\label{rem:xnu-mani}
  In order to define a manifold structure on $X^\nu$, we need to give
  identifications between the tubular neighborhoods of boundaries and
  corners that are glued in \eqref{eq:xnudef}.  These identifications
  are the same as those between the corresponding pieces of the
  decomposition \eqref{eq:xnupre1} of $(X,\om_X)$.
\end{remark}

\begin{remark}
  {\rm(Projection to dual complex)}
  There is a natural projection map
  \begin{equation}
    \label{eq:pinb-ch3}
    \pi_{\nu B^\dual} : X^\nu \to \nu B^\dual  
  \end{equation}
  which is defined on the subset
  $\Phinv(P^\fillblack) \times \nu P^\dual$ as projection to
  $\nu P^\dual$ for any $P \in \PP$. The map $\pi_{\nu B^\dual}$ is
  continuous.
\end{remark}

\begin{remark} \label{rep:thus} By Definition \ref{def:neckstr}, the
  neck-stretched manifold $X^\nu$ is equipped with
\begin{enumerate}
\item for each $P$, a $P$-cylindrical structure on the subset
  $\Phinv(P^\fillblack) \times \nu P^\dual \subset X^\nu$; that is,
  there is a projection
  \begin{equation}
    \label{eq:xnupproj}
  \Phinv(P^\fillblack) \times \nu P^\dual \to \Phinv(P^\fillblack)/T_P  
  \end{equation}
  whose fibers are $T_P \times \nu P^\dual \subset T_{P,\C}$; and
\item a symplectic form on the base manifold
  $\Phinv(P^\fillblack)/T_P$ that is a $T_P$-reduction of the
  symplectic form $\om_X$ on $X$.
\end{enumerate}
\end{remark}

\begin{figure}[ht]
  \centering \scalebox{.8}{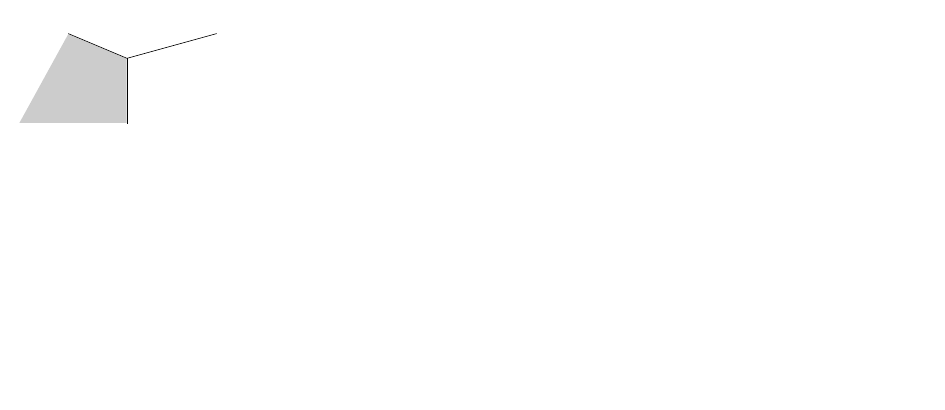}
  \caption{Stretching}
  \label{fig:stretch}
\end{figure}
\begin{remark}
  {\rm(A warning about Figure \ref{fig:stretch})} Figure \ref{fig:stretch} is a
  schematic representation of neck-stretched manifolds.  We recall
  symplectic broken manifolds are represented by the images of their
  tropical moment map, for example in Figure \ref{fig:broken-a3}. In contrast, we do not
  have a moment map on neck-stretched manifolds. A larger polytope is
  just used to indicate a larger cylinder.
\end{remark}

\begin{remark}
  The neck-stretched manifold $X^\nu$ is diffeomorphic to $X$, but the
  diffeomorphism is not canonical. As a result, there is no canonical
  symplectic form on $X^\nu$.
\end{remark}

\begin{example}\label{ex:1cutneck}
  {\rm(Neck-stretched manifolds for a single cut)} We describe
  neck-stretched manifolds in the case of a single cut.  Let $(X,\Phi,c)$
  be a tropical Hamiltonian action with a single cut along $\Phinv(c)$, and
  whose polytopes are as in Example \ref{ex:singlecutdual}.  The
  symplectic cylindrical structure consists of an $S^1$-equivariant
  symplectomorphism defined on a neighborhood
  \begin{equation}
    \label{eq:1cutsymcyl}
  \Phinv([c-\eps,c+\eps]) \xrightarrow{\phi_{P_0}} (\Phinv(c) \times [-\eps,\eps], \om_{\qu} + \d(t\alpha)),    
  \end{equation}
  where $\om_{\qu}$ is the reduced symplectic form on the quotient
  $\Phinv(c)/S^1$, $t \in (-\eps,\eps)$ is the coordinate function,
  and $\alpha \in \Om^1(\Phinv(c))$ is a connection one-form on the
  $S^1$-bundle $\Phinv(c) \to \Phinv(c)/S^1$. The other symplectic
  cylindrical structure maps $\phi_{P_-}$, $\phi_{P_+}$ are trivial
  since $P_+$, $P_-$ are top-dimensional. The neck-stretched manifold
  is then 
  \begin{equation}
    \label{eq:Xnu1eg}
  X^\nu:=\{\Phi \leq c- \eps\} \cup_{\Phinv(c)} (\Phinv(c)\times [-\eps \nu,\eps \nu]) \cup_{\Phinv(c)} \{\Phi \geq c+\eps\},   
  \end{equation}
  where the attachment maps are given by identifying
  $\{\Phi=c\pm \eps\}$ to $\{\Phi=c\}$ via the symplectic cylindrical
  structure $\phi_{P_\pm}$, and in the middle piece, $\{\Phi=c\}$ is
  identified to the ends $\{\Phi=\pm \eps \nu\}$. \label{rep:xpnu-eg}
  \footnote{The interval $[-\eps \nu, \eps \nu]$ in \eqref{eq:Xnu1eg}
    is chosen to be consistent with definitions, there is no canonical
    choice of end-points, as long as the length of the segment is
    $\nu$ times the neck length in $X$.}
\end{example}

\begin{definition}{\rm(Cylindrical almost complex structures on neck-stretched manifolds)}
  \label{def:cylneckst}
  Let $(X,\PP)$ be a tropical Hamiltonian action and let
  $\{X^\nu\}_\nu$ be a sequence of neck-stretched manifolds. Recall
  from \eqref{eq:xnudef} that a neck-stretched manifold has a
  decomposition
  \[X^\nu=   \left( \bigsqcup_{P \in \PP} \Phinv(P^\fillblack) \times \nu P^\dual \right) / \sim,\]
  where $\sim$ identifies boundaries of the different components. 
  \begin{enumerate}
  \item An almost complex structure $J^\nu$ on the neck-stretched
    manifold $X^\nu$ is \em{cylindrical} if $J^\nu$ is
    $P$-cylindrical in the sense of Definition \ref{def:cylp} in the
    subset
    \[\Phinv(P^\fillblack) \times \nu P^{\dual,\circ} \subset X^\nu.\]
    The space of cylindrical almost complex structures on $X^\nu$ is
    denoted by
    \[\J^\cyl(X^\nu).\]
    \index{J@$\J^\cyl$!$\J^\cyl(X^\nu)$}We say that $(X^\nu,J^\nu)$
    is a \em{family of neck-stretched almost complex structures}
    \index{Family of neck-stretched almost complex structures} if
    there are $\nu$-independent cylindrical almost complex structures
    $J_P$ on $\Phinv(P^\fillblack) \times \t_P^\dual$ for all $P$,
    such that on the $P$-cylindrical subset
    $\Phinv(P^\fillblack) \times \nu P^{\dual,\circ}$ of $X^\nu$,
    $J^\nu$ is the restriction of $J_P$.
  \item {\rm(Local tamedness and compatibility)} \label{part:tamefib}
    \index{Almost complex structure! Locally tamed} \index{Almost
      complex structure! Locally compatible} Let $(X^\nu,J^\nu)$ be a
    family of neck-stretched almost complex structures.  We say that
    each element $J^\nu$ is \em{locally tamed} resp. \em{locally
      compatible} if $J^1$ is $\om_X$-tamed
    resp. $\om_X$-compatible. (The definition is based on the
    observation that the neck-stretched manifold $X^1$ has a canonical
    diffeomorphism to $(X,\om_X)$, and this is not so for $X^\nu$,
    $\nu \neq 1$.)
  \item {\rm(Local strong tamedness)} \label{part:local-strong-t}
    \index{Almost complex structure!
      Locally strongly tamed} A cylindrical locally tamed almost
    complex structure $J$ on $X^\nu$ is \em{locally strongly tamed}
    if for all $P \in \PP$ the connection one-form $\alpha_{P,J}$
    underlying the $P$-cylindrical almost complex structure on
    $\Phinv(P^\fillblack) \times \nu P^{\dual,\circ}$ in Definition
    \ref{def:cylp} is equal to
    the connection one-form underlying the symplectic cylindrical
    structure in Definition
    \ref{def:sympcylstr}.
  \end{enumerate}
\end{definition}
\begin{remark}
  \label{rem:oncyl}
  The following are some remarks on cylindrical almost complex structures on neck-stretched manifolds.
  \begin{enumerate}
  \item \label{part:oncyl1}
    The cylindrical almost complex structures defined above in Definition \ref{def:cylneckst}
    is sometimes referred to as \em{$X$-cylindrical} 
    \index{Almost complex structure! X@ $X$-cylindrical}
    to emphasize its dependence on the $X$-inner product \eqref{eq:idtt} on $\{\t_P\}_P$. Recall that the dependence on the inner product arises because of the identification \eqref{eq:tp-id} between the symplectic cylinder $\Phinv(P) \times \t_P^\dual$ and the almost complex cylinder $\Phinv(P) \times \sqrt{-1}\t_P$
  \item The definition of local (strong) tamedness depends not just on the symplectic form $\om_X$,
  but also on the symplectic cylindrical structure in Definition \ref{def:sympcylstr}.
\item In the space of cylindrical almost complex structures, the
  cylindrical coordinate maps, taking values in
  $\Phinv(P^\fillblack) \times \nu P^\dual$, are held fixed, but the
  connection one-forms $\alpha_{P,J}$, $P \in \PP$ underlying a
  cylindrical almost complex structure $J$ are allowed to vary.
\item \label{part:tame-equiv} We give an alternate definition of local
  tamedness.  A cylindrical almost complex structure $J^\nu$ is \em{
    locally tamed} on $X^\nu$ if for each $P$, the base almost complex
  structure (see Definition \ref{def:cylp}) on $X_P$ induced by
  $J^\nu$ on the $P$-cylinder $\Phinv(\wP) \times \nu
  P^\dual$ \label{rep:nopush}
is tamed by
$\om_{X_P} + \bran{\d\alpha_P, c}$ for all 
  $c \in P^\dual - c_{P^\dual} \subset \t_P$.  Here
  $c_{P^\dual}:=\pi_{P^\dual}(P) \in \t_P$ is a constant and
  $\alpha_P$ is the $T_P$-connection one-form underlying the
  symplectic cylindrical structure in Definition \ref{def:sympcylstr}.
  Note that the definition of local tamedness does not involve $\nu$, since
  the ``same'' almost complex structure can be defined for any $\nu$.
    \end{enumerate}
\end{remark}
The following Lemma is a quantitative version of the statement that
local tamedness is a $C^0$-open property in the space of cylindrical
almost complex structures.  We use the alternate definition of local
tamedness from Remark \ref{rem:oncyl} \eqref{part:tame-equiv}.

\begin{lemma}\label{lem:tameC0}
  Let $J^0_P$ be an almost complex structure on $\Phinv(\wP)/T_P$ that
  is tamed by the form $\om_{X_P} + \bran{\d\alpha_P,\tau}$ for all
  $\tau \in P^\dual \subset \t_P$. Then there exist constants
  $\eps$, $c$ such that if $J_P \in \J(\Phinv(\wP)/T_P)$ is such that
  $\Mod{J_P - J_P^0}_{C^0} <\eps$ then
  \begin{multline}
    \label{eq:horiz-forms}  
    c^{-1} \om_{X_P}(v,J_Pv) \leq (\om_{X_P} + \bran{\d\alpha_P,
      c})(v,J_Pv) \leq c \om_{X_P}(v,J_Pv) \\ \forall v \in
    T(\Phinv(\wP)/T_P).
  \end{multline}
\end{lemma}

\begin{proof}
  By the compactness of the spaces $\wP/T_P$ and $P^\dual$, and the
  tamedness of $J_P^0$ for $\om_{X_P} + \bran{\d\alpha_P,\tau}$ for all
  $\tau \in P^\dual$, there are constants $c_0,c_1>0$ such that
  \[c_0|v|^2 \leq (\om_{X_P}+\bran{\d\alpha_P,\tau})(v,J_P^0v) \leq c_1|v|^2\]
  for all $v \in T(\Phinv(\wP)/T_P$, $\tau \in P^\dual$.
  For any $\eps>0$ and $J_P \in B_\eps(J^0_P)$, there are constants $c_0',c_1'>0$ such that
  \[(c_0-c_0'\eps)|v|^2 \leq (\om_{X_P}+\bran{\d\alpha_P,\tau})(v,J_P^0v) \leq (c_1+c_1'\eps)|v|^2.\]
  The Lemma follows by 
  choosing $\eps>0$ such that $c_0-c_0'\eps>0$ and $c:=\frac{c_1 + c_1'\eps}{c_0 - c_0'\eps}$.
\end{proof}
\section{Broken manifold as a degenerate limit}
\label{sec:cylbrokenmfd}

In this Section, we define cut spaces and broken manifolds as almost
complex manifolds with cylindrical ends.  For any $P$, the cut space
$X_P$ is defined as the direct limit of the $P$-cylindrical subsets of
the neck-stretched manifolds quotiented by the torus $T_{P,\C}$. The
broken manifold $\XX_P$ is a $T_{P,\C}$-fibration over $X_P$.  For any
$P$, the almost complex cut space $X_P$ resp. broken manifold $\XX_P$
is diffeomorphic to the corresponding symplectic object $X_P^\om$
resp. $\XX_P^\om$, but there is no canonical diffeomorphism.  See Lemma
\ref{lem:immdiffeo} and Remark \ref{rem:nosymp} for related
discussion.

\subsection{Defining almost complex broken manifolds}

The broken manifold is the degenerate limit of neck-stretched almost
complex manifolds as we now explain.  For a polytope $P$ and
$\nu \geq 1$, let
\begin{equation}
  \label{eq:xolpnu}
X^\nu_{\tP} :=  \left(\bigsqcup_{Q \in \PP : Q \subseteq P} (\Phinv(Q^\fillblack) \times \nu Q^\dual)/\sim\right) \subset X^\nu  
\end{equation}
be the subset of $X^\nu$ that has a $P$-cylindrical structure.  Thus
$X^\nu_{\tP}$ has a free partial $T_{P,\C}$-action (see
\eqref{eq:pact}).  (Here the equivalence relation $\sim$ is the same
as the one in the definition \ref{eq:xnudef} of $X^\nu$.)  Let
\begin{equation}
  \label{eq:xpnu}
  X^\nu_P:=X^\nu_{\tP}/T_{P,\C}  
\end{equation}
be the quotient of $X^\nu_{\tP}$ under the partial
$T_{P,\C}$-action. Consequently, there are projections
\begin{equation}
  \label{eq:zpnu}
  X^\nu_{\tP} \xrightarrow{\pi_P'} Z^\nu_P  \xrightarrow{\pi_P''} X^\nu_P,  
\end{equation}
where $Z^\nu_P  \xrightarrow{\pi_P''} X^\nu_P$ is a $T_P$-bundle, and the fibers of $\pi_P'$ are  $P^\dual \subset \t_P^\dual$. 
\begin{lemma}\label{lem:natinc}
  Let $(X^\nu,J^\nu)$ be a family of neck-stretched almost complex structures (as in Definition \ref{def:cylneckst}).  For any $P \in \PP$ and $\nu_0 < \nu_1$ there is a natural embedding 
\[i_{P, \nu_0,\nu_1}:(X^{\nu_0}_P,J^{\nu_0}) \to (X^{\nu_1}_P,J^{\nu_1}), \quad i_{Z_P,\nu_0,\nu_1}:Z^{\nu_0}_P \to Z^{\nu_1}_P,\]
and for any $\nu_0<\nu_1<\nu_2$,
\[i_{P, \nu_0,\nu_2}:=i_{P, \nu_1,\nu_2} \circ i_{P, \nu_0,\nu_1}, \quad i_{Z_P, \nu_0,\nu_2}:=i_{Z_P, \nu_1,\nu_2} \circ i_{Z_P, \nu_0,\nu_1}\]
\end{lemma}

\begin{proof}
  Let $P \in \PP$ be a top-dimensional polytope. Then,
  $X^\nu_P=X^\nu_{\tP}$ has a decomposition as in \eqref{eq:xolpnu}.
  Define embeddings
  \begin{equation}
    \label{eq:ipnuembed}
    i_{P, \nu_0,\nu_1}:=(\Id, \tau):\Phinv(Q^\fillblack) \times \nu_0 Q^\dual \to
    \Phinv(Q^\fillblack) \times \nu_1 Q^\dual  
  \end{equation}
  where $\tau$ is the restriction of a translation on $\t_Q^\dual$
  (recall that $\nu Q^\dual \subset \t_Q$ for all $\nu$) that
  maps the point $\nu_0 P^\dual \in \nu _0Q^\dual$ to the point
  $\nu_1 P^\dual \in \nu _1Q^\dual$. The maps \eqref{eq:ipnuembed}
  glue to yield an embedding
  $i_{P, \nu_0,\nu_1}:X^{\nu_0}_P \to X^{\nu_1}_P$.

  Next, consider a face $Q \in \PP$ of the top-dimensional polytope
  $P$. The embedding
  $i_{P, \nu_0,\nu_1} : X^{\nu_0}_{\tQ} \to X^{\nu_1}_{\tQ}$ descends
  to $i_{Z_Q,\nu_0,\nu_1} : Z^{\nu_0}_Q \subset Z^{\nu_1}_Q$ and
  $i_{Q, \nu_0,\nu_1} : X^{\nu_0}_Q \subset X^{\nu_1}_Q$.  We leave it
  to the reader to check that the embedding preserves the almost
  complex structure.
\end{proof}

Next, we define cut spaces and the broken manifold as manifolds with
cylindrical structures.  Cut spaces resp. broken manifolds were
already defined as symplectic manifolds in
Definition 
\ref{def:cutspace}
resp. \ref{def:brokenxxsymp}.  We now define these spaces as almost
complex manifolds.  See Lemma \ref{lem:immdiffeo} and Remark
\ref{rem:nosymp} for a reconciliation of the two viewpoints. The
symplectic spaces have a superscript $\om$ to distinguish them from
the corresponding almost complex spaces,
see Notation \ref{note:symp-vs-ac}.

\begin{definition}{\rm(Almost complex cut spaces and broken manifold)}
  \label{def:brokenJ}
  Let $(X,\PP,\Phi)$ be a tropical Hamiltonian action and let $\{X^\nu\}_\nu$ be
  the corresponding family of neck-stretched
  manifolds.
  \begin{enumerate}
  \item {\rm(Cut space)} \index{Cut space!Almost complex cut space $X_P$}
    \label{part:cutspace}
    For a polytope $P \in \PP$, the \em{cut space} $\XB_P$ is defined
    as the direct limit
    \label{rep:cutdef}
    \footnote{We do not just define the cut space as $\Phinv(P)/T_P$
      as in done in the case of a symplectic cut space. The definition
      by direct limits allows us to realize the cut space as the
      direct limit of almost complex manifolds $X^\nu_P$.}
    \begin{equation}
      \label{eq:xpdef}
      X_P:=(\cup_\nu X_P^\nu)/\sim,  
    \end{equation}
    where $X_P^\nu$ is the $T_P$-quotient of the $P$-cylindrical
    subset of $X^\nu$ (see \eqref{eq:xpnu}), and $\sim$ is given by
    the embeddings $i_{P, \nu_0,\nu_1}:X^{\nu_0}_P \to X^{\nu_1}_P$
    from Lemma \ref{lem:natinc}.  For any $\nu$, there is a natural
    embedding
    \[i_{P,\nu} : X_P^\nu \to X_P.\]
  \item {\rm($T_P$-bundle on the cut space $X_P$)} Define the
    $T_P$-bundle $\oZ_P \to X_P$ as the direct limit
    \begin{equation}
      \label{eq:zpdef}
      \oZ_P:=(\cup_\nu Z_P^\nu)/\sim  
    \end{equation}
    \index{Torus bundle! on almost complex cut space $Z_P$}
    of $T_P$-bundles $Z_P^\nu$ (defined in \eqref{eq:zpnu}), where
    $\sim$ is given by the embeddings
    $i_{Z_P,\nu_0,\nu_1}:Z^{\nu_0}_P \to Z^{\nu_1}_P$ from Lemma
    \ref{lem:natinc}.  There is a natural embedding
    \[i_{Z_P,\nu} : Z_P^\nu \to \oZ_P.\]
  \item{\rm(Broken manifold)} \index{Broken manifold! Almost complex
      broken manifold $\XC_P$} The broken manifold $\XX$ is the
    disjoint union
    \begin{equation}
      \label{eq:brokenmfd}
      \XX_\PP\quad \text{or} \quad \XX:=
      \bigsqcup_{P \in \PP} \XC_P ,
    \end{equation}
    where we use the notation $\XX$ when the polyhedral decomposition
    $\PP$ is clear from the context.
    Here,
    \[\XX_P:=\oZ_P \times \t_P \]
    and is a $T_{P,\C}$-bundle over $\XB_P$ with projection
    \begin{equation}
      \label{eq:pixp}
      T_{P,\C} \to \XX_P \xrightarrow{\pi_{X_P}} X_P.  
    \end{equation}
  \item {\rm(Cylindrical almost complex
      structures)}\index{Cylindrical! almost complex structure}
    \index{Almost complex structure! Cylindrical}
    \label{part:brokenJ-cyl}
    An almost complex
    structure $\JJ=(\JJ_P)_{P \in \PP}$ on $\XX$ consists of an
    almost complex structure $\JJ_P$ on each component
    $\XC_P \subset \XX$.  Such a $\JJ$ is \em{cylindrical}
    (or \em{$X$-cylindrical} to emphasize the dependence on the $X$-inner product, 
    \index{Almost complex structure! X@ $X$-cylindrical}
    see Remark \ref{rem:oncyl} \eqref{part:oncyl1})
    if it is
    the limit of a family $(J^\nu)_\nu$ of neck-stretched cylindrical
    almost complex structures on $X^\nu$. The space of cylindrical
    almost complex structures on $\XX$ is denoted by
    \[\J^\cyl(\XX) = \{ \JJ \ \text{cylindrical} \}.\]

    \index{J@$\J^\cyl$!$\J^\cyl(\XX)$} The almost complex structure
    $\JJ$ is \em{locally tame} or \em{locally compatible} or \em{
      locally strongly tamed} \index{Almost complex structure! Locally
      strongly tamed} \index{Almost complex structure! Locally tamed}
    if the corresponding property holds for $J^\nu$ (see Definition
    \ref{def:cylneckst}).
  \item {\rm(Cylindrical ends)} 
  \index{Cylindrical! P@ $P$-cylindrical end} For a pair of polytopes $Q \subset P$, the \em{
      $Q$-cylindrical end of $\XB_P$} is a subset
    $U_Q(\XB_P) \subset \XB_P$ defined as the exhaustion
    \begin{equation} \label{eq:uq} U_Q(\XB_P):=\cup_\nu
      U_Q(X_P^\nu) \end{equation}
    of $Q$-cylindrical subsets $U_Q(X_P^\nu) \subset X_P^\nu$ given by
    \[U_Q(X_P^\nu):= \cup_{R \subseteq Q} \Phinv(R^\fillblack) \times
      \nu R^\dual \subset X_P^\nu.\]
    The $Q$-cylindrical end in $\XX_P$ resp. $\oZ_{P}$ is the lift of
    $U_Q(X_P)$ by the projection map \eqref{eq:zpnu}
    $\pi''_P : \oZ_P \to \XB_P$ resp. $\pi_{X_P} : \XC_P \to \XB_P$,
    and is denoted by
    \[U_Q(\XC_P) \subset \XC_P \quad \text{resp.} \quad U_Q(\oZ_P)
      \subset \oZ_P.\]
  \end{enumerate}
\end{definition}

The following is immediate from the definitions:

\begin{lemma}\label{lem:immdiffeo}
  Given a broken almost complex manifold $\XX$, any piece $\XX_P$ of
  the broken manifold is diffeomorphic to the symplectic manifold $\XX^\om_P$.
\end{lemma}
The cut space $X_P$ has an orbifold compactification 
%
\index{Cut space!Almost complex cut space $\ol X_P$}
\begin{equation}
  \label{eq:xpcpt}
  \ol X_P
\end{equation}
that is diffeomorphic to the compactification $\ol X_P^\om$ of the
symplectic cut space.  The boundary $\ol X_P \bs X_P$ is a
collection of sub-orbifolds of codimension at least $2$, each
corresponding to either a relative divisor, or the intersection of a
collection of relative divisors in $\ol X_P^\om$ from Definition
\ref{def:brokenxxsymp} \eqref{part:bdrydiv}.  The compactification may
be constructed formally (without reference to $X_P^\om$) by adding to
$X_P$ limit points of torus orbits; for example, a divisor
$X_Q \subset \ol X_P$ corresponding to a facet $Q \subset P$,
$Q \in \PP$ with outward normal $\nu_Q \in \t_Q/\t_P$ is the set of
orbits
\[\{[x]=T_{\nu_Q,\C}x \enspace: \enspace x \in X_P\}, \]
where
$T_{\nu_Q,\C} \simeq \C^\times$ is the torus generated by $\nu_Q$, and 
each $[x]$ is the limit point of a $T_{\nu_Q,\C}$-orbit, that
is, $\lim_{s \to \infty} e^{(s+it)\nu_Q}x=[x]$.
 The cylindrical almost complex structure on $X_P$ does not extend to $\ol X_P$.
Similarly, a component $\XX_P$ of the broken manifold has a compactification $\ol \XX_P$ if
the $X$-inner product on $\{\t_P\}_P$ 
(from \eqref{eq:idtt}) is rational.

\begin{remark}\label{rem:recover}
  The neck-stretched manifolds $X^\nu$ can be recovered from the
  broken almost complex manifold equipped with cylindrical coordinates
  on its ends as
  \[X^\nu= \left(\bigcup_{P \in \PP, \codim(P)=0} X_P \right)/\sim_\nu\]
  where the equivalence relation $\sim_\nu$ is as follows.  For
  top-dimensional polytopes $P_0$, $P_1$, and $Q:=P_0 \cap P_1$,
  \begin{multline}
    \XB_{P_0} \ni x_0 \sim_\nu x_1 \in \XB_{P_1}
    \Longleftrightarrow \\ x_0 \in U_Q(X_{P_0}),\enspace x_1 \in U_Q(X_{P_1}),
    \enspace i_Q^{P_0}(x_0) = e^{\nu(P_0^\dual - P_1^\dual)}
    i_Q^{P_1}(x_1).
  \end{multline}
  Here, $P_0^\dual - P_1^\dual \in \t_Q$, and the maps $i_Q^{P_0}$, $i_Q^{P_1}$ are $Q$-cylindrical coordinate maps defined in Lemma \ref{lem:cyl-br}. 
   This ends the
  Remark.
\end{remark}
\begin{example}{\rm(Cylindrical ends in a single
    cut)}\label{ex:1cutcyl}
  We continue Example \ref{ex:1cutneck} where we described
  neck-stretched manifolds in the case of a single cut.  Let $(X,\Phi,c)$
  be a tropical Hamiltonian action with a single cut.  First, we describe the
  $P$-cylindrical part in the neck-stretched manifold $X^\nu$.  We
  have,
  \[X^\nu_{P_+}=\{\Phi \geq c+\eps\} \cup_{\Phinv(c)} (\Phinv(c)
    \times [-\eps \nu, \eps \nu]),\]
  with $X^\nu_{P_-}$ defined analogously, and
  \[X^\nu_{P_0}=(\Phinv(c) \times [-\eps \nu, \eps \nu]).\]
  Next, we describe the $P_0$-cylindrical end in $X_{P_+}$.  Since
  $\codim(P_+)=0$, the spaces $\XB_{P_+}$, $\oZ_{P_+}$ and $\XC_{P_+}$
  are all the same. Furthermore, 
  \begin{equation}
    \label{eq:1cutcylend}
    \XB_{P_+}=\{\Phi \geq c+\eps\} \cup_{\Phinv(c)} \left( (-\infty,0] \times \Phinv(c)\right),  
  \end{equation}
  where $\Phinv(c+\eps)$ is mapped to $\Phinv(c)$ via the symplectic
  cylindrical structure map, see \eqref{eq:1cutsymcyl}. The latter may be 
  identified with $\{0\} \times \Phinv(c)$.
  Note that $\oZ_{P_0} = \Phinv(c)$, the normal cone
  $\NCone_{P_+^\dual}P_0^\dual$ is $(-\infty,0]$.  The subset
  $(-\infty,0] \times Z$ in $\XB_{P_+}$ is the $P_0$-cylindrical end.
\end{example}
\begin{remark}\label{rem:nosymp}
  {\rm(Symplectic versus almost complex broken manifolds)} We have
  defined the broken manifold as a collection of almost complex
  manifolds $(\XX_P,J_P)$ in \eqref{eq:brokenmfd}, and as symplectic
  manifolds $(\XX_P^\om,\om_{\XX_P})$ in \eqref{eq:XolP}.  In this
  Remark, we see how to relate the two spaces, and also why they can
  not be viewed as the same space.  Since a component $\XX_P$ of a
  broken manifold is a $T_{P,\C}$-bundle over a cut space $X_P$, we
  focus on cut spaces in this discussion.
  \begin{enumerate}
  \item \label{part:nosymp1} {\rm(Maps respecting cylindrical
      structures)} A surjective map $\phi : X_P \to X_P^\om$
    \em{respects the cylindrical structure} if
    \begin{itemize}
    \item $\phi$ is the natural embedding on the complement of
      cylindrical ends, that is,
      \begin{equation}
        \label{eq:ipembed}
        \phi(X_P \bs (\cup_{Q \subset P} U_Q(X_P))) = \Phinv(\wP)/T_P  \subset (\ol
        X_P^\om,\om_{X_P}), 
      \end{equation}
      arising from the definition \eqref{eq:xolpnu}, \eqref{eq:xpnu},
      \eqref{eq:xpdef} of $X_P$, and the definition \eqref{eq:cutsp}
      of the symplectic cut space $X_P^\om$;
    \item $\phi$ is $T_Q$-equivariant on the $Q$-cylindrical end; and
    \item $\phi$ commutes with the projection to $X_Q$ on the
      $Q$-cylindrical end, that is,
      \[\pi_Q^P = \pi_Q^{P,\om} \circ \phi \quad \text{on }U_Q(X_P)
        \bs \cup_{R \subset Q}U_R(X_P),\]
      where the projection $\pi_Q^P$ to $X_Q$ on the $Q$-cylindrical
      end is defined in \eqref{eq:pipq} and the corresponding
      projection $\pi_Q^{P,\om}$ to $X_Q^\om$ for symplectic manifolds
      is defined in \eqref{eq:pip-om}.  In the above equation, both
      sides map to the complement of the cylindrical ends in $X_Q$:
      The left hand side maps to
      $X_Q \bs (\cup_{R \subset Q} U_R(X_Q))$, and the right hand side
      maps to $\Phinv(\wQ)/T_Q$, and the two spaces are canonically
      identified as in \eqref{eq:ipembed}.
    \end{itemize}
    For a broken manifold $\XX$, there is a large collection of maps
    that respect the cylindrical structure. In Chapter \ref{chap:hof},
    we construct squashing maps that are continuous and piecewise
    smooth. Those maps can be approximated by smooth maps that satisfy
    the above properties.  Furthermore, any two maps respecting the
    cylindrical structure are homotopic via a family of maps
    respecting the cylindrical structure.  These maps are used to
    define the symplectic area of pseudoholomorphic maps to the broken
    manifold (Definition \ref{def:br-area}), and to define Donaldson
    divisors on broken manifolds (Definition \ref{def:brokediv}).
  \item {\rm(No canonical diffeomorphisms)} The manifolds $X_P$ and
    $X_P^\om$ are diffeomorphic, but there is no canonical
    diffeomorphism $\phi : X_P \to X_P^\om$ that respects the
    cylindrical structure. Assuming $X_P$ has a cylindrical locally
    tamed almost complex structure $J_P$, we do not have a method of
    constructing diffeomorphisms $\phi: X_P \to X_P^\om$ that respect
    the cylindrical structure and for which $J_P$ is tamed by
    $\phi^*\om_{X_P}$.
  \item {\rm(Canonical diffeomorphisms without gluability)} The lack
    of taming embeddings of broken almost complex manifolds into
    compact symplectic broken manifolds can be remedied by introducing
    a slight weakening in the definition of cylindrical almost complex
    structures on broken manifolds.  In particular, if one allows the
    connection one-forms $(\alpha_P)_{P \in \PP}$ and the inner
    products $\{g_P\}_{P \in \PP}$ to differ across the set of cut
    spaces $\{X_P\}_{P \in \PP^{(0)}}$, one can construct taming
    diffeomorphisms of $X_P$ into $X_P^\om$, see Lemma
    \ref{lem:omxx-cptble}.  The cost of this choice is that the new
    kind of almost complex structures on pieces of $\XX$ do not glue
    to give a neck-stretched almost complex structure in $X$.  That
    is, the almost complex structures are not
    \em{gluable}. \index{Gluable} Gluability of almost complex
    structures on broken manifolds is necessary for the proof of
    homotopy equivalence of the Fukaya algebras defined on the
    unbroken manifold $(X,L)$ and the broken manifold $(\XX,L)$. Once
    this result is proved, one has greater flexibility in choosing
    almost complex structures on broken manifolds while keeping the
    compactness and regularity results, as we point out in Section
    \ref{sec:brokenind}.  In particular, one can work with
    $\om_\XX$-tamed almost complex structures on $\XX$ (Definition \ref{def:omxxcyl})
    -- these are  tamed
    almost complex structures defined on compact symplectic manifolds $\ol X_P$, 
    and the distinct notations $X_P$, $X_P^\om$ for almost complex and
    symplectic manifolds can be dropped.  This finishes the Remark.
  \end{enumerate}
\end{remark}

\subsection{Coordinates on cylindrical ends}
Our next task is to produce identifications between $Q$-cylindrical
ends of different components of a broken manifold. These
identifications are used in writing down the matching conditions at
nodes of a broken map: The matching condition compares the evaluation
of maps lying in different manifolds (say $\XX_P$, $\XX_{P'}$),
although neighborhoods of both lifts of the node map to the
$Q$-cylindrical end.  The identifications between the cylindrical ends
are via certain natural cylindrical ``coordinates'' that take values
in cones of torus Lie algebras such as $\t_P^\dual$.

\begin{remark}\label{rem:cone-exhaust}
  We give some relations between dual polytopes, cones and normal
  cones (Definition \ref{def:normcones}).  Later in Remark
  \ref{rem:cone-exhaust1}, these relations will be shown to be the
  polytope analogues of the fact that cut spaces are degenerate limits
  of neck-stretched manifolds.  For a pair $Q \subset P$ of polytopes
  with $\codim(P)=0$, there are inclusions
  \[\tilde i_{P,Q,\nu_0,\nu_1}:\nu_0 Q^\dual \to \nu_1 Q^\dual, \quad \forall \nu_0 <
    \nu_1 \]
  that are translations and which map the vertex
  $\nu_0P^\dual \in \nu_0 Q^\dual$ to
  $\nu_1 P^\dual \in \nu_1 Q^\dual$. Then $\Cone_{P^\dual}Q^\dual$ is
  the exhaustion
  \begin{equation}
    \label{eq:conelim}
    \Cone_{P^\dual}Q^\dual = \sqcup_\nu \nu Q^\dual/\sim,  
  \end{equation}
  where, for any $\nu_0<\nu_1$, $\sim$ identifies $\nu_0 Q^\dual$ to
  its image under $\tilde i_{P,Q,\nu_0,\nu_1}$.  In general for any
  pair $Q \subset P$ (with $\codim(P)>0$ possibly) there are
  inclusions
  \[i_{P,Q,\nu_0,\nu_1}:\nu_0 Q^\dual/\t_P \to \nu_1 Q^\dual/\t_P,
    \quad \forall \nu_0 < \nu_1\]
  which take the point $\nu_0 P^\dual/\t_P$ in the domain to the
  point $\nu_1 P^\dual/\t_P$ in the target space.  The resulting
  exhaustion is the normal cone
  \begin{equation}
    \label{eq:nconelim}
    \NCone_{P^\dual}Q^\dual = (\sqcup_\nu \nu Q^\dual/\t_P)/\sim,  
  \end{equation}
  where, for any $\nu_0<\nu_1$, $\sim$ identifies
  $\nu_0 Q^\dual/\t_P$ to its image under $i_{P,Q,\nu_0,\nu_1}$.
\end{remark}
\begin{figure}[ht]
  \centering \scalebox{.8}{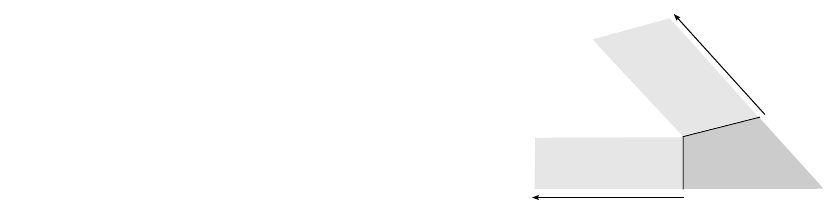}
  \caption{Some components of the broken manifold in the limit $\nu \to \infty$ of the neck-stretching in Figure \ref{fig:stretch}.}
  \label{fig:stretchlimit}
\end{figure}
The following lemma gives an identification between the
$Q$-cylindrical end in a component $\XC_P$ of the broken manifold
$\XX$ to the ``standard'' $Q$-cylinder $\oZ_Q \times \t_Q$
(which is equal to  $\XC_Q$ by definition), see Figure
\ref{fig:stretchlimit}.
\begin{lemma}{\rm(Cylindrical ends on broken manifolds)} 
\index{Cylindrical! P@ $P$-cylindrical end}
  \label{lem:cyl-br}
  For any pair of polytopes $Q \subset P$ in $\PP$, there are natural embeddings
  \begin{equation}
    \label{eq:Pcoord}
    \begin{split}
      i_Q^P: U_Q(\XB_P) &\to (\oZ_Q/T_P) \times \NCone_{P^\dual}Q^\dual,\\
      i_Q^{Z,P}: U_Q(\oZ_P) &\to \oZ_Q \times \NCone_{P^\dual}Q^\dual,\\
      i_Q^{\tP}: U_Q(\XC_P) &\to \oZ_Q \times (\NCone_{P^\dual}Q^\dual \times \t_P) \subset \oZ_Q \times \t_Q \simeq \XC_Q. 
    \end{split}
  \end{equation}
\end{lemma}

\begin{proof}
  We prove the Lemma for the case $\codim(P)=0$. In this case, the
  torus $T_P$ is trivial, $\XB_P=\oZ_P=\XC_P$, and
  $\NCone_{P^\dual}Q^\dual=\Cone_{P^\dual}Q^\dual$.  The domain of
  $i_Q^P$ has a decomposition (into sets whose interiors are disjoint)
  \begin{equation}
    \label{eq:iqpdomain}
  U_Q(X_P)=\cup_{R \subseteq Q} (\Phinv(R^\fillblack) \times
    \Cone_{P^\dual}R^\dual)  .
  \end{equation}
  Here, we use the fact that $\Cone_{P^\dual}R^\dual$ is the limit of
  the polytopes $\nu R^\dual$, see \eqref{eq:conelim}.
  The target space of $i_Q^P$ has a decomposition (into sets whose
  interiors are disjoint)
  \begin{equation}
    \label{eq:iqptarget}
  \oZ_Q=\Phinv(Q^\fillblack) \cup (\cup_{R \subset Q}
    \Phinv(R^\fillblack) \times \NCone_{Q^\dual}R^\dual)  
  \end{equation}
  which is a limit of the decompositions
  \[Z_Q^\nu=\Phinv(Q^\fillblack) \cup (\cup_{R \subset Q}
    \Phinv(R^\fillblack) \times \nu R^\dual/\t_Q), \]
  since $\NCone_{Q^\dual}R^\dual$ is the limit of
  $\nu R^\dual/\t_Q$ as $\nu \to \infty$ (see
  \eqref{eq:nconelim}).  The embedding $i_Q^P$ maps a subset in the
  domain decomposition \eqref{eq:iqpdomain} to the corresponding
  subset in the target decomposition \eqref{eq:iqptarget}.  Here
  $Z_Q^\nu$ is a $T_Q$-bundle defined in \eqref{eq:zpnu}.  For any
  $R \subseteq Q$, the map
  \[i_Q^P:\Phinv(R^\fillblack) \times \Cone_{P^\dual}R^\dual \to
    (\Phinv(R^\fillblack) \times \NCone_{Q^\dual}R^\dual) \times
    \Cone_{P^\dual}Q^\dual\]
  is defined in an following way:  It is the
  identity on $\Phinv(R^\fillblack)$ and
  \[ \Cone_{P^\dual}R^\dual \to \NCone_{Q^\dual}R^\dual \times
  \Cone_{P^\dual}Q^\dual \] 
  is an orthogonal splitting.  We leave to the
  reader the construction of the maps $i_Q^P$, $i_Q^{Z,P}$ when
  $\codim(P)>0$.  Finally, we point out that the domain resp. target
  space of the map $i_Q^{\tP}$ is the product of $\t_P$ and the
  domain resp. target space of $i_Q^{Z,P}$. Therefore the map
  $i_Q^{\tP}$ is defined as such a product once $i_Q^{Z,P}$ is known.
\end{proof}

\begin{remark}\label{rem:cone-exhaust1}
  We can now relate the direct limit definition of cut spaces to the
  corresponding relations between cones from Remark
  \ref{rem:cone-exhaust}.  For any pair $Q \subset P$, Lemma
  \ref{lem:cyl-br} gives a coordinate map
  $\pi_{\NCone_{P^\dual}Q^\dual}:U_Q(X_P) \to \NCone_{P^\dual}Q^\dual$
  on the $Q$-cylindrical end $U_Q(X_P)$ of $X_P$. The maps
  $\pi_{\NCone_{P^\dual}Q^\dual}$, for all $Q \subset P$, assemble to
  give a map
  \begin{equation}
    \label{eq:pincone}
    \pi_{\NCone_{P^\dual}B^\dual} : X_P \to \NCone_{P^\dual}B^\dual,  
  \end{equation}
  where the complement $X_P \bs (\cup_{Q \subset P} U_Q(X_P))$ is
  mapped to the point $\NCone_{P^\dual} P^\dual$.  The continuity of
  the map is a consequence of the construction in Lemma
  \ref{lem:cyl-br}.  There is a commutative diagram
  \begin{equation}
    \label{eq:cdet1}
    \begin{tikzcd}
      X_P^\nu \arrow{r}{i_{P,\nu}} \arrow[swap]{d}{\pi_{\nu P^\dual}} & X_P \arrow{d}{\pi_{\NCone _{P^\dual} B^\dual}} \\
      \nu B^\dual/\t_P \arrow{r}{} & \NCone_{P^\dual}B^\dual
    \end{tikzcd}
  \end{equation}
  where the left downward arrow is the map \eqref{eq:pinb-ch3}
  composed with the $\t_P$-quotient map, the top arrow is from
  Definition \ref{def:brokenJ} \eqref{part:cutspace}, the bottom arrow
  is from \eqref{eq:nconelim}, and the right downward arrow is from
  \eqref{eq:pincone}.
\end{remark}

\begin{remark}
  {\rm(Projections on cylindrical ends)} The coordinates on the
  cylindrical ends of broken manifolds naturally yield projection
  maps.  For any pair of polytopes $Q \subset P$, the projection on the
  $Q$-cylindrical end of $X_P$
  \begin{equation}
    \label{eq:pipq}
    \pi_Q^P : U_Q(X_P) \to X_Q
  \end{equation}
  is defined by the first component of $i_Q^P$ from \eqref{eq:Pcoord} composed with the $T_Q$-quotient.
\end{remark}

Neck-stretched manifolds and components of a broken manifold are
equipped with a cylindrical metric.  We recall that the $X$-inner
product \eqref{eq:idtt} defines a metric $|\cdot|_{\t_P}$
resp. $|\cdot|_{\t_P^\dual}$ on $\t_P^\dual$ resp.  $\t_P^\dual$ for
all $P \in \PP$.
\begin{remark}
  {\rm(Compactifications of cut spaces)}
  \label{rem:compactify}
  Since the cylindrical ends of
  a cut space have a standard form as in \eqref{eq:Pcoord}, any cut space $X_P$ with
  a cylindrical almost complex structure $J_P$ has a compactification
  \[\ol X_P\]
  that is an almost complex orbifold, with $\ol X_P \bs X_P$ being a
  union of divisors, each corresponding to a facet $Q \subset P$,
  $Q \in \PP$.  In a similar way, if the $\t$-inner product from
  \eqref{eq:idtt} is rational, any component $\XX_P$ of the broken
  manifold $\XX$ has a compactification
  \[\ol \XX_P\]
  that is an almost complex orbifold, and $\ol \XX_P \bs \XX_P$
  consists of divisors corresponding to the facets of $\tP$, see
  \eqref{eq:facetsolp}. 
  The same construction defines a $T_P$ orbifold bundle
  \begin{equation}
    \label{eq:olzp}
    \ol Z_P \to \ol X_P  
  \end{equation}
  that is an extension of the $T_P$-bundle $Z_P \to X_P$, and
  is such that $\ol Z_P \times \t_P^\dual$ is the complement of
  the vertical divisors (Definition \ref{def:brokenxxsymp} \eqref{part:bdrydiv}) of $\ol \XX_P$.
 Note that the
  spaces $Z_P$ and $\ol \XX_P$ depend on the $\t$-inner product.
\end{remark}
%
Broken manifolds are equipped with a cylindrical metric.
\begin{definition}
  {\rm(Cylindrical metric)}
  \label{def:cylmet}
  A metric $g_P$ on $\XC_P \simeq \oZ_P \times \t_P^\dual$ is \em{
    $P$-cylindrical} if $g_P$ is a product metric, that is, the
  product of the linear metric $|\cdot|_{\t_P^\dual}$ on $\t_P^\dual$
  and a $T_P$-invariant metric $g_{Z_P}$ on $Z_P$ that satisfies
    \[\abs{\xi_{Z_P}}_{g_{Z_P}}= |\xi|_{\t_P} \quad \xi \in \t_P \]
    where $\xi_{Z_P}$ is the generating vector field as in
    \eqref{genvec}. \label{xiZP} On the multiply-stretched manifolds
    $X^\nu$, a metric $g_\nu$ is \em{cylindrical} if for any
    $P \in \PP$, $g_\nu$ is $P$-cylindrical in the region
    $\Phinv(P^\fillblack) \times \nu P^\dual$.
\end{definition}

\section{Translations: Relating neck-stretched and broken manifolds}
\label{sec:trans-def}
\index{Translation|(} To examine the convergence behavior of maps in
neck-stretched manifolds to a limit map in the broken manifold, we
need to embed $P$-cylindrical regions of the neck-stretched manifold
into the $P$-cylindrical component of the broken manifold. The
embedding maps, called \em{translations} are parametrized by elements
in the scaled dual complex $\nu B^\dual$.  Given an element
$t \in \nu B^\dual$ lying in the dual polytope $\nu P^{\dual,\circ}$,
the translation $\e^{-t}$ is a map from the $P$-cylindrical subset
$X^\nu_{\tP} \subset X^\nu$ to the component of the broken manifold
$\XX_P$.
 
We start by recalling some facts about neck-stretched and broken
manifolds:
  \begin{enumerate}
  \item For any polytope $P \in \PP$ and any $\nu$, the
    $P$-cylindrical subset of $X^\nu$ is $X^\nu_{\tP}$ which has a
    projection
    \[X^\nu_{\tP} \xrightarrow{\pi_{P,\nu}'} Z^\nu_P    \]
    where $Z^\nu_P$ is a $T_P$-bundle over $X^\nu_P$ and the fibers of
    $\pi_{P,\nu}'$ are subsets of $\nu P^\dual$, see
    \eqref{eq:zpnu}. In particular, there is a map
  \begin{equation}
    \label{eq:pipdual}
    \pi_{\nu P^\dual} : X^\nu_{\tP} \to \nu P^\dual,
  \end{equation}
  which is $\pi_{\nu B^\dual} : X^\nu \to \nu B^\dual$ composed with
  the orthogonal projection $Q^\dual \to P^\dual$ (from Definition
  \ref{def:dualcomplex} \eqref{part:dualpair}).
\item The $P$-cylindrical component $\XC_P \subset \XX$ is a product
    \[\XC_P = \oZ_P \times \t_P\]
    where $\oZ_P \to \XB_P$ is a $T_P$-bundle over $\XB_P$.
  \item There is a natural embedding $Z_P^\nu \to \oZ_P$,
    $X_\tP^\nu \to \XB_P$ from Definition \ref{def:brokenJ}.

  \end{enumerate}
 
  \begin{definition}
    {\rm($P$-translation)}
 For any $P \in \PP$, 
  $t \in \nu P^\dual$, denote by
\begin{equation}
  \label{eq:transinc}
  \e^{-t}_P: X^\nu_{\tP} \to \XC_P.
\end{equation}
the lift of the inclusion $Z_P^\nu \to \oZ_P$ that maps a level set
$\{\pi_{\nu P^\dual}=c\} \subset X^\nu_{\tP}$ to
$\oZ_P \times \{c-t\} \subset \XC_P$, where $\pi_{\nu P^\dual}$ is
from \eqref{eq:pipdual}, $c \in \nu P^\dual$, and so
$c-t \in \t_P$.
  \end{definition}

  The notion of $P$-translation may be expressed via the following
  commutative diagram for any $t \in \nu P^\dual$:
  \begin{equation}
    \label{eq:cdet}
    \begin{tikzcd}
         X_\tP^\nu \arrow{r}{\e^{-t}_P} \arrow[swap]{d}{\pi_{\nu B^\dual}} & \XC_P \arrow{d}{\pi_{\Cone _{P^\dual} B^\dual}} \\
B^\dual \supseteq \cup_{Q \subseteq P}\nu Q^\dual \arrow{r}{\e^{-t}_P} & \Cone_{P^\dual}B^\dual
\end{tikzcd}  
  \end{equation}
  Here, the left downward arrow $\pi_{\nu B^\dual}$ is from
  \eqref{eq:pinb-ch3}, and for any $Q \subseteq P$, the arrow
  $\e^{-t}_P : \nu Q^\dual \to \Cone_{P^\dual} Q^\dual$ is a
  translation by $t \in \t_P$.
  
  For any $t \in \nu P^\dual$, the translation $\e^{-t}_Q$ is the
  `same' for all $Q \subseteq P$. Indeed, 
  \[\e^{-t}_P|X^\nu_{\tQ}=\e^{-t}_Q\]
  since the restriction of $\e^{-t}_P$ to
  $X^\nu_{\tQ} \subset X^\nu_{\tP}$ maps to the $Q$-cylindrical end
  $U_Q(\XC_P) \subset \XC_P$; the latter may be viewed as a subset of
  $\XC_Q$.  This leads us to view the parameter $t$ in the translation
  $\e^{-t}$ as an element in the dual complex $B^\dual$ as in the
  following definition:

  \begin{definition}\label{def:transdef}
    {\rm(Generalized translation)}
    For any $t \in \nu B^\dual$,
    \[\e^{-t}:=\e^{-t}_P \quad \text{if $t \in P^{\dual,\circ}$}. \]
  \end{definition}
  
  For an element $t \in \nu P^{\dual,\circ}$, the inverse of the
  translation $\e^{-t}$ is well-defined on a subset
  $\XC_{P,\nu} \subset \XC_P$:
\begin{equation}
  \label{eq:transinc2}
  \e^t:=(\e^{-t})^{-1} : \XC_P \supset \XC_{P,\nu} \to X^\nu_{\tP}.
\end{equation}
The sequence of subsets $\XC_{P,\nu}$ exhaust $\XC_P$ as $\nu \to \infty$. 

\begin{example} 
  We illustrate translations in a multiple cut using an example with
  two non-intersecting single cuts.  Consider the tropical Hamiltonian action
  $(X,\Phi,\PP)$ where the torus is $T=S^1$, and the polytopes in
  $\PP$ are $P_0$, $P_1$, $P_2$, $P_{01}$, $P_{12} \subset \R$ shown
  in Figure \ref{fig:polytope1}.
\begin{figure}[h]
  {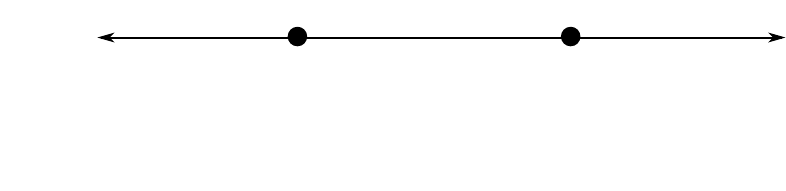}
\caption{A polyhedral decomposition $\PP$ and its dual complex $B^\dual$.}
\label{fig:polytope1}
\end{figure}
The dual complex $B^\dual$ is a subset of $\R$.  Let the point
$P^\dual_i$ be $g_i \in \R$ for $i=0,1,2$.  For $\nu>0$, the
neck-stretched manifold is
  \[X^\nu:=  (\wX_{P_0} \cup ([\nu g_0,\nu g_1] \times Z_0) \cup
     \wX_{P_1} \cup ([\nu g_1,\nu g_2] \times Z_1) \cup
     \wX_{P_2})/\sim,\]
  where $Z_i:=\Phinv(c_i)$, $\wX_{P_i}$ is $\ol X_{P_i}$ minus a
  tubular neighbourhood of relative divisors, and $\sim$ identifies
  the copies of $Z_0$ and $Z_1$ on the boundaries.  
  The translations are described as follows.
  \begin{itemize}
  \item Suppose $t \in \nu P_1^\dual$.  Then $t=\nu g_1$ and $\e^{-t}$
    is the embedding
    \begin{multline*}
      ([\nu g_0,\nu g_1] \times Z_0) \cup_{Z_0} \wX_{P_1} \cup_{Z_1}
      ([\nu g_1,\nu g_2] \times Z_1)\\ \to \XB_{P_1} \simeq
      ((-\infty,0] \times Z_0) \cup_{Z_0} \wX_{P_1} \cup_{Z_1}
      ([0,\infty) \times Z_1),
    \end{multline*}
    that is identity on $\wX_{P_1}$, and on the cylindrical ends it is
    a translation by $\nu g_1$, namely
    \[([\nu g_0,\nu g_1] \times Z_0) \xrightarrow{((\cdot - \nu g_1),\Id)} (-\infty,0] \times Z_0, \quad ([\nu g_1,\nu g_2] \times Z_1) \xrightarrow{((\cdot - \nu g_1),\Id)} ([0,\infty) \times Z_1).\]
    The translation $\e^{-t}$ is defined similarly when
    $t \in \nu P_0^\dual$ or $\nu P_2^\dual$.
  \item Suppose $t \in P^\dual_{i(i+1)}$. Then
    $t \in [\nu g_i, \nu g_{i+1}]$ and
\[\e^{-t}:[\nu g_i,\nu g_{i+1}] \times Z_i \to \R \times Z_i \]
     maps $\{c\}\times Z_i$ to $\{c-t\} \times Z_i$.
     \end{itemize}
  This ends the Example.
\end{example}

\index{Translation|)}

\section{Existence of symplectic cylindrical structures}\label{sec:sympcyl}
We prove that manifolds with tropical Hamiltonian actions possess
symplectic cylindrical structures, which were used in the definition
of neck-stretched almost complex manifolds.  We recall from Definition
\ref{def:sympcylstr} that in a tropical Hamiltonian manifold
$(X,\PP,\Phi)$, for a polytope $P \in \PP$, a \em{symplectic
  cylindrical structure} in a neighborhood of $\Phinv(P) \subset X$ is
a diffeomorphism
\[\phi_P : \Phinv(\tP) \to \Phinv(P) \times P^\dual, \]
where the polytope $\tP$ is a neighborhood of $P$, and
$(\phi_P^{-1})^*\om_X$ has a certain standard cylindrical form on
$\Phinv(P) \times P^\dual$.  The space $\Phinv(P) \times P^\dual$ is a
fibration over $\Phinv(P)/T_P$ whose fibers are subsets of $T_{P,\C}$;
via the identification by $\phi_P$, neck-stretched $T_P$-cylindrical
almost complex structures were defined on $\Phinv(\tP)$ in Section
\ref{sec:cylacs}.

However, for the proof of the main result of the book, namely the homotopy equivalence between unbroken and broken Fukaya algebras, we do not use the neck-stretched manifolds constructed via the symplectic cylindrical structures from Proposition \ref{prop:symcyl} below. 
For that purpose, neck-stretched manifolds are constructed by gluing a broken manifold equipped with a stabilizing divisor as in Section \ref{sec:stabpair}.

To construct the symplectic cylindrical structures, we may need to
modify the tropical moment map $\Phi$ in the following sense: Given a
tropical Hamiltonian manifold $(X,\PP,\Phi)$, a tropical moment map
$\Phi'$ is a \em{modification} of $\Phi$ if for every $P \in \PP$,
there is a sufficiently small $T_P$-invariant neighborhood $U_P \subset X$ of $\Phinv(P)$, 
and for which
\begin{equation}
  \label{eq:modify}
  \pi_{P^\dual} \circ \Phi=\pi_{P^\dual} \circ \Phi' : U_P \to \t_P^\dual.  
\end{equation}
In other words, the $\t_P^\dual$-component of $\Phi$ agrees with that
of $\Phi'$, wherever it is an honest $T_P$-moment map.
\begin{proposition}
  {\rm(Existence of symplectic cylindrical
    structures)}\label{prop:symcyl}
  There exists a symplectic cylindrical structure (see Definition
  \ref{def:sympcylstr})  for a tropical Hamiltonian manifold $(X,\PP,\Phi)$, after
replacing $\Phi$ by a modification of $\Phi$ (see previous paragraph).
\end{proposition}
\begin{proof}[Proof of Proposition \ref{prop:symcyl}]
  We construct the maps $\{\phi_P\}_P$ and connection one-forms
  $\{\alpha_P\}_P$ by induction on the dimension of $P$. Consider a
  polytope $P \in \PP$.  We assume that $\phi_Q$ and $\alpha_Q$ are
  determined for every proper face $Q \subset P$ that is an element of
  $\PP$.  We first construct a $T_P$-equivariant diffeomorphism
  \[\psi_P : \Phinv(\tP) \to \Phinv(P) \times P^\dual \]
  that satisfies the properties expected of $\phi_P$ in
  $\Phinv(\tP \cap \tQ)$ for all proper faces $Q \subset P$ as
  follows: The inner product on $\t$ in \eqref{eq:idtt} gives the
  following orthogonal projection
  \begin{equation}
    \label{eq:phip1}
    Q^\dual \to P^\dual \times \t_Q^\dual/\t_P^\dual, 
  \end{equation}
  and the image lies in $P^\dual \times \NCone_Q P$ if we restrict the
  map to an appropriate neighborhood in $Q^\dual$.  Viewing
  $\NCone_QP$ as a subset of $Q^\dual$ via the cylindrical structure
  map $\phi_Q$, we obtain an embedding
  \begin{equation}
    \label{eq:phip2}
    \Phinv(Q) \times \NCone_QP \to \Phinv(P)   
  \end{equation}
  defined on a neighborhood of the origin in $\NCone_QP$. The map
  $\psi_P$ is defined on $\Phinv(\tQ)$ by \eqref{eq:phip1} and
  \eqref{eq:phip2}.  Extend $\psi_P$ to a $T_P$-equivariant
  diffeomorphism on $\Phinv(\tP)$ that respects the $T_P$-moment map,
  that is, satisfies
  \[ \Pi_{P^\dual} \circ \psi_P=\pi_{P^\dual} \circ \Phi, \]
  where
  $\Pi_{P^\dual}$ is from Definition \ref{def:sympcylstr}
  \eqref{part:sympstr2} and $\pi_{P^\dual}$ is from Definition
  \ref{def:fibpoly}.  Next, we define a symplectic form on
  $\Phinv(P) \times P^\dual$ by choosing a connection one-form
  $\alpha_P$: For any proper face $Q \subset P$, on
  $\Phinv(\tQ \cap P)/T_P$, the connection one-form $\alpha_P$ is
  given by the consistency condition \eqref{eq:consis}.  We may choose
  any extension of $\alpha_P$ in the rest of $\Phinv(P)/T_P$.  By our
  choice of connection one-form $\alpha_P$ on
  $\Phinv(\tQ \cap P)/T_P$, $\psi_P$ is a symplectomorphism on
  $\Phinv(\tP \cap \tQ)$ for all $Q \subset P$.  Next, we apply Lemma
  \ref{lem:coiso} to isotope $\psi_P$ to a sympectomorphism $\phi_P$,
  so that $\phi_P \equiv \psi_P$ on $\Phinv(\tP \cap \tQ)$ for all
  $Q \subset P$.  As a last step, we replace $\Phi$ by a tropical
  moment map $\Phi'$, such that $\Phi \equiv \Phi'$ in $\Phinv(\tQ)$,
  $Q \subset P$, $\Phi'$ satisfies \eqref{eq:modify}, and
  $\Phi' \circ \phi_P=\pi_P \circ \Phi'$.
  \end{proof}

  The proof of Proposition \ref{prop:symcyl} uses the following
  result, which is a version of the coisotropic neighborhood theorem.
  
\begin{lemma}\label{lem:coiso}
  Let $Z \subset (X,\om)$ be a submanifold with boundary.  Suppose a
  neighborhood $U_Z \subset X$ of $Z$ has moment map 
  $\Phi_P : U_Z \to \t_P^\dual$ that is smooth and generates
  a free Hamiltonian $T_P$-action, for which $Z=\Phi_P^{-1}(0)$.  Then, for any connection one-form $\alpha_P \in \Om^1(Z,\t_P)$ on the bundle $Z \to Z/T_P$, there is a neighborhood $\Delta \subset \t_P^\dual$ of the origin, and a
  $T_P$-equivariant symplectomorphism
  \[\phi_P : (U_Z,\om) \to (Z \times \Delta, \om_{Z/T_P} +
    \d\bran{\alpha_P,i}) \]
  on $U_Z:=\Phi_P^{-1}(\Delta)$, where $i : \Delta \to \t_P^\dual$ is
  the inclusion map.  Furthermore, if
  $\psi_P : U_Z \to Z\times \Delta$ is a diffeomorphism that preserves
  the symplectic form on $N:=U(\partial Z) \times \Delta$, where
  $U(\partial Z) \subset Z$ is a neighborhood of $\partial Z$, then
  $\psi_P$ can be isotoped to a symplectomorphism $\phi_P$ relative to
  $N$.
\end{lemma}
  
The proof of the ordinary symplectic neighborhood (\cite[Lemma
3.14]{ms:intro}) can be used to prove the slightly stronger statement
of Lemma \ref{lem:coiso}.

\chapter{Broken disks}\label{chap:brokendisks}

\index{Holomorphic disks! Broken treed holomorphic disks}
\index{Map|seeonly{Holomorphic disks}} \index{Map! Broken maps} The
goal of this chapter is to define \em{broken treed holomorphic
  disks}. These are analogues of what Parker \cite{bp1} calls \em{exploded} holomorphic maps. These structures combine the features of
treed holomorphic disks and tropical (or broken) maps.
\begin{itemize}
\item Treed holomorphic disks consist of surface components that are
  nodal holomorphic disks or spheres in a symplectic manifold whose
  boundary lies in a Lagrangian submanifold, with the additional
  feature that disk nodes may be replaced by tree segments mapping to
  the Lagrangian submanifold. 
  Treed disks are generalizations of  pearly trajectories in
  Biran-Cornea \cite{bc:ql}.
\item On the other hand, a broken map is defined on the normalization
  of a nodal curve, and each of the domain components maps to a
  different component of the target broken manifold, and the maps
  satisfy a matching condition at the nodal lifts. The broken map is
  equipped with the additional data of a tropical graph in the dual
  complex associated with the degeneration of the symplectic manifold.
\end{itemize}
In a broken treed holomorphic disk, certain nodes in the domain are
\em{tropical nodes} which means that the curve components incident on the
node map to different target components; and the other nodes, called
\em{internal nodes}, are sphere nodes as seen in Gromov-Witten theory or
disk nodes with treed segments as in treed holomorphic disks.  Since we
assume that the Lagrangian submanifold is disjoint from the relative
divisors of the broken manifold, all disk nodes are internal nodes.

\begin{figure}[h]
  \centering \scalebox{.8}{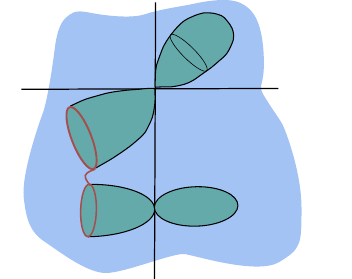}
  \caption{A broken treed holomorphic disk. The disk node $w_3$ has been replaced by a treed segment in the Lagrangian $L$, and $w_1$, $w_2$ are tropical nodes.}
  \label{fig:brokendisk}
\end{figure}

\section{Treed disks}

\em{Treed disks} are the domains of broken treed holomorphic disks, which are the main  objects defined in this Chapter. 
Treed disks 
are  analogues of pearly trajectories of Biran-Cornea \cite{bc:ql},
Cornea-Lalonde \cite{cl:clusters} and Seidel \cite{seidel:genustwo}, and 
they are combinations of trees, nodal disks and nodal spheres.

\begin{definition} \label{def:treeddisk} \index{Treed disk} \index{Curve| \seeonly {Disk}}
  {\rm(Treed disks)} 
  \begin{enumerate}   \index{Disk! Nodal disk}
  \item {\rm (Nodal disks)} A nodal disk $S$ is a union
    \begin{equation} \label{Sunion} S = \left(\bigcup_{\alpha =
          1,\ldots, d(\white)} S_{\alpha,\white} \right) \cup \left(
        \bigcup_{ \beta = 1,\ldots, d(\black)} S_{\beta,\black}
      \right)/ \sim \end{equation}
    where the index $\alpha$, ranging over $\{1,\dots,d(\white)\}$,
    indexes disk components, that is, for each $\alpha$, $S_\alpha$ is
    biholomorphic to a unit disk $\mathbb{D}^2 \subset \C$, and the
    index $\beta$, ranging over $\{1,\dots,d(\black)\}$, indexes
    sphere components, that is, for each $\beta$, $S_\beta$ is
    biholomorphic to the projective line $\P^1$, glued together by an
    equivalence relation $\sim$. The equivalence relation $\sim$ is
    generated by pairs of interior or boundary \em{nodal points}
    \begin{equation} w_e = (w^+_e, w^-_e) \in
      (\cup_\alpha \partial S_{\alpha})^2 \cup (\cup_{\alpha}
      (S_{\alpha}^\circ) \cup \cup_\beta S_{\beta})^2
    \end{equation} 
    of pairs of boundary points of disks, interior points of disks or
    spheres, with the property that the boundary $\partial S$ is
    connected and there are no cycles of components
    $S_{\alpha_1},\ldots, S_{\alpha_k} = S_{\alpha_1}$ connected by
    nodes (so that in particular, each node connects two different
    disk or sphere components of $S$).  \label{nocycles} A \em{
      marking} of a nodal disk is a collection of boundary and
    interior points
    \begin{equation}
      \label{eq:markorder}
\ul{z}_\white = (z_{\white,i} \in \partial S, i = 0,\ldots,
      d(\white)), \quad \ul{z}_\black = (z_{\black,i} \in S \bs
      \partial S, i = 1,\ldots, d(\black)) 
    \end{equation}
    distinct from the nodes.  A marked nodal disk is \em{stable} if
    it admits no automorphisms, or equivalently, if for each disk
    component $S_{\white,i}$ the sum of the number of special (nodal
    or marked) boundary points and twice the number of interior
    special points is at least three, and each sphere component
    $S_{\black,i}$ has at least three special points.
  \item {\rm (Combinatorial type of a nodal disk)} \label{part:combidisk}
    The \em{combinatorial type} of a
    nodal disk $S$ is the tree $\Gamma$ whose vertices $\Ver(\Gamma)$ 
    correspond to disk or sphere components of $S$, and whose edges
    $\Edge(\Gamma)$ correspond to markings or nodes, together with 
    \begin{enumerate}
    \item {\rm(Sphere and disk vertices)}
      \index{Vertex! Disk vertex} \index{Vertex! Sphere vertex}
      a partition of the vertex
      set
      \[ \Ver(\Gamma) = \Ver_{\black}(\Gamma) \cup
        \Ver_\white(\Gamma) \]
      into disk vertices $\Ver_{\white}(\Gamma)$ and sphere vertices
      $\Ver_{\black}(\Gamma)$;
    \item {\rm(Interior and boundary edges)} a partition of the edge
      set
      \index{Edge! Interior edge} \index{Edge! Boundary edge}
      \[ \Edge(\Gamma) = \Edge_{\black}(\Gamma) \cup
        \Edge_\white(\Gamma) \]
      into edges $\Edge_{\white}(\Gamma)$ of boundary type and edges
      $\Edge_{\black}(\Gamma)$ of interior type;
    \item {\rm(Leaf and non-leaf edges)} \index{Edge! Leaf edge}
      a partition of the edge set
      $\Edge(\Gamma) $
      \[ \Edge(\Gamma) = \Edge_-(\Gamma) \cup
        \Edge_{\rightarrow}(\Gamma)
      \]
      into leaf edges $\Edge_{\rightarrow}(\Gamma)$ which correspond
      to markings,
      and non-leaf edges $\Edge_{-}(\Gamma)$
      which correspond to nodes (and so, leaf edges are incident on a
      single vertex and non-leaf edges are each incident on two
      vertices);
    \item {\rm(Ordering of leaves)} an ordering of the boundary leaf
      edges $\Edge_{\white,\to}(\Gamma)$ and the interior leaf edges
      $\Edge_{\black,\to}(\Gamma)$ given by the corresponding ordering
      of markings (see \eqref{eq:markorder});
    \item {\rm(Ribbon structure)} a \em{ribbon structure} on
      $\Gamma$, which is a cyclic ordering $<_v$ on the set of
      boundary edges (both leaf and non-leaf)
      \[\Edge_\white^v(\Gamma):=\{e \in \Edge_\white(\Gamma): v \in e\} \]
      incident on each boundary vertex $v \in \Ver_\white(\Gamma)$
      such that the induced cyclic ordering on the set
      $\Edge_{\white,\to}(\Gamma)$ of boundary leaves corresponds to
      the cyclic ordering
      $z_{\white,0},\dots,z_{\white,d(\white)},z_{\white,0}$ of
      boundary markings.
      \label{cyclicstr}
    \item {\rm(Root edge and edge orientations)} The edge
      $e_0 \in \Edge_\white(\Gamma)$ corresponding to the first
      boundary marking $z_0$ is an outgoing edge \label{outgoing} and
      is called the \em{root edge}, all the other boundary markings
      are incoming edges, and all edges corresponding to nodes are
      oriented to point towards the root. \index{Edge! Root edge}
      \index{Root of a treed disk}
    \end{enumerate}
  \item{\rm(Treed segments)} A \em{treed segment} consists of a
    collection of closed intervals $I_1,\ldots, I_k$,
    \[ I_j \cong [0,\ell(I_j)], \quad  (-\infty,
        0], \quad \text{or} \quad I_j \cong [a_j,\infty)\quad \text{or} \enspace (-\infty,\infty) \]
      glued along infinite end-points, to produce a topological space
      isomorphic to some subset of $\R$.  For example,
    \begin{equation}
      \label{eq:brokenseg}
    [0,\infty) \cup_\infty (-\infty,\infty) \cup_\infty
      (-\infty,0]  
    \end{equation}
    is a treed segment with three components and finite end-points.
    Each treed segment $T$ has a length $\ell(T) \in [0,\infty]$ and a
    number of breakings $b(T) \in \Z_{\ge 0}$. A treed segment with
    $b(T)>0$ is called a \em{broken segment} and has
    $\ell(T) = \infty$.  We also consider treed segments with one
    infinite end such as
    \[[0,\infty) \cup_\infty (-\infty,\infty)\]
    or both infinite ends such as $(-\infty,\infty) \cup_\infty (-\infty,\infty)$.
  \item {\rm(Treed disk)} \label{part:treeddisk} \index{Disk! Treed
      disk} A \em{treed nodal disk} $C = S \cup T$ is
    \begin{enumerate}
    \item either obtained from a nodal disk $S_0$
      \begin{itemize}
      \item by assigning a length $\ell(e) \in [0,\infty]$
        \index{Edge! Edge length $\ell(e)$} to each boundary node
        $e \in \Edge_{\white,-}(\Gamma)$, and replacing any boundary
        node $w_e$, $e \in \Edge_{\white,-}(\Gamma)$ with $\ell(e)>0$
        by a treed segment $T_e$ with finite end-points, and the treed
        segment $T_e$ has length $\ell(e)$ if $\ell(e)<\infty$ or it
        is a broken segment as in \eqref{eq:brokenseg} if
        $\ell(e)=\infty$; and
      \item each boundary marking $w_e$,
        $e \in \Edge_{\white,\to}(\Gamma)$ is replaced by a (possibly
        broken) treed segment, one of whose end-points is infinite,
        and thus $\ell(e)=\infty$ for all boundary markings; or
      \end{itemize}
    \item $C$ has no surface component and consists only of a treed
      segment with two infinite ends.  The $-\infty$ resp. $\infty$
      end of the segment is regarded as the input resp. output, and
      therefore there is one output and one input, so
      $d(\white)=1$.
    \end{enumerate}
    A treed segment $T_e$, $e \in \Edge_\white(\Gamma)$, containing a
    breaking is called a \em{broken edge}. \index{Edge! Broken edge}
    A non-leaf edge $e \in \Edge_-(\Gamma)$ is broken if and only if
    $\ell(e)=\infty$.
  \item {\rm(Isomorphism of treed disks)}
    \label{part:isotreeddisk}
    An isomorphism of treed disks is a homeomorphism $\phi:C \to C'$
    that is a biholomorphism on each sphere or disk component,
    length-preserving on edges, and preserves the labelling of
    leaves.  \label{page:mordisks}
  \item{\rm(Combinatorial type of a treed
      disk)} \label{part:typetreeddisk} \index{Combinatorial type! of
      a treed disk} \index{Type|seeonly {Combinatorial type}} The
    combinatorial type of a treed disk consists of the combinatorial
    type $\Gamma$ of the underlying nodal disk which includes the
    vertex and edge partitions, ordering of markings, ribbon structure
    and edge orientations; and in addition a partition
      \[\Edge_{\white,-}(\Gamma)=\Edge_{\white,-}^0(\Gamma) \cup
        \Edge_{\white,-}^{(0,\infty)} \cup
        \Edge_{\white,-}^\infty(\Gamma)\]
      of boundary edges corresponding to boundary nodes with zero,
      finite, and infinite length edges. 
      Note that for a boundary edge $e$ with edge length
      $\ell(e) \in (0,\infty)$, the length of the treed segment $T_e$
      is not part of the combinatorial type.
    \item {\rm(Stable treed disk)} A treed nodal disk $C$ is \em{
        stable} if the underlying disk $S$ is stable, the treed
      segment at any node $T_e, e \in \Edge_{\white,-}(\Gamma)$ has at
      most one breaking, and treed segments at markings
      $T_e, e \in \Edge_{\rightarrow,\white}(\Gamma)$ are unbroken. A
      treed disk with no surface component (that is, consisting of a
      single possibly broken sequence of segments) is not stable.
    \item {\rm (Disconnected types)} \label{discon} Let $\Gamma$ be a
      disjoint union of treed disk types $\Gamma_1,\ldots,\Gamma_k$. A
      treed curve of type $\Gamma$ is a collection $u_1,\ldots, u_k$
      of treed disks of types $\Gamma_1,\ldots, \Gamma_k$.
  \end{enumerate}
\end{definition}
For integers $d(\black), d(\white) \geq 0$, denote by
$\M_{d(\black), d(\white)}$ the (possibly empty) moduli space of
isomorphism classes of stable treed disks with $d(\white)$ incoming
boundary markings, one outgoing boundary marking and $d(\black)$
interior markings.  For each combinatorial type $\Gamma$, denote by
$\M_\Gamma \subset \M_{d(\black), d(\white)}$ \index{Moduli space!of
  treed disks $\M_{\Gamma}$} the set of isomorphism classes of stable
treed disks of type $\Gamma$. The moduli space
$\M_{d(\black), d(\white)}$ then decomposes as
\begin{equation*}
  \M_{d(\black), d(\white)} 
=\bigcup_\Gamma \M_\Gamma.
\end{equation*}
For any type $\Gamma$, $\M_\Gamma$ has a compactification $\ol \M_\Gamma$ where the boundary is 
\[\ol \M_\Gamma \bs \M_\Gamma=\bigcup_{\Gamma'} \M_{\Gamma'},\]
and $\Gamma'$ ranges over all strata such that $\Gamma$ is obtained from $\Gamma'$ by either collapsing an interior edge or a zero length boundary edge, or making a zero length boundary edge non-zero, or making an infinite boundary edge finite, or performing a combination of these operations.  See \ref{fig:disk-moduli} for an example.

\begin{figure}[h]
  \centering \scalebox{.8}{
\begingroup%
  \makeatletter%
  \providecommand\color[2][]{%
    \errmessage{(Inkscape) Color is used for the text in Inkscape, but the package 'color.sty' is not loaded}%
    \renewcommand\color[2][]{}%
  }%
  \providecommand\transparent[1]{%
    \errmessage{(Inkscape) Transparency is used (non-zero) for the text in Inkscape, but the package 'transparent.sty' is not loaded}%
    \renewcommand\transparent[1]{}%
  }%
  \providecommand\rotatebox[2]{#2}%
  \newcommand*\fsize{\dimexpr\f@size pt\relax}%
  \newcommand*\lineheight[1]{\fontsize{\fsize}{#1\fsize}\selectfont}%
  \ifx\svgwidth\undefined%
    \setlength{\unitlength}{428.13776418bp}%
    \ifx\svgscale\undefined%
      \relax%
    \else%
      \setlength{\unitlength}{\unitlength * \real{\svgscale}}%
    \fi%
  \else%
    \setlength{\unitlength}{\svgwidth}%
  \fi%
  \global\let\svgwidth\undefined%
  \global\let\svgscale\undefined%
  \makeatother%
  \begin{picture}(1,0.23081474)%
    \lineheight{1}%
    \setlength\tabcolsep{0pt}%
    \put(0,0){\includegraphics[width=\unitlength,page=1]{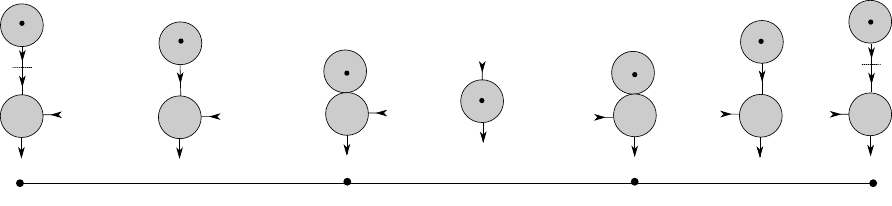}}%
    \put(0.49266273,0.00233964){\color[rgb]{0,0,0}\makebox(0,0)[lt]{\lineheight{1.25}\smash{\begin{tabular}[t]{l}$\Gamma_1$\end{tabular}}}}%
    \put(0.11828434,0.00209962){\color[rgb]{0,0,0}\makebox(0,0)[lt]{\lineheight{1.25}\smash{\begin{tabular}[t]{l}$\Gamma_2$\end{tabular}}}}%
    \put(0.82917392,0.00461347){\color[rgb]{0,0,0}\makebox(0,0)[lt]{\lineheight{1.25}\smash{\begin{tabular}[t]{l}$\Gamma_3$\end{tabular}}}}%
  \end{picture}%
\endgroup%
}
  \caption{The moduli space $\M_{1,1}$ has three top-dimensional strata corresponding to the types $\Gamma_1$, $\Gamma_2$, $\Gamma_3$.}
  \label{fig:disk-moduli}
\end{figure}
The top-dimensional cells in $\M_{d(\black), d(\white)}$ correspond to
strata where all boundary edges $e$ have finite non-zero
length, \label{page:topdim} that is, $\ell(e) \in (0,\infty)$. The
dimension of each of the top-dimensional cells is
$d(\white)+2d(\black)-2$.  For the stratum with no finite edges, so
containing a single disk, this follows immediately from the fact that
each boundary leaf edge resp. interior leaf edge contributes $1$
resp. $2$ to the dimension, and the automorphism group of the disk is
$PSL(2,\R)$ which has dimension $3$. The dimensions of the other
top-dimensional cells can be computed by first computing the
corresponding stratum of the moduli spaces of disks without trees, and
then adding one for each boundary edge with finite non-zero length.
The moduli space $\M_{d(\black),d(\white)}$ is a
manifold with corners where the codimension $k$ corner stratum
consists of curves containing $k$ broken treed edges.  We do not give a
proof, since for our purposes it is enough to view
$\M_{d(\black),d(\white)}$ as a cell-complex, with a manifold
structure on each of the strata $\M_\Gamma$.

\index{Orientation for moduli spaces!of treed disks} 
\begin{definition}\label{def:orientmg}
  {\rm(Orientation of moduli of treed disks)} We fix an orientation
  for moduli spaces of treed disks of type $\Gamma$, where $\Gamma$
  does not contain boundary edges $e \in \Edge_{\white,-}(\Gamma)$ of
  length $0$ or $\infty$.
  \begin{enumerate}
  \item{\rm(Single disk)} Let $\Gamma$ be a treed disk type with a
    single disk component with $\geq 2$ incoming boundary leaves and
    no other surface component.  The moduli space $\M_\Gamma$ is the
    quotient
    \[\on{Conf}_{d(\black),d(\white)}/PSL(2,\R),\]
    where
    \[\on{Conf}_{d(\black),d(\white)} \subset \{\ul z=(\ul z^\white, \ul z^\black) \in (\partial D)^{d(\white)
        +1} \times (D^\circ)^{d(\black)}\}\]
    is the subset of configurations where $\ul z$ consists of distinct
    points and the boundary markings are ordered counter-clockwise.
    Fixing $z_0^\white=-1$, $z_1^\white=1$, $z_2^\white=i$ gives a
    global slice $\Sigma$
    of the $PSL(2,\R)$-action.  Furthermore, 
    $\M_\Gamma$ is oriented via its identification to the slice
    $\Sigma$
    which is an open subset of
    $\R^{d(\white)-2} \times \C^{d(\black)}$.  In the case that $\Gamma$ has less than 
    $2$ incoming boundary leaves, we fix the global slice 
$\Sigma$ \label{rep:sigma}
by setting
    $z_0^\white=-1$, $z_0^\black=0$.
  \item {\rm(Multiple disks)} Let $\Gamma$ be a treed disk type with
    no sphere components and 
    whose boundary edges all 
    have finite
    non-zero length.  The orientation on $\M_\Gamma$ is chosen by
    inducting on the number of boundary edges
    $e \in \Edge_{\white,-}(\Gamma)$.
    Suppose the treed disk types $\Gamma$, $\Gamma_0$, $\Gamma'$ are related by the following morphisms:
    \[\Gamma \xleftarrow{\text{Make the edge length $\ell(e)$ non-zero}} \Gamma_0 \xrightarrow{\text{Collapse $e$}} \Gamma'. \]
    Here,
    we point out that the edge length $\ell(e)$ in $\Gamma_0$ is zero.
    The moduli space 
    $\M_{\Gamma_0}$ is a codimension one boundary stratum of both
    $\ol \M_{\Gamma}$ and $\ol \M_{\Gamma'}$ via inclusions
    \[i_{\Gamma,\Gamma_0}:\M_{\Gamma_0} \to \ol \M_\Gamma, \quad
      i_{\Gamma',\Gamma_0}:\M_{\Gamma_0} \to \ol \M_{\Gamma'}.\]
    Assuming an orientation on $\M_\Gamma'$, 
    the stratum $\M_\Gamma$ is oriented so that 
    the orientations on
    $\M_{\Gamma_0}$ induced by $i_{\Gamma,\Gamma_0}$,
    $i_{\Gamma',\Gamma_0}$ are the opposite of each other. 
  \item {\rm(Disks and spheres)} Let $\Gamma$ be a treed disk type,
    and let $\Gamma'$ be the type obtained by collapsing interior
    edges $e \in \Edge_{\black,-}(\Gamma)$. Then the orientation on
    $\M_{\Gamma'}$
    and the standard orientation on the moduli space of spheres
    induces an orientation on $\M_{\Gamma} \subset \ol \M_{\Gamma'}$
    which is equal to the product orientation
    $\M_{\Gamma_\white} \times \prod_{v \in \Ver_\black(\Gamma)}
    \M_{\Gamma_v}$. Here, $\Gamma_\white \subset \Gamma$ is the
    subgraph consisting of all the disk vertices and $\Gamma_v$ is a
    graph with a single vertex $\{v\}$ and markings corresponding to
    all edges $e \in \Edge_\black(\Gamma)$ incident on $v$.
\end{enumerate}
\end{definition}

\begin{remark}
  Let $\Gamma$ be a treed disk type containing a single edge
  $e \in \Edge_{\white,-}(\Gamma)$ of infinite length, and 
  whose other boundary edges
  $e' \in \Edge_{\white,-}(\Gamma)$ all 
  have $\ell(e') \in (0,\infty)$.
  Let $\Gamma'$ be a treed disk type obtained from $\Gamma$ by making the edge length of $e$ finite. Then,
  \[i_{\Gamma,\Gamma'}: \M_\Gamma \hra \ol \M_{\Gamma'}\]
  is a boundary stratum of codimension one.  Suppose $\Gamma_1$,
  $\Gamma_2$ are treed disk types obtained by cutting the edge $e$ in
  $\Gamma$ (see Definition \ref{def:pertops}), and suppose the root of
  $\Gamma$ is contained in $\Gamma_1$. Then the boundary orientation
  on $\M_\Gamma$ induced by $i_{\Gamma,\Gamma'}$ differs from the
  product orientation $\M_{\Gamma_1} \times \M_{\Gamma_2}$ by
  \begin{equation}
    \label{eq:orient-diff}
    \eps(\Gamma_1,\Gamma_2):=(-1)^{\diamondsuit} 
  \end{equation}
  where $\diamondsuit$ depends only on the combinatorial types $\Gamma_1$, $\Gamma_2$, see \cite[(12.22)]{se:bo}. 
\end{remark}

The moduli spaces admit universal curves, which admit partitions into
one and two-dimensional parts.  For any combinatorial type $\Gamma$,
let $\U_\Gamma$ denote the universal treed disk consisting of
isomorphism classes of pairs $(C, z)$ where $C$ is a treed disk of
type $\Gamma$ and $z$ is a point in $C$, possibly on a disk component,
sphere component, or one of the edges of the tree. \label{udisk} The
map
\begin{equation} \label{eq:univcurv} \U_\Gamma \to \M_\Gamma, \quad [C,z]
  \mapsto [C]
\end{equation}
is the universal projection, whose fiber over $[C]$ is a copy of $C$.
The union over types $\Gamma'$ with
$\M_{\Gamma'} \subset \ol{\M}_\Gamma$ is denoted $\ol{\U}_\Gamma$.
Denote by
\[\S_\Gamma,\quad \text{resp.}\quad \T_\Gamma\]
the locus of points $[C,z] \in \U_\Gamma$ where $z$ lies on a
disk or sphere component resp. an edge of $C$.
Hence $\U_\Gamma =
\S_\Gamma \cup \T_\Gamma$, and $\S_\Gamma \cap \T_\Gamma$
is the set of points on the boundary of the disks meeting the edges of
the tree.
Denote by 
\[\ol \S_\Gamma,\quad \text{resp.}\quad \ol \T_\Gamma\]
the compactification of $\S_\Gamma$ resp. $\T_\Gamma$ in $\ol \U_\Gamma$.

On each stratum, the universal curve admits local
trivializations. \label{localtriv} For a stratum $\Gamma$, the spaces
\[\S_\Gamma \to \M_\Gamma, \quad \text{resp.} \quad \T_\Gamma \to \M_\Gamma\]
are smooth fibrations with $2$ resp. $1$-dimensional fibers and 
markings are sections
\[\ul z=(z_{i,\white}, z_{j,\black}): \M_\Gamma \to \S_\Gamma, \quad 0 \leq i \leq d(\white), 1 \leq j \leq d(\black).\]
We view breaking points on broken treed segments as sections
$\M_\Gamma \to \T_\Gamma$.  On a type $\Gamma$, the treed segment
$T_e$ corresponding to any edge $e \in \Edge_\white(\Gamma)$ has a
fixed number of breakings, each of which gives rise to a section.  The
union $\U_\Gamma=\S_\Gamma \cup \T_\Gamma$ has local trivializations:
For a curve $[C] \in \M_\Gamma$, $C=S \cup T$, and a small enough
neighborhood $U_C \subset \M_\Gamma$ of $[C]$, there is a
homeomorphism
\begin{equation}
  \label{eq:univlocaltriv}
  \U_\Gamma|U_C \simeq U_C \times C.  
\end{equation}
When restricted to each stratum of
$\S_\Gamma|U_C$ resp. $\T_\Gamma|U_C$, the homeomorphism above is a
diffeomorphism onto its image.  The markings
\[z_{i,\white}, z_{j,\black}|U_C: U_C \to S\]
are constant functions, whose values we denote by
$\ul{z_{U_C}}=(z_{i,\white,U(C)}, z_{j,\black,U(C)})$, where
$z_{i,\white,U(C)}, z_{j,\black,U(C)} \in S$.  The fibers of
$\S_\Gamma \to U_C$ possess a conformal structure, and the
trivialization in \eqref{eq:univlocaltriv} induces a map
\begin{equation}
  \label{eq:jmoduli}
  j: U_C \to \J(S)   
\end{equation}
to the space $\J(S)$ of conformal structures on $S$ such that for
$[C_1] \in U_C$, the  marked curve $(C,\ul{z_{U_C}})$ with complex structure 
$j([C_1])$ is biholomorphic to the marked curve
$(\S_\Gamma, \ul z)|_{[C_1]}$.

\begin{remark}\label{rem:smooth-breaks}
  The structure of a ``smooth fibration with sections'' on
  $\S_\Gamma \to \M_\Gamma$ and $\T_\Gamma \to \M_\Gamma$ breaks down on
  the extension to the compactification $\ol \M_\Gamma$.  For example,
  a finite treed segment $T_e$ in the fibers of $\T_\Gamma$ may be
  transformed to a segment of ``zero length'' in the compactification
  $\ol \T_\Gamma$; or two disconnected components in the fibers of
  $\S_\Gamma$ may connect at a disk node in the compactification
  $\ol \S_\Gamma$.
\end{remark}

\section{Treed pseudoholomorphic disks}

\label{page:intropar}
Treed pseudoholomorphic disks are maps from treed disks to a
symplectic manifold equipped with a Lagrangian submanifold.  The
symplectic manifold has a tamed almost complex structure and the
Lagrangian submanifold has a Morse function on it.  On the
two-dimensional part of the treed disk the map is pseudoholomorphic, 
and the boundaries of the disks map to the Lagrangian submanifold. On
the one-dimensional part of the domain, the map is a gradient flow
line of the Morse function on the Lagrangian submanifold, whose length
is the same as the length of the tree edge.  Later in this chapter, 
we will
adapt the definition of treed holomorphic disks in a symplectic
manifold to define broken treed holomorphic disks in a broken
manifold.  Even later in the text,  the almost complex structure, the Morse
function, and the metric on the Lagrangian will be given
domain-dependent perturbations in order to regularize the moduli
spaces of treed holomorphic (broken) disks.

We introduce the necessary notation for defining treed holomorphic
disks.  Let $(X,\om_X)$ be a symplectic manifold and $L \subset X$ be a
Lagrangian submanifold.  Let $J$ be an $\om_X$-tame almost complex
structure.  Let $G_L$ be a Riemannian metric on $L$ and let
$F \in C^\infty(L,\R)$ be a Morse function such that the pair
$(F,G_L)$ is Morse-Smale. \index{Morse function on the Lagrangian}
The \em{gradient vector field} is defined
by the condition
\[ \grad_F \in \Vect(L), \quad  \d F(\cdot)=G_L(\grad_F,\cdot) .\]

\begin{definition}\label{def:treedholdisk} 
  A \em{treed holomorphic disk} with boundary in
  $\lag \subset X$ consists of a treed disk $C = S \cup T$ and a
   continuous map
  \[u: C \to X\]
  satisfying the following conditions: 
  \begin{enumerate}
  \item The tree components $T$ and the boundary $\partial S$ of the
    surface components are mapped to the Lagrangian submanifold $L$;
    that is, $ u(T \cup \partial S) \subset \lag $;
  \item the map $\left. u \right|_S$ is a pseudoholomorphic map on the
    surface part:
    $ J d (\left. u \right|_S) = d(\left. u \right|_S)\circ j $; and
  \item the map $\left. u \right|_T$ is a union of gradient
    trajectories of $F$:
    \[ \dds \left. u \right|_T = - \grad_F(\left. u \right|_T) \]
    where $s$ is a unit velocity coordinate on $T$.
  \end{enumerate}
\end{definition}

A treed holomorphic disk $u: C = S \cup T \to X$ is \em{stable}
\label{page:finiteaut} if it has finitely many automorphisms, 
$\# \Aut(u) < \infty$, or equivalently
\begin{enumerate}
\item each surface component $S_v \subset S$ on which the map $u$ is
  constant is stable as a component of a nodal disk $S$ (see
  Definition \ref{def:treeddisk}); and
\item each treed segment $T_e$ on which the map $u$ is
  constant has at most one infinite end, that is, one of the ends of
  $T_{e}$ is an attaching point to a sphere or disk $S_v \subset S$.
\end{enumerate}
Note that the case $C \cong \R$ equipped with a non-constant Morse
trajectory $u: C \to L$ is allowed under this stability condition and
corresponds to the case of a single incoming edge, that is,
$d(\white) = 1$. \label{Rcase}
The area of a sphere or disk $u: S \to X$ is the symplectic area
\[ \Area(u) = \int_S (u|_S)^* \omega_X .\]

\section{Multiply-broken disks}
\label{sec:mbrokdisks}

A broken map is a map from a nodal curve to a multiply cut manifold
that is discontinuous at tropical nodes.  Different components of the
nodal curve map to different pieces of the multiply cut manifold, and
the lifts of the tropical nodal points satisfy an edge-matching
condition. The nodal points carry an additional data of intersection
multiplicity with relative divisors. This data is packaged into a \em{tropical structure}, which is part of the combinatorial type of the
broken map.

\begin{definition}{\rm(Tropical graph)}
  \label{def:tropgraph}
  Let $B^\dual$ be the dual complex for a set of polytopes
  $\PP=\{P \subset \t^\dual\}$ as in Definition \ref{def:dualcomplex}.
  A \em{pre-tropical graph} is a triple \index{Tropical graph}
  $(\Gamma,P,\cT)$ consisting of
  \begin{enumerate}
  \item a graph $\Gamma$ with vertex set $\Ver(\Gamma)$ and edge set
    $\Edge(\Gamma) \subset \Ver(\Gamma) \times \Ver(\Gamma)$,
  \item polytope assignments
    \begin{equation} \label{eq:passign}
      P: \Ver(\Gamma) \cup \Edge(\Gamma) \to \PP
    \end{equation} 
    such that for any edge $e=(v_+,v_-)$, $P(e)=P(v_+) \cap P(v_-)$, and
  \item edge \em{{direction}s}
    \index{{Direction}! of an edge}
    \[\cT(e) \in \t_{P(e),\Z}\bs \{0\}, \quad \forall \enspace e\in
      \Edge(\Gamma). \]
  \end{enumerate}
  A \em{tropical graph} is a pre-tropical graph $(\Gamma,P,\cT)$ that
  is \em{realizable}
  \index{Realizability of a tropical graph}
  in the  sense that there is a collection of \em{tropical vertex positions} 
  \index{Vertex positions! of a tropical graph $\W(\Gamma)$}
  \begin{equation} \label{eq:tv} \cT(v) \in P(v)^{\dual,\circ} \subset
    B^\dual, \quad v \in \Ver(\Gamma)
  \end{equation}
  satisfying the following \em{direction condition} \index{Direction
    condition! for tropical graphs}
  \begin{equation} \label{eq:bslope} {\text{\rm (Direction condition)}}
    \quad \cT(v_+) - \cT(v_-) \in \R_{> 0} \cT(e) .
  \end{equation} 
  The set of
  tropical vertex position maps on a tropical graph $\Gamma$ is
  denoted
  \begin{equation}
    \label{eq:tweights}
    \W(\Gamma) = \{ ( \cT(v))_{v \in \Ver(\Gamma)} : \cT \text{ is a tropical vertex position map on $\Gamma$} \}. 
  \end{equation}
  A tropical graph $\Gamma$ is \em{rigid} if $\W(\Gamma)$ has exactly
  one element.  \index{Rigid! tropical graph}
\end{definition}

\begin{remark} 
  \label{rem:samegraph}
  A deformation of vertex positions for a tropical graph $\Gamma$ is shown
  in Figure \ref{fig:movetrop} in the dual complex $B^\dual$
  corresponding to two orthogonal cuts (see Figure \ref{fig:break} and
  Figure \ref{fig:sqdual}). By the definition of tropical graphs,
  moving the vertices $v_3$ and $v_4$ to the dotted positions produces
  a one-parameter space of tropical vertex positions.  Thus, 
  the space of vertex positions is
  \[\W(\Gamma) \simeq (0,1).\]
 This remark is continued in Remark \ref{rem:samegraph2}.
\begin{figure}[h]
  \centering \scalebox{.8}{
\begingroup%
  \makeatletter%
  \providecommand\color[2][]{%
    \errmessage{(Inkscape) Color is used for the text in Inkscape, but the package 'color.sty' is not loaded}%
    \renewcommand\color[2][]{}%
  }%
  \providecommand\transparent[1]{%
    \errmessage{(Inkscape) Transparency is used (non-zero) for the text in Inkscape, but the package 'transparent.sty' is not loaded}%
    \renewcommand\transparent[1]{}%
  }%
  \providecommand\rotatebox[2]{#2}%
  \newcommand*\fsize{\dimexpr\f@size pt\relax}%
  \newcommand*\lineheight[1]{\fontsize{\fsize}{#1\fsize}\selectfont}%
  \ifx\svgwidth\undefined%
    \setlength{\unitlength}{144.2134395bp}%
    \ifx\svgscale\undefined%
      \relax%
    \else%
      \setlength{\unitlength}{\unitlength * \real{\svgscale}}%
    \fi%
  \else%
    \setlength{\unitlength}{\svgwidth}%
  \fi%
  \global\let\svgwidth\undefined%
  \global\let\svgscale\undefined%
  \makeatother%
  \begin{picture}(1,0.52349845)%
    \lineheight{1}%
    \setlength\tabcolsep{0pt}%
    \put(0,0){\includegraphics[width=\unitlength,page=1]{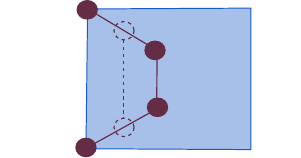}}%
    \put(0.1162315,0.47694421){\color[rgb]{0,0,0}\makebox(0,0)[lt]{\lineheight{0}\smash{\begin{tabular}[t]{l}$P_0^\dual$\end{tabular}}}}%
    \put(0.123555,0.01854872){\color[rgb]{0,0,0}\makebox(0,0)[lt]{\lineheight{0}\smash{\begin{tabular}[t]{l}$P_1^\dual$\end{tabular}}}}%
    \put(0.72068672,0.23764086){\color[rgb]{0,0,0}\makebox(0,0)[lt]{\lineheight{0}\smash{\begin{tabular}[t]{l}$P_\cap^\dual$\end{tabular}}}}%
    \put(0.53070827,0.39610028){\color[rgb]{0,0,0}\makebox(0,0)[lt]{\lineheight{1.25}\smash{\begin{tabular}[t]{l}$v_3$\end{tabular}}}}%
    \put(0.56612609,0.11301018){\color[rgb]{0,0,0}\makebox(0,0)[lt]{\lineheight{1.25}\smash{\begin{tabular}[t]{l}$v_4$\end{tabular}}}}%
    \put(-0.00441826,0.2295931){\color[rgb]{0,0,0}\makebox(0,0)[lt]{\lineheight{1.25}\smash{\begin{tabular}[t]{l}$\Gamma$\end{tabular}}}}%
  \end{picture}%
\endgroup%
}
  \caption{Moving the vertices $v_3$ and $v_4$ to the dotted positions
    gives a different vertex position map for the same tropical graph $\Gamma$.}
  \label{fig:movetrop}
\end{figure}
\end{remark}

\begin{remark}\label{rem:Wpoly}
  {\rm(The set of vertex positions is the interior of a polytope)}
  For a tropical graph $\Gamma$, the closure $\ol \W(\Gamma)$ of the space $\W(\Gamma)$ of vertex positions is given by 
  \begin{multline*}
    \ol \W(\Gamma):=\{(\cT(v) \in  P(v)^\dual)_{v \in \Ver(\Gamma)} : \\
    \cT(v_+) - \cT(v_-) \in \R_{\geq 0} \cT(e), \enspace \forall
    e=(v_+,v_-) \in \Edge(\Gamma)\}.
\end{multline*}
  Thus $\ol \W(\Gamma)$ is obtained by weakening the conditions \eqref{eq:tv} and \eqref{eq:bslope} in the definition of $\W(\Gamma)$. The closure $\ol \W(\Gamma)$ is a compact polytope in the affine space (of the same dimension) 
  \begin{multline*}
    A_\Gamma:=\{(\cT(v) \in \t_{P(v)})_{v \in \Ver(\Gamma)} :\\
    \cT(v_+) - \cT(v_-) \in \R \cT(e), \enspace \forall e=(v_+,v_-) \in \Edge(\Gamma) \}.
  \end{multline*}
  The facets of the polytope $\ol \W(\Gamma)$ are given by the conditions
  \[\cT(v) \in P(v)^\dual, \quad  \cT(v_+) - \cT(v_-) \in \R_{\geq 0} \cT(e),\]
  for each vertex $v$ and each edge $e=(v_+,v_-)$ of $\Gamma$. 
\end{remark}

\begin{definition} \label{def:tropstrtreed}
  {\rm(Tropical structure on a treed disk type)}
  \index{Tropical structure}
  Let $\Gamma$ be the
  combinatorial type of a treed disk (see Definition \ref{def:treeddisk}\eqref{part:typetreeddisk}).  Let $\XX$ be a
  broken manifold with an underlying polyhedral decomposition $\PP$,
  and a Lagrangian submanifold $L$ contained in the component
  $X_{P_0} \subset \XX$, $P_0 \in \PP$.

  A \em{tropical structure} on $\Gamma$ consists of a tropical graph
  $(\Gamma_\tr, P, \cT)$ and an edge collapse morphism
  \begin{equation} \label{eq:tr}
  \tr : \Gamma \to \Gamma_\tr \end{equation}
called the \em{tropicalization}.\index{Tropicalization} Here
$P:\Ver(\Gamma_\tr) \cup \Edge(\Gamma_\tr) \to \PP$ is the polytope
assignment on the tropical graph (see \eqref{eq:passign}), and
$\cT=(\cT(e) \in \t_{P(e),\Z})_{e \in \Edge(\Gamma_\tr)}$ is the
collection of edge {direction}s.  The edges collapsed by $\tr$ are called
\em{internal edges} and uncollapsed edges are called \em{tropical
  edges}. The subset of internal resp. tropical edges of $\Gamma$ is
denoted by \index{Edge! Tropical edge} \index{Edge! Internal edge}
  \[ \Edge_\internal(\Gamma) \quad{\text{resp.}} \quad \Edge_\trop(\Gamma) \subset \Edge_-(\Gamma). \]
  All boundary edges are collapsed 
  by assumption. 
  Therefore,
  \[\Edge_{-,\white}(\Gamma) \subset \Edge_{\internal}(\Gamma), \quad \Edge_\trop(\Gamma) \subset \Edge_{-,\black}(\Gamma).\]
  The map $\tr$ is often suppressed in the notation.  The polygon
  assignment $P \circ \tr$ and edge {direction} $\cT \circ \tr$ maps on
  $\Gamma$ are often denoted by $P$, $\cT$.
  \end{definition}

  \begin{remark} \label{rem:zeroslope} In a treed disk type $\Gamma$
    equipped with a tropical structure, since the tropicalization map
    collapses all the boundary edges $e \in \Edge_\white(\Gamma)$, all
    the boundary vertices $v \in \Ver_\white(\Gamma)$ are mapped to a
    single vertex in $\Gamma_\tr$.
\end{remark}

We recall some notation required to define broken maps.  The domain of
a broken map is a treed disk $C=T \cup S$ (Definition
\ref{def:treeddisk}) where $S$ is the surface part and $T$ is the tree
part.  For a treed disk $C$ of type $\Gamma$ equipped with a tropical
structure, for every vertex $v \in \Ver(\Gamma)$ we denote by
\begin{equation}
  \label{eq:opensv}
  S_v^\circ = S_v \bs \{w_e : e \in \Edge_\trop(\Gamma)\}  
\end{equation}
the surface part with cylindrical ends obtained by deleting nodal
points corresponding to tropical edges. The target space of a broken
map is a broken manifold $\XX$.  We recall from 
\eqref{eq:brokenmfd} that a broken manifold with an underlying
polyhedral decomposition $\PP$ is a disjoint union
\[\XX=\bigsqcup_{P \in \PP}\XC_P,\]
and there is a top-dimensional polytope $P_0 \in \PP$ such that
$\XB_{P_0}$ contains a Lagrangian submanifold $L$.  Note that
$\XB_{P_0}=\XC_{P_0}$ since the torus $T_{P_0}$ is trivial.

\begin{definition}
  \label{def:bmap1s}
  {\rm(A surface component of a broken map)} Let $\Gamma$ be the
  combinatorial type of a treed disk equipped with a \em{tropical
    structure} and let $v$ be a vertex of $\Gamma$. Define the domain
  $S_v$ to be a disk if $v \in \Ver_\white(\Gamma)$ and a sphere if
  $v \in \Ver_\black(\Gamma)$.  The \em{surface component of a broken
    map} corresponding to $v$ consists of a punctured domain curve
  $S_v^\circ:=S_v \bs \{w_e : v \in e, e \in \Edge_\trop(\Gamma)\}$ and a
  holomorphic map
  \[u_v : S_v^\circ \to \XB_{P(v)}\]
  with the following behavior at punctures: For any tropical edge
  $e \in \Edge_\trop(\Gamma)$, $e \ni v$, and any holomorphic
  coordinate in a punctured neighborhood $U_{w_e}$ \label{rep:uw} of
  $w_e$,
  \[z_e : (U_{w_e} \bs \{w_e\}) \to (\C,0),\]
  the map $z_e^{-\cT(e)}u_v$ has a removable singularity at $w_e$.
  The quantity
  \begin{equation}
    \label{eq:tropev}
\ev_{w_e}^{\cT(e)}u_{v}:=\lim_{z_e \to 0} z_e^{-\cT(e)}u_{v} \in \XC_{P(e)}.  
\end{equation}
\index{Tropical evaluation map} 
is called the \em{tropical evaluation} at the puncture $w_e$ for the
choice of holomorphic coordinate $z_e$.  Here,
$\cT(e) \in \t_{P(e),\Z}$ is the {direction} of the edge $e$,
\footnote{It is enough to just consider broken maps whose edges have integral {direction}s $\cT(e)$,
  and not rational {direction}s, 
  because the free torus actions in the neighborhood of cut loci implies that in the neighborhood of a puncture, a map is asymptotic to a torus orbit generated by an integral element. Also, see the related Remark \ref{rem:integ-edge} following the proof of the removal of singularities result.
}
and for any
$z \in \C^\times$, $z^{\cT(e)}$ is an element of the one-dimensional
torus $T_{\cT(e),\C} \subset T_{P(e),\C}$. Via the cylindrical
coordinate map \eqref{eq:Pcoord} which identifies the
$P(e)$-cylindrical end to $\XC_{P(e)}$, we may assume that
  \[u_{v}(U_{w_e} \bs \{w_e\}) \subset \XC_{P(e)}.\]
  Recall that $\XC_{P(e)}$ has a holomorphic $T_{P(e),\C}$-action.
\end{definition}

It is often useful to have a projected version of the tropical evaluation map that does not involve the choice of a holomorphic coordinate in the neighborhood of the puncture.

\begin{definition} \label{def:proj-eval} {\rm(Projected tropical
    evaluation map)} \index{Projected tropical evaluation map} In the
  notation of Definition \ref{def:bmap1s} of a component of a broken
  map, for a puncture $w_e$ and coordinates $z_e$ in the neighborhood
  of $w_e$, the \em{projected tropical evaluation} at a puncture
  $w_e$ corresponding to an edge $e \in \Edge_\trop(\Gamma)$ is
  \begin{equation}
    \label{eq:evproj}
    \pi_{\cT(e)}^\perp (\ev_{w_e}^{\cT(e)}u_{v})\in \XX_{P(e)}/T_{\cT(e),\C},
  \end{equation}
  which is equal to $\lim_{w_e} (\pi^\perp_{\cT(e)} \circ u_v)$, and
  therefore, is independent of the coordinate $z_e$.  Here,
  $\pi^\perp_{\cT(e)} : \XX_{P(e)} \to \XX_{P(e)}/T_{\cT(e),\C}$ is
  the quotient map by the complex subtorus
  $T_{\cT(e),\C} \subset T_{P(e),\C}$.
\end{definition}
An unframed broken map consists of surface components as in Definition \ref{def:bmap1s}, for which projected tropical evaluations on any pair of nodal lifts match; and treed segments that are mapped to gradient flow lines on the Lagrangian submanifold.
\begin{definition} \label{def:ubmap} {\rm(Unframed broken map)} Let
  $\XX_\PP$ be a broken manifold and $L \subset \XB_{P_0}$ be a
  Lagrangian submanifold as above.  An \em{unframed broken map} $u$
  to $\XX$ is a datum consisting of
  \begin{enumerate}
  \item {\rm(Domain type and tropical structure)} a \em{domain type}
    $\Gamma$, which is the combinatorial type of a treed disk equipped
    with a \em{tropical structure} (Definition
    \ref{def:tropstrtreed}) for which disk vertices map to $P_0$ :
    \[v \in \Ver_\white(\Gamma) \implies P(v) = P_0; \]
  \item {\rm (Domain curve)} a treed nodal disk $C=S \cup T$ of type
    $\Gamma$ consisting of a surface component $S_v$ (sphere or disk) for every vertex $v$ of $\Gamma$, and a treed segment $T_e$ for every boundary edge $e \in \Edge_\white(\Gamma)$;
  \item {\rm (Map)} a collection of holomorphic maps on punctured
    surface components
    \begin{equation*} u_v: S_v^\circ \to \XC_{P(v)}, \quad v \in
      \Ver(\Gamma)
    \end{equation*}
    as in Definition \ref{def:bmap1s}, and continuous maps on the treed
    segments
    \[ u_e: T_e \to L, \quad e \in \Edge_\white(\Gamma) \]
    collectively denoted $u: C \to \XX $, such that
    \begin{itemize}
    \item {\rm(Behavior on the boundary)} the restriction to the
      components with boundary, namely
      \[u|(\cup_{v \in \Ver_\white(\Gamma)}S_v^\circ \cup \cup_{e \in
          \Edge_\white(\Gamma)}T_e), \]
      of $C$ is a treed holomorphic map (in the sense of Definition
      \ref{def:treedholdisk}) to the target space $(\XB_{P_0},L)$;
    \item {\rm (Matching at internal nodes)} for an interior internal
      edge
      \[e=(v_+,v_-) \in \Edge_\black(\Gamma) \cap
        \Edge_\internal(\Gamma),\]
      the corresponding nodal points $w_e^\pm \in S_{v_\pm}$ map to
      $\XC_{P(v_\pm)}$ and the map $u$ is continuous at the node, that is, 
      \begin{equation} \label{zeromatch} u(w^+_e)=u(w^-_e) \in
        \XC_{P(v_\pm)},\end{equation}
    \item {\rm (Matching at tropical nodes)} for a tropical edge
      $e=(v_+,v_-) \in \Edge_\trop(\Gamma)$, 
      the projected tropical
      evaluations \eqref{eq:evproj} on the nodal lifts are equal, that is,
  \begin{equation}
        \label{eq:trop-matching}
      (\pi_{\cT(e)}^\perp \circ u_{v_+})(w^+_e)=(\pi_{\cT(e)}^\perp
        \circ u_{v_-})(w^-_e),  
      \end{equation}
    \end{itemize}

  \end{enumerate}
  that is stable (as in Definition \ref{def:iso-stab} \eqref{part:stable}).  This
  ends the Definition.
\end{definition}

\begin{definition}\label{def:bmap}
  {\rm(Broken map)} A broken map $u : C \to \XX$ is an
unframed broken map (Definition \ref{def:ubmap}) with the additional
data of a \em{framing} at tropical nodes $w_e$,
$e \in \Edge_\trop(\Gamma)$. A framing is a linear isomorphism
\index{Framing}
\begin{equation} \label{eq:framingeq} \fr_e : T_{w^+_e} S_{v_+}
  \tensor T_{w^-_e} S_{v_-} \to \C,
\end{equation}
satisfying the following: 
For any holomorphic
  coordinate
  \begin{equation}
    \label{eq:holcoord}
    z_\pm : (U_{w_e^\pm}, w_e^\pm) \to (\C,0)  
  \end{equation}
  in the neighborhood of nodal lifts $w_e^\pm$ that respect the framing, that is,
  \begin{equation}
    \label{eq:framingcoord}
    \d z_+(w^+_e) \tensor \d z_-(w^-_e)=\fr_e,
  \end{equation}
  the following matching condition is satisfied at tropical nodes:
  \begin{equation}
    \label{eq:nodematch}  
    \lim_{z_- \to 0} z_-^{-\cT(e)} 
    u_{v_-}(z_-) = 
    \lim_{z_+ \to 0} z_+^{\cT(e)} 
    u_{v_+}(z_+). \index{Matching condition at nodes}
  \end{equation}
  The coordinates $z_+$, $z_-$ are called \em{matching coordinates}
  at the node $w_e$. \index{Matching coordinates}
  \end{definition}
  %
  %
  %

  We consider isomorphism classes of stable broken maps.

  \begin{definition}\label{def:iso-stab}
    {\rm(Stability and isomorphism)}
    \begin{enumerate}
    \item  \label{part:horizconst}\index{Horizontally constant}
  In a broken map $u$,
  a surface component  $\left. u \right|_{S_v^\circ}: S_v^\circ 
  \to \XX_{P(v)}$ is
  \em{horizontally constant} if its projection to $\XB_{P(v)}$ is constant.
  \item {\rm(Stability)} \label{part:stable}
  A broken map $u: C \to \XX$ is required to be \em{stable} in the sense that 
 any surface component $S_v \subset C$ on which the map 
  $u_v : S_v^\circ \to \XB_{P(v)}$ is horizontally constant is stable 
  as a marked curve, and any tree component $T_e \subset C$ on
  which $\left. u \right|_{T_e}$ is constant does not contain an infinite segment $\R \subset T_e$.
  \item \label{part:mapiso}
  {\rm(Isomorphisms)} An \em{isomorphism} between two unframed broken
  maps $u: C \to \XX$, $u': C' \to \XX$ is an isomorphism
  $\phi:C \to C'$ of treed disks (see Definition 
  \ref{def:treeddisk}\eqref{part:isotreeddisk}) such that
  $u=u' \circ \phi$. Two framed broken maps $(u,\fr)$, $(u',\fr')$ are
  isomorphic if the preceding relation holds, and in addition for any
  node $w$ in $C$ corresponding to an edge $e$ in the underlying
  graph, and lifts $w^\pm$,
  $\fr_e \circ (\d\phi(w^+) \tensor \d\phi(w^-)) =\fr_e'$.
    \end{enumerate}
  \end{definition}

The stability condition on broken maps implies that the
automorphism group of any broken map is finite.

\begin{remark}
  {\rm(Trivial cylinder components)}
  The stability condition implies
  that a broken map can not have components without markings on which
  the map is a trivial cylinder, that is, the map is of the form 

  \[u_v : S_v^\circ \simeq \C^\times \to \XX_{P(v)}, \quad z \mapsto
    z^\mu x\]
  for some $\mu \in \t_{P(v),\Z}$ and $x \in \XX_{P(v)}$. Indeed, such
  a component has two nodes and is horizontally constant. Trivial
  cylinders with markings do not occur in zero and one-dimensional
  moduli spaces of broken maps, as in Example \ref{ex:triv1}.
\end{remark}

\begin{remark} \label{rem:extiso} In this book, we only use the zero-dimensional 
components of the moduli space, and the above notion of isomorphism
is sufficient.  To construct higher-dimensional moduli spaces of isomorphism classes of broken maps, one also needs to take into account the symmetries of the target manifolds.  In this case, one would define an isomorphism between two broken maps    $u: C \to \XX$, $u': C' \to \XX$ 
to be an isomorphism  of domains $\phi: C \to C'$
     and a tropical symmetry 
    \[ (g_v \in T_{P(v),\C}, v \in \Ver(\Gamma), z_e \in \C^\times, e
    \in \Edge(\Gamma) ) \]
     intertwining the maps $u,u'$ in the sense that
   \begin{equation} \label{eq:riso}
 u_v' = g_v u_v \circ (\left. \phi \right|_{C_v}) , \quad \forall v \in
     \Ver(\Gamma) . \end{equation} 
   In fact, Theorem \ref{thm:cpt-breaking} shows that if a sequence of
    maps in neck-stretched manifolds  converges to a limit map whose tropical graph $\Gamma$ is not rigid (and so, the group $T_\trop(\Gamma)$ of tropical symmetries is positive dimensional), the limit map
   is determined only up to the action of the identity component
of the tropical symmetry group, denoted by 
   $T_{\trop,\W}(\Gamma) \subset T_\trop(\Gamma)$.  
\end{remark}
%


\begin{definition}
    {\rm(Area of a broken map)} 
    \label{def:br-area}
    The \em{area} of a surface component
  $u_v: S_v^\circ \to \XX_{P(v)}$ 
  of a broken map is the symplectic area of its horizontal projection $\pi_{P(v)} \circ u_v : S_v^\circ \to  X_{P(v)}$. That is,
  \begin{equation}
    \label{eq:area-uv}
    \Area(u_v)=\bran{(\pi_{P(v)} \circ u_v)_*[S_v], \om_{X_{P(v)}}}. 
  \end{equation}
  Note that the pairing is defined via a map
  $\phi : X_{P(v)} \to X_{P(v)}^\om$ that respects the cylindrical
  structure (as made precise in Remark \ref{rem:nosymp}
  \eqref{part:nosymp1}).  Since at any puncture in $S_v^\circ$,
  $\pi_{P(v)} \circ u_v$ is asymptotic to a trivial cylinder (see
  \eqref{eq:tropev}), $\phi(\pi_{P(v)} \circ u_v)$ extends to a
  continuous map $\ol u_v : S_v \to \ol X_{P(v)}^\om$. A different
  choice of $\phi$ changes $\ol u_v$ by a homotopy. 
\end{definition}

\begin{remark}\label{rem:samegraph2}
  {\rm(Tropical graph as a combinatorial invariant)} The tropical
  graph of a broken map is a combinatorial invariant consisting of the
  data of edge {direction}s $\cT(e)$, $e \in \Edge_\trop(\Gamma)$ and vertex
  polytopes $P(v)$, $v \in \Ver(\Gamma)$, since varying the tropical
  positions of vertices does not produce a new broken map, see Remark \ref{rem:samegraph}. 
\end{remark}

The following remarks are easy conclusions of the definition of broken maps, and are intended to help the reader process the definition.

\begin{remark}{\rm(Continuity away from tropical nodes)}
  Suppose the tropical structure on a broken map is given by
  $\Gamma \xrightarrow{\tr} \Gamma_\tr$ where $\tr$ is the
  tropicalization map and $\Gamma_\tr$ is a tropical graph.  The
  domain of a broken map breaks up into connected components
  \[C \bs \{w_e : e \in \Edge_\trop(\Gamma)\} = \cup_{v \in
      \Ver(\Gamma_\tr)} C_v^\circ.\]
  For a vertex $v \in \Ver(\Gamma_\tr)$ in the tropical graph, if
  $\tr^{-1}(v)$ does not have disk vertices, then $C_v$ is a nodal
  sphere with punctures, with nodes corresponding to internal edges in
  $\tr^{-1}(v)$, and punctures corresponding to tropical edges
  incident on $\tr^{-1}(v)$. If $\tr^{-1}(v)$ has disk components then
  $C_v^\circ$ is a treed disk with punctures.  For any vertex
  $v \in \Ver(\Gamma_\tr)$ in the tropical graph, the restriction
  \[u|C_v^\circ : C_v^\circ \to \XC_{P(v)}\]
  is continuous.
\end{remark}
\begin{remark}\label{rem:1cutmatch}
  {\rm(Node matching for a single cut)} In the case of a single cut, the
  familiar form of the node matching condition from symplectic field
  theory corresponds to the matching condition \eqref{eq:trop-matching}
  for unframed maps.
  Indeed, for any edge $e$ in a tropical graph, $\codim(P(e))=\dim T_{P(e)}=1$, and so, $T_{\cT(e),\C}=T_{P(e),\C}$; the unframed matching condition \eqref{eq:trop-matching} then translates to the matching of evaluations on $\XB_{P(e)}$, namely, 
  $(\pi_{P(e)} \circ u_{v_+})(w^+_e)=(\pi_{P(e)} \circ u_{v_-})(w^-_e)$.
\end{remark}

\begin{definition}\label{def:primdirection}
  {\rm(Primitive direction, multiplicity of a tropical edge)} 
  In a tropical graph $\Gamma$, the {direction} $\cT(e) \in \t_{P(e),\Z} \bs \{0\}$ of a tropical edge $e$ \cwl{representing an interior node} can be written as a product
  \begin{equation*}
    \cT(e)=\mu_e\cT(e)_{\prim}
  \end{equation*}
  of a primitive integer vector $\cT(e)_\prim \in \t_{P(e),\Z}$, 
  called the \em{primitive direction}, and a positive integer
  $\mu_e \in \Z_{\geq 1}$ which is called the \em{multiplicity}
  of the edge $e$. \index{Primitive direction} \index{Multiplicity of an edge}
\end{definition}
\begin{remark}{\rm(Number of framings)} \label{rem:nfr} Any unframed
  broken map possesses framings.  For an unframed broken map $u$ of
  type $\Gamma$, the number of framings is equal to
  $\Pi_{e \in \Edge_{\trop}(\Gamma)}\mu_e$, where
  $\mu_e \in \Z_{\geq 1}$ is the multiplicity of the edge $e$
  (Definition \ref{def:primdirection}).  Indeed, the trivial cylinder
  \[ \C^\times \to T_{P(e)}, \quad z \mapsto z^{\cT(e)} \] 
which is the one-parameter subgroup induced by the Lie algebra element $\cT(e)$,  is a $\mu_e$-cover. 
 As a consequence, if
  $\fr_e$ is a framing for the node $w_e$, then
  $e^{2\pi i k/\mu_e}\fr_e$ is also a framing for
  $k=0,1,\dots, \mu_e-1$.
\end{remark}

\begin{remark}\label{rem:extend}
  {\rm(Extending broken map components over punctures)}  In the special
  case that the compactifications of the components of the broken
  manifold are manifolds, a broken map extends over lifts of nodal
  points to yield
  a map
  \begin{equation}
    \label{eq:extcv}
    u_v : S_v \to \ol \XC_{P(v)}, \quad v \in \Ver(\Gamma).
  \end{equation}
  Indeed, the existence of the tropical evaluation \eqref{eq:tropev}
  implies that, in $\XX_{P(v)}$, $u_v$ is asymptotically close to
  trivial cylinders at punctures.  If $\ol \XC_{P(v)}$ has orbifold
  singularities, that is, if the polytope $P(v)$ is simple and not
  Delzant, the extension \eqref{eq:extcv} is defined on a domain curve
  with orbifold singularities.
  Since compactifications of cut spaces \eqref{eq:xpcpt} are orbifolds, the projections extend over
  punctures to yield $\pi_{P(v)} \circ u_v : S_v \to \ol X_{P(v)}$. 
  For any
  nodal lift $w \in S_v$ corresponding to an edge $e=(v,v')$, we have
  \begin{equation}
    \label{eq:extpip}
    (\pi_{P(v)} \circ u)(w) \in X_{P(e)} \subset \ol X_{P(v)}.
  \end{equation}
  This can be seen as follows: For each relative divisor
  $X_Q \subset \ol X_{P(v)}$, the normal vector $\mu_Q$ in the moment
  polytope lies in $\t_Q/\t_{P(v)}$, and a punctured neighborhood of
  $X_Q$ in $X_{P(v)}$ is a subset of a $T_{\mu_Q,\C}$-fibration.  If
  the edge {direction} $\cT(e) \in \t_{P(e)}$ is spanned by normal vectors
  $\mu_{Q_1},\dots,\mu_{Q_k}$, where each $Q_i \in \PP$ is a facet of
  $P(v)$, then $X_{P(e)} = \cap_i X_{Q_i}$, and
  $(\pi_{P(v)} \circ u)(w) \in X_{P(e)}$ since the map is
  asymptotically close to a $T_{\cT(e),\C}$-orbit near the puncture
  $w$.
\end{remark}

\begin{remark}\label{rem:intmult}
   {\rm(Relationship of edge {direction} with intersection multiplicities)}
  Consider the special case when the compactifications 
  of the broken manifold are smooth manifolds.
  Consider a component $u_v : S_v \to \ol \XX_{P(v)}$ of a broken map. 
  Suppose for a nodal lift $w \in S_v$ corresponding to an edge
  $e=(v,v')$, $u_v(w)$ lies on the intersection of relative divisors
  $Y_1,\dots,Y_k \subset \ol \XC_{P(v)}$.  Furthermore, suppose
  $\nu_1,\dots,\nu_k \in \t$ are the primitive outward pointing normal
  vectors to the facets of $\tP(v)$ corresponding to the divisors
  $Y_1,\dots, Y_k$, and that the map $u_v$ intersects the divisor
  $Y_i$ with multiplicity $\mu_i \in \Z_+$. Then we have the following
  relation to the {direction} of the edge:
  \begin{equation}
    \label{eq:intmult}
    \sum \mu_i \nu_i=\cT(e).  
  \end{equation}
  In the case when $\ol \XX_{P(v)}$ is an orbifold, the intersection
  multiplicities $\mu_i$ may be fractional, though the relation
  \eqref{eq:intmult} still holds.
  However, we do not use orbifold
  intersection numbers in any of the proofs.  This ends the Remark.
\end{remark}

\begin{remark}\label{rem:expconv}
  {\rm(Exponential convergence to a trivial cylinder at tropical
    nodes)} The removal of singularity of punctures from Remark
  \ref{rem:intmult} can alternately be stated as that a broken map is
  asymptotically close to a trivial cylinder at the punctured
  neighborhood of any nodal lift.  For a tropical node $w_e$
  corresponding to $e \in \Edge_\trop(\Gamma)$ and holomorphic
  coordinates
  \begin{equation}
    \label{eq:nodecoord}
    (s,t) : U(w_e^\pm) \bs \{w_e^\pm\} \to \R_{\geq 0} \times S^1    
  \end{equation}
  in punctured neighborhoods of  the nodal lifts, the limit
  \[x_e^\pm:=\lim_{s \to \infty}e^{\pm\cT(e)(s+it)}u_{v_\pm}(s,t)\]
  exists and is a point in the $P(e)$-cylindrical end
  $U_{P(e)}(\XX_{P(v_\pm)})$.  Furthermore, the map $u_{v_\pm}$
  exponentially converges to a trivial cylinder with {direction} 
  $\cT(e)$ in
  the punctured neighborhood $U(w_e^\pm) \bs \{w_e^\pm\}$, that is,
  there is a constant $c> 0$ such that
  \begin{equation*}
    d(u_{v_\pm}(s,t),e^{\mp\cT(e)(s+it)}x_e^\pm) \leq ce^{-s}, \quad s \geq s_0, 
  \end{equation*}
  using the cylindrical metric in $\XC_{P(e)}$. The proof is left to the reader.
\end{remark}


\begin{remark}
  {\rm(A different view of node matching)}
\label{rem:hv-match}
  In the special case 
  that 
  the multiple cut is a collection of orthogonally intersecting single
  cuts, and each cut space is a manifold, the tropical node matching condition
  \eqref{eq:nodematch}  
  splits into 
  the following conditions:
  \begin{itemize}
  \item a \em{horizontal matching} condition on the intersection of
    relative divisors, and
  \item a \em{vertical matching} condition involving leading
    order Taylor coefficients in the directions normal to each of the
    relative divisors.
  \end{itemize}
  We consider a multiple cut given by a tropical moment map
  $\Phi:X \to \R^k$ with cuts along the hypersurfaces $\Phinv_i(0)$,
  $i=1,\dots,k$.  Denote the relative divisors by
  $Y_i:=\Phinv_i(0)/S^1$ (though $Y_i$ is a broken manifold).
  Consider a broken map $u$ with components $u_+$, $u_-$ in the cut
  spaces for the polytopes 
  \[ P_+:=\cap_{i=1}^k\{\Phi_i \geq 0\}, \quad P_-:=\cap_{i=1}^k\{\Phi_i \leq 0\} \] 
  respectively, sharing a node $w_e$. As discussed in Remark \ref{rem:intmult}, the map $u_\pm$ extends over the puncture at the nodal point $w_e^\pm$. The
  intersection multiplicities with the divisors $Y_1,\dots,Y_k$ are
  the same for $u_+$, $u_-$, and are denoted by
  $\mu_1,\dots,\mu_k \in \Z_{>0}$. The matching condition consists of
  \begin{itemize}
  \item {\rm(Horizontal matching)} a horizontal condition which says
    that
    \[u_+(w^+_e)= u_-(w^-_e) \in X_{P(e)},\]
    where $X_{P(e)}=\Phinv(0)/(S^1)^k \simeq Y:= \cap_{i=1}^k Y_i$;
  \item {\rm(Vertical matching)} and a vertical condition which says
    that for holomorphic coordinates
    \[z_\pm : (U(w_e^\pm),w_e^\pm) \to (\C,0)\]
    in a neighborhood of the nodal lifts that respect the framing (as
    in \eqref{eq:framingcoord}), for all $i$, the $(\mu_i+1)$-th
    derivative normal to $Y_i$ is equal for $u_{v_+}$ and
    $u_{v_-}$. In the direction normal to $Y_i$, since derivatives up
    to order $\mu_i$ vanish, the $(\mu_i+1)$-th normal derivative, or
    the $(\mu_i+1)$-jet normal to $Y_i$, denoted by 
    \[ j^{\mu_i}_{Y_i}u_\pm(w_e^\pm) \in N_\pm Y_i \bs Y_i\]
    is well-defined (see \cite[Section 6]{cm:trans}), where
    $N_\pm Y_i$ is the normal bundle of $Y_i$ in $\ol X_{P_\pm}$. The
    vertical matching condition says that
    \[j^{\mu_i}_{Y_i}u_+(w^+_e)=j^{\mu_i}_{Y_i}u_-(w^-_e).\]
  \end{itemize}

  This view of the matching condition is relevant only in the case of a
  collection of orthogonal single cuts. In general cases, given a
  node, both nodal lifts do not lie on the same set of relative
  divisors.  For example, in the broken map in Figure \ref{fig:mikh}
  corresponding to a Mikhalkin graph, there are edges of the form
  \begin{itemize}
  \item $e=(v_+,v_-)$, $\cT(e)=(1,1)$
  \item with $P(v_+)=P(v_-)=P_0$ which is the zero-dimensional
    polytope; and
  \item one of the lifts lies on a divisor of $\ol \XC_{P_0}$
    corresponding to a facet with normal vector $(1,1)$ and another
    lift lies on the intersection of divisors corresponding to facets
    with normal vectors $(0,-1)$ and $(-1,0)$.
  \end{itemize}
  This ends the Remark.
\end{remark}

\begin{example} \label{ex:unpack} We unpack the horizontal and
  vertical matching conditions from Remark \ref{rem:hv-match} in the
  first broken map in Figure \ref{fig:broken_eg}. The multiple cut
  consists of two orthogonal cuts shown in Figure \ref{fig:break}, and
  the node $w$ lies between maps $u_0:C_0 \to \ol X_{P_2}$,
  $u_1:C_1 \to \ol X_{P_0}$.  The nodes $u_0(w)$, $u_1(w) $ lies in
  $ \ol X_{P_\cap}$.  Suppose the leading order Taylor term of $u_0$ in
  the direction normal to $X_{P_{12}}$ resp. $X_{P_{23}}$ is
  $a_+ z_+^{\mu_h^+}$ resp. $b_+ z_+^{\mu_v^+}$, and suppose the
  leading order Taylor term of $u_1$ in the direction normal to
  $X_{P_{30}}$ resp. $X_{P_{01}}$ is $a_- z_-^{\mu_h^-}$ resp.
  $b_- z_-^{\mu_v^-}$. Then the matching condition says that
\begin{itemize}
\item {\rm (Matching of intersection multiplicities)}  $\mu_h^+=\mu_h^-$, $\mu_v^+=\mu_v^-$,
\item {\rm (Horizontal matching)} the projections of the nodal evaluation maps to $X_{P_\cap}$ are equal:
  $\pi_{P_\cap} (u(w^+))=\pi_{P_\cap}(u(w^-)) \in X_{P_\cap}$,
\item {\rm(Vertical matching)} and the leading order Taylor
  coefficients are equal: $a_+=a_-$, $b_+=b_-$.
\end{itemize}

\end{example}

\begin{figure}[ht]
  \centering \scalebox{.8}{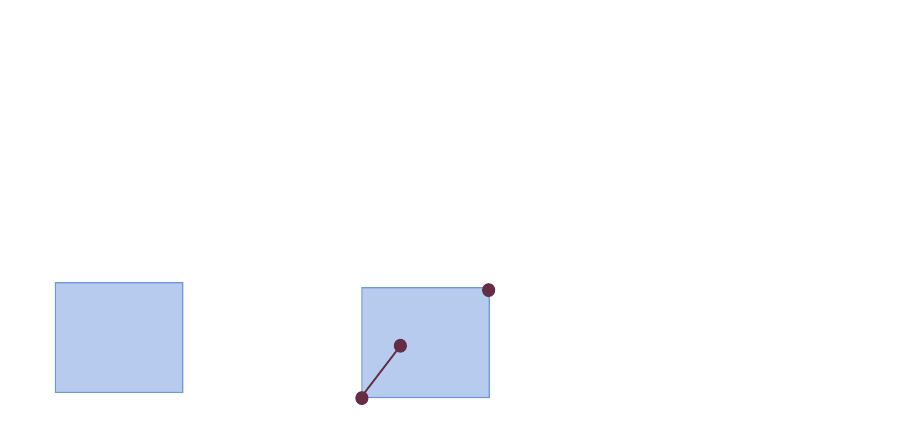}
  \caption{Broken maps and their dual graphs in the broken manifold of Figure \ref{fig:multpiece}.}
  \label{fig:broken_eg}
\end{figure}

\begin{remark}
  {\rm(Comparison to Ionel's refined matching)} In \cite[p14]{ion:nc},
  Ionel states the edge-matching condition using a ``refined
  evaluation map'', which is analogous to the projected tropical
  evaluation map defined in \eqref{eq:trop-matching}.  Suppose a node
  $w_e$ maps to the intersection of relative divisors $Y_1,\dots,Y_n$
  and the intersection multiplicity with each of the divisors is
  $\mu_1,\dots,\mu_n$.  (Here we work with the extension of the broken
  map $u$ over nodal lifts as in Remark \ref{rem:intmult}.)  The
  refined evaluation map $\ev_{\on{ref}}(u,0)$ for the map $u$ and the
  nodal lift $0 \in C$ is a point in the weighted projective space
  \[\ev_{\on{ref}}(u,0) = (x_1,\dots,x_n) \in \P_{(\mu_1,\dots,\mu_n)}(\oplus_i
  NY_i).\]
The point $\ev_{\on{ref}}(u,0)$ is well-defined since replacing the
domain coordinate $z$ by $az$ for some $a \in \C^\times$ has the
effect of changing $(x_1,\dots,x_n)$ to
$(a^{\mu_1}x_1,\dots,a^{\mu_n}x_n)$. The projected tropical evaluation
in \eqref{eq:trop-matching} and \eqref{eq:evproj}
  \[\pi^\perp_{\cT(e)}(\ev^\mu(0)) \in (Z \times \R^n)/T_{\cT(e),\C}\]
  is Ionel's evaluation map written using cylindrical coordinates on
  the target space, and the {direction} $\cT(e)$ of the edge $e$ is the
  vector $(\mu_1,\dots,\mu_n)$.  The cylindrical viewpoint appears
  more natural because in the image of the refined evaluation map none
  of the coordinates $x_i$ vanish.  On the other hand, all points in
  the cylinder $(Z \times \R^n)/T_{\cT(e),\C}$ can possibly be in the
  image of the evaluation map.
\end{remark}

\begin{remark} \label{rem:balance} {\rm(Balancing property)}
  We discuss the generalization of the balancing condition
  \eqref{eq:orig-balance} for Mikhalkin graphs to broken maps.
  First, consider the special case that the projection
  $(\pi_{P(v)} \circ u_v): S_v^\circ \to X_{P(v)}$ of a map component
  $u_v: S_v^\circ \to \XX_{P(v)}$ is constant. Then, the edges
  $e \in \Edge_\trop(\Gamma)$ emanating from the vertex $v$ satisfy
  \begin{equation}
    \label{eq:constt-bal}
    \sum_{e \ni v} \cT(e)_\ver = 0 ,
  \end{equation}
  where $\cT(e)_{\ver} \in \t_{P(v)}$ is the orthogonal projection of
  $\cT(e) \in \t_{P(e)}$ to $\t_{P(v)}$ with respect to the inner product
  \eqref{eq:idtt} on $\t_{P(e)}$.
  
  We can generalize this property to non-constant projections,
  assuming that the inner product \eqref{eq:idtt} on $\t$ is rational,
  in which case the compactification $\ol \XX_P$ of the broken
  manifold is an orbifold, and $\ol Z_P \to \ol X_P$ is an orbifold
  torus bundle (see Remark \ref{rem:compactify}) for all $P \in \PP$.
  Then we have the equality
  \begin{equation} \label{eq:balprop} \sum_{e \ni v} \cT(e)_{\ver} =
    c_1( (\pi_{P(v)} \circ u_v) ^* \ol Z_{P(v)} \to \ol X_{P(v)}),
  \end{equation} 
  where
  $c_1( (\pi_{P(v)} \circ u_v) ^* \ol Z_{P(v)} \to \ol X_{P(v)})$ is
  the first Chern class of the pull-back bundle, viewed as a vector in
  $\t_{P(v)} \simeq H^2(S_v,\t_{P(v)})$, and $S_v$ is the possibly
  orbifold compactification of $S_v^\circ$ over which the map $u_v$
  extends.
  Indeed, the map $u_v$ gives a section of the $T_{P(v)}$-principal
  bundle $(\pi_{P(v)} \circ u_v) ^* \ol Z_{P(v)} \to \ol X_{P(v)}$ on
  the complement of nodal points $w_e$, $e \ni v$, and the monodromy
  of the section around each such intersection is determined by
  $\cT(e)_\ver$. This ends the Remark.
\end{remark}

\begin{remark}
  {\rm(A comparison with symplectic field theory)} Holomorphic
  buildings in symplectic field theory \cite{bo:com} are defined
  slightly differently to broken maps on manifolds with a single
  cut. The target space of a holomorphic building consists of $X^+$,
  $X^-$ and $k-1$ copies of the neck piece $Z(\P^1)$ for some
  $k \geq 1$:
  \[\XX[k]:=\ol X_+ \cup_Y Z(\P^1) \cup_Y \dots \cup_Y Z(\P^1) \cup_Y \ol X_-, \]
  and any pair of consecutive pieces are identified along a divisor
  $Y$.  A holomorphic building $u:C \to \XX[k]$ is a continuous map,
  where nodes map to the divisor $Y$, and intersection multiplicities
  $m_{w_e^\pm}(u_{v_\pm}, Y)$ are equal on both sides.

  A holomorphic building differs from a broken map in two ways:  A
  holomorphic building is a continuous map, and the data for a
  holomorphic building includes an ordering for the neck piece
  components. In the broken map view, this ordering is not important:
  With suitable regularity assumptions a broken $u$ map with $m$
  components in neck pieces can be glued to give a $2m$-dimensional
  family of unbroken maps in $X^\nu$ for any $\nu$.  Any sequence of
  maps $u_\nu:C_\nu \to X^\nu$ lying in the glued family converges to
  a broken map $u': C' \to \XX$ that is related to $u: C \to \XX$ by a
  tropical symmetry (as in Definition \ref{def:tsym}).  In contrast,
  the holomorphic building limits of different sequences in the glued
  family may not all be the same, since the choice of the sequence
  $(u_\nu)_\nu$ determines the component of $\XX[k]$ to which a curve
  component $C_v \subset C$ maps.  For a broken map $u = (u_v)$, the
  ordering of the pieces $u_v$ is not part of the data of the the map.
  One effect of the differing definitions is the following: Unlike
  holomorphic buildings, broken maps do not have components $u_v$ that
  are trivial cylinders in the sense that they map into a fiber of
  $\ol \XC_{P(v)}$ with only two marked points and so are unstable. In
  holomorphic buildings, trivial cylinders have to be inserted
  whenever there is a node $w_e$ between components $C_{v_+}, C_{v_-}$
  that are not in adjacent levels in order to achieve continuity.
  This ends the Remark.
\end{remark}

\section{Symmetries of broken maps}
The moduli space of broken maps of a fixed type $\Gamma$ has the
action of a group, called the \em{tropical symmetry group}, arising
from torus actions on the ``neck pieces'' in the degeneration.  The
tropical symmetry group $T_\trop(\Gamma)$ can be read off from the
tropical graph $\cT$ of $\Gamma$. The identity component of
$T_\trop(\Gamma)$, is generated by the degrees of freedom of vertex
positions in the tropical graph $\cT$.  For example, the graph
$\Gamma_1$ in Figure \ref{fig:rigid} has one degree of freedom, and
therefore, the identity component of $T_\trop(\Gamma_1)$ is a
one-dimensional complex torus. A tropical graph is \em{rigid} if there
is no way of moving the vertices while keeping the edge {direction}s
fixed.  Such graphs have a finite tropical symmetry group.  For example,
in Figure \ref{fig:rigid}, $|T_\trop(\Gamma_2)|=2$.  The Mikhalkin
graphs from Section \ref{sec:mikh} give a class of examples of rigid
tropical graphs.  We prove later in this section that the size of the
symmetry group for Mikhalkin graphs is equal to the multiplicity of
the graph.

The tropical symmetry group also incorporates a group action on the
framings of the broken map.  Even one of the simplest kind of broken
maps occurring in a single cut, namely one whose tropical graph
consists of two vertices each lying in $P_+^\dual$ and $P_-^\dual$ (in
Figure \ref{fig:dualcomplex}) connected by an edge of multiplicity $k$
has a tropical group of size $k$ arising from the action on framings.

\begin{figure}[ht]
  \centering \scalebox{.8}{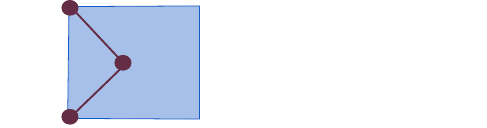}
  \caption{The dual complex $B^\dual$ for the multiple cut 
    in Figure \ref{fig:break} is a
    rectangle. The tropical graph $\Gam_1$ (left figure) is rigid, but
    $\Gam_2$ (right figure) is not rigid since the vertices inside the
    square can be moved to the dotted positions.}
  \label{fig:rigid}
\end{figure}

\index{Tropical symmetry}
\begin{definition} \label{def:tsym} {\rm(Tropical symmetry)} A \em{
    tropical symmetry} for a tropical graph $\Gamma$ 
  is a tuple
  \[(\ul g, \ul z)=((g_v)_{v \in \Ver(\Gamma)}, (z_e)_{e \in \Edge_\trop(\Gamma)}), \quad g_v \in T_{P(v),\C}, \quad z_e \in \C^\times\]
  consisting of a translation $g_v$ for each vertex and a change of
  local coordinate $z_e$ for each tropical edge that satisfies 
  \begin{equation} \label{connecting} g_{v_+} g_{v_-}^{-1} =
    z_e^{\cT(e)} \quad \forall e=(v_+,v_-) \in \Edge_\black(\Gamma),
  \end{equation} 
  where we assume $z_e=1$ for
  $e \in \Edge_\black(\Gamma) \bs \Edge_\trop(\Gamma)$.  A tropical
  symmetry $(g,z)$ acts on a broken map $(u,\fr)$ as
  \[u_v \mapsto g_vu_v, \quad \fr_e \mapsto z_e \fr_e. \]
  The group of tropical symmetries is denoted by
  \begin{equation} \label{eq:tsymg} T_{\on{trop}}(\Gamma)
:= \{((
    g_v)_{v \in \Ver(\Gamma)}, (z_e)_{e \in \Edge_\black(\Gamma)})|
    \text{\eqref{connecting}} \} .
  \end{equation}
\end{definition}

The condition \eqref{connecting} is a necessary and sufficient
condition for the translations $(g_v)_v$ to preserve the matching
condition at nodes.

\begin{remark}
  {\rm(Framing symmetry)} There are finitely many tropical symmetries
  $(\ul g, \ul z) \in T_{\on{trop}}(\Gamma)$, called \em{framing symmetries},
  for which the action on the unframed map $u$ of type $\Gamma$ is
  trivial, that is $g_v=\Id$ for all vertices $v \in
  \Ver(\Gamma)$. 
  The group of such framing symmetries $(\ul g, \ul z)$ is the
  product
  \begin{equation}
    \label{eq:zne}
  \prod_{e \in \Edge_\black(\Gamma)}\Z_{\mu_e}  \subset T_\trop(\Gamma),
\end{equation}
where $\mu_e$ is the multiplicity of the edge $e$ (Definition
\ref{def:primdirection}), and also the order of ramification of the broken
map at the nodal point $w_e$.
\end{remark}

\begin{lemma}\label{lem:same-unfr}
  Let $u_0$, $u_1$ be broken maps of type $\Gamma$ whose underlying
  unframed broken maps are the same. Then, $u_0$ and $u_1$ are related
  by an element of the framing symmetry group (from \eqref{eq:zne}).
\end{lemma}

\begin{proof}
  By Remark \ref{rem:nfr}, the number of possible framings of an
  unframed broken map is equal to
  $\prod_{e \in \Edge_\black(\Gamma)}\mu_e$.  This number
  is equal to the
  size of the framing symmetry group from \eqref{eq:zne} which has a
  free action on the set of broken maps.
\end{proof}

\begin{remark}\label{rem:symm-unfr}
  {\rm(Tropical symmetry for unframed broken maps)} A tropical
  symmetry on an unframed broken map is a tuple
  \[\ul g =(g_v)_{v \in \Ver(\Gamma)}\]
  that satisfies
  \begin{equation}
    \label{eq:unfr-symm}
  g_{v_+}g_{v_-}^{-1} \in \{z^{\cT(e)} : z \in \C^\times\} \quad \forall e=(v_+,v_-) \in \Edge_\black(\Gamma).  
  \end{equation}  
  The tropical symmetry group for unframed maps is equal to the
  quotient of the tropical symmetry group by the group of framing
  symmetries \eqref{eq:zne}.
\end{remark}

The next result shows that the identity component of the tropical
symmetry group is generated by tropical positions of vertices of the
tropical graph.  \index{Tropical symmetry}

\begin{lemma} {\rm(Tropical vertex positions generate tropical symmetries)}
  \label{lem:wtgen}
  \begin{enumerate}
  \item For a tropical graph $\Gamma$, the set of tropical vertex positions $\cW(\Gamma)$ 
 (defined in 
    \eqref{eq:tweights}) is convex.
  \item If a
    tropical graph $\Gamma$ has two distinct tropical vertex position maps 
    \[(\cT_0(v) , v \in \Ver(\Gamma) ) , \quad (\cT_1(v) , v \in \Ver(\Gamma)
    ) \]
  then the difference $\cT_1 - \cT_0$ generates a real-two-dimensional
  subgroup $\exp ( ( \cT_1 - \cT_0)( \cdot))$ of the tropical symmetry
  group $T_\trop(\Gamma)$ (defined in \eqref{eq:tsymg}).
  \item \label{part:idcpt} The subgroup 
    \begin{equation}
      \label{eq:Tidcpt}
    T_{\trop,\W}(\Gamma):=\bran{\exp((\cT_1-\cT_0)z) |
      \cT_0, \cT_1 \in \cW(\Gamma), z \in \C}  
    \end{equation}
    generated by tropical vertex position maps
    is the identity component of $T_\trop(\Gamma)$.
  \end{enumerate}
\end{lemma}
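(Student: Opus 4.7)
The plan is to handle the three parts in order: (a) is a direct convexity check, (b) exhibits a one-complex-parameter subgroup of $T_\trop(\Gamma)$ generated by a weight difference, and (c) is the main content, proved by matching Lie algebras.

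For (a), note that $\cW(\Gamma)$ is the intersection of convex sets, namely the polytope conditions $\cT(v) \in P(v)^\dual$ together with the half-space conditions $\cT(v_+) - \cT(v_-) \in \R_{\geq 0}\,\cT(e)$ for each edge $e$; each is preserved by convex combinations. For (b), given $\cT_0 \ne \cT_1 \in \cW(\Gamma)$, set $\xi_v := \cT_1(v) - \cT_0(v) \in \t^\dual_{P(v)} \simeq \t_{P(v)}$ via the fixed identification \eqref{eq:idtt}, and write $\cT_i(v_+) - \cT_i(v_-) = a_i(e)\,\cT(e)$ with $a_i(e) \geq 0$. The assignment
\[ s \mapsto \bigl((\exp(s\xi_v))_v,\ (\exp(s(a_1(e)-a_0(e))))_e\bigr), \quad s \in \C, \]
is a holomorphic homomorphism $\C \to T_\trop(\Gamma)$ because $\xi_{v_+}-\xi_{v_-} = (a_1(e)-a_0(e))\,\cT(e)$ verifies the matching condition \eqref{connecting}. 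Since $\cT_0 \ne \cT_1$ some $\xi_v$ is nonzero, so the image is a nontrivial complex one-parameter, hence real two-dimensional, subgroup.

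For (c), the inclusion $T_{\trop,\W}(\Gamma) \subset T_\trop(\Gamma)^\circ$ is immediate from (b), since each generator lies in a connected subgroup through the identity. For the converse, I pass to Lie algebras: the Lie algebra of $T_\trop(\Gamma)$ is
\[ \t_\trop = \bigl\{(\xi_v)_v \in \textstyle\bigoplus_v \t_{P(v),\C} : \xi_{v_+}-\xi_{v_-} \in \C\,\cT(e)\ \forall e \bigr\}, \]
while that of $T_{\trop,\W}(\Gamma)$ is the $\C$-linear span $V_\C$ of weight differences $\{\cT_1-\cT_0\}$, contained in $\t_\trop$ by (b). Since weight differences lie in the real form $\t_{\trop,\R} := \t_\trop \cap \bigoplus_v \t_{P(v)}$, it suffices to show the $\R$-span $V_\R$ equals $\t_{\trop,\R}$; complexifying then yields $V_\C = \t_\trop$, and two connected complex Lie subgroups of $T_\trop(\Gamma)$ with the same Lie algebra coincide. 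To show $V_\R = \t_{\trop,\R}$, fix $\cT_0$ in the relative interior of $\cW(\Gamma)$ (nonempty by (a)). For any $\xi \in \t_{\trop,\R}$ with $\xi_{v_+}-\xi_{v_-} = c_e\,\cT(e)$, the perturbed assignment $\cT_0 + \epsilon\xi$ stays in $\cW(\Gamma)$ for $|\epsilon|$ small: the polytope conditions $\cT(v) \in P(v)^\dual$ are open at a relative interior point, and the slope coefficients $a_e(\cT_0) + \epsilon c_e$ remain nonnegative because $a_e(\cT_0) > 0$ on every edge where $a_e$ does not vanish identically on $\cW(\Gamma)$. Hence $\xi = \epsilon^{-1}((\cT_0+\epsilon\xi) - \cT_0) \in V_\R$.

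The main obstacle is the last step, specifically the edges for which the slope coefficient $a_e$ vanishes identically on $\cW(\Gamma)$. On such an edge the polytope constraints at $v_\pm$ force $\cT(v_+) = \cT(v_-)$ throughout $\cW(\Gamma)$, and one must check that the linear conditions cutting out $\cW(\Gamma)$ then also force any $\xi \in \t_{\trop,\R}$ to satisfy $\xi_{v_+} = \xi_{v_-}$ at those edges; otherwise $V_\R$ would be strictly smaller than $\t_{\trop,\R}$. Making this matching precise---essentially a dimension count aligning the polyhedral structure of $\cW(\Gamma)$ with the kernel of the differential of the slope relations---is the technical core of the argument.
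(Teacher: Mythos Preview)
Your treatment of parts (a) and (b) is essentially identical to the paper's: convexity by direct check, and an explicit one-complex-parameter subgroup built from the edge-length differences $l_{1,e}-l_{0,e}$.

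For part (c) you have done considerably more than the paper does. The paper's entire proof of (c) is the sentence ``In a similar way, we see that the Lie algebra $\t_\trop(\Gamma)$ is equal to the vector space generated by differences of tropical weights, from where \eqref{part:idcpt} follows.'' You have tried to actually prove this Lie-algebra equality via a relative-interior perturbation, and the obstacle you flag is a genuine one: on an edge $e$ with $\cT(e)\neq 0$ for which the coefficient $a_e$ vanishes identically on $\cW(\Gamma)$, your perturbation $\cT_0+\epsilon\xi$ need not remain in $\cW(\Gamma)$ unless $c_e=0$, yet the defining relations of $\t_{\trop,\R}$ do not obviously force $c_e=0$. Concretely, take a two-vertex graph with $P(v_+)$ top-dimensional (so $\t_{P(v_+)}=0$), $P(v_-)$ a facet of $P(v_+)$, and $\cT(e)$ pointing so that the slope condition pins $\cT(v_-)$ to the single vertex $P(v_+)^\dual$ of the interval $P(v_-)^\dual$; then $\cW(\Gamma)$ is a point while $\t_{\trop,\R}\cong\R$. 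The paper does not address this either, so your argument is at least as complete as theirs; closing the gap would require either a polyhedral argument linking the face structure of the $P(v)^\dual$ to the degenerate slope constraints, or an explicit hypothesis ruling out such edges.

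One minor point: your description of $\t_\trop$ records only the vertex data $(\xi_v)_v$, whereas $T_\trop(\Gamma)$ also carries edge coordinates $z_e$. When $\cT(e)\neq 0$ the infinitesimal $\zeta_e$ is determined by the $\xi_v$, so your formula is correct there; but zero-slope interior edges contribute an unconstrained $\C$-factor to the Lie algebra that your description omits. This does not affect the argument when all interior edges have nonzero slope, which is the main case of interest.
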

\begin{proof}
  For the first statement, if $\cT_0, \cT_1 \in \cW(\Gamma)$ are
  vertex position maps for a tropical graph $\Gamma$, then for any
  $t \in [0,1]$
  \[ (1-t)\cT_0 + t\cT_1 \in \cW(\Gamma) \]
  is also a vertex position map on $\Gamma$. Assume that $\cT_0$,
  $\cT_1$ are distinct, and that $l_{e,i} \in \R$ describe the
  difference between the tropical vertex positions in the sense that
\[\cT_i(v_+)-\cT_i(v_-)=l_{i,e}\cT(e), \quad i=0,1, e \in \Edge_\black(\Gamma). \]
Then the set of elements
\begin{equation}
  \label{eq:cTgen}
  g: \C \to T_\trop(\Gamma), \quad z \mapsto
    \begin{cases}
      e^{(\cT_1(v) -\cT_0(v))z}, \quad v \in \Ver(\Gamma)\\
      e^{(l_{1,e} - l_{0,e})z}, \quad e \in \Edge_{\black}(\Gamma)
    \end{cases}
\end{equation}
    is a non-trivial group of symmetries for broken maps modelled on
    $\Gamma$. In a similar way, one sees
    that the Lie algebra $\t_\trop(\Gamma)$ is equal to the vector
    space generated by differences of tropical vertex position maps
    $\cT_1-\cT_0$, $\cT_0, \cT_1 \in \W(\Gam)$, from which
    \eqref{part:idcpt} follows.
\end{proof}

The following is a corollary of Lemma \ref{lem:wtgen}.

\begin{corollary}
  {\rm(Rigid tropical graphs)} \index{Rigid! tropical graph} A
  tropical graph is rigid if and only if its tropical symmetry group
  $T_{\on{trop}}(\Gamma) $ is finite.
\end{corollary}

\begin{lemma}
  For any tropical graph $\Gamma$, the quotient
  $T_{\trop}(\Gamma)/T_{\trop,\W}(\Gamma)$ is finite and the
  group $T_\trop(\Gamma)$ has a finite number of connected components.
\end{lemma}

\begin{proof}
  The quotient $T_{\trop}(\Gamma)/T_{\trop,\W}(\Gamma)$ is discrete,
  because by Lemma \ref{lem:wtgen}, $T_{\trop,\W}(\Gamma)$ is the
  identity component of $T_{\trop}(\Gamma)$.  Furthermore, every
  connected component of $T_{\trop}(\Gamma)$ deformation retracts to a
  component of the maximal compact subtorus
  \[T_{\trop}(\Gamma)^{im}:=\{(\ul{g},\ul{z}) \in T_\trop(\Gamma) : g_v \in T_{P(v)}, |z_e|=1\}.\]
  Indeed, an element $(g,z) \in T_{\trop}(\Gamma)$ can be written as
  \[g_v=k_ve^{is_v}, \quad k_v \in T_{P(v)}, s_v \in \t_{P(v)}, \quad z_e=\theta_e e^{\alpha_e}, \quad \theta_e \in S^1, \alpha_e \in \R\]
  The tuple $(k,\theta):=((k_v)_v,(\theta_e)_e)$ is a tropical
  symmetry element in $T_{\trop}(\Gamma)^{im}$ which is connected to
  $(g,z)$ via the path
  \[ [0,1] \ni \tau \mapsto ((k_v e^{i\tau s_v})_v, (\theta_e e^{\tau \alpha_e})_e) \in T_{\trop}(\Gamma).\]
  The quotient $T_{\trop}(\Gamma)/T_{\trop,\W}(\Gamma)$ is finite
  because it is in bijection with the connected components of the
  compact subgroup $T_{\trop}(\Gamma)^{im}$.
\end{proof}

\begin{figure}[ht]
  \centering \scalebox{.8}{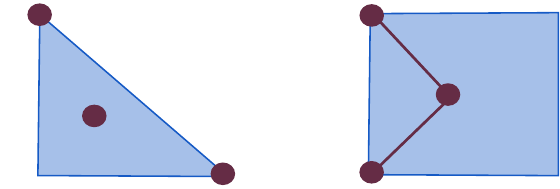}
  \caption{Rigid tropical graphs in Example \ref{eg:rigidfr}.}
  \label{fig:multfr}
\end{figure}

\begin{example} 
  \label{eg:rigidfr}
  The tropical graphs in Figure \ref{fig:multfr} are rigid, but have
  non-trivial tropical symmetry groups.  Suppose the tropical graph
  $\Gamma_1$ has edge {direction}s
  \[\cT(e_1)=(-1,-1), \quad \cT(e_2)=(-1,2), \quad
    \cT(e_3)=(2,-1).\]
  A tropical symmetry $(\ul{g},\ul{z})$ on $\Gamma_1$ satisfies the equations
  \[g_1=g_2=g_3=\Id, \quad g_1g_0^{-1}=z_{e_1}^{(-1,-1)}, \quad 
    g_2g_0^{-1}=z_{e_2}^{(-1,2)}, \quad 
    g_3g_0^{-1}=z_{e_3}^{(2,-1)}, \]
  and is therefore given by 
  \[g_0=(\om,\om), \quad z_{e_1}=z_{e_2}=z_{e_3}=\om,\]
  where $\om \in e^{2\pi i k/3}$ is a cube root of unity. Thus
\begin{equation} \label{three}
|T_\trop(\Gamma_1)|=3 .\end{equation}  
This tropical graph is similar to the example studied by
Abramovich-Chen-Gross-Siebert \cite[p51]{abram} and Tehrani
\cite[Section 6]{teh:deg}.  The tropical graph $\Gamma_2$ has edge
{direction}s
  \[\cT(e_1)=(-1,1), \quad \cT(e_2)=(-1,-1).\]
  By a similar calculation $|T_\trop(\Gamma_2)|=2$.
\end{example}

Recall from Section \ref{sec:mikh} that the multiplicity of a
Mikhalkin graph $\Gamma$ is the product of multiplicities of the
vertices of $\Gamma$, that is,
\[\mult(\Gamma):=\prod_{v \in \Ver(\Gamma)} \mult(v).\]
Assuming that all vertices are trivalent, the multiplicity of a vertex
$v$ of $\Gamma$ is the area of the parallelogram spanned by two of the
three edges incident on $v$. The next result, which was used in
Section \ref{sec:mikh}, shows that he multiplicity of the Mikhalkin
graph is equal to the size of the tropical symmetry group of the
augmented graph $\Gamma_\aug$.  We recall from the proof of Proposition
\ref{prop:tropbrok}, that $\Gamma_\aug$ is a tropical graph for the
polyhedral decomposition of $\P^2$ from Figure \ref{fig:stretchp2}.  We
also recall that
 \[\Ver(\Gamma_\aug)= \Ver(\Gamma) \cup \Ver_\to(\Gamma_\aug) \cup \Ver_1(\Gamma_\aug),\]
 where a vertex $v_e \in \Ver_1(\Gamma_\aug)$ corresponds to a leaf $e$ of the Mikhalkin graph,
 and each vertex in $\Ver_\to(\Gamma_\aug)$ has an interior marking, that is constrained to pass through a fixed point in the broken manifold.
 
\begin{lemma}{\rm(Mikhalkin multiplicity and tropical symmetry)}
  \label{lem:mikhsym}
  Let $\Gamma$ be a Mikhalkin graph all whose vertices are trivalent,
  and whose leaf edges have multiplicity $1$.  Let $\Gamma_\aug$ be
  the augmentation of $\Gamma$. Then,
  \[|T_\trop(\Gamma_\aug)|=\mult(\Gamma).\]
  Any two broken maps $u_1$, $u_2$ modelled on $\Gamma_\aug$ are
  related by a tropical symmetry $g \in T_\trop(\Gamma_\aug)$.
\end{lemma}
\begin{proof}
  A tropical symmetry of a broken map with tropical graph
  $\Gamma_\aug$ is a tuple
\[(\ul g, \ul z)=((g_v)_{v \in \Ver(\Gamma_\aug)}, (z_e)_{e \in \Edge(\Gamma_\aug)}), \quad g_v \in T_{P(v),\C}, \quad z_e \in \C^\times\]
satisfying
\begin{equation} \label{eq:mconn1} g_{v_+} g_{v_-}^{-1} =
    z_e^{\cT(e)} \quad \forall e=(v_+,v_-) \in \Edge(\Gamma_\aug),
  \end{equation}
  see Definition \ref{def:tsym}.  We count the number of solutions of
  \eqref{eq:mconn1}.  For vertices $v \in \Ver_\to(\Gamma_\aug)$
  containing markings, we must have $g_v=\Id$ in order to ensure that
  the evaluation of the marking satisfies the point constraint.  We
  solve for $(\ul g, \ul z)$, one vertex at a time, with vertex
  ordering respecting the marking orientation defined
  as part of the proof of Proposition \ref{prop:tropbrok}. 
  Consider a vertex $v \in \Ver(\Gam)$
  with incoming edges $e_1=(v_1,v)$ and $e_2=(v_2,v)$, and outgoing
  edge $e_3=(v,v_3)$.  Assuming the values of $g_{v_1}$, $g_{v_2}$ to
  be given, we solve the equations
\[g_v g_{v_1}^{-1}=z_{e_1}^{\cT(e_1)}, \quad g_v g_{v_2}^{-1}=z_{e_2}^{\cT(e_2)} \]
for $g_v \in T_{P_0,\C} \simeq (\C^\times)^2$, $z_{e_1}$,
$z_{e_2} \in \C^\times$ and show that the number of solutions is
$\mult(v)$. Use an integral basis of $\t_{P_0,\Z} \simeq \Z^2$ so that
\[ \cT(e_1)=(p_1,0), \quad \cT(e_2)=(q,p_2) \] 
for some integers
$p_1,p_2,q$. 
In this notation, the multiplicity $\mult(v)$ is $|p_1p_2|$.  We may
write $g_v=(g_v^0,g_v^1)$.  We solve in the order $g_v^1$, $z_{e_2}$,
$g_v^0$, $z_{e_1}$.  For a fixed value of $g_{v_1}$, $g_{v_2}$, there
are $p_1p_2$ solutions: There are $p_1$ solutions for $z_{e_2}$, and
for each of these solutions, there are $p_2$ solutions of $z_{e_1}$.
Thus we have shown that for a vertex
$v \in \Ver(\Gamma) \subset \Ver(\Gamma_\aug)$, the number of
solutions for $(g_v, z_{e_1},z_{e_2})$ is equal to the multiplicity of
$v$.  Finally, given a solution of \eqref{eq:mconn1} for vertices in
the set $\Ver(\Gamma) \cup \Ver_\to(\Gamma)$ and connecting edges, it
remains to solve \eqref{eq:mconn1} for $g_v$, $z_{e(v)}$ where
$v \in \Ver_1(\Gamma_\aug)$ is a univalent vertex corresponding to a
leaf of the Mikhalkin graph $\Gamma$ and $e(v) \in \Edge(\Gamma_\aug)$
is the edge incident on $v$.  There is a unique solution of
\eqref{eq:mconn1} for $g_v \in T_{P(v),\C}$ and
$z_{e(v)}\in \C^\times$ because the {direction} $\cT(e(v))$ of the edge
$e(v)$ is primitive, and $T_{P(v),\C}$ is orthogonal to
$T_{\cT(e(v)),\C}$ which contains the element $z^{\cT(e(v))}_{e(v)}$.
This shows that 
\[|T_\trop(\Gam_\aug)|=\prod_{v \in \Ver(\Gam)}\mult(v).\]

To prove the second statement of the Lemma, consider two unframed
broken maps $u$, $u'$ modelled on $\Gamma_\aug$.  For vertices
$v \in \Ver_\to(\Gamma_\aug)$ containing markings, $u_v=u_v'$. Indeed,
$u_v(z)=cz^\mu$ for some $\mu \in \Z^2$, and the point constraint
determines the constant $c \in (\C^\times)^2$ uniquely.  For all other
vertices $v$, there exist elements $g_v \in (\C^\times)^2$ satisfying
$g_v u_v=u_v'$ as follows: For any trivalent vertex $v$ we assume the
domain is parametrized so that $z_1$, $z_2$, $z_3 \in C_v$ are the
nodal points, and the edge incident at $z_i$ has {direction}
$\mu_i \in \Z^2$.  Then,
\[u_v=c\prod_{1 \leq i \leq 3}(z-z_i)^{\mu_i}, \quad u_v'=c'\prod_{1 \leq i \leq 3}(z-z_i)^{\mu_i},\]
for some $c,c' \in (\C^\times)^2$, and therefore $g_v u_v=u_v'$ for
some $g_v \in (\C^\times)^2$.  For a univalent vertex
$v \in \Ver_1(\Gamma_\aug)$ corresponding to an end of $\Gamma$, we
have $u_v(z)=c (z-z_0)^\mu$, $u_v'(z)=c'(z-z_0)^\mu$ for some
$c, c' \in (\C^\times)^2$, where $z_0$ is the nodal point and
$\mu \in \Z^2$ is the {direction} of the edge $e(v)$.
The matching condition at nodes for unframed broken maps implies that
for any edge $e=(v_+, v_-)$, the condition \eqref{eq:unfr-symm} for a
tropical symmetry element for unframed maps, namely
$g_{v_+}g_{v_-}^{-1} \in T_{\cT(e),\C}$, is satisfied.  Finally
consider (framed) broken maps $u$, $u'$ modelled on $\Gamma_\aug$. The
preceding discussion for unframed maps implies that, after applying a
tropical symmetry element, the unframed maps underlying $u$, $u'$ are
the same, and therefore, by Lemma \ref{lem:same-unfr}, $u$, $u'$ are
related by a framing symmetry, which is an element of the tropical
symmetry group $T_\trop(\Gamma)$.  This finishes the proof of the
Lemma.
\end{proof}

\chapter{Stabilizing divisors}\label{chap:stabdiv}

We use domain-dependent perturbations of the almost complex structure
to regularize the moduli space of pseudoholomorphic disks or spheres.
Moduli spaces of maps are defined modulo automorphisms of the domain
curve, and in order to define domain-dependent
perturbations on every component of the map, we would like the domain
components to be stable.  As in Cieliebak-Mohnke \cite{cm:trans}, we
consider domains with interior markings corresponding to points at
which the pseudoholomorphic map intersects a Donaldson divisor.
As a consequence, the almost complex structure can be perturbed to attain
regularity on all domain components. Of course, one could envision
using any of the current perturbation schemes to achieve virtual
fundamental chains; the stabilizing divisor approach is merely a
convenience.

\section{Stabilizing divisors in symplectic manifolds}
\label{sec:stab-manifold}
We recall results about stabilizing divisors for unbroken manifolds.
We consider divisors of Donaldson type, meaning Poincar\'e dual to
$k[\om]$ where $k \gg 0$ is a large integer.  To define and construct
Donaldson divisors we assume that all the symplectic forms are
rational.

\begin{definition} A symplectic orbifold $(X,\omega)$ is \em{
    rational} if $[\om] \in H^2(X,\Q)$, and \em{integral} if
  $[\om] \in H^2(X,\Z)$.  Given an integral form $\om$, a \em{
    prequantum line bundle} is a line-bundle-with-connection
  $\mL_X \to X$ whose curvature is
\[ \curv(\mL_X \to X) = (2\pi/i) \omega \in \Omega^2(X,i\R)  . \]   
A Lagrangian $L \subset X$ disjoint from the singular locus of $X$ is
\em{rational} if there exists a prequantum bundle $\mL_X$, an integer
$k$ and a flat section $s: L \to \mL_X^{\otimes k}$ of the restriction
$\mL_X^{\otimes k} | L$.
\end{definition}

A Donaldson divisor $D$ is a \em{stabilizing divisor} if there is an
almost complex structure $J$ for which $D$ is $J$-holomorphic, and $D$
intersects all $J$-holomorphic spheres and disks at sufficiently many
isolated points. In other words, the domains of $J$-curves can be
stabilized by treating intersection points with the divisor $D$ as
markings.
\begin{definition}\label{def:stabdiv}
{\rm(Stabilizing pair)}
  \begin{enumerate}
    \index{Stabilizing divisor}
    \index{Stabilizing pair}
  \item A \em{divisor} in $X$ is a symplectic suborbifold
    $D \subset X$ of real codimension $\codim(D) = 2$ such that
    locally if $X$ is the quotient of a symplectic manifold $X$ by a
    finite group $\Gamma$, then the divisor $D$ is the quotient of a
    $\Gamma$-invariant divisor in $X$.  A \em{Donaldson divisor} is a
    divisor $D$ that is Poincar\'e dual to $k[\om]$ for some large
    integer $k>0$, and $k$ is called the \em{degree} of the divisor
    $D$.
  \item An almost complex structure $J$ on $X$ is \em{adapted} to $D$
    if $D$ is a $J$-almost complex submanifold, that is,
    $J(TD)=TD$. The space of $D$-adapted almost complex structures is
    denoted by
    %
    \[ \J(X,D) = \{ J \in \J(X) \ : \ J(TD) = TD \} .\]
  \item Let $D \subset X -L$ be a divisor disjoint from the Lagrangian
    $L$.  For $E>0$, a $D$-adapted almost complex structure $J$ is
    \em{$E$-stabilizing} for the divisor $D$ if and only if
    \begin{enumerate}
    \item {\rm(No non-constant spheres)} $D$ does not contain any
      $J$-holomorphic \em{orbifold sphere} $u:C \to X$ with symplectic
      area $<E$ (by an orbifold sphere we mean that $C$ is an orbifold
      Riemann surface of genus zero), and
    \item {\rm(Sufficient intersections)} any $J$-holomorphic orbifold
      sphere in $X$ with symplectic area $<E$ has at least $3$
      distinct points of intersection with $D$, and any
      $J$-holomorphic disk with symplectic area less than $E$ has at
      least one intersection with $D$.
    \end{enumerate}
    A pair $(J,D)$ consisting of a divisor $D$ and an $\om$-tamed
    $J \in \J(X,D)$ is \em{stabilizing} if $J$ is $E$-stabilizing for
    all $E>0$.  We call the divisor $D$ in a stabilizing pair a \em{stabilizing divisor}.
  \end{enumerate}
\end{definition}

\begin{proposition} {\rm(Existence of a stabilizing pair,
    \cite[Section 4]{cw:traj}, \cite[Section 8]{cm:trans})}
  \label{prop:stab}
  Suppose $\om \in \Om^2(X)$ is a rational symplectic form on an
  orbifold $X$ and $L \subset X$ is a compact rational Lagrangian
  submanifold disjoint from the singular locus of $X$.  Then, there
  exists
  \begin{enumerate}
  \item a divisor $D \subset X - L$ such that $L$ is exact in $X-D$; and
  \item a tamed almost complex structure $J_0 \in \J(X,D)$ such
    that $(J_0,D)$ is a stabilizing pair and for every $E>0$ there is
    an open neighbourhood
    %
    \[\J(X,D;J_0,E) \subset \{J \in \J(X,D)\  : \ J|TD=J_0|TD\}\]
    of $J_0$ consisting of $E$-stabilizing almost complex structures
    adapted to $D$.
  \end{enumerate}
\end{proposition}

The proof of the Proposition is outlined below. The construction of a
stabilizing pair uses the following notion of a degree bound.

\begin{definition} \label{degbound} A constant $k_*>0$ is a \em{
    degree bound} for a tamed almost complex structure $J$ on
  $(X,\om)$ if for any $J$-holomorphic orbifold sphere $u:C \to X$,
  \[c_1(u):=\int_{C}u^*c_1(TX) \leq k_* \om(u).\]
\end{definition}

\begin{lemma}{\rm(Existence of uniform degree bounds)} (\cite[Lemma 8.11]{cm:trans})
  \label{lem:unifdegbd}
  Let $J_0$ be a compatible almost complex structure on the compact
  symplectic orbifold $(X,\om)$.  For any $0<\eps<1$, there exists a
  constant $k_*$ which is a degree bound for all tamed almost complex
  structures $J$ satisfying $\Mod{J-J_0}_{C^0(X)}<\eps$.
\end{lemma}
The proof in \cite{cm:trans}, carried out for the manifold case,
extends verbatim to the case of orbifolds.  We reproduce the proof
since an adaptation of the technique is used in the case of broken
manifolds.
\begin{proof}[Proof of Lemma \ref{lem:unifdegbd}]
  Let $\gamma \in \Om^2(X)$ be a closed two-form in the class of the
  first Chern class $c_1(TX) \in H^2(X)$. Using the norm
  $|v|^2:=\om(v,J_0v)$ on $TX$, and a $C^0$ norm on forms, for any
  $J \in B_\eps(J_0)$ and $v \in TX$, we have the bounds
  \begin{align*}
    \gamma(v,Jv) &\leq \Mod{\gamma}( 1 + \Mod{J-J_0})|v|^2 \leq \Mod{\gamma}(1+\eps)|v|^2  ,\\
    \om(v,Jv) &\geq (1 - \Mod{J-J_0})|v|^2 \geq  (1-\eps)|v|^2.
  \end{align*}
  Define $k_*:=\frac{1+\eps}{1-\eps}$.
  We have
  \[\gamma(v,Jv) \leq k_* \om(v,Jv). \]
  For a $J$-holomorphic orbifold sphere $u:C \to X$ this implies
  $c_1(u) \leq k_* \om(u)$.
\end{proof}

\begin{remark}\label{rem:whydeg} {\rm(Using the degree bound for
    stabilizing)}
  The degree bound on almost complex structures is related to the
  stabilizing property in Definition \ref{def:stabdiv} as follows.  In
  the manifold case, by \cite[Lemma 8.13]{cm:trans}, if a divisor $D$
  is Poincar\'e dual to $k[\om]$, and a $D$-adapted tamed almost
  complex structure $J$ has a degree bound $k_*$ satisfying
  \begin{equation}
    \label{eq:kkstar}
    k > 2\max\{k_*, k_* + \hh(\dim(X)) -2\},  
  \end{equation}
  then the expected dimension of moduli space of $J$-holomorphic
  spheres that are not stabilized by $D$ is negative.  The same holds
  in the orbifold case also.  Indeed, for an orbifold sphere
  $u:C \to X$, in the Hirzebruch-Riemann-Roch formula for the expected
  dimension of the moduli space of maps, the Chern number $c_1(u^*TX)$
  is replaced by the first Chern number of the ``de-singularization''
  of $u^*TX$, and the latter is lesser than the former, see
  \cite[p24]{cr:gw}, \cite[Proposition 4.2.1]{cr:coh}.  Therefore if
  $J$ were chosen generically, $(J,D)$ would be a stabilizing pair.
\end{remark}

\begin{proof}
  [Outline of proof of Proposition \ref{prop:stab}]
  In the case when $X$ is a manifold, the adaptation
  of Donaldson's construction in Auroux-Gayet-Mohsen
  \cite{auroux:complement} produces an approximately holomorphic
  divisor in $X-L$.  By \cite[Theorem 3.6]{cw:traj}, if the
  Lagrangian $L$ is rational, then for any divisor $D$ in $X-L$
  produced by Auroux-Gayet-Mohsen \cite{auroux:complement}, $L$ is
  exact in $X-D$. (We refer the reader to \cite[Example 3.2]{cw:traj}
  for the definition of ``exactness''.)  Exactness allows the following
  relation of area to intersection numbers: If $\lag$ is exact in
  $X-D$, then, the intersection number of a disk
  $u:(C,\partial C) \to (X,L)$ with the divisor $D$ is proportional to
  the area of the disk: If $[D]^\dual=k[\om]$ for some $k \in \Z$,
  then,
  \begin{equation}
    \label{eq:mult}
    \#u^{-1}(D)=k\int_Cu^*\om.   
  \end{equation}
  See \cite[Lemma 3.4]{cw:traj}.  As a consequence, for any $J$ that is
  adapted to $D$, any non-constant $J$-holomorphic disk is
  automatically stabilized since it has at least one marked point.

  Donaldson's divisor construction is extended to the orbifold case by
  Gironella-Mu\~{n}oz-Zhou \cite{munoz:orbidiv}.  Since the Lagrangian
  is smooth the modifications of Auroux-Gayet-Mohsen
  \cite{auroux:complement} do not interfere with the arguments in
  Gironella-Mu\~{n}oz-Zhou \cite{munoz:orbidiv}.  The relation between
  symplectic area and the number of divisor intersections extends to
  the orbifold case if the Lagrangian does not contain orbifold
  singularities, the proof from \cite{cw:traj} carries over verbatim.
  
  To construct a stabilizing pair one starts with a preliminary almost
  complex structure $J^\pre$ that is $\om$-compatible.  For any
  $\eps>0$ there exists a constant $k_*:=k_*(\eps,J^\pre)$ that is a
  degree bound for any $\om$-tamed almost complex structure $J$ on $X$
  satisfying $\Mod{J-J^\pre}_{C^0} \leq \eps$. This fact gives enough
  wiggle room to find a stabilizing pair as follows: For any $\eps'>0$
  there is a constant $k_d(\eps')$ such that if the degree of the
  Donaldson divisor $D$ is $\geq k_d(\eps')$, then there is a tamed
  almost complex structure $J_1 \in \B_{\eps'}(J^\pre)$ that is
  $D$-adapted.  We choose the degree $k$ of the Donaldson divisor to
  be high enough that it satisfies the bound \eqref{eq:kkstar}, that
  is,
\[ k > 2\max\{k_*, k_* + \hh(\dim(X)) -2\}  \]
and $k \geq k_d(\eps/2)$. Then, there is an open subset of $\J(X,D)$
contained in $B_\eps(J^\pre)$, and for any
$J \in \J(X,D) \cap B_\eps(J^\pre)$, $k_*$ is a degree bound in the
sense of Definition \ref{degbound}.  Therefore by Remark
\ref{rem:whydeg}, for a generic $J_0$ in
$\J(X,D) \cap B_\eps(J^\pre)$, the pair $(J_0,D)$ is stabilizing for
all orbifold spheres.  See \cite[Section 8]{cm:trans} for details.
\end{proof}

\section{Cylindrical almost complex structures, without gluability}
Donaldson's construction of approximately holomorphic divisors
requires a rational symplectic manifold with a compatible almost
complex structure.  Broken manifolds with cylindrical almost complex
structures, as defined in Chapter \ref{chap:bsymp}, do not meet this
requirement because they do not have natural embeddings into
symplectic broken manifolds.  We define a modified version of
cylindricity, called \em{$\XX$-cylindricity} for almost complex
structures, so that symplectic broken manifolds possess
$\XX$-cylindrical almost complex structures.  We point out that an
$\XX$-cylindrical almost complex structure is not \em{gluable},
\index{Gluable} that is, it can not be glued on the ends to yield
neck-stretched almost complex structures on $X^\nu$. Indeed, the
$\t$-inner product underlying an $\XX$-cylindrical almost complex
structure is not the same for all the components $\ol X_P$ of the
broken manifold $\XX$, and has the following form.

\index{Inner product! $\XX$-inner product}
\begin{definition}{\rm($\XX$-inner product)}
  \label{def:xx-metric}
  Let $g_Q^P: \t_Q \times \t_Q \to \Q$ be a collection of inner products for all $Q \subset P$, where $P \in \PP^0$ ranges over top-dimensional polytopes, that satisfy
  \begin{enumerate}
  \item {\rm(Restriction)} \label{item:restrict}
    $R \subset Q \implies g_R^P|\t_Q=g_Q^P$; and 
  \item {\rm(Orthogonality)} for $Q \subset P$, $\dim(Q)=0$, which is the intersection of facets $Q=\cap_{i=1}^nQ_i$ of $P$, the inner product $g_Q^P$ satisfies the condition that the outward normals to $Q_i$ are orthogonal. 
  \end{enumerate}  
\end{definition}

We define symplectic cylindrical structure maps on symplectic cut spaces $\{\ol X_P^\om\}_{P \in \PP}$ using $\XX$-inner products for the consistency condition.

\index{Cylindrical! X@ $\XX$-symplectic cylindrical structure}

\begin{definition}{\rm($\XX$-symplectic cylindrical structure maps)}
  \label{def:xxsympstr}
  For any top-dimensional polytope $P$, let $(\phi^P_Q)_{Q \subset P}$
  be a collection of symplectic cylindrical structure maps
  \begin{equation}
    \label{eq:xxsympcyl}
    U_{\ol X^\om_P}(\ol X^\om_Q) \xrightarrow{\phi^P_Q} ((\Cone_QP \times \ol Z_Q^\om)/\sim, 
    \om_{\tQ}), \quad \om_{\tQ}:=\om_{X_Q} + d\bran{\alpha_Q^P,
    \pi_{\t_Q^\dual}}
  \end{equation}
  on neighborhoods $U_{\ol X^\om_Q}(\ol X^\om_P) \subset \ol X^\om_P$ of $\ol X^\om_Q$, where
  \begin{itemize}
  \item $\ol Z_Q^\om \to \ol X^\om_Q$ is the $T_Q$-bundle from \eqref{eq:zpom}, 
  \item $\alpha_Q^P \in \Om^1(\ol Z_Q^\om,\t_Q)$ is a connection one-form, where the collection
  $(\alpha_Q^P)_{Q \subset P}$ satisfies the consistency condition
  \eqref{eq:consis} with respect to the $\XX$-inner product; and
\item $\pi_{\t_Q^\dual} : \t^\dual \to \t_Q^\dual$ is the
  projection on $\Cone_QP \subset \t^\dual$, and so,
  $\lan \alpha_Q, \pi_{\t_Q^\dual} \ran$ is a one-form on
  $(\ol Z_Q^\om \times \Cone_QP)$; 
\item for any facet $R \subset P$ with $Q \subseteq R$, 
     the equivalence $\sim$ mods out the boundary $\Cone_QR \subset \Cone_QP$
     by the action of $T_R \simeq S^1$; and
     \footnote{By Lerman's construction $(\Cone_QP \times \ol Z^\om_Q,
     \om_{\tQ})/\sim$ is a manifold resp. orbifold if $\Cone_QP$ is Delzant resp. simple.}
 \item $\ol X^\om_Q$ is identically mapped by $\phi_Q^P$ to
     $Q \times \ol Z^\om_Q/T_Q$.
  \end{itemize}
  The map $\phi_Q^P$ induces a projection map
  \begin{equation}
    \label{eq:pip-om}
    \pi_Q^{P} : U_{\ol X^\om_Q}(\ol X^\om_P) \to \ol X^\om_Q.  
  \end{equation}
\end{definition}

Finally, we define $\om_\XX$-compatible almost complex structures. In
addition to being $\om$-compatible on each piece, in neighborhoods
$U_{\ol X^\om_Q}(\ol X^\om_P)$ of boundary submanifolds these almost
complex structures are integrable on the fibers of the map $\pi_Q^P$.

\begin{definition}{\rm($\XX$-cylindrical almost complex
    structures)} \label{def:omxxcyl} Let $\XX$ be a broken symplectic
  manifold equipped with $\XX$-symplectic cylindrical structure maps
  as in \eqref{eq:xxsympcyl} and resulting projection maps
    \[\pi_Q^P: U_{\ol X^\om_Q}(\ol X^\om_P) \to \ol X^\om_Q \]
    on neighborhoods
    $U_{\ol X^\om_Q}(\ol X^\om_P) \subset \ol X^\om_P$ of
    $\ol X^\om_Q$ for $Q \subset P$.
  \begin{enumerate}
  \item {\rm($\XX$-cylindrical)} 
  \index{Almost complex structure! X@ $\XX$-cylindrical}
  An \em{$\XX$-cylindrical}
  almost complex structure $\JJ=(\JJ_P)_{P \in \PP}$ on $\XX$ consists 
  of a cylindrical almost complex structure $\JJ_P$ on each compactified component $\ol \XX_P$ of $\XX$, $P \in \PP$, 
  whose projection to $\ol X_P$ (as in \eqref{eq:dpij}) is denoted by $J_P$, and the collection $(J_P)_{P \in \PP}$ is such that 
  \begin{enumerate}
  \item {\rm(Restriction)}
    for any pair $Q \subset P$ of polytopes,
    $\ol X^\om_Q$ is a $J_P$-holomorphic submanifold of $\ol X^\om_P$,
    and $J_P|\ol X^\om_Q=J_Q$, and 
  \item {\rm(Cylindrical structure)} for any pair $Q \subset P$ of
    polytopes, the fibers of $\pi_Q^P$ are $\JJ$-holomorphic and $\JJ$
    is integrable on each of the fibers.
  \end{enumerate}
\item {\rm($\om_\XX$-compatibility and tamedness)} \label{part:omxxcyl2}
  \index{Almost complex structure! W@ $\om_\XX$-compatibile}
  \index{Almost complex structure! W@ $\om_\XX$-tamed}
  An $\XX$-cylindrical almost complex
  structure $\JJ=(\JJ_P)_{P \in \PP}$ is \em{$\om_{\XX}$-compatible} resp.
  \em{$\om_{\XX}$-tamed} if
  for any $P \in \PP$, $J_P$ is $\om_{X_P}$-compatible resp. $\om_{X_P}$-tamed. The space of
  $\om_{\XX}$-compatible resp. $\om_{\XX}$-tamed almost complex structures is denoted by
  \[\J^\cyl_{\om_\XX}(\XX) \quad \text{resp.} \quad  \J^\cyl_{\om_\XX,\tau}(\XX).\]
  \end{enumerate}
\end{definition}

\begin{remark}
  A collection of $\XX$-cylindrical almost complex structures can
  not be glued along the necks because the underlying $\t$-inner
  products and connection one-forms vary across cut spaces.
\end{remark}

\begin{lemma}\label{lem:omxx-cptble}
  An $\om_\XX$-compatible almost complex structure exists.
\end{lemma}

\begin{proof}
  Starting from a broken symplectic manifold $\XX$, we construct an $\XX$-inner product, $\XX$-symplectic cylindrical structure maps, and an $\om_\XX$-compatible almost complex structure $\JJ$.  We first construct these structures in the neighborhoods of vertices $Q \in \PP_{(0)}$. Consider a top-dimensional polytope $P \in \PP^{(0)}$ and a vertex $Q \in P$ which is the intersection of facets $Q_1, \dots Q_q \subset P$.
  The symplectic cylindrical structure map at the corner $Q$ is a 
   $T_Q$-equivariant symplectic embedding
  \[U_{X^\om_Q}\ol X^\om_P \xrightarrow{\phi_Q^P} (\C^q/\Gamma, \om_{\on{std}}),  \]
  where $\Gamma$ is a finite group acting on $\C^q$ ($\Gamma$ is
  trivial if $Q$ is a smooth corner of $P$ as in \eqref{eq:zspan}),
  $T_Q$ is the torus acting on $U_{X^\om_Q} \ol X^\om_P$ with an
  identification $T_Q=(S^1)^q/\Gamma$, and $(S^1)^q$ has a standard
  action on $\C^q$ that descends to a $T_Q$-action on $\C^q/\Gamma$.
  On $U_{X^\om_Q} \ol X^\om_P$, we define the almost complex structure $\JJ$ by pulling back via $\phi_Q^P$; the inner product $g_Q^P$ is automatically defined.  We may similarly define
  $\JJ$ for all vertex neighborhoods $U_{X^\om_Q}\ol X^\om_P$, $P \in \PP^{(0)}$, $Q \in \PP_{(0)}$ so that
   the $\XX$-inner product satisfies the \hyperref[item:restrict]{(Restriction)} 
   condition.

   With the $\XX$-inner product fixed, we proceed to extend the
   $\om_\XX$-compatible almost complex structure $\JJ$ to all of the
   broken manifold by induction on the face structure of the polytopes
   in the polyhedral decomposition. Consider a polytope
   $Q \in \PP \bs \PP^{(0)}$ that is not top-dimensional, and assume
   that an $\om_\XX$-compatible collection of almost complex
   structures
  \begin{equation}
    \label{eq:jjrp}
    \JJ| \left(\cup_{R \subset Q, P \in \PP^{(0)}: Q \subset P}U_{\ol X^\om_R}(\ol X^\om_P)  \right)
  \end{equation}
  is already defined.  First, we extend
  $\JJ|\cup_{R \subset Q}U_{\ol X^\om_R}(\ol X^\om_Q)$ to an
  $\om_{X_Q}$-compatible almost complex structure over $\ol X^\om_Q$.
  Next, we extend the definition of $\JJ$ to neighborhoods
  $U_{\ol X^\om_Q}(\ol X^\om_P)$ for any $P \supset Q$, $P \in \PP^{(0)}$ as
  follows.  Suppose $Q_1,\dots, Q_q \in \PP$ are facets of $P$ such
  that $\cap_iQ_i=Q$. Identify
  $T_Q \simeq \prod_{i=1}^q T_{Q_i} \simeq (S^1)^q$.
    Choose a symplectic cylindrical structure map $\phi_Q^P$ (as in
  \eqref{eq:xxsympcyl}) 
  which is consistent with the
  existing maps, and let $\pi_Q^P$ be the resulting projection map
  as in \eqref{eq:pip-om}. 
  From $\phi_Q^P$ 
  we obtain a  family of $T_Q$-equivariant symplectic embeddings
for $x \in \ol X_Q^\om$ 
  \[(\pi_Q^P)^{-1}(x) \xrightarrow{\phi_x} (\C^q/\Gamma,\om_{\on{std}})  \]
  where $\Gamma$ is a finite group and there is an identification $T_Q \simeq (S^1)^q/\Gamma$ so that
  $T_Q$ has a standard action on $\C^q/\Gamma$.
  The embeddings $\phi_x$ vary smoothly with $x \in Q$, and
  $(\pi_Q^P)^{-1}(x) \cap \ol X^\om_{Q_i}$ is mapped to the subspace
  $\{z_i=0\} \subset \C^q$.  The fibers of the projection
  $\pi_Q^P: U_{\ol X^\om_Q}(\ol X^\om_P) \to \ol X^\om_Q$ are equipped with the
  complex structure pulled back by $\phi_x$.  For any
  $y \in U_{\ol X^\om_Q}(\ol X^\om_P)$, on the $\om$-complement of
  $\ker(d\pi_Q^P)_y$ in the tangent space $T_y\ol X^\om_P$,   define
  $\JJ$ to be equal to $\JJ_{\ol X^\om_Q}(\pi_Q^P(y))$. 
  The definition of $\JJ$ in neighborhoods of $\ol X^\om_Q$ agrees with
  the existing definition in neighborhoods of $\ol X^\om_R$ for any
  $R \subset Q$. Indeed, for any $R \subset Q$, and top-dimensional
  polytope $P \supset Q$, the definition of the $\XX$-inner product
  implies that $\JJ|U_{\ol X^\om_Q}(\ol X^\om_P)$ agrees with
  $\JJ|U_{\ol X^\om_R}(\ol X^\om_P)$ from \eqref{eq:jjrp} in the overlaps. In
  the concluding step of the induction, for each top-dimensional
  polytope $P \in \PP^{(0)}$, we extend
  $\JJ|\cup_{Q \subset P}U_{\ol X^\om_Q}(\ol X^\om_P)$ to an
  $\om_{X_P}$-compatible almost complex structure on $\ol X^\om_P$.
\end{proof}

\section{Stabilizing divisors in broken manifolds}
\label{sec:stabbroke}

In this Section, we construct stabilizing divisors in a broken
manifold by a modification of Donaldson's construction.  A broken
divisor $\DD$ in a broken manifold $\XX$ is a Donaldson divisor $D_P$
in each cut space $\ol X_P$ of $\XX$ that is cylindrical in the ends
of $\ol X_P$.  Such submanifolds are constructed as approximately
holomorphic submanifolds with respect to a compatible almost complex
structure.

\begin{definition}\label{def:brokediv}
  {\rm(Broken Donaldson divisor)} \index{Stabilizing divisor!Broken
    stabilizing divisor} Let $(X,\PP)$ be a tropical Hamiltonian
  manifold, where on any symplectic cut space $\ol X_P^\om$,
  $P \in \PP$, the symplectic form $\om_{X_P}$ is rational.  Let
  $\XX:=\XX_\PP$ be the broken manifold in the sense of Definition
  \ref{def:brokenJ}.  For $k \gg 0$, a \em{broken Donaldson divisor}
  of degree $k$ on $\XX$ is a collection of codimension two
  submanifolds
  \[ \bD = \{ \DD_P \subset \XX_P, \quad P \in \PP \} \]
  such that
  \begin{enumerate}
  \item {\rm(Cylindricity on neck pieces)} for any $P \in \PP$, there is a codimension two submanifold
    \[
      D_P \subset X_P \quad  \text{such that} \quad \DD_P=\pi_P^{-1}(D_P),\]
    where $\pi_P : \XX_P \to X_P$ is the projection map;
  \item {\rm(Cylindricity on ends)}\label{item:cylends} for any pair
    of polytopes $Q \subset P$, the divisor $D_P \subset X_P$ is
    $Q$-cylindrical in the $Q$-cylindrical end $U_Q(X_P)$, that is,
    $D_P \cap U_Q(X_P) = (\pi_Q^P)^{-1}(D_Q)$, where the projection
    $\pi_Q^P: U_Q(X_P) \to X_Q$ is from \eqref{eq:pipq}; and
  \item {\rm(Poincar\'e dual)} for any map $F : X_P \to \ol X_P^\om$
    that respects the cylindrical structure at the ends of $X_P$ (in
    the sense of Remark \ref{rem:nosymp} \eqref{part:nosymp1}), the
    closure $\ol {F(D_P)} \subset \ol X_P^\om$ is a suborbifold that
    is Poincar\'e dual to $k[\om_{X_P}]$.
  \end{enumerate}
\end{definition}

In the rest of this section, we construct a stabilizing divisor in the
cut spaces of a broken manifold. The construction of the divisors is
via a slight modification of Donaldson's technique \cite{don:symp}.  We
give an outline of Donaldson's construction following Auroux
\cite{Auroux:remark}.  Let $(X,\om)$ be a symplectic manifold with a
compatible almost complex structure $J$.  Let $\mL_X \to X$ be a
Hermitian line-bundle with connection $\alpha$ over $X$ whose
curvature two-form $\curv(\alpha)$ satisfies
$\curv(\alpha)= (2\pi/i) \omega$.  Since our symplectic manifolds are
rational we may always assume this to be the case after taking a
suitable integer multiple of the symplectic form.  We will construct
approximately holomorphic sections $s_k$ of the line bundles $\mL_X^k$
for large $k$ that are transverse in the sense that
$|\delbar s_k| \ll |\partial s_k|$ on the zero set $s_k^{-1}(0)$. Then
for large enough $k$ the zero set $s_k^{-1}(0)$ is transversally cut
out, and is a divisor of $X$ with degree $k$. To study sections on the
bundle $\mL_X^k$, we use the metric
\[ g_k:=k\om(\cdot,J\cdot) \]  
on $X$. Under this metric, the effect of the non-integrability of $J$
becomes negligible as $k$ increases.  We define the notions of
\em{approximate holomorphicity} and \em{transversality}:
\begin{definition} {\rm (Asymptotically holomorphic sequences of
    sections)} Let $(X,\om)$ be a symplectic manifold with
  $\om$-compatible almost complex structure $J$ and a prequantum line
  bundle $\mL_X \to X$.  Let $(s_k)_{k \ge 0}$ be a sequence of
  sections of $\mL_X^k \to X$.
\begin{enumerate} 
\item The sequence $(s_k)_{k \ge 0}$ is \em{asymptotically
    holomorphic} if there exists a constant $C$ and integer $k_0$ such
  that for $k \ge k_0$,
\begin{equation} \label{asymhol}
 | s_k | + | \nabla s_k| + | \nabla^2 s_k | \leq C, \quad |\olp s_k| +
 | \nabla \olp s_k| \leq C k^{-1/2} .\end{equation} 
\item The sequence $(s_k)_{k \ge 0}$ is \em{uniformly transverse} to
  $0$ if there exists a constant $\eta$ independent of $k$, and
  $k_0 \in \Z$ such that for any $x \in X$ and $k \geq k_0$ with
  $|s_k(x)| < \eta$, the derivative $\nabla s_k$ of $s_k$ is
  surjective at any point and satisfies $| \nabla s_k(x)| \ge \eta$.
\end{enumerate} 
In both definitions, the norms of the derivatives $\nabla s_k$ are
evaluated using the metric $g_k = k \omega( \cdot , J \cdot)$.
\end{definition} 

Donaldson's construction uses a family of asymptotically holomorphic
sections on $\mL_X^k$, that are concentrated at a given point in $X$ :
The Gaussian section centered at $x \in X$ is constructed by choosing
Darboux coordinates $(z_1,\dots,z_n) : (U_x,x) \to (\C^n,0)$ that are
approximately holomorphic in the following sense: 
Assuming that the Darboux coordinates map to a neighborhood $B \subset \C^n$ of the origin, and 
$\psi: \C^n \supset B \to X$ is the inverse map,
then the norm $|\psi^*J-i|$ of
the difference between the complex structures $\psi^*J$ and $i$ is at
most $c|z|$, where $c$ is a uniform constant independent of $x$, and
the derivative $|\nabla(\psi^* J -i)|$ is uniformly bounded.  We choose
a trivialization of the bundle $\mL_X$ so that the Hermitian
connection is $\sum_i(z_id\ol z_i - \ol z_i dz_i)$ plus terms of
higher order in $z, \ol z$. The Gaussian section is then defined as
\begin{equation}
  \label{eq:cutoff}
  \sig_{k,x}:=\beta_{k,x} \cdot e^{-k|z|^2}   
\end{equation}
on $U_x$, where $\beta_{k,x}$ is a cut-off function vanishing at a
$g_k$-distance of $k^{1/6}$ from $x$, and extended by zero outside
$U_x$.

The globalization process in Donaldson's construction uses the
following result:

\begin{lemma} \label{lem:qsard1} {\rm(Quantitative Sard's theorem,
    \cite[Theorem 20]{don:symp})} Suppose $0 < \delta < \frac 1 4$,
  and $f:B_+ \to \C$ is a function defined on a ball
  $B_+ \subset \C^{n}$ of radius $\frac {11}{10}$ that satisfies
  $\Mod{f}_{C^1} \leq \eta$, where
  $\eta:= \delta \log(\delta^{-1})^{-p}$. Then, there exists
  $w \in \C$, $|w| \leq \delta$ such that $f- w$ is $\eta$-transverse
  to $0$ over the interior ball $B$ of radius $1$.
\end{lemma}

\noindent Lemma \ref{lem:qsard1} is used to modify an approximately
holomorphic section in order to achieve uniform transversality in the
neighborhood of a given point: Given an asymptotically holomorphic
section $s_k: X \to \mL_X^k$ and a point $x \in X$, by applying Lemma
\ref{lem:qsard1} to the section
$f_k:=s_k/\sig_{k,x} : B_{g_k}(x,\frac {11}{10}) \to \C$, we obtain a
constant $w_k \in \C$ such that $f_k - w_k$ is uniformly transverse to
the zero section. As a consequence, the difference $s_k- w_k \sig_{k,x}$
is also uniformly transverse to the zero section in a neighborhood of
$x$.

Uniform transversality on the entire manifold is obtained by
iteratively adding successively smaller contributions, so that the
transversality over previous neighborhoods is not disturbed. In
particular, Donaldson's construction chooses a lattice of points
$\Lam_k \subset X$ such that $X$ is covered by the unit balls
$B_{g_k}(x,1)$, $x \in \Lam_k$; and partitions the lattice into $N$
sets (where $N$ is $k$-independent)
\[\Lam_k=I_1^k\cup \dots \cup I_N^k\]
such that any two points in a set $I_j^k$ are separated by a uniform
$g_k$-distance guaranteeing the following: For any $I_j^k$, the
constants $w_{k,x}$ for the balls centered at points $x \in I_j$ can
be chosen simultaneously and independently of each other.  If
$j_1<j_2$, the constants $w_{k,x} \in \C$ for $x \in I^k_{j_2}$ are
chosen to be small enough to not break the transversality in the balls
belonging to $I_k^{j_1}$.  We start with the zero section, and run the
iteration described above.  The iteration terminates in $N$ steps,
which is $k$-independent, and the resulting section is a sum
\[\sig_k:=\sum_{x \in \Lam_k} w_{k,x} \sig_{k,x}.\]
that is uniformly transverse on $X$ 
for large $k$.

\begin{remark}\label{rem:orbdiv3}
  In the case of a symplectic orbifold $(X,\om)$, we consider the
  pre-quantum orbibundle $\mL_X$, and a sequence of sections
  $s_k: X \to \mL_X^k$ for $k$ such that $\mL_X^k$ is a line bundle on
  $X$. Notions of asymptotic holomorphicity and transversality extend
  naturally to the orbifold case and are defined in
  Gironella-Mu\~{n}oz-Zhou \cite{munoz:orbidiv}.
\end{remark}

Next, we give a preliminary result constructing a gluable cylindrical
almost complex structure on the broken manifold $\XX$, that will be
used to construct Donaldson divisors.  We start with compatible almost
complex structures on the symplectic cut spaces (as in Definition
\ref{def:omxxcyl} and constructed in Lemma \ref{lem:omxx-cptble}), and
produce gluable almost complex structures by twisting the inner
product on $\t$ in a truncation of the cylindrical end.

\begin{lemma}\label{lem:tamingmap}
  {\rm(Taming map)} Let $\XX$ be a broken manifold with cylindrical
  end $U_Q(X_P) \subset X_P$ and projections
  $\pi_Q : U_Q(X_P) \to X_Q$ for each pair $Q \subset P$, and an
  $\om_\XX$-compatible almost complex structure $\JJ=(\JJ_P)_P$ with projections $J_P$, $P \in \PP$, as in
  Definition \ref{def:omxxcyl}.  Then, there exist
  \begin{enumerate}[label*=(\arabic*)]
  \item truncated cylindrical ends
    \[U_Q''(X_P) \subset U_Q'(X_P) \subset U_Q(X_P),\]
  \item an almost complex structure $\JJ'=(\JJ_P')_P$ on $\XX$ with projections $(J_P')_P$, that is
    strongly compatible (as in Definitions \ref{def:brokenJ} \eqref{part:brokenJ-cyl}, \ref{def:cylneckst}
    \eqref{part:tamefib}) and $X$-cylindrical (Definition \ref{def:brokenJ} \eqref{part:brokenJ-cyl}) 
    on the ends
    $U_Q''(X_P)$ for all $Q \subset P$, and
  \item  maps
    \(\phi_P : X_P \to \ol X_P^\om\)
  \end{enumerate}
  such that
  \begin{enumerate}
  \item on $X_P \bs U_Q'(X_P)$, $\phi_P$ is a diffeomorphism onto $X_P^\om$,  and
    $\phi_P^*J_P=J_P'$, and
  \item on $U_Q'(X_P)$, $\phi_P$ is
    projection to $X_Q$, that is, 
    $\phi_P|U_Q'(X_P)=\phi_Q \circ \pi_Q$.
  \end{enumerate}
\end{lemma}
\begin{proof}
  Starting from an $\XX$-inner product on the cylindrical end, we
  define domain-dependent inner products that are gluable near the
  infinite ends.  Choose truncated cylindrical ends $U_Q'(X_P)$,
  $U_Q''(X_P)$ for all pairs $Q \subset P$, and choose
  domain-dependent inner products
  \[g_Q^P : U_Q(X_P) \to (t_Q \tensor t_Q)^\dual, \]
  each of which is
  \begin{itemize}
  \item constant and equal to the $X$-inner product $g_Q$ from
    \eqref{eq:idtt} on $U_Q''(X_P)$; and 
  \item is constant and equal to an $\XX$-inner product (Definition
    \ref{def:xx-metric}) $g_Q^P$ on $U_Q(X_P) \bs U_Q'(X_P)$.
  \end{itemize}

  On cylindrical ends, the maps $\phi_P$ are determined by maps on
  cones.  For example, for a pair $Q \subset P$, on the domain of $\phi_P$, we have
  \begin{equation}
    \label{eq:uqxp-embed}
    U_Q(X_P)=(0,\infty)^k \times (Z_Q/T_P), \quad U_Q'(X_P)=\Pi_{i=1}^k(\delta_i,\infty) \times (Z_Q/T_P),  
  \end{equation}
  where $k=\codim_P(Q)$, and on the target space of $\phi_P$, 
  \begin{equation}
    \label{eq:uqxp-om-embed}
    U_{\ol X_Q^\om}\ol X_P^\om = (\Pi_{i=1}^k(-\eps_i,0] \times (Z_Q/T_P))/\sim,   
  \end{equation}
  where $\sim$ mods out boundaries by circle actions as in
  \eqref{eq:xxsympcyl}. The map $\phi_P$ is coordinate-wise given by
  maps $(0,\infty) \to (-\eps_i,0]$ that are equal to $0$ on
  $[\delta_i,\infty)$, and strictly increasing on $(0,\delta_i)$.  On
  the complement of the cylindrical ends, the map $\phi_P$ is standard
  as in \eqref{eq:ipembed}.

  The almost complex structure $J_P'$ is defined to be the pullback
  $\phi_P^*J_P$ on the complement of $\cup_Q U_Q'(X_P)$.   The pullback 
  can be extended to the ends $\cup_QU_Q(X_P)$ by keeping the base almost
  complex structures $d\pi_Q(J_P)$ fixed on $U_Q(X_P)$, ensuring
  $X$-cylindricity on $U_Q''(X_P)$, and interpolating the inner
  products and connection one-forms in the intervening subset
  $U_Q'(X_P) \bs U_Q''(X_P)$.
\end{proof}

The following is the main result of the section.
\begin{proposition}
  {\rm(Construction of a broken divisor)}
  \label{prop:brokdiv}
  Let $\JJ_0$ be a cylindrical almost complex structure as constructed
  in Lemma \ref{lem:tamingmap}.  For any $\theta>0$, and a large
  enough $k \in \N$, there is a cylindrical Donaldson-type divisor
  $\DD \subset \XX$ that is $\theta$-approximately holomorphic, and
  such that on any cut space $\ol X_P$, the induced divisor
  $D_P \subset \ol X_P$ is Poincar\'e dual to $k[\om_{X_P}]$.
\end{proposition}

We refer the reader to \cite{don:symp} for the definition of
$\theta$-approximate holomorphicity.

\begin{proof}[Proof of Proposition \ref{prop:brokdiv}]
  The sections are constructed by running Donaldson's procedure
  simultaneously for all the manifolds in the set
  $\{\ol X_P\}_{P \in \PP}$.  Our modification of Donaldson's
  algorithm is limited to choosing appropriate Gaussian sections in
  order to ensure \hyperref[item:cylends]{(Cylindricity on ends)} is
  satisfied.  The step of achieving global transversality by applying
  Lemma \ref{lem:qsard1} is the same as the original algorithm, and
  therefore not discussed.

  The prequantum bundle on a cut space $\ol X_P$ is defined as the
  pullback by a map to the the symplectic cut space $\ol X_P^\om$.
  Lemma \ref{lem:tamingmap} constructs a map
  $\phi_P: \ol X_P \to \ol X_P^\om$ and produces an almost complex
  structure $J_P$ such that
  \begin{itemize}
  \item a truncated cylindrical end $U_Q'(X_P) \subset U_Q(X_P)$ is
    mapped by $\phi_P$ to $X_Q$ via a projection map,
  \item on the complement of the truncated cylindrical end, $\phi_P$
    is a diffeomorphism onto its image and $J_P$ is
    $\phi_P^*\om_{X_P}$-compatible, and
  \item $\phi_P|X_Q=\phi_Q$.
  \end{itemize}
  Assuming that $\mL_P \to \ol X_P^\om$ is a prequantum line bundle
  with connection, we will construct asymptotically holomorphic
  sections on the pullback bundle $\phi_P^*\mL_P \to X_P$. On
  $U_Q'(X_P)$, the connection on the pullback bundle is flat on the
  fibers of $\pi_Q$, and we will define sections that are constant in
  the fiber direction.

  The notions of asymptotically holomorphicity and transversality are
  defined with respect to a dilated metric on the cut spaces.  On any
  $X_P$, the metric $g$ is equal to $\om_{X_P}(\cdot, J_P \cdot)$ in
  the complement of the cylindrical ends $U_Q'(X_P)$, $Q \subset P$,
  and on the cylindrical end $g_P$ is a product metric on the
  fibration $U_Q(X_P) \to X_Q$ that extends the metric $g_Q$ on $X_Q$.
  For any $k$, define a dilated metric
  \[g_{k}:=kg.\]

  We describe a set of lattice points at which the Gaussian section
  for each tensor power of the given line bundle are centered.  Given
  $k \gg 0$, a set of lattice points
  \[\Lam_{k,P} \subset X_P \bs \cup_{Q \subset P}U_Q'(X_P)\]
  is defined so that it contains $Ck^{\dim(\ol X_P)}$ number of
  points, and there is a covering
  \[ X_P\bs \left( \cup_{Q \subset P} B_{g_k}(U_Q'(X_P),1) \right) =
    \left( \cup_{x \in \Lam_{k,P}}B_{g_k}(x,1) \right).
  \]
  (The set $\Lam_{k,P}$ is called a lattice because it resembles a
  lattice locally in Darboux coordinates.)  For any
  $p \in \Lam_{k,Q}$, we will write down Gaussian sections
  \[ \sig_{k,p,P} : \ol X_P \to \phi_P^*\mL_P\]
  centered at $p$ for all $P \supseteq Q$, and determine coefficients
  $w_{p,k} \in \C$ so that
  \[\sum_{p \in \Lam_Q, Q \subseteq P} w_{p,k} \sig_{k,p,P} : \ol X_P
    \to \phi_P^*\mL_P\]
  is asymptotically holomorphic and uniformly transverse on $\ol X_P$.
  The set $\Lam_k$ is partitioned into subsets $I_1,\dots, I_N$ where
  $N$ is independent of $k$ while satisfying the following: For any
  pair
  \[ x \in \Lam_{k,P} \bs \cup_{Q \subset P} \Lam_{k,Q}, \quad y \in
    \cup_{Q \subset P} \Lam_{k,Q} \]
  we have
  \[x \in I_\alpha, y \in I_\beta \implies \beta < \alpha, \]
  and at the $\alpha$-th iteration the coefficients $w_{k,p}$ are
  fixed for $p \in I_\alpha$.  As a result, coefficients of the
  Gaussian sections centered at lattice points on $X_Q$ are fixed
  before those centered at lattice points on $X_P$ if $Q \subset P$.

  We define Gaussian sections concentrated at the lattice points.
  Consider $p \in \Lam_{k,Q}$.  We
  wish to construct
  Gaussian sections $\sig_{k,p,P} : X_P \to \phi_P^*\mL_P^k$ for all
  $P \supseteq Q$ such that
  \[ \sig_{k,p,P}|X_Q=\sig_{k,p,Q} . \]
  We will first define $\sig_{k,p,Q}$ and then define $\sig_{k,p,P}$
  so that it is constant on the fibers of the projection
  $U_Q(X_P) \to X_Q$, and and decays at an exponential rate along the
  fiber $U_Q(X_P) \bs U_Q'(X_P) \to X_Q$.  The detailed construction
  of the sections is as follows: For any $P \supseteq Q$, choose
  Darboux coordinates $(z_1,\dots,z_{n(P)})$ on a neighborhood
  $U_p \subset \ol X_P^\om$ centered at $p$, where
  $n(P):=\hh\dim(X_P)$, and the first $n(Q)$ coordinates are Darboux
  coordinates on $X_Q^\om$.  Choose a trivialization of $\mL_P^k|U_P$
  for which the connection is $k\sum_i (\ol z_i dz_i - z_i d\ol z_i)$.
  Let
  \[s_{k,p,P}(z):=\beta_{k,z} e^{-k|z|^2}:\ol X_P^\om \to \mL_P^k,\]
  where $\beta_{k,z}$ is a cut-off function vanishing at a
  $g_k$-distance of $k^{1/6}$ from $x$, and extended by zero outside
  $U_p$, and let
  \[\sig_{k,p,P}:=\phi_P^*s_{k,p,P} : X_P \to \phi_P^*\mL_P^k.\]
  As a consequence, $\sig_{k,p,Q}$ is an ordinary Gaussian section.
  For $P \supset Q$, the section $\sig_{k,p,P}$ is constant on the
  fibers decays outside $U_Q'(X_P)$.

  The globalization process consists of finding coefficients
  $\{w_p \in \C\}_{p \in \Lam_k}$ such that
  \[ \sig_{k,P}:=\sum_{p \in \Lam_{k,P}} w_p \sig_{k,p,P} \]
  is a uniformly transverse sequence of sections for each $P$. The
  proof of globalization carries over from \cite{don:symp}.  The only
  new feature is to determine each coefficient $w_p$ in a
  $P$-independent way. This can be done by Lemma \ref{lem:qsard} which
  is a modification of Lemma \ref{lem:qsard1}.

  Finally, for any pair $Q \subset P$, the zero set
  $\sig_{k,P}^{-1}(0)$ is $Q$-cylindrical, because a lattice point
  $p \in \Lam_{k,P}$ lies in the complement of the $Q$-cylindrical
  end, and the section $\sig_{k,p,P}$ is supported in
  $B_{g_k}(p,k^{1/6})$, which is equal to $B_g(p,k^{-1/3})$.
  Therefore, there is a truncated cylindrical end
  $U_Q''(X_P) \subset U_Q'(X_P)$ on which, for large enough $k$,
  $\sig_{k,P}^{-1}(0)$ is $Q$-cylindrical, since $U_Q''(X_P)$ is
  disjoint from $B_g(p,k^{-1/3})$ for all $p \in \Lam_{k,P}$.
\end{proof}

\begin{lemma} \label{lem:qsard} {\rm(Quantitative Sard's theorem)}
  Given a tuple of positive integers $(n_1,\dots,n_k) \in \Z^k$ there
  is an integer $p$ for which the following is satisfied.  Suppose
  $0 < \delta < \frac 1 4$, and $f_i:B_+^{n_i} \to \C$ is a set of $k$
  functions on balls $B_+^{n_i} \subset \C^{n_i}$ of radius
  $\frac {11}{10}$ that satisfy $\Mod{f_i}_{C^1} \leq \eta$, where
  $\eta:= \delta \log(\delta^{-1})^{-p}$. Then, there exists
  $w \in \C$, $|w| \leq \delta$ such that $f_i- w$ is
  $\eta$-transverse to $0$ over the interior ball $B^{n_i}$ of radius
  $1$ for each $i$.
\end{lemma}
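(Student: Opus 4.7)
The plan is to reduce this multi-function quantitative Sard statement to the single-function version due to Donaldson \cite{don:symp} by a simple union bound on the set of bad shifts. The single-function statement says: for $f : B_+^n \to \C$ with $\|f\|_{C^1} \leq 1$ (and some $C^2$ control inherited from the asymptotically holomorphic setting), the Lebesgue measure of the set
\[
B_f := \{ w \in \C : |w| \leq \delta,\ f - w \text{ is not } \eta\text{-transverse to } 0 \text{ over } B^n \}
\]
is bounded by $C(n)\,\delta^2\,\log(\delta^{-1})^{-q(n)}$ whenever $\eta = \delta\,\log(\delta^{-1})^{-p}$ with $p$ chosen sufficiently large depending on $n$. (Here $q(n)$ can be made arbitrarily large by increasing $p$.)

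To deduce the present statement, I would proceed as follows. First, rescale each $f_i$ so that $\|f_i\|_{C^1} \leq 1$ — the factor of $\eta$ in the hypothesis is the worst case where one rescales by $\eta^{-1}$; this multiplies the relevant $w$ by $\eta^{-1}$, but the bookkeeping is straightforward because we ultimately only need a $w$ of size $\delta$. Second, for each $i = 1,\ldots,k$ apply the single-function estimate to bound $\mathrm{meas}(B_{f_i}) \leq C(n_i)\,\delta^2\,\log(\delta^{-1})^{-q(n_i)}$. Third, choose the universal exponent
\[
p := \max_{1 \leq i \leq k} p(n_i, k)
\]
large enough that $C(n_i)\,\log(\delta^{-1})^{-q(n_i)} < \pi/(2k)$ for all $i$ and all $\delta < 1/4$. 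Then
\[
\mathrm{meas}\!\left(\bigcup_{i=1}^k B_{f_i}\right) \leq \sum_{i=1}^k \mathrm{meas}(B_{f_i}) < \frac{\pi}{2}\,\delta^2 < \mathrm{meas}\bigl(\{|w| \leq \delta\}\bigr),
\]
so the complement is nonempty and supplies a shift $w$ making every $f_i - w$ simultaneously $\eta$-transverse on the interior ball.

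The main technical input is the single-function bound on $\mathrm{meas}(B_f)$, which is proved in \cite[Theorem 20, Proposition 25]{don:symp} by covering arguments and the classical Sard theorem together with an explicit rate; verifying that $q(n)$ may be taken arbitrarily large at the cost of increasing $p$ is the only place where one has to inspect the dependence on parameters. I expect this is the most delicate step — not conceptually difficult, but requiring careful tracking of constants through Donaldson's argument so that the exponent $p$ depends only on the tuple $(n_1,\ldots,n_k)$ and not on $\delta$ or the specific $f_i$. Everything else is a routine union bound on measures of the bad shift sets.
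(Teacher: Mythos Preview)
Your proposal is correct and is essentially the same as the paper's approach: the paper notes that Donaldson's $k=1$ proof works by bounding the measure of the bad set in the target, and for $k$ functions this measure is simply multiplied by $k$, so one replicates the argument with adjusted constants --- exactly your union bound on the sets $B_{f_i}$. (Your rescaling step is unnecessary since $\|f_i\|_{C^1}\le\eta<1$ already satisfies the $C^1\le 1$ hypothesis, but this is harmless.)
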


The case $k=1$ is Theorem 20 of \cite{don:symp}, and is stated as
Lemma \ref{lem:qsard1}.  The proof in the case $k=1$ is by bounding
the size of the image $f(B_+)$ in the range.  For a finite $k$, the
volume is multiplied by a constant, and the proof in \cite{don:symp}
can be replicated by altering the constants.

\begin{remark}\label{rem:orbdiv4}
  The construction of a broken divisor in Proposition
  \ref{prop:brokdiv} extends to the orbifold case. Indeed, in the
  orbifold adaptation of the Donaldson construction in
  Gironella-Mu\~{n}oz-Zhou \cite{munoz:orbidiv}, the new features are
  the choice of an appropriate lattice compatible with the
  stratification of the orbifold; and adjusting the uniform
  transversality constants. Both of these features are compatible with
  the modifications we have introduced in the proof of Proposition
  \ref{prop:brokdiv}.
\end{remark}

\begin{figure}[ht]
  {
\begingroup%
  \makeatletter%
  \providecommand\color[2][]{%
    \errmessage{(Inkscape) Color is used for the text in Inkscape, but the package 'color.sty' is not loaded}%
    \renewcommand\color[2][]{}%
  }%
  \providecommand\transparent[1]{%
    \errmessage{(Inkscape) Transparency is used (non-zero) for the text in Inkscape, but the package 'transparent.sty' is not loaded}%
    \renewcommand\transparent[1]{}%
  }%
  \providecommand\rotatebox[2]{#2}%
  \newcommand*\fsize{\dimexpr\f@size pt\relax}%
  \newcommand*\lineheight[1]{\fontsize{\fsize}{#1\fsize}\selectfont}%
  \ifx\svgwidth\undefined%
    \setlength{\unitlength}{164.65382662bp}%
    \ifx\svgscale\undefined%
      \relax%
    \else%
      \setlength{\unitlength}{\unitlength * \real{\svgscale}}%
    \fi%
  \else%
    \setlength{\unitlength}{\svgwidth}%
  \fi%
  \global\let\svgwidth\undefined%
  \global\let\svgscale\undefined%
  \makeatother%
  \begin{picture}(1,0.1080029)%
    \lineheight{1}%
    \setlength\tabcolsep{0pt}%
    \put(0,0){\includegraphics[width=\unitlength,page=1]{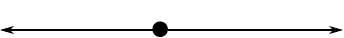}}%
    \put(0.21374648,0.05728465){\color[rgb]{0,0,0}\makebox(0,0)[lt]{\lineheight{1.25}\smash{\begin{tabular}[t]{l}$P_+$\end{tabular}}}}%
    \put(0.73598903,0.07108834){\color[rgb]{0,0,0}\makebox(0,0)[lt]{\lineheight{1.25}\smash{\begin{tabular}[t]{l}$P_-$\end{tabular}}}}%
    \put(0.44192833,0.0698634){\color[rgb]{0,0,0}\makebox(0,0)[lt]{\lineheight{1.25}\smash{\begin{tabular}[t]{l}$P_0$\end{tabular}}}}%
  \end{picture}%
\endgroup%
}
  \caption{The polyhedral decomposition of a single cut.} 
  \label{fig:polytope0}
\end{figure}

 \begin{figure}[ht]
    {
\begingroup%
  \makeatletter%
  \providecommand\color[2][]{%
    \errmessage{(Inkscape) Color is used for the text in Inkscape, but the package 'color.sty' is not loaded}%
    \renewcommand\color[2][]{}%
  }%
  \providecommand\transparent[1]{%
    \errmessage{(Inkscape) Transparency is used (non-zero) for the text in Inkscape, but the package 'transparent.sty' is not loaded}%
    \renewcommand\transparent[1]{}%
  }%
  \providecommand\rotatebox[2]{#2}%
  \newcommand*\fsize{\dimexpr\f@size pt\relax}%
  \newcommand*\lineheight[1]{\fontsize{\fsize}{#1\fsize}\selectfont}%
  \ifx\svgwidth\undefined%
    \setlength{\unitlength}{106.75564698bp}%
    \ifx\svgscale\undefined%
      \relax%
    \else%
      \setlength{\unitlength}{\unitlength * \real{\svgscale}}%
    \fi%
  \else%
    \setlength{\unitlength}{\svgwidth}%
  \fi%
  \global\let\svgwidth\undefined%
  \global\let\svgscale\undefined%
  \makeatother%
  \begin{picture}(1,0.5904632)%
    \lineheight{1}%
    \setlength\tabcolsep{0pt}%
    \put(0,0){\includegraphics[width=\unitlength,page=1]{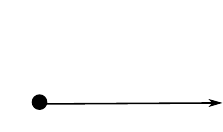}}%
    \put(0.04635769,0.32638066){\color[rgb]{0,0,0}\makebox(0,0)[lt]{\lineheight{1.25}\smash{\begin{tabular}[t]{l}$P_1$\end{tabular}}}}%
    \put(0.47925728,0.01767325){\color[rgb]{0,0,0}\makebox(0,0)[lt]{\lineheight{1.25}\smash{\begin{tabular}[t]{l}$P_2$\end{tabular}}}}%
    \put(-0.0062204,0.10449243){\color[rgb]{0,0,0}\makebox(0,0)[lt]{\lineheight{1.25}\smash{\begin{tabular}[t]{l}$P_0$\end{tabular}}}}%
    \put(0,0){\includegraphics[width=\unitlength,page=2]{polytope2.pdf}}%
    \put(0.58925656,0.36186388){\color[rgb]{0,0,0}\makebox(0,0)[lt]{\lineheight{1.25}\smash{\begin{tabular}[t]{l}$P_3$\end{tabular}}}}%
  \end{picture}%
\endgroup%
}
\caption{Multiple cut case.}
\label{fig:polytope2} 
\end{figure}

\begin{remark}
  {\rm(On the construction of broken stabilizing divisors)} This is a
  technical remark justifying the proof technique of Proposition
  \ref{prop:brokdiv} wherein we construct asymptotically holomorphic
  sequences of sections $\sig_{k,P}$ simultaneously for all polytopes
  $P \in \PP$ rather than construct one sequence $(\sig_{k,P})_k$ at a
  time.  We point out that the latter approach, which appears cleaner,
  is taken in \cite{cw:flips} for the case of single breaking.
  Consider a single cut with polyhedral decomposition shown in Figure
  \ref{fig:polytope0}.  Given a sequence of sections
  \[ (\sig_{k,{P_0}}: \ol X_{P_0} \to \mL_{\ol X_{P_0}}^k)_k, \]
  \cite[Lemma 7.15]{cw:flips} constructs an extension of the sequence
  of sections to 
  \[ (\sig_{k,{P_\pm}}: \ol X_{P_\pm} \to
    \mL_{\ol X_{P_\pm}}^{k})_k . \]   
  The first step in the construction is to consider the extension
  $\sig_{k,{P_0}} e^{-k|x|^2}$ and then turn on the contributions from
  Gaussians centered away from the divisor $\ol X_{P_0} \subset \ol X_{P_\pm}$
  so that the sections become transverse. If we apply this approach to
  spaces corresponding to the
  polytopes shown in Figure \ref{fig:polytope2},
  we would construct a sequence in the order $\sig_{k,{P_0}}$, then
  $\sig_{k,{P_1}}$, $\sig_{k,{P_2}}$ and then finally
  $\sig_{k,{P_3}}$.  We would like the sequence $\sig_{k,{P_3}}$ on
  $\ol X_{P_3}$ to be an extension of $\sig_{k,{P_0}}$ , $\sig_{k,{P_1}}$
  and $\sig_{k,{P_2}}$. Therefore, we would like
  to start Donaldson's globalization iteration with a sequence of Gaussian sections that are equal to
  $\sig_{k,{P_0}} e^{-k(|z_1|^2 + |z_2|^2)}$ in a neighborhood of
  $\ol X_{P_0}$, equal to $\sig_{k,{P_1}} e^{-k|z_1|^2}$ in a neighborhood
  of $\ol X_{P_1}$, and equal to $\sig_{k,{P_2}} e^{-k|z_2|^2}$ in a
  neighborhood of $\ol X_{P_2}$. The approach fails because the three
  sections do not agree on overlaps. Indeed, to define the sequence
  $\sig_{k,{P_1}}$, we would have used $\sig_{k,{P_0}} e^{-k|z_2|^2}$
  as a starting sequence, but then these sections would have been
  modified when contributions from Gaussians in nearby balls are
  turned on to achieve transversality.
\end{remark}

\begin{lemma}\label{lem:jjneck}
  Let $\JJ'$ be the $X$-cylindrical almost complex structure from
  Lemma \ref{lem:tamingmap}. There is a diffeomorphism
  $\phi : X \to X$ that preserves the tropical structure (in the sense
  of Remark \ref{rem:trop-preserve} below) and such that gluing $\JJ'$
  along the ends produces a sequence $(J')^\nu$ of almost complex
  structures on $X$, that is neck-stretching with respect to the
  symplectic form $\phi^*\om$, and locally  $\phi^*\om$-compatible.
\end{lemma}
We recall that ``$(J')^\nu$ being neck-stretching with respect to the
symplectic form $\phi^*\om$'' means that there is a symplectic
cylindrical structure (Definition \ref{def:sympcylstr}) on
$(X,\phi^*\om)$ which gives rise to neck-stretched manifolds
(Definition \ref{def:neckstr}) on which $(J')^\nu$ is cylindrical.
\begin{remark}\label{rem:trop-preserve}
  A diffeomorphism $\phi : X \to X$ preserves the tropical structure
  $\PP$ on $X$ if there are neighborhoods $U_P \subset X$ of
  $\Phinv(P)$ on which $\phi$ is $T_P$-equivariant and
  $\Phi_P \circ \phi=\Phi_P$ where $\Phi_P : U_P \to \t_P^\dual$ is the
  projection of the tropical moment map $\Phi$ to $\t_P^\dual$.
\end{remark}

\begin{proof}

  We recall that in Lemma \ref{lem:tamingmap}, we start out with a
  $\XX$-cylindrical almost complex structure $\JJ$, which is locally
   $\om$-compatible with cylindrical end $U_Q(X_P)$ for any pair $Q \subset P$, $P \in \PP^{(0)}$.  
  For any top-dimensional polytope $P \in \PP$,
  the corresponding symplectic $\XX$-cylindrical structure on $X_P^\om$ is
  given by maps
  \[ U_{\ol X^\om_P}(\ol X^\om_Q) \xrightarrow{\phi^P_Q} ((\Cone_QP
    \times \ol Z_Q^\om)/\sim, \om_{\tQ}), \quad \om_{\tQ}:=\om_{X_Q} +
    d\bran{\alpha_Q^P, \pi_{\t_Q^\dual}},\] for all $Q \subset P$
  where the various terms are as in \eqref{eq:xxsympcyl}.

  As a first step of the proof, we produce a symplectic
  $X$-cylindrical structure on symplectic cut spaces for which the almost complex
  structure $\JJ'$ is  locally compatible.  We recall from the
  proof of Lemma \ref{lem:tamingmap} that $\JJ'$ is obtained from
  $\JJ$ to changing the connection one-form to be $X$-cylindrical in
  the truncated cylindrical end $U_Q''(X_P)$ and interpolating between
  the two in the intervening subset $U_Q''(X_P) \bs U_Q''(X_P)$, where $Q \subset P$ and $P \in \PP^{(0)}$. 
  We may alter the connection one-form underlying the symplectic
  cylindrical structure in a similar way to obtain the following: For
  any top-dimensional $P \in \PP$, there is a symplectic form
  $\om_{P,1}$ on $\tX_P^\om$, which is equal to $\om_{X_P}$ in the
  complement of $U_{\ol X_Q^\om}(\ol X_P^\om)$, and on
  $U_{\ol X_Q^\om}(\ol X_P^\om)$,
  \begin{equation}
    \label{eq:om1end}
    ((\phi_Q^P)^{-1})^*\om_{P,1}=\om_{X_Q} + d\bran{\alpha^P_{Q, \t_Q^\dual}, \pi_{\t_Q^\dual}},   
  \end{equation}
  where $\alpha^P_{Q, \t_Q^\dual}$ is a $T_Q$-connection one-form on
  the $T_Q$-bundle $(\Cone_QP \times \ol Z_Q^\om)$, and for which
  $\JJ'$ is locally $\om_{1,P}$-compatible.  (The subscript $\t_Q^\dual$
  denotes dependence on the coordinate in
  $\Cone_QP \subset \t_Q^\dual$.)  Comparing \eqref{eq:om1end},
  \eqref{eq:xxsympcyl}, we see that the forms $\om_{Q,1}$, $\om_{X_Q}$
  are cohomologous. Furthermore, assuming that the cylindrical end is
  small enough the family
  $\{(1-t) \om_{Q,1}+t\om_{X_Q} : t \in [0,1]\}$ consists of
  symplectic forms on $\ol X_P$. Applying Moser's theorem, we obtain a
  symplectomorphism
  \[\psi_P: (\ol X_P^\om, \om) \to (\ol X_P^\om, \om_1).\]
  Note that $\phi_{Q,1}^P:=\phi_Q^P \circ \psi_P$ is a symplectic
  $X$-cylindrical structure.

  We glue the symplectic cut spaces $X_P^\om$ using the symplectic
  $X$-cylindrical structure, and call the resulting symplectic
  manifold $(X,\om_1)$. The almost complex structures
  $\{(J')^\nu\}_\nu$ obtained by gluing $\JJ'$ along the ends is
  locally $\om_1$-compatible.
  It remains to prove that the manifold $(X,\om_1)$ is
  symplectomorphic to the original manifold $(X,\om)$.  By Proposition
  \ref{prop:symcyl}, there is a symplectic $X$-cylindrical structure
  $\{\phi_P\}_P$ on $(X,\om)$. Applying the multiple cut $\PP$ to
  $(X,\om)$, we obtain cut spaces $\ol X_P^\om$ equipped with an
  symplectic $X$-cylindrical structure denoted by
  $\phi_{Q,0}^P$. There is a family of $X$-cylindrical structures
  $\{\phi_{Q,t}^P\}$ connecting $\phi_{Q,0}^P$ and
  $\phi_{Q,1}^P$. Gluing along each of these produces a family
  $(X,\om_t)$ with cohomologous symplectic forms. The Lemma follows
  from Moser's theorem.
\end{proof}
\section{Stabilizing pairs in neck-stretched manifolds}
\label{sec:stabpair}

In this Section, we prove the existence of a stabilizing pair
$(\JJ_0,\DD)$ on a broken manifold such that the family
$(J^\nu,D^\nu)$ obtained by gluing the pair $(\JJ_0,\DD)$ consists of
stabilizing pairs on neck-stretched manifolds.  We recall from Section
\ref{sec:stab-manifold} that in a smooth symplectic manifold
$(X,\om_X)$, a stabilizing pair $(J,D)$ consists of a Donaldson-type
divisor $D \subset X$ and a $\om_X$-tamed almost complex structure $J$
that is adapted to $D$ and for which any non-constant $J$-holomorphic
sphere intersects $D$ at least three points and is not contained in
$D$.  We start by defining the analogous notion for broken manifolds.

\begin{definition}
  {\rm(Adapted broken almost complex structure)}
\label{def:j-adapt}
  Given a broken
  divisor $\bD \subset \XX$ as in Definition \ref{def:brokediv}, we denote by 
  %
  \[ \J^\cyl(\XX, \bD) = \{ \JJ \in \J^\cyl(\XX)\ : \  \JJ(T\DD_P)=T\DD_P \text{ on $\ol \XC_P$ } \ \forall P
  \in \PP  \} \]
    \index{J@$\J^\cyl$!$\J^\cyl(\XX,\DD)$}
  the space of cylindrical almost complex structures that are adapted
  to
  $\bD$.
\end{definition}

A \em{stabilizing pair} in a broken manifold consists of a broken Donaldson divisor and a broken almost complex structure such that the restriction of the pair to any cut space is a stabilizing pair in the sense of Definition \ref{def:stabdiv}.
\index{Stabilizing pair!in a broken manifold}
\begin{definition}
  {\rm(Stabilizing pair in a broken manifold)}
  \label{def:stab-brok}
  Let $\bD \subset \XX$ be a broken divisor as in Definition
  \ref{def:brokediv} which is disjoint from the Lagrangian
  $L \subset \XX$.  For $E>0$, an adapted almost complex structure
  $\JJ \in \J^\cyl(\XX,\bD)$, is \em{$E$-stabilizing} if for every
  polytope $P \in \PP$, the almost complex structure $\JJ|\ol X_P$ is
  $E$-stabilizing in $(\ol X_P,\om_{X_P}, \cD_P)$. A pair $(\JJ, \bD)$
  is \em{stabilizing} if $\JJ$ is locally strongly tamed and is
  $E$-stabilizing for all $E>0$.  We call a divisor $\DD$ appearing in
  a stabilizing pair a \em{stabilizing divisor.}
\end{definition}

The following is the main result of the Section where we construct
stabilizing pairs on broken manifolds.  We adapt the construction of
stabilizing pairs in smooth manifolds outlined in Section
\ref{sec:stab-manifold}.

\begin{proposition} 
{\rm(Stabilizing pair in a broken manifold)}
  \label{prop:stab-broken}
  Suppose $\XX:=\XX_\PP$ is a broken manifold, such that on each cut space
  $\ol X_P$, $P \in \PP$ the symplectic form $\om_{X_P}$ is
  rational.   There exists a stabilizing pair
  $(\JJ_0, \bD)$ in $\XX$ such that 
  \begin{enumerate}
  \item the family $(J^\nu,D^\nu)$ obtained by
    gluing consists of stabilizing pairs for all $\nu \in [1,\infty)$, and
  \item for every $E>0$ there is a neighbourhood
    %
    \[\J^\cyl(\XX,\bD;\JJ_0,E)  \subset \{\J \in \J^\cyl(\XX,\bD) :\J|T\bD = \JJ_0|T\bD\}\]
     of $\JJ_0$ consisting of $E$-stabilizing locally tamed 
  cylindrical almost complex structures adapted to $\bD$.
\end{enumerate}
\end{proposition}
It follows from Lemma \ref{lem:jjneck} that the family of almost complex structures $\{J^\nu\}_\nu$ 
produced by Proposition \ref{prop:stab-broken} are neck-stretched in the sense of Definitions in Section \ref{sec:cylacs}.

%
%
 \begin{proof}[Proof of Proposition \ref{prop:stab-broken}]

\vskip .05in
\noindent    \textsc{ Step 1}: We start with a preliminary almost complex
   structure, and find a uniform degree bound on its neighborhood, in
   order to determine the degree of the Donaldson divisor that we
   require.  Suppose $\JJ^\pre_\om \in \J(\XX)$ is a
   $\om_\XX$-compatible almost complex structure (Definition
   \ref{def:omxxcyl}).  By Lemma \ref{lem:tamingmap}, there are maps
   $\phi_P : X_P \to \ol X_P^\om$ from cut spaces to symplectic cut
   spaces, and a locally compatible (hence gluable, see Definitions
   \ref{def:brokenJ} \eqref{part:brokenJ-cyl}, \ref{def:cylneckst}
   \eqref{part:tamefib}) almost complex structure on $(X_P)_P$, which
   we denote by $\JJ^\pre$.  The almost complex structure $\JJ^\pre$
   is cylindrical on a truncation of the cylindrical ends, henceforth
   in this proof, the cylindrical ends are replaced by these
   truncations, and are denoted by $U_Q(X_P) \subset X_P$ for any pair
   $Q \subset P$.  We recall that cylindrical ends are equipped with
   projections $\pi_Q : U_Q(X_P) \to X_Q$.  For any $P \in \PP^{(0)}$,
   Lemma \ref{lem:tamingmap} says that $\phi_P^*\om_{X_P}$ is a
   \em{basic} form on the cylindrical ends of $X_P$, in the sense that
   for any $Q \subset P$, on $U_Q(X_P)$, $\phi_P^*\om_{X_P}$ is a
   pullback of a form by $\pi_Q$.  Gluing the forms
   $\phi_P^*\om_{X_P}$ along the ends produces
   $\om_\nu \in \Om^2(X^\nu)$ which is cohomologous to
   $\om_X \in \Om^2(X)$, and which is basic in the cylindrical
   subsets.  To derive the bound we also choose representatives of the
   first Chern classes $[c_1(T\ol X_P)]$ which are basic on
   cylindrical ends. That is, we choose forms
   \[ \gamma=(\gamma_P)_{P \in \PP}, \quad \gamma_P \in
     \Om^2(\ol X_P), \quad [\gamma_P]= c_1(T\ol X_P).
   \]
  We choose forms on the neck-stretched manifolds
   \[
     \gamma_\nu \in \Om^2(X^\nu), \quad \gamma_\nu \in [c_1(TX)],\]
   which are basic in the the sense that
   on the $P$-cylindrical subset $\pi_P^\nu : X_{\tP}^\nu \to X_P$,
   each of the forms $\gamma_\nu$ is the pullback of the same form on $X_P$.
  
    For any $P \in \PP$, the supremum of the ratio of
   the first Chern class form and the basic symplectic form 
   given by 
   \[k_*(\eps,P):=\sup_{\substack{\JJ:\Mod{\JJ,\JJ^\pre}_{C^0}<\eps\\ 0 \neq
         v \in TX_P}} \tfrac {\gamma_P(v,\JJ v)}{\om_{X_P}(v,\JJ v)}\]
   is finite. The proof of finiteness is as in the proof 
   of Lemma \ref{lem:unifdegbd}, together with the fact that the forms $\gamma_P$ and $\om_{X_P}$
   are basic on $X_P$. 
   Similarly, the supremum
   \[k_*(\eps,X):= \sup_\nu\sup_{\substack{\JJ:\Mod{\JJ,\JJ^\pre}_{C^0}<\eps\\ 0 \neq
         v \in TX^\nu}} \tfrac {\gamma_\nu(v,J^\nu v)}{\om_\nu(v,J^\nu v)},\]
   where $J^\nu$ is given by gluing $\JJ$ on the cylindrical ends, 
   is finite. 
  We choose $k_*$
   satisfying
   \begin{equation}
     \label{eq:kstar1}
     k_* \geq k_*(\eps,X), k_*(\eps,P) \quad \forall P \in \PP^{(0)}.
   \end{equation}

\vskip .05in \noindent    \textsc{ Step 2} : We find a stabilizing divisor in the broken
   manifold.
   We will find a stabilizing divisor of degree
   \begin{equation}
     \label{eq:k1choose}
     k > 2\max\{k_*, k_* + \hh(\dim(X)) -2\}.  
   \end{equation}
   for reasons explained in Remark \ref{rem:whydeg}.  Let $\theta_0>0$
   be such that for any $\theta_0$-approximately
   $\JJ^\pre$-holomorphic divisor, there is an adapted tamed almost
   complex structure that is $\eps/2$-close to $\JJ^\pre$.  In
   addition to satisfying \eqref{eq:k1choose}, we may choose $k$ to be
   large enough that a Donaldson divisor of degree $k$ is
   $\theta_0$-approximately holomorphic.
   By Proposition \ref{prop:brokdiv}, 
   there is a divisor $\DD$
   cylindrical in $\{U_Q(X_P)\}_{Q \subset P}$.
There is an adapted strongly tamed almost complex structure 
   \begin{equation*}
     \JJ_0 \in B_{\eps/2}(\JJ^\pre),
   \end{equation*}
   and furthermore, there is a
   neighborhood of adapted almost complex structures
   \[U_{\JJ_0} :=\{\JJ \in \J^\cyl(\XX,\DD) : \text{$\JJ$ is locally strongly tamed},\Mod{\JJ-\JJ_0}_{C^0}<\eps/2\}\]
   that satisfy the degree bound of $k_*$ on the cut spaces
   $\{x_P\}_P$, and their gluings satisfy the same degree bound on the
   neck-stretched manifolds $\{X_\nu\}_\nu$.

\vskip .05in   \noindent    \textsc{ Step 3}: We finish the proof closely following the method
   in the unbroken case in Cieliebak-Mohnke \cite{cm:trans}, which is
   outlined in Proposition \ref{prop:stab}.  So far, we have shown
   that there is a neighborhood $U_{\JJ_0}$ in the space of locally
   strongly tamed $\DD$-adapted cylindrical almost complex structures
   on $\XX$ such that for any $\JJ \in U_{\JJ_0}$, $k_*$ is a degree
   bound for the glued family $J^\nu$.  By Remark \ref{rem:whydeg}
   generic almost complex structures in the set $U_{\JJ_0}$ are
   stabilizing.
  
   Next, we claim that for any $E>0$, an open and dense subset of
   $U_{\JJ_0}$ is $E$-stabilizing for all neck lengths.  The subset
   \[\J^{\reg,E,\infty}=\{\JJ \in U_{\JJ_0}
     : \JJ \text{ is $E$-stabilizing on $\XX$}\} \]
   is open and dense: Openness is a consequence of Gromov convergence
   applied to each of the cut spaces $\ol X_P$, $P \in \PP$ (see the
   proof of \cite[Corollary 8.16]{cm:trans}), and denseness follows
   because the $E$-stabilizing condition is generic.  The subset
   \[\J^{\reg,E}=\{\JJ \in \J^{\reg,E,\infty} : J^\nu \text{ is
       $E$-stabilizing on $X^\nu$ for $\nu \in [1,\infty)$}\}. \]
   is also open in $\J^{\reg,E,\infty} $ by Gromov
   compactness. Openness at the infinite neck length parameter is
   proved by Lemma \ref{lem:cpt-stab}. The subset
   $\J^{\reg,E} \subset \J^{\reg,E,\infty}$ is comeager by an
   application of Sard's theorem : The universal moduli space
   \[\M_\J:=\{u_\nu:\P^1 \to X^\nu: u_\nu \text { is
       $J^\nu$-holomorphic}, \JJ \in \J^{\reg,E,\infty}, \nu \in
     [1,\infty) \} \]
   is a Banach manifold and the projection
   $\pi_\J:\M_\J \to \J^{\reg,E,\infty}$ is a Fredholm map. Therefore,
   the subset $\J^{\reg,E}$ of regular values of $\pi_\J$ is comeager.

   Finally, let $E_k \to \infty$ be any sequence of real numbers with
   limit infinity.  The set of almost complex structures
   \[ \J^{\reg} = \bigcap_{k = 1}^\infty \J^{\reg,E_k} \]
   that is stabilizing for all $\nu \in [1,\infty]$ is the
   intersection of the set of $E_k$-stabilizing almost complex
   structures for all $k$.  The intersection $\J^{\reg}$ is non-empty
   because each of the sets in the intersection is open and dense.
 \end{proof}

 The following Lemma, used above in the proof of Proposition
 \ref{prop:stab-broken}, is an openness statement for stabilizing
 almost complex structures at $\nu=\infty$.

\begin{lemma}\label{lem:cpt-stab}
  Suppose $\bD \subset \XX$ is a cylindrical broken divisor, and
  $\JJ \in \J^\cyl(\XX,\bD)$ is a tamed adapted almost complex
  structure that is $E$-stabilizing.  Suppose the divisors
  $D^\nu \subset X^\nu$ are obtained by gluing, and the sequence
  $J^\nu \in \J(X^\nu, D^\nu)$ converges to $\JJ$. Then, there exists
  $\nu_0$ such that $J^\nu$ is $E$-stabilizing for $\nu \geq \nu_0$.
\end{lemma}

The proof of the Lemma requires terminology from Chapter
\ref{chap:hof} and Section \ref{sec:horiz-conv}. The proof is by a
hard rescaling argument similar to the one used in the proof of Gromov
convergence for breaking maps.

\begin{proof}[Proof of Lemma \ref{lem:cpt-stab}] Suppose
  $u_\nu : C \to X^\nu$ is a sequence of non-constant
  $J^\nu$-holomorphic spheres with area $\leq E$ that are not
  stabilized. That is, either the images are contained in the divisor
  $D^\nu$ or they have $\leq 2$ distinct points of intersection with
  the divisor.  
  
  First, consider the situation that the derivatives of $u_\nu$ are
  uniformly bounded.  By Lemma \ref{lem:cptconv}, 
  after passing to a subsequence,
  the sequence $u_\nu$ converges horizontally in
  some polytope $P$. 
  By Lemma \ref{lem:hcp}, there is a sequence of
  translations $t_\nu \in \nu P^\dual$ such that a subsequence of
  $\e^{-t_\nu}u_\nu$ uniformly converges to a non-constant
  $\JJ_P$-holomorphic map $u: C \to \XX_P$, that is unstabilized.  The
  projection $\pi_P \circ u : C \to X_P$ is non-constant, because it
  is not possible for the image of $u$ to be non-constant and lie in a
  single fiber of $\pi_P$, which is isomorphic to $T_{P,\C}$.  By the
  definition of $P$-Hofer energy, for closed curves in $X_P$,
  \[\bran{(\pi_P \circ u)_*[C], \om_{X_P}} \leq E_{P,\Hof}^*(\pi_P \circ u).\]
  We have 
  \[ E_{P,\Hof}^*(\pi_P \circ u) \leq \lim_\nu E_\Hof(u_\nu) = \bran{(u_\nu)_*[C],\om_X} \leq E,\]
  where the first inequality is by Proposition
  \ref{prop:hofer-breaking}, and from the fact that Hofer energy is
  monotonic for maps that are holomorphic with respect to a locally
  strongly tamed almost complex structure (Lemma
  \ref{lem:monot-strong}), and the equality is from Remark
  \ref{rem:hof-area}. As a consequence,
  \[ \Area(u)=\bran{(\pi_P \circ u)_*[C], \om_{X_P}} \leq E .\]

  If the derivatives on $u_\nu$ are not uniformly bounded, we produce
  a sphere by hard rescaling. By following the procedure in Step 2 of
  the proof of Theorem \ref{thm:cpt-breaking}, we obtain a rescaled
  sequence of $J^\nu$-holomorphic maps $v_\nu : B_{r_\nu} \to X^\nu$
  on balls $B_{r_\nu}$ that exhaust $\C$, a polytope $P$, and a
  sequence of translations $t_\nu \in \nu P^\dual$ such that
  $\e^{-t_\nu}u_\nu$ converges in $C^\infty_\loc$ to a non-constant
  map $v:\C \to \XC_P$. In this preceding step, the rescaled maps are
  found in the same way as in \cite[Lemma 4.6.5]{ms:jh}, the polytope
  $P$ and translation sequences are given by Lemma \ref{lem:hcp}.  As
  in Step 2 in the proof of Theorem \ref{thm:cpt-breaking}, the map
  $v$ has finite $P$-Hofer energy, and by Proposition
  \ref{prop:remsing}, the infinite end of $v$ is asymptotic to a
  trivial cylinder. Therefore, the projection $\pi_P \circ v$ extends
  to $\pi_P \circ v:C_0 \to \ol X_P$, for some weighted projective
  space $C_0=\P(1,n)$.  The projection
  $\pi_P \circ v: C_0 \to \ol X_P$ is non-constant: Otherwise, the
  image of $v(\C)$ is in a fiber $V_{P^\dual}$ which is a toric
  variety, and the map extends to $v: C_0 \to V_{P^\dual}$ with only
  one point $\infty \in C_0$ that maps to toric divisors
  $V_{Q^\dual}, Q \supset P$ of $V_{P^\dual}$, and therefore, $v$ is
  constant.  By the argument in the previous paragraph,
  $\Area(v)=\bran{(\pi_P \circ v)_*[C_0], \om_{X_P}} \leq E$.

  In both cases, the limit (orbifold) sphere is not stabilized and the
  Lemma follows.
\end{proof}

\chapter{Coherent perturbations and regularity}\label{chap:transv}

In order to obtain the necessary transversality for moduli spaces of
treed holomorphic maps, we use the Cieliebak-Mohnke perturbation
scheme \cite{cm:trans}.  This scheme has been adapted to define Fukaya
algebras of Lagrangians by Charest-Woodward \cite{cw:traj}.  The Morse
function on the Lagrangian submanifold and the almost complex
structure on the broken manifold are allowed to be
domain-dependent. Stabilizing divisors from the previous chapter are
essential in defining domain-dependent perturbations.

A domain-dependent perturbation is defined as a map from a universal
curve to the space of tamed almost complex structures.  We recall from
Chapter \ref{chap:brokendisks} that domain curves are treed disks.  For
any type $\Gamma$ of treed disks the moduli space $\M_\Gamma$ of treed
disks has a universal curve
\[\U_\Gamma \to \M_\Gamma\]
whose fiber over a point $[C] \in \M_\Gamma$ is isomorphic to $C$.
The perturbation datum is then a map from $\U_\Gamma$ to the space of
cylindrical almost complex structures on the broken manifold and Morse
functions on the Lagrangian submanifold.  Under this perturbation
scheme, marked points on the domain curve are the inverse images of
the intersection of the holomorphic map with the stabilizing
divisor. The stabilizing divisor is Poincar\'e dual to a large
multiple of the symplectic form. This ensures that generically there
are no holomorphic spheres in the divisor, and that any non-constant
holomorphic sphere has enough divisor intersections to ensure that its
domain is stable.

Domain-dependent perturbations are necessary for solving the multiple
cover problem. Indeed, for a fixed almost complex structure the
compactification of the moduli space of pseudoholomorphic maps may
contain nodal maps with components that are multiple covers of maps
with negative Chern class. It is not possible to achieve
transversality for the moduli space of such maps. Multiple covers can
be perturbed away by domain-dependent perturbations. As a toy example,
suppose for a domain-independent almost complex structure $J_0$, there
is a simple $J_0$-holomorphic sphere $S \subset X$ with negative Chern
number and one intersection point with the stabilizing divisor. Via a
domain-dependent perturbation of $J_0$, we can ensure that an
$n$-covered sphere ($n>1$) homologous to $n[S]$ does not occur in the
moduli space of perturbed holomorphic maps. Indeed, for such a cover
the domain would have $n$ marked points labelled $z_1,\dots,z_n$, and
the perturbation is not required to be invariant under automorphisms
of the domain curve that permute the marked points.  Breaking the
permutation symmetry adds a multiplicative factor of $\frac 1 {n!}$ to
the curve count (see Remark \ref{rem:pertsheet}).
\label{marklabels}

\section{Domain-dependent perturbations}\label{sec:domdep}
A domain-dependent perturbation consists of a domain-dependent almost complex structure on the surface parts and a domain-dependent Morse function on the treed segments of treed disks.
Domain-dependent perturbations $\Pe_\Gamma$ are defined for each
domain type $\Gamma$, and later in the section we describe a set of
\em{coherence} conditions on the set of perturbation data
$(\Pe_\Gamma)_\Gamma$ to ensure that the perturbations on various
strata fit in together in an expected manner.

 To describe perturbations for a domain type,  
we first fix subsets of the universal treed disk outside of which
perturbations vanish.  Let $\Gamma$ be a combinatorial type of treed
disk and $\ol{\U}_\Gamma = \ol{\S}_\Gamma \cup \ol{\T}_\Gamma$ its
universal curve from \eqref{eq:univcurv}.  Fix a compact subset
\[  \ol{{\T}}^{\on{cp}}_{\Gamma} \subset \ol{{\T}}_{\Gamma} \]
in the complement of breaking points and disk nodes (that is, disk
nodes $w_e$ where the length $\ell(e)$ of the treed segment is zero),
and containing in its interior, for every edge
$e \in \Edge_\white(\Gamma)$ and every curve
$C \subset \ol \U_\Gamma$, at least one point $z \in T_e \subset C$ on
any infinite segment.  Also fix a compact subset
\[  \ol{\S}_\Gamma^{\on{cp}} \subset \ol{\S}_\Gamma - \{ w_e \in
\ol{\S}_\Gamma, \ e \in \Edge_-(\Gamma) \} \]
disjoint from the boundary and spherical nodes
$w_e \in C, e \in \Edge(\Gamma)$, containing in its interior at least
one point $z \in S_v$ on each sphere and disk component
$S_v \subset C$ for each fiber $C \subset \ol{\U}_\Gamma$.
Furthermore, the complement
$ \ol{{\S}}_{\Gamma} - \ol{{\S}}^{\on{cp}}_{\Gamma} \subset
\ol{{\S}}_{\Gamma} $
is a neighbourhood of the boundary and nodes; these neighborhoods must
be chosen compatibly with those already chosen on the boundary for the
inductive construction later.

Following Floer \cite{floer:grad}, we use a $C^\veps$-topology on the
space of almost complex structures.  For a section $\xi$ of a vector
bundle $E \to X$, the $C^\veps$-norm is
\[\Mod{\xi}_{C^\veps}:= \ssum_{k=0}^\infty \veps_k
\Mod{\xi}_{C^k(X,E)}.\]
Here $\veps=(\veps_i)_{i \in \N}$ is a fixed sequence of positive
numbers that converges fast enough to $0$ as $i \to \infty$. If the
convergence is sufficiently rapid, then the space of sections with a
bounded norm is a Banach space \cite[Lemma 5.1]{floer:grad} and
contains sections supported in arbitrarily small neighbourhoods of
$X$.

\begin{definition} {\rm (Perturbation data)} 
\label{def:domdep}
  \begin{enumerate}
  \item {\rm (Domain-dependent Morse functions)} \index{Morse function on the Lagrangian} Suppose that $\Gamma$ is a type of stable
    treed disk, and $\ol{\T}_\Gamma \subset \ol{\U}_\Gamma$ is the
    tree part of the universal treed disk.  Let
    \[ (F_0:L \to \R,G: T^{\otimes 2} L \to \R) \]
    be a Morse-Smale pair.  For an integer $l \ge 0$ a \em{
      domain-dependent perturbation} of $F_0$ of class $C^l$ is a $C^l$
    map
    \index{Perturbation! Domain-dependent perturbation}
    \begin{equation} \label{eq:FGam} F_{\Gamma}: \ol{{\T}}_{\Gamma}
      \times L \to \R \end{equation}
    equal to the given function $F$ away from the compact part:
    \[ F_\Gamma | (\ol{{\T}}_{\Gamma} - \ol{{\T}}^{\on{cp}}_{\Gamma})
    = \pi_2^* F_0 \]
  where $\pi_2$ is the projection on the second factor in
  \eqref{eq:FGam}.  Here $F_0$ is called the \em{background Morse
    function} for the domain-dependent perturbation $F_\Gamma$. The
  set of critical points of the background Morse function is denoted
  by
  \begin{equation}
    \label{eq:Igeom}
    \cI(L):=\crit(F).
  \end{equation}
\item {\rm (Domain-dependent almost complex structure)}
  \label{part:domdepJ}
  Let $\JJ_0 \in \J^{\on{cyl}}(\XX)$ be a locally strongly tamed
  cylindrical almost complex structure.  A \em{domain-dependent
    almost complex structure} of class $C^\veps$ for treed disks of
  type $\Gamma$ with \em{background almost complex structure} $\JJ_0$
  \index{Background almost complex structure} is a map from the
  two-dimensional part $\ol{{\S}}_{\Gamma} $ of the universal curve
  $\ol{{\U}}_{\Gamma}$ to $ \J^\cyl(\XX)$ given by
  \begin{equation}
    \label{eq:Jvary}
    \JJ_{\Gamma} : \ \ol{{\S}}_{\Gamma} \to \J^{\on{cyl}}(\XX)
    \footnote{Here we do not say that $\JJ_\Gamma$ maps to the locally tamed elements in $\J^\cyl(\XX)$.  But later in Definition \ref{def:adapt-pert} we constrain $\JJ_\Gamma$ to take values in a small enough neighborhood of $\JJ_0$ that automatically ensures local tamedness.}
  \end{equation}
  equal to the given $\JJ_0$ away from the compact part:
  \begin{equation}
    \label{eq:basej0}
    \JJ_\Gamma | 
    (\ol{{\S}}_\Gamma - \ol{{\S}}^{\on{cp}}_\Gamma ) = \JJ_0,     
  \end{equation}
  and on any fiber $S_\Gamma \subset \ol \S_\Gamma$, $\JJ_\Gamma - \JJ_0$
  has finite norm in $C^\veps(S_\Gamma \times \XX, \End(T\XX))$.
\item {\rm (Perturbation data)}  A perturbation datum for a type $\Gamma$ of stable treed disks is a
  pair $\Pe_\Gamma = (F_\Gamma,\JJ_\Gamma)$ consisting of a
  domain-dependent Morse function $F_\Gamma$ and a domain-dependent
  almost complex structure $\JJ_\Gamma$.
\end{enumerate}
\end{definition}
The following morphisms on the set of combinatorial types of stable
treed disks are used to define coherence of perturbations.

\begin{definition} \label{def:pertops}
  {\rm(Morphisms on treed disk types)}
  \begin{enumerate}
  \item \label{item:cuttingedgesmorphism} 
  {\rm (Cutting edges)}
      \index{Cutting an edge}
There is a (Cutting edges) morphism $\Gamma \to \Gamma'$ between 
    combinatorial types $\Gamma$, $\Gamma'$ of stable treed disks 
    if and only if $\Gamma$ is
    obtained by cutting a boundary edge $e \in \Edge_{\white,-}$ of $\Gamma'$
    that contains a breaking; see Figure \ref{fig:cutedge}.

  \item \label{item:collapsingedgesmorphism} {\rm (Collapsing edges)}
    \index{Collapsing an edge}
  A morphism $\Gamma \to \Gamma'$ is a (Collapsing edges) morphism if $\Gamma'$ is obtained from $\Gamma$ by collapsing an interior edge $e \in \Edge_{\black,-}(\Gamma)$ or a boundary edge $e \in \Edge_{\white,-}(\Gamma)$ with length zero, $\ell(e)=0$.
  \item \label{item:makingedgelengthmorph} {\rm(Making an edge length finite or
      non-zero)}
      \index{Making an edge length finite/non-zero}
A morphism $\Gamma \to \Gamma'$ is a  (Making an edge length finite or non-zero) morphism if  $\Gamma'$ is obtained from $\Gamma$ by
changing the edge length of a boundary edge $e \in \Edge_{\white,-}(\Gamma)$ from infinite or zero to finite non-zero.
\end{enumerate}
\end{definition}

\begin{figure}[h]
  \centering \scalebox{.8}{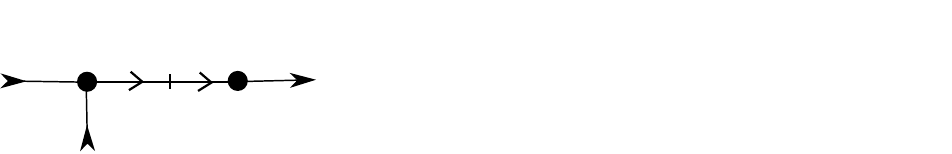}
  \caption{Cutting an edge $e$ relabels the boundary and interior markings while preserving their ordering on the pieces $\Gamma_0'$, $\Gamma_1'$.}
  \label{fig:cutedge}
\end{figure}

Perturbation data $\Pe_\Gamma$ for various treed disk types $\Gamma$
must be chosen coherently to obtain compactness.  For any two
treed types $\Gamma$, $\Gamma'$ related by a morphism from Definition \ref{def:pertops}, there is a
coherence condition relating $\Pe_\Gamma$, $\Pe_{\Gamma'}$.

  \begin{definition}\label{def:pertmorph}
    {\rm(Morphisms of perturbation data)}
\begin{enumerate} \label{defpi}
\item {\rm(Collapsing edges/Making an edge length finite/non-zero)}
  \index{Collapsing an edge} \index{Making an edge length finite/non-zero} Let $\Gamma' \to \Gamma$ be a (Collapsing
  edges/making an edge length finite/non-zero) morphism. 
  For perturbation data $\Pe_{\Gamma'}$,
  $\Pe_\Gamma$ there is a morphism $\Pe_{\Gamma'} \to \Pe_\Gamma$ if
  $\Pe_{\Gamma'}$ is induced by pullback of $\Pe_{\Gamma}$ under the
  natural inclusion of the universal curve
    \[\iota_\Gamma^{\Gamma'} : \ol{\U}_{\Gamma'} \to   \ol{\U}_{\Gamma}.\]
  \item {\rm(Cutting edges)}
    \index{Cutting an edge}
    Suppose $\Gamma \to \Gamma'$ is a (Cutting edges) morphism, where an edge $e \in \Edge_{\white,-}(\Gamma)$ is cut to yield leaf edges
    $e_+,e_- \in \Edge(\Gamma')$. There is a morphism of perturbation data $\Pe_{\Gamma} \to \Pe_{\Gamma'}$
    if $\Pe_{\Gamma'}$ is obtained by pushing forward $\Pe_{\Gamma}$ under the map
    \[
       \pi_\Gamma^{\Gamma'}: \ol{\U}_{\Gamma} \to \ol{\U}_{\Gamma'}\]
     defined by gluing at the leaf edges $e_\pm$ to form a single
     non-leaf edge $e$ with $\ell(e)=\infty$.  That is, define
\[ \JJ_{\Gamma'}(z',x) = \JJ_\Gamma(z,x), \quad \forall z \in
(\pi_\Gamma^{\Gamma'})^{-1}(z) .\]
The definition for the perturbed Morse datum $F_{\Gamma'}$ is similar.
\end{enumerate}

\end{definition}
We are now ready to define coherent collections of perturbation data.
These are data that behave well with each type of operation in
Definition \ref{def:pertops}.  \index{Perturbation! Coherent
  perturbation datum}
\begin{definition} \label{def:coherent} {\rm (Coherent families of
    perturbation data)} A collection of perturbation data
  $ \ul{\Pe} = (\Pe_\Gamma )_\Gamma $ is \em{coherent} if it is
  compatible with the morphisms of moduli spaces of different types in
  the sense that
  \begin{enumerate}
  \item \label{item:collapsing} {\rm (Collapsing edges)/(Making an
      edge length finite/non-zero)} \index{Collapsing an edge}
    \index{Making an edge length finite/non-zero} if $\Gamma$ is
    obtained from $\Gamma'$ by collapsing an edge or making an edge
    finite/non-zero, then $\Pe_{\Gamma'}$ is the pullback of
    $\Pe_{\Gamma}$;
  \item \label{item:cuttingedges} {\rm (Cutting edges) }
    \index{Cutting an edge} if $\Gamma$ is obtained from $\Gamma'$ by
    cutting a boundary edge $e \in \Edge_{\white,-}^\infty(\Gamma')$
    of infinite length, then $\Pe_{\Gamma'}$ is the push-forward of
    $\Pe_{\Gamma}$. Assuming $\Gamma$ is the union of types
    $\Gamma_1,\Gamma_2$, $\Pe_\Gamma$ is obtained from
    $\Pe_{\Gamma_1}$ and $\Pe_{\Gamma_2}$ as follows: For $k=1,2$, let
\[\pi_k: \ol{\M}_\Gamma \cong \ol{\M}_{\Gamma_1} \times
\ol{\M}_{\Gamma_2} \to \ol{\M}_{\Gamma_k}\]
denote the projection on the $k$-th factor, and therefore,
$\ol{\U}_\Gamma$ is the union of $\pi_1^* \ol{\U}_{\Gamma_1}$ and
$\pi_2^* \ol{\U}_{\Gamma_2}$.  We require that $\Pe_\Gamma$ is equal
to the pullback of $\Pe_{\Gamma_k}$ on $\pi_k^* \ol{\U}_{\Gamma_k}$:
\begin{equation} \label{eq:require} \Pe_\Gamma | \ol{\U}_{\Gamma_k} =
  \pi_k^* \Pe_{\Gamma_k} .\end{equation}
\end{enumerate}
We also require the perturbation data to satisfy the following
locality axiom which ensures that the perturbations on any component
only depend on special points on that component, and the length of the
treed segments on the boundary of the disk. We first set up some
notation: For a type $\Gamma$ underlying treed disks, and a vertex
$v \in \Ver(\Gamma)$, let $\Gamma(v)$ be a graph with a single vertex
$v$ and markings
\[\{e \in \Edge(\Gamma) : v \in e\}.\]
corresponding to each edge incident on $v$. \label{gammav} Let
$\U_{\Gamma,v} \subset \U_\Gamma$ be a fibration over $\M_\Gamma$
whose fiber over $m \in \M_\Gamma$ consists of the curve component
represented by $v$.  Define a map
\begin{equation}
  \label{eq:loc-proj}
  \pi_v : \U_{\Gamma,v} \to \U_{\Gamma(v)} \times ([0,\infty])^{|\Edge_{\white,-}(\Gamma)|},
\end{equation}
whose first component $\U_{\Gamma,v} \to \U_{\Gamma(v)}$ is the
natural projection map, and the second component is the length
function on boundary edges $e \in \Edge_{\white,-}(\Gamma)$.
\begin{itemize}
  \index{Locality axiom}
\item[] {\rm (Locality Axiom)}
\label{item:localityaxiom}
  The restriction of the perturbation
  datum $\Pe_\Gamma$ to $\U_{\Gamma,v}$ is the pullback via $\pi_v$ of
  some datum on
  $\U_{\Gamma(v)} \times ([0,\infty])^{|\Edge_{\white,-}(\Gamma)|}$.
\end{itemize}
This ends the Definition. \end{definition}

Let $C$ be a possibly unstable treed disk of type $\Gamma$.  The \em{
  stabilization} of $C$ is the stable treed disk $\st(C)$ of some type
$\st(\Gamma)$ obtained by collapsing unstable surface and tree
components.  Thus the stabilization $\st(C)$ of any treed disk $C$ is
the fiber of a universal treed disk $\U_{\st(\Gamma)}$.  Given a
perturbation datum for the type $\st(\Gamma)$, we obtain a
domain-dependent almost complex structure and Morse function for $C$,
still denoted $\JJ_\Gamma,F_\Gamma$, by pull-back under the map
$C \to \U_{\st(\Gamma)}$. If $\Gamma$ does not contain vertices,
i.e. if $C$ is a single infinite segment $T_e, e \in \Edge(\Gamma)$,
then the perturbation $\Pe_\Gamma$ vanishes on $C$.

\begin{remark} \label{rem:cohmot}
  \index{Collapsing an edge}
  \index{Making an edge length finite/non-zero}
  \index{Cutting an edge}
  Coherence conditions for perturbation data are motivated as follows.
  \begin{enumerate}
  \item If a sequence $C_\nu$ of (domain) curves of type $\Gamma$
    converges to a limit curve $C$ of type $\Gamma'$, we would like
    the perturbation data on $C_\nu$ to converge to the perturbation
    datum on $C$. If $\Gamma' \neq \Gamma$ then there is a
    \hyperref[item:collapsing]{(Collapsing edges)/(Making an edge
      length finite/non-zero)} morphism $\Gamma' \to \Gamma$, and the
    convergence of the perturbation datum is ensured by coherence
    under \hyperref[item:collapsing]{(Collapsing edges)/(Making an
      edge length finite/non-zero)}.
  \item Suppose $\Gamma \to \Gamma'$ is a cutting edges morphism, and
    suppose the disconnected type $\Gamma'$ has components
    $\Gamma'_0$, $\Gamma_1'$.  Once moduli spaces of perturbed
    holomorphic maps in a symplectic manifold $X$ are defined, we
    would like to be able to say that the moduli space $\M_\Gamma(X)$
    of maps with domain type $\Gamma$ is a product
    \[\M_\Gamma(X)=\M_{\Gamma_0'}(X) \times \M_{\Gamma_1'}(X),\]
    The product relation would hold only if the perturbation data is
    coherent under \hyperref[item:cuttingedges]{(Cutting edges)}.
  \end{enumerate}
  
\end{remark}

\begin{remark} 
  {\rm(On the locality axiom)} The locality axiom ensures that
  forgetting a marking $z_{e'}$ on a treed curve $C$ affects the
  perturbation datum only on the component containing $z_{e'}$. This
  feature is used in Proposition \ref{prop:noghosts}.  The dependence
  on the boundary edge lengths is useful for the following
  reason. Suppose $\Gamma$ is a combinatorial type of treed disk
  depicted in Figure \ref{fig:cutedge}.  By cutting an edge
  $e \in \Edge(\Gamma)$, we obtain two identical types. The cutting
  edge axiom requires that a coherent perturbation datum $\Pe_\Gamma$
  for $\Gamma$ is equal after restriction to the universal curve for
  the type $\Gamma'$ on both sides of the edge $e$. If in the locality
  axiom, the perturbation is $\Pe_\Gamma$ defined by pulling back by
  the map $\pi_v : \U_{\Gamma,v} \to \U_{\Gamma(v)}$, then the
  perturbation datum on both surface components will be required to be
  equal even when the edge length of $e$ is finite. This creates a
  problem, because in order to obtain transversality in the case
  $\ell(e)=0$, we need the perturbation datum on both surface
  components to be independent of each other.
\end{remark}

\begin{remark}\label{rem:gwpert}
  {\rm(Perturbations for moduli spaces of spheres)} The coherence
  conditions required to define moduli spaces of spheres are much
  simpler than those required for treed disks: The only coherence
  condition is \hyperref[item:collapsing]{(Collapsing edges)}, which
  translates to the statement that the perturbation datum is
  continuous on the compactified moduli space $\ol \M_n$ of $n$-marked
  spheres. An important difference from the disk case is that the
  perturbation datum on $\ol \M_n$ can be defined independently of
  $\ol \M_k$ for any $k<n$.
\end{remark}

We require perturbations to be adapted to the stabilizing pair
$(\JJ_0,\DD)$ in the sense that $\JJ_0$ is the background almost
complex structure; the domain-dependent almost complex structures
$\JJ_\Gamma$ are adapted to the stabilizing divisor (as in Definition
\ref{def:j-adapt}) in that the tangent space of the stabilizing
divisor is $\JJ_\Gamma(z)$-invariant for all points $z \in \ol \U_\Gamma$;
and finally, the domain-dependent almost complex structures take
values in small enough neighborhoods of $\JJ_0$, with the size of these
neighborhoods being smaller for types $\Gamma$ with more interior
markings. The reason for the last requirement is explained later in
Remark \ref{rem:stabrem} after perturbed holomorphic maps are defined.

\begin{definition} \label{def:adapt-pert} {\rm(Perturbations adapted
    to a stabilizing divisor)} Let $k \gg 0$, and $(\JJ_0,\bD)$ be a
  stabilizing pair on the broken manifold $\XX$ (as in Definition
  \ref{def:stab-brok}), such that $D_P \subset \ol X_P$ is dual to
  $k[\om_{X_P}]$ for all polytopes $P \in \PP$.  Furthermore, let
  $\bD$ be disjoint from the Lagrangian submanifold $L$.  Suppose
  $\Gamma$ is a type of treed disk. A perturbation datum
  $\Pe_\Gamma=(\JJ_\Gamma, F_\Gamma)$ on $(\XX,L)$ is adapted to the
  pair $(\JJ_0,\bD)$ if
  \begin{itemize}
  \item $\JJ_0$ is the background almost complex structure for
    $\JJ_\Gamma$, meaning that each $\JJ_\Gamma$ is a domain-dependent
    perturbation of $\JJ_0$, and 
  \item for any treed curve $C=S \cup T$, and a connected
    component $S' \subset S$ with $d_\black(S')$ interior markings,
  \begin{equation}
    \label{eq:enbhd}
    \JJ_\Gamma(S') \subset \J^{\cyl}(\XX,\DD;\JJ_0,\tfrac 1 k 
    d_\black(S')) \cap \U_{\JJ_0}.
  \end{equation}
  Here
  \[\J^{\cyl}(\XX,\DD;\JJ_0,\tfrac {1}{k}d_\black(S')) \subset
    \{\JJ \in \J^{\cyl}(\XX,\DD) : \JJ|T\bD = \JJ_0|T\bD\}\]
  is the neighbourhood of $\JJ_0$ consisting of $\frac
  {1}{k}d_\black(S')$-stabilizing almost complex structures (see
  Proposition \ref{prop:stab-broken}), and $U_{\JJ_0} \subset \J^\cyl(\XX)$ is a neighborhood of $\JJ_0$ on which the results on Hofer energy hold (see Lemma \ref{lem:monot}).
  \end{itemize}
  The set of perturbation data adapted to $(\JJ_0,\DD)$ is denoted by
  $\PPe_\Gamma(\XX,\JJ_0,\DD)$.
\end{definition}

\section{Perturbed maps}

Given domain-dependent perturbations adapted to a stabilizing divisor 
as in the previous section, we define 
perturbed pseudoholomorphic maps and their moduli spaces.

\begin{definition} {\rm (Perturbed holomorphic treed disks)}
  \label{def:pert-unbroken}
  Given a rational symplectic manifold $(X,\om_X)$, a rational Lagrangian $L \subset X$, 
  a coherent perturbation datum $\ul \Pe=\{\Pe_\Gamma=(J_\Gamma,F_\Gamma)\}_\Gamma$ on $X$ 
  adapted to a stabilizing divisor $D$ (as in Definition
  \ref{def:adapt-pert}), a
  \em{perturbed holomorphic treed disk} $u$ of stable domain type
  $\Gamma$ consists of 
  \begin{enumerate}
  \item a treed disk $C = S \cup T$ of type
  $\Gamma$, and
  \item a continuous map
\[ u: C \to X,  \] 
  \end{enumerate}
  such that the following hold:
  \begin{enumerate}
  \item {\rm (Boundary condition)} The tree components and the
    boundary of the surface components are mapped to the Lagrangian
    submanifold :
    \[u(\partial S \cup T) \subset L.\]
  \item {\rm (Surface equation)} On the surface part $S$ of $C$, the
    map $u$ is holomorphic with respect to the given domain-dependent
    almost complex structure: if $j$ denotes the complex structure on
    $S$ then
    \begin{equation}
      \label{eq:Jpert}
      J_\Gamma(z,u(z)) \ \d u_S = \d u_S \ j.   
    \end{equation}
  \item {\rm (Boundary tree equation)} On the boundary tree part $T
    \subset C$ the map $u$ is a collection of gradient trajectories:
    \begin{equation}
      \label{eq:Fpert}
    \dds u_T = -\grad_{ F_\Gamma(s,u(s)) } u_T   
    \end{equation}
    where $s$ is a coordinate on the segment so that the segment has
    the given length.  Thus, for each treed edge
    $e \in \Edge_{\white,-}(\Gamma)$ the length of the trajectory
    $u |_{T_e}$ is equal to $\ell(e)$.
  \item {\rm(Adapted to stabilizing divisor)}
\label{part:u-adapt-d}
    Each interior marking
  $z_e, e \in \Edge_{\black,\to}(\Gamma)$ maps to $D$ under $u$ and
  each connected component $C'$ of $u^{-1}(D) \subset C$ contains an
  interior marking.
  \end{enumerate}
  We say that the map $u$ is \em{$\Pe_\Gamma$-holomorphic}. 
  A perturbed holomorphic disk $u$ with an unstable domain type $\Gamma$
  is defined
  similarly except that $u_v, u_e$ are
  $\cP_{\st{\Gamma}}$-holomorphic, where $\st{\Gamma}$ is the
  stabilization of $\Gamma$. On surface and treed components that get collapsed in the stabilization, the map $u$ is pseudoholomorphic with respect to the background data $(J_0,F_0)$.
\end{definition}

\begin{remark} \label{rem:stabrem} Our domains below are stable on the
  surface parts because of the additional markings arising from
  intersections with the stabilizing divisor. 
  So the content of the last sentence is that
  on broken Morse trajectories, for segments that are infinite in both
  directions the gradient flow equation is unperturbed. \end{remark}

Perturbed holomorphic disks in broken manifolds are defined analogously to the smooth version in Definition
\ref{def:pert-unbroken}. 
\begin{definition} \label{def:pdisks} {\rm(Perturbed holomorphic
    broken treed disks)}
  Given
  a broken manifold $\XX:=\XX_\PP$ with rational symplectic cut spaces $X_P$, $P \in \PP$, a rational Lagrangian
  $L \subset X_{P_0}$, 
  a coherent perturbation datum
  $\Pe=\{\Pe_\Gamma=(\JJ_\Gamma,F_\Gamma)\}_\Gamma$ on $\XX$ that is adapted to a broken stabilizing
  divisor $\DD$ (as in Definition \ref{def:adapt-pert}), 
  a \em{perturbed holomorphic broken treed disk} 
  $u$ of stable domain type $\Gamma$ 
   consists of
  \begin{enumerate}
  \item a treed disk $C = S \cup T$ of type $\Gamma$;
  \item a tropical structure $\cT$ on $\Gamma$; 
  \item a collection of maps on surface components
    \[ u_v: S_v^\circ \to \XX_{P(v)}, \quad v \in
    \Ver(\Gamma) \]
  that are $\JJ_\Gamma$-holomorphic as in \eqref{eq:Jpert} where
  $S_v^\circ $ is the punctured domain curve with lifts of tropical
  nodal points deleted (see \eqref{eq:opensv}), satisfy the Lagrangian
  boundary condition $u_v( \partial S_v ) \subset L$ for all
  $v \in \Ver_\white(\Gamma)$ and are adapted to the stabilizing divisor $\DD$ (as in Definition \ref{def:pert-unbroken} \eqref{part:u-adapt-d}); and 
  \item a collection of $F_\Gamma$-gradient flow lines as
  in \eqref{eq:Fpert}
   \[ u_e: T_e \to L \subset X_{P_0},  \quad e \in 
    \Edge_{\white}(\Gamma) \]
  on the treed parts, 
  that satisfy all the matching conditions on
  broken treed holomorphic maps (Definition \ref{def:ubmap}).
  \end{enumerate}
  We say that the map $u$ is \em{$\Pe_\Gamma$-holomorphic}. 
\index{Holomorphic disks!$\ul \Pe$-holomorphic disks}
  Broken maps on unstable domain types are defined similarly as the unbroken case in
  Definition \ref{def:pert-unbroken}.
\end{definition}
%



\begin{remark}
  We explain why we use $\frac {1}{k}d_\black(S')$-stabilizing almost
  complex structures in \eqref{eq:enbhd}.  Let
  $\ul J=(J_\Gamma)_\Gamma$ be a collection of coherent
  domain-dependent almost complex structures satisfying
  \eqref{eq:enbhd}.  Let $S_\nu$ be a sequence of nodal curves of type
  $\Gamma$ with $d(\black)$ markings.  Then the limit $u$ of any
  sequence $u_\nu:S_\nu \to \XX$ of $J_\Gamma$-holomorphic broken maps
  can not have an unstable domain component. Indeed, the area of the
  maps $u_\nu$ is $\frac {d_\black} k$, and so an unstable component
  $u_v$ of the $u$ has area $\leq \frac {d_\black} k$.  Therefore, $u$
  is holomorphic with respect to a domain-independent almost complex
  structure $\JJ_v$ which, by \eqref{eq:enbhd}, is
  $\frac {d_\black} k$-stabilizing.  This contradicts the existence of
  a non-constant map $u_v$.
\end{remark}

We now define moduli spaces of perturbed broken treed holomorphic disks. 
The moduli space of such maps is stratified by combinatorial type.

\begin{definition} \label{def:type-broken} \index{Combinatorial type!
    of a holomorphic broken treed disk} The \em{combinatorial type of
    an perturbed holomorphic broken treed disk} $u:C\to \XX$ adapted to a
  broken divisor $\DD \subset \XX - L$ consists of
    \begin{enumerate}
    \item the combinatorial type $\Gamma$ of its domain $C$,
    \item the tropical structure on $\Gamma$, which consists of an
      assignment of polytopes for vertices, and {direction}s for tropical
      edges :
      \[\Ver(\Gamma) \ni v \mapsto P(v) \in \PP, \quad
        \Edge_\trop(\Gamma) \ni e \mapsto \cT(e) \in \t_{P(e),\Z};\]
    \item a labelling
      \begin{equation*}
        d(v) :=(\pi_{P(v)} \circ u_v)_*[(S_v,\partial S_v)] \in
        \begin{cases}
          H_2(\ol X_{P_0}, L), \quad v \in \Ver_\white(\Gamma),\\
          H_2(\ol X_{P(v)}), \quad v \in \Ver_\black(\Gamma)
        \end{cases}
      \end{equation*}
      that assigns to each vertex $v$ of $\Gamma$ the homology class
      of the disk/sphere obtained by extending
      $\pi_{P(v)} \circ u_v: S^\circ_v \to X_{P(v)}$ over the
      punctures corresponding to nodal lifts; \footnote{In case the
        space $\XX_{P(v)}$ has an orbifold compactification
        $\XX_{P(v)}$ (see Remark \ref{rem:compactify}), the homology
        class of $(u_v)_*[S_v] \in H_2(\ol \XX_{P(v)})$ can be read
        off from $d(v)$ and the tropical graph.}
    \item a labelling
      \begin{equation*}
        \mu_{\bD}:\Edge_{\black,\rightarrow}(\Gamma) \to \Z_{> 0}
      \end{equation*}
      that records the order of tangency of the map $u$ to the divisor
      $\bD$ at markings that do not lie on horizontally constant
      components (with the convention that a transverse intersection
      has order $1$).
    \end{enumerate}
    The type is denoted simply as $\Gamma$, suppressing
    $\cT,d,\mu_{\bD}$ in the notation, or by $\Gamma_X$ if we wish to
    distinguish a type of map $\Gamma_X$ from a type of treed disk
    $\Gamma$.
  \end{definition}

  We introduce the following notations for moduli spaces.
  Given a collection of coherent perturbation data $\ul \Pe=(\Pe_\Gamma)_\Gamma$, 
  let 
  $\tM^\br(L,\ul \Pe)$ be the moduli space of isomorphism classes of
  stable treed broken holomorphic disks in $\XX$ with boundary in
  $L$, where isomorphism is modulo
  reparametrizations of domains and is defined in Definition
  \ref{def:iso-stab} \eqref{part:mapiso}.  \index{Moduli space!of broken maps $\M_\Gamma^\br(L)$}
  Let
  \[ \tM_{\Gamma}^\br(L,\Pe_\Gamma) \subset \tM^\br( L,\ul \Pe) \]
  be the \em{moduli space of broken maps} of combinatorial type
  $\Gamma$ modulo the action of domain reparametrizations.
  We drop the perturbation datum $\ul \Pe$ from the notation if it is obvious from
  the context.    We recall
  that domain reparametrizations are isomorphisms of broken maps as in
  Definition \ref{def:iso-stab} \eqref{part:mapiso}.  The group of tropical symmetries
  $T_\trop(\Gamma)$ acts naturally on $\tM_\Gamma^\br(L, \Pe_\Gamma)$.
  The quotient
\[ \M_{\Gamma,\red}^\br(L,\Pe_\Gamma) := \tM_\Gamma^\br(L,\Pe_\Gamma) /
T_{\on{trop}}(\Gamma) \]
\index{Moduli space!of broken maps, reduced $\M_{\Gamma,\red}^\br(L)$}
is the \em{reduced moduli space of broken maps} of combinatorial type
$\Gamma$.  For $\ul x\in \cI(L)^{d(\white)+1}$, let
\begin{equation}
  \label{eq:reduced-def}
  \tM_{\Gamma}^{\br}(L,\Pe_\Gamma,\ul{x}) \subset \tM_{\Gamma}^{\br}(L,\Pe_\Gamma)
\end{equation}
denote the subset of perturbed holomorphic treed disks of type
$\Gamma$ with limits
$\ul{x} = (x_0 ,\dots, x_{d(\white)}) \in \cI(L)$ along the root
and leaves. The union over all types with $d(\white)$ incoming leaves
is denoted
\[\tM_{d(\white)}^{\br}( L,\ul \Pe) := \bigcup_{\Gamma , \ul x} \tM_{\Gamma}^{\br}( L,\Pe_\Gamma,\ul{x}) .\]
The space ${\tM}_{d(\white)}^{\br}( L,\ul \Pe)$ has a natural topology
for which convergence is a version of Gromov convergence defined in
Chapter \ref{chap:cpt}.  For a type $\Gamma$ and labels $\ul x$ on leaves, the expected dimension of the moduli space $\tM_{\Gamma}^{\br}( L,\Pe_\Gamma,\ul{x})$ of broken maps is given by the index $i^\br(\Gamma, \ul x)$ defined in \eqref{eq:expdim} in Section \ref{sec:index}. For $d=0,1$, we denote by
\begin{equation}
  \label{eq:moduli-d}
\tM_{d(\white)}^{\br}( L,\ul \Pe)_d := \bigcup_{(\Gamma , \ul x): i^\br(\Gamma,\ul x)=d} \tM_{\Gamma}^{\br}( L,\Pe_\Gamma,\ul{x})   
\end{equation}
the union of $d$-dimensional strata with $d(\white)$ incoming boundary edges in the moduli space of broken maps. In this book, we show that the
compactifications of zero and one-dimensional components of the moduli
space $\tM_{d(\white)}^{\br}(L,\ul \Pe)^{< E}$ are manifolds.  For
one-dimensional components, the boundary consists of configurations
with disk bubbling.  For zero and one-dimensional components of
$\tM_{d(\white)}^{\br}(L,\ul \Pe)$ the symmetry tropical group is finite.

\begin{remark}\label{rem:pertsheet} 
  {\rm(Perturbed maps and curve counts)} A perturbed holomorphic map
  $u$ in a zero-dimensional moduli space makes a contribution of
  $\frac 1 {d(\black)!}$ to curve counts. Here $d(\black)$ is the
  number of interior leaves of $u$, and hence is the number of
  intersections of the map $u$ with the stabilizing divisor. To
  justify this multiplicative factor, let us consider the special case
  where for a certain type of map, the moduli space can be regularized
  using a domain-independent perturbation. Then there are $d(\black)!$
  ways of labelling the divisor intersections as
  $z_1,\dots,z_{d(\black)}$.  Each of these labellings is a different
  map in the moduli space, and therefore a corrective factor of
  $\frac 1 {d(\black)!}$ needs to be inserted.
\end{remark}

\section{Fredholm theory for broken maps}\label{sec:fredholm-broken}
%


We introduce a weighted Sobolev space needed for the transversality
 result.
The norm is defined by viewing the domain as having punctures
that map to cylindrical ends in the target. With this Sobolev
completion, we can enforce higher order tangencies with relative
divisors. We focus on surface components of a broken map, since the
transversality and gluing results use a standard norm on tree components.
Moduli spaces of broken holomorphic maps are cut out as zero sets of a Cauchy-Riemann operator on the weighted Sobolev space. 
Later in the section, we define another Cauchy-Riemann operator on the 
Sobolev completion of the space of maps on compact curves, and show that
the linearizations of both Cauchy-Riemann operators have kernels and
cokernels of the same dimension as each other. The latter operator is
often useful for index computations.

The Sobolev norm is defined component-wise for a broken map.  We start
by defining a \em{relative map}, which is a single component of a
broken map.  Tropical nodes in the broken map appear as markings on a
relative map, called \em{tropical markings}; they correspond to
 one-sided edges $\Edge_\trop(\Gamma)$ on the combinatorial type
$\Gamma$ of the relative map.

\begin{definition}\label{def:relmap}
  {\rm(Relative map)} A \em{relative type} is a graph $\Gamma$ with a
  single vertex $v$, a collection $\Edge_\trop(\Gamma)$ of edges each
  with a single end-point, a polytope $P(v) \in \PP$, {direction}s for the
  edges
  \[ \cT(e) \in \t_{P(e),\Z}, \quad e \in \Edge_\trop(\Gamma), \]
  and homology data $ d(v) \in H_2(\ol X_{P(v)})$.  A \em{relative
    map} is a map $u: C^\circ \to \XC_{P(v)}$ from a punctured
  domain $C^\circ:=C \bs \{z_e : e \in \Edge_\trop(\Gamma)\}$ where
  $C$ is a disk or sphere, to the manifold $\XC_{P(v)}$ with
  cylindrical ends, such that at a puncture $z_e$ $u$ is asymptotic to
  a $\cT(e)$-cylinder, and the extension of $\pi_{P(v)} \circ u$ over the
  punctures has homology class $d(v) \in H_2(\ol X_{P(v)})$.
\end{definition}

We introduce the space of smooth relative maps whose Sobolev
completion will be given later.  Let $(C,j)$ be a connected Riemann
surface with a set $\{z_e: e \in \Edge_\trop(\Gamma)\}$ of tropical
markings.  Let
\begin{equation}
  \label{eq:cmnodes}
  C^\circ:=C \bs \{z_e: e \in \Edge_\trop(\Gamma)\}  
\end{equation}
be the curve with the tropical marked points removed.  For the purpose
of defining weighted Sobolev norms, we introduce cylindrical
coordinates
\begin{equation}
  \label{eq:sete}
  (s_e,t_e): U_{z_e}\bs \{z_e\} \to [0,\infty) \times S^1  
\end{equation}
on the neighborhood $U_{z_e} \subset C$ of every puncture $z_e$.  Let
\[\Map_\Gamma(C^\circ,\XC_P)\]
be the set of relative maps, which at a puncture
$z_e \in C \bs C^\circ$ corresponding to an edge
$e \in \Edge_\trop(\Gamma)$ is asymptotically close to a
$\cT(e)$-cylinder
\[ u_{\on{vert},e} : [0,\infty) \times S^1 \to \XC_{P(e)}, \quad (s_e,t_e) \mapsto e^{\cT(e)(s_e+it_e)}x_e\]
for some point $x_e \in \XC_{P(e)}$.  Here, we recall that the
punctured neighborhood at $z_e$ in $C^\circ$ maps to a
$P(e)$-cylindrical end of $\XC_P$ and the $P(e)$-cylindrical end is
identified to $\XC_{P(e)}$, see \eqref{eq:Pcoord}.  An infinitesimal
deformation of a $\cT(e)$-cylinder is given by a vector
$\xi_e \in T_{x_e}\XC_{P(e)}$.  It defines a section
\begin{equation}
  \label{eq:xisec}
  \xi_{e,\cT(e)} := \tfrac \d {\d\tau} (e^{\cT(e)(s+it)}\exp_{x_e}(\tau\xi_e))|_{\tau=0} \in \Gamma(\R_+ \times S^1, u_{\on{vert},e}^*T\XC_{P(e)}) .
\end{equation}
The tangent space of $\Map_\Gamma(C^\circ,\XC_P)$ at $u$ consists of
sections
\[ \xi \in \Gamma(C^\circ,u^*T\XC_P) \]  
which, for any edge $e \in \Edge_\trop(\Gamma)$, have limits 
\begin{equation}
  \label{eq:xie-def}
  \xi_e :=\lim_{z \to z_e}\xi \in T_{x_e}\XC_{P(e)}   
\end{equation}
in the sense that a section $\xi$ is asymptotically close to the section
$\xi_{e,\cT(e)}$ \eqref{eq:xisec} as $z \to z_e$.

\begin{remark}
  The quantity $\xi_e$ defined in \eqref{eq:xie-def} is the
  infinitesimal change in the tropical evaluation map
  $\ev_{z_e}^{\cT(e)}(u) \in \XC_{P(e)}$ (see \eqref{eq:tropev}) when
  the broken map $u$ is infinitesimally deformed by $\xi$. Since the
  tropical evaluation map depends on the choice of cylindrical
  coordinates \eqref{eq:sete} in the punctured neighborhood of $z_e$
  in the domain curve, the quantity $\xi_e$ depends on infinitesimal
  changes in the domain cylindrical coordinates.  On the other hand,
  the projected evaluation map
  $\pi_{\cT(e)}^\perp(\ev_{z_e}(u)) \in \XC_{P(e)}/T_{\cT(e),\C}$ is
  independent of the domain cylindrical coordinates, and the same is
  true of its infinitesimal version
  $\pi_{\cT(e)}^\perp(\xi_e) \in T(\XC_{P(e)} /T_{\cT(e),\C})$.
\end{remark}

We now define the Sobolev norm.  Define a cutoff function
\begin{equation} \label{beta0} \beta \in C^\infty(\R, [0,1]), \quad
  \begin{cases} 
    \beta(s) = 0 & s \leq 0 \\ \beta(s) = 1 & s \ge
    1 \end{cases}. \end{equation}
 Let 
 \begin{equation}
   \label{eq:kappainf}
   \kappa : C^\circ \to \R   
 \end{equation}
 be equal to $\beta(s_e) s_e$ near the puncture corresponding to
 $z_e$, for any edge $e$, and $0$ outside all the edge neighbourhoods.
 For a section $\xi:C^\circ \to E$ of a vector bundle $E$ with
 connection $\nabla$, integers $k \geq 0$, $p>1$ and constant
 $\lam \in (0,1)$ define the $W^{k,p,\lam}$-norm of $\xi$ as
\[ \Mod{\xi}_{W^{k,p,\lam}}^p:=\sum_{0 \leq i \leq k}\int_{C^\circ}|\nabla^i \xi|^p
\exp(\lam \kappa p) \dvol_{C^\circ}. \]
The norm on a section $\xi$ is
\begin{equation}
  \label{eq:xinorm}
  \Mod{\xi}^\circ_\Gamma:=\ssum_e|\xi_e| + \Mod{\xi - \ssum_e\beta(s_e) \pT_u^e \xi_{e,\cT(e)}}_{W^{1,p,\lam}},
\end{equation}
where 
\begin{equation}\label{eq:xielim}
 \xi_e:=\lim_{z \to z_e}\xi \in T_{x_e}\XC_{P(e)}   
\end{equation}
near the puncture $z_e$ as in \eqref{eq:xielim}, $u$ is asymptotic to a vertical cylinder 
$u_{\on{vert},e}$,
\[ \pT_u^e: u_{\on{vert},e}^*T\XC_P
\to u^*T\XC_P \] 
is the parallel transport map along geodesics, and  $\xi_{e,\cT(e)}$  is a section of $u_{\on{vert},e}^*T\XC_P$ as in \eqref{eq:xisec}.

There are similar Sobolev spaces of maps and one-forms.  Let
$\Map^{1,p,\lam}_\Gamma(C^\circ,\XC_P)$ be the Banach completion of
the space of maps $\Map_\Gamma(C^\circ,\XC_P)$ under the norm
\eqref{eq:xinorm}.  That is, for a smooth map
$u \in \Map_\Gamma(C^\circ,\XC_P)$ and a section
$\xi \in \Gamma(C^\circ,u^*T\XX_P)$ satisfying
$\Mod{\xi}_{\Gamma}^\circ<\infty$, the map $\exp_u \xi$ belongs to the
completion $\Map^{1,p,\lam}_\Gamma(C^\circ,\XC_P)$.  Let
\[ \mE^{p,\lam} \to \Map_\Gamma^{1,p,\lam}(C^\circ,\XC_P) \]
be the vector bundle whose fiber
\[ \mE^{p,\lambda}_u =
\Om^{0,1}(C^\circ,u^*T\XC_P)_{L^{p,\lam}}\]
is the space of $(0,1)$-forms with respect to $(j,J)$, where $J$ is a
fixed cylindrical almost complex structure.  The vertical projection
of the linearization of the Cauchy-Riemann operator $\delbar_{j,J}$ at
$u$
\[D_u^\circ:T_u \Map_\Gamma^{1,p,\lam} \to \E_u^{p,\lam} \]
is a Fredholm operator by results of Lockhart-McOwen \cite{loc:ell}. \cwl{See Wehrheim-Woodward \cite{ww:quilt} for the Fredholm property in the Lagrangian case.}

Next, we describe a linearized $\delbar$-operator on the compact curve
$C$, denoted by $D_u$. \label{page:dudef} This operator will be useful
for index computations. It is not used for transversality and gluing
proofs where we stick to $D_u^\circ.$ Given a relative map
$u: C^\circ \to \XX_P$, we define an extension of the pullback bundle
$u^*T\XX_P \to C^\circ$ to a bundle over $C$, as follows.  Define a (partial) 
almost complex orbifold compactification of $\XX_P$, denoted by
$\XX_{P,\cT}$, as the union
\begin{equation}
  \label{eq:xxpt-def}
  \XX_{P,\cT} := \XX_P \cup \cup_{e \in \Edge_\trop(\Gamma)}Y_{\cT(e)}  
\end{equation}
obtained by adding a divisor at infinity $Y_{\cT(e)}$ corresponding to 
every tropical marking $z_e$.\footnote{The space $\XX_{P,\cT}$ is a different compactification
  of $\XX_P$ compared to $\ol \XX_P$ (when it exists).}
The divisor $Y_{\cT(e)}$ is 
defined as
\begin{equation}
  \label{eq:Ycte}
  Y_{\cT(e)}:=\{[x]=T_{\cT(e),\C} x \ : \ x \in \XX_P \},  
\end{equation}
where each $[x]$ is the limit point of a $T_{\cT(e),\C}$-orbit, that
is, $\lim_{s \to \infty} e^{(s+it)\cT(e)}x=[x]$, and
$Y_{\cT(e)}=Y_{\cT(e')}$ if $\cT(e)=\lam \cT(e')$ for some
$\lam \in \R_{>0}$.  The map $u$ extends over all tropical markings
$z_e$ to yield $u : C \to \XX_{P,\cT}$, with $u(z_e) \in Y_{\cT(e)}$.
The extension of $u^*T\XX_P \to C^\circ$ is the pullback bundle
$u^*T\XX_{P,\cT} \to C$.  Recall from Definition \ref{def:primdirection}
that for any tropical marking $z_e \in C \bs C^\circ$, the {direction}
$\cT(e) \in \t_{P(e),\Z}$ is the product
\begin{equation}
  \label{eq:prim-direction}
  \cT(e)=\mu_e \cT(e)_{\prim},  
\end{equation}
\index{Primitive direction} \index{Multiplicity of an edge} of the primitive direction $\cT(e)_{\prim} \in \t_{P(e),\Z}$ and a multiplicity
$\mu_e \in \Z_{\geq 1}$.  
Denote by 
\begin{equation}
  \label{eq:Dudef}
  D_u: W^{k,p}(\Om^0_{\ul \mu}(C,u^*T\XX_{P,\cT})) \to W^{k-1,p}(\Om^{0,1}_{\ul\mu
    -1}(C,u^*T\XX_{P,\cT}))
\end{equation}
the linearization of the $\delbar$-operator defined on the space of sections on the compactification $\XX_{P,\cT}$. 
Here, 
$\ul \mu=(\mu_e \in \N)_e$, and the subscript in the domain and
codomain indicate the order of vanishing of the sections at the
tropical markings $z_e$, $e \in \Edge_\trop(\Gamma)$ in the direction
normal to the divisor at infinity $Y_{\cT(e)}$; and the Sobolev
indices are such that
\[ k \in \Z_{>0}, \quad p>1, \quad k>\mu_e + 2/p \enspace \forall e \]  
 so that the sections with prescribed orders of vanishing at the divisors are well-defined.

\begin{proposition} \label{prop:sameind}
  Let $u \in \Map_\Gamma(C^\circ,\XX_P)$ be a relative holomorphic map.   
 There are isomorphisms
  \[\ker(D_u) \simeq \ker(D_u^\circ), \quad \coker(D_u) \simeq
  \coker(D_u^\circ).\]
\end{proposition}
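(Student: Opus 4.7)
The plan is to construct an explicit Banach space isomorphism between the domains (and codomains) of the two Fredholm operators, and check that the operators correspond under this isomorphism. First I would localize the problem near a puncture $z_e$. Using exponential cylindrical coordinates $z_e = e^{-s-it}$ on the domain and the identification \eqref{eq:Pcoord} of a neighborhood of the boundary divisors in $\cX_{\ol P}$ with a subset of the $P(e)$-cylinder $Z_{P(e),\C}$, Lemma \ref{lem:expconv} gives $d(u(s,t), u_{\ver}(s,t)) \leq c e^{-s}$, so in these coordinates $u$ is a small $W^{1,p}$-perturbation of the model cylinder $(s,t) \mapsto e^{-(s+it)\cT(e)} x_e$.

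Next I would define the identification of tangent spaces. Given $\xi \in T_u\Map^{k,p}_\Gamma(C,\cX_{\ol P})$, the tangency condition $m_{z_e}(u,D_Q) = \langle \cT(e),\nu_Q\rangle$ being preserved forces the Taylor expansion of $\xi$ at $z_e$ to have a well-defined "leading behavior": a horizontal component in $T\cX_{P(e)}$ coming from $\xi(z_e)$ and a vertical component in $\t_{P(e),\C}$ coming from the variation of the coefficient $x_e$ of $u_{\ver}$. This pair is precisely an element $\xi_e \in T\cX_{P(e)} \oplus \t_{P(e),\C}$. In the opposite direction, given asymptotic data $(\xi_e)_e$ and a section $\eta$ with $\Mod{\eta}_{W^{1,p,\lambda}} < \infty$, the assembly $\eta + \sum_e \beta \T_u \xi_e$ produces a section of $u^*TX_{\ol P}$ on $C^\circ$, which extends across the punctures to a $W^{k,p}$ section on $C$ with the prescribed tangency order — the point being that in cylindrical target coordinates a constant section of the normal bundle to $D_Q$ corresponds, under the exponential map $z \mapsto z^{\cT(e)}$, to a holomorphic function with the correct order zero. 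Choosing $\lambda \in (0,1)$ smaller than the gap between the tangency order and the next Taylor mode ensures both directions are continuous.

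One then checks that $D_u$ and $D_u^\circ$ correspond under this splitting. On the cylindrical ends with cylindrical almost complex structure, the linearized operator applied to the reference constant section $\beta \T_u \xi_e$ produces a term supported in $\{0 \le s \le 1\}$ with values in any Sobolev space, so it contributes a finite-rank perturbation; the remainder acts on the exponentially weighted space exactly as $D_u^\circ$ does, by standard Lockhart–McOwen theory. Consequently $\ker$ and $\coker$ match. The main obstacle I anticipate is bookkeeping at punctures with non-trivial slope $\cT(e) \neq 0$: one must verify that the tangency data prescribed by $\Gamma$ in $D_u$ corresponds exactly to the $(T\cX_{P(e)}\oplus\t_{P(e),\C})$-valued asymptotic limit of $\xi$ after dividing out by the vertical cylinder, and that the weight parameter $\lambda$ lies in the admissible Lockhart–McOwen range so that no extra indicial roots are captured or lost. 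This parallels the standard comparison in relative Gromov–Witten theory (Ionel–Parker) and in SFT (Hofer–Wysocki–Zehnder), adapted to the multi-dimensional tropical setting.
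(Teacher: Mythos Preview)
Your plan has a genuine gap at its first step: the domains of $D_u$ and $D_u^\circ$ are \emph{not} isomorphic as Banach spaces, so the strategy of conjugating one operator to the other by a Banach isomorphism cannot succeed. The domain of $D_u$ is a $W^{k,p}$ space with $k > \mu + 2/p$ (needed so that the tangency conditions of order $\mu$ make sense pointwise), while the domain of $D_u^\circ$ is only $W^{1,p,\lambda}$ with asymptotic data. Your forward map $W^{k,p}_\Gamma \to W^{1,p,\lambda}$ is fine, but the reverse map you describe --- assembling $\eta + \sum_e \beta \cT_u \xi_e$ from a general $\eta \in W^{1,p,\lambda}$ --- does \emph{not} land in $W^{k,p}$: a $W^{1,p,\lambda}$ section on the cylinder has only one derivative, and converting to compact coordinates on $C$ does not manufacture the missing $k-1$ derivatives. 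So the claimed isomorphism fails, and with it the conclusion ``consequently $\ker$ and $\coker$ match.''

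The paper's proof avoids this regularity mismatch by never comparing the full Banach spaces. Instead it works directly with elements of the kernel and cokernel, which are automatically smooth by elliptic regularity. Near a puncture it splits $TX$ into horizontal and vertical parts using the cylindrical structure, writes $D_u$ in upper-triangular block form, and then establishes the correspondence by explicit pointwise metric conversions such as $|\xi_v(z)|^\circ \sim |\xi_v(z)|/|u(z)|$ together with the tangency bound $|\xi_v(z)| \le c|z|^\mu$. The inclusion $\ker D_u^\circ \subset \ker D_u$ uses removal of singularities rather than a continuous inverse on Sobolev spaces. The cokernel argument is genuinely separate and more delicate: it treats $\eta \in \coker D_u$ as a distribution, shows $z^\mu \eta_v$ is weakly holomorphic hence smooth, and then applies the dual metric conversion $|\eta_v(z)|^\circ \sim |\eta_v(z)|\cdot |u(z)|$. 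Your proposal does not address cokernels independently, and the Lockhart--McOwen comparison you invoke only gives equality of \emph{indices}, not of kernels and cokernels separately.
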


\begin{proof}
  Since the proof is by a local computation near each of the special
  points, it is enough to prove the result for a map whose domain is a
  single curve component with a single tropical marking $z_e$.  By the
  description of $D_u$ (see \eqref{eq:Dudef}), the curve $u$ extends
  over $z_e$ to yield $u:C \to \XX_{P,\cT}:=\XX_P \cup Y$ (as in
  \eqref{eq:xxpt-def}), with $u(z_e)$ mapping to the divisor at
  infinity $Y$, and $u$ intersecting the divisor $Y$ at $z_e$ with
  multiplicity $\mu \in \N$. We abbreviate $\hat \XX_P:=\XX_{P,\cT}$.
  We assume that the marked point is $0 \in C$.
 
  The proof is based on the fact that in a neighborhood of the divisor
  the tangent space splits into a vertical and a horizontal subspace.
  Indeed, because of the cylindrical almost complex structure, there
  is a neighborhood $U_Y \subset \hat \XX_P$ of $Y$ for which there is a
  projection $\pi_Y: U_Y \to Y$ with holomorphic fibers. The tangent
  space splits into a $J$-holomorphic horizontal and vertical part: 
  \begin{equation}
    \label{eq:split}
     T\hat \XX_P|U_Y \simeq V
  \oplus H, \quad H:=\pi_Y^*TY, \quad V:=\ker(d\pi_Y) .
  \end{equation}
  Under
  the splitting \eqref{eq:split},
  \begin{equation}
    D_u =     \label{eq:dusplit}
    \bigl(
  \begin{smallmatrix}
    \delbar & A\\
    0 & D_u^Y
  \end{smallmatrix}
  \bigr) 
  \end{equation}
  in a neighbourhood $U_0 \subset C$ of the marked point.  Here
  $\delbar: \Gamma(U_0,u^*V) \to \Om^{0,1}(U_0,u^*V)$ is the standard
  Cauchy-Riemann operator,
  $A: \Gamma(U_0,u^*H) \to \Om^{0,1}(U_0,u^*V)$ is multiplication with
  a tensor and
  \[D_u^Y:\Gamma(U_0, u^*H) \to \Om^{0,1}(U_0,u^*H)\]
  is the lift of the linearized
  operator $D_{u_Y}$, where $u_Y:=\pi_Y \circ u$.

  The correspondence for the kernels follows by decay estimates.  For
  an element $\xi \in W^{k,p}(C,u^*T\hat \XX_P)$, we denote the horizontal
  part and vertical part by
\[ \xi_h \in  W^{k,p}(U_0,u^*H), \quad  \xi_v \in W^{k,p}(U_0,u^*V). \]
in the neighbourhood $U_0 \subset C$ of $0$.
Denote by $|\cdot|$ resp. $|\cdot|^\circ$ the ordinary metric on $\hat \XX_P$
resp. the cylindrical metric on $\XX_P$. Then, we have
  \begin{equation}\label{eq:cyl2r}
    |\xi_h(z)|^\circ \sim |\xi_h(z)|, \quad |\xi_v(z)|^\circ \sim |\xi_v(z)|/|u(z)|, \quad z \neq 0,  
  \end{equation}
  where the norm on $u(z)$ is a norm on the fiber of the projection
  $U_Y \to Y$.  Since 
\[ |\xi_h| \leq c, \quad |\xi_v(z)| \leq c|z|^\mu, \quad |u(z)| \sim
c|z|^\mu , \]  
we conclude that $|\xi_h|^\circ$ and $|\xi_v(z)|^\circ$ are uniformly
bounded and have a limit as $z \to 0$.  Since $\xi$ is smooth on $C$,
the convergence to $\xi(0)$ is exponential on $C^\circ$:
  \[ |\xi(z) - \xi(0)|^\circ \sim c|z| \sim ce^{-s}, \]
  where $(s,t)$ are cylindrical coordinates on $\P^1 \bs \{0\}$ near
  the puncture, and
  \[ |\nabla \xi_h(z)|^\circ, |\nabla \xi_v(z)|^\circ \sim ce^{-s}\]
  since the derivative is with respect to the cylindrical coordinates on $C^\circ$. 
    We conclude
  \[\xi \in W^{k,p}_\Gamma(C,\hat \XX_P) \implies \xi - \xi(0) \in
  W^{1,p,\lam}(C^\circ,u^*T\XX_P)\]
  for any $0<\lam<1$, which implies $\ker D_u \subset \ker D_u^\circ$.
  The converse $\ker D_u^\circ \subset \ker D_u$ is proved using
  removal of singularity, elliptic regularity, and the estimate \eqref{eq:cyl2r}.

  A similar argument holds for the cokernels.  We first prove the
  inclusion $\coker(D_u) \subset \coker (D_u^\circ)$.  Consider
  $\eta \in \coker (D_u)$. We view $\coker(D_u)$ as a subspace in the
  $(W^{k-1,p})^\dual$-completion of $\Om^{1,0}(C,u^*(T^*\hat \XX_P))$,
  which is a space of distributions.    (Here we use  $\Om^{1,0}(u^*(T^*\hat \XX_P))$
  instead of $\Om^{0,1}(u^*T\hat \XX_P)$ to avoid making choices of metrics on
  $C$ and $\hat \XX_P$.)  By elliptic regularity, the distribution
  $\eta$ is represented by a smooth section in the complement of
  marked points. So we focus attention in a neighbourhood
  $U_0 \subset C$ of $0$.  For any $W^{k,p}$-section
  $\xi:C \to u^*T\hat \XX_P$ that is supported in $U_0$, and is
  vertical, we have $z^\mu \xi \in W^{k,p}_\Gamma(C,u^*T\hat
  \XX_P)$. So for any such $\xi$,
  \[0=\int_C(\delbar(z^\mu \xi),\eta_v).\]
  Therefore, $\delbar(z^\mu \eta_v)=0$ weakly in $U_0$ and so,
  $z^\mu \eta_v$ can be represented by a smooth function. Next, using
  the split form \eqref{eq:dusplit}, we observe that any
  horizontal section $\xi$
  that is supported in $U_0$ satisfies 
  \[(A\xi, \eta_v) + (D_u^Y\xi,\eta_h)=0,\]
  and so, $A^*\eta_v + (D_u^Y)^*\xi_h=0$ weakly in $U_0$.  The tensor
  $A$ vanishes to order $\mu$ at $0 \in C$ in the $|\cdot|$-norm, as a
  consequence of the transformation relation \eqref{eq:cyl2r} and the
  fact that $A$ is bounded in the $|\cdot|^\circ$ norm.  So,
  $A^*\eta_v$ is smooth in $U_0$.  By elliptic regularity $\xi_h$ is
  smooth in $U_0$.  Finally, $\eta$ is in $\coker D_u^\circ$ because of
  the following transformations valid in $U_0$ :
  \begin{equation}\label{eq:coker-tr}
    |\eta_h(z)|^\circ \sim |\eta_h(z)|, \quad |\eta_v(z)|^\circ \sim |\eta_v(z)|\cdot|u(z)|, \quad z \neq 0.
  \end{equation}
  Indeed, $|\eta(z)|^\circ$ is bounded, and therefore is in
  $L^{p^*,-\lam}$ the dual space of $L^{p,\lam}$.  The reverse
  inclusion $\coker(D_u^\circ) \subset \coker(D_u)$ follows formally
  from the inclusion relation
  \[W^{k-1,p}_{\mu-1}(\Om^{0,1}(C,u^*T\hat \XX_P)) \to L^{p,\lam}(\Om^{0,1}(C^\circ,u^*T\XX_P))\]
  between the target spaces of $D_u$ and $D_u^\circ$. 
\end{proof}

\section{The index of a broken map}\label{sec:index}

The index of a broken map is the expected dimension of the moduli
space containing the broken map.  In this section, we give an
expression for the index in terms of the Fredholm index of the
linearized Cauchy-Riemann operator defined in the last section.  We
also prove that collapsing tropical edges in the tropical graph of the
broken map does not change the expected dimension of the moduli space.

We start by stating the expected dimension of the moduli space of
relative maps, which are single components of broken maps. We recall
that the \em{boundary Maslov index $I(E,F)$}
\index{Index! Maslov index, $I$}
\index{Index! boundary Maslov index, $I$}
is an integer
assigned to a pair $(E,F)$ consisting of a complex vector bundle $E$
on a Riemann surface $C$ with boundary, and a totally real subbundle
$F \subset E|_{\partial C}$ on the boundary (see \cite[Appendix
C]{ms:jh}).  In case $\partial C=\emptyset$, the Maslov index
$I(E,\emptyset)$ is twice the first Chern number $c_1(E)$.

\begin{lemma}\label{lem:adjmaslov}
  {\rm(Expected dimension for relative maps)}
  \index{Index!Adjusted Maslov index $I_\adj$}
  Let $u$ be a relative map of type
  $\Gamma_v$ with a collection of tropical markings $z_e$
  corresponding to $e \in \Edge_\trop(\Gamma_v)$, each with
  multiplicity $\mu_e \in \N$, and with simple intersections with the
  stabilizing divisor $\DD$.  Then, the expected dimension of the
  moduli space $\M_{\Gamma_v}(\XX)$ of relative maps of type
  $\Gamma_v$ is
\begin{equation}
    \label{eq:igammav}
    i(\Gamma_v)=
    \begin{cases}
      (d(\white)+1)  + i_\Morse(\ul x) + I_\adj(\Gamma_v)  - \dim \Aut(\mathbb D^2), & \text{$\Gamma_v$ is a disk },\\
      \dim(X) + I_\adj(\Gamma_v) - \dim \Aut(\P^1), & \text{$\Gamma_v$ is a sphere}
    \end{cases}
  \end{equation}
  where $I_\adj(\Gamma_v)$ is the {\em adjusted Maslov index} defined as 
  \begin{multline}
    \label{eq:adjmas2}
     {\text{\rm\textup{(Adjusted Maslov index)}}} \quad
     I_\adj(\Gamma_v):=I_\adj(u):=I((u^*T\XX_{P(v),\cT}), (\partial u)^*TL)\\
     - \sum_{e \in \Edge_\trop(\Gamma)}2(\mu^\trop_e-1);
  \end{multline}
  the Morse index of the tuple is the difference
    \[ i_\Morse(\ul{x}) := i_\Morse(x_0)- \ssum_{i =1}^{d(\white)}
      i_\Morse(x_i), \]
    and $i_\Morse(x_i)$ is the dimension of the stable manifold of
    $x_i$ in $L$.  Furthermore, 
    \begin{equation}
    \label{eq:adjmas}
    I_\adj(u) =     \ind(D_u) +
    2|\Edge_\trop(\Gamma)|.
  \end{equation}
\end{lemma}

\begin{proof}
  We recall from the paragraph preceding \eqref{eq:Dudef} that,
  assuming the tropical markings are fixed, the moduli space of
  relative maps of type $\Gamma_v$ is cut out as the zero set of $D_u$
  from the Banach space of sections
  $W^{k,p}_{\ul \mu}(C, u^*T\XX_{P,\cT})$ that vanish to order $\mu_e$
  in the direction normal to the divisor $Y_{\cT(e)}$ at the tropical
  marking $z_e$, and therefore the expected dimension is
  $\ind(D_u)$. Since the tropical markings are free to move on the
  domain, the expected dimension of $\M_{\Gamma_v}(\XX)$ is given by
  the first expression in \eqref{eq:adjmas}.  To obtain the second
  expression, we observe that the expected dimension of moduli space
  of holomorphic maps $u: C \to \XX_{P,\cT}$ is the Maslov index
  $I(u^*T\XC_{P,\cT}, (\partial u)^*TL)$, and for any
  $e \in \Edge_\trop(\Gamma_v)$, the choice of $z_e$ on $C$ adds $2$
  to the expected dimension, and the requirement that there is a
  tangency of order $\mu_e$ with the divisor $Y_{\cT(e)}$ at $z_e$
  subtracts $2\mu_e$.
\end{proof}

\begin{example}
  \index{Index!Adjusted Maslov index $I_\adj$} Suppose
  $\ol X_{P(v)}=\P^2$ with a Hamiltonian $(S^1)^2$-action,
  $\XB_{P(v)}$ is the complement of the 3 torus-invariant divisors,
  $L \subset \ol X_{P(v)}$ is a toric Lagrangian, and
  $u: (D,\partial D) \to (\ol X_{P(v)},L)$ is a disk through the point
  $[1:0:0]$ whose boundary Maslov index is $4$. In contrast, the
  adjusted Maslov index $I_\adj(u)$ is $2$, because the adjusted
  Maslov index accounts for the constraint that the disk passes
  through the point $[1:0:0]$ which cuts down the index by $2$.  The
  reason for the discrepancy is that the pullback bundle
  $u^*T\ol X_{P(v)}$ is different from $u^*T\XC_{P(v),\cT}$ since a
  tropical marking maps to the intersection of two divisors in
  $\ol X_{P(v)}$, which is replaced by a single divisor in
  $\XC_{P(v),\cT}$.
\end{example}

For a type of broken maps the expected dimension of the moduli space
is the same as that of the glued type of unbroken maps. We define
\em{glued type}:

  \begin{definition}
    {\rm(Glued type)} \index{Glued type $\Gamma_{\on{glue}}$} Let
    $\Gamma$ be the combinatorial type of a broken map. Then the \em{
      glued type} $\Gamma_\glue$ is the type of the unbroken map
    obtained by collapsing the tropical edges
    $e \in \Edge_\trop(\Gamma)$ in $\Gamma$.
\end{definition}

\index{Index! Morse index of a critical point $i_\Morse(x)$}
\begin{proposition}\label{prop:expdim}
  {\rm(Expected dimension)} Suppose $\Gamma$ is the combinatorial type
  of a broken map.

  \begin{enumerate}
  \item The expected dimension of the moduli space
    $\tM_\Gamma^\br(L,\ul x)$ of broken maps of type $\Gamma$ with
    limits $\ul x \in \cI(L)^{d(\white)+1}$ along the root and
    leaves is equal to
    \begin{equation}
      \label{eq:expdim}
      i^\br(\Gamma,\ul x):=  \sum_{v \in \Ver(\Gamma)}i(\Gamma_v)
      -(\dim(X)-2)\Edge_\trop(\Gamma) - c(\Gamma),
    \end{equation}
    where $i(\Gamma_v)$ is the expected dimension of the moduli space
    of relative maps of type $\Gamma_v$ (see \eqref{eq:igammav}), and
    \[ c(\Gamma) := 2|\Edge_{\internal,\black}(\Gamma)|+
      |\Edge_{\white,-}^0| + |\Edge_{\white,-}^\infty| + 2\ssum_{e \in
        \Edge_{\black,\rightarrow}(\Gamma)}(\mu_{\bD}(e)-1) \]
    is a factor accounting for internal disk and sphere nodes in
    $\Gamma$, and the last term accounts for higher order
    intersections with the stabilizing divisor.
  \item \label{part:glue-dim}
    Collapsing tropical edges does not affect the index, that is,
    denoting by $i(\Gamma_{\on{glue}},\ul x)$ the expected dimension
    of the moduli space of unbroken maps of type $\Gamma_{\on{glue}}$ with limits
    $\ul x \in \cI(L)^{d(\white)+1}$ along the root and leaves,
    \[ i^\br(\Gamma,\ul x)=i(\Gamma_{\on{glue}},\ul x) .\]
    \index{Index! of a broken map $i^\br$}
    \index{Index! of an unbroken map $i$}
  \end{enumerate}
\end{proposition}
\begin{notation}
For a broken map type $\Gamma$ and leaf labels $\ul x$,
the expected dimension of the reduced moduli space $\M_{\Gamma,\red}^\br(\XX,L,\ul x)$ is
\begin{equation}
  \label{eq:i-red-br}
  i^\br_\red(\Gamma,\ul x):=i^\br(\Gamma,\ul x) - \dim(T_\trop(\Gamma)).
\end{equation}
\end{notation}
\begin{proof}[Proof of Proposition \ref{prop:expdim}]
  The formula \eqref{eq:expdim} for the expected dimension of the
  moduli space of broken maps follows in a straightforward way: the
  expected dimension is the sum of expected dimensions $i(\Gamma_v)$
  for each of its components, viewed as relative maps minus the
  codimension of the matching conditions at nodes, and higher order
  tangency conditions with the stabilizing divisor. The matching
  condition at a tropical node $w_e$ is a condition of codimension
  $(\dim(X)-2)$, since it requires that the projected tropical
  evaluations for both nodal lifts agree (see \eqref{eq:trop-matching}), which accounts for
  the second term in \eqref{eq:expdim}. The last term incorporates
  matching conditions at non-tropical nodes and higher intersections
  with the stabilizing divisor.

  To prove \eqref{part:glue-dim}, we proceed by obtaining a relation
  between the Maslov indices of the components of the broken map and
  the Maslov index of the glued type.  For simplifying exposition, we
  first consider the case of a broken map $u$ of type $\Gamma$ with
  two components $u_{v_+}$, $u_{v_-}$ connected by a tropical edge
  $e=(v_+,v_-)$ with nodal lifts $w_e^\pm \in C_{v_\pm}$.  The map
  $u_{v_\pm} : C_{v_\pm} \bs \{w_e^\pm\} \to \XX_{P(v_\pm)}$ extends
  to a map on the compactified curve, denoted by
  \[\ol u_{v_\pm} : C_{v_\pm} \to
    \XX_{P(v_\pm),\cT},\]
  mapping to the compactification $\XX_{P(v_\pm),\cT}$ of
  $\XX_{P(v_\pm)}$ (from \eqref{eq:xxpt-def}).  The evaluations
  $\ol u_{v_\pm}(w_e^\pm)$ lie in $ Y_{\cT(e)}$, where
  $Y_{\cT(e)} \subseteq \XX_{P(v_\pm),\cT} \bs \XX_{P(v_\pm)}$ is a
  divisor at infinity as in \eqref{eq:Ycte}.  Recall from definition
  \ref{def:primdirection} that the edge {direction} $\cT(e) \in \t_{P(e),\Z}$ is the
  product
  \[\cT(e)=\mu^\trop_e\cT(e)_{\prim}\]
  of a primitive integer vector $\cT(e)_\prim \in \t_{P(e),\Z}$ and a
  positive integer $\mu^\trop_e$.  Because of the cylindrical complex
  structure on the ends, a neighbourhood $U^\pm_{Y_{\cT(e)}}$ of
  $Y_{\cT(e)}$ in $\XX_{P(v_\pm),\cT(e)}$ has a projection map
  \[\pi_\pm:U^\pm_{Y_{\cT(e)}} \to Y_{\cT(e)} \]
  with holomorphic fibers, and the tangent space splits as the sum of
  horizontal and vertical sub-bundles
  \[ TU^\pm_{Y_{\cT(e)}} = H \oplus V_\pm , \quad H
    =\pi_\pm^*TY_{\cT(e)}, \quad V_\pm=\ker(d\pi_\pm).\]
  Topologically, the maps $\ol u_{v_+}$, $\ol u_{v_-}$ are obtained by
  cutting $u_{\glue}$ to yield $u_{v_+}$, $u_{v_-}$ and adding a
  capping disk to either side. The horizontal bundle $(u_{v_\pm})^*H$
  extends trivially over the capping disk, but for $(u_\pm)^*V_\pm$,
  the bundle over the capping disk is glued in with a $\mu_e^\trop$ twist.
  Thus gluing at the node $w_e$ has the effect of adding the Maslov
  indices on both sides and subtracting $4\mu_e^\trop$. 
  
  Next, consider a general broken map type $\Gamma$, that is, it may have more than two
  components. Using the preceding observation, and summing over
  contributions from all tropical edges $e \in \Edge_\trop(\Gamma)$,
  we obtain
  \begin{multline}
    \label{eq:mascalc1}
    \sum_{v \in \Ver(\Gamma)}I(u_v^*T\XC_{P(v),\cT},(\partial u_v)^*TL) - \sum_{e \in \Edge_\trop(\Gamma)}4\mu_e^\trop\\ = \sum_{v \in \Ver(\Gamma_\glue)}I(u_v^*TX,(\partial u_v)^*TL).  
  \end{multline}
  Rewriting the left hand side of \eqref{eq:mascalc1} using \eqref{eq:adjmas2}, we get
  \begin{equation}
      \label{eq:maslovglue}
     \sum_{v \in \Ver(\Gamma)}I_\adj(\Gamma_v) - 4 \Edge_\trop(\Gamma)=\sum_{v \in \Ver(\Gamma_\glue)}I(u_v^*TX,(\partial u_v)^*TL). 
  \end{equation}

  The expected dimension of $\tM_{\Gamma}^\br(L, \ul \Pe,\ul{x})$ can
  be related to the index of the glued type by using the relation between the Maslov indices from the previous paragraph. 
  To lighten notation, let us assume that all nodes are tropical,
  since internal nodes can be accounted for in an obvious way, and
  that all intersections with the stabilizing divisor $\DD$ are
  simple, and consequently $c(\Gamma)=0$. The assumption also implies
  that there is a single disk component and $\Edge_\trop(\Gamma)$
  number of sphere components. Invoking \eqref{eq:expdim}, followed by
  \eqref{eq:igammav}, and then \eqref{eq:maslovglue}, we get
  \begin{multline*}
    i^\br(\Gamma,\ul x)=\sum_{v \in \Ver(\Gamma)} i(\Gamma_v) - (\dim(X)-2)|\Edge_\trop(\Gamma)|\\
    =\sum_{v \in \Ver(\Gamma)} I_\adj(\Gamma_v) + (d(\white) -2) + i_\Morse(\ul x) - 4|\Edge_\trop(\Gamma)|=i(\Gamma_\glue,\ul x).
  \end{multline*}
\end{proof}

\begin{figure}[h]
  \centering \scalebox{.7}{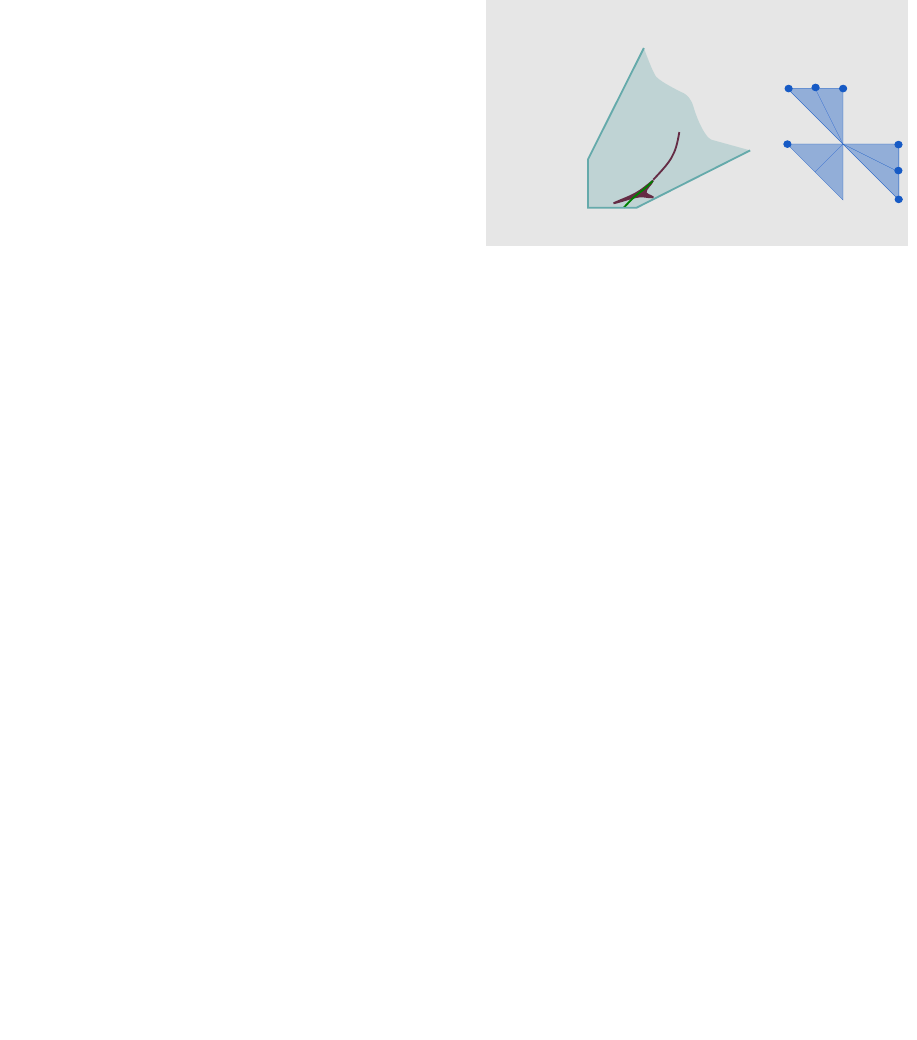}
  \caption{Broken maps corresponding to the Maslov index four disk class $[u_\glue]=2[\delta_{E_2}] +[E_2]=[\delta_{E_1}] + [\delta_D]$.}
  \label{fig:maslov4}
\end{figure}

\begin{remark}\label{rem:on-adj}
  {\rm(On the adjusted Maslov index)} Consider a relative map
  $u_v: S_v \to \ol \XX_P$ whose target space $\ol \XX_P$ is a
  manifold.  The adjusted Maslov index of $u$ is then the ordinary
  Maslov index $I(u)$ adjusted by the constraint that for any tropical
  marking $w_e$ corresponding to an edge $e \ni v$, the map has the
  appropriate tangency with relative divisors. In particular, if at
  $w_e$, the sum of orders of tangency with all relative divisors is
  given by $\mu_{e,v} \in \N$, then, the adjusted Maslov index is
  \begin{equation}
    \label{eq:adj-in-mfd}
    I_\adj(u)=I(u) - 2\sum_{e: v \in e}(\mu_{e,v}-1).   
  \end{equation}
\end{remark}

\begin{example}{\rm(Adjusted Maslov index
    computation)}\label{ex:adj-mas}
  In this example, we consider the multiple cut of the cubic surface
  from Section \ref{sec:cubic-intro}.  We consider the cubic surface
  $X$ with a deformed symplectic form so that it is a toric manifold
  (see \eqref{eq:Xeps-def}), $L \subset X$ is a toric Lagrangian, and
  the multiple cut is as in Figure \ref{fig:cubiccut}.  We list all
  broken maps whose gluing has homology class
  $2[\delta_{E_2}] + [E_2]$, and write down the adjusted Maslov
  indices of all the components. Here $\delta_{E_2} \in H_2(X,L)$ is
  the class of the disk of Maslov index two with a single intersection
  with the short divisor $E_2$, see the center of Figure
  \ref{fig:maslov4}. There are seven types $\Gamma$ of broken disks
  whose gluing has homology class $2[\delta_{E_2}] + [E_2]$ shown in
  Figure \ref{fig:maslov4}. We state the adjusted Maslov indices of
  the components of some of these types.
  \begin{enumerate}
  \item In $\Gamma^1$ the adjusted Maslov indices (defined in \eqref{eq:adjmas2}) are
    \[I_\adj(v_1)=4, \quad I_\adj(v_2)=4,\]
    which are equal to the ordinary Maslov indices of the components.
  \item \label{part:adj-mas3} The edge {direction}s in $\Gamma^3$ are
    \[\cT(v_1,v_2)=(0,-2), \quad \cT(v_2,v_3)=(-1,0).\]
    The adjusted Maslov indices of the vertices in $\Gamma^3$ are
    \[I_\adj(v_1)=2, \quad I_\adj(v_2)=6, \quad I_\adj(v_3)=4.\]
    Indeed $u_{v_2}$ is a curve with self-intersection number $2$ in
    $\ol \XC_{P(v_2)}$ which is the second Hirzebruch surface $F_2$,
    and therefore, $c_1(u_{v_2}^*T\ol \XC_{P(v_2)})=4$. The ordinary
    Maslov indices are $I(v_1)=4$, $I(v_2)=8$. The adjusted Maslov
    indices $I_\adj(v_1)$, $I_\adj(v_2)$ are each $2$ lower because
    the node between $v_1$, $v_2$ has an intersection multiplicity of
    $2$ with the relative divisor. To calculate $I_\adj(v_3)$, we need
    to choose an alternate almost complex compactification of
    $\XB_{P(v_3)}$ since $u_{v_3}$ has a nodal point $w_e$
    corresponding to the edge $e=(v_2,v_3)$ mapping to an orbifold
    singularity. We choose the compactification to be a resolution of
    the $A_1$-singularity as in Figure \ref{fig:a2resolve} so that the
    nodal point $w_e$ lies in a single divisor, from where, we
    conclude $I_\adj(v_3)=4$.
    \begin{figure}[h]
      \centering \scalebox{.8}{
\begingroup%
  \makeatletter%
  \providecommand\color[2][]{%
    \errmessage{(Inkscape) Color is used for the text in Inkscape, but the package 'color.sty' is not loaded}%
    \renewcommand\color[2][]{}%
  }%
  \providecommand\transparent[1]{%
    \errmessage{(Inkscape) Transparency is used (non-zero) for the text in Inkscape, but the package 'transparent.sty' is not loaded}%
    \renewcommand\transparent[1]{}%
  }%
  \providecommand\rotatebox[2]{#2}%
  \newcommand*\fsize{\dimexpr\f@size pt\relax}%
  \newcommand*\lineheight[1]{\fontsize{\fsize}{#1\fsize}\selectfont}%
  \ifx\svgwidth\undefined%
    \setlength{\unitlength}{176.54451866bp}%
    \ifx\svgscale\undefined%
      \relax%
    \else%
      \setlength{\unitlength}{\unitlength * \real{\svgscale}}%
    \fi%
  \else%
    \setlength{\unitlength}{\svgwidth}%
  \fi%
  \global\let\svgwidth\undefined%
  \global\let\svgscale\undefined%
  \makeatother%
  \begin{picture}(1,0.27829137)%
    \lineheight{1}%
    \setlength\tabcolsep{0pt}%
    \put(0,0){\includegraphics[width=\unitlength,page=1]{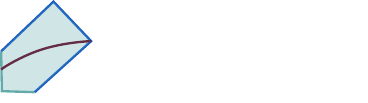}}%
    \put(0.04588631,0.06033051){\color[rgb]{0,0,0}\makebox(0,0)[lt]{\lineheight{1.25}\smash{\begin{tabular}[t]{l}$v_3$\end{tabular}}}}%
    \put(0,0){\includegraphics[width=\unitlength,page=2]{a2resolve.pdf}}%
    \put(0.32071675,0.21315675){\color[rgb]{0,0,0}\makebox(0,0)[lt]{\lineheight{1.25}\smash{\begin{tabular}[t]{l}Resolve $A_1$\\singularity \end{tabular}}}}%
    \put(0.82516978,0.06026){\color[rgb]{0,0,0}\makebox(0,0)[lt]{\lineheight{1.25}\smash{\begin{tabular}[t]{l}$v_3$\end{tabular}}}}%
  \end{picture}%
\endgroup%
}
      \caption{Orbifold singularities can be eliminated by changing
        the compactification.}
      \label{fig:a2resolve}
    \end{figure}
  \item The edge {direction}s in $\Gamma^2$ are
    \[\cT(v_1,v_2)=(0,-2), \quad \cT(v_2,v_3)=(-1,0), \quad
      \cT(v_3,v_4)=(-1,0),\]
    and the adjusted Maslov indices are
    \[I_\adj(v_1)=2, \quad I_\adj(v_2)=6, \quad I_\adj(v_3)=4, \quad
      I_\adj(v_4)=4.\]
    The adjusted Maslov indices of $v_3$ and $v_4$ are computed by
    passing to a toric resolution as in as in Figure
    \ref{fig:a2resolve}.
  \item In $\Gamma^5$ the edge $e = (v_1,v_2)$ has multiplicity
    $2$. The adjusted Maslov indices are
    \[I_\adj(v_1)=2, \quad I_\adj(v_2)=8, \quad I_\adj(v_3)=2.\]
    Note that the ordinary Maslov indices are different:  $I(v_1)=4$,
    $I(v_2)=10$.  The value of $I_\adj$ is lower for $v_1$, $v_2$
    because it accounts for the intersection multiplicity of $2$ at
    the node $w_e$ corresponding to the edge $e=(v_1,v_2)$. We point
    out that the map $u_{v_3}$ is homologous to the $(-1)$-divisor
    $E_2'$ which is obtained by cutting the $(-2)$-divisor $E_2$, see
    Figure \ref{fig:27polys} for notation. Since the almost complex
    structure is a perturbation of the toric almost complex structure,
    the image of $u_{v_3}$ does not lie on the toric divisor $E_2'$.

  \item In a broken map of type $\Gamma^7$, the component $u_{v_1}$
    has two distinct intersections with the relative divisor, each of
    multiplicity $1$.  For each of the vertices $v_i$ in $\Gamma^0$ the
    Maslov index $I(u_{v_i})$ is equal to the adjusted Maslov index
    $I_\adj(u_{v_i})$, and
    \[I_\adj(v_1)=4, \quad I_\adj(v_2)=6, \quad I_\adj(v_3)=2, \quad
      I_\adj(v_4)=4.\]
  \end{enumerate}
  For each type $\Gamma^i$, the Maslov index sum formula
  \eqref{eq:maslovglue} implies that the Maslov index
  $I(\Gamma^i_\glue)$ 
of the glued disk is $4$.  We discuss the moduli
  spaces of broken maps of types $\Gamma_i$ in the following Remark
  \ref{rem:maslov4-moduli}.
\end{example}

\begin{remark}\label{rem:maslov4-moduli} 
  We conjecturally describe the moduli space of  broken disks in the
  multiply cut cubic surface whose glued homology class is
  $[\delta_{E_1}] + [\delta_D]$. We refer to Figure \ref{fig:maslov4}
  for notations.  We can not make this discussion rigorous because
  under the Cieliebak-Mohnke perturbation scheme, we can only
  compactify moduli spaces of dimension at most $1$.  For
  $i=1,\dots,7$ we denote by
  \[\M_i:=\M_{\Gamma^i}^\br(\XX)/T_\trop(\Gamma^i)\]
  the reduced moduli space of broken disks whose tropical graph is
  $\Gamma^i$ and which have a point constraint on the boundary. In
  other words, the disks do not have any inputs and the output marking
  is required to map to the maximum point of the Morse function on the
  Lagrangian $L$. Since in each of the types the glued disk has Maslov
  index $4$, we get $\dim(\M_{\Gamma^i}^\br(\XX))=2$ for all $i$.  For
  odd $i$, the tropical graph is rigid and $T_\trop(\Gamma^i)$ is
  finite.  For even $i$, $\dim_\C(T_\trop(\Gamma^i))=1$, and so  $\M_i$
  is $0$-dimensional.  For any even $i$,
  \[\M_i \subset \ol \M_{i-1} \bs \M_{i-1} \quad \text{and} \quad \M_i \subset \ol \M_{i+1} \bs \M_{i+1},\]
  because there are tropical edge collapse morphisms
  $\Gamma^{i+1} \to \Gamma^i$, $\Gamma^{i-1} \to \Gamma^i$ (see
  Definition \ref{def:tropcol1} and Example
  \ref{ex:maslov4-collapse}).  Besides having a non-compact end, the
  moduli spaces $\M_1$, $\M_7$ also have codimension one boundary
  components consisting of configurations with a broken edge
  $e \in \Edge_-(\Gamma^i)$, $\ell(e)=\infty$.  The moduli space of
  broken disks of class $[\delta_{E_1}] + [\delta_D]$ is
  \[\ol \M_1 \cup_{\M_2} \ol \M_3 \cup_{\M_4} \ol \M_5 \cup_{\M_6} \ol \M_7. \]
  Here we take Parker's viewpoint \cite{bp8} that the moduli spaces of broken
  maps are themselves broken manifolds (which in his papers, are
  called ``exploded manifolds''). The spaces $\ol \M_i$ for odd $i$ are
  the top-dimensional cut spaces, and $\M_i$ for even $i$ are the cut
  spaces of codimension $2$.
\end{remark}

\section{Transversality}

In this section, we show that for a set of comeager domain-dependent
perturbations, moduli spaces of broken maps with index at most one are
transversally cut out.  The perturbation scheme we use here only
achieves transversality for certain combinatorial types, as in
Cieliebak-Mohnke \cite{cm:trans}.  In particular, the Cieliebak-Mohnke
\cite{cm:trans} perturbation scheme can not achieve transversality on
\em{crowded components}, which are components where the map is
constant and which contain more than one marking.  The moduli space of
such maps cannot be transversally cut out, since the constraint that
the first marking in such a component $S$ maps to the stabilizing
divisor $D$ together with the fact that $u|S$ is a constant map
guarantees that the second marking does as well.  In the context of
broken maps, the ``constant'' condition in the definition of
crowdedness needs to be replaced by ``horizontally constant''.  Recall
that a component $u_v: S_v^\circ \to \XX_{P(v)}$ is horizontally
constant if its projection to $X_{P(v)}$ is constant. We define
crowdedness for broken maps. \label{morexp}

\begin{definition} \label{def:crowded} The combinatorial type $\Gamma$
  of a broken map $u:C \to \XX$ is \em{crowded} if there exists a
  connected subgraph $\Gamma' \subset \Gamma$ such that $u|S_v$ is
  horizontally constant (see Definition \ref{def:iso-stab} \eqref{part:horizconst}) on all
  vertices $v \in \Ver(\Gamma')$ and $\Gamma'$ contains more than one
  interior leaf.
 \end{definition}

  For a type $\Gamma$ of treed disks, a perturbation datum
  $\Pe_\Gamma$ is \em{regular} if any $\Pe_\Gamma$-holomorphic map is
  regular, and so for any type $\Gamma_X$ of broken maps whose domain
  type is $\Gamma$, the moduli space $\M^\br_{\Gamma_X}(L,\Pe_\Gamma)$
  is a manifold of expected dimension.

The following theorem on the existence of regular perturbation data
for broken maps is the main result of this section.  Perturbation data
is defined strata-wise, and at each step we assume that the data on
the smaller strata is fixed, where the ordering on the strata is as
follows: For types $\Gamma'$, $\Gamma$ of broken maps,
\begin{equation}
  \label{eq:type-order}
  \Gamma' < \Gamma
\end{equation}
if $\Gamma$ is obtained from $\Gamma'$ by collapsing an edge or making
the length of a boundary edge finite or non-zero.  The ordering
relation helps in describing the boundary of the moduli spaces of
treed curves as
\[\ol \M_\Gamma \bs \M_\Gamma=\cup_{\Gamma' < \Gamma}\M_{\Gamma'}.\]
The background data $(\JJ_0, F_0)$ and the stabilizing divisor $\DD$
are the same for $\Pe_\Gamma$ for all domain types $\Gamma$.  For any
$\Gamma$, we will require that the perturbation datum $\Pe_\Gamma$ is
adapted to the stabilizing pair $(\JJ_0,\DD)$ in the sense of
Definition \ref{def:adapt-pert}.

 \begin{theorem}{\rm(Transversality)}
   \label{thm:transversality}
   Let $\XX$ be a broken manifold.  Let $\bD \subset \XX$ be a
   cylindrical broken divisor and $\JJ_0 \in \J^\cyl(\XX,\bD)$ be a
   locally strongly tamed cylindrical almost complex structure adapted
   to $\bD$ such that $(\bD,\JJ_0)$ is a stabilizing pair (Definition
   \ref{def:stab-brok}), and let $F_0: L \to \R$ be a Morse function.
   Suppose $\Gamma$ is a stable treed disk type, and the
   perturbation data
   \[\{\Pe_{\Gamma'}\}_{\Gamma' <\Gamma}\]
   is coherent, regular, has background $(\JJ_0,F_0)$, and is adapted
   to the pair $(\JJ_0,\DD)$ (as in Definition \ref{def:adapt-pert}).
   Then, for type $\Gamma$, there is
   \begin{itemize}
   \item a neighborhood $\mN_\Gamma \subset \ol \M_\Gamma$ of the boundary $\ol \M_\Gamma \bs \M_\Gamma$,  and a set of domain-dependent perturbations $\PPe_\Gamma(\DD)$  on curves of type $\Gamma$ whose value is fixed on
     $\mN_\Gamma$, and which have 
     background $(\JJ_0,F_0)$, are adapted to the pair $(\JJ_0,\DD)$, and which are 
     coherent with the data
   $\{\Pe_{\Gamma'}\}_{\Gamma' < \Gamma}$ (as in Definition
   \ref{def:coherent}); and
    \item a comeager subset
   \[\PPe_\Gamma^{\on{reg}}(\DD) \subset \PPe_\Gamma(\DD),\]
 \end{itemize}
 such that the following holds for any $\Pe_\Gamma \in \PPe_\Gamma^{\on{reg}}(\DD)$: 
 For an uncrowded type $\Gamma_X$ of broken
   maps with domain type $\Gamma$ (see Definition
   \ref{def:type-broken}),
   and leaf labels $\ul x$, for which the expected dimension of the reduced moduli space \eqref{eq:i-red-br} is
   \[i^\br_\red(\Gamma,\ul x) \leq 1,\]    
   the moduli space
   ${\M}_{\Gamma_X}(\XX, L,\Pe_\Gamma, \ul x)$ is a smooth oriented
   manifold of expected dimension.
   
   Consequently, there is a collection
   of coherent perturbation data
   \[\ul \Pe:=(\Pe_\Gamma)_\Gamma\]
   that 
   has background $(\JJ_0,F_0)$, and is 
   adapted to the pair $(\JJ_0,\DD)$, where $\Gamma$ ranges over all types of stable treed disks.
 \end{theorem}
\begin{proof}[Proof of Theorem \ref{thm:transversality}] 
  Transversality, as in the unbroken case, is an application of
  Sard-Smale as in Cieliebak-Mohnke \cite{cm:trans} and
  Charest-Woodward \cite{cw:flips} on the universal space of maps.
  The new feature is that in neck pieces of the broken manifold the
  almost complex structure is fixed in the fiber direction. This does
  not pose any issues for maps whose horizontal projection is
  non-constant.  Components of the map whose horizontal projection is
  constant will be shown to be automatically transversal.
 
  The moduli space is cut out as a zero set of a section of a Banach
  bundle which we now describe.  We restrict our attention to types of
  maps for which all intersections with the stabilizing divisor have
  multiplicity one. Other types are discussed later.  We construct the
  moduli space of broken maps without framing, since the framed
  version is a finite cover of the unframed one. (See Definition
  \ref{def:bmap} of framing and Remark \ref{rem:nfr}.)  We cover the
  moduli space of treed disks $\M_{\Gamma}$ by charts
  $\cup_i\M_{\Gamma}^i$, so that on a trivialization of the universal
  curve $\U_{\Gamma}^i$, each of the fibers is a fixed treed curve
  $C=S \cup T$ with fixed special points (see
  \eqref{eq:univlocaltriv}).  The complex structure on $S$ varies
  smoothly in the sense that it is given by a map
  \[\M_{\Gamma}^i \to \J(S_\Gamma), \quad m \mapsto j(m).\]

  In order to apply Sard-Smale, we pass to maps from the normalized
  curve of a fixed Sobolev class.  The domain of the map is the
  punctured curve $C^\circ \subset C$ with punctures at tropical nodal
  points $w_e$ (corresponding to $e \in \Edge_\trop(\Gamma)$).  Let
  \[\tC^\circ := \bigsqcup_{v \in \Ver(\Gamma)} S_v^\circ \sqcup \bigsqcup_{e \in
      \Edge_\white(\Gamma)} T_e \]
  denote the normalized curve in the sense that nodes in $C^\circ$
  (corresponding to internal edges $e \in \Edge_\internal(\Gamma)$)
  are lifted to double points in $\tC^\circ$ and the tree components
  in $C^\circ$ are detached from the surface components.  Choose $p>2$
  and $\lam \in (0,1)$.  Let
  \[\Map^{1,p,\lam}_{\Gamma}(\tC^\circ,\XX, L,\bD)\]
  denote the completion of maps under the weighted Sobolev norm
  $\Mod{\cdot}^\circ_\Gamma$ in \eqref{eq:xinorm} for surface
  components, and the ordinary $W^{1,p}$-norm for the tree components.
  That is, an element of this space consists of
  \begin{enumerate}
  \item a collection of $W^{1,p,\lam}$-maps
    \[ u_v:S_v^\circ \to \XC_{P(v)}, \quad u(\partial S_v) \subset L, \quad v \in \Ver(\Gamma) \]
    for each vertex $v$ (where $S_v^\circ$ is defined in \eqref{eq:opensv}), with the puncture at the node corresponding to any
      tropical edge $e \in \Edge_\trop(\Gamma)$ asymptotic to a $\cT(e)$-cylinder;
  \item and $W^{1,p}$ maps
    \[ u_e : T_e  \to L, \quad e \in \Edge_\white(\Gamma) \]
    from tree components to $L$ asymptotic to the critical points
    $\ul x$ on the leaves $T_e, e \in \Edge_{\white}(\Gamma)$.  If
    $\ell(e)$ is finite and non-zero, that is
    $e \in \Edge_{\white}^{(0,\infty)}(\Gamma)$, then
    $T_e \simeq [0,1]$ and if $\ell(T_e)=\infty$, each segment in
    $T_e$ is either $[0,\infty)$, $(-\infty,0]$ or $\R$.
  \end{enumerate}
  The metric on $\XC_{P(v)}$ is chosen to be cylindrical on the ends,
  and so that $L$ is totally geodesic.

  The perturbation data on the strata lower than $\Gamma$ glue to give
  regular perturbation datum in a small neighborhood in the boundary
  of the moduli space $\ol \M_\Gamma$: Any perturbation data for
  $\Gamma'$ with $\Gamma' < \Gamma$ induces perturbations for $\Gamma$
  in a neighborhood of $\U_{\Gamma'}$ in $\ol{\U_{\Gamma}}$ by the
  gluing construction in \eqref{eq:cdelta} and a similar gluing
  construction for the tree parts.  Let
  \[ \PPe_\Gamma(\DD):= \{ \Pe_\Gamma = (J_\Gamma,F_\Gamma) \} \]
  be the space of perturbation data defined on $\ol \U_\Gamma$ whose
  distance from the background data $(\JJ_0,F_0)$ is bounded in the
  $C^\veps$-norm, and that agree with the glued perturbation datum on
  a fixed open neighbourhood $\mN_{\Gamma'}$ of $\U_{\Gamma'}$ for the
  strata $\Gamma' < \Gamma$.
  \label{suffsmall} For a sufficiently small neighborhood
  $\mN_{\Gamma'}$, any such perturbation $\Pe_\Gamma $ is already
  regular for maps $u$ with domain in $\mN_{\Gamma'}$ as follows: If
  not, one could take a sequence $u_\nu$ of irregular maps of fixed
  area with domain converging to $\U_{\Gamma'}$.  By Gromov
  compactness in Chapter \ref{chap:cpt}, after passing to a
  subsequence we may assume that $u_\nu$ converges to a limit
  $u_\infty$ of type $\Gamma'$.  The limit $u_\nu \to u_\infty$ does
  not involve sphere bubbling, otherwise,
  $i^\br_\red(u_\infty) \leq i^\br_\red(u_\nu) -2 <0$ by \eqref{eq:i-br-red-2}, and since
  $\Pe_{\Gamma'}$ is regular, $u_\infty$ does not actually exist.
  Therefore the tropical graphs of $u_\nu$ and $u_\infty$ are the
  same. The limit $u_\nu \to u_\infty$ either involves the formation
  of a disk bubble, or the length of a treed segment going to zero or
  infinity.  By the arguments in the proof of surjectivity of gluing
  in Case 1 in Theorem \ref{thm:bdry-glue} \eqref{tubs} (which is a
  simplification of the proof in Section \ref{surjofgluing}, since
  the gluing is only at non-tropical nodes), the linearized operator
  $D_{u_\nu}$ is surjective for $\nu$ sufficiently large, which is a
  contradiction. Since the gluing and compactness arguments do not use
  regularity, circularity of argument is avoided.
 
  We use the $C^\veps$-norm for $J_\Gamma$ and a $C^l$-norm for
  $F_\Gamma$ where $l>1$ is a fixed number.  For any broken curve $C$
  of type $\Gamma$ we obtain perturbation data on $C$ by identifying
  it conformally with a fiber of the universal tree disk
  ${\U}_{\Gamma}$.
  Let
  \[ \B^i_{p,\lam,l,\Gamma} := {\M}^i_{\Gamma} \times
    \Map^{1,p,\lam}_\Gamma(\tC,\XX,L,\bD) \times {\PPe}_\Gamma(\bD)
    .\]
  Let $\E^i = \E^i_{p,\lam,\Gamma}$ be the Banach bundle over
  $\B^i_{p,\lam,l,\Gamma}$ given by
  \[ (\E^i_{p,\lam,\Gamma})_{j, u, J } \subset
    L^{p,\lam}(\Omega^{0,1}_{j,J}(S, (u|S)^* T\XX)) \oplus
    L^p(\Omega^1(T, (u|T)^* TL)). \]
  Here the first summand is the space of $0,1$-forms with respect to
  $(j(m),J)$.  The Cauchy-Riemann and shifted gradient operators
  applied to the restrictions $u|S$ resp. $u|T$ of $u$ to the two
  resp. one dimensional parts of $C = S \cup T$ define a $C^{l-1}$
  section
  \begin{equation} \label{olp3} \olp_\Gamma: \bB_{p,\lam,l,\Gamma}^i
    \to \cE_{p,\lam,\Gamma}^i,\quad (C,u, (J_\Gamma,F_\Gamma)) \mapsto
    \left(\olp_{j(m),J_\Gamma} u|S , \left(\frac 1 {\lam_e} \dds +
        \grad_{F_\Gamma}\right) u|{T}\right)
  \end{equation}
  where $s$ is a local coordinate on the tree components with unit
  speed, and
  \[\lam_e=
    \begin{cases}
      \ell(e), \quad e \in \Edge^{(0,\infty)}_{\white,-}(\Gamma),\\
      1, \quad e \in \Edge^\infty_{\white}(\Gamma).
    \end{cases}
  \]
  is a factor accounting for the length of treed segments in case the
  edge length is finite and non-zero.  The evaluation maps at lifts of
  nodal points, markings, and lifts of $S \cap T$ give a smooth map
  \begin{equation} \label{egam} \ev_\Gamma : \bB_{p,\lam,l,\Gamma}^i
    \to \XX(\Gamma) \end{equation}
  where
  \begin{multline} \label{xgam} \XX (\Gamma) = \left( \prod_{e \in
        \Edge_{\trop}(\Gamma)} (\XX_{P(e)}/T_{\cT(e),\C})^2 \right)
    \times
    \left( \prod_{e=(v_+,v_-) \in \Edge_{\internal,\black}(\Gamma)}
      (\XC_{P(v_\pm)})^2 \right) \\
    \times \left(\prod_{x \in S \cap T} L^2 \right)
    \times \left(\prod_{e \in \Edge_{\white}^0(\Gamma)} L^2 \right)
    \times \left( \prod_{e \in \Edge_{\black,\to}(\Gamma)} \XX_{P(v(e))} \right).
  \end{multline}
  The first two factors in \eqref{xgam} correspond to lifts of
  interior nodes, the third term is a lift of boundary nodes $w_e$
  with no treed segments (that is, $\ell(e)=0$), the fourth term is a
  lift of $S \cap T$, and the last term corresponds to evaluation at
  an interior marking.  All the factors of $\ev_\Gamma$ are standard
  evaluation maps, except for the first factor which is the projected
  tropical evaluation map as in \eqref{eq:evproj}.  Let
  \[ \Delta(\Gamma) \subset \XX(\Gamma) \]
  be the submanifold that is the product of diagonals in the first
  four factors of $\XX(\Gamma)$ in \eqref{xgam}, and the stabilizing
  divisor $D_{P(v(e))} \subset \XC_{P(v(e))}$ in the last factor.  The
  \em{local universal moduli space} is
  \begin{equation} \label{localuniv}
{\M}^{\univ,i}_{\Gamma}(L,\bD) = (\olp,\ev_\Gamma)^{-1}
    (\bB^i_{p,\lam,l,\Gamma},\Delta(\Gamma)), \end{equation}
  where $\bB^i_{p,\lam,l,\Gamma}$ is embedded as the zero section in
  $\cE_{p,\lam,\Gamma}^i$.

  We will next show that this subspace is cut out transversally.  We
  first consider two-dimensional components of $\tC$ on which the map
  is not horizontally constant, and show that the linearization of
  $(\delbar, \ev_\Gamma)$ is surjective.  For components whose target
  space is not a neck piece, the surjectivity of the differential
  $D(\delbar, \ev_\Gamma)$ follows from \cite[Proposition
  3.4.2]{ms:jh}.  Recall from \eqref{eq:pixp} that a neck piece
  $\XC_P$ is a $T_{P,\C}$-bundle
  \[T_{P,\C} \to \XC_P \to \XB_P,\]
  and the almost complex structure on $\XC_P$ is $P$-cylindrical. (For
  non-neck pieces $T_P$ is trivial.)  Consider a component
  $S_v^\circ \simeq \P^1 \bs \{\text{special points}\}$ that maps to a
  neck piece $\XC_P$, and the horizontal projection
  $\pi_P \circ u:S_v^\circ \to \XB_P$ is non-constant. The linearized
  operator
  \[D_{u,J}^\circ(\xi,K) = D_u^\circ \xi + \hh K Du j. \]
  is surjective as follows.  Let
\[ \eta \in \coker(D_{u,J}^\circ) \subset  \Omega^{0,1}(u^*
T\XC_P) \] 
be a one-form in the cokernel of $D_{u,J}^\circ$.  Variations of tamed
almost complex structure of cylindrical type are $J$-antilinear maps
  \[K: T \XC_P \to T\XC_P\]
  that vanish on the vertical sub-bundle and are
  $T_{P,\C}$-invariant.\footnote{Perturbing $K$ is equivalent to
    perturbing the connection one-form $\alpha_P$ of the
    $P$-cylindrical almost complex structure, see
    \eqref{eq:connJ}.} \label{expK} Since the horizontal part of
  $D_z u$ is non-zero at some $z \in S_v^\circ$, we may find an
  infinitesimal variation $K$ of almost complex structure \em{of
    cylindrical type} by choosing $K(z)$ so that $K(z) D_zu j(z)$ is
  an arbitrary $(j(z),J(z))$-antilinear map from $T_z C$ to
  $T_{u(z)} \XC_P$.  Choose $K(z)$ so that $K(z) D_zu j(z)$ pairs
  non-trivially with $\eta(u(z))$ and extend $K(z)$ to an
  infinitesimal almost complex structure $K$ by a cutoff function on
  the domain curve.

  For two-dimensional components that are
  horizontally constant, by Proposition \ref{prop:toricfibreg}, the
  linearization $D\delbar$ is surjective, and additionally the
  evaluation map at a single marked point is surjective. (Note that,
  if we consider all the markings together, the linearization
  $D\ev_\Gamma$ may not be surjective on horizontally constant
  components.) Finally, for tree components, the linearization of the
  shifted gradient operator and the evaluation map at the finite end
  is surjective, since we can perturb the Morse function on the
  Lagrangian.
 
  From the discussion so far, we conclude that the local moduli space
  \eqref{localuniv} is cut out transversally except if there is a
  connected component in the domain on which the map is horizontally
  constant, and which contains more than one irreducible surface
  component. In this exceptional case, it remains to prove that the
  matching conditions at nodes between two horizontally constant
  components are cut out transversally. Such nodes are necessarily
  internal interior nodes, corresponding to edges
  $e \in \Edge_{\black,\internal}(\Gamma)$.  We consider a maximal
  connected subgraph $\Gamma' \subset \Gamma$ so that the map is
  horizontally constant on the vertices of $\Gamma'$, and $\Gamma'$
  does not have tree components.  By uncrowdedness, there is at most
  one marked point in $\Gamma'$.  So, it is possible to choose at most
  one special point (marked point or a lift of a nodal point) on each
  component of $\tilde C_{\Gamma'}$, so that for every nodal point one
  of its lifts is chosen.  By Proposition \ref{prop:toricfibreg}, for a
  horizontally constant component with a single tropical marking $z_e$
  corresponding to an edge $e \in \Edge_\trop(\Gamma)$, the linearized
  map $D(\delbar,\pi_{\cT(e)}(\ev_{z_e}^{\cT(e)}))$ is surjective.
  Since the evaluation map is surjective at each of the chosen lifts,
  an inverse of the linearized map $D(\delbar,\ev_\Gamma)$ can be
  constructed inductively, see \cite[p63]{cw:flips}.

  For types where the map has higher order intersections with the
  stabilizing divisor, the universal moduli space is cut out
  inductively as in \cite[Lemma 6.5]{cm:trans}. Each step of the
  induction cuts out a moduli space where the tangencies at one of the
  markings is increased by one. We start out with a moduli space cut
  out of $W^{k,p,\lam}$ where $k - \frac 2 p > \mu$ and $\mu$ is the
  largest order of tangency with the stabilizing divisor $\DD$ that
  occur in the type $\Gamma$.

  By the implicit function theorem,
  ${\M}^{\univ,i}_{\Gamma}(L,\bD)$ is a smooth Banach manifold, and
  the forgetful morphism
  \[\varphi_i: {\M}^{\univ,i}_{\Gamma}(L,\bD)_{k,p,l} \to
    \PPe_{\Gamma}(L,\bD)_l \]
  is a smooth Fredholm map.  By the Sard-Smale theorem, the set of
  regular values $\PPe^{i,\reg}_{\Gamma}(L,D)$ of $\varphi_i$ on
  ${\M}^{\univ,i}_{\Gamma}(L,\bD)_d$ in $\PPe_{\Gamma}(L,\bD)$ is
  comeager.  Let
  \[ \PPe^{\reg}_{\Gamma}(L,\bD) = \bigcap_i
    \PPe^{i,\reg}_{\Gamma}(L,\bD) .\]
  A standard argument shows that the set of smooth domain-dependent
  $\PPe^{\reg}_{\Gamma}(L,\bD)$ is also comeager.  Fix
  $(J_\Gamma,F_\Gamma) \in \PPe^{\reg}_{\Gamma}(L,\bD)$.  By
  elliptic regularity, every element of ${\M}^i_{\Gamma}(L,\bD) $
  is smooth. The transition maps for the local trivializations of the
  universal bundle define smooth maps
  \[ {\M}^i_{\Gamma}(L,\bD) |_{ {\M}^i_{\Gamma} \cap
      {\M}^j_{\Gamma}} \to {\M}^j_{\Gamma}(L,\bD) _{{\M}^i_{\Gamma}
      \cap {\M}^j_{\Gamma}} .\]
  This construction equips the space
  \[ {\M}_{\Gamma}(L,\bD) = \cup_i {\M}^i_{\Gamma}(L,\bD) \]
  with a smooth atlas.  Since $\M_\Gamma$ is Hausdorff and
  second-countable, so is $\M_\Gamma(L,\bD)$ and it follows that
  $\M_\Gamma(L,\bD)$ has the structure of a smooth manifold. Orientation of the moduli space is discussed in Remark \ref{rem:orientmap} below.
\end{proof}

For regular perturbations, the tangent space to the moduli space can
be identified with the kernel of an operator called the \em{
  linearized operator}.  Using notations from the proof of
transversality, we write down the linearized operator for future
reference.  For a regular perturbation $\Pe_\Gamma$, consider the map
\[ (\delbar,\ev_\Gamma): \bB_{k,p,\Gamma}^i \to \cE_{k,p,\Gamma}^i
  \times \XX(\Gamma) \]  
where $\ev_\Gamma$ is defined in \eqref{egam}. 
Its linearization is denoted
\begin{equation}
  \label{eq:linop}
  D_u : T_{[C],u}\bB_{k,p,\Gamma}^i \to
  (\cE_{k,p,\Gamma}^i)_{[C],u} \oplus \ev_\Gamma^* T\XX(\Gamma)/T\Delta(\Gamma).  
\end{equation}
For a regular broken map $u$, the operator $D_u$ is surjective.

\index{Orientation for moduli spaces!of maps} 
\begin{remark}{\rm(Orientation of moduli spaces of maps)}
  \label{rem:orientmap}
  Moduli spaces of broken maps are oriented using a relative spin
  structure on the Lagrangian as in Fukaya-Oh-Ohta-Ono \cite{fooo} as
  follows. An orientation for the moduli space at a given map is by
  definition a non-zero determinant line bundle of the Fredholm
  operator $D_u$, that is the linearization of the Fredholm map
  cutting out the moduli space. The determinant line bundle is defined as the
  tensor product of top exterior powers
  \[ \det(D_u):=\Lam^{\on{top}}(\ker(D_u)) \wedge
    \Lam^{\on{top}}(\coker(D_u)).\]
  Let $L$ be an oriented Lagrangian equipped with a relative spin
  structure (see \cite[Chapter 44]{fooo} for the definition).  Given a
  holomorphic disk $u:(\D^2,\partial \D^2) \to (X,L)$ and a complex
  linear Cauchy-Riemann operator
  \[\delbar : \Gamma(\D^2, \partial \D^2;u^*TX, (\partial u)^*TL) \to
    \Om^{0,1}(\D^2,u^*TX),\]
  the determinant line bundle can be identified with $\det(T_lL)$ for
  any point $l \in L$, that is, $\det(\delbar) \simeq \det(T_lL)$,  and the identification is canonical up to multiplication by positive
 scalars $\lam \in \R_+$, see \cite[Proposition 44.4]{fooo} or an
 explanation in \cite[Proposition 5.2]{Cho:pn}.  The argument in 
 \cite[Proposition 44.4]{fooo} also gives such an identification in
 case of a nodal disk with interior nodes, since that involves
 degenerating the bundle $u^*TX$ into a bundle on the disk and a
 complex vector bundle on the sphere.  The determinant bundle for 
 the linearized operator for broken maps can be
 identified to $\det(T_lL)$ in a similar way, since the matching conditions 
 at the cylindrical ends are complex-linear.   There are similar orientations for moduli spaces of treed disks with  contributions to the orientation from the
 infinite treed segments attached at boundary markings.  We choose
 orientations on Morse unstable and stable manifolds
  \[o(x) : W^\pm(x) \to TW^\pm(x), \quad \forall x \in \crit(F)\]
  for the Morse function $F$ on the Lagrangian,
  such that for any $x \in \crit(F)$ the map
  \[\det(T_xW^-(x) \oplus T_xW^+(x)) \to \det(T_xL)\]
  is orientation preserving. We view a broken treed holomorphic disk
  as defined by a condition $u(z_i^\white) \in W^-(x_i)$ on each of
  the boundary markings $e_i^\white \in \Edge_{\white,\to}$.  This
  gives an isomorphism of line bundles
  \[\det(D_u) \simeq \det(T\M_\Gamma) \wedge \det(TL) \wedge
    \det(W^+(x_0)) \wedge (\wedge_{i=1}^{d(\white)}
    \det(W^-(x_i))), \]
  up to multiplication by positive scalars. The choices made in the previous paragraphs  fix a positive section on the right-hand side.    The construction 
  induces an orientation on the moduli space
  $\M_\Gamma(\XX,\ul x)$ for any type $\Gamma$ of broken maps and a
  collection of inputs and output $\ul x$. The orientation of
  $\det(T \M_\Gamma)$ is described in Definition \ref{def:orientmg}.
For a broken map type $\Gamma$ with a broken edge, the
 difference in orientation induced by
  \hyperref[item:makingedgelengthmorph]{(Making an edge length
    finite)} morphism and the product orientation induced by
  \hyperref[item:cuttingedgesmorphism]{(Cutting an edge)} morphism
  differ by a quantity that depends on the type $\Gamma$ and the Morse
  indices of the labels on the end-points of the treed segments, see
  \cite[(12.25)]{se:bo}.  This ends the Remark.
\end{remark}

\section{The toric case}

In this section, we consider the question of regularity for
horizontally constant maps. These are maps in a toric fibration whose
image is contained in a single fiber.  Therefore, we wish 
to show that, in a toric variety with standard almost complex
structure, spheres that intersect torus-invariant divisors at isolated
points are regular.  The maps are allowed to have a single tropical
marking, in which case, the evaluation map at the marking is
submersive. This result is used in the proof of transversality in the
previous section to analyze horizontally constant components.

\begin{proposition}\label{prop:toricfibreg}
  Suppose $\XX$ has a cylindrical almost complex structure.  Let
  $\Gamma$ be a type of relative map with a single vertex $v$, the
  vertex $v$ being of sphere type and horizontally constant.  Any map
  $u: S_v^\circ \to \XX_{P(v)}$ of type $\Gamma$ is regular, and
  therefore, the moduli space of maps $\M_\Gamma(\XX)$ is a manifold
  of expected dimension.  For a tropical marking $z_e$ corresponding to
  any edge $e \in \Edge_\trop(\Gamma)$, the projected tropical
  evaluation map
  \[\pi^\perp_{\cT(e)} \circ \ev^{\cT(e)}_{z_e} : \M_\Gamma(\XX) \to \XX_{P(e)}/T_{\cT(e),\C}\]
  is submersive.
\end{proposition}

\begin{lemma}\label{lem:toric-sphere-reg}
  Let $X$ be a toric orbifold with the standard almost complex
  structure, and let $u : \P^1 \to X$ be a holomorphic sphere whose
  intersection points with the torus-invariant divisors
  $Y_1,\dots, Y_N$ are isolated. Then, $u$ is regular in $X$, that is,
  the linearized operator $D\delbar$ is surjective at $u$.
\end{lemma}

\begin{proof}
  Holomorphic spheres meeting the interior of $X$ are regular by the
  following argument. As in Delzant \cite{de:ha}, $X$ can be viewed as
  a geometric invariant theory quotient $\C^N \qu G$ where $N$ is the
  number of prime torus-invariant relative divisors of $X$, and
  $G \subset (\C^\times)^N$ is a complex torus whose quotient
  $(\C^\times)^N/G$ is $T_\C$.  Each of the torus-invariant divisors
  $Y_1,\dots, Y_N$ in $X$ lifts to a coordinate hyperplane
  $ \{ z_1 = 0 \}, \ldots, \{ z_N = 0 \}$ in $\C^N$.  Consider a
  holomorphic sphere $u:\P^1 \to X$, that is not contained in any
  toric divisor.  The vector bundle $u^*TX \oplus \ul \g$ on $\P^1$ is
  a sum of line bundles
\[ u^*TX \oplus \ul \g =  \bigoplus_{i=1}^Nu^*\mO(Y_i) \]  
where $\ul \g:=\g \times \P^1$ is the trivial bundle.  The degree
$\deg(u^* \mO(Y_i))$ of the line bundle $u^*\mO(Y_i)$ is given by the
intersection of $u$ with $Y_i$.  Hence, each of the degrees
$\deg(u^* \mO(Y_i))$ is non-negative.  As a result, the operator
  \begin{equation*}
    \olp:\Gamma(\P^1, \oplus_i u^*\mO(Y_i)) \to \Om^{0,1}(\P^1, \oplus_i u^*\mO(Y_i))
  \end{equation*}
  is onto.  Consequently, the cohomology group
  \begin{equation*}
    H^{0,1}(\P^1,\oplus_i u^*\mO(Y_i)) \simeq H^1(\P^1,\oplus_i u^*\mO(Y_i)) 
  \end{equation*}
  vanishes.  Consider the long exact sequence in {\v C}ech cohomology,
  corresponding to the short exact sequence of sheaves
  \begin{equation*}
    0 \to \ul \g \to \oplus_i u^*\mO(Y_i) \to u^*TX \to 0.
  \end{equation*}
  Vanishing of the zeroth resp. first cohomology of the first
  resp. second terms implies that the first cohomology of the third
  term
  \begin{equation*}
    H^{0,1}(\P^1,u^*TX) \simeq H^1(\P^1, u^*TX)
  \end{equation*}
  also vanishes.  Therefore the sphere $u$ is regular in $X$.
\end{proof}

\begin{proof}[Proof of Proposition \ref{prop:toricfibreg}]
  We recall that we may equivalently use the operators $D_u$ or
  $D_u^\circ$ since they have isomorphic kernels and cokernels.
  First, we use $D_u$ to show that $u$ is regular. Recall that
  \[  \XX_{P(v),\cT} := \XX_{P(v)} \cup \cup_{e \in \Edge_\trop(\Gamma)}Y_{\cT(e)}\]
  is a compactification of $\XX_{P(v)}$ obtained by adding a divisor
  $Y_{\cT(e)}$ (see \eqref{eq:Ycte}) to $\XX_{P(v)}$ corresponding to
  every edge $e$ incident on $v$.  A map
  $u : S_v^\circ \to \XX_{P(v)}$ of type $\Gamma$ extends
  holomorphically to $u: \P^1 \to \XX_{P(v),\cT}$ with
  $u(z_e) \in Y_{\cT(e)}$.  The fibers of the projection map
  $\pi_{P(v)} : \XX_{P(v),\cT} \to X_{P(v)}$ are toric orbifolds with
  the standard almost complex structure, and the image of $u$ is
  contained in a single fiber. Since the almost complex structure on
  $\XX_{P(v)}$ is cylindrical, the pullback bundle splits into
  holomorphic bundles
  \[u^*T\XX_{P(v)} = V \oplus H \to \P^1, \quad V=u^*\ker(d\pi_{P(v)}),\]
  with the horizontal bundle $H$ being trivial, since $u$ is
  horizontally constant. The linearization $D\delbar|V$ surjects onto
  $\Lam^{0,1}_{\P^1} \tensor V$ by Lemma \ref{lem:toric-sphere-reg},
  and since $H$ is trivial, the map
  $D\delbar: H \to \Lam^{0,1}_{\P^1} \tensor H$ is surjective.

  Next, we will show that the moduli spaces of spheres with prescribed
  tangencies at relative marked points are cut out transversally.  For
  a fixed point $p \in X_{P(v)}$, consider holomorphic maps
  $u: \P^1 \to \pi_{P(v)}^{-1}(p) \subset \XX_{P(v),\cT}$ of type
  $\Gamma$, and hence not contained in the divisors $Y_{\cT(e)}$.  The
  first Chern class of the sphere $u$, viewed as a map to the fiber
  $\pi_{P(v)}^{-1}(p)$, is equal to the sum of its intersection
  multiplicities $(u. Y_{\cT(e)})$ with all divisors at infinity
  $Y_{\cT(e)}$.  In fact, the moduli space containing $u$ is
  parametrized by the set of intersection points $z \in C$ with the
  toric divisors $Y_{\cT(e)}$.  For maps with a higher order tangency
  with a toric divisor $Y_{\cT(e)}$, some subsets of intersection
  points of $u$ with $Y_{\cT(e)}$ coincide.  Since the zeros of
  sections of positive line bundles may be chosen arbitrarily in genus
  zero, the locus of such maps is cut out transversally from the space
  of all maps.  Therefore, the moduli space of relative maps of any
  type $\Gamma$ is regular. We now switch to working with relative
  maps $u: S_v^\circ \to \XX_{P(v)}$ defined on punctured curves.

   The linearization of the tropical evaluation map \eqref{eq:tropev}
   at tropical markings is surjective because of the
   torus-equivariance of the evaluation map. In particular, for a
   tropical marking $z_e$ and a choice of holomorphic coordinate in a
   neighborhood of $z_e$, the tropical evaluation
  \[\ev^{\cT(e)}_{z_e}:  \M_\Gamma(\XX) \to \XX_{P(e)}\]
  satisfies
  \begin{equation}
    \label{eq:evequiv}
  \ev^{\cT(e)}_{z_e}(tu)=t\ev^{\cT(e)}_{z_e}(u), \quad \forall t
    \in T_{P(v),\C}.   
  \end{equation}
  We choose an embedding of $\XX_{P(e)}$ into the cylindrical end of
  $\XX_{P(v)}$, and view $\ev^{\cT(e)}_{z_e}$ as mapping to
  $\XX_{P(v)}$.  The fiber of the projection map $\pi_{P(v)}$ is a
  $T_{P(v),\C}$-orbit.  By \eqref{eq:evequiv}, the image of
  $\ev^{\cT(e)}_{z_e}$ consists of $T_{P(v),\C}$-orbits, and
  therefore, $D\ev^{\cT(e)}_{z_e}$ surjects onto the vertical part of
  the tangent space $V \subset T\XX_{P(v)}$. The differential
  $D\ev^{\cT(e)}_{z_e}$ also surjects onto the horizontal subspace
  $H$, since constant sections on $H$ are contained in
  $\ker(D\delbar)=T\M_\Gamma(\XX)$.
\end{proof}

\begin{example}
  {\rm(Trivial cylinders with a single marking)}
  \label{ex:triv1} 
  A trivial cylinder with a single marking is an uncrowded component. A broken map $u:C \to \XX$ of type $\Gamma$ containing such a component 
  has index at least two, as we now explain. Taking a slice of the action of the domain isomorphism group (Definition \ref{def:iso-stab} \eqref{part:mapiso}), we assume that the nodes are $z_0=0$, $z_1=\infty$, the marking is $z_2=1$ on $S_v \simeq \P^1$, and the map is
  \[u_v : S_v^\circ \simeq \C^\times \to \XX_{P(v)}, \quad z \mapsto z^\mu x,\]
    for some $\mu \in \t_{P(v),\Z}$. 
    Since $S_v$ has a marking, the image 
    of $u_v$ lies in a $T_{\mu,\C}$-orbit in the stabilizing divisor $\DD_{P(v)} \subset \XX_{P(v)}$.
    In the broken map $u$, the $T_{\mu,\C}$-orbit containing the image
    of $u_v$ is determined by the matching conditions at the nodes.
    The set of maps $u_v$ in a given orbit, for which matching
    conditions are satisfied, is $2$-dimensional, and therefore, the
    index of the broken map $u$ is at least $2$. In particular, in the
    moduli space $\M_\Gamma(\XX)$, the evaluation $\ev_{z_2}$ takes
    all possible values in the image orbit.  As an aside, observe that
    the tropical symmetry group $T_\trop(\Gamma)$ has at least one
    infinitesimal generator, since both edges incident on the vertex
    $v$ have slope $\mu$.
\end{example}

\chapter{Hofer energy and exponential decay}\label{chap:hof}

Compactness for sequences of broken pseudoholomorphic maps requires
bounds on area. Neck-stretched manifolds do not have a canonically-defined taming symplectic form.  However, neck-stretched
manifolds are non-canonically diffeomorphic to a tropical Hamiltonian manifold and inherit a cohomology class of degree two from the diffeomorphism.   The Hofer energy of a pseudoholomorphic curve is defined as a
supremum of symplectic areas where the symplectic form ranges over a
family of taming forms in this symplectic class. Each symplectic form in
the family is given by a map of complexes
\[\aleph:B_J \to B_\om\]
between the \em{$J$-complex} $B_J$ underlying the cylindrical almost
complex structure and the \em{$\om$-complex} $B_\om$ underlying the
symplectic form on the tropical manifold.  For example, in a
neck-stretched manifold $X^\nu$ the $\om$-complex is the dual complex
$B^\dual$ and the $J$-complex is $\nu B^\dual$.  Such a map $\aleph$
between complexes gives a map between manifolds
\[\psi_\aleph : (X^\nu,J^\nu) \to (X,\om_X) \]
which is taming (that is, $J^\nu$ is $\psi_\aleph^*\om_X$-tame) if
$\aleph$ satisfies certain conditions.
In case of a single cut, $\psi_\aleph$ is taming if
$\aleph : [\frac {-\nu} 2, \frac \nu 2] \to [0,1]$ is an increasing
diffeomorphism. This notion of energy as a supremum over a family of
symplectic areas was originally defined by Hofer in the context of
symplectic field theory \cite{Hofer:weinstein}.

The taming condition is more complicated for a multiple cut and leads
us to define a class of maps between complexes, called \em{squashing
  maps}.  A squashing map is a continuous piecewise smooth map between
complexes, which on any piece, is the composition of a translation,
dilation and an orthogonal projection.  For a pseudoholomorphic map
$u : C \to X^\nu$ in a neck-stretched manifold (or a broken manifold),
we define the \em{Hofer energy} as
\[E_\Hof(u):=\sup_{\aleph \text{ is a squashing map}}\int_C u^*(\psi_\aleph^*\om_X). \]
Of course, if $C$ is a closed curve, or if $C$ is a disk and $u$ maps
the boundary $\partial C$ to a Lagrangian submanifold, the value of
the integral is independent of $\aleph$ since the cohomology class of
$\psi_\aleph^*\om_X$ is independent of $\aleph$.

The main result of this Chapter is that a punctured pseudoholomorphic
curve $u$ with finite Hofer energy is asymptotic to a trivial
cylinder, which is a map
\[u_\triv : \C^\times \to X_P, \quad z \mapsto z^\mu x_0 \]
for some integral generator $\mu \in \t_{P,\Z}$ and $x_0 \in X_P$. More precisely,
we show that for such a map $u$, the tropical evaluation is
well-defined at the puncture point, that is, the twisting by $-\mu$,
given by
\[v(z):=z^{-\mu}u(z),\]
has a removable singularity at the puncture, where $z$ is a
holomorphic coordinate in the neighborhood of the puncture $z=0$.

For the removal-of-singularities result, we consider maps on a
punctured disk $B_1 \bs \{0\} \subset \C$, which is holomorphically
identified with
\[\Cyl := \R_{\geq 0} \times S^1. \]
For any $l \geq 0$, we refer to a truncated semi-infinite cylinder by
\[\Cyl(l) := [l,\infty) \times S^1. \]
We use a product metric on the domain $\Cyl$, and on the target broken
manifold $\XX$, we use a fixed cylindrical metric as in Definition
\ref{def:cylmet}.

The following result holds for maps that are holomorphic with respect
to a domain-dependent almost complex structure that takes values in a
$C^0$-neighborhood $U_{\JJ_0}$ of the space of cylindrical almost
complex structures. The neighborhood $U_{\JJ_0}$ is determined in
Lemma \ref{lem:monot}, \label{rep:uj} and is such that any
$J \in U_{\JJ_0}$ is weakly tamed (Definition \ref{def:weaklytaming})
by the form $\psi_\aleph^*\om_X$ for any squashing map $\aleph$.

\begin{proposition}\label{prop:remsing}
  {\rm(Removal of singularities)} Suppose $u: \Cyl \to \XX_P$ is a map
  that is holomorphic with respect to the domain-dependent almost
  complex structure $J : B_1 \to U_{\JJ_0}$ (holomorphically
  identifying $\Cyl \simeq B_1 \bs \{0\}$) and $U_{\JJ_0}$ is as in
  the preceding paragraph.  Suppose
  \begin{equation}
    \label{eq:remhyp}
    E_{P,\Hof}^*(u) <\infty, 
    \quad   \Mod{\d u}_{L^\infty(\Cyl)} <\infty. 
  \end{equation}
  Then, there exists a polytope $Q \subseteq P$, $\mu \in \t_{Q,\Z}$,
  and $L \geq 0$, such that the image $u(\Cyl(L))$ lies in the
  $Q$-cylindrical end $\XX_P$, and the twisted map
  \begin{equation}
    \label{eq:remconc}
  \ol u: \Cyl(L) \to \XX_Q, \footnote{The $Q$-cylindrical end of $\XX_P$ is embedded in $\XX_Q$ by \eqref{eq:Pcoord}.}
    \quad (s,t) \mapsto e^{-\mu(s+it)}u(s,t)  
  \end{equation}
  has a removable singularity at $\infty$.  The same result holds when
  the broken manifold $\XC_P$ is replaced by the cut space $\XB_P$.
\end{proposition}
\noindent Here, $E^*_{P,\Hof}$ is the unpartitioned $P$-Hofer energy of a map defined below in Definition \ref{def:PHof}.

We describe some other results from this Chapter, and indicate how
they are used in the proof of Gromov compactness in Chapter
\ref{chap:cpt}. Given a sequence of pseudoholomorphic maps in neck-stretched manifolds
which belong to a fixed homology class, Remark \ref{rem:hof-area} says
that the Hofer energy of these maps is uniformly bounded, since it is
a topological quantity for maps on closed domains and disks with
Lagrangian boundary conditions. Next, the monotonicity result (Lemma
\ref{lem:monot}) says that the Hofer energy of a map can not increase
if we restrict the map to a subdomain.  By Proposition
\ref{prop:hofer-breaking}, Hofer energy is well-behaved for a sequence
of maps on subdomains (not necessarily closed) under the
neck-stretching limit, which allows us to obtain Hofer energy bounds
on pieces of the Gromov limit. As a last step, this bound allows us to
apply the removal of singularities result (Proposition
\ref{prop:remsing}) on components of the Gromov limit.

\section{Symplectic forms on neck-stretched manifolds}
\label{sec:incmap}
In this section, we define a family of symplectic forms on
neck-stretched manifolds that tame cylindrical almost complex
structures that are locally strongly tame (Definition
\ref{def:cylneckst} \eqref{part:local-strong-t}).  For a
pseudoholomorphic disk $u:(C,\partial C) \to (X^\nu,L)$ in a neck
stretched manifold $X^\nu$ whose boundary maps to a Lagrangian
submanifold $L \subset X^\nu$, the symplectic area
\begin{equation}
  \label{eq:area-u}
\Area(u):=\lan u_*[C],[\om_X]\ran , 
\end{equation}
\index{Area!Symplectic area}is
defined by identifying 
the neck-stretched
manifold $X^\nu$ (not canonically) with the symplectic
manifold $X$.  
Our task is to construct a family of symplectic
forms in the class $[\om_X] \in H^2(X^\nu)$.
As a first step, we describe 
how a map \[\aleph : \nu B^\dual \to B^\dual\] between complexes determines a map
\[\psi_\aleph : X^\nu \to X
\]
from the neck-stretched manifold $X^\nu$ to the corresponding symplectic manifold $X$. In the second half of the Section, we describe an \em{increasing} condition on the map $\aleph$ that ensures that $\psi_\aleph^*\om$ tames cylindrical almost complex structures that are locally strongly tamed (Definition
\ref{def:cylneckst} \eqref{part:local-strong-t}) and that lie in a small neighborhood. The condition on $\aleph$ may appear fairly restrictive, and the reason is that the symplectic form needs to tame a class of cylindrical almost complex structures whose underlying connection one-form is allowed to vary (see Remark \ref{rem:tame-compare} for more details). 
The extra freedom of varying the connection one-forms is needed for defining domain-dependent almost complex structures with which one can attain transversality of the linearized $\delbar$ operator.

In order to construct the map $\psi_\aleph$ from the map $\aleph$ of
complexes, we first define a map $\pi_{B^\dual}$ from the symplectic
manifold $(X,\om_X)$ to its $\om$-complex $B^\dual$ that is a
piecewise projection.  We recall from \eqref{eq:sympcylX} that on a
tropical manifold $(X,\om_X,\PP,\Phi)$ there is a symplectic
cylindrical structure on neck regions, that is, there is a
$T_P$-equivariant symplectomorphism%
\begin{multline}
  \label{eq:sympstr-hofer}
  (X,\om_X) \supset \Phinv(\tP) \xrightarrow{\phi_P} (\Phinv(P) \times
  P^\dual, \tilde \om_P), \\
  \tilde \om_P := (\om_X|\Phinv(P)) + d\bran{\alpha_P, \pi_{P^\dual}},
\end{multline}
for each polytope $P \in \PP$, where $\tP \subset \t^\dual$ is a
tubular neighborhood of the polytope $P \subset \t^\dual$ with
projection (see Figure \ref{fig:fibered-poly})
\[\pi_P : \tP \to P,\]
and $\alpha_P \in \Om^1(\Phinv(P),\t_P)$ is a $T_P$-connection
one-form. The projection
$\pi_{P^\dual} : \Phinv(\tP) \to P^\dual \subset \t_P^\dual$ of the
moment map $\Phi$ to $\t_P^\dual$ is a moment map for the
$T_P$-action, where $P^\dual$ is viewed as a subset of $\t_P^\dual$ via the isomorphism $\t_P \simeq \t_P^\dual$ from 
\eqref{eq:idtt}. 
Let $\wP \subset P$ be the complement of a neighborhood
of faces of $P$, namely
\begin{equation}
  P^\fillblack:=P \bs (\cup_{Q \subset P}\tQ). 
\end{equation}
Let
\begin{equation}
  \label{eq:olpblack2}
  \tP^\fillblack:=\pi_P^{-1}(P^\fillblack) \subset \tP
\end{equation}
be the thickening of $P^\fillblack$.  For a pair $Q \subset P$ with
$\codim_P(Q)=1$, the fibered polytopes
$\tP^\fillblack ,\tQ^\fillblack \subset \t^\dual$ share a facet, which
is isomorphic to $Q^\fillblack \times P^\dual$.  Then the image of
$\Phi$ has a cover
\begin{equation}
  \label{eq:phicover2}
  \im(\Phi)=\cup_{P \in \PP} i_{\tP}(\tP^\fillblack)/\sim,   
\end{equation}
where $\sim$ identifies shared facets of polytopes; see Figure
\ref{fig:wp}.
\begin{figure}[ht]
  \centering \scalebox{.8}{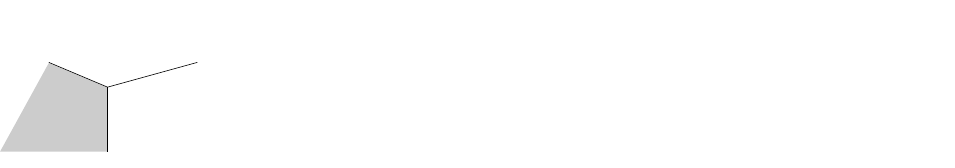}
  \caption{Decomposition of moment polytope induced by thickenings of
    polytopes in $\PP$.}
  \label{fig:wp}
\end{figure}
\noindent The partition of $\im(\Phi)$ pulls back to a partition of
the manifold $X$
\begin{equation}
  \label{eq:xnupre2}
  (X,\om_X) \simeq \left( \bigsqcup_{P \in \PP} \Phinv( \tP^\fillblack) \right) / \sim 
\end{equation}
into manifolds with corners, where the identifications in $\sim$ are
along the boundaries and are induced by the inclusions
$\Phinv( \tP^\fillblack) \to X$.  Furthermore,  the symplectic cylindrical
structure map $\ul \phi=(\phi_P)_P$ may be used to rewrite the
decomposition in \eqref{eq:xnupre2} as
\begin{equation}
  \label{eq:xnupre3}
  (X,\om_X) \simeq \left( \bigsqcup_{P \in \PP} \Phinv( P^\fillblack)  \times P^\dual \right) / \sim.
\end{equation}
(See the derivation of \eqref{eq:xnupre1} for more details.)  The
decomposition \eqref{eq:xnupre3} implies that there is a continuous
projection map
\begin{equation}
  \label{eq:pib-symp}
  \pi_{B^\dual}: (X,\om_X) \to B^\dual
\end{equation}
defined by projecting $\Phinv( P^\fillblack) \times P^\dual \subset X$
to $P^\dual$ for each $P \in \PP$.  For any polytope $P \in \PP$,
$\pi_{B^\dual}$ is a $T_P$-moment map for the $T_P$-action on
$\pi_{B^\dual}^{-1}(P^{\dual,\circ})$.  Thus, $P^\dual$ is the
$\om$-polytope on $\Phinv(\wP) \times P^\dual \subset X$, and the
union $B^\dual=\cup_{P \in \PP} P^\dual$ is the \em{$\om$-complex}
for $X$.

Next, we give map relating the neck-stretched manifold $X^\nu$ to
the $J$-complex $\nu B^\dual$. We recall that the neck-stretched manifold is
\begin{equation}
  \label{eq:xnuJ-dec}
  X^\nu=\left(\bigsqcup_{P \in \PP} \Phinv(\wP) \times \nu P^\dual \right)/\sim   
\end{equation}
where the equivalence $\sim$ from \eqref{eq:xnudef} identifies
boundary components.  The space $X^\nu$ is a smooth manifold, with the
identifications between tubular neighborhoods of boundaries and
corners that are glued in \eqref{eq:xnuJ-dec}, being the same as the
corresponding identifications in the decomposition \eqref{eq:xnupre3}
of $(X,\om_X)$. \label{page:xnuids} Analogously to \eqref{eq:pib-symp},
neck-stretched manifolds project to the scaled dual complex (see
\eqref{eq:pinb-ch3}): For any $\nu \geq 1$, there is a continuous
projection map
\begin{equation}
  \label{eq:pib-J}
  \pi_{\nu B^\dual} : X^\nu \to \nu B^\dual
\end{equation}
such that for any polytope $P \in \PP$,
$(\pi_{\nu B^\dual})^{-1}(P^{\dual,\circ})$ is a $P$-cylinder
$\Phinv(\wP) \times \nu P^{\dual,\circ}$.  Thus the $J$-polytope for
the $P$-cylindrical subset of $X^\nu$ is $\nu P^\dual$, and the \em{
  $J$-complex} of $X^\nu$ is $\nu B^\dual$.

Neck-stretched almost complex manifolds are mapped to compact
symplectic manifolds via maps of complexes between the $J$-complex of
the neck-stretched manifold and the $\om$-complex of the symplectic
manifold:
\begin{definition}
  For any $\nu\geq 1$, a map
  \[\aleph : \nu B^\dual \to B^\dual\] 
  is a \em{map of complexes} if it is a collection of maps of
  polytopes, that is, for any $P \in \PP$,
  $\aleph(\nu P^\dual) \subseteq P^\dual$.
\end{definition}
A map $\aleph$ of complexes induces a map of manifolds
\begin{equation}
  \label{eq:alephembed}
  \psi^\nu_\aleph: X^\nu \to (X,\om_X),
\end{equation}
where, for any $P \in \PP$, $\psi_\aleph$ maps
$\Phinv(\wP) \times \nu P^\dual$ to $\Phinv(\wP) \times P^\dual$ by
$(\Id, \aleph)$. We leave it to the reader to verify that the maps on
the subsets patch to yield a continuous map. The resulting map $\psi_\aleph$ fits into a commutative
diagram
\[
    \begin{tikzcd}
      X^\nu \arrow{r}{\psi_\aleph} \arrow[swap]{d}{\pi_{B_J}} & (X,\om_X) \arrow{d}{\pi_{B^\dual}}\\
      \nu B^\dual \arrow{r}{\aleph} & B^\dual.
    \end{tikzcd}
  \]

Next, we describe a
condition on the map $\aleph$ that ensures that the pullback of the
symplectic form by $\psi_\aleph$ tames a suitable class of cylindrical
almost complex structures. 
In case of a single cut, if the
$\aleph : [\frac {-\nu} 2, \frac \nu 2] \to [-\hh,\hh]$ is an
increasing diffeomorphism, then $\psi^\nu_\aleph$ is a diffeomorphism
for which $(\psi^\nu_\aleph)^*\om_X$ tames any cylindrical almost almost
complex structure that is locally strongly tamed (Definition
\ref{def:cylneckst} \eqref{part:local-strong-t}). This is precisely the class of maps $\aleph$
used to define Hofer energy in \cite{bo:com}.

To generalize the notion of Hofer energy to a multiple cut, we define
an analogue of increasing maps for higher dimensional polytopes in 
Definition \ref{def:increasing-maps}. The condition is that at any
point, the derivative of the map $\aleph$ is a symmetric matrix with
non-negative eigenvalues.  In other words, we require a non-decreasing
condition in a set of orthogonal directions.  

To simplify notation, in this Chapter, we assume that the polyhedral
decomposition $\PP$ has a single zero-dimensional polytope, or in
other words, the dual complex has a single top-dimensional polytope.
As a result, the inner product on $\t$ (in \eqref{eq:idtt}) is assumed
to be fixed throughout.  Extensions to the general case do not present
difficulties; in the proof of Proposition
\ref{prop:remsing}, which is the main result of the Chapter, we point out the necessary steps to
extend the proof to the general case.

\begin{definition}{\rm(Increasing maps)} \label{def:increasing-maps}
  A map of complexes $\aleph : \nu B^\dual \to B^\dual$ is \em{
    increasing} resp. \em{strictly increasing} if for any polytope $P \in \PP$ and a point
  $x \in \nu P^\dual$, the derivative
  $D(\aleph|\nu P^\dual)_x : \t_P^\dual \to \t_P^\dual$ is
  \begin{enumerate} 
\item symmetric (that is, 
  diagonalizable with a set of orthogonal eigenvectors with respect 
  to the $\t_P$-inner product \eqref{eq:idtt}), and 
  \item non-negative with norm at most $1$ resp. norm at most $1$ and invertible (that is, the eigenvalues
  $n_1,\dots, n_k$ lie in the interval $[0,1]$ resp. $(0,1]$.)
  \end{enumerate}
\end{definition}

The notion of tamedness is weakened as follows:
\begin{definition} \label{def:weaklytaming} An almost complex structure $J$ is \em{weakly tamed} by a two-form $\om$ 
if  $\om(v,Jv) \geq 0$ for all tangent vectors $v$. 
\end{definition}

Before stating the main result, we prove Lemma \ref{lem:strongtame}, which is a warm-up result dealing with the easier case of
locally strongly tamed cylindrical almost complex structures (in contrast with the general result which deals with locally tamed cylindrical almost complex structures). 

\begin{lemma}\label{lem:strongtame}
  For an increasing map $\aleph : \nu B^\dual \to B^\dual$ of
  complexes, and any locally strongly tamed cylindrical almost complex
  structure $J^\nu$ on $X^\nu$, $(\psi_{\aleph}^\nu)^*\om_X$ weakly tames
  $J^\nu$, where $\psi_\aleph^\nu :(X^\nu,J^\nu) \to (X,\om_X)$ is the
  map of manifolds induced by $\aleph$.
\end{lemma}
\begin{remark}
  We describe the squashed area form $\psi_\aleph^*\om_X$ induced by a
  squashing map $\aleph$, as it is used in the proofs of Lemmas
  \ref{lem:strongtame} and \ref{lem:dirinc}.  For any $P \in \PP$,
  recall that
  \begin{equation}
    \label{eq:pip-recall}
    \pi_P : \Phinv(\wP) \times \nu P^\dual \to \Phinv(\wP)/T_P  
  \end{equation}
  is a $(T_P \times \nu P^\dual)$-fibration, and on $\Phinv(\wP) \times \nu P^\dual \subset X^\nu$,
  \[ (\psi_{\aleph}^\nu)^*\om_X =
    \om_{X_P} + \d\bran{\aleph,\alpha_P},\]
  where $\aleph_P:=\aleph|(\nu P^\dual)$, and 
  $\alpha_P \in \Om^1(\Phinv(\wP), \t_P)$ is a $\t_P$-valued connection one-form from the symplectic cylindrical structure \eqref{eq:sympcylX}. 
  Indeed,
  $\aleph(\nu P^\dual) \subseteq P^\dual \subset \t_P^\dual$, and
  therefore $\bran{\aleph_P,\alpha_P}$ is well-defined as a one-form.  On
  the vertical sub-bundle $\ker(d\pi_P)$, the form
  $(\psi_{P,\aleph}^\nu)^*\om_X$ is equal to
  $\bran{d\aleph_P \wedge \alpha_P}$.  On the horizontal
  sub-bundle $\ker(\alpha_P) \subset T\Phinv(\wP)$, the form is
  $\om_{X_P} + \aleph_P d\alpha_P$.
\end{remark}
\begin{proof}[Proof of Lemma \ref{lem:strongtame}] 
  We recall from Definition \ref{def:cylneckst} that local strong
  tamedness implies that the fibers of the projection map $\pi_P$ in
  \eqref{eq:pip-recall} are $J^\nu$-holomorphic, and the horizontal
  tangent sub-bundle $\ker(\alpha_P) \subset T\Phinv(\wP)$ is
  $J^\nu$-invariant.  On the vertical sub-bundle $\ker(d\pi_P)$, the
  form $(\psi_{P,\aleph}^\nu)^*\om_X$ is equal to
  $\bran{d(\aleph|\nu P^\dual),\alpha_P}$, which is taming because the
  increasing map $\aleph$ satisfies
  \begin{equation}
    \label{eq:weakinc2}
  \lan D(\aleph|\nu P^\dual) \wedge \alpha_P \ran (v,J^\nu v) > 0, \quad \forall v \in \ker
    (d\pi_P).  
  \end{equation}
  (Note that \eqref{eq:weakinc2} is equivalent to  \eqref{eq:weakinc} below, since $J^\nu$ is strongly locally tamed.) 
  On the horizontal sub-bundle $\ker(\alpha_P) \subset T\Phinv(\wP)$,
  the form is $\om_{X_P} + \aleph_P d\alpha_P$ which tames $J^\nu$,
  since $J^\nu$ is locally strongly tamed.
\end{proof}

\begin{remark}\label{rem:tame-compare}
  The \em{increasing} condition of Definition \ref{def:increasing-maps} produces forms that are weakly taming for cylindrical almost complex structures that lie in a small $C^0$-neighborhood and which are locally tamed.
  If we restrict attention to cylindrical almost complex structures that are locally strongly tamed, that is, those whose underlying connection one-form is fixed, then
  the increasing condition may be replaced by the following condition on
  $\aleph$:
  \begin{equation}
  \label{eq:weakinc}
  \bran{D(\aleph|\nu P^\dual)_xv,v} \geq 0 \quad \forall P \in \PP, x \in \nu P^\dual, v \in T_x(\nu P^\dual),  
\end{equation}
which is equivalent to \eqref{eq:weakinc2} used in the proof of Lemma \ref{lem:strongtame}. 
The condition \eqref{eq:weakinc} is a consequence of the \em{increasing} condition of Definition \ref{def:increasing-maps}, and thus, is a weaker condition. 
\end{remark}

In the next result, which is the main result of this Section, 
we show that given a locally strongly tamed
$\JJ_0$, there is a $C^0$-neighborhood of locally tamed almost complex
structures that are weakly tamed by $\psi_\aleph^*\om_X$ for any 
$\psi_\aleph$ induced by an increasing map $\aleph$. 

\begin{lemma}\label{lem:dirinc}
    \index{Area!Symplectic area!from increasing maps}
  Suppose $\JJ_0 \in \J^\cyl(\XX)$ is a locally strongly tamed
  cylindrical almost complex structure. There
  exists $\eps>0$ and 
  a $C^0$-neighbourhood
  \[U_{\JJ_0}:=\{\JJ \in \J^{\cyl}(\XX) : \Mod{\JJ-\JJ_0}_{C^0}<\eps\}
  \]
  of $\JJ_0$ such
  that the following is satisfied for any $\JJ_1 \in U_{\JJ_0}$ and 
  any $\nu \in [1,\infty)$.  Let $\aleph:\nu B^\dual \to B^\dual$ be a map
  of complexes, and let $\psi_\aleph^\nu :X^\nu \to (X,\om_X)$
  be the resulting map of manifolds.
  
  \begin{enumerate}
  \item If $\aleph$ is increasing, then the form
    $(\psi_{\aleph}^\nu)^*\om_X$ weakly tames $J^\nu_1$, where
    $J_1^\nu \in \J^\cyl(X^\nu)$ is obtained by gluing $\JJ_1$ at
    cylindrical ends.
  \item If $\aleph$ is strictly increasing, then the
    form $(\psi_{\aleph}^\nu)^*\om_X$ tames $J^\nu_1$.
  \end{enumerate}
\end{lemma}
We point out that local tamedness is a $C^0$-open condition in
$\J^\cyl(\XX)$. Any $ \JJ_1 \in U_{\JJ_0}$ is locally tamed. This can
be seen, for example, by setting $\nu=1$, in which case,
$\aleph \equiv \Id$.
\begin{proof}[Proof of Lemma \ref{lem:dirinc}]
  We prove the first statement for increasing maps, since the second
  one is similar.  We consider a polytope $P \in \PP$, and prove the
  (Weakly taming) property of (Definition \ref{def:weaklytaming}) in
  the $P$-cylindrical region
  \[Z_\C:=\Phinv(\wP) \times \nu P^\dual \subset X^\nu. \]
  We denote $Z:=\Phinv(\wP)$.  We recall that for a cylindrical almost
  complex structure $\JJ$ with a family of neck-stretched almost
  complex structures $\{J_\nu\}_\nu$, the fibers of the projection
  \[T_P \times \nu P^\dual \to Z_\C \xrightarrow{\pi_P} Z/T_P \subset
    \XB_P\]
  are $J^\nu$-holomorphic.  Furthermore, on $Z_\C$, $J^\nu$ is
  determined by its projection to $\XB_P$, denoted $D\pi_P(J)$ (see
  \eqref{eq:dpij}), and the associated connection one-form
  $\alpha_{P,\JJ} \in \Om^1(Z,\t_P)$ defined by the condition that the
  horizontal complement of $\ker(D\pi_P)$ given by
  \[\ker(\alpha_{P,\JJ}) \subset TZ \subset TZ_\C\]
  is $J^\nu$-invariant (also see \eqref{eq:connJ}). We recall that
  $\alpha_{P,\JJ}$ is equal to $\alpha_P$ from the symplectic
  cylindrical structure \eqref{eq:sympcylX} if $\JJ$ is locally
  strongly tamed.

  To prove the weakly taming property, we first consider perturbations
  to the horizontal projection of the almost complex structure
  $\JJ_0$.  The horizontal part of the two-form $\psi_{\aleph}^*\om_X$
  is $\om_{X_P} + \aleph \d\alpha_{\JJ_0}$, which is a symplectic form
  since it occurs as a horizontal component of the symplectic form in
  \eqref{eq:sympstr-hofer}; and it tames $D\pi_P(\JJ_0)$ since $\JJ_0$
  is locally tamed (see Remark \ref{rem:oncyl}
  \eqref{part:tame-equiv}).  Since $\aleph$ takes values in a compact
  set $P^\dual$, and tamedness on a symplectic manifold is a
  $C^0$-open condition, we conclude the following:  There is a
  constant $\eps_H$ such that if
  \begin{equation}
    \label{eq:epsh}
    \Mod{D\pi_P(\JJ_1) - D\pi_P(\JJ_0)}_{C^0} < \eps_H 
  \end{equation}
  on $(Z/T_P)$ for some $\JJ_1 \in \J^\cyl(\XX)$, then $D\pi_P(\JJ_1)$
  is tamed by $\om_{X_P} + \aleph \d \alpha_{\JJ_0}$, see Lemma
  \ref{lem:tameC0} for a proof.

  Next, we study the effect of changing the connection associated to
  the almost complex structure. Suppose $\JJ_1 \in \J^{\cyl}$ is a
  cylindrical almost complex structure whose horizontal projection
  $J_P:=D\pi_P(\JJ_1)$ is $C^0$-close to $D\pi_P(\JJ_0)$ as in
  \eqref{eq:epsh}.  Let $\JJ_{10} \in \J^{\cyl}(Z_\C)$ be a locally
  strongly tamed almost complex structure whose horizontal projection
  is the same as that of $\JJ_1$, that is, $D\pi_P(\JJ_{10})=J_P$.
  Denote the $P$-connection forms by
  $\alpha_0:=\alpha_{\JJ_{10}}=\alpha_{\JJ_0},
  \alpha_1:=\alpha_{\JJ_1}$. The difference
  \[ A:=\alpha_{1} - \alpha_{0} \]
  descends to a $\t_P$-valued one-form on $X_P$.  For a vector
  $(v,t) \in T_{\zz,\tau}Z_{\C}$, where $\zz \in Z$,
  $\tau \in \nu P^\dual$, $v \in TZ$, $t \in \t_P^\dual$, the
  difference between $\JJ_1$ and $\JJ_{10}$ is
  \[(\JJ_1-\JJ_{10})(v,t) = -(A(J_P v_P))_Z + J_F (A v_P)_Z, \quad
    v_P:=D\pi_P(v),\]
  where $J_F$ is the complex structure on the fibers of $\pi_P$, and
  for any $\xi \in \t$, $\xi_Z \in \Vect(Z)$ is the vector field
  $\xi_Z(z):=\ddt(\exp(t\xi)z)|_{t=0}$ for any $z \in Z$.  We write
  \begin{equation}
    \label{eq:omJ1}
    (\psi_{\aleph}^*\om_X)((v,t),\JJ_1(v,t))=(\psi_{\aleph}^*\om_X)((v,t),\JJ_{10}(v,t)) + (\psi_{\aleph}^*\om_X)((v,t),(\JJ_1-\JJ_{10})(v,t)).
  \end{equation}
  By \eqref{eq:epsh}, there is a constant $c>0$ such that the first
  term in the right-hand side of \eqref{eq:omJ1} is bounded on the
  $P$-region as
  \begin{equation}
    \label{eq:omJ1r1}
    (\psi_{\aleph}^*\om_X)((v,t),\JJ_{10}(v,t)) \geq  c|v_P|^2 + \lan D\aleph_\tau(\alpha_0(v)),
    \alpha_0(v)\ran + \lan D\aleph_\tau(t), t \ran.
  \end{equation}
  In the second term in the right-hand side of \eqref{eq:omJ1}, the
  difference $(\JJ_1-\JJ_{10})(v,t)$ is in the fiber direction. The
  form $\psi_{\aleph}^*\om_X$ in the fiber is $D\aleph \wedge \alpha_0$
  and therefore,
  \begin{equation}
    \label{eq:omJ1r2}
    (\psi_{\aleph}^*\om_X)((v,t),(\JJ_1-\JJ_{10})(v,t))= \bran{\alpha_0(v), D\aleph_\tau(A( v_P))} +
    \bran{D\aleph_\tau(t),A(J_P v_P)}
  \end{equation}
  By the increasing property, $D\aleph_\tau$ is diagonalizable with
  orthogonal eigenvectors and eigenvalues $n_1,\dots,n_k \in
  [0,1]$.  For any element $\xi \in \t_P$ resp. $\t_P^\dual$, we denote
  by $\xi_i \in \t_P$ resp. $\t_P^\dual$ the projection of $\xi$ to
  the $i$-th eigenspace.  We write
  \[D\aleph_\tau(\xi)=\ssum_{i=1}^kn_i \xi_i.\]
  Using this eigen-decomposition, and the equations \eqref{eq:omJ1},
  \eqref{eq:omJ1r1} and \eqref{eq:omJ1r2}, we get
  \begin{multline*}
    (\psi_{\aleph}^*\om_X)((v,t),\JJ_1(v,t)) \geq c|v_P|^2 +
    \ssum_{i=1}^kn_i( |\alpha_0(v)_i|^2+ |t_i|^2 + \alpha_0(v)_i
    A(v_P)_i + t_i A(J_P
    v_P)_i)\\
    \geq \ssum_{i=1}^kn_i(\tfrac c k |v_P|^2 + |\alpha_0(v)_i|^2+
    |t_i|^2 + \alpha_0(v)_i A(v_P)_i + t_i A(J_P v_P)_i).
  \end{multline*}
  The last two terms are bounded as
  \[\alpha_0(v)_i A(v_P)_i \geq -\tfrac {1} 2(|A|^2|v_P^2| +
    |\alpha_0(v)_i|^2), \quad t_i A(J_P v_P)_i \geq -\tfrac {1}
    2(|A|^2|v_P^2| + |t_i|^2) \]
  where $|A|:=\Mod{A}_{C^0}$.  Therefore,
  $(\psi_{\aleph}^*\om_X)((v,t),\JJ_1(v,t))$ is non-negative if
  $ |A|^2 \leq \frac c {2k}$, leading to the proof of the weak
  tamedness of $J_1^\nu$.
\end{proof}

\section{Squashing maps} \label{sec:sqmaps}

\index{Squashing map|(}

In the last section we saw \em{increasing maps} between complexes of
polytopes induce two-forms that are weakly taming in the sense of
Definition \ref{def:weaklytaming} for cylindrical almost complex
structures in a neighborhood of a locally strongly tamed almost
complex structure.  In this section we construct a large class of
increasing maps, called \em{squashing maps}. Squashing maps are
continuous and piecewise smooth; and on each subset where the map is
smooth, it is a composition of an orthogonal projection, a translation
and a dilation.  In the next section, Hofer energy for a multiple cut
is defined as the supremum over the squashed areas of maps induced by
squashing maps between polytopes.

\begin{remark}\label{rem:poly-convention}
  In the rest of this Chapter, a polytope $P$ is assumed to be a
  subset of an affine space $V$. Two polytopes $P_1, P_2 \subset V$
  are isomorphic if one is a translate of the other in $V$. A
  Euclidean metric on $V$ is inherited by polytopes in $V$.
\end{remark}

\begin{definition}{\rm(Squashing maps)}
\label{def:sqmap}
  Let $V$ be an affine space equipped with a Euclidean metric
  and let $Q \subset V$ be a compact top-dimensional polytope.
  \begin{enumerate}
  \item {\rm(Undilated squashing map)} An \em{undilated squashing map} $\aleph : V \to Q$ has
    \begin{enumerate}
  \item an underlying polyhedral decomposition $\QQ$ of $Q$, that is,
    \begin{equation}
      \label{eq:qqdef}
      Q=\cup_{R \in \QQ}R,  
    \end{equation}
    into polytopes $R$ (of all dimensions in the range $[0,\dim(V)]$) such that
    \begin{itemize}
    \item for any $R_1$, $R_2 \in \QQ$, the interiors are disjoint, that is, $R_1^\circ \cap R_2^\circ=\emptyset$,
    \item and if $R \in \QQ$ and $R_1 \subset R$ is a face, then, $R_1 \in \QQ$;
    \end{itemize}
  \item and a decomposition
    \[V=\cup_{R \in \QQ} \tilde R\]
    into top-dimensional polytopes $\tilde R \subset V$, 
  \end{enumerate}
  such that $\aleph|\tilde R$ is an orthogonal projection onto $R$. That is, the fibers of 
  $\aleph|\tilde R$ are polytopes perpendicular to $R$. The map $\aleph$ is continuous on $V$.
\item{\rm(Squashing map)}
  A \em{squashing map} $\aleph : V \to Q$ is a composition
  \begin{equation}
    \label{eq:aleph-dilate}
    \aleph=\delta_t \circ \aleph_0  
  \end{equation}
  for some $t \geq 1$, 
  where $\aleph_0:V \to tQ$ is an undilated squashing map and $\delta_t : tQ \to Q$ scales by a factor of $\frac 1 t$.
\item {\rm(Unpartitioned squashing map)}
  \label{part:unpart} 
  We say that a squashing map $\aleph:V \to Q$ is \em{unpartitioned}
  if the underlying polyhedral decomposition $\QQ$ is equal to the set of faces of $Q$. 
  \index{Squashing map!Unpartitioned
    squashing map} \index{Squashing map|)}
\end{enumerate}
\end{definition}

\begin{remark}
A squashing map $\aleph: V \to Q$ is continuous, surjective and piecewise smooth.   
\end{remark}

\begin{remark}\label{rem:unpart-inv}{\rm(On unpartitioned squashing maps)}
  An unpartitioned squashing map $\aleph : V \to Q$ is a translation
  composed with scaling on $\aleph^{-1}(Q^\circ)$, and therefore,
  $\aleph$ has a right inverse $\aleph_{\on{inv}} : Q \to V$. That is,
  $\aleph \circ \aleph_{\on{inv}} = \Id_Q$. An unpartitioned squashing map
  is uniquely determined by its right inverse $\aleph_{\on{inv}}$. See
  Figure \ref{fig:sqeg1}.
\end{remark}
\begin{figure}[ht]
  \centering \scalebox{.8}{
\begingroup%
  \makeatletter%
  \providecommand\color[2][]{%
    \errmessage{(Inkscape) Color is used for the text in Inkscape, but the package 'color.sty' is not loaded}%
    \renewcommand\color[2][]{}%
  }%
  \providecommand\transparent[1]{%
    \errmessage{(Inkscape) Transparency is used (non-zero) for the text in Inkscape, but the package 'transparent.sty' is not loaded}%
    \renewcommand\transparent[1]{}%
  }%
  \providecommand\rotatebox[2]{#2}%
  \newcommand*\fsize{\dimexpr\f@size pt\relax}%
  \newcommand*\lineheight[1]{\fontsize{\fsize}{#1\fsize}\selectfont}%
  \ifx\svgwidth\undefined%
    \setlength{\unitlength}{184.74878413bp}%
    \ifx\svgscale\undefined%
      \relax%
    \else%
      \setlength{\unitlength}{\unitlength * \real{\svgscale}}%
    \fi%
  \else%
    \setlength{\unitlength}{\svgwidth}%
  \fi%
  \global\let\svgwidth\undefined%
  \global\let\svgscale\undefined%
  \makeatother%
  \begin{picture}(1,0.43730987)%
    \lineheight{1}%
    \setlength\tabcolsep{0pt}%
    \put(0,0){\includegraphics[width=\unitlength,page=1]{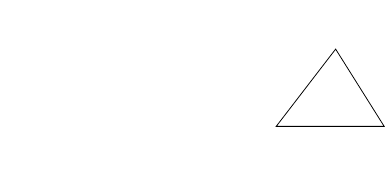}}%
    \put(0.92879793,0.26811308){\color[rgb]{0,0,0}\makebox(0,0)[lt]{\lineheight{0}\smash{\begin{tabular}[t]{l}$Q$\end{tabular}}}}%
    \put(0.1149727,0.33440934){\color[rgb]{0,0,0}\makebox(0,0)[lt]{\lineheight{0}\smash{\begin{tabular}[t]{l}$V$\end{tabular}}}}%
    \put(0,0){\includegraphics[width=\unitlength,page=2]{sqeg1.pdf}}%
    \put(0.29187927,0.18551487){\color[rgb]{0,0,0}\makebox(0,0)[lt]{\lineheight{0}\smash{\begin{tabular}[t]{l}$Q$\end{tabular}}}}%
    \put(0,0){\includegraphics[width=\unitlength,page=3]{sqeg1.pdf}}%
    \put(0.58128042,0.24458945){\color[rgb]{0,0,0}\makebox(0,0)[lt]{\lineheight{0}\smash{\begin{tabular}[t]{l}$\aleph$\end{tabular}}}}%
  \end{picture}%
\endgroup%
}
  \caption{An unpartitioned undilated squashing map $\aleph : V \to Q$. Each of the
    solidly shaded regions is mapped to a vertex of $Q$, ruled regions
    are mapped to the sides of $Q$ by contracting each ruling to a
    point, and the blank region is mapped isometrically to the
    interior of $Q$.}
  \label{fig:sqeg1}
\end{figure}
Unpartitioned squashing maps are sufficient for most purposes. More
general squashing maps are only used in the proof of Proposition
\ref{prop:hofer-breaking}, where we pass from maps in neck-stretched
manifolds $X^\nu$ to maps in broken manifolds in the limit
$\XX_P$. The rest of Section \ref{sec:sqmaps} is used only in the
proof of Proposition \ref{prop:hofer-breaking}. The following is a
preliminary remark about general squashing maps (beyond the
unpartitioned ones).

\begin{remark}
  {\rm(A partial inverse of a squashing map)} We continue Remark
  \ref{rem:unpart-inv} where we defined a right inverse for
  unpartitioned squashing maps.  For a general squashing map
  $\aleph : V \to Q$ with underlying polyhedral decomposition $\QQ$ of
  $Q$, with top-dimensional polytopes $Q_1, \dots,Q_k \in \QQ^{(0)}$,
  a right inverse $\aleph_{\on{inv}}$ exists for the restriction
  $\aleph|(\cup_i \aleph^{-1}(Q_i^\circ))$. The right inverse
  $\aleph_{\on{inv}}|Q_i^\circ : Q_i^\circ \to \aleph^{-1}(Q_i^\circ)$ is a
  translation composed with scaling.
\end{remark}

An undilated squashing map with an underlying polyhedral decomposition
corresponds to a dual complex of $\QQ$, as we show in Lemma
\ref{lem:dual2sq}.  Before stating the result, we point out that the
polyhedral decomposition in \eqref{eq:qqdef} in the definition of
squashing maps is a decomposition of a polytope $Q \subset V$, and not
of the entire vector space $V$. This is in contrast with the
polyhedral decomposition of Chapter \ref{chap:bsymp} which was defined
for the entire vector space.  A decomposition of the vector space $V$
induces a decomposition of $Q \subset V$ as we now describe:

Let $V$ be a vector space and let $Q \subset V$ be a polytope of the
same dimension as $V$.  A polyhedral decomposition $\PP$ of the vector
space $V$ for which all top-dimensional polytopes $P \in \PP^{(0)}$
intersect $Q^\circ$ induces a polyhedral decomposition $\PP(Q)$ of $Q$
whose top-dimensonal polytopes are
\[\PP(Q)^{(0)}:=\{P \cap Q : P \in \PP^{(0)}\},\]
and whose lower dimensional polytopes are faces of polytopes in
$\PP_Q^{(0)}$; see Figure \ref{fig:pqdual}.

\begin{figure}[ht]
  \centering \scalebox{.8}{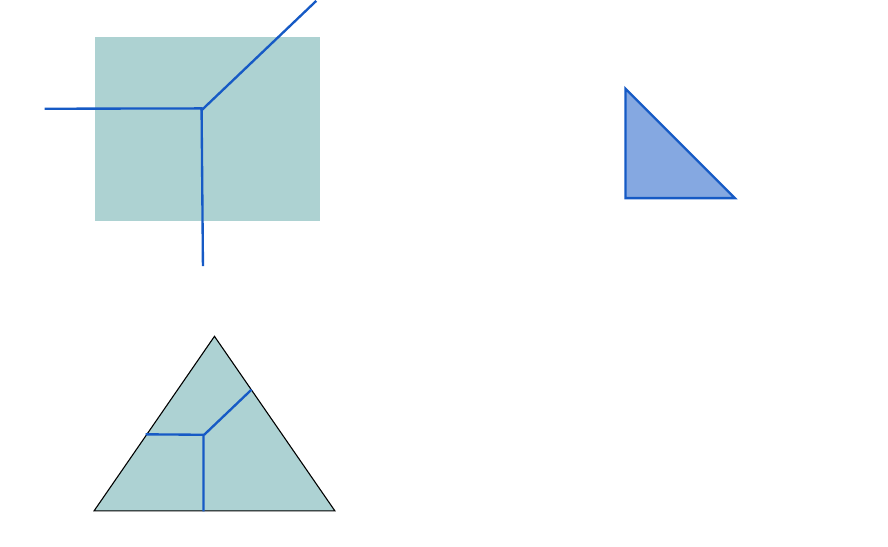}
  \caption{Left: A polyhedral decomposition $\PP$ of $V$ induces a polyhedral decomposition $\PP(Q)$ of $Q \subset V$. Right : The corresponding dual complexes.}
  \label{fig:pqdual}
\end{figure}

In Lemma \ref{lem:dual2sq}, we will construct a sequence of undilated
squashing maps $\aleph_\nu : V \to Q$ whose underlying polyhedral
decomposition of $Q$ is $\PP(Q)$ described in the previous paragraph.
The squashing maps will have the property that for any two
top-dimensional polytopes $Q_i, Q_j$ of $\QQ$, the distance between
$\aleph^{-1}(Q_i^\circ)$, $\aleph^{-1}(Q_j^\circ)$ goes to infinity as
$\nu \to \infty$.

\begin{lemma}\label{lem:dual2sq}
  {\rm(From a dual complex to squashing maps)}
  Let $Q$, $V$, $\PP$, $\PP(Q)$ be as above. Suppose $B^\dual_\PP$ is a dual complex of $\PP$ that respects the metric on $V$ (as in Remark \ref{rem:const-g}). 
  \begin{enumerate}
  \item \label{part:dual2sq1} There is a dual complex
    $B^\dual_{\PP(Q)}$ of $\PP(Q)$ that respects the metric on $V$ and
    for which $B^\dual$ is a subcomplex.
  \item \label{part:dual2sq2} For any $\nu \in \R_{>0}$, there is a
    squashing map $\aleph_\nu : V \to Q$ such that for any polytope
    $P \in \PP(Q)$,
    \begin{equation}
      \label{eq:nuinv}
      \aleph^{-1}_\nu(P) \xrightarrow{\text{isomorphic}}  P \times \nu P^\dual,
    \end{equation}
    where $P^\dual \subset V$ is the dual polytope of $P$ in
    $B^\dual_{\PP(Q)}$.
  \item \label{part:dual2sq3} Let $P \in \PP(Q)^{(0)}$ be a
    top-dimensional polytope, and for any $\nu$, let $i_\nu : P \to V$
    be an embedding (which is a translation).  For any $\nu$, there is
    a unique squashing map that satisfies \eqref{eq:nuinv} and the
    condition that $\aleph_\nu \circ i_\nu=\Id_P$.  For any two
    top-dimensional polytopes $Q_i, Q_j \in \PP(Q)^{(0)}$,
    \begin{equation}
      \label{eq:QiQjdV}
    d_V(\aleph^{-1}(Q_i^\circ),\aleph^{-1}(Q_j^\circ)) \to \infty
    \quad \text{as $\nu \to \infty$.}   
    \end{equation}
    \end{enumerate}
\end{lemma}

\begin{proof}
  To construct $B^\dual_{\PP(Q)}$, we start with
  $B^\dual_\PP \subset \t^\dual$, and add polytopes corresponding to
  $P_0 \cap Q_0$ where $P_0 \in \PP$ and $Q_0$ is a proper face of
  $Q$.  First consider the case that $Q_0$ is a facet and $P_0$ is
  top-dimensional. Then, the one-dimensional polytope
  $(P_0 \cap Q_0)^\dual$ is a ray pointing in the outward normal
  direction to $Q_0$,   whose starting point is
  $P_0^\dual \in B^\dual_{\PP}$. Next, consider the general case when
  the face $Q_0 \subset Q$ is the intersection of facets
  $Q_1,\dots,Q_k$, and the dual polytope of $P_0 \in \PP$ in
  $B^\dual_\PP$ has vertices $P_1, \dots, P_\ell$.  Then, we define

  \[(P_0\cap Q_0)^\dual:=\on{Convex-hull}(\{(P_j \cap Q_i)^\dual\}_{i,j}),   \]
  which has the right dimension for the following reason: For any $j$,
  the convex hull of $C_j:=\{P_j \cap Q_i\}_i$ is a cone spanned by
  the rays $(P_j \cap Q_i)^\dual$ with vertex $P_j^\dual$.  Denote
  $C:=C_j$, since for any $j'$, $C_{j'}$ is a translate of $C_j$.  The
  convex hull of $\{P_j \cap Q_i\}_{i,j}$ is the product
  $C \times P_0^\dual$, and therefore, has codimension
  $k + \codim(P_0)$.  Here, $P_0^\dual$ is the dual polytope of $P_0$
  in $B^\dual_\PP$.

  To prove \eqref{part:dual2sq2}, we observe that for any $\nu >0$
  there is an isomorphism
  \begin{equation}
    \label{eq:embedV}
    \left(\sqcup_{P \in \PP(Q)}(\nu P^\dual \times P)/\sim\right) \xrightarrow{i_\nu} V,  
  \end{equation}
  which is uniquely defined up to a translation in $V$. Here,
  $P^\dual$ is the dual polytope of $P$ in $B^\dual_{\PP(Q)}$, and for
  any pair $Q \subset P$ with $\codim_PQ=1$, $\sim$ identifies
  $(\nu P^\dual \times Q)$, which is a facet in both
  $(\nu P^\dual \times P)$ and $(\nu Q^\dual \times Q)$. In
  \eqref{eq:embedV}, each of the maps $i_\nu|(\nu P^\dual \times P)$
  is a translation.  The squashing map $\aleph_\nu$ is defined as
  projection to $P$ on the subset $i_\nu(\nu P^\dual \times P)$.

  To prove \eqref{part:dual2sq3}, we observe that \eqref{eq:embedV} is
  uniquely defined up to translation in $V$. The condition
  $\aleph_\nu \circ i_\nu=\Id_P$ cuts down this translation freedom
  and fixes a unique $\aleph_\nu$. The property \eqref{eq:QiQjdV}
  follows from the construction of $\aleph_\nu$, since the two sets
  being considered are separated by a dual polytope scaled up by
  $\nu$. This finishes the proof of the Lemma.
\end{proof}

\begin{figure}[ht]
  \centering \scalebox{.8}{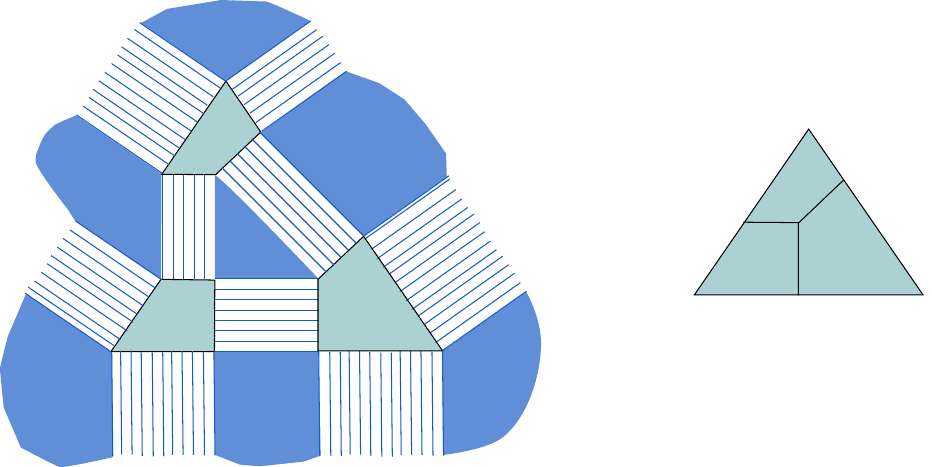}
  \caption{
    The undilated squashing map $\aleph:V \to Q$ corresponding to the dual complex $B^\dual_{\PP(Q)}$ in Figure \ref{fig:pqdual}. 
    In $V$, solidly shaded regions are mapped to points,
    ruled regions are mapped to lines by contracting each ruling to a
    point, and blank regions are mapped isometrically.}
  \label{fig:parted}
\end{figure}

The following result constructs a polyhedral decomposition (with dual
complex) of a polytope $Q$ that contains a prescribed top-dimensional
piece.  The polyhedral decomposition is used to construct squashing
maps in the proof of Proposition \ref{prop:hofer-breaking}.

\begin{lemma}\label{lem:dec-with-dual}
  Let $V$ be a vector space of dimension $n$ equipped with a Euclidean
  metric.  Let $C \subset V$ be a $k$-dimensional cone, that is, $C$ is
  the convex hull of $k$ rays originating at $0 \in V$ and pointing in
  linearly independent directions.  Let $C^\dual \subset V$ be an
  $(n-k)$-dimensional simplex that is orthogonal to $P$. Then, there
  is a polyhedral decomposition $\PP$ of $V$ that contains
  $P \times P^\dual$ as a top-dimensional polytope, and which has a
  dual complex $B^\dual_\PP$ respecting the metric on $V$.
\end{lemma}

\begin{proof}
  First, we construct a polyhedral decomposition $\PP_\pre$ of $V$
  that contains $C$ as a polytope, and whose dual complex is an
  $n$-simplex. Assuming that the cone $C$ is the convex hull of rays
  pointing in the directions $e_1,\dots, e_k$, choose vectors
  $e_{k+1},\dots, e_{n+1}$ such that $V$ is the convex hull of the
  rays $\R_{\geq 0} e_i$, $i=1,\dots, n+1$, and (a translate of)
  $C^\dual$ is a cross-section of the cone spanned by the rays
  $\{\R_{\geq 0}e_i\}_{k+1 \leq i \leq n+1}$.  Define a polyhedral
  decomposition
  \[\PP_\pre := \{P_I:=\R_+\bran{x_i}_{i \in I} \subset V : I \subset
    \{1,\dots,n+1\}\} \]
  of $V$, and observe that its dual complex (respecting the metric) is
  an $n$-simplex denoted by $B^\dual_{\PP_\pre}$.  Define the 
  polyhedral decomposition $\PP$ so that its top-dimensional polytopes
  are
  \[\tilde C:=C^\dual + \R_{\geq 0}\bran{e_i}_{1 \leq i \leq k} \simeq
    C \times C^\dual,\]
  and $P \bs \tilde C$ for all top-dimensional polytopes
  $P \in \PP_\pre^{(0)}$; see Figure \ref{fig:PCconst}.  The
  polyhedral decomposition $\PP$ has a dual complex $B^\dual_\PP$,
  which is obtained by adding a vertex $\tilde C^\dual$ to the face
  $P^\dual_{1,\dots,k}$ such that the convex hull of
  $\tilde C^\dual \cup P^\dual_{\{k+1,\dots,n+1\}}$ is perpendicular
  to $P^\dual_{\{1,\dots,k\}}$. The other polytopes in $B^\dual_\PP$
  are spanned by the given set of vertices, and can be determined
  combinatorially.
\end{proof}

\begin{figure}[ht]
  \centering \scalebox{.8}{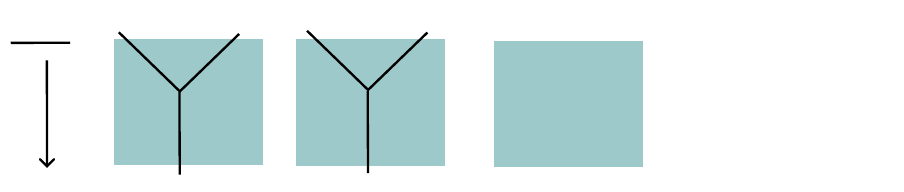}
  \caption{Constructing $\PP$ from $\PP_\pre$ in Lemma
    \ref{lem:dec-with-dual}. Here $n=2$, $k=1$.}
  \label{fig:PCconst}
\end{figure}

\section{Multi-directional Hofer energy}
Hofer energy for a multiple cut is defined as the supremum over
pullbacks of symplectic forms by squashing maps between manifolds that
are induced by squashing maps between polytopes.  The definition of
Hofer energy uses squashing maps from $\om$-complexes to
$J$-complexes.  In this section, we define these complexes for
neck-stretched and broken manifolds, and prove some useful properties
satisfied by Hofer energy.

In the last section, we defined squashing maps as maps from an affine
space to a compact polytope. The definition extends naturally to maps
between polytopes, and later, complexes.

\begin{definition}\label{def:polytope-squash}
  {\rm(Squashing map between polytopes)} Let $P, Q \subset V$ be
  polytopes, and let $Q$ be compact. A squashing map
  $\aleph : P \to Q$ is the restriction of a squashing map
  $\ol \aleph : V \to Q$ for which the subset $V_0 \subset V$ on which
  $\ol \aleph$ is a dilation is contained in $P$.
\end{definition}

\subsection{Hofer energy for neck-stretched manifolds}
We recall from Section \ref{sec:incmap} that for a neck-stretched
manifold $X^\nu$, the $\om$-complex is the dual complex $B^\dual$ and
the $J$-complex is $\nu B^\dual$.

\index{Hofer energy!for neck-stretched manifolds}
\begin{definition} \label{def:squashedarea} {\rm(Hofer energy for
    neck-stretched manifolds)} Let $\{X^\nu\}_\nu$ be a family of
  neck-stretched manifolds.  For any $\nu \geq 1$, the \em{Hofer
    energy} of a map $u:C \to X^\nu$ is
  \[E_{\Hof}(u) = \sup_{\aleph:\nu B^\dual \to B^\dual} \int_C
  (\psi_{\aleph} \circ u)^* \om_X, \]
where the supremum is over all maps of complexes
$\aleph:\nu B^\dual \to B^\dual$ for which
$\aleph|\nu P^\dual : \nu P^\dual \to P^\dual$ is a squashing map for
each polytope $P \in \PP$ (as in Definition
\ref{def:polytope-squash}); and
\[\psi_{\aleph}: X^\nu \to (X,\om_X)\]
is the map induced by $\aleph$ as in \eqref{eq:alephembed}.  For any
squashing map $\aleph$, the form $\psi_\aleph^*\om_X$ is called a \em{
  squashed area form}. The squashing map $\aleph$ and the squashed
area form are called \em{unpartitioned} if for each polytope $P$, the
squashing map $\aleph|\nu P^\dual$ is unpartioned in the sense of
Definition \ref{def:sqmap} \eqref{part:unpart}. The supremum over
unpartitioned squashed area forms is called \em{unpartitioned Hofer
  energy} and is denoted by
\[E^*_{\Hof}(u) = \sup_{\aleph:\nu B^\dual \to B^\dual \text { is unpartitioned}} \int_C
  (\psi_{\aleph} \circ u)^* \om_X.\]
See Figures \ref{fig:nondist} and \ref{fig:2cellnu} for examples of
squashing maps for neck-stretched manifolds. This ends the Definition.
\end{definition}
\begin{figure}[ht]
  \centering \scalebox{.8}{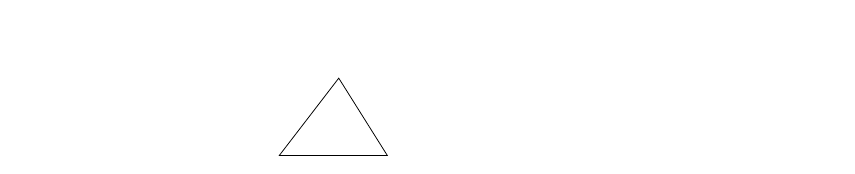}
  \caption{Examples of squashing maps $\aleph :\nu B^\dual \to B^\dual$. In both examples, solidly
    shaded regions are mapped to points, ruled regions are mapped to
    lines by contracting each ruling to a point, and blank regions are
    mapped isometrically. The map in the left is unpartitioned, the map in the right is not.}
  \label{fig:nondist}
\end{figure}
\begin{figure}[ht]
  \centering \scalebox{.8}{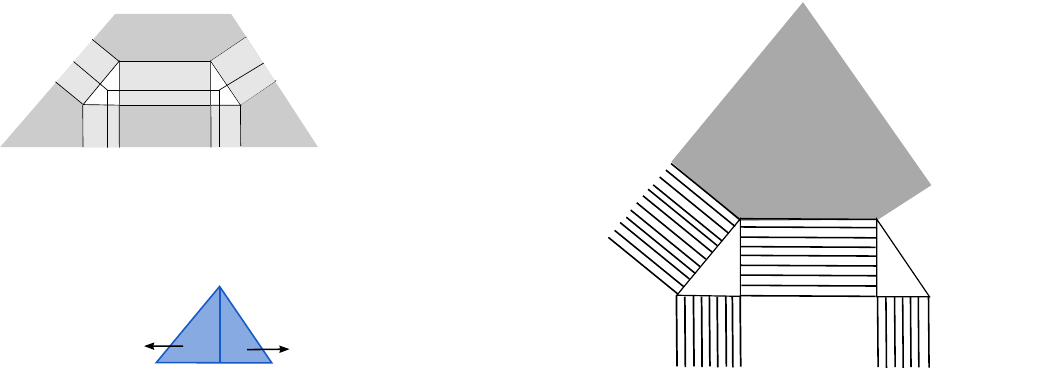}
  \caption{A squashing map $\aleph:\nu B^\dual \to B^\dual$ where there are two top-dimensional dual polytopes $Q_0^\dual$, $Q_1^\dual$ in $B^\dual$. The maps $\aleph|\nu Q_0^\dual$, $\aleph|\nu Q_1^\dual$ are equal on $\nu P^\dual$. The map $\aleph$ is unpartitioned.}
  \label{fig:2cellnu}
\end{figure}
For any squashing map $\aleph$, the map of manifolds
$\psi_\aleph : X^\nu \to X$
is continuous and piecewise-smooth
with a finite number of pieces. Therefore, the integral computing
squashed area is well-defined.
\begin{remark}\label{rem:sq-nondeg}
  {\rm(When is squashed area non-degenerate)} Given a squashing map
  $\aleph : \nu B^\dual \to B^\dual$, the squashed area form is
  non-degenerate on the subset $S \subset \nu B^\dual$ where $\aleph$
  is dilation by a positive factor.  For any polytope $P \in \PP$,
  suppose
  \[\aleph_P:= \aleph|\nu P^\dual : \nu P^\dual \to P^\dual\]
  has an underlying decomposition $\QQ_P$ of $P^\dual$ (as in
  \eqref{eq:qqdef}). Then, for any top-dimensional polytope
  $Q \in \QQ_P$, $\psi_\aleph^*\om_X$ is non-degenerate on
  $\pi_{\nu B^\dual}^{-1}(\aleph_P^{-1}(Q^\circ))$.
\end{remark}
\begin{remark}\label{rem:hof-area}
  For a compact curve $C$ with boundary, and a map
  $u:(C,\partial C) \to (X^\nu,L)$, the Hofer energy is equal to the
  area of $u$ :
  \[E_\Hof(u)=\bran{(\psi_\aleph \circ u)_*[C],[\om_X]}\]
  for any squashing map $\aleph : \nu B^\dual \to B^\dual$.  The
  quantity $E_\Hof$ is independent of $\aleph$, because any pair of
  squashing maps $\aleph_0$, $\aleph_1$ are isotopic via a family
  $\{\aleph_t\}_{t \in [0,1]}$ of continuous piecewise smooth maps,
  and therefore, the maps $(\psi_{\aleph_0} \circ u)$,
  $(\psi_{\aleph_1} \circ u) : C \to (X,\om_X)$ are isotopic.
\end{remark}

We point out that squashing maps satisfy the increasing property
(Definition \ref{def:increasing-maps}), which we initially set out to
achieve.
\begin{lemma}{\rm(Squashed area forms are weakly taming)}
  \label{lem:sq-are-inc}
  Suppose $\JJ_0 \in \J^\cyl(\XX)$ is a locally strongly tamed
  cylindrical almost complex structure, and suppose
  $U_{\JJ_0} \subset \J^{\cyl}(\XX)$ is a $C^0$-neighborhood of
  $\JJ_0$ from Lemma \ref{lem:dirinc}. Then, for any $\nu$ and any
  squashing map $\aleph: \nu B^\dual \to B^\dual$, the squashed area
  form $\psi_\aleph^*\om_X$ is weakly taming (Definition
  \ref{def:weaklytaming}) for any $J^\nu \in \JJ^\cyl(X^\nu)$ obtained
  by gluing $\JJ \in U_{\JJ_0}$ at cylindrical ends.
\end{lemma}
 \begin{proof}
   A squashing map $\aleph : \nu B^\dual \to B^\dual$ satisfies the
   increasing property on $\aleph|\nu P^\dual$.  Indeed, it is enough to
   check this property for squashing maps on vector spaces, which in
   turn, follows from the increasing property on undilated squashing
   maps, and finally, an undilated squashing map is piecewise smooth,
   and on each of these pieces, it is an orthogonal projection.
 \end{proof}
 The next result, which is a consequence of Lemma
 \ref{lem:sq-are-inc}, says that the squashed area
 $(\psi_{\aleph} \circ u)^* \om_X$ is pointwise non-negative if the map
 $u$ is pseudoholomorphic with respect to an almost complex structure
 that is close to a locally strongly tamed almost complex structure.
 \begin{lemma} \label{lem:hmon} {\rm(Monotonicity of Hofer
     energy)} \label{lem:monot} Suppose $\JJ_0 \in \J^\cyl(\XX)$ is a
   locally strongly tamed cylindrical almost complex structure, and
   suppose $U_{\JJ_0} \subset \J^{\cyl}(\XX)$ is a $C^0$-neighborhood
   of $\JJ_0$ from Lemma \ref{lem:dirinc}.  Let $u: C \to X^\nu$ be a
   map that is holomorphic with respect to the domain dependent almost
   complex structure $J^\nu : C \to U_{\JJ_0}$.  (Here, for any
   $z \in C$, $J^\nu(z)$ denotes both a broken almost complex
   structure $\JJ_z$ on $\XX$ and the almost complex structure
   $J_z^\nu$ on $X^\nu$ obtained by gluing $\JJ_z$ on the neck with
   neck length parameter $\nu$.)  For any open subset $\Om \subset C$,
   \[ E_{\Hof}(u,\Om) \leq E_{\Hof}(u,C) .\]
\end{lemma}

For some results in Chapter \ref{chap:stabdiv}, we consider maps that
are holomorphic with respect to a locally strongly tamed almost
complex structure, in which case, monotonicity of Hofer energy holds
without restricting to a $C^0$-neighborhood. The relevant statement is
as follows, and the proof is a consequence of Lemma
\ref{lem:strongtame}.

 \begin{lemma}\label{lem:monot-strong} 
   {\rm(Monotonicity in the locally strongly tamed case)} Suppose
   $\JJ_0 \in \J^\cyl(\XX)$ is a locally strongly tamed cylindrical
   almost complex structure.  Let $u: C \to X^\nu$ be a
   $J_0^\nu$-holomorphic map. Here, the almost complex structure
   $J_0^\nu$ on $X^\nu$ is obtained by gluing $\JJ_0$ on the neck with
   neck length parameter $\nu$.  For any open subset $\Om \subset C$,
   \[ E_{\Hof}(u,\Om) \leq E_{\Hof}(u,C) .\]
 \end{lemma}

\subsection{Hofer energy on a broken manifold} \label{sec:hofbr1}

Hofer energy on broken manifolds is defined in a similar way
to the Hofer energy 
on neck-stretched manifolds. The only new feature is that the
$\om$-complex and $J$-complex are different.

We describe the $\om$-complexes for pieces of the broken manifold.
For a polytope $P \in \PP$, the $\om$-complex of the cut space
$X_P^\om$ is the subset
\[\BB^\dual_P:=\pi_{B^\dual}(\XB_P^\om) \subset B^\dual,\]
where $\pi_{B^\dual}:(X,\om_X) \to B^\dual$ is the projection to the
dual complex \label{rep:xpom} from \eqref{eq:pib-symp} and we recall
that the symplectic cut space $\XB_P^\om=\Phinv(P)$ is a subset of
$(X,\om_X)$. (We recall that the symplectic cut space and symplectic
broken manifolds have a superscript $\om$, see
Notation \ref{note:symp-vs-ac}.)
The complex
$\BB^\dual_P$ may alternately be defined as
\begin{equation}
  \label{eq:pi2bbp}
   \BB^\dual_P:= \bigcup_{Q \in \PP, \dim(Q)=0, Q \in P}i_{\tQ}^{-1}(P) \subset B^\dual,
\end{equation}
where $i_{\tQ} : Q^\dual \to \on{im}(\Phi) \subset \t^\dual$ is the
embedding from \eqref{eq:ipdef}, noting that for a zero-dimensional
polytope $Q$, $\tQ=Q^\dual$.  See Figure \ref{fig:bbp-new}. The space
$\BB^\dual_P$ inherits the structure of a complex from $B^\dual$. It
is a union of polytopes
\begin{equation}
  \label{eq:bbpq}
  \BB^\dual_P=\cup_{Q \in \PP : Q \subseteq P}\BB^\dual_{P,Q}, \quad \BB^\dual_{P,Q}:=\BB^\dual_P \cap Q^\dual,   
\end{equation}
and for any pair $R \subset Q \subseteq P$, $\BB^\dual_{P,Q}$ is
identified to a face of $\BB^\dual_{P,R}$.  Note that
$\dim(\BB^\dual_P)=\dim(P)$, and
$\dim(\BB^\dual_{P,Q})=\dim(P)-\dim(Q)$.

\begin{figure}[ht]
  \centering \scalebox{.8}{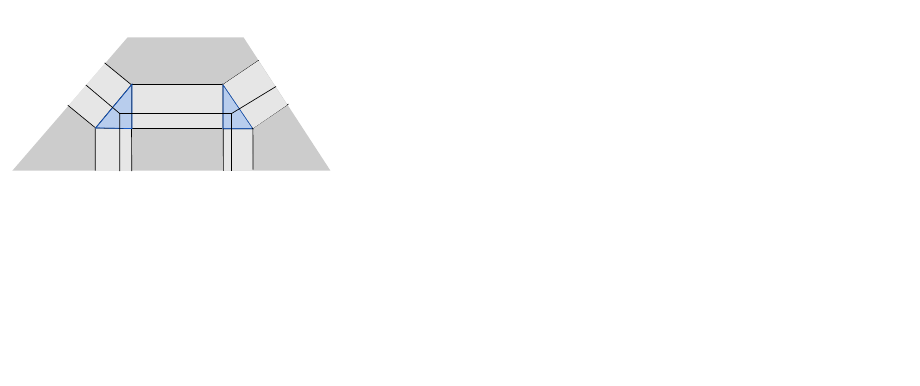}
  \caption{For a polyhedral decomposition $\PP$ of a tropical
    manifold, $\BB^\dual_P$ resp. $\BB^\dual_{\tP}$ is the
    $\om$-complex of the cut space $X_P$ resp.  Broken manifold
    $\XC_P$ for $P \in \PP$.}
  \label{fig:bbp-new}
\end{figure}

\begin{figure}[ht]
  \centering \scalebox{.8}{
\begingroup%
  \makeatletter%
  \providecommand\color[2][]{%
    \errmessage{(Inkscape) Color is used for the text in Inkscape, but the package 'color.sty' is not loaded}%
    \renewcommand\color[2][]{}%
  }%
  \providecommand\transparent[1]{%
    \errmessage{(Inkscape) Transparency is used (non-zero) for the text in Inkscape, but the package 'transparent.sty' is not loaded}%
    \renewcommand\transparent[1]{}%
  }%
  \providecommand\rotatebox[2]{#2}%
  \newcommand*\fsize{\dimexpr\f@size pt\relax}%
  \newcommand*\lineheight[1]{\fontsize{\fsize}{#1\fsize}\selectfont}%
  \ifx\svgwidth\undefined%
    \setlength{\unitlength}{91.56728189bp}%
    \ifx\svgscale\undefined%
      \relax%
    \else%
      \setlength{\unitlength}{\unitlength * \real{\svgscale}}%
    \fi%
  \else%
    \setlength{\unitlength}{\svgwidth}%
  \fi%
  \global\let\svgwidth\undefined%
  \global\let\svgscale\undefined%
  \makeatother%
  \begin{picture}(1,0.72909716)%
    \lineheight{1}%
    \setlength\tabcolsep{0pt}%
    \put(0,0){\includegraphics[width=\unitlength,page=1]{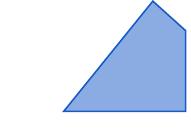}}%
    \put(0.61901045,0.30592187){\color[rgb]{0,0,0}\makebox(0,0)[lt]{\lineheight{1.25}\smash{\begin{tabular}[t]{l}$\BB_{P_0,R_0}^\dual$\end{tabular}}}}%
    \put(0.55139492,0.03818717){\color[rgb]{0,0,0}\makebox(0,0)[lt]{\lineheight{1.25}\smash{\begin{tabular}[t]{l}$\BB_{P_0,Q_1}^\dual$\end{tabular}}}}%
    \put(0.18988752,0.43809297){\color[rgb]{0,0,0}\makebox(0,0)[lt]{\lineheight{1.25}\smash{\begin{tabular}[t]{l}$\BB_{P_0,Q_3}^\dual$\end{tabular}}}}%
    \put(-0.00265905,0.1014019){\color[rgb]{0,0,0}\makebox(0,0)[lt]{\lineheight{1.25}\smash{\begin{tabular}[t]{l}$\BB_{P_0,P_0}^\dual$\end{tabular}}}}%
  \end{picture}%
\endgroup%
}
  \caption{Polytopes in the complex $\BB_{P_0}^\dual$ from Figure \ref{fig:bbp-new}.}
  \label{fig:bbp-polys}
\end{figure}

\begin{figure}[ht]
  \centering \scalebox{.8}{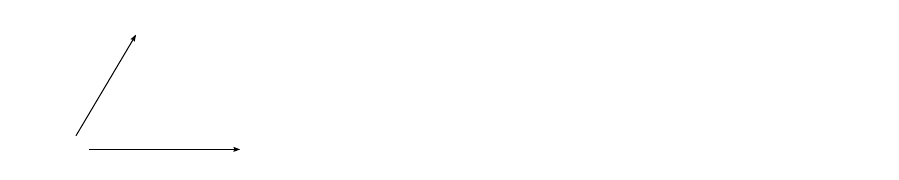}
  \caption{Some $J$-complexes for the polyhedral decomposition $\PP$ in Figure \ref{fig:bbp-new}.}
  \label{fig:Jcomplex}
\end{figure}

We also associate an $\om$-complex to thickened complexes $\tP$ in
order to define Hofer energy for maps in the broken manifold
$\XC_P$. We define $\BB^\dual_{\tP}$ as a thickening of $\BB^\dual_P$
:
\[\BB^\dual_{\tP} := \BB^\dual_P \times P^\dual, \]
where $P^\dual \subset \t_P$ is  small enough that 
there is an embedding
$\BB^\dual_{\tP} \to B^\dual$ whose image is a tubular neighborhood of
$\BB^\dual_P \subset B^\dual$ (if needed we assume that $P^\dual$ is a scaling of the dual polytope of $P$). The space $\BB^\dual_{\tP}$ inherits
the structure of a complex from $B^\dual$, and thus, consists of
polytopes
\[\BB^\dual_{\tP,Q}:=\BB^\dual_{P,Q} \times P^\dual \quad \forall Q \subseteq P.\]
The choice of the $\om$-complexes $\BB^\dual_P$, $\BB^\dual_{\tP}$ is
justified by the fact that the symplectic broken manifold or the symplectic cut space
can be re-constructed from the complexes as
\begin{equation}
  \label{eq:xp-symp-dec}
  \begin{split}
    \left( \cup_{Q \subseteq P} \Phinv(\wQ) \times \BB^\dual_{\tP,Q}\right)/\sim =\Phinv(P^\circ)=(\XX_{P}^\om,\om_{\XX_P}),\\
       \left( \cup_{Q \subseteq P} \Phinv(\wQ)/T_P \times \BB^\dual_{P,Q}\right)/\sim = \Phinv(P^\circ)/T_P= (X_P^\om,\om_{X_P}),
  \end{split}
\end{equation}
where $\sim$ is an identification on boundaries, which is a
restriction of the equivalence relation in \eqref{eq:xnupre1}.  The
decomposition in \eqref{eq:xp-symp-dec} is a consequence of the
decomposition of $(X,\om_X)$ in \eqref{eq:xnupre3}.

Next, we describe the $J$-complexes.  For any $P \in \PP$, the
$J$-complex for the broken manifold $\XB_P$ is
\[\Cone_{P^\dual} B^\dual := \left( \bigcup_{Q \subseteq P} \Cone_{P^\dual}Q^\dual \right)/\sim, \]
where, for any pair $Q_0 \subset Q_1$, the equivalence relation $\sim$
identifies $\Cone_{P^\dual}Q^\dual_1$ to a face of
$\Cone_{P^\dual}Q^\dual_0$. The $J$-complex for the cut space $\XB_P$
is the corresponding normal cone
\[\NCone_{P^\dual} B^\dual := \left(\bigcup_{Q \subseteq P} \NCone_{P^\dual}Q^\dual \right)/\sim, \]
and thus $\Cone_{P^\dual} B^\dual$ is a product of orthogonal spaces
\[\Cone_{P^\dual} B^\dual=\NCone_{P^\dual} B^\dual \times \t_P.\]
The almost complex broken manifold and cut spaces can be reconstructed from the $J$-complexes
\begin{equation}
  \label{eq:xolp-dec}
  \begin{split}
      \XC_P&=\left( \cup_{Q \subseteq P} \Phinv(\wQ) \times \Cone_{P^\dual}Q^\dual\right)/\sim,\\
  \XB_P&=\left( \cup_{Q \subseteq P} \Phinv(\wQ)/T_P \times \NCone_{P^\dual}Q^\dual\right)/\sim,
  \end{split}
\end{equation}
where, for any facet $Q \subset R$, $\sim$ identifies the boundary component
\[\Phinv(\wQ) \times \Cone_{P^\dual}(Q^\dual) \subset \Phinv(\wQ) \times \Cone_{P^\dual}(Q^\dual)\]
with the boundary component
\[\Phinv(\wQ) \times \Cone_{P^\dual}(Q^\dual) \subset \Phinv(\wR) \times \Cone_{P^\dual}(R^\dual)\]
by the identity map. Here we use the viewpoint that broken manifolds
are degenerate limits of neck-stretching, and therefore the
decomposition in \eqref{eq:xolp-dec} is the limit of the
$J$-decomposition in \eqref{eq:xnuJ-dec}. As a consequence of the
decomposition in \eqref{eq:xolp-dec}, there are projection maps
\begin{equation}
  \label{eq:proj2cone}
\pi_{\Cone_{P^\dual}B^\dual} : \XC_P \to \Cone_{P^\dual}B^\dual, \quad \pi_{\NCone_{P^\dual}B^\dual} : \XB_P \to \NCone_{P^\dual}B^\dual  
\end{equation}
for all polytopes $P \in \PP$.

\begin{definition}\label{def:sqbroken}  Let $\XX_\PP$ be a
  broken manifold and let $P \in \PP$ be a polytope.  \index{Squashing
    map!for cut spaces}
  \begin{enumerate}
  \item {\rm(Squashing map for cut spaces)}
    A \em{squashing map for the cut space} $\XB_P$ is a map of complexes
    \[\aleph : \NCone_{P^\dual} B^\dual \to \BB_P^\dual \]
    for which
    $\aleph(\NCone_{P^\dual}Q^\dual) \subset \BB_{P,Q}^\dual$ for any
    polytope $Q \subseteq P$, and
    $\aleph : \Cone_{P^\dual}Q^\dual \to \BB_{P,Q}^\dual$ is a
    squashing map of polytopes.
    \index{Squashing map! for a broken manifold}
  \item {\rm(Squashing map for a broken manifold)} A \em{squashing
    map} for the component $\XC_P$ of the broken manifold $\XX_\PP$ is a map 
  \[\aleph=(\aleph^P, \aleph^{P^\dual}): \Cone_{P^\dual} B^\dual \to \BB_{\tP}^\dual \]
  where $\aleph^P : \NCone_{P^\dual}B^\dual \to \BB_P^\dual$ is a squashing map for the cut space $\XB_P$,
  and $\aleph^{P^\dual}:\t_P \to P^\dual$ is a squashing map of polytopes.  Note that
  $\aleph$ is itself a squashing map of complexes.
\end{enumerate}
See Figure \ref{fig:sqbr-eg} for examples.
\end{definition}
\begin{figure}[ht]
  \centering \scalebox{.8}{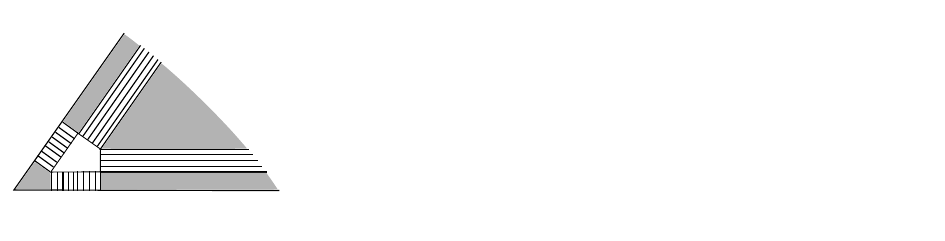}
  \caption{Squashing maps for a broken manifold. 
    $\aleph_1:\Cone_{P_0^\dual}R_0^\dual \to \BB_{P_0}^\dual$,
    $\aleph_2 : \Cone_{Q_1^\dual}R_0^\dual \to \BB_{Q_1}^\dual$,
 $\aleph_3:\Cone_{P_3^\dual}B^\dual \to \BB_{P_3}^\dual$ with
  $\om$-complexes and $J$-complex from Figures \ref{fig:bbp-new},
  \ref{fig:Jcomplex}.}
  \label{fig:sqbr-eg}
\end{figure}

Via the decompositions \eqref{eq:xp-symp-dec}, \eqref{eq:xolp-dec} of
a cut space into a union of fibrations over polytopes, a squashing map
$\aleph$ for the almost cut space $\XB_P$ induces a map of manifolds
\[\psi_\aleph : \XB_P \to X_P^\om, \]
that is, piecewise, a smooth submersion onto the symplectic cut space
$X_P^\om$.  Similarly for a component $\XC_P$ of an almost complex
broken manifold $\XX$, a squashing map $\aleph$ induces a map of
manifolds
\[\psi_\aleph : \XC_P \to \XX_P^\om \]
onto the symplectic broken manifold $\XX_P$.
\index{Hofer energy!for a broken manifold}
\begin{definition} {\rm(Hofer energy for a broken manifold)}
  \label{def:PHof}
  Let $P \in \PP$ be a polytope.  The \em{$P$-Hofer energy} of a map
  $u:C \to \XC_P$ is
  \[E_{P,\Hof}(u) = \sup_{\aleph} \int_C (\psi_\aleph \circ u)^*
    \om_{\XC_P}, \]
  where the supremum is over all squashing maps $\aleph$ for $\XC_P$
  (as in Definition \ref{def:sqbroken}).  The \em{unpartitioned
    $P$-Hofer energy} of $u$ is defined as
  \[E_{P,\Hof}^*(u) = \sup_{\aleph \text{ is unpartitioned}} \int_C (\psi_\aleph \circ u)^*
    \om_{\XC_P}, \]
  where the squashing map $\aleph$ is said to be unpartitioned if for
  each $Q \subseteq P$, $\aleph|\Cone_{P^\dual}Q^\dual$ is
  unpartitioned (as in Definition \ref{def:sqmap}
  \eqref{part:unpart}).  The $P$-Hofer energy resp. unpartitioned
  $P$-Hofer energy of a holomorphic map $u:C \to \XB_P$ to a cut space
  $\XB_P$, also denoted by $E_{P,\Hof}(u)$ resp. $E_{P,\Hof}^*$, is
  analogously defined.
\end{definition}
The proof of the following monotonicity result is the same as the
proof in the neck-stretched case (Lemma \ref{lem:monot}).
\begin{lemma}{\rm(Monotonicity of Hofer energy for broken manifolds)}
  \label{lem:monotXX}
  Suppose $\JJ_0 \in \J^\cyl(\XX)$ is locally strongly tamed. Then
  there is a $C^0$-neighborhood $U_{\JJ_0} \subset \J^\cyl$ of $\JJ_0$
  from Lemma \ref{lem:dirinc} such that for any $\JJ \in U_{\JJ_0}$,
  $P \in \PP$ and any map
  \[\psi_\aleph : (\XX_P,\JJ) \to (\XX_P^\om,\om_{\XX_P})\]
  induced by a squashing map
  $\aleph : \Cone_{P^\dual} B^\dual \to \BB_{\tP}^\dual$, $\JJ|\XC_P$
  satisfies
  \[\psi_\aleph^*\om_{\XC_P}(v,\JJ v)\geq 0\]
  for all
  $v \in T\XC_P$.
\end{lemma}

Often, it is useful to have squashing area forms that are
non-degenerate on a given compact subset of the cut space $X_P$.

\begin{lemma}\label{lem:Knondeg}
  For any $P \in \PP$ and a compact subset $K \subset X_P$, there is an unpartitioned squashing map
  $\aleph: \NCone_{P^\dual} B^\dual \to \BB_P^\dual$ such that the squashed area form
  $\psi_\aleph^*\om_{X_P}$ is non-degenerate on $K$. 
\end{lemma}

\begin{proof}
  The requirement in the Lemma is satisfied by a squashing map
  $\aleph$ that is a dilation on
  $\pi_{\NCone_{P^\dual}B^\dual}(K)$. Such a map exists by taking the
  dilation constant $t$ (from \eqref{eq:aleph-dilate}) to be large
  enough.
\end{proof}

\begin{proposition}\label{prop:hofer-breaking}{\rm(Limit of maps and Hofer energy)}
  Let $\Om \subset \C$ be a compact set, and let
  $u_\nu : \Om \to X^\nu$ be such that there is a polytope $P \in \PP$
  and a sequence of translations $t_\nu \in \nu P^\dual$ such that
  \begin{equation}
    \label{eq:dtnuP0}
    d(t_\nu, \nu P_0^\dual) \to \infty, \quad \forall P_0 \supset P  
  \end{equation}
  and the sequence of translated maps $\e^{-t_\nu} u : \Om \to \XX_P$
  \footnote{From \eqref{eq:transinc},
    $e^{-t_\nu} : X^\nu_{\tP} \to \XX_P$ is an embedding of the
    $P$-cylindrical subset $X^\nu_{\tP} \subset X^\nu$.}  converges
  uniformly on compact subsets to a limit $u:\Om \to \XX_P$. Then,
  \[E_{P,\Hof}^*(u) \leq \lim_\nu E_\Hof(u_\nu).\]
  Here $E^*_{P,\Hof}$ is unpartitioned $P$-Hofer energy from
  Definition \ref{def:PHof}.
\end{proposition}

\begin{proof}
  Consider an unpartitioned squashing map
  $\aleph : \Cone_{P^\dual} B^\dual \to \BB_{\tP}^\dual$. To prove the
  result, we need to construct a sequence of squashing maps
  $\aleph_\nu : \nu B^\dual \to B^\dual$ for which
  \begin{equation}
    \label{eq:sqlimgoal}
    \lim_\nu u_\nu^*(\phi_{\aleph_\nu}^*\om_X)=u^*(\phi_\aleph^*\om_{\XX_P}).
  \end{equation}
  In particular, we will construct $\aleph_\nu$ that fits into a
  diagram (whose commutativity we will describe later)
  
 \[
    \begin{tikzcd}
      \Cone_{P^\dual} B^\dual\arrow[r,shift left, "\aleph"] 
      \arrow[d, dashed, shift left, "i_\nu^{-1}"] & \BB_{\tP}^\dual \arrow[d, "i"] \arrow[l, shift left,"\aleph_{\on{inv}}"] \\
      \nu B^\dual \supseteq \cup_{Q \subseteq P}\nu Q^\dual \arrow[r, "\exists \aleph_\nu"] \arrow[u, shift left, "i_\nu"] & B^\dual,
    \end{tikzcd}
  \]
  where
  \begin{itemize}
  \item $i_\nu:=\e^{-t_\nu}_P$ from \eqref{eq:cdet}, the inverse $i_\nu^{-1}$ is defined on the image of $i_\nu$,
  \item $\aleph_{\on{inv}}$ is the right inverse of the unpartitioned squashing map $\aleph$ (see Remark \ref{rem:unpart-inv}),
    \item and $i : \BB_\tP^\dual \hra B^\dual$ is a tubular neighborhood of $\BB_P^\dual$ in $B^\dual$. 
  \end{itemize}
  By the hypothesis \eqref{eq:dtnuP0} on $t_\nu$, the images of
  $i_\nu$ exhaust $\Cone_{P^\dual}B^\dual$.  We will define
  $\aleph_\nu$ so that there is a sequence of increasing open subsets
  $S_\nu \subset \on{image}(i_\nu)$ that exhaust
  $\Cone_{B^\dual} P^\dual$ and on which the diagram commutes, that
  is,
  \begin{equation}
    \label{eq:aleph-cond}
    \aleph=\aleph_\nu \circ i_\nu \quad \text{on $S_\nu$}.   
  \end{equation}
  This condition ensures that for any point $x \in \XX_P$ for which
  $\pi_{\Cone_{P^\dual} B^\dual}(x) \in S_\nu$, there is an equality
  of squashed forms
  \[(\phi_{\aleph_\nu}^*\om_X)|_{(\e^{-t_\nu})^{-1} x} = (\phi_\aleph ^* \om_{\XX_P})_x,  \]
and consequently \eqref{eq:sqlimgoal} holds.

 We assume that $B^\dual$ has a single top-dimensional
  polytope, and therefore $B^\dual$ itself may be viewed as a
  polytope. The general case is a natural extension and is left to the
  reader. 

  The squashing map $\aleph_\nu$ is constructed using a polyhedral
  decomposition of the target space $\nu B^\dual$ that possesses a
  dual complex, as in Figures \ref{fig:pqdual} and \ref{fig:parted}.
  By Lemma \ref{lem:dec-with-dual} and Lemma \ref{lem:dual2sq}
  \eqref{part:dual2sq1}, there exists a polyhedral decomposition $\QQ$
  of $B^\dual$ for which $\BB_\tP^\dual$ is a top-dimensional
  piece, \label{page:bbp-embed} and which has a dual complex
  $B^\dual_\QQ$ \footnote{The dual complex $B^\dual_\QQ$ is that of
    the polyhedral decomposition of $B^\dual$. The space $B^\dual$
    itself happens to be a dual complex, but in this discussion it
    plays the role of the target space of a squashing map.}.  Note
  that Lemma \ref{lem:dec-with-dual} proves the result for a
  decomposition of a vector space, and Lemma \ref{lem:dual2sq}
  \eqref{part:dual2sq1} extends the construction of a dual complex for
  the decomposition of a polytope. The squashing maps $\aleph_\nu$ are
  constructed by applying Lemma \ref{lem:dual2sq}
  \eqref{part:dual2sq2} to the polyhedral decomposition induced on
  $B^\dual$ by $\QQ$.  By Lemma \ref{lem:dual2sq}
  \eqref{part:dual2sq2}, \eqref{part:dual2sq3} there is a unique
  family of squashing maps $\{\aleph_\nu\}_\nu$ for which
 \[\aleph=\aleph_\nu \circ i_\nu^{-1} \quad \text{on $\aleph^{-1}(\BB_{\tP}^{\dual,\circ})$},\]
 and such that for any  top-dimensional polytope $P_0 \in \QQ$ other than $\BB_\tP^\dual$,
 \begin{equation}
   \label{eq:P0leaves}
   d(\aleph_\nu^{-1}(P_0^\circ), \aleph_\nu^{-1}(\BB_{\tP}^{\dual,\circ})) \to \infty,
 \end{equation}
 see \eqref{eq:QiQjdV}.  By \eqref{eq:P0leaves}, the commutativity
 \eqref{eq:aleph-cond} holds on
 \[S_\nu:=\cup_{P_1 \in \QQ, P_1 \subseteq
     \BB_{\tP}^\dual}\aleph_\nu^{-1}(P_1). \]
 Informally, the reason for the commutativity on $S_\nu$ is that the
 squashing map sends $S_\nu$ to $\BB_{\tP}^\dual$, and so
 $\aleph_\nu$ does not see the effect of other top-dimensional
 polytopes of $\QQ$ on $S_\nu$. Thus, the behavior of $\aleph_\nu$ is
 exactly like $\aleph$ with a domain translation given by $i_\nu$. See
 Figures \ref{fig:movepoly} and \ref{fig:movepoly1} for examples. This
 finishes the proof of Proposition \ref{prop:hofer-breaking}.
 \end{proof}

  \begin{figure}[ht]
    \centering \scalebox{.8}{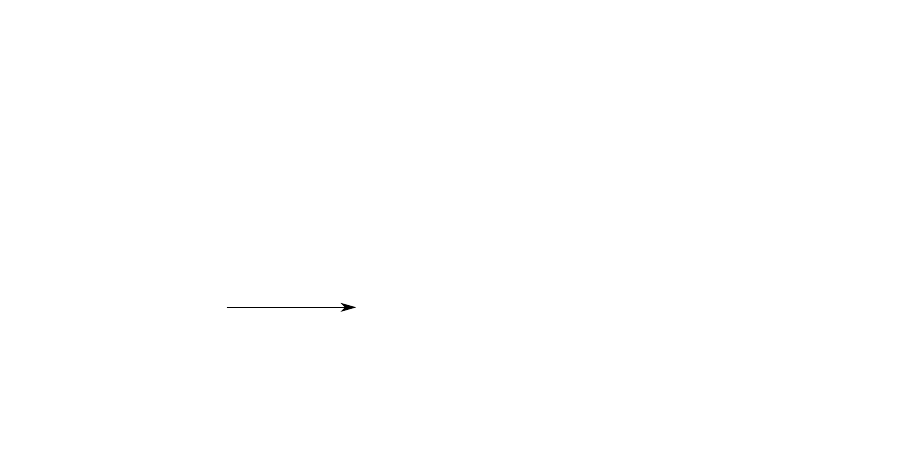}
    \caption{The squashing maps $\aleph_\nu : \nu B^\dual \to B^\dual$ (constructed in the proof of of Proposition \ref{prop:hofer-breaking}) converge to $\aleph:\Cone_{P_1^\dual}Q^\dual \to \BB_{P_1}^\dual$ as $\nu \to \infty$. Note that $\BB_{P_1}^\dual=\BB_{\tP_1}^\dual$ since $\codim(P_1)=0$.}
    \label{fig:movepoly}
  \end{figure}
   \begin{figure}[ht]
    \centering \scalebox{.8}{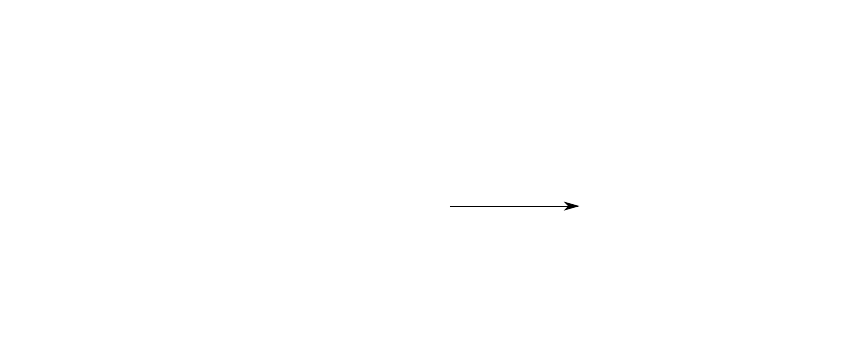}
    \caption{The squashing maps $\aleph_\nu : \nu B^\dual \to B^\dual$
      (constructed in the proof of of Proposition
      \ref{prop:hofer-breaking}) converge to
      $\aleph:\Cone_{P_4^\dual}Q^\dual \to \BB_{\tP_4}^\dual$ as
      $\nu \to \infty$, and $d_1, d_2, d_3 \to \infty$. Here $\PP$ and
      $B^\dual$ are from Figure \ref{fig:movepoly}.}
    \label{fig:movepoly1}
  \end{figure}

\begin{proposition}\label{prop:quothof}
  {\rm(Hofer energy and quotients)} Let $P \in \PP$ be a polytope with
  $\codim(P)>0$.  Let $U_{\JJ_0}$ be the $C^0$-neighborhood of
  cylindrical almost complex structures from Lemma \ref{lem:monotXX}
  for which Hofer energy is monotonic.  Let $u:C \to \XC_P$ be a
  holomorphic map with respect to a domain-dependent almost complex
  structure $J:C \to U_{\JJ_0}$.  Let $\pi_P : \XC_P \to \XB_P$ be the
  quotient under the action of $T_{P,\C}$. Then,
  \begin{enumerate}
  \item   $E_{P,\Hof}(\pi_P \circ u) \leq E_{P,\Hof}(u)$, and
  \item $E_{P,\Hof}^*(\pi_P \circ u) \leq E_{P,\Hof}^*(u)$
  \end{enumerate}
  where $E_{P,\Hof}$ denotes $P$-Hofer energy, and the superscript $*$
  refers to unpartitioned Hofer energy, see Definition \ref{def:PHof}.
\end{proposition}

\begin{proof}
  We recall that a squashing map for $\XC_P$ is a product map
  \[\aleph=(\aleph^P, \aleph^{P^\dual}): \Cone_{P^\dual} B^\dual \to
    \BB_{\tP}^\dual \]
  where $\aleph^P : \NCone_{P^\dual}B^\dual \to \BB_P^\dual$ is a
  squashing map for the cut space $\XB_P$, and
  $\aleph^{P^\dual}:\t_P \to P^\dual$ is a squashing map of
  polytopes. The $P$-Hofer energy is the supremum over squashed areas
  induced by $\aleph_P$, and Hofer energy is the supremum over
  squashed areas induced by $\aleph$.

  To prove the Proposition, consider a squashing map
  $\aleph^P : \NCone_{P^\dual}B^\dual \to \BB_P^\dual$.  Suppose 
  the projection
  $\pi_{\t_P} : \BB_{\tP}^\dual \to P^\dual \subset \t_P$ maps
  $\BB_P^\dual \subset \BB_{\tP}^\dual$ to the point $c_P \in \t_P$.
  For a map
  \begin{equation}
    \label{eq:alephP0}
    \aleph=(\aleph^P,c_P) : \Cone_{P^\dual} B^\dual \to
    \BB_{\tP}^\dual 
  \end{equation}
  and any map $u:C \to \XC_P$, we have
  \begin{equation}
    \label{eq:quotomP}
    (\pi_P \circ u)^*(\psi_{\aleph^P}^*\om_{X_P}) = u^*(\psi_{\aleph}^* \om_{\XX_P}).
  \end{equation}
  Adding a translation to the squashing map $\aleph$ alters the
  squashed area by a bounded multiplicative factor: Let $c_P'$ be a
  vertex of the polytope $P^\dual \subset \t_P$.  For the map
  \[\aleph'= (\aleph^P,c_P') : \Cone_{P^\dual}B^\dual \to
    \BB_\tP^\dual\]
  Lemma \ref{lem:tameC0} shows that there is a constant $c>0$ such
  that
  \[\psi_{\aleph'}^*\om_{\XX_P}(v,Jv) \geq c^{-1}
    \psi_\aleph^*\om_{\XX_P}(v,Jv)\]
  for any $J \in U_{\JJ_0}$ and $v \in T\XX_P$.

  Next, we construct a sequence of squashing maps that approximate
  $\aleph'$. (Note that $\aleph'$ itself is not a squashing map since
  it is not surjective.)  Define
  \[\tilde \aleph_\nu:=(\aleph^P, \aleph_\nu)\]
  where $\aleph_\nu :\t_P \to P^\dual$ is an unpartitioned squashing
  map whose right inverse $\aleph_{\nu,inv}: P^\dual \to \t_P$ is a
  translation such that the image $\aleph_{\nu,inv}(P^\dual)$ goes to
  $\infty$ in $\t_P$ in the direction $-c_P'$.  For example, we may
  take $\aleph_{\nu,inv}:=\aleph_{0,inv}- \nu c_P'$ for any $\nu$.
  Consequently, there exist increasing open sets
  $U_\nu \subset \Cone_{P^\dual}B^\dual$ that exhaust
  $\Cone_{P^\dual}B^\dual$ such that
  \[\aleph_\nu|U_\nu=(\aleph^P,c_P')=\aleph'.\]
  The Lemma now follows: For any squashing map $\aleph^P$ of $\XB_P$,
  define a sequence of squashing maps
  $\tilde \aleph_\nu:=(\aleph^P,\aleph_\nu)$ of $\XC_P$. We have
  \begin{multline*}
    E_\Hof(u) \geq \int_C(\psi_{\tilde \aleph_\nu} \circ
    u)^*\om_{\XX_P}
    \geq  \int_{C_\nu}(\psi_{\aleph'} \circ u)^*\om_{\XX_P} \\
    \geq c\int_{C_\nu}(\psi_{\aleph} \circ u)^*\om_{\XX_P} =
    \int_{C_\nu} (\psi_{\aleph_P} \circ \pi_P \circ u)^*\om_{X_P},
  \end{multline*}
  where $C_\nu:=(\pi_{\Cone_{P^\dual}B^\dual} \circ u)^{-1}(U_\nu)$.
  Since the sets $C_\nu$ exhaust the domain $C$, we conclude that for
  any squashing map $\aleph_P$ of $\XB_P$,
  \[\int_C(\psi_{\aleph_P} \circ \pi_P \circ u)^*\om_{X_P} \leq
    E_\Hof(u), \]
  and consequently $E_\Hof(\pi_P \circ u) \leq E_\Hof(u)$.
\end{proof}

\section{Removal of singularities}
\label{sec:remsing}
In this section, we prove the removal of singularities result,
Proposition \ref{prop:remsing}, for punctured holomorphic maps in a
piece of a broken manifold. \footnote{An approach to proving this
  result by embedding the broken almost complex manifold into a
  compact symplectic manifold fails, because the almost complex
  manifold $\XB_P$ can not be embedded into the symplectic cut space
  $\ol X_P^\om$ via increasing maps.  For any vertex $Q \in \PP$ of
  $P$, an increasing map may not exist in the $Q$-cylindrical corner
  of $\XB_P$ because the fixed $\t$-inner product from \eqref{eq:idtt}
  is not equal to the natural $\t$-inner product at the
  $Q$-corner. The natural inner product is the one for which the edges
  $P_1 \subset P$ emanating from $Q$ form an orthogonal basis.
  Consequently, a Hofer energy bound in $\XB_P$ does not translate to
  a bound on symplectic area in $\ol X_P^\om$. } We prove that any
punctured holomorphic map with finite Hofer energy lies in some
$Q$-cylindrical region and the image of the projection to $\XB_Q$ lies
in a compact set in the complement of relative divisors.  The removal
of singularities result applies on the projected map, and consequently
on the original map.

Monotonicity for pseudoholomorphic maps is the main technical tool in
the proof of Proposition \ref{prop:remsing}. We state the monotonicity
result (see for example \cite[Proposition 3.12]{zinger:notes}).
\begin{proposition} \label{prop:monohol} {\rm(Monotonicity) } Let
  $(X,\om)$ be a compact symplectic manifold, $J_0$ an $\om$-tamed
  almost complex structure, and let
  \[U_{J_0}:=\{J \text{ is $\om$-tamed} : \Mod{J-J_0}_{C^0}<\eps\}\]
  for some $\eps>0$ be a $C^0$-neighborhood on the space of tamed
  almost complex structures.  There exist constants $c, r_0>0$ such
  that for any $x \in X$, \cwl{resp. $x \in L$} $0 < r \leq r_0$, a
  Riemann surface $C$ with boundary $\partial C$ \cwl{resp. with
    boundary $\partial C$, and possibly corners} and a
  pseudoholomorphic map $u:C \to X$ with respect to a domain-dependent
  almost complex structure $J:C \to U_\J$ whose image contains $x$ and
  $u(\partial C) \subset \partial B(x,r)$,
  \cwl{resp.
    $\partial B(x,r) \cup L$}
  \[\int_C u^*\om \geq cr^2. \]
\end{proposition}

We first prove the removal of singularities result (Proposition
\ref{prop:remsing}) in the case of a single cut to serve as a warm-up
for the more complicated proof in the case of multiple cuts.
 
\begin{proof}
  [Proof of Proposition \ref{prop:remsing} in the case of a single
  cut] We set up some notation first. We consider a single cut with
  polyhedral decomposition
  \[\PP = \{ P_+:=(-\infty,0],P_0:=\{ 0 \}, P_-:=[0, \infty) \},\]
  and dual complex $B^\dual \cong [\frac {-\delta} 2,\frac \delta 2]$.

  We first carry out the proof for maps whose target space is
  $\XB_{P_+}$, the case of $X_{P_-}$ being similar.  We recall from
  \eqref{eq:proj2cone}
  there is a projection to the $J$-complex
  \[\pi_{B_J} : \XB_{P_+} \to \Cone_{P_+^\dual} \simeq (-\infty,0].\]
  We choose any unpartitioned squashing map (as in Definition \ref{def:sqmap} \eqref{part:unpart})
  \[ \Cone_{P_+^\dual} \simeq (-\infty,0] \xrightarrow{\aleph} [\tfrac
    {-\delta} 2,\tfrac \delta 2] \simeq B^\dual. \]
  Such a map is a translation on an interval
  $[\tau,\tau+\delta] \subset \R_-$ and a locally constant map on the
  complement.

  \begin{definition}
    {\rm(Crossing)} A connected component $C \subset \Cyl$ of
    $(\pi_{B_J} \circ u)^{-1}([\tau,\tau+\delta])$ is called a \em{
      crossing} if $(\pi_{B_J} \circ u)(C)$ intersects both boundary
    components of $[\tau,\tau+\delta]$.
  \end{definition}

\vskip .05in \noindent   \textsc{Step 1}: We will establish a lower bound on the squashed
  area of any compact crossing $C$ by applying the monotonicity result
  Proposition \ref{prop:monohol}.  First, we recall that since any
  cylindrical almost complex structure $\JJ$ on $\XX$ is the limit of
  neck-stretched almost complex structures $(X^\nu,J^\nu)$, $\JJ$
  corresponds to an $\om_X$-tamed almost complex structure $J:=J^1$ on
  $(X,\om_X)$, and further, $\psi_{\aleph,*}\JJ=J$ on subsets of $\XX$
  where $\aleph$ is an isometry. We also note that $\psi_\aleph$ is an
  isometry on subsets of $\XX$ if $\aleph$ is an isometry on the
  correponding region in the $J$-complex. Since $\aleph$ is an
  isometry on the image of $\pi_{B_J} \circ u|C$, it is equivalent to
  work with
  \[\psi_\aleph \circ u : C \to (X,\om_X) \]
  instead of $u : C \to \XX$.  Choose
  $0<\delta_1<\min\{\frac \delta 2, r_0\}$ where $r_0$ is the constant
  from the monotonicity result applied to $(X,\om_X)$.  Let $z_0 \in C$
  be such that $\pi_{B_J}(u(z_0))=\tau+\frac \delta 2$. Applying the
  monotonicity result to the ball
  $B_{\delta_1}(\psi_\aleph \circ u(z_0)) \subset X$, we conclude
  there is a uniform constant such that for any compact crossing $C$,
  \begin{equation}
    \label{eq:crossing-single}
    \int_Cu^*(\psi_\aleph^*\om_X) \geq c.
  \end{equation}

\vskip .05in \noindent   \textsc{Step 2}: Next, we will show that the image $u(\Cyl)$ is
  either contained in a compact subset of $\XB_{P_+}$ or after
  truncating the domain cylinder by a finite amount, the image of $u$
  is contained in the $P_0$-cylindrical subset of $\XB_{P_+}$: After
  passing to a truncation of the domain cylinder, we may assume that
  the squashed area of the map is small enough to ensure that there
  are no compact crossings. If there is a non-compact crossing
  $C \subset \Cyl$, the image of $u$ is contained in a compact
  set. Indeed, there exists $\ell_0 \geq 0$ such that $C$ intersects
  the image $u(\{\ell\} \times S^1)$ for all $\ell \geq \ell_0$, and
  therefore the image $u(\Cyl(\ell_0))$ is contained within a radius
  $2\pi\Mod{du}_{L^\infty}$ of the compact subset
  $\{\tau \leq \pi_{B_J} \leq \tau +\delta\} \subset
  \XB_{P_+}$.  Finally, if $u$ does not have any crossings, the image
  of $u$ is either contained in the compact subset
  $\{\pi_{B_J} \geq \tau\} \subset \XB_{P_+}$; or it is contained in
  $\{\pi_{B_J} \leq \tau + \delta\}$ which is in the $P_0$-cylindrical
  subset of $\XB_{P_+}$.

\vskip .05in \noindent   \textsc{Step 3}: \label{page:1cutstep3} In case the image of $u$ is
  contained in a compact subset of $X_{P_+}$, the result follows from
  the removal of singularities result for compact symplectic
  manifolds. Indeed, for any compact subset $K$ of $X_{P_+}$, by Lemma
  \ref{lem:Knondeg}, there is a squashing area form $\om_\aleph$ that
  is a symplectic form on $K$, and
  $\int_\Cyl u^*\om_\aleph < E_\Hof(u)$.

\vskip .05in \noindent   \textsc{Step 4}: \label{page:1cutstep4} Next, we prove the result
  in the case when the image of $u$ is contained in the
  $P_0$-cylindrical end of $\XB_{P_+}$. The $P_0$-cylindrical end is a
  semi-infinite cylinder $Z_{P_0} \times (-\infty,0]$. The
  $(\om_{X_{P_0}} - \frac \eps 2 d\alpha_{P_0})$-area of the
  projection $u_{P_0}:=\pi_{P_0} \circ u$ is bounded by $E_\Hof(u)$
  because if we define the squashing map $\aleph_0$ so that
 \[ \aleph_0 \equiv \tfrac {-\eps} 2 \quad \text{on} \quad (-\infty,0],\]
 then
 \[\int_\Cyl u_{P_0}^*(\om_{X_{P_0}} - \tfrac \eps 2 d\alpha_{P_0})=\int_\Cyl u^*\om_{\aleph_0}  \leq E_\Hof(u).\]
 By the removal of singularities result for compact symplectic
 manifolds applied to the projected map
  \[u_{P_0} : \Cyl \to (X_{P_0}, \om_{X_{P_0}} - \tfrac \eps 2 d\alpha_{P_0}),\]
  \cwl{using the fact that the Lagrangian extends to a Lagrangian with
    clean self-intersection in the compactification} we conclude that
  $u_{P_0}$ extends holomorphically to
  \[ u_{P_0}:B_1 \to X_{P_0} . \]
  Consider a holomorphic trivialization of the pullback bundle
  $u_{P_0}^*(\oZ_{P_0} \times \R) \to B_1$.  The projection of $u$ to
  the fiber, denoted by
  \begin{equation}
    \label{eq:uv-punct}
    u_{\on{vert}}: B_1 \bs \{0\} \to S^1 \times \R ,   
  \end{equation}
  is holomorphic, and the $\R$-coordinate has an upper bound.
  Therefore, $u_{\on{vert}}$ extends over $0$ to a holomorphic map in
  $\P^1$.  Suppose $u_{\on{vert}}$ has a zero resp. pole of order
  $n \in \Z_{\geq 0}$ at $0 \in B_1$.  Then the twisted map
  $\ol u(z):=z^{-n}u(z)$ resp. $z^n u(z)$ has the same projection to
  $X_{P_0}$ as $u$, the vertical component $\ol u_{\on{vert}}$ has a
  removable singularity, and therefore, $\ol u$ has a removable
  singularity.  This proves \eqref{eq:remconc} in Proposition
  \ref{prop:remsing}.

  \textsc{Step 5}: So far we have proved the result in the cases when
  the target space is $\XB_{P_\pm}$. Next, consider the case of a map
  $u: \Cyl \to \XC_{P_0}$.  As in the previous paragraph, the
  singularity at $\infty$ can be removed for the projected map
  $\pi_{P_0} \circ u : \Cyl \to X_{P_0}$. To prove that the
  singularity can be removed for the vertical component $u_{\on{vert}}$ (as in
  \eqref{eq:uv-punct}) 
  it is enough to show that the $\R$-component
  has either an upper or a lower bound so that essential singularities
  are ruled out. This bound is a consequence of a lower bound on the
  squashed area $\psi^*\aleph$ for crossings. The details are exactly
  as in the case of $X_{P_\pm}$ and are therefore omitted.
\end{proof}

\begin{remark}\label{rem:integ-edge}
  {\rm(The integrality of edge {direction}s)}
  As part of the above proof, we have shown that if the punctured end has a {direction} $\mu \in \t_{P_0}$, then $\mu$ is integral, and not just fractional. This fact relies on the torus actions being free in the neighborhood of cut loci (see Definition \ref{def:tropmanifold} and Figure \ref{fig:orb-cut}). Indeed, in \eqref{eq:uv-punct} we use the fact that $Z_{P_0} \times \R \to X_{P_0}$ is a $S^1 \times \R$-bundle, and so the $S^1$ factor in \eqref{eq:uv-punct} is generated by an integral element in $\sqrt{-1}\t_{P_0}$. 
\end{remark}

\begin{proof}
  [Proof of Proposition \ref{prop:remsing} in the case of a multiple
  cut] The proof is by induction on $\dim(P)$.  By the induction
  hypothesis, we assume that the result holds for maps in the cut space $X_Q$ and the component $\XC_Q$ of the broken manifold for any $Q \in \PP$ with $\dim(Q) < \dim(P)$.
  
  We first consider the case when the target space is a cut space
  $\XB_P$, the case of a broken manifold $\XC_P$ is dealt with
  subsequently.  For notational simplicity, we assume that there is
  only one zero-dimensional polytope $R \in \PP$ contained in $P$, and
  so, $B^\dual=R^\dual$. We also denote $\t_R=\t$.  The generalization
  is easy and is described in Step 3
  of the proof. We 
  also assume that $\codim(P)=0$ so that $\t_P=\{0\}$. Otherwise, all
  instances of $\t$ are replaced by $\t/\t_P$.
To help the reader follow the proof, we give an example of an $\om$-complex of a cut space $X_P$ in Figure \ref{fig:bbp-rem}.
  
\begin{figure}[ht]
  \centering \scalebox{.8}{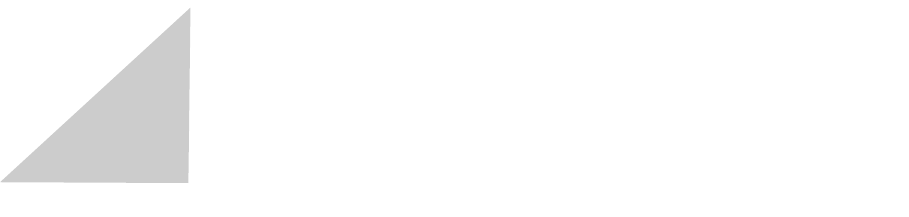}
  \caption{$\BB_P^\dual$ is the $\om$-complex of the cut space
    $\XB_P$. The faces of $\BB_P^\dual$ with dark grey labels are subcomplexes, and those with light grey labels are the $\om$-complexes of other cut spaces.}
  \label{fig:bbp-rem}
\end{figure}

The proof is similar to that of a single cut. The new feature is that
we now consider crossings defined with respect to various
one-dimensional subspaces of $\t$, which are defined as follows.
Let $P_1,\dots P_k \in \PP$ be facets of $P$. That is, for any
$\lam \in \{1,\dots,k\}$, $P_\lam \subset P$ and
$\dim(P_\lam)=\dim(P)-1$.  For any $\lam$, let $\pp_\lam \in \PP$ be
the the one-dimensional face of $P$ that is transverse to $P_\lam$,
that is, $\pp_\lam:=\cap_{1 \leq i \leq k, i \neq \lam}P_i$.  Let
\begin{equation}
  \label{eq:pilam}
  \pi_\lam : \t \to \R  
\end{equation}
be a linear projection that maps the codimension one dual polytope
$\pp_\lam^\dual \subset \t$ to a constant.  In particular, since
we assumed $P^\dual \in \t$ is the origin, we have
\begin{equation*}
  \pi_\lam(\pp_\lam^\dual)=0. 
\end{equation*}
Denote by
\[\pi_{B_J,\lam} :=\pi_{\lam} \circ \pi_{B_J}: \XB_P \to \R\]
the composition of $\pi_{B_J}: \XB_P \to \NCone_{P^\dual}R^\dual \subset \t$ with $\pi_\lam$ from \eqref{eq:pilam}.  The projection $\pi_{\lam}$ has the useful property that the $P_\lam$-cylindrical end in $\XB_P$ is
\begin{equation}
  \label{eq:lamend}
  U_{P_\lam}(\XB_P)=\{\pi_{B_J,\lam} >0\},
\end{equation}
and the complement $\XB_P \bs U_{P_\lam}(\XB_P)$ is equal to
$\{\pi_{B_J,\lam} =0\}$. 

We choose a point $\tau \gg 0$ in the interior of
$\pi_{\lam}(\NCone_{P^\dual}R^\dual) \subset \R$.

  \begin{definition}\label{def:ppcrossing}
    {\rm($\pp_\lam^\dual$-crossing)} Given $\delta>0$, a connected
    component $C \subset \Cyl$ of
    $(\pi_{B_J,\lam} \circ u)^{-1}([\tau - \delta, \tau + \delta])$ is
    a $(\pp_\lam^\dual,\delta)$-crossing if $\pi_{B_J,\lam}(u(C))$
    intersects both boundary components of
    $[\tau - \delta, \tau + \delta]$. If $\delta$ is clear from the
    context, we refer to $C$ as a $\pp_\lam^\dual$-crossing.
  \end{definition}

  We interrupt the proof of Proposition \ref{prop:remsing} (multiple
  cut case) to state and prove a technical Claim. The proof is
  continued
  after the proof of Claim \ref{claim:alephbd}.
  %
\end{proof}
  
The following Claim is the technical heart of the proof of the removal
of singularities result.

\begin{claim}\label{claim:alephbd}
  There are constants $\delta, c>0$ and a squashing map $\aleph$ such
  that for any compact $(\delta,\pp_\lam^\dual)$-crossing
  $C \subset \Cyl$, there is a lower bound
  \[\int_C(\psi_\aleph \circ u)^*\om_{X_P} \geq c.\]
\end{claim}
\begin{subproof}
  [Proof of Claim \ref{claim:alephbd}]
  We fix some constants in Step 1 of the proof. This is followed by an outline of the rest of the proof, with details given in Step 2 onwards. 
  \vskip .1in  \noindent \textsc{Step 1}:
  The first step is to fix the constant $\delta$ and the squashing map
  $\aleph$.  Cut up the $\om$-polytope $\BB_P^\dual \subset \t$
  along a level set of $\pi_\lam$ into two polytopes
  $\BB_{P,\pm}^\dual$ so that the facet
  $\BB^\dual_{P_\lam} \subset \BB_P^\dual$ is contained in
  $\BB_{P,-}^\dual$; see Figure \ref{fig:pp1crossing}.  Define
  \[\delta:=\hh \on{length}(\pi_{\lam}(\BB_{P,+}^\dual)).\]
  We will define $\aleph : \NCone_{P^\dual} R^\dual \to \BB_P^\dual$
  to be an unpartitioned undilated squashing map, and therefore
  $\aleph$ will possess a right inverse that is a translation
  \[\aleph_{\on{inv}} : \BB_P^\dual \to
    \ol{\aleph^{-1}(\BB_P^{\dual,\circ})}.  \]
  In fact, $\aleph$ is fully determined by the map $\aleph_{\on{inv}}$,
  which we define by the condition
  \begin{equation}
    \label{eq:alephinv}
    \aleph_{\on{inv}}(\BB^\dual_{P,P}):=\{\pi_{\lam} = \tau - \delta\} \cap \NCone_{P^\dual}P_\lam^\dual. 
  \end{equation}  
  We point out that both sides of the equation \eqref{eq:alephinv} are
  points.  The squashing map $\aleph$ defined in this manner has the
  property that for any facet $\NCone_{P^\dual}Q^\dual$ of
  $\NCone_{P^\dual} R^\dual$ that contains
  $\NCone_{P^\dual}P_\lam^\dual$ (that is, $Q \in \PP$,
  $Q \subseteq P_\lam$ and $\dim(Q)=1$),
  \begin{equation}
    \label{eq:bb-liesin}
    \aleph^{-1}(\BB_{P,Q}^\dual) \subset \NCone_{P^\dual}Q^\dual.   
  \end{equation}

  \begin{figure}[ht]
    \centering \scalebox{.8}{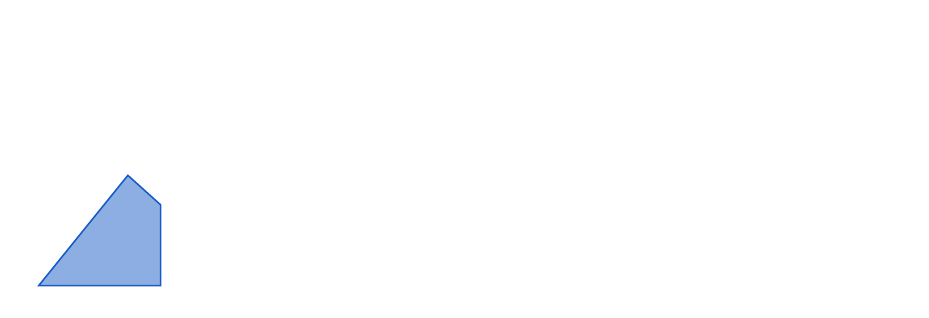}
    \caption{$\pp_\lam^\dual$-crossing with $\lam=1$.}
    \label{fig:pp1crossing}
  \end{figure}

  \vskip .1in \noindent \textsc{Outline of the proof of the Claim}: 
  Compared to
  the single cut, the most significant difference in the multiple cut
  case is that the image of a crossing may not be contained in a
  subset of $\XC_P$ where $\psi_\aleph$ is an isometry. This
  corresponds to the fact that $\aleph$ is not an isometry on
  $\pi_{\lam}^{-1}([\tau-\delta,\tau+\delta])$.
  The way around this issue is that if the image $u(C)$ is contained
  in a subset of $\XB_P$ where the map $\psi_\aleph$ is a projection
  whose fibers are $T_{Q_0,\C}$-orbits for some $Q_0 \in \PP$ for
  which $Q_0 \subseteq P$, $Q_0 \nsubseteq P_\lam$, Then we apply
  monotonicity result not on $u$ (as in the single cut proof) but on
  $u$ composed with a quotient by the $T_{Q_0,\C}$-action.  Observe
  that the fibers of $\psi_\aleph$ are $T_{Q_0,\C}$-orbits exactly if
  the map $\aleph$ is \em{$\t_{Q_0}$-squashing}, that is, it is a
  projection map whose fibers are $\dim(\t_{Q_0})$-dimensional and
  parallel to $\t_{Q_0}$.  We note that $\aleph$ is chosen so
  that in the subset $\pi_{\lam}^{-1}([\tau-\delta,\tau+\delta])$, the
  $\t_{P_\lam}$ direction itself is not squashed (that is,
  $Q_0 \nsubseteq P_\lam$); this feature is crucial in getting the
  lower bound on area.  The details of the rest of the proof
  of Claim \ref{claim:alephbd}
  are as follows.

  \vskip .1in  \noindent \textsc{Step 2}: \textit{ Determining the polytope
    $Q_0 \in \PP$ (from the proof outline) via a decomposition of the
    $J$-polytope.}
  The subset of the $J$-complex corresponding to the
  $\pp_\lam^\dual$-crossing has a decomposition
  \begin{equation}
    \label{eq:jdec}
    \pi_{\lam}^{-1}(\tau - \delta, \tau + \delta) \subset \bigcup_{Q \in \PP, Q \nsubseteq P_\lam, Q \subseteq P} S_Q,  
  \end{equation}
  where $S_Q \subset \NCone_{P^\dual} R^\dual$ is the subset on which
  $\aleph$ is \em{$T_Q$-squashing}, that is, $\aleph$ projects to the
  direction perpendicular to $\t_Q$. See Figure \ref{fig:sqdec}. The
  decomposition \eqref{eq:jdec} exists because \eqref{eq:bb-liesin}
  rules out any other kind of squashing in $\pi_{\lam}^{-1}(\tau - \delta, \tau + \delta)$.

  \begin{figure}[h]
    \centering \scalebox{.8}{
\begingroup%
  \makeatletter%
  \providecommand\color[2][]{%
    \errmessage{(Inkscape) Color is used for the text in Inkscape, but the package 'color.sty' is not loaded}%
    \renewcommand\color[2][]{}%
  }%
  \providecommand\transparent[1]{%
    \errmessage{(Inkscape) Transparency is used (non-zero) for the text in Inkscape, but the package 'transparent.sty' is not loaded}%
    \renewcommand\transparent[1]{}%
  }%
  \providecommand\rotatebox[2]{#2}%
  \newcommand*\fsize{\dimexpr\f@size pt\relax}%
  \newcommand*\lineheight[1]{\fontsize{\fsize}{#1\fsize}\selectfont}%
  \ifx\svgwidth\undefined%
    \setlength{\unitlength}{212.48620461bp}%
    \ifx\svgscale\undefined%
      \relax%
    \else%
      \setlength{\unitlength}{\unitlength * \real{\svgscale}}%
    \fi%
  \else%
    \setlength{\unitlength}{\svgwidth}%
  \fi%
  \global\let\svgwidth\undefined%
  \global\let\svgscale\undefined%
  \makeatother%
  \begin{picture}(1,0.7203025)%
    \lineheight{1}%
    \setlength\tabcolsep{0pt}%
    \put(0,0){\includegraphics[width=\unitlength,page=1]{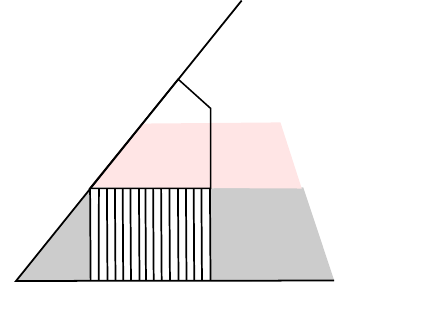}}%
    \put(0.62874542,0.54341115){\color[rgb]{0,0,0}\makebox(0,0)[lt]{\lineheight{1.25}\smash{\begin{tabular}[t]{l}$\Cone_{P^\dual}R^\dual$\end{tabular}}}}%
    \put(0.10024834,0.31464919){\color[rgb]{0,0,0}\makebox(0,0)[lt]{\lineheight{1.25}\smash{\begin{tabular}[t]{l}$S_P$\end{tabular}}}}%
    \put(0.39080295,0.67072288){\color[rgb]{0,0,0}\makebox(0,0)[lt]{\lineheight{1.25}\smash{\begin{tabular}[t]{l}$\t_{P_1}$\end{tabular}}}}%
    \put(0.6639375,0.01153752){\color[rgb]{0,0,0}\makebox(0,0)[lt]{\lineheight{1.25}\smash{\begin{tabular}[t]{l}$\t_{P_2}$\end{tabular}}}}%
    \put(0.73054639,0.43283258){\color[rgb]{0,0,0}\makebox(0,0)[lt]{\lineheight{1.25}\smash{\begin{tabular}[t]{l}$S_{P_2}$\end{tabular}}}}%
    \put(0.69996287,0.35191453){\color[rgb]{0,0,0}\makebox(0,0)[lt]{\lineheight{1.25}\smash{\begin{tabular}[t]{l}$\pi_1^{-1}([\tau-\delta,\tau+\delta])$\end{tabular}}}}%
    \put(0,0){\includegraphics[width=\unitlength,page=2]{sqdec.pdf}}%
    \put(-0.00128047,0.02923503){\color[rgb]{0,0,0}\makebox(0,0)[lt]{\lineheight{1.25}\smash{\begin{tabular}[t]{l}$\t_P$\end{tabular}}}}%
  \end{picture}%
\endgroup%
}
    \caption{The $\pp_1^\dual$-crossing decomposes into $S_P$ and $S_{P_2}$.}
    \label{fig:sqdec}
  \end{figure}
  
  The monotonicity result will be applied on small balls in the target
  space whose radius $\delta_1$ we determine next.  Let $N:=\dim(P)$.
  Choose a constant $0<\delta_1<\delta/(N+1)$ which is small enough
  that for any $x \in \pi_{\lam}^{-1}([\tau-N\delta_1,\tau+N\delta_1]) \subset
  \NCone_{P^\dual}R^\dual$ and any polytope $Q \subseteq P$,
  \begin{equation}\label{eq:bsq}
    B_{\delta_1}(x) \cap \NCone_{P^\dual}Q^\dual \neq \emptyset \Longrightarrow B_{\delta_1}(x) \subset \bigcup_{Q_1 \in \PP, Q_1 \supseteq Q}S_{Q_1}.
  \end{equation}

  Consider a compact $\pp_\lam^\dual$-crossing $C \subset \Cyl$.  We
  call $(x\in X_P, Q \in \PP)$ an \em {admissible pair} if
  \begin{itemize}
  \item $x \in \pi_{B_J}(u(C))$ and $\pi_\lam(x) \in [\tau- n_Q \delta, \tau+ n_Q \delta]$, where $n_Q:=\dim(Q)$; and
  \item $ Q_x^\dual=\cap_{Q \in \QQ}Q^\dual$, where 
\[\QQ:=\{Q \in \PP : Q \subseteq P, Q \nsubseteq P_\lam, B_{\delta_1}(x) \cap \NCone_{Q^\dual}R^\dual \neq \emptyset\}. \] 
  \end{itemize}
  Let
  \begin{equation}
    \label{eq:x0Q0}
    (x_0,Q_0)  
  \end{equation}
  be an admissible pair such that there is no admissible pair
  $(x,Q_x)$ such that $Q_0 \subset Q_x$.  In other words, we choose an
  admissible pair $(x_0,Q_0)$ for which the torus $T_{Q_0,\C}$ is
  minimal.  See Figure \ref{fig:sqcurve} for an example.
  
  \vskip .1in \noindent 
  \textsc{Step 3}: \textit{ Applying the monotonicity result and obtaining the lower bound on squashed area.}\\
  We will apply the monotonicity result (Proposition
  \ref{prop:monohol}) to the map $u$ quotiented by $T_{Q_0,\C}$.  Let
  \[u_{Q_0}: C \dashrightarrow X_{Q_0}, \quad u_{Q_0}:=u/T_{Q_0,\C}\]
  be a map defined on the subset of $C$ that is mapped by $u$ to the
  $Q_0$-cylindrical end of $X_P$.  Fix any $z_0 \in (\pi_{B_J,\lam} \circ u)^{-1}(x_0)$, and let
  \[C_1:=u_{Q_0}^{-1}(B_{\delta_1}(u_{Q_0}(z_0))).\]
  We will now prove that
  \begin{equation}
    \label{eq:uq0-bdry}
    u_{Q_0}(\partial C_1) \subset \partial B_{\delta_1}(u_{Q_0}(z_0)), 
  \end{equation}
  which is a hypothesis of the monotonicity result. If
  \eqref{eq:uq0-bdry} did not hold, there would be a point
  $z_1 \in \partial C_1$ such that $u(z_1)$ does not map to the
  $Q_0$-cylindrical end of $X_P$ (assuming $U_Q(X_P)$ is an open
  subset of $X_P$), and $\pi_{B,J}(u(z_1)) \in \Cone_{P^\dual} Q_1^\dual$ for some
  $Q_1 \in \PP$, $Q_1 \supset Q$. Then, setting
  $x_1:=\pi_{B_J}(u(z_1))$, $(x_1,Q_{x_1})$ is an admissible pair for
  some $Q_{x_1} \in \PP$ with $Q_{x_1} \subseteq Q_0$, where we use
  the observation that since $|\pi_\lam(x_0) - \tau|<n_{Q_0}\delta_1$,
  we have $|\pi_\lam(x_1) - \tau|<(n_{Q_0}+1)\delta_1 \leq n_{Q_{x_1}}
  \delta_1$. We have thus contradicted the minimality of $T_{Q_0,\C}$
  from \eqref{eq:x0Q0}, and shown that \eqref{eq:uq0-bdry} indeed
  holds.

  \begin{figure}[h]
    \centering \scalebox{.8}{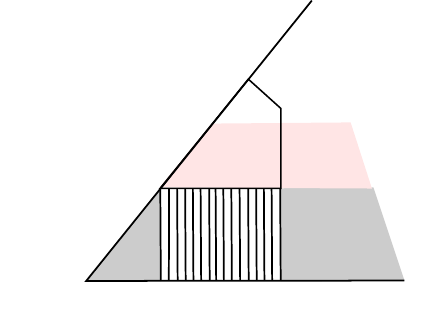}
    \caption{Assume the setting of Figure \ref{fig:sqdec}.  For the map $u$, $(x_1,P)$, $(x_2,P_2)$ are admissible pairs, with $(x_1, P)$ being the pair chosen in \eqref{eq:x0Q0}.}
    \label{fig:sqcurve}
  \end{figure}
  
  To finish the proof of the result, we relate the squashed areas of
  $u$ and its projection $u_{Q_0}$, via the following commutative
  diagram
  \[
    \begin{tikzcd}
      U_{Q_0}(\XB_P) \arrow{r}{\psi_\aleph} \arrow[swap]{d}{/T_{Q_0,\C}} & X^\om_P \arrow[dashed]{d}{/T_{Q_0,\C}}\\
      \XB_{Q_0} \arrow[dashed]{r}{\psi_{\aleph,\qu}} & X^\om_{Q_0}.
    \end{tikzcd}
  \]
  The maps denoted by dashed arrows are defined on subsets of the
  domain.  However these subsets contain the images of
  $\psi_\aleph \circ u$ and $u/T_{Q_0,\C}$. The left downward arrow is
  a map between almost complex manifolds. The right downward arrow is
  a map between symplectic manifolds, and is defined by an orthogonal
  projection map $\BB_{P,+}^\dual \to \BB_{P,Q_0}$. The map
  $\psi_{\aleph,\qu}$ is induced by a squashing
  $\aleph_{\qu} : \NCone_{Q_0^\dual}R^\dual \to \BB_{Q_0}^\dual$ which
  is defined on a subset of the domain as follows: The restriction
  $\aleph|\aleph^{-1}(\BB^\dual_{P,+} \cap \BB_{Q_0}^\dual)$
  orthogonally projects to $\BB_{Q_0}^\dual$.  Therefore, it descends
  to the $\t_Q$-quotient of the domain, and yields the map
  $\aleph_{\qu}$.  The map $\psi_{\aleph,\qu}$ is an isometry on the
  image of $(u/T_{Q_0,\C})|C_1$.  Applying the monotonicity theorem on
  the map $\psi_{\aleph,\qu}(u/T_{Q_0,\C})|C_1$, we conclude
  \[\int_{C_1}(\psi_{\aleph,\qu}(u/T_{Q_0,\C}))^*\om_{X_{Q_0}} > c_{Q_0}
    \delta_1^2 .\]
  Finally, there is a constant $c(\XX)>0$ such that
  \[\int_{C_1}(\psi_\aleph \circ u)^*\om_{X_P} \geq
    c\int_{C_1}(\psi_{\aleph,\qu}(u/T_{Q_0,\C}))^*\om_{X_{Q_0}} > c c_{Q_0}
    \delta_1^2. \]
  This leads to the lower bound on $\aleph$-squashed area of
  crossings, and finishes the proof of Claim \ref{claim:alephbd}.
\end{subproof}

\begin{proof}[Proof of Proposition \ref{prop:remsing} for multiple cuts, continued]
  \label{page:remsingcont}

  We recall that $P_1,\dots, P_k\in \PP$ are facets of $P$. 

\vskip .1in \noindent   {\sc Step 1}: \em{There is a constant $\ell_0$ such that for any
    facet $P_\lam \in \PP$ of $P$, one of the two possibilities
    occurs:
\begin{itemize}
\item Either the image $u(\Cyl(\ell_0))$ is contained in the
  $P_\lam$-cylindrical end $U_{P_\lam}(\XB_P)$, or
\item the projection $\pi_{B_J,\lam}(u(\Cyl(\ell_0)))$ has an
  upper bound.
\end{itemize}}
For a given $\lam$, we first consider the case that $u$ has a
non-compact crossing $C$. We may choose $\ell_\lam$ so that
$\{s\} \times S^1$ intersects $C$ for all $s>\ell_\lam$. Therefore,
$\pi_{B_J,P_\lam} \circ u|\Cyl(\ell_\lam)$ lies in a
$2\pi \Mod{du}_{L^\infty}$-radius of
$\pi_{B_J,P_\lam}^{-1}([\tau-\delta_\lam,\tau+\delta_\lam])$, and
therefore by \eqref{eq:lamend} the image of $u|\Cyl(\ell_\lam)$ lies
in the $P_\lam$-cylindrical end of $\XB_P$. Next, consider the case
that $u$ does not have any non-compact crossings. Claim
\ref{claim:alephbd} implies that there is a constant $\ell_\lam\geq 0$
such that $u|\Cyl(\ell_\lam)$ does not have any compact crossings.
Then, $\pi_{B_J,\lam}(u(\Cyl(\ell_\lam)))$ either has a lower bound of
$\tau_\lam - \delta_\lam$ or an upper bound of
$\tau_\lam + \delta_\lam$. In the former case, $u(\Cyl(\ell_\lam))$
lies in the $P_\lam$-cylindrical end.  Finally, by taking
$\ell_0:=\min_\lam \ell_\lam$, the condition stated in the beginning
of the paragraph is satisfied.

\vskip .1in \noindent 
{\sc Step 2:} We use induction to finish the proof of the result for a
map in a cut space $X_P$, and for which there is only one
zero-dimensional polytope $R \in \PP$ in $P$.  First consider the case
that for all facets $P_\lam$, the projection
$\pi_{B_J,\lam}(u(\Cyl(\ell_0)))$ has an upper bound. Then, the image
of $u$ is contained in a compact subset of $X_P$, and the proof of
removal of singularity follows exactly as in Step 3 of the proof in
the case of a single cut.
The
other possibility is that for some $\lam$, the image of
$u|\Cyl(\ell_0)$ is contained in the $P_\lam$-cylindrical end. Then,
we may view $u$ as a map in $\XC_{P_\lam}$ and apply the induction
hypothesis to the map $\pi_{P_\lam} \circ u : \Cyl \to X_{P_\lam}$.
For the proof of the induction hypothesis, it is enough to have a
bound on squashed area corresponding to squashing maps on
$\NCone_{P_\lam^\dual}R^\dual$ which are restrictions of squashing
maps $\NCone_{P^\dual}R^\dual$ with $\NCone_{P_\lam^\dual}R^\dual$
being embedded in $\NCone_{P^\dual}R^\dual$ far enough from the faces
corresponding $Q^\dual \subset P_\lam^\dual$; such squashed areas are
bounded by $E_{P,\Hof}(u)$.

\vskip .1in \noindent {\sc Step 3:}
\label{page:manyR}
If the target of $u$ is a cut space $X_P$ and there are multiple
zero-dimensional polytopes $R \in \PP$ that are vertices of $P$, the
only difference is in the definition of the projection map $\pi_\lam$
from \eqref{eq:pilam}.  First, observe that in the case of a single
zero dimensional polytope $R$, the map $\pi_\lam$ is actually defined
on $R^\dual$ and $\Cone_{P^\dual}R^\dual$, both of which were viewed
as subsets of $\t$. In the general case, we define
$\pi_{\lam,R}: \t_R \to \R$ for each $R$ so that
$\pi_{\lam,R}(\pp_{\lam,R})=0$ (where
$\pp_{\lam,R}^\dual \subset R^\dual$ is the face corresponding to
$\pp_\lam^\dual$) and $\pi_{\lam,R}|P^\dual_{\lam,R}$ is the same for
all $R \in P_\lam$. The notion of a $\pp_\lam^\dual$-crossing can now be
defined, and the rest of the proof carries over.

\vskip .1in \noindent {\sc Step 4:} Next, we consider the case that the target space of $u$
is a broken manifold $\XC_P$. We may assume that the projection
$u/T_{P,\C}$ lies in a compact subset of $X_P$; otherwise by applying
the preceding discussion to $u/T_{P,\C}$, we may conclude that its
image lies on a cylindrical end of $X_P$ and hence the induction
hypothesis is applicable. As in the previous paragraph, we conclude
that $u/T_{P,\C}$ has a removable singularity, and we assume that
$(u/T_{P,\C})(\infty)=x_P \in X_P$. Analogously to Step 4 of the proof
in the case of a single cut,
we
prove the result in $\XC_P$ by considering a holomorphic
trivialization of the $T_{P,\C}$-bundle
\((u/T_{P,\C})^*\XC_P \to B_1 \)
and projecting $u$ to the fiber direction in a neighborhood of the
puncture at $0 \in B_1$. To finish the proof, it only remains to show
that for any one-dimensional face $Q^\dual$ of $P^\dual$ (where
$Q \in \PP$), the projection of
\[\pi_{P,J}(u(\Cyl)) \subset \Cone_{Q^\dual}P^\dual \simeq
  \NCone_{Q^\dual}P^\dual \times \t_Q\]
to $\t_Q^\dual \simeq \R$ has a one-sided bound. This fact follows
from a lower bound on squashed area of $\t_Q$-crossings, which is
proved in a similar manner to Claim \ref{claim:alephbd}, and whose details 
are left to the reader. This finishes the
proof of Proposition \ref{prop:remsing}.
\end{proof}

The following is a version of the removal of singularities result for
punctures on the boundary. This result does not involve any technical
difficulties arising from neck-stretching because the Lagrangian
submanifold does not intersect the neck regions.
\begin{proposition}{\rm(Removal of singularities on the boundary)}
  \label{prop:remsing-disk}
  Suppose
  \[u:(\R_+ \times [0,1],\R_+ \times \{0,1\}) \to (X_{P_0},L)\]
  is a map that is holomorphic with respect to a domain-dependent
  cylindrical almost complex structure taking values in $U_{\JJ_0}$
  (from Lemma \ref{lem:monot}), and
  \begin{equation*}
    E_{\Hof}(u) <\infty, \quad   \Mod{\d u}_{L^\infty(\Cyl)} <\infty. 
  \end{equation*}
  Then, $u$ extends to a holomorphic map
  \[u: (B_1 \cap \H, B_1 \cap \R) \to (\ol X_{P_0},L).\]
  \cwl{resp. $(\ol \XC_P,L_{\tP})$}
\end{proposition}
\begin{proof}
  By the uniform bound on the derivative, the image of $u$ is
  contained in a compact neighborhood $K \subset X_P$ of the
  Lagrangian $L$.  By Lemma \ref{lem:Knondeg}, there is a squashed
  area form $\om_\aleph \in \Om^2(\XB_P)$ that is symplectic on the
  closure of $\on{im}(u)$ and that tames the almost complex
  structure. Therefore, the statement of the Proposition follows from the removal of
  singularity theorem for compact symplectic manifolds.
  \cwl{resp. with boundary on cleanly-intersecting Lagrangians.}
\end{proof}

\section{Hofer energy for Gromov compactness of broken maps}

The results stated so far in this Chapter will be used in the proof
that a sequence of breaking maps with bounded area converges to a
broken map. In this section, we discuss analogs of the results which
will be used to show that a sequence of broken maps with bounded area
converges to a broken map.  In components $\XX_P$ of the broken
manifold, we use $P$-Hofer energy of maps.

The first result, which is an analog of Remark \ref{rem:hof-area} for
broken maps, is that for a component of a broken map in $\XX_P$, the
$P$-Hofer energy is bounded by a constant that depends only on the
topology of the map.  In particular, we obtain a bound on Hofer energy
for components of broken maps $u : C^\circ \to \XX_P$ whose projection
to $X_P$ is in class $\beta \in \pi_2(\ol X_P, L)$, and whose tropical
graph is $\cT$. Note that both $\beta$ and $\cT$ are part of the
combinatorial type (Definition \ref{def:type-broken}) of the map $u$,
and that $\cT$ determines the number of punctures in the domain of
$u$, and the edge {direction} $\cT(z_i) \in \t_\Z$ of the map at each of the
punctures $z_i$.

\begin{proposition}\label{prop:hofertop} For any polytope $P \in \PP$,  a homotopy class $\beta \in \pi_2(\ol X_P, L)$ and a tropical graph $\cT$ underlying
  a relative map, there is a constant $c$ such that for a holomorphic map $u:C^\circ \to \XX_P$ 
  with tropical graph $\Gamma$, and whose projection $\pi_P \circ u$ is of class $\beta$,
  \[E_{P,\on{Hof}}(u) \leq c. \]
\end{proposition}
\begin{proof}
  We first prove the result assuming that the $X$-inner product $g$ (see \eqref{eq:idtt})  
  is rational, so that the compactification $\ol \XX_P$  of $\XX_P$
  is an orbifold with symplectic form $\om_{\XX_P}$.  Furthermore, 
  \[E_{P,\Hof}(u) = \int_C u^*\psi_\aleph^*\om_{\XX_P}
    =:\om_{\XX_P}(u),\]
  where $\aleph$ is any squashing form.  The above quantity is   independent of which $\aleph$ is used, because for any two choices
  $\aleph_1$, $\aleph_2$, the maps $\psi_{\aleph_1}$ and
  $\psi_{\aleph_2}$ are isotopic.
  
  Under the splitting of $\t = \t_P \oplus \t_P^\perp$, let
  $\cT_g^\ver(z_i) \in \t_P$ be the $\t_P$-component of $\cT(z_i) \in \t$.
  Recall that the fiber of $\pi_P : \ol \XX_P \to \ol X_P$ is
  \[P^\dual=\{x \in \t_P: \bran{\mu_i,x} \leq c_i, i=1,\dots,N\},\]
  where for any $i=1,\dots, N$, $c_i \in \R$ and $\mu_i \in \t_{P,\Z}^\dual$ is a 
  primitive outward pointing normal to a facet of $P^\dual$. 
  Suppose
  the base symplectic form $\om_{X_P}$ on $\ol X_P$ is given by reduction at the
  origin $0 \in \t_P$. Then,
  \begin{multline}
    \label{eq:fibarea}
    \bran{u_*[C], \om_{\XC_P}}=\bran{(\pi_P \circ u)_*[C], \om_{X_P}} + 2\pi \ssum_{i=1}^{d(\black)}  \ssum_j c_j \cdot m(z_i, D_j)\\
    \leq \bran{(\pi_P \circ u)_*[C], \om_{X_P}} + \sum_i |\cT_g^{\ver}(z_i)|\\
    \leq \bran{(\pi_P \circ u)_*[C], \om_{X_P}} + c_g\sum_i |\cT(z_i)|
  \end{multline}
  where $m(z_i,D_j)$ is the intersection multiplicity at $z_i$ with
  the toric divisor $D_j=\{\bran{\mu_j,x} = c_j\}$,
  $|\cT^{\ver}(z_i)|$ is the sum of the components of $\cT^\ver(z_i)$
  for a fixed $\Z$-basis. Indeed, \eqref{eq:fibarea} is a consequence of the
  cohomological relation
  \[[\om_{\XX_P}]=[\pi_P^*\om_{X_P}] + \sum_j c_j [D_j]^\dual, \]
where $ [D_j]^\dual \in H^2(\XX_P, \Z)$ is the Poincar\'e dual of $[D_j]$.  
  Since the origin $0 \in \t_P$ is in
  the interior of $P^\dual$, the constant $c_j$ is positive for all
  $j$.  The statement of the Proposition follows in the case when the
  $X$-inner product is rational.

  For a general $X$-inner product $g$, we consider a sequence
  $\{g_\nu\}_\nu$ of rational $X$-inner products uniformly converging
  to $g$.  A squashing map with respect to the $X$-inner product $g$
  is the limit of squashing maps $\aleph_\nu$ with respect to the
  $X$-inner product $g_\nu$, and therefore,
  $E_{P,\Hof,g} \leq \limsup_\nu E_{P,\Hof,g_\nu}$.  For any $\nu$,
  the constant $c_{g_\nu}$ is uniformly bounded, since it is chosen so
  that it satisfies
  $|\cT_{g_\nu}^{\ver}(z_i)| \leq c_{g_\nu}|\cT(z_i)|$, and 
  the statement of the Proposition follows.
\end{proof}

The following is an analog of Proposition \ref{prop:hofer-breaking}
for the convergence of a sequence of broken map components in $\XX_P$.

\begin{proposition} \label{prop:hofbv}
  {\rm(Hofer energy and limits of maps, broken version)}\label{prop:Hbr-limit}
  Let $\Om \subset \C$ be an open domain, and let $u_\nu : \Om \to \XX_P$ be such that there is a
  polytope $Q \subset P$ and a 
  sequence of translations $t_\nu \in \Cone_{P^\dual} Q^\dual$ such that
  \begin{equation}
    \label{eq:dtnuP0b}
    d(t_\nu, \nu P_0^\dual) \to \infty, \quad \forall P \supseteq P_0 \supset Q,
  \end{equation}
  and the sequence of translated maps $e^{-t_\nu} u : \Om \to \XX_Q$
  converges uniformly on compact subsets to a limit $u:\Om \to
  \XX_Q$. Then,
  \[E_{Q,\Hof}^*(u) \leq \lim_\nu E_{P,\Hof}(u_\nu).\]
\end{proposition}
The proof is the same as that of Proposition
\ref{prop:hofer-breaking}.

\chapter{Gromov compactness}\label{chap:cpt}

We prove two convergence results for maps in broken manifolds in this
Chapter.  The first result, Theorem \ref{thm:cpt-breaking}, concerns
the limit of holomorphic maps to the neck-stretched manifolds $X^\nu$
in the limit $\nu \to \infty$.  The second result, Theorem
\ref{thm:cpt-broken}, concerns the convergence behavior of broken
maps.  Both theorems require only an area bound on the sequence of
holomorphic maps.  In the breaking case, area of a map in $X^\nu$
refers to $\om_X$-area \eqref{eq:area-u}.  In the case of broken maps
in Theorem \ref{thm:cpt-broken}, area refers to the sum of the
$\om_{X_P}$-areas \eqref{eq:area-uv} of the projections of the map
components to cut spaces $X_P$. \label{rep:area1}

We sketch the definition of Gromov convergence, with a more precise
and detailed definition postponed to Section \ref{sec:convdef}.  As in
symplectic field theory, convergence in neck-stretched manifolds is
modulo target rescalings. Rescalings are analogous to
\em{translations} defined in Section \ref{sec:cylbrokenmfd}.  Given
$\nu \in \R_+$, the set of translations is parametrized by the dual
complex $\nu B^\dual$.  A translation
$t \in \nu P^\dual \subset B^\dual$ is an embedding
\[\e^{-t} : X^\nu_{\tP} \to \XC_P \subset \XX\]
of the $P$-cylindrical subset $ X^\nu_{\tP}$ of the neck-stretched
manifold $X^\nu$.  Gromov convergence of a sequence of maps
$u_\nu : C_\nu \to X^\nu$ on neck-stretched manifolds to a broken map
$u: C \to \XX$ encapsulates the following
conditions: \label{conv-sketch}
\begin{enumerate}
\item {\rm(Convergence of domains)}\label{item:dconv1} The sequence of treed disks $C_\nu$
  converges to a treed disk $C$;
\item \label{item:mconv1}
{\rm (Convergence of maps)} for each component $C_v$ of $C$,
  there is a sequence of translations $t_\nu(v) \in \nu P(v)^\dual$
  such that the translated maps $\e^{-t_\nu}u_\nu$ converges to a map
  $u_v$;
\item \label{item:thincyl1} 
{\rm(Thin cylinder convergence)} for a tropical node $w$ in the
  domain $C$ of $u$ corresponding to an edge $e=(v_+,v_-)$, on the midpoints
  \[m_\nu:=(0,0) \in [-l_\nu/2,l_\nu/2] \times \R/2 \pi \Z \subset C_\nu\]
  \cwl{Resp. 
\[  m_\nu:=(0,0) \in [-l_\nu/2,l_\nu/2] \times [0,1] \subset C_\nu  \]
in the case of strips}
of the sequence of annuli in $C_\nu$ converging to the node $w$, the
translated evaluations $\e^{\hh(t_\nu(v_+)+ t_\nu(v_-))}u_\nu(m_\nu)$
converge to the tropical evaluation (see \eqref{eq:tropev}) of $u$ at
the nodal point $w$.
\end{enumerate}
The translation sequences yield tropical positions of the vertices 
in the
tropical graph of the limit map, justifying the representation in
Figure \ref{fig:trop}.  For any $\nu$
\[\cT(v):= \tfrac {t_\nu(v)}{\nu} \in P(v)^\dual, \quad v \in \Ver(\Gamma) \]
is a vertex position map for the combinatorial type $\Gamma$ of the
limit map.  For a Gromov-converging sequence of maps, area is
automatically preserved in the limit, since the number of domain
marked points is preserved in the limit, and this number is a constant
multiple of area (see \eqref{eq:mult}). \label{rep:area2} We remark
that \hyperref[item:thincyl1]{(Thin cylinder convergence)} is not part
of the definition of convergence of stable maps in smooth manifolds,
since it can be deduced from convergence of maps. However, in case of
neck-stretching, \hyperref[item:thincyl1]{(Thin cylinder convergence)}
does not follow from \hyperref[item:mconv1]{(Convergence of
  maps)}. Moreover \hyperref[item:thincyl1]{(Thin cylinder
  convergence)} is used in the proof of surjectivity of gluing in
Chapter \ref{chap:glue}.

In the statement of Gromov convergence for maps on neck-stretched
manifolds, the perturbations on neck-stretched manifolds are obtained
by gluing a perturbation datum on the broken manifold.  A cylindrical
divisor $\bD$ in a broken manifold $\XX$ can be glued to give a family
of divisors $D^\nu$ in neck-stretched manifolds $X^\nu$.  There is a
natural correspondence of tamed cylindrical divisor-adapted almost
complex structures
\begin{equation} \label{rhonu} \rho_\nu : \J^{\cyl}(\XX,\bD) \to
  \J^{\cyl}(X^\nu,D^\nu) .\end{equation}
\begin{definition} \label{def:breakpert} {\rm(Breaking perturbation
    datum)} Let $\ul{\Pe}$ be a perturbation datum for the broken manifold $\XX$ adapted to a cylindrical divisor $\bD$. A \em{
    breaking perturbation datum} is a family of perturbation data
  $\{\ul{\Pe}_\nu\}_{\nu \in [\nu_0,\infty]}$ for the neck-stretched
  manifolds $(X^\nu,D^\nu)$ satisfying the following property: There
  exists a constant $\nu(\Gamma)$ so that if $\nu > \nu(\Gamma)$ then
  \[\Pe_{\nu,\Gamma}:=(\rho_\nu\circ \JJ_\Gamma, F_\Gamma), \quad \text{where } \Pe_\Gamma=(\JJ_\Gamma,F_\Gamma).\]
 \end{definition}
 The statement of the convergence theorem does not require
 perturbations be regular, but it does require perturbations to be
 adapted to a stabilizing pair $(D^\nu, J^\nu)$ on $X^\nu$.  Such a
 pair is obtained by gluing a stabilizing pair $(\DD,\JJ)$ on the
 broken manifold (constructed in Section \ref{sec:stabpair}).  We
 recall that by Definition \ref{def:stab-brok} of a stabilizing pair
 $(D^\nu,J^\nu)$, any non-constant $J^\nu$-holomorphic sphere in
 $X^\nu$ intersects $D^\nu$ transversally 
 at at least 3 points.  The
 theorem is regarding a sequence of treed holomorphic disks in
 neck-stretched manifolds $X^\nu$ converging to a stable broken disk
 in a broken manifold $\XX$. We recall from Definition \ref{def:bmap}
 that a \em{broken disk} is a broken map, one of whose components in
 the piece $X_{P_0} \subset \XX$ is a treed disk with the disk
 boundary and treed segments mapping to the Lagrangian
 $L \subset X_{P_0}$.
 
 \begin{theorem} \label{thm:cpt-breaking} {\rm(Gromov convergence for
     breaking maps)} Suppose $(\JJ_0,\bD)$ is a stabilizing pair for
   the broken manifold $\XX$ in the sense of Definition
   \ref{def:stab-brok}, and $\ul{\Pe}^\infty$ is a perturbation datum
   on $\XX$ adapted to $(\JJ_0,\bD)$. Suppose $\{\ul{\Pe}^\nu\}_\nu$
   is a sequence of breaking perturbation data (as in Definition
   \ref{def:breakpert}) on neck-stretched manifolds
   $\{X^\nu\}_\nu$.  Let
  \[ u_\nu:C_\nu \to X^\nu \]  
  be a sequence of treed $\ul{\Pe}^\nu$-holomorphic disks with uniformly
  bounded area $\Area(u_\nu)$.  There is a
  subsequence of $\{u_\nu\}_\nu$ that Gromov converges to a
  $\ul{\Pe}^\infty$-holomorphic stable broken disk $u:C \to \XX_\PP$ modelled
  on a tropical graph $\Gamma$. 
  The limit $u$ is unique up to domain
  reparametrizations and the action of the identity component of the
  tropical symmetry group $T_{\on{trop},\W}(\Gamma)$ (see
  \eqref{eq:Tidcpt}).
\end{theorem}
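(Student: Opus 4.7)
The plan is to combine standard Gromov compactness, applied component-by-component after suitable cylindrical translations, with the breaking annulus lemma (Proposition \ref{prop:breaking-ann}) to extract the tropical structure. I would proceed as follows.

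\textbf{Step 1: Bounding Hofer energy.} First I would use the basic area forms $\ol\omega_\nu$ from Proposition \ref{prop:areafamily} and the uniform area bound to deduce a uniform bound on the $J^\nu$-Hofer energy of each $u_\nu$. Monotonicity (Lemma \ref{lem:monot}) together with Proposition \ref{prop:areabd-breaking} then gives, for each polytope $P \in \PP$, uniform bounds on both the horizontal $\om_{\cX_P}$-area of $\pi_P \circ u_\nu$ on $X^\nu_P$ and the $P$-Hofer energy of $u_\nu|X^\nu_P$ after an appropriate embedding into $Z_{P,\C}$. These bounds are the substitute for a global symplectic area bound that standard Gromov compactness requires.

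\textbf{Step 2: Thick-thin decomposition and choice of translations.} Following the scheme of Bourgeois--Eliashberg--Hofer--Wysocki--Zehnder \cite{bo:com}, I would partition each domain $C^\nu$ into a ``thick part,'' where the derivative of the composition with $\pi_P$ (relative to a cylindrical metric) is bounded, and a ``thin part,'' consisting of long annuli where $\om_{\cX_P}$-area on any sub-annulus of length one is below the quantization threshold $\hbar$ of the breaking annulus lemma. Each thick component $C^\nu_\alpha$ is entirely contained in some cylindrical region $X^\nu_{P(\alpha)}$; I choose a translation $t_{\alpha,\nu} \in \nu P(\alpha)^\dual$ so that $e^{-t_{\alpha,\nu}} \circ u_\nu|C^\nu_\alpha$ lies in a fixed compact subset of $\oX_{\ol{P(\alpha)}}$ (this uses Remark \ref{rem:bannxnu}). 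After passing to a subsequence, standard Gromov compactness in the compactification $\cX_{\ol{P(\alpha)}}$, applied to the composition with $\iota_{J_0}$ from \eqref{eq:embed2}, produces a limit holomorphic map $u_\alpha: C_\alpha \to \cX_{\ol{P(\alpha)}}$ together with bubble trees. Removal of singularities (Proposition \ref{prop:remsing}) ensures these limits take values in $\oX_{\ol{P(\alpha)}}$ at the interior punctures corresponding to degenerating necks.

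\textbf{Step 3: Analyzing the thin necks via the breaking annulus lemma.} On each long thin annulus separating two thick components $C^\nu_{\alpha_\pm}$, Proposition \ref{prop:breaking-ann} (applied in the $P$-cylinder into which the annulus embeds, cf.\ Remark \ref{rem:bannxnu}) produces a slope $\mu_e \in \t_{P(e),\Z}$ and shows that $u_\nu$ is exponentially close to a trivial cylinder $z \mapsto z^{\mu_e}x_e$ on the middle portion. If $\mu_e = 0$ the annulus collapses to a node with continuous matching; if $\mu_e \ne 0$ the annulus becomes a nodal edge with non-trivial slope. Iterating (extracting further bubble components whenever a subsequence of re-scaled annuli converges to a non-trivial cylinder, i.e.\ develops a new thick component in an intermediate neck region) yields a finite tree of components and edges. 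The translations $t_{\alpha,\nu}$ determine tropical weights $\cT(v) := \lim_\nu \aleph_\nu^{-1}(t_{\alpha,\nu}) \in P(v)^\dual \subset B^\dual$, and by construction the differences of these weights along an edge are positive multiples of the slope $\mu_e$, so the (Slope condition) holds and we obtain a genuine tropical graph $\Gamma$. This iterative bubbling/necking analysis, together with verifying that the process terminates (using the uniform Hofer-energy bound and energy quantization on each new component in each cylindrical piece), is the main technical obstacle.

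\textbf{Step 4: Matching, adaptedness, stability, and uniqueness.} The horizontal matching \eqref{hormatch} is an immediate consequence of continuity of $\pi_{P(e)} \circ u_\nu$ across the collapsing annulus. The vertical matching \eqref{vmatch} is the exponential estimate \eqref{eq:ann-est} of Proposition \ref{prop:breaking-ann}, phrased in the form of Remark \ref{rem:cylmatch}: both sides of the node acquire the same vertical cylinder $z^{\pm\mu_e} x_e$ (up to framing, which lifts via finitely many choices as in Remark \ref{rem:nfr}). That the limit is $\Pe^\infty$-holomorphic follows from $\Pe^\nu \to \Pe^\infty$ in $C^\infty_\loc$ (Definition \ref{def:conv-breaking}) and locally uniform convergence of the maps on the thick part. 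Adaptedness of the limit follows from the open condition \eqref{eq:enbhd} on perturbations and the stabilizing-divisor properties (Remark \ref{rem:cptadapt}): no component is absorbed into $\bD$, and each non-constant sphere/disk component retains the required number of intersection points. Stability is imposed by collapsing any unstable ghost components in the usual way. Finally, the limit is unique up to domain reparametrizations and the ambiguity in the translations $t_{\alpha,\nu}$; changing $t_{\alpha,\nu}$ by a bounded amount amounts to acting on the limit by an element of $T_{\on{trop},\W}(\Gamma)$ (Lemma \ref{lem:wtgen}), which gives exactly the stated indeterminacy.
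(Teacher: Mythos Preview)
Your outline follows the SFT-style thick--thin decomposition and iterative-bubbling scheme of \cite{bo:com}, which is a valid route, but the paper organizes the argument differently by front-loading the stabilizing divisor $\bD$. Since the $u_\nu$ are $\bD$-adapted, their domains $C^\nu$ are already stable curves, so after passing to a subsequence they converge to a stable treed curve $C$ of a fixed combinatorial type $\Gamma$; the paper then shows directly that $\sup_\nu|du_\nu|<\infty$ in the cylindrical metric by a hard-rescaling contradiction (any bubble would be a sphere meeting $\bD$ in at most one point, since the rescaling region contains at most one interior marking, contradicting the stabilizing property). With bounded derivatives, a single translation $t_\nu(v)$ per vertex $v\in\Ver(\Gamma)$, chosen via horizontal convergence (Lemma \ref{lem:cptconv}), yields the limit component $u_v$ with no iteration at all; a second divisor argument rules out unstable bubbles on the necks. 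Two steps you pass over too quickly are handled carefully in the paper: (i) showing that each long neck actually maps into the correct $P(e)$-cylindrical region is isolated as Proposition \ref{prop:edgeincap} (``Edges lie in right regions''), and (ii) the componentwise translations $t_\nu(v)$ only satisfy the slope relation \emph{approximately}, and must be corrected by a bounded amount via Lemma \ref{lem:approxpoly} before \eqref{eq:slope} holds exactly---this is the real content behind your phrase ``by construction the differences of these weights are positive multiples of $\mu_e$.'' Your iterative approach can be made to work (the paper says as much in a remark), but the divisor-based argument is shorter and sidesteps having to justify termination of bubbling in the multiple-cut setting.
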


The second result of the Chapter concerns convergence of a sequence of
broken maps with uniformly bounded area. After passing to a
subsequence the maps $u_\nu$ have the same type, say $\Gamma$ with
tropical structure $\cT$. The limit $u$ typically has additional
domain components. The tropical structure $\cT'$ of the limit is
related to $\cT$ by a \em{tropical edge collapse morphism}, which is
a morphism of graphs $\cT' \xrightarrow{\kappa} \cT$ that collapses a
subset of edges of $\cT'$, and the {direction}s of uncollapsed edges are the
same in $\cT$ and $\cT'$. See Figure \ref{fig:tcollapse} for an
example.
  \index{Collapsing an edge!Tropical edge collapse}

\begin{figure}[h]
  \centering\scalebox{.8}{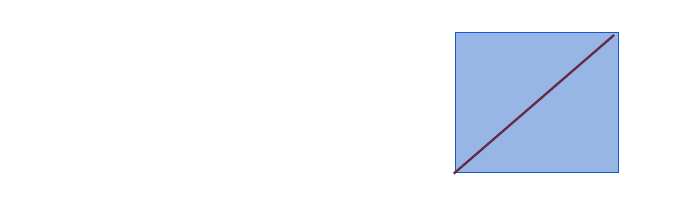}
  \caption{Tropical edge collapse morphism between tropical graphs.}
  \label{fig:tcollapse}
\end{figure}
 
\begin{theorem}
  \label{thm:cpt-broken}
  {\rm(Gromov convergence for broken maps)} Suppose $(\JJ_0,\bD)$ is a
  stabilizing pair for the broken manifold $\XX_\PP$, and $\ul{\Pe}$
  is a perturbation datum on $\XX_\PP$ adapted to
  $(\JJ_0,\bD)$. Suppose $u_\nu:C_\nu \to \XX_{\cP}$ is a sequence of
   broken $\ul{\Pe}$-holomorphic disks with uniformly bounded
  area.

  After passing to a subsequence, the type of the broken disks $u_\nu$
  is $\nu$-independent, and is equal to, say, $\Gamma$. The sequence
  $u_\nu$ Gromov converges to a broken $\ul{\Pe}$-holomorphic disk
  $u:C \to \XX_{\cP}$ of type $\Gamma'$ for which there is a tropical
  edge-collapse morphism $\Gamma' \to \Gamma$. The limit $u$ is unique
  up to domain reparametrizations and the action of the identity
  component of the tropical symmetry group $T_{\on{trop},\W}(\Gamma)$
  (see \eqref{eq:Tidcpt}).

  If the edge-collapse $\Gamma' \to \Gamma$ is non-trivial (in the
  sense of Definition \ref{def:tropcol1}) then
  $\dim_\C(T_\trop(\Gamma')) > \dim_\C(T_\trop(\Gamma))$.
\end{theorem}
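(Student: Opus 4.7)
The plan is to combine componentwise Gromov compactness on each cut space $\cX_{\ol P(v)}$ with a tropical re-normalization by $T_{\trop,\W}(\Gamma)$ that controls drift along cylindrical ends, with the refined type $\Gamma'$ encoding both new bubbles and new broken cylinders appearing in the limit.

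First I would pass to a subsequence on which the combinatorial type of $u_\nu$ is constant; the uniform area bound restricts each vertex homology class by positivity of symplectic area and, via Proposition \ref{prop:hofertop} together with Corollary \ref{cor:hofbd}, bounds the edge slopes $\cT(e)$ and tangency orders $\mu_{\bD}$, so only finitely many tropical types $\Gamma$ occur. Next I would normalize using the tropical symmetry group. The root disk vertex $v_0 \in \Ver_\white(\Gamma)$ maps to $\cX_{P_0}$ with trivial torus $T_{P_0}$, so $u_{\nu,v_0}$ stays in a compact set automatically. Proceeding inductively along edges of $\Gamma$, I would choose translations $g_\nu(v) \in T_{P(v),\C}$ subject to the tropical constraint \eqref{connecting} so that each translated component $g_\nu(v) u_{\nu,v}$ meets a fixed compact subset $K_v \subset \cX_{\ol P(v)}$. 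The freedom in such translations is precisely $T_{\trop,\W}(\Gamma)$ by Lemma \ref{lem:wtgen}; when two components along an edge $e$ drift apart faster than the slope $\cT(e)$ permits, the excess drift is absorbed by refining $e$ into a chain of vertices and edges in $\Gamma'$.

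Then I would extract a $C^\infty_{\loc}$ subsequential limit $u_v$ on each vertex component by applying standard Gromov compactness to $g_\nu(v) u_{\nu,v}$ after embedding into the compactification via $\iota_{J_0}$ from \eqref{eq:fixembed}; the area of the compactified sequence is bounded by Proposition \ref{prop:areabd-broken}. Removal of singularities (Proposition \ref{prop:remsing}) handles punctures where annular regions degenerate. On each long annular region of the domain, the breaking annulus lemma (Proposition \ref{prop:breaking-ann}) supplies a slope $\mu \in \t_{P(v),\Z}$ and an exponential estimate \eqref{eq:ann-est} expressing convergence to a trivial cylinder; this produces a new interior edge of $\Gamma'$ of slope $\mu$ and a new vertex on each side. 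Horizontal matching \eqref{hormatch} at both old and new edges follows from continuity of ordinary evaluation under Gromov convergence, while vertical matching \eqref{vmatch} follows from continuity of the leading-order derivative map (Remark \ref{rem:higher-eval}) combined with the exponential decay estimate. Collapsing the newly introduced edges of $\Gamma'$ recovers $\Gamma$, giving the asserted tropical edge-collapse $\Gamma' \to \Gamma$.

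The main obstacle is coordinating the tropical translations $g_\nu$ across components with differing stabilizers, especially when some components require unbounded translations, forcing edge-refinement, while others do not. This is handled using the convexity of $\cW(\Gamma')$ from Lemma \ref{lem:wtgen}: the renormalized vertex positions define tropical weights in $\cW(\Gamma')$ whose limit determines the tropical structure on $\Gamma'$, with the remaining ambiguity in the choice of translations being exactly the $T_{\trop,\W}(\Gamma)$-action claimed in the uniqueness statement. For the final claim, triviality of $\Gamma' \to \Gamma$ means no new edges appear; equivalently, every annulus examined in the breaking-annulus analysis is short, so no unbounded translations are required, and conversely, if a trivial translation sequence $t_\nu' \equiv 0$ already achieves convergence within the $T_{\trop,\W}(\Gamma)$-orbit of $u_\nu$, there is no cylindrical drift to absorb and $\Gamma' = \Gamma$.
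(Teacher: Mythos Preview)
Your outline has the right ingredients but misses the key organizational device the paper uses, and this creates a genuine gap. The paper proceeds modularly: for each vertex $v \in \Ver(\Gamma)$ it applies the proof of Theorem \ref{thm:cpt-breaking} to the sequence $u_{\nu,v}$ (treating nodal points as relative marked points), obtaining a limit modelled on a tropical subgraph $\Gamma_v$ together with its own translation sequence. The refined graph $\Gamma'$ is then assembled by joining the $\Gamma_v$ along the old edges of $\Gamma$. Only after this does the paper coordinate translations across those old edges: it shows the vertex-wise sequences form an \emph{approximate} $(\Gamma',\Gamma)$-translation sequence (Definition \ref{def:approxt-rel}) and invokes Lemma \ref{lem:relapprox} to adjust them by bounded amounts to an exact one, which then forces the matching condition \eqref{eq:evvert-proj} at the old edges.

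Your approach instead tries to choose global translations $g_\nu(v)$ ``subject to the tropical constraint \eqref{connecting}'' before $\Gamma'$ is known. This is circular: the constraint for $\Gamma$ forbids precisely the drift you want to absorb into new edges, while the constraint for $\Gamma'$ requires already knowing the new vertices and their slopes. Moreover, your ``standard Gromov compactness after embedding via $\iota_{J_0}$'' will produce bubbles \emph{inside} each vertex component, each needing its own translation, and you do not say how these are assigned or how they interact with the constraint. Your appeal to ``convexity of $\cW(\Gamma')$'' for the coordination step is not the mechanism at work; Lemma \ref{lem:relapprox} is an iterative subtraction of fastest-growing sequences, and convexity alone does not deliver the bounded adjustment you need. (Minor point: for finiteness of types, cite Proposition \ref{prop:finno} rather than the Hofer-energy bounds, which only control energy once the type is fixed.)
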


\begin{remark}
  \label{rem:convunique}
  In theorems \ref{thm:cpt-breaking} and \ref{thm:cpt-broken}, the
  limit broken map is uniquely determined if it is rigid.  Indeed, for
  a rigid tropical graph $\Gamma$ the tropical symmetry group is
  finite.
\end{remark}

\section{Gromov convergence}\label{sec:convdef}

In this Section, we define the notion of convergence in the
compactness results, Theorem \ref{thm:cpt-breaking} and Theorem
\ref{thm:cpt-broken}.  Much of the Section is devoted to fixing
identifications between domain curves that are close to each other in
the compactified moduli space of stable curves. In addition to fixing such
identifications in the complements of nodes, we also define a notion
of when a sequence of annuli in nearby curves converges to a
neighborhood of a node in a nodal curve; this notion is used in
\hyperref[item:thincyl1]{(Thin cylinder convergence)}, which is part
of the statement of Gromov convergence.  Finally, we define
\em{translation sequences} which are sequences of rescalings of the
target; the convergence of maps is modulo these rescalings.

To fix identifications between domain
curves that are close to each other in the compactified moduli space
of stable curves, we fix a choice of an exponential map in
the neighborhood of nodes.  Different choices lead to the same notion
of convergence.
To prove the surjectivity of the gluing operation in Chapter
\ref{chap:glue}, it is helpful to fix these identifications and use
the same identifications in the statement of the convergence result
and in the gluing construction.
We start by recalling a construction of deformations of a stable nodal curve.  We describe the construction for rational
stable nodal curves (without boundary). The extensions to treed disks
are explained later in Remark \ref{rem:id-treed-disks}.  Let $\Gamma$
be the combinatorial type of a rational stable nodal curve with
$d(\black)$ markings. The moduli space of curves $\M_\Gamma$ is a
submanifold of the compactified moduli space $\M_{d(\black)}$ whose
tubular neighborhood can be described as follows.  For a stable curve
$S$ in $\M_\Gamma$, let $\tS$ be the normalization of $S$ at the nodal
points.  For any edge $e \in \Edge_{\black,-}(\Gamma)$, we fix a map
\begin{multline}
  \label{eq:cexp}
  \exp_{w_e^\pm}^S : (U(T_{w_e^\pm}\tS),0) \to (U_{w_e^\pm}(\tS),w_e^\pm) , \\U(T_{w_e^\pm}\tS) \subset T_{w_e^\pm}\tS, \quad  U_{w_e^\pm}(\tS) \subset \tS
\end{multline}
that biholomorphically maps a neighborhood $U(T_{w_e^\pm}\tS)$ of the
origin in the tangent space onto a neighborhood $U_{w_e^\pm}(\tS)$ of
the lift of the node $w_e$, satisfies $d\exp_{w_e^\pm}^S(0)=\Id$, and
varies smoothly with $S$.  The family of maps
$\{\exp_{w_e^\pm} : e \in \Edge_{\black,-}(\Gamma)\}$ is fixed for the
rest of this book. Whenever we choose complex coordinates
\[z_\pm:(U_{w_e^\pm}(\tS),w_e^\pm) \to (\C,0) \]
in neighborhoods of the node we assume that they are compatible with
$\exp_{w_e^\pm}^S$. That is, $z_\pm$ is the composition of
$\exp_{w^\pm}$ with a linear map on the tangent space.  On an open
neighborhood
\[U_{\M_\Gamma} \subset \M_{d(\black)} \]
of $\M_\Gamma$, there is a projection map
\[\pi_\Gamma: U_{\M_\Gamma} \to \M_\Gamma. \]
such that curves in a fiber $\pi_\Gamma^{-1}(S)$ are obtained by
gluing the interior nodes of $S$ as follows. (See, for example,
Siebert \cite[Proposition 2.4]{siebert} in the closed case; the open
case is similar.)

\begin{definition} Given a nodal sphere $S$ of type $\Gamma$, a \em{
    collection of gluing parameters} \index{Gluing parameter} is a
  tuple
  \[ \delta=(\delta_e)_{e \in \Edge_-(\Gamma)}
    ,\quad
    \delta_e \in T_{w^+_e}\tS \tensor T_{w^-_e}\tS.\]
 The \em{curve corresponding to a gluing parameter} $\delta$ is
 defined by
\begin{multline} \label{eq:cdelta}
  S^\delta:=(S \bs \cup_e U'_{w_e^\pm}(\tS))/\sim,\\
 z_+ \sim z_-  \Leftrightarrow (\exp^S_{w^+_e})^{-1}(z_+) \tensor (\exp^S_{w^-_e})^{-1}(z_-)=\delta_e. 
\end{multline}
\end{definition}
\noindent Here $U'_{w_e^\pm}(\tS) \subset \tS$ is an open neighborhood of
$w_e^\pm$ such that the boundary $\partial U'_{w_e^\pm}(\tS)$ is
identified to $\partial U_{w_e^\mp}(\tS)$ (with reversed orientation)
by the equivalence relation $\sim$. Thus, for any node $w_e$, the
relation $\sim$ identifies the pair of annuli
\begin{equation}
  \label{eq:necke}
  U_{w^+_e}(\tS)\bs U'_{w^+_e}(\tS) \xrightarrow{\sim} U_{w^-_e}(\tS)\bs U'_{w^-_e}(\tS).
\end{equation}
The resulting annulus \cwl{(resp. rectangle in the case of Lagrangian
  boundary conditions)} in $S^\nu$ is called $\Neck_e(S^\nu)$, and
\begin{equation}
  \label{eq:neckC}
  \Neck(S^\nu):=\cup_e \Neck_e(S^\nu) \subset S^\nu.
\end{equation}
The gluing construction maps a neighborhood of zero in the space of
gluing parameters to a neighbourhood of $S$ in the fiber
$\pi_\Gamma^{-1}(S)$.  Curves in the neighborhood $U_{\M_\Gamma}$ of
$\M_\Gamma$ possess a \em{gluing parameter} \index{Gluing parameter}
function for every smoothed node
\[\nl=(\nl_e)_e:U_{\M_\Gamma} \to \prod_{e \in
    \Edge_{\black,-}(\Gamma)} T_{w^+_e}\tS \tensor T_{w^-_e}\tS, \quad
  S^\delta \mapsto \delta.\]
\begin{remark} 
  Suppose that at a node $w_e$ we choose a framing
  $\fr : T_{w^+_e}\tS \tensor T_{w^-_e}\tS \to \C$ as in
  \eqref{eq:framingeq} or complex coordinates
  \[z_+:(\tS,w^+_e) \to (\C,0), \quad z_-:(\tS,w^-_e) \to (\C,0).\]
  Then the gluing parameter $\nl_e$ may be identified with a complex
  number.
\end{remark}
\begin{remark} \label{idents} {\rm(Identifications between nearby
    curves)} Consider
  a rational nodal curve $[S] \in \M_\Gamma$ and
  a smooth curve 
  $[S'] \in \M_{d(\black)}$ in the
  neighborhood of $S$. The complement of nodes $S^\circ$ can be
  identified to subsets of the curve $S'$ as follows.  First, suppose
  that $S'$ is obtained by gluing $S$. That is,
  $S' \in \pi_\Gamma^{-1}(S)$.  For a vertex $v \in \Ver(\Gamma)$, let
  $S'(v) \subset S'$ be the subset corresponding to the component
  $S_v \subset S$ including the necks $\Neck_e(S')$, $v \in e$.  The
  subsets $S'(v)$ cover $S'$.  By the gluing construction, there are
  natural inclusions
  \begin{equation}
    \label{eq:iccv}
    i_{S_v,S'}: S'(v) \to S_v \subset S.  
  \end{equation}
  Next, suppose $S'$ is not in $\pi_\Gamma^{-1}(S)$.  Consider
  a neighborhood $U_{\Gamma,S} \subset \M_\Gamma$ of $S$ on which the
  universal curve $\U_\Gamma$ can be trivialized, so that the
  variation of the complex structure is given by a map
  \begin{equation}
    \label{eq:univtriv}
    U_{\Gamma,S} \to \J(\U_\Gamma), \quad m \mapsto j(m)  
  \end{equation}
  for which $j(m)$ is $m$-independent in the neighborhood of special
  points.  The trivialization of the universal curve $\U_\Gamma$ gives
  a diffeomorphism $\phi : \pi_\Gamma(S') \to S$. The maps
  $i_{S_v,S'}$ are defined as compositions
  \[i_{S_v,S'}:=\phi \circ i_{\pi_\Gamma(S')_v,S'}.\]
    If a sequence $S_\nu$ of curves converges to $S$, then the images
  $i_{S_v,S_\nu}(S_\nu(v))$ exhaust the complement of nodes $S_v^\circ$
  as $\nu \to \infty$.
\end{remark}

\begin{remark} \label{rem:uniqueid} {\rm(Uniqueness of
    identifications)} The identifications between regions of nearby
  curves are unique in the following sense. Suppose a sequence
  $[S_\nu] \in \ol \M_{d(\black),d(\white)}$ converges to a curve
  $[S]$.  For any node $e=(v_+,v_-)$ in $S$, the identification in the
  neck region
  \begin{equation}
    \label{eq:neckid}
    i_{S_{v_\pm},S^\nu} : \Neck_e(S^\nu) \to  S_{v_\pm}  
  \end{equation}
  is uniquely determined by the choice of the exponential map
  \eqref{eq:cexp}. On the complement of the neck region, let
  \[\phi_\nu, \phi_\nu': S^\nu \bs \Neck(S^\nu) \to S \]
  be two possible identifications given by trivializations of the
  universal curve.  The maps
  $\phi_\nu$, $\phi_\nu'$ have the same image, and the maps
  $\phi_\nu' \circ \phi_\nu^{-1}$ converge to the identity uniformly
  in all derivatives.
\end{remark}

\begin{remark}\label{rem:id-treed-disks}
  {\rm(Identifications of nearby treed disks)}
  The identifications between nearby nodal spheres extend to treed disks.  Let $C$ be a stable treed disk of type $\Gamma$ with surface part $S$ and treed part $T$.
  \begin{enumerate}
  \item {\rm(Glued treed disk)}
    For a disk node $e \in \Edge^0_\white(\Gamma)$ of zero length, $\ell(e)=0$, a \em{gluing parameter} at the node $w_e \in C$ is an element
    \[\delta_e \in (T_{w_e^+}\partial S \tensor T_{w_e^-} \partial S)_{\leq 0} \subset  T_{w_e^+}S \tensor T_{w_e^-} S\]
    where the orientation on
    $T_{w_e^+}\partial S \tensor T_{w_e^-} \partial S$ is derived from
    the orientation on the boundaries of components of $S$. Given
    gluing parameters
    \[\delta:=(\delta_e)_{e \in \Edge_{\black,-}(\Gamma) \cup \Edge_\white^0(\Gamma)}\]
    for all nodes in the surface component $S$, a glued surface
    $ S^\delta $ is defined exactly as in \eqref{eq:cdelta}. The glued
    treed curve is $C^\delta:=S^\delta \cup T$, that is, the treed
    part in $C^\delta$ is the same as that of $C$.
  \item{\rm(Domain identifications)} Let $C$ be a treed curve of type
    $\Gamma$, and let $\Gamma'$ be a treed disk type obtained by
    collapsing a subset of interior edges
    $e \in \Edge_{\black,-}(\Gamma)$ and zero length boundary edges
    $e \in \Edge_{\white,-}^0(\Gamma)$.  Let $C'=S' \cup T'$ be a
    treed disk of type $\Gamma'$ that is close to $C$.  For any surface
    component $S_v \subset C$ corresponding to a vertex
    $v \in \Ver(\Gamma)$, there is a subset $S'(v) \subset S'$ and an
    identification
    \[i_{S_v,S'} : S'(v) \to S_v\]
    exactly as in \eqref{eq:iccv}. 
  \end{enumerate}
\end{remark}

Next, we identify punctured neighborhoods of interior nodes to
annuli in nearby curves. These identifications are needed for stating
the \hyperref[item:thincyl2]{(Thin cylinder convergence)} for tropical nodes in the definition of Gromov convergence. We make these identifications only for interior nodes since all tropical nodes are interior nodes.

Informally, we may restate the definition below as follows: A sequence
of annuli converging to a node $w$ is obtained by gluing neighborhoods
$U_{w^+}$, $U_{w^-}$ of the nodal lifts $w^+$, $w^-$.  Given
holomorphic coordinates in a neighborhood of the nodal $w^+$, $w^-$, a
sequence of centered annuli converging to a node $w$ is given by
gluing neighborhoods of $w^+$, $w^-$ that have the same radius.

\begin{definition} \label{def:ann2node} {\rm(Annuli converging to a
    node)} Let $C_\nu$ be a sequence of treed disks converging to a
  limit curve $C$ for which the arguments
  $\frac {\nl_e(C_\nu)}{|\nl_e(C_\nu)|}$ of the gluing parameters
  converge for all interior edges $e \in
  \Edge_{\black,-}(\Gamma)$.  Let $w \in C$ be an interior node
  corresponding to the edge $e=(v_+,v_-)\in \Edge_{\black,-}(\Gamma)$.
  \begin{enumerate}
  \item A sequence of \em{annuli $A_\nu \subset C_\nu$ converges to a
      node $w$} in $C$ if there are open neighborhoods
      $U_{w^\pm} \subset S_{{v}_\pm}$ of
    the lifts $w^+$, $w^-$ of $w$ such that
    \[A_\nu=i_{S_{v_+},C_\nu}^{-1}(U_{w^+}) \cap
      i_{S_{v_-},C_\nu}^{-1}(U_{w^-}).\]
    We say that the sequence of annuli $A_\nu$ is obtained by \em{
      gluing the node $w$}.
  \item {\rm(Centered annuli converging to a node)} Suppose the node
    $w \in C$ is equipped with complex coordinates
    \[ z_\pm : (U_{w^\pm}, w^\pm) \to (\C, 0) \]
    on neighborhoods $U_{w^\pm} \subset C_{{v}_\pm}$ of its lifts
    $w^+$, $w^-$.
    A \em{sequence of centered annuli converging to the node $w$} is
    a sequence of parametrized annuli
    \[A_\nu:=[-l_\nu'/2, l_\nu'/2] \times \R/2\pi\Z \hra C_\nu\]
  \cwl{resp. $ [-l_\nu'/2, l_\nu'/2] \times [0,1] \hra C_\nu $ }
    for which there exists $\eps>0$ such that
    \[A_\nu=i_{S_{v_+},C_\nu}^{-1}(\{|z_+|\leq \eps\}) \cap
      i_{S_{v_-},C_\nu}^{-1}(\{|z_-|\leq \eps\}),\]
    and the map
    \[A_\nu \xhookrightarrow{i_{S_{v_\pm},C_\nu}} C_{v_\pm}
      \xrightarrow{z_\pm} \C \quad \text{is equal to} \quad (s,t)
      \mapsto \exp(\mp (s+it)-(l_\nu+i\theta_\nu)/2).\]
    Here $e^{-(l_\nu+i\theta_\nu)} \in \C^\times$ is the neck length
    parameter for the curve $C_\nu$ resulting from the choice of
    coordinates $z_\pm$.
  \end{enumerate}
\end{definition}

The following definition describes rescalings of the target spaces
required in the statement of convergence of components of the broken
map.  We recall from Definition \ref{def:transdef} that a translation
is a way of identifying a subset of a neck-stretched manifold $X^\nu$
to a subset of the broken manifold $\XX$.  The set of translations is
parametrized by $\nu B^\dual$ where $B^\dual$ is the dual complex of
the neck-stretching. A translation sequence for a tropical graph
consists of a translation sequence corresponding to each of the
vertices of the tropical graph that are compatible with the edge
{direction}s of the tropical graph.

\begin{definition}{\rm(Translation sequence for a tropical graph)}
  \label{def:trseq}
  \index{Translation!Translation sequence} Suppose $\Gamma$ is a
  pre-tropical graph (as in Definition \ref{def:tropgraph}). A \em{
    $\Gamma$-translation sequence} consists of a collection of
  sequences
\[\{ t_\nu(v) \in \nu B^\dual
  \}_\nu, \quad  v \in \Ver(\Gamma)\]
 such that the following conditions hold : 
  \begin{enumerate}
  \item {\rm(Polytope)} For each vertex $v$
    \begin{equation} \label{pcond} t_\nu(v) \in \nu P(v)^\dual
      \subset \nu B^\dual,
    \end{equation}
    and for any polytope $P_0 \in \PP$, $P_0 \supset P(v)$,
    \[d_{B^\dual}(t_\nu(v),\nu P_0^\dual) \to \infty \quad \text{as} \quad \nu \to \infty.  \] 
  \item {\rm(Direction)} For any node $e$ between vertices $v_+$, $v_-$,
    there is a sequence $l_\nu(e) \to \infty$ such that
    \begin{equation}
      \label{eq:direction}
    t_\nu(v_+) - t_\nu(v_+) = \cT(e) l_\nu.  
    \end{equation}
  \end{enumerate}
\end{definition}

\begin{remark}
  \label{rem:trans2pos}
  The existence of a translation sequence $\{t_\nu\}_\nu$ on a
  pre-tropical graph $\Gamma$ implies that $\Gamma$ is a tropical
  graph, since, for any $\nu$,
  \[\Ver(\Gamma) \ni v \mapsto \tfrac{t_\nu(v)}{\nu} \in \nu P(v)^\circ\]
  is a tropical vertex position for $\Gamma$.
\end{remark}

Finally, we define the notion of convergence of a sequence of maps in neck-stretched manifolds to a broken map in a broken manifold.
\begin{definition} \label{def:grom-breaking}
  {\rm(Gromov convergence,
    multiple cuts)} Let $C_\nu=S_\nu \cup T_\nu$ be a sequence of stable treed disks of type $\Gamma_0$. A sequence of holomorphic maps
  $u_\nu:(C_\nu,\ul z_\nu) \to X^\nu$ \em{(Gromov) converges} to a broken map
  $u:C=(S \cup T) \to \XX_\cP$ of type $\Gamma$ and framing $\fr$ if the following are satisfied. 
  \begin{enumerate}
  \item \label{item:dconv2} {\rm(Convergence of domains)} The sequence
    of treed disks $C_\nu$ converges to $C$ and for any tropical node
    $w_e$ of the limit map $u$ the arguments
    $\frac {\nl_e(C_\nu)}{|\nl_e(C_\nu)|}$ of the gluing parameters
    converge to a limit.  Using the fixed holomorphic exponential map
    \eqref{eq:cexp}, let $S_\nu(v) \subset S_\nu$ be the subset
    corresponding to a vertex $v \in \Ver(\Gamma)$, and let
    \[i_{v,\nu}:=i_{S_v,C_\nu}: S_\nu(v) \to S_v, \quad S_\nu(v)
      \subset S_\nu,\]
    be embeddings from \eqref{eq:neckid} whose images
    $i_{v,\nu}(S_\nu(v))$ exhaust the complement of nodes $S_v^\circ$
    as $\nu \to \infty$.
  \item \label{item:mconv2} {\rm(Convergence of maps)} There is a
    $\Gamma$-translation sequence $\{t_\nu(v)\}_{v,\nu}$ such that for
    any irreducible surface component $S_v \subset S$, the sequence of
    maps
    \[  S_v^\circ \supset i_{v,\nu}(S_\nu(v)) \xrightarrow{\e^{-t_\nu(v)}(u_\nu \circ i_{v,\nu}^{-1})}
    \XC_{P(v)} \]
  converges in $C^\infty_{\loc}(S_v^\circ)$ to
  $u_v:S_v^\circ \to \XC_{P(v)}$. The map
  $\e^{-t_{\nu}(v)}: X^\nu_{\tP(v)} \to \XX_{P(v)}$ is defined in
  \eqref{eq:transinc}.  For each boundary edge $e$ in $\Gamma_0$, the
  map $u_\nu|T_{e,\nu}$ on the treed segment converges to a (possibly
  broken) treed segment in $u$.
\item \label{item:thincyl2} {\rm(Thin cylinder convergence)} For a
  node $w$ in $C$ corresponding to a tropical edge
  $e=(v_+,v_-) \in \Edge_\trop(\Gamma)$, let
    \[ z_\pm : (U_{w^\pm}, w^\pm) \to (\C, 0) \]
    be matching coordinates (see Definition \ref{def:bmap} following
    \eqref{eq:nodematch}) on neighborhoods
    $U_{w^\pm} \subset S_{{v}_\pm}$ of the nodal point which respect
    the framing $\fr_e$.  Let
    \[ A(l_\nu):=[-l_\nu/2,l_\nu/2] \times S^1 \subset S_\nu \]
    be a sequence of centered annuli converging to the node $w$, see
    Definition \ref{def:ann2node}.  Then the sequence
    \[ x_\nu:=\e^{-\hh(t_\nu(v_+) + t_{\nu}(v_-))} u_\nu(0,0) \in \XC_{P(e)}\]
    converges to a limit $x_0$, and the components $u_{v_\pm}$ of the
    broken map are asymptotically close to
    \[z_\pm \mapsto z_\pm^{\cT(e)} x_0\]
    in the sense of Remark \ref{rem:expconv}.
  \end{enumerate}
\end{definition}

\begin{remark}
  {\rm(On the uniqueness of limits)}
\label{rem:addedref}
The uniqueness of the limit, when it exists, is explained in Theorem
\ref{thm:cpt-breaking}.  The Gromov limit does not depend on the various
choices made in this section. In particular, the proof of uniqueness
only depends on the \hyperref[item:dconv2]{(Convergence of domains)}
and \hyperref[item:mconv2]{(Convergence of maps)}. The latter item
depends on the maps $i_{v,\nu}$ which are identifications between
complements of neighborhoods of nodes in nodal curves, and complements
of corresponding neck regions in nearby smooth curves. Different
choices of these maps converge in the limit as pointed out in Remark
\ref{rem:uniqueid}.

We remind the reader that we should not expect to prove that the
Gromov limit is independent of perturbation data which includes the
choice of Donaldson divisors, since the definition of
pseudoholomorphic maps is dependent on these choices. Later, in
Chapter \ref{chap:bfa}, we will prove that different choices of
perturbation data produce homotopy-equivalent Fukaya algebras.
\end{remark}

\section{Horizontal convergence}\label{sec:horiz-conv}
\index{Horizontal convergence}

In this section, we discuss a notion of convergence for a sequence of
points in neck-stretched manifolds, called \em{horizontal
  convergence}, that is useful in the proof of convergence for
breaking maps. It has the feature that any sequence of points
$\{x_\nu \in X^\nu\}_\nu$ in the family of neck-stretched manifolds
$X^\nu$ has a subsequence that horizontally converges.

\begin{definition}\label{def:cptcnv}
  {\rm(Horizontal convergence)} A sequence of points $x_\nu \in X^\nu$
  \em{horizontally converges} to a point $x \in \XB_P$ for a polytope
  $P \in \PP$ if
  \begin{itemize}
  \item $x_\nu \in X^\nu_{\tP}$ for all $\nu$, and the sequence
    $\pi_P^\nu(x_\nu)$ converges to $x$, where
    $\pi_P^\nu:X^\nu_{\tP} \to \XB_P$ is the projection map
    $X^\nu_{\tP} \to X^\nu_P$ from \eqref{eq:xpnu} composed with the
    inclusion $X^\nu_P \to \XB_P$,
  \item and for any subsequence of $\{x_\nu\}_\nu$, the above
    condition is not satisfied for any polytope $P_0 \supset P$.
  \end{itemize}
\end{definition}

We present some results on horizontal convergence of sequences: 

\begin{lemma}\label{lem:horizcpt}
  For any sequence $x_\nu \in X^\nu$, there exists a subsequence of
  $\{x_{\nu_k}\}_k$ that converges horizontally.  There is a unique
  polytope $P \in \PP$ for which the subsequence $\{x_{\nu_k}\}_k$
  converges horizontally in $X_P$.
\end{lemma}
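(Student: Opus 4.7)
The plan is a pigeonhole plus compactness argument. The neck-stretched manifold partitions (up to the identifications \eqref{eq:phicover}) as
\[X^\nu = \bigcup_{Q \in \PP} \bigl(\Phi^{-1}(Q^\fillblack)\times \nu Q^{\dual,\circ}\bigr)/\sim,\]
and since $\PP$ is finite, by pigeonhole I may pass to a subsequence so that all $x_\nu$ lie in one fixed piece, indexed by some $Q_0 \in \PP$. Write $x_\nu = (\phi_\nu, t_\nu)$ with $\phi_\nu \in \Phi^{-1}(Q_0^\fillblack)$ and tropical coordinate $t_\nu \in \nu Q_0^{\dual,\circ}$. For every $P \in \PP$ with $Q_0 \subset P$, the face correspondence of the gluing datum realises $\nu P^\dual$ as a face of $\nu Q_0^\dual$, and $x_\nu \in X_P^\nu$, so the projection $\pi_P^\nu(x_\nu) \in \cX_P$ is defined.

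The qualitative behaviour of this projection is governed by the distance $d(t_\nu,\nu P^\dual)$: a bounded distance keeps $\pi_P^\nu(x_\nu)$ in a fixed compact subset of $\oX_P$, while a divergent one pushes the projection toward $\partial \cX_P = \cup_{Q \subsetneq P} \cX_Q$.  Using finiteness of $\PP$, I pass to a further subsequence on which $d(t_\nu,\nu P^\dual)$ is either bounded or tends to infinity for each $P \supset Q_0$.  Set
\[\mathcal{A} := \{ P \in \PP : Q_0 \subset P \ \text{and}\ d(t_\nu,\nu P^\dual) \ \text{is bounded} \}.\]
Then $Q_0 \in \mathcal{A}$, and $\mathcal{A}$ is downward-closed in the partial order $\subset$ on $\{P : P \supset Q_0\}$, because $P \subset P'$ makes $\nu P'^\dual$ a subface of $\nu P^\dual$ and so $d(t_\nu,\nu P^\dual) \le d(t_\nu,\nu P'^\dual)$.

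The main step is to verify that $\mathcal{A}$ has a unique maximal element $P^*$.  If $P_1,P_2$ were two distinct maximal elements, the faces $\nu P_1^\dual, \nu P_2^\dual \subset \nu Q_0^\dual$ are either disjoint --- whence $d(\nu P_1^\dual,\nu P_2^\dual) = \nu\,d(P_1^\dual,P_2^\dual) \to \infty$, contradicting $t_\nu$ being at bounded distance from both --- or meet in a non-empty common face.  In the latter case this intersection is itself a face of $Q_0^\dual$, hence (by the face--polytope correspondence in $\PP$) corresponds to a polytope $F \in \PP$ with $F \supset P_1$ and $F \supset P_2$; a convex-geometric estimate on $Q_0^\dual$ shows that bounded distance from both $\nu P_i^\dual$ forces bounded distance from their intersection $\nu F^\dual$, so $F \in \mathcal{A}$, contradicting the maximality of $P_1,P_2$.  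With $P^*$ determined, a final subsequence along which $\pi_{P^*}^\nu(x_\nu)$ converges --- extracted by compactness of $\oX_{P^*}$ since the sequence lies in a fixed compact subset --- produces a limit $x \in \oX_{P^*}$. Conditions (a)--(c) of Definition \ref{def:cptcnv} then hold for $P = P^*$, and condition (d) is precisely the maximality of $P^*$ in $\mathcal{A}$.  Uniqueness of the horizontal limit polytope follows: any $P$ for which $\{x_{\nu_k}\}$ converges horizontally belongs to $\mathcal A$ and must be maximal there by condition (d), hence coincides with $P^*$.  The principal technical obstacle is the convex-geometric bound $d(t_\nu,\nu (P_1^\dual \cap P_2^\dual)) \le C\bigl(d(t_\nu,\nu P_1^\dual) + d(t_\nu,\nu P_2^\dual)\bigr)$ used in the uniqueness step, which follows from linear algebra on the fixed polytope $Q_0^\dual$ and the compactness of its set of possible relative positions.
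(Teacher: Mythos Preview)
Your argument is correct and carries out in full what the paper leaves implicit: the paper records this lemma as ``a straightforward consequence of the definition of horizontal convergence'' and gives no proof at all. Your pigeonhole-plus-face-lattice approach is exactly the natural way to make that sentence precise, and the key observation --- that $d(t_\nu,\nu P^\dual)$ bounded versus divergent dichotomises whether $\pi_P^\nu(x_\nu)$ stays in a relatively compact region of $\oX_P$ or escapes to $\partial\cX_P$ --- is the right bridge between the tropical coordinate and the definition.

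Two small points of polish. First, the phrase ``compactness of $\oX_{P^*}$'' is a slip: $\oX_{P^*}$ is not compact (it is the complement of the boundary divisors), and what you actually use is that $\pi_{P^*}^\nu(x_\nu)$ lies in a fixed compact subset of it, together with compactness of $\Phi^{-1}(Q_0^\fillblack)$ for the $\phi_\nu$ factor. Second, your convex-geometric bound
\[
d\bigl(t,\,F_1\cap F_2\bigr)\le C\bigl(d(t,F_1)+d(t,F_2)\bigr), \qquad F_i=P_i^\dual\subset Q_0^\dual,
\]
is indeed a standard fact for faces of a convex polytope (a Hoffman-type inequality, or equivalently a consequence of the Delzant condition making $Q_0^\dual$ locally a product at every face); since the constant $C$ depends only on the shape of $Q_0^\dual$, it is scale-invariant and your application to the $\nu$-scaled polytopes is legitimate. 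For the uniqueness step, note that you must rerun the $\mathcal A$ analysis on the \emph{given} horizontally convergent subsequence rather than on your originally constructed one --- but condition (d) in the definition forces boundedness of $d(t_{\nu_k},\nu_kP^\dual)$ for the convergence polytope $P$ and unboundedness for every $P_0\supsetneq P$, so your maximal-element argument applies verbatim there.
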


\begin{proof}
  Recall from \eqref{eq:pib-J} that there is a projection to the dual
  polytope $\pi_{\nu B^\dual} : X^\nu \to \nu B^\dual$ for all $\nu$.
  There is a polytope $P \in \PP$ such that, after passing to a
  subsequence, $(x_\nu)_\nu$ satisfies the equation
  \begin{equation}
    \label{eq:dpdual}
    \sup_\nu d(\pi_{\nu B^\dual}(x_\nu), \nu P^\dual)<\infty, \quad d(P_0^\dual,\pi_{\nu B^\dual}(x_\nu)) \to \infty \enspace \forall P_0 \supset P.
  \end{equation}
  It is then clear that a subsequence of $(x_\nu)_\nu$ and the
  polytope $P$ satisfy the conclusions of the Lemma.
\end{proof}

The following Lemma relates horizontal convergence to a property of translation sequences. 
\begin{lemma}\label{lem:hcp}
  Suppose $P \in \PP$ is a polytope with $\codim(P)>0$, and suppose
  $x_\nu \in X^\nu$ is a sequence of points, and
  $t_\nu \in \nu P^\dual$ is a sequence of translations such that
  $\e^{-t_\nu}x_\nu$ converges in $\XC_P$. Then, the following are
  equivalent:
  \begin{enumerate}
  \item \label{part:hc} The sequence $x_\nu \in X^\nu$ converges horizontally in $X_P$.
    \item \label{part:pc} For any $P_0 \supset P$ (that is, $P$ is a proper face of $P_0$), $d(t_\nu, \nu P_0^\dual) \to \infty$. 
  \end{enumerate}
\end{lemma}
\begin{proof}
  Firstly, we observe that the convergence of the translated sequence
  $\e^{-t_\nu}x_\nu$ implies that
  $d(\pi_{\nu B^\dual}(x_\nu), \nu P^\dual)$ is uniformly bounded, and
  that $(\pi_P(x_\nu))_\nu$ converges to a limit in $\XB_P$.  The
  condition in \eqref{part:pc} then implies that $(x_\nu)_\nu$
  horizontally converges in $X_P$. The converse is similar and is left
  to the reader.
\end{proof}

The next result may be seen as the horizontal convergence version of
the Arzela-Ascoli theorem.
\begin{lemma}\label{lem:cptconv}
  Suppose $C$ is a connected curve and $u_\nu:C \to X^\nu$ is a
  sequence of differentiable maps satisfying
  $\sup_\nu\Mod{\d u_\nu}_{L^\infty}<\infty$. There exists a
  subsequence of maps $\{u_{\nu_k}\}_k$ and a polytope $P \in \PP$
  such that
  \begin{enumerate}
  \item \label{part:cptconva}  for all $z \in C$ the sequence $u_{\nu_k}(z)$ converges
    horizontally in $X_P$,
  \item \label{part:cptconvb} and there is a sequence of translations $t_\nu \in \nu P^\dual$  such that
    $\e^{-t_\nu}u_\nu$ converges uniformly in compact sets to a map
    $u : C  \to \XC_P$.
  \end{enumerate}
  For the subsequence $\{u_{\nu_k}\}_k$, the polytope $P$ is unique.
\end{lemma}

\begin{proof}
  First, assume that $C$ is compact. The proof of Lemma
  \ref{lem:horizcpt} carries over. Indeed, by the uniform bound on
  $\sup_\nu\Mod{\d u_\nu}_{L^\infty}$, the sequence
  $d(\pi_{\nu B^\dual}(u_\nu(z_0)), P^\dual)$ is uniformly bounded for
  a fixed point $z_0 \in C$ iff the sequence
  $d(\pi_{\nu B^\dual}(u_\nu(z)), P^\dual)$ is uniformly bounded for
  any $z \in C$. The result also holds for non-compact curves since
  they are exhausted by a sequence of compact curves.

  For \eqref{part:cptconvb}, we view $u_\nu$ as mapping to the
  $P$-cylinder, since for any translation
  $t_\nu \in \nu P^\dual \subset \t_P$ there is an embedding
  \[\e^{-t_\nu}: X^\nu_{\tP} \to \XC_P,\]
  see \eqref{eq:transinc}.  We choose a sequence of translations
  $t_\nu \in t_P^\dual$ so that
  $\pi_{\t_P} \circ (\e^{-t_\nu}u_\nu)(z_0)$ is
  $\nu$-independent.  The horizontal convergence of $u_\nu$ and the
  uniform bound on the derivatives of $u_\nu$ imply that for any
  compact set $K \subset C$, the images $\e^{-t_\nu}u_\nu(K)$ are
  contained in a compact set of $\XC_P$.  The Arzela-Ascoli theorem
  then implies that the sequence $\e^{-t_\nu}\tiu_\nu$ converges.
\end{proof}

\section{Breaking annuli}

In the next proposition, called the \em{breaking annulus lemma}, we
consider a sequence of maps on annuli with small areas and bounded
Hofer energy that converge to a pair of maps on punctured disks, the
maps on punctures being asymptotic to trivial cylinders.
See Figure \ref{fig:breakingst} for a depiction of the maps. 
The main
conclusion of the proposition is that there is a trivial cylinder
$u_\triv$ to which the sequence of annuli is asymptotically
close. That is, the distance from $u_\triv$ decays exponentially
towards the middle of the cylinders.  Furthermore, if the limit
punctured disks lie in the components $\XX_{P_+}$ and $\XX_{P_-}$ of
the broken manifold, where $P_+$, $P_-$ are polygons in the polyhedral
decomposition $\PP$, then, the trivial cylinder $u_\triv$ lies in
$\XX_{P_\cap}$, where $P_\cap:=P_+ \cap P_-$, or in other words, the
{direction} $\mu$ of $u_\triv$ is in $\t_{P_\cap}$.

In the proof of convergence of maps in neck-stretched manifolds to a
broken map in Section \ref{sec:breakingconv}, the breaking annulus
lemma is used to show that a sequence of maps on annuli (as in the
previous paragraph) converges to a neighborhood of a node in a broken
map; in particular, to show that the \em{matching condition} is
satisfied at nodes. The matching condition says that on the punctured
neighborhoods of both lifts of the node, the map is asymptotic to the
same trivial cylinder.  In this setting, both punctured neighborhoods
are asymptotic to $u_\triv$. The asymptotic closeness of the annuli to
$u_\triv$ also implies that the convergence of the annuli satisfies
the \hyperref[item:thincyl2]{(Thin cylinder convergence)}
property. (Horizontal matching), which is one of the conclusions of
the breaking annuli result, is a sub-condition of the node-matching
condition.

The breaking annulus lemma also gives an approximation of the
difference in the translation sequences in the convergence of maps on
the two ends of the annuli (see \eqref{part:breaking-anna}), which is
used in constructing the tropical structure of the limit map in the
proof of Gromov convergence in Section \ref{sec:breakingconv}. This
approximation follows from the fact that the maps on the annuli are
well-approximated by trivial cylinders of the same length.  The
breaking annulus lemma is analogous to results in Parker \cite[Lemmas
5.11, 5.13]{bp9}.

In the statement of the breaking annulus lemma, the hypothesis on
small area is replaced by a bubble-free condition (Definition
\ref{def:bubble-free}).  For any $L > 0$, we denote by
%
  \[ A(L) := [-\tfrac L 2 , \tfrac L 2] \times S^1 \cong \{ z \in \C \ : \ |z| \in [e^{-L/2},
  e^{L/2}] \}  \]
the cylinder with length $L$ equipped with the product metric. 

\begin{figure}[ht]
  \centering \scalebox{.8}{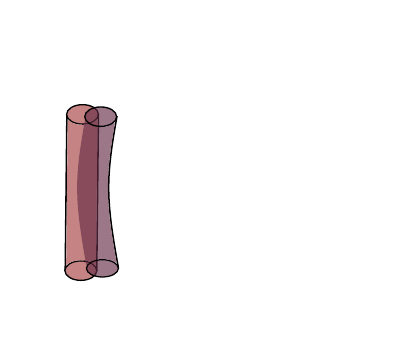}
  \caption{A sequence of annuli converging to a neighborhood of a node in a broken map.}
  \label{fig:breakingst}
\end{figure}

\begin{definition}\label{def:bubble-free}
  {\rm(Bubble-free)} A sequence of maps
  \[ u_\nu: A(l_\nu) \to X^\nu , \quad l_\nu \to \infty\]
  satisfying $\sup_\nu \Mod{du_\nu}_{L^\infty}<\infty$ is \em{
    bubble-free} if, for any $P \in \PP$, there is no non-constant map
  $v: \R \times S^1 \to X_P$ that is a limit of a subsequence of
  reparametrized maps
   \[\pi_P \circ u_\nu(\cdot - r_\nu) \quad \text{with } r_\nu \in \R, \left|r_\nu \pm \tfrac {l_\nu} 2 \right| \to \infty.\]
 \end{definition}

 \begin{proposition}
  \label{prop:breaking-ann}
  {\rm(Breaking annulus lemma)} Let $\JJ_0$ be a cylindrical almost
  complex structure on $\XX$ for which Hofer energy is monotonic (see
  Lemma \ref{lem:monot}), and suppose that for any $\nu$ the almost
  complex structures $J_0^\nu$ is obtained by gluing $\JJ_0$ on the
  necks.  There are constants $0<\rho<1$, $c>0$ such that the
  following hold.  For a sequence $l_\nu \to \infty$, let
  \[ u_\nu: A(l_\nu) \to X^\nu \]
  be a sequence of $J_0^\nu$-holomorphic maps satisfying 
  \begin{enumerate}[label*=(\arabic*)]
  \item\label{ba-hyp1} $\sup_\nu E_{\Hof}(u_\nu) <\infty$,
    $\sup_{z \in \R \times S^1, \nu} |\d u_\nu(z)|<\infty,$ where the
    domain annuli
    $A(l_\nu):=[-\tfrac {l_\nu} 2 , \tfrac {l_\nu} 2] \times S^1 $ are
    equipped with the product metric.
\item \label{ba-hyp2} The sequence $\{u_\nu\}_\nu$ is bubble-free.
\item There exist polytopes $P_+, P_-$ such that the sequence of maps
  $u_\nu(\cdot \pm \frac {l_\nu} 2)$ converges horizontally in
  $X_{P_\pm}$.
\item \label{part:banhypd} There exists a sequence of translations
  $t_\nu^\pm \in \nu P^\dual_\pm$ such that the sequence
  $\e^{-t_\nu^\pm}u_\nu(\cdot \pm \frac {l_\nu} 2)$ converges in
  $C^\infty_\loc$ to
    \[u_\pm : \R_\mp \times S^1 \to \XC_{P_\pm},\] and the map
    extends holomorphically over $\mp \infty$, possibly after passing to a finite cover in the orbifold case. 
  \end{enumerate}
  Then, there exists $\mu \in \t_{P_\cap,\Z}$, \cwl{resp. $\t_P$ in
    the case with Lagrangian boundary} $P_\cap:=P_+ \cap P_-$ for
  which the following hold after passing to a subsequence of
  $\{u_\nu\}_\nu$.
  \begin{enumerate}
  \item \label{part:breaking-anna} The sequence
    $t_\nu^+ - t_\nu^- - \mu l_\nu \in \t_{P_\cap}$ is uniformly
    bounded. 
  \item {\rm(Horizontal matching)} The points
    $(\pi_{P_+} \circ u_+)(-\infty)$,
    $(\pi_{P_-} \circ u_-)(+\infty)$ lie in
    $\XB_{P_\cap} \subset \ol X_{P_\pm}$ and
    $(\pi_{P_+} \circ u_+)(-\infty)=(\pi_{P_-} \circ u_-)(+\infty)$.
    \footnote{The point
      $(\pi_{P_\pm} \circ u_\pm)(\mp\infty) \in \ol X_{P_\pm}$ is
      defined, see \eqref{eq:extpip}.}
  \item {\rm(Asymptotic decay)} Let $\xi_\nu$ be a section defined by
    the relation
    \begin{equation*}
      u_\nu=\exp_{u_{\nu,\on{triv}}}\xi_\nu, \quad u_{\nu,\on{triv}}(s,t):=e^{\mu(s+it)}u_\nu(0,0).
    \end{equation*}
    There exists $l \geq 0$ and a subsequence such that
    \begin{equation}\label{eq:ann-est}
      \abs{\xi_\nu(s,t)} \leq c(e^{\rho(s-\frac {l_\nu} 2)} +
      e^{\rho(-s-\frac {l_\nu} 2)}), \quad 
      \forall s \in \left[-\tfrac {l_\nu} 2 +l,\tfrac {l_\nu} 2-l \right].
    \end{equation}
  \end{enumerate}
\end{proposition}
\begin{lemma}
  \label{lem:ba1}
  Let $l_\nu \to \infty$ be a sequence. Suppose
  $u_\nu : [0,l_\nu] \times S^1 \to X^\nu$ is a sequence of maps that
  converges horizontally in $P_0 \in \PP$, and for which there is a
  sequence of translations $t_\nu \in \nu P_0^\dual$ such that
  $\e^{-t_\nu}u_\nu$ converges uniformly on compact subsets to a map
  $u:\R_{\geq 0} \times S^1 \to \XC_{P_0}$, whose projection
  $\pi_{P_0} \circ u$ has a removable singularity at the infinite end
  with $u(\infty) \in \XB_{P_1} \subset \ol X_{P_0}$ for some
  $P_1 \subseteq P_0$. Then, there is a subsequence $r_\nu \to \infty$
  such that
  \[u_\nu(r_\nu,\cdot) : S^1 \to X^\nu\]
  converges horizontally in $P_1$.
\end{lemma}

\begin{proof}
  The removal of singularity for $u$ implies that there exists a point
  $x_0$ in the $P_1$-cylindrical end of $\XC_{P_0}$ and an element
  $\mu \in \t_{P_1,\Z} \bs (\cup_{P \supset P_1}\t_P)$
  \cwl{resp. $\t_{P_1}$} such that
  \[d(u(s,t), e^{\mu(-s - it)} x_0) \leq ce ^{-|s|} \quad (s,t) \in
    \R_{\geq 0} \times S^1,\] 
  see Remarks \ref{rem:extend} and \ref{rem:expconv}. 
    \cwl{resp. $[0,1]$} 
    Then, there is a sequence $r_\nu \to \infty$ and translations
    $t_\nu \in \nu P_0^\dual$ such that
    \[\d(u_\nu(r_\nu, \cdot), \e^{t_\nu}e^{\mu(-r_\nu - it)} x_0) \to
      0 \quad \text{ as }\nu \to \infty,\]
    where $\e^{t_\nu}$ maps a subset of $\XC_{P_0}$ to $X^\nu_{\tP_0}$
    (see \eqref{eq:transinc2}). It follows that $u_\nu(r_\nu,\cdot)$
    horizontally converges to the constant map $\pi_{P_1}(x_0)$ in
    $\XB_{P_1}$. (Note that the values of $t_\nu$ are not relevant to
    the conclusion of horizontal convergence since
    $t_\nu \in \nu P_0^\dual \subset i\t_{P_0}$, and for the purpose
    of horizontal convergence in $X_{P_1}$, we quotient the map by
    $T_{P_1,\C}$ which contains $T_{P_0,\C}$ as a subtorus.)
\end{proof}

\begin{lemma}
  \label{lem:ba2}
  Suppose $l_\nu \to \infty$ and
  $u_\nu : [0,l_\nu] \times S^1 \to X^\nu_{\tP} \subset X^\nu$ is a
  sequence of maps satisfying hypotheses \ref{ba-hyp1}, \ref{ba-hyp2}
  of Proposition \ref{prop:breaking-ann}.  Furthermore, assume that
  $\pi_P \circ u_\nu$ converges on compact subsets to a constant map
  $u:\R_{\geq 0} \times S^1 \to \XB_P$ with value $x_0 \in
  \XB_P$. Then, there is a compact subset $K \subset \XB_P$ that
  contains the image of $(\pi_P \circ u_\nu)$ for all
  $\nu$.\footnote{For any $\nu$, there is a natural inclusion
    $X^\nu_P \to \XB_P$, see Lemma \ref{lem:natinc} and Definition
    \ref{def:brokenJ}.}
\end{lemma}

\begin{proof}
  The Lemma is proved by applying the annulus lemma (Proposition \ref{prop:ann-reg})
  on a compact subset of $\XB_P$.  \cwl{(resp.
    $[0,l_\nu] \times [0,1]$ in the case of Lagrangian boundary
    conditions)}    The intervals
  \begin{equation*}
    I_\nu:=\d_{\nu B^\dual}(\pi_{\nu B^\dual}(u_\nu([0,l_\nu] \times S^1)), \nu P^\dual) \subseteq [0,\infty]
  \end{equation*}
  are connected, and the the conclusion in the Lemma is equivalent to
  \begin{equation}
    \label{eq:goaldnu}
    \sup_\nu\on{length}(I_\nu)<\infty. 
  \end{equation}
  Let
  \begin{equation}
    \label{eq:duminusp}
    \tau:= \d(\pi_{\NCone_{P^\dual}B^\dual} \circ u, P^\dual) = \d(\pi_{\NCone_{P^\dual}B^\dual}(x_0), P^\dual),
  \end{equation}
  so that $\tau \in I_\nu$ for all $\nu$.  Let us assume that
  \eqref{eq:goaldnu} does not hold.  Consider any $\kappa>0$.  If
  \eqref{eq:goaldnu} does not hold, after passing to a subsequence,
  \begin{equation*}
    \tau + \kappa \in I_\nu \quad \forall \nu.
  \end{equation*}
  For each $\nu$, let $s_\nu \in [0,l_\nu]$ be the least value for
  which there is a point $z_\nu=(s_\nu,\theta_\nu)$ in the domain of $u_\nu$ such
  that
  \begin{equation}
    \label{eq:snuchoose}
    d_{\nu B^\dual}(\pi_{\nu B^\dual}(u_\nu(z_\nu)), \nu P^\dual)=
    \tau+\kappa.
  \end{equation}
  The minimality of $s_\nu$ implies that
  \begin{equation}
    \label{eq:snumin}
    d_{\nu B^\dual}((\pi_{\nu B^\dual} \circ u_\nu)([0,s_\nu] \times S^1), \nu P^\dual)
    \leq \tau+\kappa. 
  \end{equation}
  Because the limit $u$ is at a distance of $\tau$ from $P^\dual$, we
  have $s_\nu \to \infty$.  We also have $l_\nu-s_\nu \to \infty$,
  otherwise, \eqref{eq:snumin} together with the uniform bound on
  $\Mod{du_\nu}_{L^\infty}$ implies that the intervals $I_\nu$ are
  uniformly bounded.  We rescale the domain centered at $s_\nu$.  By
  the uniform bound on the derivatives $|\d u_\nu|$, after passing to a
  subsequence, the sequence $\pi_P \circ u_\nu(\cdot - s_\nu)$
  converges uniformly on compact subsets to a limit
  $v : \R \times S^1 \to \XB_P$.  By the bubble-free condition, $v$ is
  a constant map whose value is, say, $x_1 \in \XB_P$.  We now arrive
  at a contradiction by applying the annulus Lemma \ref{prop:ann-reg}
  on compact symplectic manifolds to the sequence
  $\pi_P \circ u_\nu|[0, s_\nu]$.
  By \eqref{eq:snumin}, there is a compact set $K_0 \subset \XB_P$
  that contains the images of
  $(\pi_P \circ u_\nu)([0,s_\nu] \times S^1)$.  By Lemma
  \ref{lem:Knondeg}, there exists a squashed area form $\om_\aleph$ on
  $\XB_P$ that is a symplectic form on $K_0$.  Proposition
  \ref{prop:thinconvcpt} on $(K_0,\om_\aleph)$ implies that the
  sequence $(\pi_P \circ u_\nu)|([0,s_\nu] \times S^1)$ \footnote{The
    image of $(\pi_P \circ u_\nu)$ lies in $X_P^\nu$, which is
    naturally embedded in $X_P$ by Definition \ref{def:brokenJ}
    \eqref{part:cutspace}.}  \cwl{resp. $[0,1]$} converges to a pair
  of disks $(u, u')$ connected at a node. The second disk $u'$ is the
  same as $v$ which is a constant $x_1$.  Furthermore, $x_1 \neq x_0$
  because $d(\pi_{\NCone_{P^\dual}B^\dual}(x_0), P^\dual)=\tau$, and
  \eqref{eq:snuchoose} implies that
  $d(\pi_{\NCone_{P^\dual}B^\dual}(x_1), P^\dual)=\tau + \kappa$,
  resulting in a contradiction.
\end{proof}

\begin{proof}
  [Proof of Proposition \ref{prop:breaking-ann}]
  In Step 1 we will find a polytope $P \subset P_+ \cap P_-$, and in subsequent steps we 
  will prove the assertions of the Proposition with $P$ playing
  the role of $P_\cap$. In one of the final steps, we will show that
  $P$ is equal to $P_\cap$.
  \vskip .1in \noindent 
  \textsc{Step 1} : \em{There
    is a polytope $P \subset P_+ \cap P_-$ such that
    $(\pi_{P_\pm} \circ u_+)(\mp \infty) \in \XB_P$.  Furthermore,
    there exists a constant $L\geq 0$ such that
    \begin{equation}
      \label{eq:bapP}
    u_\nu(A(l_\nu -2L)) \subset X_{\tP}^\nu ,
  \end{equation}
  and a compact set $K \subset \XB_P$ that contains the image
  $\pi_Pu_\nu(A(l_\nu -L))$ for all $\nu$.} \footnote{For any $\nu$,
  there is a natural inclusion $X^\nu_P \to \XB_P$, see Lemma
  \ref{lem:natinc} and Definition \ref{def:brokenJ}.}

Let $P \subset P_-$ be the polytope for which 
\begin{equation}
  \label{eq:bap1}
  (\pi_{P_-} \circ u_-)(\infty) \in \XB_P
  \subset \ol X_{P_-}.
\end{equation}
The convergence of $\pi_{P_-} \circ u_-(\cdot + \frac {l_\nu} 2)$ to
$\pi_{P_-}(u_-)$, together with \eqref{eq:bap1} implies that there
exists $L_- \geq 0$ and a sequence $s_\nu$ such that
 \begin{equation}
   \label{eq:bap2}
   u_\nu([-\tfrac {l_\nu} 2 + L_-, s_\nu] \times S^1) \subset
   X^\nu_{\tP},
 \end{equation}
 %
 and satisfying $s_\nu + \frac {l_\nu} 2 \to \infty$.
 We assume that for any $\nu$, $s_\nu$ is the largest value which
 satisfies \eqref{eq:bap2}. Also, Lemma \ref{lem:ba1} implies that
 there is a sequence $r_\nu$ such that
 \begin{equation}
   \label{eq:bap3}
   u_\nu(r_\nu,\cdot) \text{ horizontally converges in $P$}, \quad r_\nu + \tfrac {l_\nu} 2 \to \infty.
 \end{equation}
 Automatically $r_\nu<s_\nu$.
 %
 \begin{claim}
   The sequence $\frac {l_\nu} 2 - s_\nu$ is bounded.
 \end{claim}
 \begin{subproof}
   Suppose $\frac {l_\nu} 2 - s_\nu \to \infty$. Then, the rescaled
   sequence
   $u_\nu(\cdot - s_\nu)$,
   after passing to a subsequence,
   converges horizontally in $P_1$ to a limit
   $v: \R \times S^1 \to \XB_{P_1}$.
   The maximality of $s_\nu$ in \eqref{eq:bap2} implies that $P_1 \supset P$.
   The bubble-free condition implies
   that $v$ is a constant map. Lemma \ref{lem:ba2} then implies that
   there is a compact set $K \subset \XB_{P_1}$ containing the images
   $(\pi_{P_1} \circ u_\nu)([-\frac {l_\nu} 2 + L_-, s_\nu] \times
   S^1)$.  But since, $P \subsetneq P_1$, this contradicts
   \eqref{eq:bap3} leading to the proof of the Claim.
 \end{subproof}
Since $u_\nu(s_\nu,\cdot)$ converges horizontally in $P_1 \in \PP$, the Claim implies that $P_1=P_+$. Taking $L:=\max\{L_-,\frac {l_\nu} 2-s_\nu\}$,
we obtain
\[u_\nu(A(l_\nu-2L)) \subset X^\nu_{\tP}. \]
The sequence of maps $\pi_P \circ u_\nu$ converges to a limit
$v_0 : \R \times S^1 \to \XB_P$, which by the bubble-free condition is
a constant map.  Lemma \ref{lem:ba2} applied to both
$u_\nu|[0,\frac {l_\nu} 2-L]$ and $u_\nu|[0,-(\frac {l_\nu}2-L)]$ shows that there is a
compact subset $K \subset \XB_P$ that contains the images of
$\pi_P \circ u_\nu(A(l_\nu -2L))$.

Finally, since for any sequence
$r_\nu' \in [-\frac {l_\nu} 2 +L, \frac {l_\nu} 2 -L]$, the maps
$u_\nu(r_\nu',\cdot)$ horizontally converge in $\XB_P$, Lemma
\ref{lem:ba1} implies that $(\pi_{P_+} \circ u_+)(-\infty) \in \XB_P$.

\vskip .1in \noindent \textsc{Step 2} : \em{Determining the edge {direction} $\mu$.}\\
In this step we read off the {direction} $\mu$ of the edge from the topology
of the cylinders \cwl{or strips} $u_\nu$ and show that it is equal to
the {direction}s of the limit maps at the nodal point.  By Step 1, the
images $\pi_P(u_\nu(A(l_\nu-2L)))$ are contained in a compact subset
$K$ of $X_P$.  Choose a squashing area form $\om_\aleph$ on $X_P$ that
is a symplectic form on $K$; such a form exists by Lemma
\ref{lem:Knondeg}.  The $\om_\aleph$-area of $\pi_P \circ u_\nu$ is
uniformly bounded, since
  \[\int_{A(l_\nu -2L)}(\pi_P \circ u_\nu)^*\om_\aleph \leq E_\Hof(\pi_P \circ u_\nu) \leq E_\Hof(u_\nu),\]
  where the last inequality is by Proposition \ref{prop:quothof}.  By
  the annulus lemma for compact symplectic manifolds (Proposition
  \ref{prop:thinconvcpt}), the annuli $\pi_P \circ u_\nu$ converge to
  a pair of disks connected by an interior \cwl{or boundary} node
  $x_0 \in \XB_P$.  Therefore, for some $L_1>0$, the images
  $\pi_P(u_\nu(A(l_\nu-L_1)))$ lie in a neighborhood
  $B_{\eps}(x_0) \subset \XB_P$. Consider a trivialization
  \[B_{\eps}(x_0) \times T_{P,\C} \simeq \pi_P^{-1}(B_{\eps}(x_0))
    \subset \XC_P.\]
  of $\pi_P : \XC_P \to \XB_P$.  Viewing the target space of $u_\nu$
  as the product $B_\eps \times T_{P,\C}$, the homotopy class
  $(u_\nu)_*[A(l_\nu)] \in \pi_1(T_P)$ corresponds to an element
  $\mu_\nu \in \t_{P,\Z}$. \cwl{or $\t_P$} Let $\mu \in \t_{P,\Z}$ be
  defined so that \cwl{resp. $\t_P$} $u_-$ is asymptotically close to
  a trivial cylinder of {direction} $\mu$ at $\infty$ in the sense of Remark
  \ref{rem:expconv}.  Therefore $\mu_\nu=\mu$ for large
  $\nu$. Similarly $u_+$ is also asymptotically close to a trivial
  cylinder of {direction} $\mu$ at $-\infty$.

  \vskip .1in \noindent \textsc{Step 3}: \em{The sequence of twisted
    maps
    \[\ol u_\nu:A(l_\nu) \to \XC_P, \quad (s,t) \mapsto
      e^{-\mu(s+\frac {l_\nu} 2 + it)} (\e^{-t_\nu^-}u_\nu)
\footnote{\text{By \eqref{eq:transinc}, there is an inclusion $\e^{-t_\nu^-}: X^\nu_{\tP} \to \XX_P$.} }
    \]
    converges to a pair of disks connected at an interior point.}\\

  Because the derivatives of the maps $(\ol u_\nu)_\nu$ are uniformly
  bounded, we expect the Gromov-Floer limit of the sequence to consist
  of two disks connected by a path of spheres.  \cwl{resp. disks in
    the case of Lagrangian boundary condition} Each of the components
  is obtained by a sequence of rescalings of the cylinder
  \cwl{resp. strip} of the form $(s,t) \mapsto (s+ s_\nu, t)$ for some
  sequence $s_\nu \in \R$. The bubble-free condition rules out spheres
  \cwl{or disks} in the limit, leading to the conclusion of Step 3.
  The proof is standard, except for the fact that we need to choose an
  appropriate notion of symplectic area.

  The first component in this Gromov-Floer limit of $\ol u_\nu$ is a
  disk $\ol u_-$ which is a twisted version of $u_-$.  This is seen as
  follows.  The convergence of $\e^{-t _\nu^-}u_\nu$ to $u_-$ implies
  that the sequence $\ol u_\nu(\cdot - \frac {l_\nu} 2)$ converges to
  \begin{equation}
    \label{eq:oluminus}
    \ol u_-:=e^{-\mu(s+it)}u_-.  
  \end{equation}
  The image of $\ol u_-$ is compact in $\XC_P$ since $u_-$ is
  asymptotically close to the $\mu$-cylinder
  $(s,t) \mapsto e^{\mu(s+it)}$. Since $\pi_P\circ u_-$ extends over
  $\infty$, the same is also true for $\ol u_-$. We denote
  $\ol x_0:=\ol u_-(\infty) \in \XX_P$.

  Next, we finish the description of the Gromov-Floer limit.  Choose
  any taming symplectic form $\om_{\tP}$ defined in a neighborhood
  $U'_{\ol x_0} \subset \XC_P$ of $\ol x_0$.  Let
  $U_{\ol x_0} \Subset U'_{\ol x_0}$ be a smaller open neighborhood
  whose closure is contained in $U'_{\ol x_0}$.  Let $\kappa>0$ be any
  constant.  Since $\ol u_\nu(\cdot - \frac {l_\nu} 2)$ converges to
  $\ol u_-$, there exists a constant $r_0$, and a sequence $r_\nu$
  such that
  \begin{multline}
    \label{eq:rnudef}
    \ol u_\nu([\tfrac {-l_\nu} 2 + r_0, r_\nu]
    \times S^1) \subset \ol U_{\ol x_0} \\
    \text{and }\om_{\tP}(u_\nu,[\tfrac {-l_\nu} 2 + r_0,r_\nu]
    \times S^1) \leq \om_{\tP}(\ol u_-,[r_0,\infty) \times S^1) +
    \kappa.
  \end{multline}
  We assume that for each $\nu$, $r_\nu$ is the maximum value
  satisfying the above condition, and therefore
  $r_\nu + \frac {l_\nu} 2 \to \infty$.  Applying the annulus Lemma
  for compact manifolds (Proposition \ref{prop:thinconvcpt}) to the
  sequence of maps
  $u_\nu|([\tfrac {-l_\nu} 2 + r_0, r_\nu] \times S^1) $ with target
  space $(\ol U_{\ol x_0},\om_{\tP})$, we conclude that the sequence
  of annuli converges to a pair of disks $(\ol u_-, \ol u_+)$. Here, we
  note that the first map is the same as the map in
  \eqref{eq:oluminus}. The second map
  \[\ol u_+ : (-\infty,L_1) \times S^1 \to \XC_P\]
  is the limit of rescaled maps
  \[\ol u_\nu^+(s,t):= \ol u_\nu(\cdot + r_\nu) : [\tfrac {-l_\nu} 2
    -r_\nu, \tfrac {l_\nu} 2 - r_\nu] \times S^1 \to \XC_P, \]
  where 
  \begin{equation}
    \label{eq:L1def}
    L_1:=\lim_{\nu}(\tfrac {l_\nu} 2 - r_\nu)  \in (0,\infty].
  \end{equation}
  Proposition \ref{prop:thinconvcpt} also implies that the images of
  the components $\ol u_-$ and $\ol u_+$ connect at the nodal point,
  that is, $\ol u_-(\infty)=\ol u_+(-\infty)$.  Finally, $L_1$ is
  finite, because otherwise, the sequence $\ol u_\nu(\cdot + r_\nu)$
  converges to a sphere \cwl{or disk} in $\XC_P$, contradicting the
  bubble-free assumption.

  We have shown that the limit of the twisted maps is a pair of disks
  $(\ol u_-, \ol u_+)$.  Since
  $L_1=\lim_\nu (\frac {l_\nu} 2 - r_\nu)$ is finite, after truncating
  the domain cylinders by a $\nu$-independent amount, we may replace
  $r_\nu$ by $\frac {l_\nu} 2$.  The limit $\ol u_+$ will be altered
  by a domain reparametrization and $\ol u_+(-\infty)=\ol u_-(\infty)$
  continues to hold.

  Part \eqref{part:breaking-anna} of the Proposition is now proved as
  follows.  Both sequences of maps
  \[\ol u_\nu^+(s,t):=e^{-\mu(s+it)}(\e^{-t_\nu^- - \mu
      l_\nu}u_\nu(s+\tfrac {l_\nu} 2,t)) \]
  and $\e^{-t_\nu^+}u_\nu(\cdot + \frac {l_\nu} 2)$ converge on
  $\R_{\geq 0} \times S^1$, the former by our proof and the latter by
  the hypothesis.  At the point $(s,t)=(0,0)$ the sequences
  $\ol u_\nu^+(s,t)$ and $\e^{-t_\nu^+}u_\nu(\cdot + \frac {l_\nu} 2)$
  differ by a translation by $e^{t_\nu^+ - t_\nu^- - \mu
    l_\nu}$. Since both sequences of points converge, we conclude that
  the limit
  \[\delta:=\lim_\nu(-\mu l_\nu - t_\nu^- + t_\nu^+)\]
  exists (which proves \eqref{part:breaking-anna}) and that
  \[\ol u_+(s,t)=e^\delta e^{-\mu(s+it)}u_+(s,t).\]

  \vskip .1in \noindent \textsc{Step 4} : \em{Proof of the decay estimate.}\\
  We have shown that after truncating the domain cylinders by a
  $\nu$-independent amount, say $L$, the sequence of the twisted maps
  $\ol u_\nu$ converges to a pair of disks $(\ol u_-,\ol u_+)$, and the
  images of the maps lie in a compact set $\ol U_{\ol x_0}$ with a
  taming symplectic form $\om_{\tP}$.  The sequence of domains can
  truncated again by a finite amount to ensure that
  $\om_{\tP}(\ol u_\nu)< \hbar$, since the $\om_{\tP}$-area on the
  sequence of cylinders converges to the $\om_{\tP}$-area of the pair
  of disks $(\ol u_-,\ol u_+)$.  We apply the annulus lemma for
  compact manifolds (Proposition \ref{prop:ann-reg}) to the maps
  %
  \[\ol u_\nu : A(l_\nu -2L) \to (\ol U_{\ol x_0},\om_{\tP}) \subset
    \XC_P.\]
  The decay estimate for the twisted maps $\ol u_\nu$ implies the
  asymptotic decay estimate \eqref{eq:ann-est} for $u_\nu$ required by
  the Proposition.

  \vskip .1in \noindent \textsc{Step 5} : \em{Proof of Horizontal Matching and $P=P_+ \cap P_-$.}\\
  By Step 1, the nodal lifts are mapped to on $\XB_P$, that is,
  \[(\pi_{P_\pm} u_\pm)(\mp \infty) \in \XB_P \subset
    \ol X_{P_\pm}.\]
  Therefore at the nodal point, $u_\pm$ intersects all the divisors
  $\ol X_{Q_\pm}$ of $\ol X_{P_\pm}$ which contain $\ol X_P$, that is,
  $P \subseteq Q_\pm \subseteq P_\pm$. Consequently,
  \[\mu \in \t_P \bs \cup_{Q \supset P}\t_Q.\]
  On the other hand $t_\nu^+ - t_\nu^- \in \t_{P_\cap}$, where
  $P_\cap:=P_+ \cap P_-$.  Since $t_\nu^+ - t_\nu^- - \mu l_\nu$ is
  uniformly bounded, we conclude $\mu \in \t_{P_\cap}$.  Therefore
  $P_\cap=P$. Horizontal matching now follows from Step 2, where we
  showed that the sequence $(\pi_P \circ u_\nu)$ converges in $X_P$ to
  pair of annuli connected at a node. This finishes the proof of the
  breaking annulus lemma (Proposition \ref{prop:breaking-ann}).
\end{proof}

The proof of the breaking annulus lemma was based on the following results (Propositions \ref{prop:ann-reg} and \ref{prop:thinconvcpt}) on compact symplectic manifolds. 
\begin{proposition}\label{prop:ann-reg} {\rm(Annulus lemma on compact
    manifolds \cite[Lemma 4.7.3]{ms:jh})}
  Suppose $(X,\om_X)$ is a compact symplectic manifold with a tamed
  almost complex structure $J$. There exists constants $0<\rho<1$,
  $\hbar >0$, $c>0$ such that the following holds for any
  $J$-holomorphic map $u: A(\ell) \to X$ with $E(u) \leq \hbar$.  For
  $x=u(0,0)$, there is a map
\[ \xi:A(\ell-1) \to T_xX \ \text{ such that }  \ u=\exp_x\xi  \]  
on $A(\ell-1)$ and
  \begin{equation}
    \label{eq:anndist}
    \abs{\xi(s,t)} \leq c(e^{\rho(s-\ell)} + e^{\rho(-s-\ell)}), 
\quad 
  \forall s \in [-\ell +1,\ell-1]. 
  \end{equation}
  \cwl{The same statement holds in the case that $A(\ell)$ is replaced by $[-\ell,\ell] 
  \times [0,1]$ and $u$
  takes Lagrangian boundary conditions on $[-L,L] \times \{ 0,1 \}$   .}
 The constants $\rho$, $\hbar$, $c$ depend continuously on $J$ with respect to the $C^2$-topology. 
\end{proposition}
\begin{proposition} \label{prop:thinconvcpt} {\rm(Convergence of long
    cylinders)} Suppose $(X,\om_X)$ is a compact symplectic manifold
  with a tamed almost complex structure $J$.  Let
  $u_\nu : A(l_\nu) \to X$ be a sequence of holomorphic cylinders
  \cwl{or strips} with a uniform bound on $\Mod{du_\nu}_{L^\infty}$
  and the $\om$-areas of $u_\nu$.  Furthermore, the sequence
  $\{u_\nu\}_\nu$ satisfies the bubble-free condition: For any
  sequence $r_\nu$ with $|r_\nu \pm l_\nu/2| \to \infty$, if a
  subsequence
  of $\{u_\nu(\cdot - r_\nu)\}_\nu$ converges to a limit
  $v : \R \times S^1 \to X$, then $v$ is a constant map.  After
  passing to a subsequence, $u_\nu(\cdot \pm \frac {l_\nu} 2)$
  converges in $C^\infty_\loc$ to
  \[u_\pm : \R_\mp \times S^1 \to X, \]
  the map $u_\pm$ extends holomorphically over $\mp \infty$, and
  $u_-(\infty)=u_+(-\infty)$.
\end{proposition}
\begin{proof}
  This Proposition is proved as part of the ``bubbles connect'' result
  in \cite[Proposition 4.7.1]{ms:jh}.
\end{proof}

We give an annulus lemma for sequences of holomorphic strips in
$X^\nu$ whose boundaries lie on the Lagrangian submanifold $L$. The
result is simpler than the breaking annulus lemma since the strips
converge to a nodal disk in the complement of the relative divisors,
and not a tropical node. The result is stated for maps whose domains
are strips, defined as
\[A_\white(\ell):=[-\tfrac \ell 2 , \tfrac \ell 2] \times [0,1]\]
for any $\ell>0$. The result is a boundary version of the ``bubbles
connect'' result in McDuff-Salamon \cite[Proposition 4.7.1]{ms:jh}.
However, since the manifold $X_{P_0}$ has cylindrical ends, we need to
use a Hofer energy bound.
\begin{proposition}{\rm(Annulus lemma with boundary)}
  \label{prop:annbdry}
  Suppose that the almost complex structures $\JJ_0 \in \J^\cyl(\XX)$
  and $J_0^\nu \in \J^\cyl(X^\nu)$ satisfy the conditions in the
  statement of Proposition \ref{prop:breaking-ann}. There are
  constants $0<\rho<1$, $c>0$ such that the following hold.  For a
  sequence $l_\nu \to \infty$, let
  \[ u_\nu: A_\white(l_\nu) \to X^\nu, \quad u_\nu(\partial A_\nu) \subset L \]
  be a sequence of $J_0^\nu$-holomorphic strips satisfying the
  following.
  \begin{enumerate}[label*=(\arabic*)]
  \item $\sup_\nu E_{\Hof}(u_\nu) <\infty$,
    $\sup_{z \in \R \times [0,1], \nu} |\d u_\nu(z)|<\infty.$
  \item The sequence $\{u_\nu\}_\nu$ satisfies a disk bubble-free
    condition: For any sequence $r_\nu$ with
    $|r_\nu \pm l_\nu/2| \to \infty$, if a subsequence
  of $\{u_\nu(\cdot - r_\nu)\}_\nu$ converges to a limit $v : \R \times [0,1] \to X$, then $v$ is a constant map. 
\item The sequence of maps $u_\nu(\cdot \pm \frac {l_\nu} 2)$
  converges in $C^\infty_\loc$ to a limit
    \[u_\pm : \R_\mp \times [0,1] \to \XB_{P_0},\]
    and the map
    extends holomorphically over $\mp \infty$.
  \end{enumerate}
  Then, $u_+(-\infty)=u_-(+\infty) \in L$.
\end{proposition}
\begin{proof}
  There is a compact subset $K \subset X_{P_0}$ that contains the
  image all the strips $\{u_\nu\}_\nu$. Indeed, $L \subset X_{P_0}$ is
  compact and the images of the strips are contained in a
  $\sup_\nu\Mod{du_\nu}_{L^\infty}$-radius of $L$.  By Lemma
  \ref{lem:Knondeg}, there is a squashing area form $\om_\aleph$ that
  is a symplectic form on $K$. The $\om_\aleph$-area of the strips
  $u_\nu$ are uniformly bounded. Therefore, the target space may be
  seen as a compact symplectic manifold, and the proof is analogous to
  the ``bubbles connect'' result for long cylinders in
  \cite[Proposition 4.7.1]{ms:jh}.
\end{proof}

\section{Proof of convergence for breaking maps}
\label{sec:breakingconv} 

The arguments used in the proof of convergence of breaking maps are
similar to those used in the proof of Gromov compactness for stable
maps. The new features include a more refined analysis of sequences of
long cylinders that converge to nodes of the limit broken maps. The
decay estimate on such cylinders coming from the breaking annulus
lemma (Proposition \ref{prop:breaking-ann}) allows us to prove
\hyperref[item:thincyl2]{(Thin cylinder convergence)}, and also prove
the existence of a tropical graph, that is, the graph underlying the
limit map is realizable (see Definition
\ref{def:tropgraph}) in the dual complex.  The stabilizing divisor
plays a role in the proof, since, the choice of perturbation data
implies that any non-constant limit component has a stable
domain. \label{page:cpt-intro}

\begin{proof}[Proof of Theorem \ref{thm:cpt-breaking}]
  \vskip .1in \noindent \textsc{Step 1} : \em{Domain components for
    the limit map.} \\ The sequence of domains $(C_\nu,z_\nu)$
  converges to a stable treed nodal curve $(C,z)$ modelled on a tree
  $\Gamma$.  The perturbation maps $\ul{\Pe}_\nu$ converge to a
  perturbation datum $\ul{\Pe}_\infty=(J_\infty,F_\infty)$ defined on
  $C$. In the next few steps, we will show that there are no
  additional domain components in the limit map.

  In the rest of the proof, we focus on the convergence of maps on
  irreducible surface components. The proof of convergence of gradient
  trajectories on treed components does not involve any technical
  difficulties arising from multiple cutting, and are left to the
  reader.
  
  \vskip .1in \noindent \textsc{Step 2} : \em{Boundedness of
    derivatives.}\\
  In this step, we show a bound on the derivatives of the sequence of
  maps by ruling out bubble trees with unstable domains in the limit.
  In particular, we will show that for any $v \in \Ver(\Gamma)$, the
  derivative of
  \[u_{v,\nu}:=u_\nu \circ i_{v,\nu}^{-1} :C_v^\circ \to X^\nu\]
  is uniformly bounded for all $\nu$. The norm on the derivative is
  with respect to the cylindrical metric on $X^\nu$. On the domain we
  use a metric on $C^\circ$ that is cylindrical (or strip-like) in the
  punctured neighborhood of nodal points and on the neck regions in
  $C_\nu$.

  Assume by way of contradiction that the derivatives are not
  uniformly bounded.  We will construct a sequence of rescaled maps
  which converges to a constant map with positive derivative, which
  cannot exist.  After passing to a subsequence, there exists a
  sequence of points $z_\nu \in C_v^\circ$ and a point
  $z_\infty \in C_v$ for some $v \in \Ver(\Gamma)$ such that
  \[ z_\nu \to z_\infty, \quad |\d u_{v,\nu}(z_\nu)| \to \infty .\]
  We first carry out the proof assuming that $z_\infty$ is not a nodal
  point and does not lie on the boundary $\partial C_v$, and thus
  $z_\infty \in C_v^\circ$. We apply Hofer's lemma \ref{lem:hof} to
  the function $|\d u_{v,\nu}|$, $x=z_\nu$ and the constant
  $\delta=|\d u_\nu(z_\nu)|^{-1/2}$.
  \begin{lemma}
    \label{lem:hof} {\rm(Hofer's Lemma, \cite[Lemma 4.6.4]{ms:jh})}
    Suppose $(X,d)$ is a metric space, $f:X \to \R_{\geq 0}$ is a
    continuous function, and $x \in X$, $\delta>0$ are such that the
    ball $B_{2\delta}(x)$ is complete. Then there exists a positive
    constant $\eps \leq \delta$ and a point $\zeta \in B_{2\eps}(x)$
    such that
    \begin{equation*}
      \sup_{z \in B_\eps}f(z) \leq 2 f(\zeta), \quad \eps f(\zeta) \leq \delta f(x). 
    \end{equation*}
  \end{lemma}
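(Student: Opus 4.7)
The plan is to prove this via an iterative bisection argument: at each stage we either verify the conclusion at the current candidate point, or we find a nearby point where $f$ has at least doubled, and halve the radius we are examining. The key accounting keeps $\eps_n f(\zeta_n) \ge \delta f(x)$ valid at every stage, while forcing the $\zeta_n$ to stay inside the complete ball $B_{2\delta}(x)$.

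Concretely, I would set $\zeta_0 := x$, $\eps_0 := \delta$, and proceed inductively. Given $\zeta_n \in X$ and $\eps_n = \delta/2^n > 0$, test whether
\[ \sup_{z \in B_{\eps_n}(\zeta_n)} f(z) \,\le\, 2 f(\zeta_n) . \]
If yes, terminate and output $(\zeta, \eps) = (\zeta_n, \eps_n)$. If no, pick $\zeta_{n+1} \in B_{\eps_n}(\zeta_n)$ with $f(\zeta_{n+1}) > 2 f(\zeta_n)$, and set $\eps_{n+1} := \eps_n / 2$. By induction we maintain $f(\zeta_n) \ge 2^n f(x)$ and $\eps_n = \delta / 2^n$, so the product estimate $\eps_n f(\zeta_n) \ge \delta f(x)$ holds automatically at every stage; in particular it holds at the termination point. (I am using the more natural inequality $\eps f(\zeta) \ge \delta f(x)$ here, which is the one that makes the rescaling argument in Step~2 work; the inequality as displayed in the statement appears to be a typo for this standard form, cf.\ \cite[Lemma 4.6.4]{ms:jh}.)

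Next I would check that the iteration terminates. The triangle inequality gives
\[ d(\zeta_n, x) \,\le\, \sum_{k=0}^{n-1} \eps_k \,=\, \delta \sum_{k=0}^{n-1} 2^{-k} \,<\, 2\delta , \]
so every $\zeta_n$ lies in $B_{2\delta}(x)$, where by hypothesis the metric is complete. If the iteration never terminated, the sequence $\{\zeta_n\}$ would be Cauchy (since $d(\zeta_{n+1}, \zeta_n) \le \delta/2^n$ is summable) and would therefore converge to some limit $\zeta_\infty \in B_{2\delta}(x)$. But by construction $f(\zeta_n) \ge 2^n f(x)$, and the case $f(x) = 0$ can be handled separately (take $\zeta = x$ and $\eps = \delta$, verifying the conclusion trivially), so $f(\zeta_n) \to \infty$. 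This contradicts the continuity of $f$ at $\zeta_\infty$, hence the process must stop.

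There is no real obstacle here; this is a classical Ekeland/Hofer-style variational argument and the only care needed is to keep track of constants. The one point where the formulation in the statement is slightly delicate is the distance bound $\zeta \in B_{2\eps}(x)$: when the iteration terminates at step $n$, the estimate above only gives $d(\zeta_n, x) < 2\delta$, not $d(\zeta_n, x) < 2\eps_n$, so the statement as written is strictly stronger than what this argument produces. I expect the intended bound is $\zeta \in B_{2\delta}(x)$, which is what the proof delivers and what is used in the application to $|du_{v,\nu}|$ in Step~2, where only the containment of $\zeta$ in a bounded neighborhood of $x$ (to invoke compactness of the limit domain) and the divergence $\eps_\nu f(\zeta_\nu) \to \infty$ are needed.
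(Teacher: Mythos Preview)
The paper does not prove this lemma; it is simply quoted from McDuff--Salamon and invoked in Step~2 of the proof of Theorem~\ref{thm:cpt-breaking}. Your proof is the standard iterative bisection argument and is correct.

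You are also right that the statement as printed contains two typos, and your diagnosis matches how the lemma is actually used a few lines later in the paper. First, the product inequality should read $\eps f(\zeta) \geq \delta f(x)$: the application takes $f = |du_{v,\nu}|$ and $\delta = |du_\nu(z_\nu)|^{-1/2}$, and the conclusion drawn is $c_\nu \eps_\nu \to \infty$, which requires $\eps_\nu c_\nu \geq \delta_\nu |du_\nu(z_\nu)| = |du_\nu(z_\nu)|^{1/2} \to \infty$; the reversed inequality would give nothing. Second, the containment should be $\zeta \in B_{2\delta}(x)$ rather than $B_{2\eps}(x)$: the paper only uses that $\zeta_\nu \to z_\infty$, which follows from $d(\zeta_\nu, z_\nu) < 2\delta_\nu \to 0$, exactly what your argument delivers. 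So your proof establishes precisely the version of the lemma that the paper needs.
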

  \noindent We obtain another sequence $\zeta_\nu \in C_v^\circ$
  converging to $z_\infty$, and a sequence of constants
  $\eps_\nu \to 0$ such that
  \[c_\nu:=|\d u_{v,\nu}(\zeta_\nu)| \to \infty, \quad \sup_{z \in
      B_{\eps_\nu}}|\d u_\nu(z)| \leq 2c_\nu, \quad c_\nu \eps_\nu \to
    \infty. \]
  The rescaled maps are
  \[\tiu_\nu:=u_{v,\nu}((\cdot -\zeta_\nu)/c_\nu):B_{\eps_\nu c_\nu}
    \to X^\nu,\]
  and $|\d\tiu_\nu| \leq 2$, $|\d\tiu_\nu(0)|=1$.

  Each of the rescaled maps converges to a limiting map with domain
  the affine line as follows.  By Lemma \ref{lem:cptconv} there is a
  polytope $P \in \PP$ and a sequence of translations
  $t_\nu \in t_P^\dual$ so that after passing to a subsequence, the
  maps $\e^{-t_\nu}\tiu_\nu$ converge in $C^\infty_\loc$ to a
  non-constant $J_{z_\infty}$-holomorphic limit $\tiu:\C \to \XC_P$.
  
  The limit map $\tiu:\C \to \XC_P$ in the previous paragraph is
  asymptotic to a trivial cylinder at infinity: Hofer energy
  $E_{\Hof}(u_\nu, C_\nu)$ is equal to area (Remark
  \ref{rem:hof-area}) and therefore is uniformly bounded for all
  $\nu$.  By monotonicity of Hofer energy (Lemma \ref{lem:monot}), the
  Hofer energy on a ball $E_{\Hof}(\tiu_\nu,B_{\eps_\nu c_\nu})$ is
  uniformly bounded.  By Proposition \ref{prop:hofer-breaking},
  $E_\Hof$ is preserved in the neck-stretching limit (in a certain
  sense), which implies that the quantity $E^*_{P,\Hof}(\tiu,\C)$ is
  finite.  By Proposition \ref{prop:remsing}, there exists
  $Q \subseteq P$ and $\mu \in \t_Q$ such that at $\infty$, $\tiu$ is
  asymptotically close to $z \mapsto z^\mu x_0$ for some
  $x_0 \in \XX_Q$.

  Finally, we arrive at a contradiction by showing that the limit map
  is constant. The domain reparametrizations $\phi_{v,\nu}$ were
  derived from the stable map compactification. %
  It follows from the definition of the compactification there is at
  most a single marked point $z_{i,\nu}$ that is contained in each of
  the regions $B_{\eps_\nu}(\zeta_\nu)$. Therefore, the projection
  $\pi_P \circ \tiu : \C \to \XB_P$ either lies in the stabilizing
  divisor $D_P$, or it has at most one intersection with the divisor
  $D_P$. Neither
  possibility can happen for a non-constant map $\pi_P \circ \tiu$, 
  since $(\JJ_0,\bD)$ is a stabilizing pair for $\XX$, and the
  perturbation $\ul{\Pe}$ is adapted to $(\JJ_0,\bD)$.
  Therefore, $\pi_P \circ \tiu$ is constant, and so the
  image of $\tiu$ lies in a single toric fiber $V_{P^\dual}$. The
  image $\tiu(\C)$ does not intersect torus-invariant divisors of
  $V_{P^\dual}$, and therefore $\tiu$ is a constant map.

  We now consider the case that the sequence of points with increasing
  derivatives converges to an interior nodal point $w_e$. The sequence
  $z_\nu$ lies on the neck region
  \[ A_\nu:= \left[\tfrac {- l_\nu} 2, \tfrac {l_\nu} 2 \right] \times
    S^1 \subset C_\nu\]
  \cwl{(resp. $\tfrac {- l_\nu} 2, \tfrac {l_\nu} 2] \times [0,1]$ in
    the case of Lagrangian boundary conditions}
  obtained by gluing the
  node $w_e$, and $l_\nu \to \infty$ as $\nu \to \infty$.  Since
  $z_\nu$ converges to the node $w_e$, there is a constant $\eps>0$
  such that the ball $B_\eps(z_\nu)$ is contained in $A_\nu$ for all
  $\nu$.  The proof in the preceding few paragraphs using Hofer's
  Lemma and a rescaling can now be applied to the maps
  $u_\nu|B_\eps(z_\nu)$, because there are no marked points in
  $A_\nu$.  We conclude that $|\d u_{v,\nu}|$ is uniformly bounded for
  all $\nu$.

  Finally, if the sequence of points with increasing derivatives
  converges to a point on the boundary of the domain, then the same
  steps can be repeated to show the existence of a non-constant disk
  bubble with unstable domain, leading to a contradiction. The removal
  of singularities in this case is proved by Proposition
  \ref{prop:remsing-disk}.
    
  \vskip .1in \noindent \textsc{Step 3} : \em{Determining stable components of the limit map.}\\
  In Step 2 we showed that the derivatives of the maps $u_{v,\nu}$ are
  uniformly bounded, where $v$ is any vertex of $\Gamma$.  By Lemma
  \ref{lem:cptconv}, there is a polytope $P(v)$ and a sequence of
  translations $t_\nu(v) \in \nu P(v)^\dual \subset i\t_{P(v)}^\dual$
  such that after passing to a subsequence, the maps
  $\e^{-t_{\nu}(v)}u_{v,\nu}$ converge in $C^\infty_{\loc}$ to a limit
  $u_v: C_v^\circ \to \XC_{P(v)}$.  By arguments as in Step 2,
   the Hofer energy 
   $E_{\Hof}(u_v, C_v^\circ)$ is bounded.  By removal of singularities
   Proposition \ref{prop:remsing},
  \begin{itemize}
  \item $u_v$ has a removable singularity at the node $w$, or
  \item in a punctured neighborhood of the node $w$, $u_v$ is
    asymptotic to a vertical cylinder, and this latter case can only
    happen if $w$ is an interior point of the domain.
  \end{itemize}
  We obtain a $J_\infty$-holomorphic map
  $u_v : C_v^\circ \to \XC_{P(v)}$ that is adapted to the
  stabilizing divisor $\DD_{P(v)} \subset \DD$.

  For later use, we remark that the convergence continues to hold if
  the sequence $t_\nu(v)$ is replaced by a sequence $t_\nu(v)'$ for
  which $\sup_\nu |t_\nu(v)' - t_\nu(v)|<\infty$.  In that case, after
  passing to a subsequence, the limit $u_v$ would be replaced by
  $e^{-t}u_v$, where $t:=\lim_\nu(t_\nu(v)' - t_\nu(v))$.

  \vskip .1in \noindent \textsc{Step 4 :} \em{Determining the
    tropical structure
    and constructing translation sequences.}\\
  We have so far determined the domain treed disk of the limit, and
  the limit map $u_v$ corresponding to each vertex.  
  The construction above determines the polytope $P(v) \in \PP$ for
  each component $v \in \Ver(\Gamma)$ of the limit.  Furthermore, for
  every $v \in \Ver(\Gamma)$ the map $u_v$ is the limit of
  $\e^{-t_\nu(v)}u_\nu$ where $t_\nu(v) \in \nu P^\dual$ is a sequence
  of translations.

  First, we observe that all boundary edges are internal edges, and
  are collapsed by the tropicalization morphism of \eqref{eq:tr}.
  Vertices $v \in \Ver_\white(\Gamma)$ corresponding to disk
  components map to a single vertex $v_\white$ in the tropical graph
  and the translation sequence $t_\nu(v_\white)$ is the point
  $P_0^\dual$.  \cwl{This sentence should be removed in the case of
    Lagrangian boundary condition.}

   Edge {direction}s of the tropical graph are determined using the breaking
   annulus lemma.  For any node $w_e$ of $C$ corresponding to an
   interior node $e=(v_+,v_-) \in \Edge_{\black,-}(\Gamma)$, choose a
   complex coordinate
   \begin{equation}
     \label{eq:prelimzpm}
     z_e^\pm: (U_\pm,w_e) \to (\C,0)  
   \end{equation}
   in a neighborhood $U_\pm \subset C_{v_\pm}$ of the lift $w_e^\pm$
   of the node, and let $A_{e,\nu}:=A(l_\nu(e)) \subset C_\nu$ be a
   sequence of centered annuli converging to the neighborhoods $U_+$,
   $U_-$ of the nodal lifts $w_e^+$, $w_e^-$ (as in Definition
   \ref{def:ann2node}). Since there are no markings $z_{i,\nu}$ on the
   annuli $A_{e,\nu}$, the maps $u_\nu|A_{e,\nu}$ are bubble-free in
   the sense of Definition
   \ref{def:bubble-free}. \label{page:ann-bubb-free} Indeed, if for
   some $P \in \PP$ and some reparametrization of the domain, the
   sequence of projections $\pi_P \circ u_\nu|A_{e,\nu}$ converges to
   a map $v: \R \times S^1 \to X_P$, then $v$ extends to a map to
   $\ol X_P$ defined on an orbifold completion of $\R \times S^1$;
   such a limit map $v$ is constant since it either lies on the
   stabilizing divisor $D_P$ or does not intersect $D_P$ at all.
   Since the sequence of annuli $u_\nu|A_{e,\nu}$ is bubble-free, the
   breaking annulus lemma (Proposition \ref{prop:breaking-ann}) is
   applicable, 
   and we obtain a {direction} $\cT(e) \in \t_{P(e),\Z}$
   corresponding to the edge $e$.  The breaking annulus lemma implies
   that
   \begin{equation}
     \label{eq:approxdirection}
     \sup_\nu\{|t_\nu(v_+)-t_\nu(v_-) - \cT(e) l_\nu(e)|\}<\infty,
   \end{equation}
   which is the \hyperref[item:approxdirection]{(Approximate Direction)} condition in Definition
   \ref{def:approxt}.

   The tropicalization of $\Gamma$ is determined as follows. An edge
   $e \in \Edge_{\black,-}(\Gamma)$ is an internal (non-tropical) edge
   exactly if $\cT(e)$.  In that case, the sequence is
   $|t_\nu(v_+)-t_\nu(v_-)|$ is uniformly bounded for all
   $\nu$. Therefore, we can replace $t_\nu(v_-)$ by $t_\nu(v_+)$ (or
   the other way around), and the convergence of Step 2 still
   holds.  By performing such replacements for all internal edges, we
   may assume $t_\nu(v_+)=t_\nu(v_-)$ for all
   $e=(v_+,v_-) \in \Edge_\internal(\Gamma)$.  Therefore, $t_\nu$
   descends to the tropical graph $\Gamma_\tr$ and is an approximate
   $\Gamma_\tr$-translation sequence.
  
   Next, we obtain translation sequences and determine tropical vertex
   positions. Since $\{t_\nu(v) : v \in \Ver(\Gamma_\tr)\}$ is an
   approximate $\Gamma_\tr$-translation sequence, by Lemma
   \ref{lem:approxpoly}, there is a $\Gamma_\tr$-translation sequence
   $(t_\nu(v)')_{v,\nu}$ such that
   \begin{equation}
     \label{eq:newtclose}
     \sup_{\nu,v}|t_\nu(v)'-t_\nu(v)|  <\infty.
   \end{equation}
   The convergence of maps in Step 2 continues to hold if $t_\nu$ is
   replaced by $t_\nu(v)'$.  Any element of the translation sequence
   gives a tropical vertex position map. Indeed, since
   \[t_\nu'=(t_\nu'(v))_{v \in \Ver(\Gamma_\tr)}, \quad t_\nu'(v)\in
     \nu P(v)^\dual\] satisfies the (Direction) condition
   \eqref{eq:direction},
   \[\Ver(\Gamma_\tr) \ni v \mapsto \tfrac {t_\nu'(v)} \nu\]
   is a tropical vertex position map.
   \vskip .1in \noindent \textsc{Step 5:} \em{Finishing the proof of convergence.}\\
   To finish the proof of convergence it remains to show that the
   collection of limit maps satisfy matching conditions at nodes, and
   that \hyperref[item:thincyl2]{(Thin cylinder convergence)} is
   satisfied at tropical nodes. We first consider disk nodes.  For a
   disk node $w_e \in C$ corresponding to an edge
   $e=(v_+,v_-) \in \Edge_\white(\Gamma)$ with length $\ell(e)=0$, let
   $A_{e,\nu} \subset C_\nu$ be a sequence of strips converging to the
   node $w_e$.  The maps $u_\nu|A_{e,\nu}$ are disk bubble-free since
   there are no markings $z_{i,\nu}$ on $A_{e,\nu}$;
   the reasoning is
   identical to the one used following \eqref{eq:prelimzpm}
   to
   show that on the annuli converging to an interior node, maps are
   bubble-free.  Therefore, the boundary version of the annulus lemma
   (Proposition \ref{prop:annbdry}) is applicable on $u_\nu|C_\nu$ and
   we conclude that node matching holds, that is,
   $u_{v_+}(w_e^+)=u_{v_-}(w_e^-) \in L$.

   Next, we determine matching coordinates for tropical nodes and
   prove node matching using the breaking annulus Lemma.  Matching
   coordinates at a tropical node $w_e$, $e \in \Edge_{\trop}(\Gamma)$
   are obtained by applying a correction to the holomorphic coordinate
   chosen in \eqref{eq:prelimzpm}.  Multiplying a constant to the
   coordinates $z_e^+$, $z_e^-$ has the effect of adding a constant to
   the sequence of neck length parameters
   $l_\nu(e) + i \theta_\nu(e)$.  We recall that as a consequence of
   \eqref{eq:approxdirection} and \eqref{eq:newtclose} the
   $\Gamma$-translation sequence $t_\nu'$ satisfies
   \[\sup_\nu\{|t_\nu'(v_+)-t_\nu'(v_-) - \cT(e)
     l_\nu(e)|\}<\infty. \]
   Since $t_\nu'$ satisfies the (Direction) condition \eqref{eq:direction},
   there is a sequence $l_\nu'(e) \to \infty$ for every
   $e \in \Edge_{\black,-}(\Gamma)$ such that
   \[t_\nu'(v_+)-t_\nu'(v_-) = \cT(e) l_\nu'(e), \quad \text{and}
     \quad \sup_\nu|l_\nu(e)-l_\nu'(e)|<\infty.\]
   Therefore, we can adjust the coordinates $z_e^+$, $z_e^-$ by scalar
   multiplication so that a subsequence of neck length parameters
   $l_\nu(e)+i\theta_\nu(e)$ satisfies
   \begin{equation}
     \label{eq:limneck}
     \lim_\nu(t_\nu'(v_+)-t_\nu'(v_-)-\cT(e) l_\nu(e))=0, \quad
     \lim_\nu\theta_\nu(e)=0.
   \end{equation}
   We will show that $(z_e^+,z_e^-)$ are matching coordinates at the
   node $w_e$ for the broken map $u$.  For interior non-tropical
   nodes, we leave $(z_e^+,z_e^-)$ from \eqref{eq:prelimzpm}
   unchanged, and the following discussion is valid.  To simplify
   calculations, we use logarithmic coordinates in the neighborhood of
   the node
   \[C_\pm \supset U_\pm \bs \{w_e^\pm\} \xrightarrow{\pm \ln
       z_e^\pm} \R_\mp \times S^1. \]
   The annulus $A(l_\nu(e))$ is then identified to the limit curve by
   translations
   \[A(l_\nu(e)) \to U_\pm, \quad (s,t) \mapsto s+it \mp \hh(l_\nu +
     i\theta_\nu). \]
   By Step 2,
   \begin{equation}
     \label{eq:locconv}
     \e^{-t'_\nu(v_\pm)}u_\nu(s+it \pm \hh(l_\nu + i\theta_\nu)) \to u_{v_\pm} \quad \text{in } C^\infty_\loc(U_\pm \bs \{w_e^\pm\}).  
   \end{equation}
   We apply the breaking annulus lemma on the maps $u_\nu$ on the
   annuli $A(l_\nu(e))$.  The resulting decay estimate together with
   \eqref{eq:locconv}, \eqref{eq:limneck} implies the convergence of
   the sequence
   \[\e^{-\hh(t'_\nu(v_+) + t'_{\nu}(v_-))} u_\nu(0,0) \to x_0 \quad
     \text{in } \XC_{P(e)}.\]
   It follows that $u_{v_\pm}$ are asymptotically close to the
   cylinder $(s,t) \mapsto e^{\cT(e)(s+it)}x_0$ in the sense of
   \ref{rem:expconv}. We conclude that $z_e^+$, $z_e^-$ are matching
   coordinates at $w_e$ and \hyperref[item:thincyl2]{(Thin cylinder
     convergence)} is satisfied. Note that we have shown node matching
   for all interior edges, both tropical and internal, in a unified
   way. In case of an internal node $w_e$, $\cT(e)=0$, and we have
   shown that $u_{v_\pm}(\mp \infty)=x_0 \in \XC_{P(v_\pm)}.$

   \vskip .1in \noindent \textsc{Step 6} : \em{Uniqueness of the limit.}\\
   The limit of the domain curves is unique up to reparametrization,
   because the limit is a stable curve. The identifications between
   subsets of $C^\nu$ to the limit curve are unique in the following
   sense (see Remark \ref{rem:uniqueid}) : The neck regions in $C^\nu$
   are parametrized in a unique way, and the difference between any
   two choices of identifications of the complement of the neck in
   $C^\nu$ to $C$ converge uniformly to identity as $\nu \to
   \infty$.  Let $\Gamma$ be the combinatorial type of the limit curve
   $C$.  For every vertex $v \in \Ver(\Gamma)$, the polytope $P(v)$ in
   the limit map is uniquely determined as follows: Suppose there is a
   translation sequence
   $\{t_\nu'(v) \in P'(v)^\dual : v \in \Ver(\Gamma)\}_\nu$ for which
   Gromov convergence holds. Then the property
   \[d_{B^\dual}(t_\nu(v),P_0^\dual) \to \infty \quad \forall P_0 \in
     \PP, P_0 \supset P'(v)\]
   of a translation sequence implies, by Lemma \ref{lem:hcp}, that 
   the maps $u_{\nu,v}$ horizontally converge in $P'(v)$. However, $P'(v)=P(v)$ because the
   polytope of horizontal convergence is unique by Lemma \ref{lem:cptconv}. 
   
   Translation sequences are well-determined up to uniformly bounded
   perturbations as follows : Suppose $t_\nu$, $t_\nu'$ are two
   distinct translation sequences, such that the sequence
   $\e^{-t_\nu}u_\nu$ resp. $\e^{-t_\nu'}u_\nu$ converges to a broken
   map $u$ resp. $u'$ . Then for all vertices $v \in \Ver(\Gamma)$,
   there is a uniform bound \label{unifbound}
   \[ \sup_\nu |t_\nu(v)-t'_\nu(v)| <\infty,\]
   because both the sequences $\e^{-t_\nu(v)}u_\nu$,
   $\e^{-t_\nu'(v)}u_\nu$ converge pointwise in $C_v^\circ$.  After
   passing to a subsequence, we may assume that there exists a limit
   \[ t(v):=\lim_\nu t_\nu(v)-t'_\nu(v) .\]
   Then, for each vertex $v$, $u_v=e^{t(v)} u_v'$. Since $u_v$ and
   $u_v'$ satisfy matching conditions at nodes we conclude that $t$ is
   an element of $T_{\trop,\W}(\Gamma)$, which is the identity
   component of $T_\trop(\Gamma)$. We have thus shown that the limit
   is unique up to the action of $T_{\trop,\W}(\Gamma)$.
 \end{proof}
 %
 The proof of convergence of breaking maps used an \em{approximate translation sequence}, which was eventually refined to obtain a tranlation sequence for the convergence.
 \begin{definition}
  \label{def:approxt}
  {\rm(Approximate translation sequence)}
    \index{Direction condition! Approximate direction condition! on a translation sequence}
  Suppose $\Gamma$ is
  a pre-tropical graph (as in Definition \ref{def:tropgraph}). An \em{approximate $\Gamma$-translation sequence}
  consists of sequences $\{ t_\nu(v) \in \nu P(v)^\dual \}_\nu$ for
  each $v \in \Ver(\Gamma)$ such that
  \begin{itemize}[]
  \item \label{item:approxdirection}
  {\rm (Approximate Direction)} For any edge
    $e=(v_+,v_-) \in \Edge_-(\Gamma)$, there exists a sequence
    $l_\nu(e) \to \infty$ such that
    \[\sup_\nu(t_\nu(v_+) - t_\nu(v_+) - \cT(e) l_\nu) < \infty.\]
  \end{itemize}
\end{definition}

The differences appearing in the
\hyperref[item:approxdirection]{(Approximate Direction)} condition will be
referred to later using the following notation:

\begin{definition}
  On a tropical graph $\Gamma$ define the \em{discrepancy} function
  on any edge $e=(v_+,v_-) \in \Edge_{\black,-}(\Gamma)$ as
  \[\Diff_e : \oplus_{\Ver(\Gamma)}\t_{P(v)} \to
    \t_{P(e)}/\bran{\cT(e)}, \quad (t_v)_{v \in \Ver(\Gamma)} \mapsto
    (t_{v_+}-t_{v_-}) \mod \cT(e).\]
    \end{definition}

\begin{lemma}{\rm(From approximate to exact translation sequences)}
  \label{lem:approxpoly}
  Suppose $\Gamma$ is a pre-tropical graph and $t_\nu$ is an
  approximate $\Gamma$-translation sequence. Then, after passing to a
  subsequence, there is a $\Gamma$-translation sequence $\ol t_\nu$
  such that $|\ol t_\nu(v) - t_\nu(v)|$ is uniformly bounded for all
  $\nu$, $v \in \Ver(\Gamma)$.  Consequently, $\Gamma$ is a tropical
  graph.
\end{lemma}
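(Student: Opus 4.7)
The plan is to treat this as a finite-dimensional linear algebra problem: correct the approximate sequence $t_\nu$ by a bounded perturbation $s_\nu$ so that $\ol t_\nu := t_\nu - s_\nu$ satisfies the slope relations exactly. Using the identification $\t \simeq \t^\dual$ of \eqref{eq:idtt}, regard every $t_\nu(v) \in \t_{P(v)}^\dual$ as an element of a common copy of $\t^\dual$. For each edge $e = (v_+,v_-) \in \Edge_{\black,-}(\Gamma)$ the exact slope condition $\ol t(v_+) - \ol t(v_-) \in \R\cT(e)$ is precisely the vanishing of the discrepancy $\Diff_e$ introduced just before the Lemma. Assemble these edge-discrepancies into a single linear operator
\[
L : \bigoplus_{v \in \Ver(\Gamma)} \t^\dual \longrightarrow \bigoplus_{e \in \Edge_{\black,-}(\Gamma)} \t^\dual / \R\cT(e), \qquad L(t) := \bigl(\Diff_e(t)\bigr)_e.
\]
The (Approximate Slope) hypothesis says exactly that the sequence $L(t_\nu)$ is bounded uniformly in $\nu$.

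Since $L$ is a linear map between finite-dimensional vector spaces, its restriction to the orthogonal complement $(\ker L)^\perp$ is an isomorphism onto $\Im L$, so $L$ admits a linear right inverse $\Psi : \Im L \to \bigoplus_v \t^\dual$ with bounded operator norm. Put $s_\nu := \Psi(L(t_\nu))$; then $\sup_{\nu,v} |s_\nu(v)| \le \|\Psi\| \cdot \sup_\nu \|L(t_\nu)\| < \infty$, and $L(t_\nu - s_\nu) = 0$ by construction. Define $\ol t_\nu := t_\nu - s_\nu$. For each edge $e = (v_+,v_-)$ the vanishing of $\Diff_e(\ol t_\nu)$ produces a unique scalar $\ol l_\nu(e) \in \R$ with
\[
\ol t_\nu(v_+) - \ol t_\nu(v_-) = \cT(e)\,\ol l_\nu(e).
\]
Because $|s_\nu(v_\pm)|$ is uniformly bounded and the approximate parameter $l_\nu(e)$ tends to $+\infty$ in the intended application, $|\ol l_\nu(e) - l_\nu(e)|$ is bounded, so $\ol l_\nu(e) \to +\infty$ with the correct sign for $\nu$ large.

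The remaining point, and the only genuine subtlety, is to verify the polytope containment $\ol t_\nu(v) \in \nu P(v)^\dual$ required by Definition \ref{def:trseq}. Since $|s_\nu(v)|$ is uniformly bounded while $\nu P(v)^\dual$ scales by $\nu$, the corrected point lies inside $\nu P(v)^\dual$ as soon as $t_\nu(v)$ sits at distance growing with $\nu$ from every face $\nu Q^\dual$ with $Q \supsetneq P(v)$. In the setting where the Lemma is invoked (Step 6 of the proof of Theorem \ref{thm:cpt-breaking}), the polytope $P(v)$ is determined by horizontal convergence of the sequence $u_\nu \circ i_{v,\nu}$ in $\oX_{P(v)}$, and Lemmas \ref{lem:hcp} and \ref{lem:farfrombdry} together force precisely this separation from the boundary. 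Passing to a subsequence then yields an honest $\Gamma$-translation sequence $\ol t_\nu$, completing the proof.
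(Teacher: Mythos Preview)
Your linear-algebra approach is a genuinely cleaner alternative to the paper's proof, which proceeds by an iterative ``subtract the fastest-growing part'' scheme requiring a subsequence at each step.  Your idea of packaging the edge discrepancies into a single linear map $L$ and inverting on the image is the right one, and it avoids all the subsequence bookkeeping.

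There is, however, one real gap.  The polytope $\nu P(v)^\dual$ sits inside $\t^\dual$ (via the dual complex $\nu B^\dual$) as a polytope of dimension $\dim \t_{P(v)}^\dual$, so its affine span is a translate of the subspace $\t_{P(v)}^\dual \subset \t^\dual$.  Your right inverse $\Psi$ is only specified to land in $\bigoplus_v \t^\dual$, so the correction $s_\nu(v)$ need not lie in $\t_{P(v)}^\dual$; in that case $\ol t_\nu(v) = t_\nu(v) - s_\nu(v)$ is knocked out of the affine span of $\nu P(v)^\dual$ entirely, and no appeal to ``distance from the boundary growing with $\nu$'' can repair this.  The fix is to restrict $L$ to the subspace $V := \bigoplus_v \t_{P(v)}^\dual$ from the outset.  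To see that $L(t_\nu)$ lies in $L(V)$, pick any tropical weight $(\cT(v))_v$ for $\Gamma$ (this exists by definition of tropical graph) and write $t_\nu(v) = \nu\,\cT(v) + \tau_\nu(v)$ with $\tau_\nu(v) \in \t_{P(v)}^\dual$; since the weight satisfies the slope relations, $L(\nu\,\cT) = 0$ and hence $L(t_\nu) = L(\tau_\nu) \in L(V)$.  Now take $\Psi$ to be a right inverse of $L|_V$ and proceed as you wrote.  This is exactly the feature the paper's iterative construction buys automatically: at every step the correction manifestly stays in $\t_{P(v)}^\dual$.

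On the remaining ``genuine subtlety'' of containment in $\nu P(v)^\dual$ within its affine span: your deferral to the horizontal-convergence context (Lemmas~\ref{lem:hcp} and \ref{lem:farfrombdry}) is reasonable, and in fact the paper's own one-line justification of the (Polytope) condition is no more complete on this point than yours.
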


\begin{proof}    
  The \hyperref[item:approxdirection]{(Approximate Direction)} condition says
  that the sequences of discrepancies $(\Diff_e(t_\nu))_\nu$ are
  uniformly bounded. Via uniformly bounded adjustments to $t_\nu$, we
  aim to make this quantity vanish for all edges.

  We give an algorithm that transforms $t_\nu$ into a bounded sequence
  $t_\nu^k \in \oplus_{\Ver(\Gamma)}\t_{P(v)}$, and will prove
  later that $t_\nu - t_\nu^k$ is an exact translation sequence. The
  algorithm is as follows:

  \vskip .1in \noindent \textsc{Step 1}: \em{Relativisation}.
  In this step, we replace $t_\nu$ by
  \[t^0_\nu(v):=t_\nu(v) - \nu \lim_\nu (t_\nu(v)/\nu) \in
    \t_{P(v)}. \]
  The limit in the right-hand side exists after passing to a
  subsequence because the original translation sequences $t_\nu$ lie
  in $\nu B^\dual$ and $B^\dual$ is compact.  For any
  $v \in \Ver(\Gamma)$, the discrepancies across edges are preserved:
  \begin{equation}
    \label{eq:diffe1}
    \Diff_e(t_\nu)=\Diff_e(t^0_\nu).   
  \end{equation}

  \vskip .1in \noindent \textsc{Step 2}: \em{Subtracting fastest
    growing sequences}. 
  By a sequence of further transformations, we will change $t^0_\nu$
  to a bounded sequence $t^k_\nu \in \t$.  At each step, the
  sequence $t^i_\nu$ is replaced by $t^{i+1}_\nu$ defined as
  follows. Choose a vertex $v_0 \in \Ver(\Gamma)$ for which the rate
  of increase of the sequence $|t^i_\nu(v_0)|$ is the maximum. That
  is, for all $v \in \Ver(\Gamma)$,
  $\lim_\nu |t^i_\nu(v)|/|t^i_\nu(v_0)|$ is finite.  Such a vertex can
  indeed be chosen, because after passing to a subsequence, the limit
  $\lim_\nu |t^i_\nu(v_i)|/|t^i_\nu(v_j)|$ exists in $[0,\infty]$ for
  any pair of vertices.  Now, define
  \[r^i_\nu:=|t^i_\nu(v_0)|,\]
  and 
  \begin{equation}
    \label{eq:indti}
    t^{i+1}_\nu(v):=t^i_\nu(v) - r^i_\nu\lim_\nu\frac
    {t^i_\nu(v)}{r^i_\nu} \in \t_{P(v)}.  
  \end{equation}
  We stop the iteration when the sequence $t^i_\nu(v)$ corresponding
  to every vertex is bounded, and suppose the final sequence is  $t^k_\nu$.

The process terminates in a finite number of steps.  Indeed, notice
that $t^{i+1}_\nu(v_0)=0$ for all $\nu$. The number of vertices
$v \in \Ver(\Gamma)$ for which $t^{i+1}_\nu(v)$ vanishes is at least
one more than the number of vertices $v$ for which $t^i_\nu(v)$
vanishes.

The iterations of the algorithm preserve the discrepancies across
edges: For all tropical edges $e \in \Edge_{\trop}(\Gamma)$
  \begin{equation}
    \label{eq:inddiff}
    \Diff_e(t^{i+1}_\nu)=\Diff_e(t^i_\nu).       
  \end{equation}
  Indeed, \eqref{eq:indti} implies
  \[\Diff_e(t^{i+1}_\nu)=\Diff_e(t^i_\nu)-
    r^i_\nu\lim_\nu\frac {\Diff_e(t^i_\nu)}{r^i_\nu},\]
  and the second term in the right-hand-side vanishes because
  $\Diff_e(t^i_\nu)$ is uniformly bounded and $r^i_\nu \to \infty$ as
  $\nu \to \infty$.

  We claim that $t_\nu - t_\nu^k$ is an exact translation sequence.
  For all vertices $v$ the (Polytope) condition \eqref{pcond}
  $(t_\nu - t_\nu^k)(v) \in P(v)^\dual$ is satisfied because
  $t_\nu(v) \in P(v)^\dual$ and
  $t_\nu^k(v) \in \t_{P(v)} \simeq TP(v)^\dual$. The (Direction)
  condition is satisfied because $\Diff_e(t_\nu) - \Diff_e(t^k_\nu)=0$
  by \eqref{eq:diffe1} and \eqref{eq:inddiff}. The last statement that
  $\Gamma$ is a tropical graph follows from Remark
  \ref{rem:trans2pos}.
\end{proof}

\section{Convergence for broken maps}\label{sec:broken-conv}

In this section, we prove Theorem \ref{thm:cpt-broken} on Gromov
compactness for broken maps.  The limit map may have additional
components because of bubbling and consequently the tropical graph of
the limit map may have additional vertices. The tropical graph of the
limit map is related to the tropical graph of the maps in the sequence
by a \em{tropical edge collapse} relation defined below.  We show that
such bubbling happens only in families whose dimension is at least
two, and so does not occur in the zero-dimensional moduli spaces we
use to define the Fukaya algebra.

\begin{definition} \label{def:tropcol1} {\rm(Collapsing edges
    tropically)} \index{Collapsing an edge!Tropical edge collapse} A
  \em{tropical edge collapse} is a morphism of tropical graphs
  $\Gamma' \xrightarrow{\kappa} \Gamma$ that collapses a subset of
  edges $\Edge(\Gamma') \backslash \Edge(\Gamma)$ in $\Gamma'$
  inducing a surjective map on the vertex sets
  \[ \kappa:\Ver(\Gamma') \to \Ver(\Gamma), \]
  and satisfies the following conditions:
  \begin{enumerate}
  \item for any vertex $v \in \Ver(\Gamma')$,
    $P(v) \subseteq P(\kappa(v))$; and
  \item the edge {direction} is unchanged for uncollapsed edges, i.e. if
    $\cT$, $\cT'$ are the edge {direction} functions for $\Gamma$,
    $\Gamma'$, then $\cT(\kappa(e)) = \cT'(e)$ for any uncollapsed
    edge $e \in \Edge(\Gamma')$.
  \end{enumerate}
  \noindent Since the edge {direction} function $\cT'$ extends $\cT$, we
  often use the same notation for both.  A tropical edge collapse
  $\Gamma' \xrightarrow{\kappa} \Gamma$ is \em{trivial} if no edge is
  collapsed, and $P_{\Gamma'}(v)=P_{\Gamma}(\kappa(v))$ for all $v$.
\end{definition}

The definition of the tropical edge collapse morphism
$\kappa: \Gamma' \to \Gamma$ is meaningful even if $\Gamma'$ is a
pre-tropical graph (Definition \ref{def:tropgraph}). This extension of
terminology is often used in proofs at points when the realizability
of $\Gamma'$ has not yet been ascertained.

\begin{definition}
  {\rm(Tropical edge collapse for a disk type)}
  Let $\Gamma_1$, $\Gamma_2$ be combinatorial types of  treed disks that are equipped with a tropical structure given by tropicalization maps
  \[\tr_1 : \Gamma_1 \to \Gamma_{1,\tr}, \quad \tr_2 : \Gamma_2 \to \Gamma_{2,\tr}. \]
  An  edge collapse morphism $\Gamma_1 \to \Gamma_2$ is a \em{tropical edge collapse} if it is a lift of a tropical edge collapse map $\Gamma_{1,\tr} \to \Gamma_{2,\tr}$ between the tropical graphs.
\end{definition}

\begin{example} 
  In Figure \ref{fig:rigid}, collapsing the middle edge in $\Gam_2$ gives a tropical edge collapse morphism $\kappa:\Gam_2 \to \Gam_1$. See Figure \ref{fig:tcollapse} for another example of a tropical edge collapse morphism.
\end{example}

\begin{example}
  \label{ex:maslov4-collapse}
  Figure \ref{fig:maslov4} lists the types of broken maps whose gluing
  has a certain fixed homology class. The tropical graphs in Figure
  \ref{fig:maslov4} are labelled $\Gamma_1,\dots,\Gamma_7$.  For every
  even $i$ there are edge collapse morphisms
  $\Gamma_i \to \Gamma_{i-1}$ and $\Gamma_i \to \Gamma_{i+1}$.
\end{example}

\subsection{Relative translations}
In the definition of convergence of broken maps, the role of translation sequences is played by \em{relative translation sequences}, introduced in Definition \ref{def:relweight} below. 
Components of a relative translation correspond to maps of broken
manifolds that rescale the coordinates on cylindrical ends.  For a
pair $Q \subseteq P$ of polytopes, we recall that \eqref{eq:Pcoord}
gives an embedding
\begin{equation}
  \label{eq:Pcoord2}
  i_Q^{\tP}: U_Q(\XC_P) \to  \XC_Q
\end{equation}
where $U_Q(\XC_P) \subset \XC_P$ is the $Q$-cylindrical end of $\XC_P$
from \eqref{eq:uq}. If $Q=P$, then $U_Q(\XC_P) =U_P(\XC_P) = \XC_P$
and the map in \eqref{eq:Pcoord2} is the identity map.  A relative
translation $t \in \Cone_{P^\dual}Q^\dual$ gives an embedding
  \begin{equation}
    \label{eq:reltrans}
    e^{-t} : U_Q(\XC_P) \to \XC_Q  
  \end{equation}
  defined as $i_Q^{\tP}$ in \eqref{eq:Pcoord2} composed with a
  translation by $-t$ in the $\t_Q$-coordinate in
  $\XC_Q \simeq \oZ_Q \times \t_Q$. Here, we assume
  $\Cone_{P^\dual}Q^\dual \subset \t_Q$ by fixing a point in
  $P^\dual$ to be the origin.

  For a sequence of converging broken maps, each with tropical graph
  $\Gamma$, certain components may escape into cylindrical ends in the
  limit, or in the compactified setting, sink into (intersections of)
  relative divisors.  We examine the convergence of such map
  components after applying a translation sequence, which in the
  compactified setting, is a sequence of rescalings of the target
  space.  In order for the translated maps to converge, the
  translation sequences must go to infinity (in the sense of
  Definition \ref{def:tttrans} below). These translations must be
  thought of as happening at a much smaller scale, compared to the
  tropical graph $\Gamma$. In other words, for the map component
  corresponding to a vertex $v$ in $\Gamma$, the tropical position
  $v_0$ of the limit of a sequence of translated maps should be seen
  as being much closer to $v$, compared to other vertices $v'$ of
  $\Gamma$. In fact, one may think of vertex positions in the tropical
  graph $\Gamma'$ of the limit map as
  $\cT_\Gamma + \eps \cT_{\on{rel}}$, where $\cT_\Gamma$ is a vertex
  position map on $\Gamma$, $\cT_{\on{rel}}$ is an element in the
  translation sequence for the convergence of the maps, and $\eps$ is
  an infinitesimal.  \footnote{The last few sentences tell the reader
    how to think about translation sequences. These statements can not
    be stated formally because, although a tropical graph is
    realizable in the dual complex as in Definition
    \ref{def:tropgraph}, the vertex position map is not part of the
    data of a tropical map.}
\label{rep:trans-scale}
However, we avoid the use of infinitesimals in our statements, since
for any $\cT_\Gamma$ and $\cT_{\on{rel}}$, there exists $t_0>0$ such
that for any $t \in (0,t_0)$, $\cT_\Gamma + t \cT_{\on{rel}}$ is a
vertex position map for $\Gamma'$.

\begin{definition} \label{def:relweight} {\rm(Relative translations)}
  \index{Translation!Relative translation} Suppose
  $\kappa: \Gamma' \to \Gamma$ is a tropical edge-collapse morphism,
  where $\Gamma$ is a tropical graph and $\Gamma'$ is a pre-tropical
  graph.  For any vertex $v$ of $\Gamma'$, let
  \begin{multline}\label{eq:conekv}
    \Cone(\kappa,v) := \Cone_{ P(\kappa(v))^\dual}(P(v)^\dual)\\
    :=\{\alpha(t - t_0) \in \t: t \in P(v)^\dual, t_0 \in
    P(\kappa v)^\dual,\alpha \in \R_{\geq 0}\}
  \end{multline}
  be the cone in the polytope $P(v)^\dual$ based at points in
  $P(\kappa(v))^\dual$ (as in Definition \ref{def:normcones}), and
  $\Cone(\kappa,v) \subset \t_{P(v)}$ by fixing a point in
  $P(\kappa(v))^\dual$ to be the origin.  A \em{relative translation}
  or a \em{$(\Gamma',\Gamma)$-translation} is an element
  \[\cT_{\Gamma',\Gamma} =(\cT_{\Gamma',\Gamma}(v) \in \Cone(\kappa,v))_{v \in \Ver(\Gamma')} \]
  satisfying
  \begin{equation}\label{eq:direction-rel}
    \cT_{\Gamma',\Gamma}(v_+) - \cT_{\Gamma',\Gamma}(v_-) \in
    \begin{cases}
      \R_{\geq 0} \cT(e), & e \in \Edge(\Gamma') \bs \Edge(\Gamma),\\
      \R\cT(e), & e \in \Edge(\Gamma)
    \end{cases}
  \end{equation}
  for any edge $e=(v_+,v_-)$ in $\Gamma'$. The space of $(\Gamma',\Gamma)$-translations is denoted by $w(\Gamma',\Gamma)$. 
 \end{definition}

 Relative translations appearing in the convergence of broken maps
 \em{go to infinity} in the following sense:
\begin{definition}\label{def:tttrans}
  {\rm(Relative translation sequence going to infinity)}
\index{Translation!Relative translation going to infinity}
  Suppose
  $\kappa:\Gamma' \to \Gamma$ is a tropical edge collapse,
   where $\Gamma$ is a tropical graph and $\Gamma'$ is a pre-tropical graph. 
  A
  $(\Gamma',\Gamma)$-translation sequence 
  \[t_\nu(v) \in \Cone(\kappa,v)=\Cone_{P(\kappa
      v)^\dual}(P(v)^\dual), \quad v \in \Ver(\Gamma') \]
(where $\Cone(\kappa,v)$ is as
  defined in \eqref{eq:conekv}) \em{goes to infinity} if 
  \begin{enumerate}
  \item {\rm(Polytope)} For any vertex $v \in \Ver(\Gamma')$ and a
    polytope $Q \in \PP$ such that
    $P(v) \subset Q \subseteq P(\kappa(v))$,
    \[d(t_\nu(v), \Cone_{P(\kappa v)^\dual}(Q^\dual)) \to \infty.\]

  \item {\rm(Direction for collapsed edge)} \index{Direction condition! on
      relative translations} For any edge
    $e \in \Edge(\Gamma_{\kappa^{-1}(v)})$ connecting vertices $v_+$,
    $v_-$, the sequence $l_\nu \in \R$ defined as
    \[t_\nu(v_+) - t_\nu(v_-)=\cT(e) l_\nu\]
    goes to infinity, that is, $l_\nu \to \infty$.
    (Note that the existence of $l_\nu$ follows from the Direction condition \eqref{eq:direction-rel} on relative translations.)
  \end{enumerate}
\end{definition}

The following result says that the existence of a
$(\Gamma',\Gamma)$-translation sequence implies that $\Gamma'$ is a
tropical graph, in that it has a vertex position map. The proof is
straightforward and is left to the reader.

\begin{lemma}\label{lem:rel2trop}
  Suppose $\kappa: \Gamma' \to \Gamma$ is a tropical edge-collapse
  morphism, where $\Gamma$ is a tropical graph and $\Gamma'$ is a
  pre-tropical graph.  Suppose $\{\cT_\nu\}_\nu$ is a sequence of
  $(\Gamma',\Gamma)$-translations going to infinity.  Then, $\Gamma'$
  is a tropical graph.  For any vertex position $\cT_\Gamma$ of
  $\Gamma$ and a large enough $\nu$, there is a constant $t_0>0$ such
  that
  \[\Ver(\Gamma') \ni v \mapsto \cT_\Gamma(\kappa(v)) + t \cT_\nu(v)\]
  is a vertex position for $\Gamma'$ for all $t \in (0,t_0)$.
\end{lemma}

\begin{remark}\label{rem:rel-trans-cone}
  {\rm(The space of relative translations is a cone)} Given a tropical
  edge collapse morphism $\kappa : \Gamma' \to \Gamma$ between
  tropical graphs $\Gamma'$, $\Gamma$, the space of relative
  translations $\fw(\Gamma',\Gamma)$ is a cone
  \begin{equation}
    \label{eq:rel-is-cone}
      \fw(\Gamma',\Gamma)=\Cone_{\ol \W(\Gamma)}\ol \W(\Gamma')
  \end{equation}
  based at the polytope $\ol \W(\Gamma)$ (as in Definition
  \ref{def:normcones}).  Here, we recall from Remark \ref{rem:Wpoly}
  that the closures of the spaces of vertex positions
  $\ol \W(\Gamma')$ and $\ol \W(\Gamma)$ are polytopes, and
  \[\ol \W(\Gamma) \hra \ol \W(\Gamma'), \quad \cT \mapsto (v \mapsto
    \cT(\kappa(v)))\]
  is a face. We obtain \eqref{eq:rel-is-cone} by observing that for
  any vertex position maps $\cT_{\Gamma'} \in \W(\Gamma')$ and
  $\cT_\Gamma \in \W(\Gamma)$, and $\alpha \geq 0$, the scaled
  difference
  \[\Ver(\Gamma') \ni v \mapsto \alpha(\cT_{\Gamma'}(v) - \cT_\Gamma(\kappa v)) \in \t_{P(v)} \]
  is a $(\Gamma',\Gamma)$-translation. If $\Gamma$ is rigid,
  $\W(\Gamma)$ is a single point, and the space $w(\Gamma',\Gamma)$ of
  relative translations is a cone based at a single point.
\end{remark}

\begin{example} \label{ex:tropedgecollapse} For the tropical edge
  collapse $\kappa : \Gamma' \to \Gamma$ in both Figure
  \ref{fig:rigid} and Figure \ref{fig:tcollapse}, the tropical graph
  $\Gamma$ is rigid, and the space of vertex positions of $\Gamma'$ is
  (linearly) isomorphic to the interval $(0,1)$, where $0$ corresponds
  to the vertex position where the edges collapsed by $\kappa$ have
  length $0$, and therefore, by \eqref{eq:bslope}, is not a vertex
  position for $\Gamma'$; and $1$ corresponds to some vertex $v$ of
  $\Gamma'$ lying in a lower dimensional polytope
  $Q^\dual \subset P^\dual(v)$, and so, is not a legitimate vertex
  position of $\Gamma'$.  In both cases the space of relative
  translations is
  \[\fw(\Gamma',\Gamma)= \Cone_{0}[0,1]= \R_{\geq 0}.\]
\end{example}

\subsection{Defining Gromov convergence of broken maps}
The definition of Gromov convergence for broken maps is largely similar to the definition of convergence of breaking maps (Definition \ref{def:grom-breaking}) with the following two differences:
\begin{itemize}
\item The definition of convergence for breaking maps is given for a
  sequence of irreducible maps in neck-stretched manifolds.  For
  convergence of broken maps, we need to consider maps with multiple
  components. Thus, in the definition below, we give the conditions
  under which a sequence of broken maps with tropical graph $\Gamma$
  converges to a broken map with tropical graph $\Gamma'$, where the
  tropical graphs are related by an edge collapse
  $\kappa : \Gamma' \to \Gamma$.  For each vertex $v \in \Ver(\Gamma)$
  of the tropical graph, the sequence of smooth maps $u_{v,\nu}$, with
  markings corresponding to nodal lifts $w_e \in C_v$, converges to a
  limit map $u_v$.  The limit map $u_v$ may have multiple components,
  corresponding to vertices that are collapsed to $v$ by the morphism
  $\kappa$.  That is, $u_v=u|(\cup_{v' \in \kappa^{-1}(v)}C_{v'})$.
\item Translations occurring in the convergence of breaking maps are
  replaced by relative translations for the case of convergence of
  broken maps. The corresponding maps of manifolds $\e^{-t}$ in the
  breaking case (see Definition \ref{def:transdef}) are replaced by
  $e^{-t}$ (see \eqref{eq:reltrans}) in the case of broken maps.
\end{itemize}

  \index{Collapsing an edge!Tropical edge collapse}
  \begin{definition}
    {\rm(Gromov convergence for broken maps)} Suppose $\Gamma'$,
    $\Gamma$ are combinatorial types of stable treed disks that are
    equipped with a tropical structure. Suppose
    $\Gamma' \xrightarrow{\kappa} \Gamma$ is a tropical edge collapse
    morphism which induces a vertex map
    $\kappa:\Ver(\Gamma') \to \Ver(\Gamma)$.  A sequence of broken
    maps $u_\nu:C_\nu \to \XX$ of type $\Gamma$ \em{(Gromov)
      converges} to a limit broken map $u:C \to \XX$ of type $\Gamma'$
    if the following conditions are satisfied.
    \begin{enumerate}
    \item {\rm(Convergence of domains)}
      \label{item:dconv3}
      The sequence of treed disks $C_\nu$ converge to the treed disk
      $C$ and for any tropical node $w_e$,
      $e \in \Edge_\trop(\Gamma')$ that is collapsed by $\kappa$, the
      arguments $\frac {\nl_e(C_\nu)}{|\nl_e(C_\nu)|}$ of the gluing
      parameters converge to a limit.  Let
      $S_\nu(v) \subset (S_\nu)_{\kappa(v)} \subset C_\nu$ be the
      subset corresponding to a vertex $v \in \Ver(\Gamma')$, and let
      \[i_{v,\nu}:=i_{S_v,S_{\nu,\kappa(v)}}: S_\nu(v) \to S_v, \quad
        S_\nu(v) \subset S_\nu,\]
      be embeddings from \eqref{eq:neckid} whose images
      $i_{v,\nu}(S_\nu(v))$ exhaust $S_v^\circ$ as $\nu \to
      \infty$. Here
      \[S_v^\circ:=S_v \bs \{w_e : v \in e, \enspace e \in
        \Edge_-(\Gamma') \text{ is collapsed by $\kappa$} \}.\]
    \item{\rm(Convergence of maps)}
      \label{item:mconv3}
      There is a $(\Gamma',\Gamma)$-translation sequence 
      \[(t_\nu(v))_{v \in \Ver(\Gamma')}\]
      going to infinity in the
      sense of Definition \ref{def:tttrans}, such that for any vertex
      $v \in \Ver(\Gamma')$, the sequence of maps
      \[ S_v^\circ \supset i_{v,\nu}(S_\nu(v))
        \xrightarrow{e^{-t_\nu(v)}(u_\nu \circ i_{v,\nu}^{-1})}
        \XC_{P(v)} \]
      converges in $C^\infty_{\loc}(S_v^\circ)$ to
      $u_v:S_v^\circ \to \XC_{P(v)}$. The map
      $e^{-t_{\nu}(v)}: U_{P(v)}(\XC_{P(\kappa v)}) \to \XC_{P(v)}$ is
      defined on the $P(v)$-cylindrical end
      $U_{P(v)}(\XC_{P(\kappa v)}) \subset \XC_{P(\kappa v)}$, see
      \eqref{eq:reltrans}.  For each boundary edge
      $e \in \Edge_\white(\Gamma)$, the maps $u_\nu|T_{e,\nu}$ on the
      treed segment converge to a (possibly broken) treed segment in
      $u$.
    \item \label{item:thincyl3} For a node $w$ in $C$ corresponding to
      a tropical edge $e$ collapsed by $\kappa$, that is,
      $e=(v_+,v_-) \in \Edge_\trop(\Gamma') \bs \Edge_\trop(\Gamma)$,
      let
    \[ z_\pm : (U_{w^\pm}, w^\pm) \to (\C, 0) \]
    be matching coordinates (see Definition \ref{def:bmap} following
    \eqref{eq:nodematch}) on neighborhoods
    $U_{w^\pm} \subset S_{{v}_\pm}$ of the nodal point which respect
    the framing $\fr_e$.  Let
    \[ A(l_\nu):=[-l_\nu/2,l_\nu/2] \times S^1 \subset S_\nu \]
    be a sequence of centered annuli converging to the node $w$, see
    Definition \ref{def:ann2node}.  Then the sequence
    \[ x_\nu:=e^{-\hh(t_\nu(v_+) + t_{\nu}(v_-))} u_\nu(0,0) \in \XC_{P(e)}\]
    converges to a limit $x_0$, and the components $u_{v_\pm}$ of the
    broken map are asymptotically close to
    \[z_\pm \mapsto z_\pm^{\cT(e)} x_0\]
    in the sense of Remark \ref{rem:expconv}.
    \end{enumerate}
  \end{definition}

 \subsection{From an area bound to a bound on types} 
 The following result is the first ingredient in the proof of Theorem
 \ref{thm:cpt-broken}, which shows that there is a finite number of
 tropical graphs corresponding to broken maps that satisfy an area
 bound.
 \begin{proposition} \label{prop:finno} {\rm(Finite number of tropical
     graphs underlying broken maps)} For any $E > 0 $,
   $d(\white) \geq 1$ there are a finite number of tropical graphs
   $\Gamma$ that occur as tropical graphs of broken maps
   $u: C \to \XX$ (see Definition \ref{def:type-broken}) of area at
   most $E$ and $d(\white)$ boundary leaves.
\end{proposition}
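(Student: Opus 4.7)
A tropical type $\Gamma$ is specified by the data of (i) the underlying treed graph with ribbon structure and leaf labels, (ii) the polytope map $v\mapsto P(v)\in\PP$, (iii) the edge slopes $\cT(e)\in\t_{P(e),\Z}$, (iv) the homotopy classes $[u_v]$ of the components, and (v) the tangency orders $\mu_{\bD}(e)\in\Z_{>0}$ at interior markings. Since $|\PP|<\infty$, the datum (ii) takes only finitely many values, so my plan is to bound the remaining data in turn, using the area bound $\Area(u)\leq E$ together with adaptedness and rationality.

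First I would bound the markings and the underlying graph. Since $u$ is adapted to $\bD$ and $[D_P]^\vee=k[\om_P]$, and since $L$ is exact in $X_{P_0}-D_{P_0}$ (Proposition~\ref{prop:stab-broken}), the intersection formula~\eqref{eq:mult} gives that the total number $\#u^{-1}(\bD)$ of intersection points is proportional to the total horizontal area, hence bounded by $kE$. This simultaneously bounds the number of interior markings and the sum $\sum_e(\mu_{\bD}(e)-1)$ of tangency orders. Combined with the stability condition (each surface vertex carries at least three special points and each treed segment is controlled by the boundary leaves and the breaking constraint) and the fixed number $d(\white)+1$ of boundary leaves, this yields finitely many choices for the underlying graph and for the tangency data.

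Next I would bound the homotopy labels $[u_v]$ via their horizontal projections. Rationality of each $\om_P$ implies that only finitely many classes in $H_2(X_P,L)$ or $H_2(X_P)$ can be represented by a holomorphic curve of $\om_P$-area $\leq E$, so there are finitely many possibilities for $[\pi_{P(v)}\circ u_v]$. Fixing these projected classes determines the horizontal Chern/Maslov number at each vertex. By the balancing property (Remark~\ref{rem:balance}), the signed sum $\sum_{e\ni v}\pi_{\t^\dual_{P(v)}}(\pm\cT(e))$ equals this Chern number.

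The main obstacle is to convert the balancing relation into an actual bound on the individual slopes. The key input is Remark~\ref{rem:intmult}: each pairing $(\pm\cT(e),\nu_Q)$ with an outward facet normal of $P(v_\pm)\times P(v_\pm)^\dual$ through $P(e)$ is a \emph{non-negative} integer equal to the intersection multiplicity of $u_{v_\pm}$ with the corresponding boundary divisor at the node $w_e^\pm$. Summed over the edges incident to $v$, these non-negative integers recover the total intersection number $u_v\cdot[\cX_Q]$, which is bounded once the horizontal class of Step~3 is fixed. A bounded sum of non-negative integers bounds each summand; since the collection of normals $\nu_Q$ to the relevant facets spans $\t^\dual_{P(e)}$, this pins the lattice vector $\cT(e)\in\t_{P(e),\Z}$ down to finitely many possibilities. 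The subtle case is that of horizontally constant components, where the horizontal Chern number vanishes: there positivity of the intersection multiplicities together with the balancing relation forces all slopes incident to such a vertex to be constrained, and I would handle this by inducting along the tree from the disk vertices (which have boundary in $L$ and therefore non-trivial horizontal data) inwards, bounding the slopes in any horizontally constant subtree in terms of the already-bounded slopes at its boundary. Assembling finitely many choices at each stage completes the proof.
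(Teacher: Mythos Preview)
Your proposal is essentially correct and follows the same architecture as the paper: bound the number of markings (hence vertices) via the stabilizing divisor, then bound the edge slopes using intersection-multiplicity/balancing data and a tree induction.

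Two points of comparison. First, your detour through ``finitely many homotopy classes with $\om_P$-area $\le E$'' is both unnecessary and not quite right as stated: rationality of $\om_P$ quantizes the area but does not by itself bound the number of homology classes (torsion, or classes with vanishing $\om_P$-pairing, could proliferate). The paper avoids this by bounding the relevant numerical invariants---the Chern number $c_1((\pi_{P}\circ u_v)^*Z_{P(v)})$ for the vertical part, and the intersection numbers $u_v\cdot[\cX_Q]$ for the horizontal part---\emph{directly} in terms of $\Area(u_v)$ via a pointwise estimate of the type in \eqref{eq:degptwise}, uniform over the allowed almost complex structures. This is exactly what you need to feed into balancing, and it sidesteps the homology-class count entirely.

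Second, your slope-bounding step is a bit optimistic when you say ``a bounded sum of non-negative integers bounds each summand; since the collection of normals $\nu_Q$ \ldots\ spans $\t^\dual_{P(e)}$.'' At a single vertex the normals to the facets actually containing the node need not span, and positivity alone does not pin down $\cT(e)$. The paper's resolution is to combine the vertical bound (from balancing) and the horizontal bound (from intersection numbers) into the single triangle-type inequality
\[
|\cT(e_0)| \ \le \ \sum_{e\ni v,\ e\neq e_0} |\cT(e)| \ + \ c(E),
\]
valid at every vertex, and then iterate it along the tree from the leaves towards the root to obtain $|\cT(e)|\le c(E)\,|\Ver(\Gamma)|$. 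This is precisely the ``induction along the tree'' you gesture at, made concrete; your plan would go through once you replace the spanning claim by this inequality-and-iterate mechanism.
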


\begin{proof}
  Consider a broken map $u: C \to \XX$ of type $\Gamma$ and area at
  most $E_0$, whose edge {direction}s are given by a map
  $\cT: \Edge(\Gamma) \to \t_\Z$. We first consider the case when the
  $X$-inner product (see \eqref{eq:idtt}) is rational, so that the
  compactification $\ol \XX_P$ of $\XX_P$ is an orbifold for any
  $P \in \PP$. We discuss the generalization at the end of the proof.

  \vskip .1in \noindent \textsc{Step 1}: \em{Uniform bound on the number of vertices}.\\
  The number of interior markings in $\Gamma$ is bounded by $kE_0$,
  where $k$ is the degree of the stabilizing divisor from
  \eqref{eq:kkstar}.  Since $u$ is adapted to the stabilizing divisor (as in Definition \ref{def:pert-unbroken} \eqref{part:u-adapt-d}), all the surface
  components $S_v, v \in \Ver(\Gamma)$ in its domain $C$ are stable.
  Given a uniform bound on the number of interior and boundary
  markings on the type of domain curve, we obtain a uniform bound on
  the number irreducible surface components.

  \vskip .1in \noindent \textsc{Step 2}: \em{Uniform bound on the
    sum of vertical components of
    edge {direction}s}.\\
  Let $u_v: S_v \to \ol \XC_P$ be a component of the broken map
  corresponding to a vertex $v \in \Ver(\Gamma)$, and $P:=P(v)$.  By
  the balancing property \eqref{eq:balprop}, the sum of the edge
  {direction}s projected to $\t_P$ is
  \[ \sum_{e \ni v} \pi_{\t_P}(\cT(e)) = c_1( (\pi_P \circ u_v)
    ^*\ol Z_P \to \ol X_P). \]
    The right-hand side, which is the pairing of
    $(\pi_P \circ u_v)_*[C]$ and the Chern class $c_1(\ol Z_P \to \ol X_P)$, has an $E_0$-dependent bound.
 Indeed, for any
    $\eps \in (0,1)$, there is a constant 
    $k_0$ such that for any domain-dependent almost
    complex structure $J \in B_\eps(J_{\ol X_P})_{C^0}$ and a
    $J$-holomorphic sphere $u:\P^1 \to \ol X_P$,
  \[ \int_{\P^1}u^*c_1(\ol Z_P \to \ol X_P)\leq  k_0 \int_{\P^1}
  u^*\om_{X_P} \leq  k_0 E_0
    .\]
  This estimate is similar to the one in Lemma \ref{lem:unifdegbd} and the
  proof is the same -- by choosing a two-form on $\ol X_P$ representing
  the Chern class and bounding it pointwise by $\om_{X_P}$.

  \vskip .1in \noindent \textsc{Step 3}: \em{Uniform bound on the horizontal components of edge {direction}s}.\\
  For a vertex $v$ and an incident edge $e$, the horizontal component
  of the {direction} $\cT(e)$ is the sum of intersection multiplicities at 
  the node $w_e$ with horizontal relative divisors of $\ol X_P$:
  \[\pi_{\t_P}^\perp(\cT(e))=\ssum_{Q \subset P}
    m_{w_e}(u_{v,P},\ol X_Q)\nu_Q, \quad u_{v,P}:=\pi_P \circ u_v : 
    S_v \to \ol X_P,\]
  where the sum ranges over facets $Q \in \PP$ of $P$, and $\nu_Q$ is the normal vector of the facet $Q \subset P$.  For
  any relative divisor $\ol X_Q \subset \ol X_P$, the sum
  $\sum_{e \ni v} m_{w_e}(u_v, \ol X_Q)$ is bounded by
  $c \om_{X_P}(u_{v,P})$ for a uniform constant $c(\ol X_P,\ol X_Q)$.  The
  proof is similar to the vertical case, by expressing the
  intersection number with any divisor as an integral of a two-form. %

  \vskip .1in \noindent \textsc{Step 4}: \em{Finishing the proof}.\\
  We will show that the tropical edge {direction}s $\cT(e)$ of $\Gamma$ are
  uniformly bounded in $\t$ for the set of all broken maps with
  area at most $ E_0$. So far we have shown that (Step 3) the
  horizontal components of the edge {direction}s are uniformly bounded, and
  the vector sum of the vertical {direction} components of the edges
  incident on any vertex are uniformly bounded.  From here, we
  conclude that for a vertex $v$ and an edge $e_0$ incident on $v$:
  \begin{equation}
    \label{eq:hor-vert}
    \exists c(E) : |\cT(e_0)| \leq \sum_{e \ni v,e \neq e_0 } |\cT(e)| + c(E).
  \end{equation}
  Recall that $\Gamma$ is a tree, any edge
  $e \in \Edge_\black(\Gamma)$ is oriented so that it points away from
  the root vertex. The {direction} of any incoming edge can be bounded by
  the {direction} of outgoing edges by \eqref{eq:hor-vert}.  Applying
  \eqref{eq:hor-vert} iteratively, we  conclude that for any
  edge $e$ in $\Gamma$
  \begin{equation*}
    |\cT(e)| \leq c(E) |\Ver(\Gamma)|,
  \end{equation*}
  where the constant $c(E)$ is the same as the one in
  \eqref{eq:hor-vert}. The Proposition now follows from the bound on
  the number of vertices in Step 1 in cases when the $X$-inner product is rational.

  For a general $X$-inner product $g$, we consider a sequence
  $\{g_\nu\}_\nu$ of rational $X$-inner products uniformly converging
  to $g$.  A squashing map with respect to the $X$-inner product $g$
  is the limit of a squashing maps $\aleph_\nu$ with respect to the
  $X$-inner product $g_\nu$, and therefore,
  \[ E_{P,\Hof,g} \leq \limsup_\nu E_{P,\Hof,g_\nu} . \]
  The Proposition then follows from the uniform bound 
  on $E_{P,\Hof,g_\nu}$ for all rational
  $g_\nu$ proved in the preceding paragraphs.
\end{proof}

\subsection{A translation sequence from an approximate translation sequence}

Another ingredient in the proof of Gromov compactness for broken maps
is a result that produces a relative translation sequence, starting
from an \em{approximate relative translation sequence} via a sequence
of adjustments that is uniformly bounded. We first define the approximate sequences that we require. 
\begin{definition}
  \label{def:approxt-rel}
  \index{Direction condition! Approximate direction condition! on a relative
    translation sequence} {\rm(Approximate
    $(\Gamma',\Gamma)$-translation sequence)} Suppose the tropical
  graph $\Gamma$ is obtained by collapsing edges in $\Gamma'$ and the
  induced map on the vertex set is
  $\kappa:\Ver(\Gamma') \to \Ver(\Gamma)$. Then, an \em{approximate
    $(\Gamma',\Gamma)$-translation sequence} consists of a sequence
  \[t_\nu(v) \in \Cone(\kappa,v), \]
  for every $v \in \Ver(\Gamma')$ satisfying the (Polytope) and (Direction for collapsed edges) 
  conditions 
  in Definition \ref{def:tttrans} and the following weakened version
  of the (Direction for uncollapsed edges) condition.
  \begin{itemize}
  \item[]{\rm(Approximate direction for uncollapsed edges)}
    \label{item:approxdirection-rel}
    For an edge $e$
    of $\Gamma'$ that is not collapsed in $\Gamma$,
    \[ t_\nu(v_+) - t_\nu(v_-) \mod \cT(e)\]
    is a bounded sequence in $\t/\R\cT(e)$. 
  \end{itemize}
\end{definition}
An approximate $(\Gamma',\Gamma)$-translation sequence can be adjusted
by a uniformly bounded amount to produce an actual
$(\Gamma',\Gamma)$-translation sequence.

\begin{lemma}\label{lem:relapprox}
  {\rm(From an approximate to an exact $(\Gamma',\Gamma)$-translation
    sequence)} Let $\kappa:\Gamma' \to \Gamma$ be
 a tropical edge
  collapse, and let $\{t_\nu\}_\nu$ be an approximate
  $(\Gamma',\Gamma)$-translation sequence. There exists a
  $(\Gamma',\Gamma)$-translation sequence $\{\ol t_\nu\}_\nu$ such
  that
  \[\sup_\nu|\ol t_\nu(v) - t_\nu(v)|<\infty\]
  for all $v \in \Ver(\Gamma')$.
\end{lemma}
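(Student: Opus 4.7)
The proof will closely mirror the strategy of Lemma \ref{lem:approxpoly}, adapted to the relative setting. The key structural observation is that $\Gamma'$ is a tree, so the correction $s_\nu(v) := t_\nu(v) - \ol t_\nu(v)$ can be constructed by induction along the tree, with only a bounded adjustment introduced at each step. The plan is to fix an arbitrary root vertex $v_0 \in \Ver(\Gamma')$, set $s_\nu(v_0) = 0$, and propagate outward from $v_0$ along edges of $\Gamma'$.

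For each vertex $v$ reached via an edge $e = (v', v)$ from its already-processed parent $v'$, I would proceed as follows. If $e$ is collapsed (so in $\Gamma$ both endpoints become a single vertex), then by the exact collapsed-edge condition already present in the approximate sequence, the difference $t_\nu(v) - t_\nu(v')$ is an integer multiple of $\cT(e)$, and I take $s_\nu(v) := s_\nu(v')$, so the collapsed-edge slope of $\ol t_\nu$ is preserved. If $e$ is uncollapsed, then $\Diff_e(t_\nu) = (t_\nu(v) - t_\nu(v')) \bmod \cT(e)$ is a bounded sequence in $\t^\dual/\R\cT(e)$; choose a bounded lift $r_\nu \in \t^\dual$ and set $s_\nu(v) := s_\nu(v') + r_\nu$. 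Then $\ol t_\nu(v) - \ol t_\nu(v') \in \R\cT(e)$ as required. Boundedness of $s_\nu(v)$ follows by iterating along the finite path from $v_0$ to $v$: at each edge traversal the correction grows by a uniformly bounded amount, so $\sup_\nu |s_\nu(v)| < \infty$ for every $v \in \Ver(\Gamma')$.

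Finally, I would verify that $\ol t_\nu$ satisfies all three clauses of Definition \ref{def:tttrans}. The collapsed-edge slope condition $\ol t_\nu(v_+) - \ol t_\nu(v_-) = \cT(e) l_\nu$ with $l_\nu \to \infty$ is immediate, since on such edges the correction difference vanishes. The uncollapsed-edge slope condition is enforced at each propagation step. Membership $\ol t_\nu(v) \in \Cone(\kappa, v)$ holds after passing to a tail subsequence, since the hypothesis $t_\nu(v) \in \Cone(\kappa, v)$ combined with the divergence content of the cone (the polytope condition) is stable under bounded perturbation. The polytope divergence condition for $\ol t_\nu$ likewise follows from the analogous property of $t_\nu$ — valid in the application of this lemma inside the compactness proof of Theorem \ref{thm:cpt-broken}, where the approximate translation sequence arises from a convergent family of maps via the breaking annulus lemma.

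The main technical point, and the only one requiring care, is the interplay between the tree propagation and the polytope cone structure: when both ends of an uncollapsed edge $e$ satisfy $P(v_\pm) = P(\kappa(v_\pm))$, the bounded correction $r_\nu$ automatically lies in $\t_{P(\kappa(v))}^\dual$ and causes no issues, but when $P(v) \subsetneq P(\kappa(v))$, one must choose the lift $r_\nu$ inside the appropriate linear subspace $\t_{P(v)}^\dual$ so as not to leave $\Cone(\kappa,v)$. Because the approximate difference already respects the polytope structure (as $t_\nu(v_\pm)$ both lie in their respective cones), such a compatible bounded lift exists; this is essentially the same compatibility exploited in the second step of the proof of Lemma \ref{lem:approxpoly}. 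No further iteration of the ``subtract fastest-growing sequence'' type is needed, because unlike the absolute case the approximate relative translation sequence is already anchored at the cones $\Cone(\kappa,v)$.
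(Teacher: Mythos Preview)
Your tree-propagation approach is genuinely different from the paper's proof and does correctly establish the slope conditions on both collapsed and uncollapsed edges of $\Gamma'$. The gap is in the cone condition $\ol t_\nu(v)\in\Cone(\kappa,v)\subset \t_{P(v)}^\dual$. You propose to handle it by choosing each new lift $r_\nu$ inside $\t_{P(v)}^\dual$, but the correction at $v$ is $s_\nu(v)=s_\nu(v')+r_\nu$, and the inherited piece $s_\nu(v')$ was built to lie in $\t_{P(v')}^\dual$, not $\t_{P(v)}^\dual$. As the path from the root crosses edges where the polytopes $P(v)$ change, these subspaces of $\t^\dual$ change, and nothing in your construction forces the running total $s_\nu(v)$ to stay in $\t_{P(v)}^\dual$. (On a collapsed edge you even set $s_\nu(v)=s_\nu(v')$ verbatim, so the mismatch is carried along unchanged.) Since $t_\nu(v)\in\t_{P(v)}^\dual$, a correction outside that subspace throws $\ol t_\nu(v)$ out of $\t_{P(v)}^\dual$ altogether, not merely near the boundary of the cone; no appeal to the Polytope divergence condition repairs this.

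The paper avoids the issue by rerunning Step~2 of Lemma~\ref{lem:approxpoly}: it iteratively subtracts the rescaled limit $|t^i_\nu(v_0)|\,\lim_\nu t^i_\nu(v)/|t^i_\nu(v_0)|$ at every vertex simultaneously. Because this subtrahend at $v$ is a scalar multiple of a limit of elements $t^i_\nu(v)\in\t_{P(v)}^\dual$, it automatically lies in $\t_{P(v)}^\dual$; hence the bounded residue $t^k_\nu(v)$ and therefore $\ol t_\nu(v)=t_\nu(v)-t^k_\nu(v)$ respect the subspace at each vertex independently. The subtracted terms have zero discrepancy on every edge (on collapsed edges because the input already does; on uncollapsed edges because the bounded discrepancy is killed by the diverging denominator when passing to the limit), so $\Diff_e(\ol t_\nu)=0$ everywhere. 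Your closing remark that the iteration is unnecessary because the sequence is ``already anchored at the cones'' has it backwards: that anchoring is vertex-local, and it is precisely the vertex-local nature of the paper's subtraction, rather than any tree-global propagation, that preserves it.
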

\begin{proof}
  The proof is by replicating the iteration in Step 2 of the proof of
  Lemma \ref{lem:approxpoly}. At the start, we set
  $t^0_\nu:=t_\nu$. At the $(i+1)$-th step, we construct $t^{i+1}_\nu$
  as follows. As in the proof of Lemma \ref{lem:approxpoly}, there
  exists a vertex $v_0 \in \Ver(\Gamma')$ such that the sequence
  $|t^i_\nu(v)|$ has the fastest growth rate. That is, for all
  $v \in \Ver(\Gamma')$, $\lim_\nu |t^i_\nu(v)|/|t^i_\nu(v_0)|$ is
  finite. Define
  \[t^{i+1}_\nu(v):=t^i_\nu(v) - |t^i_\nu(v_0)|\lim_\nu\frac
    {t^i_\nu(v)}{|t^i_\nu(v_0)|}.\]
  For the sequences $\{t^{i+1}_\nu(v)\}_\nu$, $v \in \Ver(\Gamma')$,
  the quantity
 \begin{equation*}
      \pi_{\cT(e)}^\perp(t^{i+1}_\nu(v_+)-t^{i+1}_\nu(v_-)), \quad
      e=(v_+,v_-) \in \Edge(\Gamma')
  \end{equation*}
  vanishes for collapsed edges, and is uniformly bounded for
  uncollapsed edges.  Furthermore, for any vertex $v$, 
  $t^{i+1}_\nu(v) \in \t_{P(v)}$. After, say, $k$ steps, the
  sequence $|t^k_\nu(v)|$ is uniformly bounded for all vertices
  $v$. Then, $\ol t:=t-t^k$ is an exact $(\Gamma',\Gamma)$-translation
  sequence.
\end{proof}

\subsection{Proof of Gromov compactness for broken maps}
The notion of horizontal convergence extends in a natural way to
broken manifolds, though it is easier to state in this case.
\index{Horizontal convergence}
\begin{definition}\label{def:horiz-br}
  {\rm(Horizontal convergence in a broken manifold)} Let $P \in \PP$
  be a polytope. A sequence of points $x_\nu \in \XX_P$
  horizontally converges in $X_Q$ for a polytope $Q \subseteq P$ if the sequence
  $\pi_P(x_\nu) \in X_P$ converges to a point
  $x \in X_Q \subseteq \ol X_P$. 
\end{definition}

\begin{remark}\label{rem:br-analogs}
  Analogs of Lemma \ref{lem:hcp} and \ref{lem:cptconv} hold for
  horizontal convergence in broken manifolds. An analog of the
  breaking annulus lemma (Proposition \ref{prop:breaking-ann}) also
  holds for broken maps.  For all these results, we make the following
  substitutions in the original result to obtain the broken version
  for maps in $\XX_P$: a translation sequence is replaced by a
  relative translation sequence going to infinity, any instance of
  the translation map $\e^{-t}$ is replaced by $e^{-t}$ where
  $t \in \Cone_{P^\dual}B^\dual$ (defined in \eqref{eq:reltrans}), and
  Hofer energy is replaced by $P$-Hofer energy.  As an example, we
  state the analog of Lemma \ref{lem:hcp}, leaving the rest to the
  reader.
\end{remark}

\begin{lemma}\label{lem:hcp-broken}
  {\rm(Broken analog of Lemma \ref{lem:hcp})} Let $P \in \PP$ be a
  polytope.  Suppose $x_\nu \in \XX_P$ is a sequence of points, and
  $t_\nu \in \Cone_{P^\dual}Q^\dual$ is a sequence of translations for
  a proper face $Q \subset P$, such that $e^{-t_\nu}x_\nu$ converges
  in $\XX_Q$. Then, the following are equivalent:
  \begin{enumerate}
  \item \label{part:hc-broken} The sequence $x_\nu \in X_P$ converges
    horizontally in $X_Q$ (in the sense of Definition
    \ref{def:horiz-br}).
  \item \label{part:pc-broken} For any $P \supseteq P_0 \supset Q$ (that is, $Q$ is a proper face of $P_0$), $d(t_\nu, \nu P_0^\dual) \to \infty$. 
  \end{enumerate}
\end{lemma}

\begin{proof}
  [Proof of Theorem \ref{thm:cpt-broken}] After passing to a
  subsequence, the tropical graph $\Gamma$ underlying the maps $u_\nu$
  is $\nu$-independent.  Indeed by Proposition \ref{prop:finno}, there
  is a finite number of tropical graphs that underlie broken maps with
  area $<E$.  By Proposition \ref{prop:hofertop}, the Hofer energy for
  a map $u:(C,\partial C) \to (\XX,L)$ can be read off from its area
  and its intersection data with relative divisors, which is provided
  by the {direction}s $\cT(e)$ of the edges of the tropical graph
  $\Gamma$. Consequently, the Hofer energy $E_{\Hof}(u_\nu)$ of the
  maps $u_\nu$ is uniformly bounded.

  We first find the limit map at each of the vertices of the tropical
  graph $\Gamma$.  For each vertex $v \in \Ver(\Gamma)$, we assume
  that the domain curve $C_{\nu,v}$ for $u_{\nu,v}$ has marked points
  corresponding to lifts of nodes $e \in \Edge_-(\Gamma)$, $v \in e$,
  in addition to the marked points corresponding to leaves
  $e \in \Edge_\to(\Gamma)$.  The proof of breaking maps carries over
  verbatim to the sequence of maps $u_{\nu,v}$.  Indeed, all the
  results used in the proof of convergence of breaking maps have
  analogs for the case of broken maps; see Propositions
  \ref{prop:hofertop}, \ref{prop:Hbr-limit}, Remark
  \ref{rem:br-analogs} and Lemma \ref{lem:hcp-broken}.  The conclusion
  is that there is a limit map $u_v$ modelled on a tropical graph
  $\Gamma_v$, and the convergence is via a translation sequence
  $t_\nu(v')$ going to infinity, $v' \in \Gamma_v$. The convergence
  includes the following statements:
  \[ P(v') \subseteq P(v); \quad
  t_\nu(v') \in \Cone_{P(v)^\dual}P(v')^\dual \subset \t_{P(v)}
  \simeq \sqrt{-1} \t_{P(v)} ,\] 
  where the inclusion is obtained by
  fixing a point in $P(v)^\dual$ as the origin of the cone; the
  sequence of maps $e^{-t_\nu(v')} u_{\nu,v}$ lie in the
  $P(v')$-cylindrical end of $\XX_{P(v)}$, and converge uniformly on
  compact subsets to a limit $u_{v'}$ in $\XX_{P(v')}$.  For any
  tropical edge $e=(v_+,v_-)$ of $\Gamma$, the limit of the nodal
  lifts $w_{\nu,\pm}^e \in S_{\nu,v_\pm}$ lies on some domain
  component of $u_v$, corresponding to a vertex $v_{\pm,e}$ of the
  graph $\Gamma_{v_\pm}$.  By connecting the graphs
  $\{\Gamma_v\}_{v \in \Ver(\Gamma)}$ by edges $(v_{+,e}, v_{-,e})$
  for all edges $e$ of $\Gamma$, we obtain a pre-tropical graph
  $\Gamma'$ for which there is an edge collapse morphism
  $\kappa : \Gamma' \to \Gamma$.  We will subsequently show that
  $\Gamma'$ is a tropical graph and $\kappa$ is a tropical edge
  collapse morphism.

  Next, we simultaneously prove that the limit map $u$ satisfies
  matching conditions on the edges of $\Gamma$, and that $\Gamma'$ is
  a tropical graph.  As a first step, we show that the translation
  sequence $\{t_\nu(v): v \in \Ver(\Gamma')\}$ in the previous
  paragraph is an approximate $(\Gamma',\Gamma)$-translation sequence
  in the sense of Definition \ref{def:approxt-rel}. The convergence to
  the maps $u_v$ implies that for any $\nu$, $t_\nu$ satisfies the
  {direction} condition \eqref{eq:direction-rel} for edges collapsed by
  $\kappa$, that is, for edges
  $e \in \Edge(\Gamma') \bs \Edge(\Gamma)$. Therefore, it remains to
  prove that the \hyperref[item:approxdirection-rel] {(Approximate direction
    for uncollapsed edges)} condition is satisfied. Consider an edge
  $e$ of $\Gamma$, that is incident on $v_+$, $v_- \in \Ver(\Gamma')$,
  and the nodal point corresponding to $e$ is the pair $w^\pm_e$ on
  $C_{v_\pm}$. The edge matching condition for broken maps
  \eqref{eq:trop-matching} implies
  \begin{equation}
    \label{eq:seqvert}
    (\pi^\perp_{\cT(e)} \circ u_\nu)(w_{\nu,+}^e)=(\pi^\perp_{\cT(e)}\circ
    u_\nu)(w_{\nu,-}^e) \quad \text{in }\XC_{P(e)}/T_{\cT(e),\C}.  
  \end{equation}
  On a sequence of converging relative maps, the projected tropical evaluations of the
  relative marked points converge, that is, 
  \begin{equation*}
    (\pi^\perp_{\cT(e)} \circ (e^{-t_\nu(v)}u_\nu))(z_{i,\nu}) \to
    (\pi^\perp_{\cT(e)} \circ u)(z) \quad \text{in }\XC_{P(e)}/T_{\cT(e),\C}.
  \end{equation*}
  This convergence is a consequence of the convergence of the marked
  points $z_{i,\nu}$ on the domain curve and the convergence of maps
  $e^{-t_\nu(v)}u_\nu$.  Therefore, for the edge $e=(v_+,v_-)$,
\begin{multline*}
  d((\pi^\perp_{\cT(e)} \circ (e^{-t_\nu(v_+)}u_\nu))(w^e_{\nu,+}), (\pi^\perp_{\cT(e)}\circ (e^{-t_\nu(v_-)}u_\nu)) (w^e_{\nu,-})) \to \\
  d((\pi^\perp_{\cT(e)} \circ u)(w^e_+), (\pi^\perp_{\cT(e)} \circ u)(w^e_-)),
\end{multline*}
where $d$ is the cylindrical metric distance in
$\XC_{P(e)}/T_{\cT(e),\C}$.  Using \eqref{eq:seqvert}, the sequence
  \begin{equation} \label{eq:tdiff}
    \pi^\perp_{\cT(e)}(t_\nu(v_+)-t_\nu(v_-)) \in
    \t_\C/\t_{\cT(e),\C}  
  \end{equation}
  is bounded, implying that $\{t_\nu(v):v \in \Ver(\Gamma')\}$ is an
  approximate $(\Gamma',\Gamma)$-translation sequence.
  An approximate $(\Gamma',\Gamma)$-translation sequence can be
  adjusted by a uniformly bounded amount to produce an actual
  $(\Gamma',\Gamma)$-translation sequence by Lemma
  \ref{lem:relapprox}. After making such an adjustment the quantity in
  \eqref{eq:tdiff} vanishes. Then by \eqref{eq:seqvert} the limit map
  $u$ satisfies the edge matching condition \eqref{eq:trop-matching}
  for edges in $\Gamma$, and $u$ is therefore a broken
  map.  Furthermore, we now have that $\{t_\nu\}_\nu$ is a relative
  translation sequence going to infinity (as in Definition
  \ref{def:tttrans}), and then, Lemma \ref{lem:rel2trop} implies that
  $\Gamma'$ is a tropical graph, and therefore, the limit map
  $u=(u_v)_{v \in \Ver(\Gamma')}$ is a broken map.
   
  The proof of uniqueness of the limit is similar to the case of
  breaking maps: The domain curve is uniquely determined, and any two
  $(\Gamma',\Gamma)$-translation sequences $t_\nu$, $t'_\nu$ differ by
  a uniformly bounded amount :
  $\sup_\nu|t_\nu(v)-t_\nu'(v)|<\infty$. After passing to a
  subsequence, the limit \label{page:tinfty}
  \[ t_\infty(v):=\lim_\nu(t_\nu(v)-t_\nu'(v))  \]
  exists. The tuple $(t_\infty(v))_v$ is an unsigned version of a
  $(\Gamma',\Gamma)$-translation since
  $t_\infty(v) \in \Cone(\kappa,v)$ and for any edge
  $e=(v_+,v_-) \in \Edge_\trop(\Gamma')$,
  \[t_\infty(v_+) - t_\infty(v_-) \in \R\cT(e).\]
  Therefore, we obtain a tropical symmetry transformation
  $e^{-t_\infty} \in T_\trop(\Gamma')$ that relates the limit maps
  $\lim_\nu e^{-t_\nu}u_\nu$, $\lim_\nu
  e^{-t_\nu'}u_\nu$. Consequently, the limit is unique up to the
  action of the identity component $T_{\trop,\W}(\Gamma')$ of the
  tropical symmetry group.

  It remains to prove the last statement of the Theorem.  Suppose the
  tropical edge collapse $\kappa:\Gamma' \to \Gamma$ is non-trivial.
  Then, either a tropical edge $e_0$ of $\Gamma'$ is collapsed by
  $\kappa$, or there is a vertex $v_0$ of $\Gamma'$ for which
  $P(v_0) \subsetneq P(\kappa(v_0))$.  In both cases, for large enough
  $\nu$, the translation $(t_\nu(v))_{v \in \Ver(\Gamma')}$ generates
  a subgroup
  \[\{(\exp z t_\nu(v))_{v \in \Ver(\Gamma')} : z \in \C^\times\} \subset T_\trop(\Gamma')\]
  (see \eqref{eq:cTgen}) 
  that is not contained 
  in $T_\trop(\Gamma)$.  Indeed, in the first case of a collapsed
  tropical edge $e_0=(v_+,v_-)$
  \[t_\nu(v_+)-t_\nu(v_-) \in \R_+ \cT(e_0)\]
  and in the second case where $P(v_0) \subsetneq P(\kappa(v_0))$ we
  have
  \[t_\nu(v_0) \notin P(\kappa(v_0))^\dual.\]
  Therefore $\dim_\C(T_\trop(\Gamma')) > \dim_\C(T_\trop(\Gamma))$. 
\end{proof}

\begin{remark}
  If a sequence $\{u_\nu\}_\nu$ of broken maps with
  tropical graph $\Gamma$ converges to a limit $u_\infty$ whose 
  tropical graph $\Gamma'$ is different from $\Gamma$, then,
  \begin{equation}
    \label{eq:i-br-red-2}
    i^\br_\red(\Gamma',\ul x) \leq i^\br_\red(\Gamma,\ul x)-2.  
  \end{equation}
  Indeed, \eqref{eq:i-br-red-2} follows from the definition
   of the reduced index $i^\br_\red$ in \eqref{eq:i-red-br}, Theorem
  \ref{thm:cpt-broken} which implies
  $\dim(T_\trop(\Gamma')) \geq \dim(T_\trop(\Gamma)) +2$, and the
  index being preserved in the limit:
  $i^\br(\Gamma',\ul x) = i^\br(\Gamma,\ul x)$ from Proposition
  \ref{prop:expdim}.
\end{remark}

\section{Boundaries of rigid strata}\label{sec:bdry}

In the moduli space of maps, a stratum is \em{rigid} if it does not
deform to another type; that is, it does not occur in the boundary of
a compactification of a different stratum.  To count isolated maps
under generic perturbations one must restrict attention to rigid
strata. In this section, we formally define rigid strata of broken and
unbroken maps, and describe the codimension one boundary components of
rigid strata.

For unbroken maps, rigid types are those whose domain type is a
top-dimensional stratum in the moduli space of treed disks. We recall
that top-dimensional strata in the moduli space of treed disks
correspond to disk types which do not have any interior nodes, and at
boundary nodes, the length of the treed segment is finite,
 see Figure \ref{fig:disk-moduli}. 
Rigidity also includes the condition that the
map has simple intersections with the stabilizing divisor, because
maps with non-simple intersections can be deformed to maps with
multiple simple intersections.

\begin{definition}{\rm(Rigid types of unbroken
    maps)} \label{def:unbrokenrigidtype} \index{Rigid! unbroken map}
  The combinatorial type $\Gamma$ of a (unbroken) treed holomorphic
  map is \em{rigid} if the only edges $e \in \Edge_-(\Gamma)$
  corresponding to nodes are boundary edges
  $e \in \Edge_{\white}(\Gamma)$ of finite non-zero length
  $\ell(e) \in (0,\infty)$, and intersection multiplicity
  $m_{w_e}(u,D_P)$ of $u$ of type $\Gamma$ at $T_e \cap S$ with the
  stabilizing divisor $\bD= (\cD_P, P \in \cP)$ is $1$.
\end{definition}

Broken maps necessarily have tropical nodes, therefore in the
definition of rigidity of broken maps, the \em{no interior nodes}
condition of the unbroken case is replaced by the condition that the
tropical graph is rigid.
\begin{definition}{\rm(Rigid types of broken maps)}
  \label{def:rigidtype}
  \index{Rigid! broken map} The combinatorial type $\Gamma$ of a
  broken treed holomorphic map is \em{rigid} if the tropical graph
  $\cT(\Gamma)$ is rigid, and the only internal (non-tropical) edges
  $e \in \Edge_\internal(\Gamma)$ are boundary edges
  $e \in \Edge_{\white}(\Gamma)$ of finite non-zero length
  $\ell(e) \in (0,\infty)$, and intersection multiplicity
  $m_{w_e}(u,D_P)$ of $u$ of type $\Gamma$ at $T_e \cap S$ with the
  stabilizing divisor $\bD= (\cD_P, P \in \cP)$ is $1$.
\end{definition}
\begin{remark}
  The rigidity for types of broken maps includes the additional
  condition of the rigidity of the tropical graph because any
  non-rigid tropical graph $\Gamma$ is related to a rigid tropical
  graph $\Gamma_0$ by a tropical edge collapse morphism, and therefore
  tropical symmetry orbits of broken maps modelled on $\Gamma$ occur
  in the compactification of the moduli space of maps modelled on
  $\Gamma_0$.
\end{remark}

Next, we address the question of which strata of broken maps occur in
the compactification of a one-dimensional moduli space
$\M_\Gamma^\br(\ul x)$ where $\Gamma$ is a rigid type. The following
result proves that the compactification does not contain broken maps
of \em{crowded type} (Definition \ref{def:crowded}). Roughly speaking,
being of crowded type means that there are no ghost components with
more than one interior marking. We recall that interior markings
correspond to intersections with the stabilizing divisors, and in the
context of broken maps, a ghost component is a domain component on
which the map is \em{horizontally constant} (Definition
\ref{def:iso-stab} \eqref{part:horizconst}).  Once this is established, we prove in
Proposition \ref{prop:truebdry} that the only strata that occur in the
codimension one boundary are those that contain either a broken edge,
or an edge with length zero.

\begin{proposition}\label{prop:noghosts}{\rm(No crowded strata in the compactification)} 
  Let $\ul \Pe$ be a regular perturbation datum.  Let $\Gamma$ be a
  type of an uncrowded rigid broken map and let
  $\ul x \in (\cI(L))^{d(\white)+1}$ be a tuple of limit points of
  boundary leaves such that the expected dimension of the moduli space
  $\M_\Gamma(L, \ul \Pe, \ul x)$ is at most $1$.  Then, for any curve
  $u$ in the compactification $\ol{\M}_{\Gamma}(L,\ul \Pe,\ul x)$,
  \begin{itemize}
  \item there are no horizontally constant components that contain interior
    markings, and
  \item  $u$ has simple intersections with the stabilizing
  divisor.
  \end{itemize}
\end{proposition}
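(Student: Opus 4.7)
My plan is a proof by contradiction. Suppose some curve $u \in \ol{\M}_{\Gamma}(L, \ul \Pe, \ul x)$ has a horizontally constant component $C_v$ carrying an interior marking $z_e$, where $e \in \Edge_{\black,\to}(\Gamma_0)$ and $\Gamma_0 \leq \Gamma$ is the combinatorial type of $u$. By the uncrowdedness hypothesis (which is inherited by boundary strata, since degeneration only increases bubbling and not markings per horizontally-constant subgraph), the maximal connected horizontally constant subgraph $\Gamma^{hc} \subset \Gamma_0$ containing $v$ has exactly one interior leaf, namely $z_e$. Because $u_v: C_v \to \cX_{\ol{P(v)}}$ is horizontally constant, its image lies in a single toric fiber $V_{P(v)^\dual}$ over the point $x = \pi_{P(v)}(u_v) \in \cX_{P(v)}$; adaptedness forces $u(z_e) \in \cD_{\ol{P(v)}} = \pi_{P(v)}^{-1}(\cD_{P(v)})$, hence $x \in \cD_{P(v)}$ and therefore $u_v(C_v) \subset \cD_{\ol{P(v)}}$ entirely.

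Next I consider the type $\Gamma'$ obtained by forgetting the interior marking $z_e$ and stabilizing the resulting domain; if $C_v$ has only $z_e$ plus two nodes this forces a contraction of $C_v$ combining its adjacent nodes, and if $\Gamma^{hc}$ has several horizontally-constant components the stabilization propagates accordingly. By the locality axiom (Definition \ref{coherent}), the coherent perturbation datum $\Pe_{\Gamma'}$ agrees with $\Pe_{\Gamma_0}$ on unchanged components, and the horizontal projection of the stabilized map still solves the perturbed equations, so the stabilized configuration lies in $\M_{\Gamma'}(L, \ul \Pe, \ul x)$.

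I then compare expected dimensions using formula \eqref{eq:expdim}. Forgetting $z_e$ changes the formula by $+2(\mu_{\bD}(e)-1) \geq 0$, while contracting each horizontally constant ghost component $C_w$ eliminates its Maslov contribution $I(\Gamma_w) = 2 c_1(u_w) = 2\sum_{e' \ni w}|\cT(e')|$ (by the balancing property in Remark \ref{rem:balance}) and shifts the count of interior edges. Using that $u_v(C_v) \subset \cD_{\ol{P(v)}}$, that $c_1(u_w) > 0$ for any non-trivial horizontally constant sphere (since by stability such a sphere intersects the toric boundary divisors of $V_{P(w)^\dual}$), and that the combinatorics of $\Gamma^{hc}$ (one interior leaf, every vertex with $\geq 3$ special points) forces the Maslov-minus-edge bookkeeping to dominate the gain from forgetting $z_e$, I conclude $i(\Gamma', \ul x) \leq i(\Gamma_0, \ul x) - 2 \leq i(\Gamma, \ul x) - 2 \leq -1$.

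Finally, since the perturbation $\Pe_{\Gamma'}$ is regular (Theorem \ref{thm:transversality} applies inductively, $\Gamma'$ being obtained from $\Gamma_0$ by forgetting/contracting), the moduli space $\M_{\Gamma'}$ is a manifold of its expected dimension, and an expected dimension $\leq -1$ forces it to be empty. This contradicts the existence of the stabilized map. The main obstacle will be the third step: carrying out the dimension accounting precisely when $\Gamma^{hc}$ contains several sphere components, where one must balance the signed contributions of all collapsed Maslov indices, removed node slopes, and forgotten tangency orders. The key technical input is the balancing identity for ghost spheres inside the toric fibers combined with the fact that $\cD_{\ol{P(v)}}$ being a pullback under $\pi_{P(v)}$ forces $C_v$ to sit entirely inside the divisor, pinning down the intersection data needed to run the count.
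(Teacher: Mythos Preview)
Your forgetting-and-stabilizing strategy has a genuine gap in the case where the horizontally constant component $C_v$ carries the marking $z_e$ together with at least three nodes. In that case forgetting $z_e$ leaves $C_v$ stable, so nothing is contracted and $\Gamma' = \Gamma_0 \setminus \{z_e\}$. But the index formula \eqref{eq:expdim} involves interior leaves only through the tangency term $-2\sum (\mu_{\bD}(e)-1)$, and markings on horizontally constant components do not contribute to this sum (see Definition~\ref{def:type-broken}). Hence $i(\Gamma',\ul x) = i(\Gamma_0,\ul x)$ and no contradiction arises. Your claimed inequality $i(\Gamma',\ul x) \le i(\Gamma_0,\ul x) - 2$ simply fails here. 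The paper disposes of this case by a direct argument you did not use: since $C_v$ has $\ge 3$ nodes, moving the marked point $z_e$ along $C_v$ produces a two-real-parameter family of pairwise non-isomorphic adapted maps inside the regular stratum $\M_{\Gamma_0}$ itself, contradicting $i(\Gamma_0,\ul x) \le 1$.

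In the remaining case where $C_v$ has exactly two nodes, the paper again avoids your dimension bookkeeping. Balancing (Remark~\ref{rem:balance}) for a horizontally constant sphere gives $\pi_{\t_{P(v)}^\dual}(\cT(e_1)) + \pi_{\t_{P(v)}^\dual}(\cT(e_2)) = 0$, not your formula $c_1(u_w) = \sum |\cT(e')|$; together with rigidity (which forces both slopes to be nonzero) this means $\cT(e_1) = \cT(e_2)$ as oriented edges, so the tropical graph admits a one-parameter tropical symmetry, contradicting rigidity of $\Gamma$. Finally, your assertion that uncrowdedness is inherited by boundary strata is unjustified; the paper treats the possibly crowded limit separately, forgetting all but one leaf on each crowded subgraph and invoking the locality axiom to see that the resulting map is still adapted for $\Pe_{\Gamma_s}$, then running a genuine index drop of $\ge 2$ only when a component is actually collapsed.
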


\begin{proof}
  First, we point out that \cwl{in the case that the Lagrangian is
    contained in the interior of $\ol X_{P_0}$} a horizontally
  constant component containing an interior marking can not be a disk
  component. Indeed, the Lagrangian $L$ is contained in a
  top-dimensional cut space $\XB_{P_0}$ and the torus $T_{P_0,\C}$ is
  trivial. Therefore, a horizontally constant disk component
  $u_v: S_v \to \XB_{P_0}$ is in fact constant and maps to a point in
  $L$. Since $L$ is disjoint from the stabilizing divisor there are no
  interior markings on $S_v$.

  Next, we rule out horizontally constant spherical components that
  contain markings. Suppose the map
  $u \in \M_\Gamma(L, \ul \Pe, \ul x)$ is horizontally constant on a
  component $S_v$, for a vertex $v$ of $\Gamma$, and suppose $S_v$ has
  an interior marking $z_e$.
  Since $\Gamma$ is an uncrowded stratum, $S_v$ has no other
  interior marking besides $z_e$. The component has at most two nodes
  -- otherwise moving
  the marking $z_e$ on the component $S_v$ gives a two-dimensional
  family of regular maps adapted to the stabilizing divisor.  By
  rigidity of the type $\Gamma$, all edges $e$ corresponding to
  interior nodes $w_e$ are tropical edges. Since $u|_{S_v}$ is
  horizontally constant, the balancing property \eqref{eq:constt-bal}
  implies that there are exactly two edges $e_1,e_2 \in \Edge(\Gamma)$
  incident on $v$, and they have the same {direction} $\cT(e_1) = \cT(e_2)$.
  This means the type $\Gamma$ has a non-trivial tropical symmetry
  group $T_\trop(\Gam)$, contradicting the rigidity of $\Gamma$.
  (See related Example \ref{ex:triv1}.) 

  Next, we consider a map in a boundary of the moduli space.  Suppose
  $\{u_\nu\}_\nu$ is a sequence in $\M_{\Gamma}(L,\ul \Pe,\ul x)$ that
  converges to a limit $u:C \to \XX$ of type $\Gamma'$.  First we prove
  the proposition assuming that the limit $u$ is uncrowded. Then $u$
  is a regular map adapted to the stabilizing divisor. Since the index of $u_\nu$ is at most
  $1$, standard arguments (see the proof of Proposition
  \ref{prop:truebdry}, for example) imply that the type $\Gamma$ of
  $u_\nu$ is obtained from the limit type $\Gamma'$ by the morphism
  (Making an edge length finite/non-zero). In particular, the tropical
  graph associated to $\Gamma'$ is the same as that of $\Gamma$.  By
  the argument in the previous paragraph, there are no markings in
  horizontally constant components of the limit map $u$.

  Next, consider the case that the limit $u$ has a crowded
  component. Starting from $u$, we obtain a map $u'$ of
  uncrowded type as follows: Forgetting all but one interior marking
  on each of the crowded components yields a map $u'$ adapted to the stabilizing divisor.  If in
  this process, a domain component $S_v \subset S$ (belonging to a
  crowded component of $u$) becomes unstable, it is collapsed. Indeed,
  the vertex $v$ has valence $1$ or $2$, and in either case, there is
  an obvious way to drop $v$ from the graph. Denote by $\Gamma_s$ the
  resulting type of the map.  In $\Gamma_s$, the remaining interior
  marking, corresponding to the leaf $e$, is assigned a multiplicity
  of $\mu_\DD(e)$ plus the sum of multiplicities $\mu_\DD(e')$ of all the forgotten leaves $e' \neq e$.  The
  limit $u'$ is $\Pe_{\Gamma_s}$-holomorphic because of the (Locality
  axiom) in Definition \ref{def:pertmorph}.  Indeed, forgetting
  markings changes the type of the limit curve, but it does not affect
  the perturbation datum $\Pe_\Gamma$ on the other curve components on
  which the map is horizontally non-constant. Therefore, $u'$ is
  regular.  If no component is collapsed in $\Gamma' \to \Gamma_s$,
  then, the expected dimension of the type $\Gamma_s$ is the same as
  that of $\Gamma'$. In this case, $u$ is an uncrowded map with a
  marking in a horizontally constant component. This possibility has
  been ruled out in the last paragraph. If a component is collapsed,
  the expected dimension of $\Gamma_s$ is at least two lower than that
  of $\Gamma$, and therefore, the map $u$ does not exist.

  Finally, we show that for a map $u$ in the compactification
  $\ol \M_{\Gamma}(L,\ul \Pe,\ul x)$, the intersections with the
  stabilizing divisor are simple.  We assume that $u$ is the limit of a
  sequence of maps $\{u_\nu\}_\nu$ in $\M_{\Gamma}(L,\ul \Pe,\ul x)$. 
  Since $\Gamma$ is rigid, the maps $u_\nu$ have simple intersections
  with $\DD$ and each intersection is an interior marking.  The map
  $u$ can have a non-simple intersection with $\DD$ only if two
  sequences interior markings $z_{e,\nu}$, $z_{e',\nu}$ on the maps
  $\{u_\nu\}_\nu$ coincide in the limit, which means that $u$ has a
  ghost component containing the two markings.  Since we have shown
  that $u$ is uncrowded, we have arrived at a contradiction.
\end{proof}

\begin{proposition}\label{prop:truebdry}
  {\rm(Boundary strata)} Let $\ul \Pe$ be a regular perturbation
  datum.  Suppose $\Gamma$ is a rigid type for a broken map and
  $\ul x \in (\cI(L))^{d(\white)}$ is a tuple of limit points of
  boundary leaves such that the expected dimension $i(\Gamma,\ul x)$
  is $\leq 1$.  The moduli space $\M_{\Gamma}(L, \ul \Pe, \ul x)$
  admits a compactification $\ol \M_{\Gamma}(L, \ul \Pe, \ul x)$.  The
  boundary strata
\[\ol \M_{\Gamma}(L, \ul \Pe, \ul x)\bs \M_{\Gamma}(L, \ul \Pe, \ul x)\]
correspond to types $\Gamma'$ with a single broken boundary trajectory
$e \in \Edge_{\white}({\Gamma'}), \ell(e) = \infty$ or a single
boundary edge with length zero $\ell(e)=0$.
\end{proposition}
\begin{proof}
  For a sequence of maps $u_\nu: C \to \XX$ in the moduli space
  $\M_{\Gamma}(L, \ul \Pe, \ul x)$, a subsequence converges to a limit
  $u:C \to \XX$ by Theorem \ref{thm:cpt-broken}.  By Proposition
  \ref{prop:noghosts}, $u$ is uncrowded and therefore, $u$ is regular
  and adapted to the stabilizing divisor.  Interior nodes $w_e$ corresponding to non-tropical
  edges $e \notin \Edge_\trop(\Gamma)$ are ruled out in $u$ for
  dimension reasons using the index relation \eqref{eq:expdim}. We
  next claim that the tropical type of $u$ is $\Gamma$. If not, it is
  of type $\Gamma'$, and there is a non-trivial tropical edge collapse
  morphism $\Gamma' \to \Gamma$. The last statement in Theorem
  \ref{thm:cpt-broken} implies that $\dim(T_\trop(\Gam')) \geq 2$.  By
  Proposition \ref{prop:expdim}, the expected dimension of the moduli
  space of broken maps is unaffected by collapsing tropical
  edges. Therefore, the index of $u$ is at most $1$.  However, the
  positive dimensionality of the tropical symmetry group
  $T_\trop(\Gam')$, and the fact that the action of the tropical
  symmetry group does not have infinitesimal stabilizers implies that
  $i^\br(u) \geq 2$, leading to a contradiction. We conclude that the
  tropical type of $u$ is the same as that of the sequence $u_\nu$.
  The only other phenomenon which occurs in the limit of
  $\{u_\nu\}_\nu$ is the formation of a boundary node $w \in C$
  corresponding to an edge $e$ of length $\ell(e)$ zero, or the length
  of a boundary edge $\ell(e)$ going to zero or infinity.
\end{proof}

\chapter{Gluing} \label{chap:glue}

In this Chapter, we show that a rigid broken map can be glued at nodes
to produce a family of unbroken maps, one in each neck-stretched
manifold.  A fixed perturbation datum $\ul{\Pe}$ on a broken manifold
$\XX$ can be glued in a natural way to produce a perturbation datum
$\ul{\Pe}^\nu$ for $X^\nu$ which is equal to $\ul{\Pe}$ away from the
neck regions.  With respect to these perturbation data, we construct a
bijection between rigid broken map and rigid maps in neck-stretched
manifolds $X^\nu$ with sufficiently large neck lengths.
We recall from \eqref{eq:moduli-d} that $\tM^{\br,< E}(\XX,L)_0$ is the union of zero-dimensional strata in the moduli space of broken maps whose energy is at most $E$.

\begin{theorem} {\rm (Gluing)} \label{thm:gluing} Suppose that
  $\cu: C \to \XX$ is a regular broken disk of index zero.  There
  exists $\nu_0 > 0$ such that if $\nu \geq \nu_0$ then there exists a
  family of unbroken disks $u_\nu: C^\nu \to X^{\nu}$ of index zero,
  with the property that $\lim_{\nu \to \infty} [u_\nu] = [\cu]$.  For
  any area bound $E >0$ there exists $\nu_0$ such that for
  $ \nu \geq \nu_0$ the correspondence $[u] \mapsto [u_\nu]$ defines
  an orientation-preserving bijection between the rigid moduli spaces
  $\M^{< E}(X^\nu,L)_0$ and $\tM^{\br,< E}(\XX,L)_0$ 
  for $\nu \geq \nu_0$.
\end{theorem}
\begin{remark}
  The gluing operation is defined on broken maps, and not on tropical
  symmetry orbits of broken maps. In fact if the tropical symmetry
  group is non-trivial (it is necessarily finite for rigid broken
  maps), gluing different elements in the tropical symmetry orbit
  produces distinct sequences of unbroken maps.  The convergence
  result also distinguishes between rigid maps in the same tropical
  symmetry orbit.  Indeed, in Theorem \ref{thm:cpt-breaking}, if the
  limit map is rigid then the limit is uniquely determined up to
  domain reparametrization.
\end{remark}
Similar to other gluing theorems in pseudoholomorphic curves, the
proof of Theorem \ref{thm:gluing} is an application of a quantitative
version of the implicit function theorem for Banach manifolds.  The
steps are: construction of an approximation solution called the
pre-glued map; construction of an approximate inverse to the
linearized operator; quadratic estimates; application of the
contraction mapping principle, and surjectivity of the gluing
construction.  Through the proof of the gluing theorem, the notation
$c$ denotes a $\nu$-independent constant whose value may be different
in every occurrence. The fact that gluing preserves the orientation of
rigid moduli spaces is discussed in Remark \ref{rem:orientglue}.

\section{The approximate solution} \label{sec:approx} 

Given a broken map, we start by constructing a family of approximate solutions, called
\em{pre-glued maps}, with one map in each neck-stretched manifold $X^\nu$. These maps are obtained by
gluing the broken map in the neck region using a cut-off function, and therefore, they are not
holomorphic in the neck regions. 

The pre-glued family for a broken map is constructed using tropical
vertex positions of the broken map. We recall that a rigid broken map is modelled on a
rigid tropical graph $\Gamma$, whose vertex positions 
$\{\cT(v): v \in \Ver(\Gamma)\}$ are uniquely determined. 
These positions determine the neck lengths for the
approximate solution as follows.  For any edge $e=(v_+,v_-)$ of
$\Gamma$, there exists $l_e>0$ such that
\begin{equation}
  \label{eq:ledef}
  \cT(v_+) -\cT(v_-)=l_e\cT(e), 
\end{equation}
where $\cT(e) \in \t_\Z$ is the {direction} of the edge.

The domain of any map in the glued family is a treed disk obtained by
replacing tropical nodes in $C$ with necks and leaving the tree part
in $C$ unchanged: For any $\nu \in \R_+$ the curve $C_\nu$ has a neck
of length $\nu l_e$ in place of a tropical node $w_e$ corresponding to
$e \in \Edge_\trop(\Gamma)$ in $C$.  Denote by
\begin{equation}
  \label{eq:gamma-glue}
  \Gamma_{\glue}  
\end{equation}
the type of the glued curve.  Recall that the lift of a tropical node
$w_e$ in $C$ has matching coordinates (as in Definition
\ref{def:bmap}) in the neighbourhoods $U_e^+$, $U_e^-$ of the lifts
$w^+_e$, $w^-_e$, which are denoted by
\begin{equation}
  \label{eq:match4glue}
  S_{v_\pm} \supset (U_e^\pm,w_e^\pm) \xrightarrow{z_e^\pm} (\C,0).   
\end{equation}
The coordinates can be chosen to be compositions of the complex
exponential map \eqref{eq:cexp} and linear functions from tangent
spaces to $\C$.  The glued curve $C_\nu$ is obtained from $C$ by
deleting a small disk in $U_e^\pm $ for every edge $e$ in $\Gamma$,
and gluing the remainder of the neighbourhoods $U_e^\pm$ using the
identification $z_e^+ \sim e^{-\nu l_e}z_e^-$, and leaving the tree
part $T$ unchanged. So the surface and tree parts of $C_\nu$ are
\[C_\nu=S_\nu \cup T.\]
For future use in the proof we point out that the punctured
neighbourhoods $U_e^\pm \bs \{w_e^\pm\}$ have matching logarithmic
coordinates
\[(s_e,t_e) : U_e^- \bs \{w^-_e\} \to [0,\infty) \times S^1, \quad
  (s_e,t_e) : U_e^+ \bs \{w^+_e\} \to (-\infty,0] \times S^1\]
given by $(s_e,t_e):=\pm \ln(z_e^\pm)$.

Translated maps can be glued at the nodal points to yield a sequence
of approximate solutions for the holomorphic curve equation in
neck-stretched manifolds.  We recall that for a rigid tropical graph,
the translation sequences are unique and are given by
\begin{equation}
  \label{eq:tnu}
  t_\nu: \Ver(\Gamma) \to \nu B^\dual, \quad v \mapsto \nu \cT(v).  
\end{equation}
Translation sequences give identifications between broken manifolds
and neck-stretched manifolds.  For any $\nu$, $v \in \Ver(\Gamma)$ the
map
\[\e^{t_\nu(v)} : \XC_{P(v)} \to X^\nu_{\tP} \subset X^\nu\]
is defined in \eqref{eq:transinc2}.  The complement of the neck region
in $S^\nu$ is a disjoint union
\[S^\nu \bs \Neck(S^\nu)=\cup_{v \in \Ver(\Gamma)}S^\nu(v).\]
On $S^\nu(v)$, the approximate solution is defined as the translation
of the map $\cu_v$, that is,
\begin{equation}
  \label{eq:preoffneck}
  u^\pre_\nu:=\e^{t_\nu(v)}\cu_v \quad \text{on $S^\nu(v)$}.   
\end{equation}
On the treed part $T \subset C^\nu$, we have
\[u^\pre_\nu:=\cu|_T \quad \text{on $T \subset C^\nu$}. \]
Next, we define $u^\pre_\nu$ on the neck region $\Neck_e(S^\nu)$
corresponding to a tropical edge
$e=(v_+,v_-) \in \Edge_\trop(\Gamma)$.  The matching condition on
$\cu$ at the tropical node $w_e$ implies that there is a point
$x_e \in \XC_{P(e)}$ such that the map $u_{v_\pm}$ is asymptotically
close to the trivial cylinder
\begin{equation}
  \label{eq:xe}
  u_e^\ver : \R_\pm \times S^1 \to \XC_{P(e)}, \quad (s,t) \mapsto
  e^{\cT(e)(s+it)}x_e  
\end{equation}
near the nodal lift $w_e^\pm \in C_v^\pm$. That is, if we define
$\zeta_e^\pm \in T_{u_e^\ver}\XC_{P(e)}$ by the condition
\begin{equation}
  \label{eq:xipmdef}
  \cu_{v_\pm}=\exp_{u_e^\ver}\zeta_e^\pm, 
\end{equation}
then
\begin{equation}
  \label{eq:enddecay}
  \Mod{D^k\zeta_e^\pm(s,t)} \leq ce^{-|s|}  
\end{equation}
for any $k \geq 0$.  Indeed, in an almost complex compactification of
the target space $\XX_{P(v_\pm)}$ of $u^0_{v_\pm}$ as in
\eqref{eq:xxpt-def}, the map extends smoothly over $\mp \infty$, from
where we obtain the decay \eqref{eq:enddecay} in cylindrical domain
coordinates.  In \eqref{eq:xipmdef} we note that the target space of
the map $\cu_{v_\pm}$ resp. $\exp_{u_e^\ver}\zeta_e^\pm$ is
$\XC_{P(v_\pm)}$ resp. $\XC_{P(e)}$, but the relation
\eqref{eq:xipmdef} is well-defined because near the node the image of
$u_{v_\pm}$ lies in the $P(e)$-cylindrical end of $\XC_{P(v_\pm)}$
which is canonically identified to $\XC_{P(e)}$.  Define a cylinder
in $\Neck_e(C_\nu)$ corresponding to $u_e^\ver$ 
by 
\[u_{e,\nu}^{\ver}:=\e^{\hh(t_\nu(v_+) + t_\nu(v_-))}u_e^\ver : [\tfrac {-\nu l_e} 2, \tfrac {\nu l_e} 2] \times S^1
  \to X^\nu. \]
The approximate solution on $\Neck_e(C_\nu)$ is the trivial cylinder
$u_{e,\nu}^{\ver} $ corrected by a section obtained by patching
$\zeta_e^+$, $\zeta_e^-$ defined as
\begin{multline} \label{eq:preglued} u^{\pre}_\nu(s,t) :=
  \exp_{u_{e,\nu}^{\ver}} ( \zeta^\nu_e(s,t)) : [\tfrac {-\nu l_e} 2, \tfrac {\nu l_e} 2] \times S^1
  \to X^\nu, \\ \quad
  \zeta^\nu_e(s,t) := \beta(-s) \zeta_e^-(s + \tfrac{\nu l_e}{2}, t) +
  \beta(s ) \zeta_e^+(s - \tfrac {\nu l_e}{2},t) .
\end{multline}
where 
\begin{equation} \label{beta} \beta \in C^\infty(\R, [0,1]), \quad
  \begin{cases} 
    \beta(s) = 0 & s \leq 0 \\ \beta(s) = 1 & s \ge
    1 \end{cases} \end{equation}
is the cutoff function from \eqref{beta0}.  In other words, in the
cylinder in $C_\nu$ corresponding to an edge $e$, one translates the
domain on both ends by an amount $\frac {\nu l_e}{2}$, and then
patches the map together using the cutoff function and geodesic
exponentiation.  The definition \eqref{eq:preoffneck} of $u_\nu^\pre$
in the complement of the neck, and its definition \eqref{eq:preglued}
on the neck together give a smooth map $u_\nu^\pre:C_\nu \to X^\nu$.
Indeed, using the relation $t_\nu(v_+) - t_\nu(v_-)=\nu l_e$, we
conclude that the expression in \eqref{eq:preglued} is equal to
$\e^{t_\nu(v_\pm)}\cu_{v_\pm}$ in the neighborhood of the boundary
component $ \tfrac{\pm \nu l_e}{2} \times S^1$.

\begin{remark}\label{rem:glueparam}\index{Map gluing parameter}
  The number $\nu \in \R_+$ is called the \em{map gluing parameter}
  to differentiate it from the parameter used for gluing nodes in a
  curve.  In standard gluing results, the map gluing parameter $\nu$
  is taken to be the neck length in the domain $C_\nu$ of the glued
  map $u^\nu$. In our setting, the map gluing parameter is the neck
  length $\nu$ in the target space $X^\nu$. The domain neck lengths,
  approximately equal to $\nu l_e$, are allowed to vary in the Picard
  iteration argument.  Note that resolving nodes $w_e$ corresponding
  to tropical edges $e$ does not increase the index of the map. As a
  result gluing an index zero broken map $u$ produces an isolated map
  $u^\nu$ in each $X^\nu$.
\end{remark}
\section{Fredholm theory for glued maps}
\label{sec:fredglued}

We define a map between suitable Banach spaces whose zeroes describe
pseudoholomorphic curves close to the approximate solution.
Pseudoholomorphic maps are zeroes of the section
\begin{multline} \label{eq:Fmap}
\F_\nu:=(\delbar,\ev): \M_{\Gamma_{\on{glue}}} \times \Map(C_\nu,X^\nu) \to \\
\Om^{0,1}(S_\nu,u^*TX) \oplus \Om^1(T,u^*TL) \oplus X(\Gamma_\glue) \end{multline} 
defined in \eqref{olp3}.  The components of the map consist of
$\delbar$ operators on surfaces, shifted gradient operators on treed
segments, and various evaluation maps. In this section we define
metrics/norms in each of the spaces. We recall the operator $\F_\nu$
later in the section.

We describe a metric on the moduli space of treed disks.  Recall that
the domain of $u$ is $C$, which is a curve of type $\Gamma$ with
surface part $(S,j_S)$ and tree part $T$, and recall from
\eqref{eq:gamma-glue} that the type $\Gamma_\glue$ of the glued curve
is given by collapsing all the interior edges in $\Gamma$.  In a
neighborhood $U_{\M_\Gamma} \subset \ol \M_{\Gamma_\glue}$ of
$\M_\Gamma$, there is a projection map
\[\pi_\Gamma : U_{\M_\Gamma} \to \M_\Gamma \]
such that any curve $C' \in U_{\M_\Gamma}$ is obtained by gluing at
tropical nodes of the curve $\pi_\Gamma(C')$ using the coordinates
\eqref{eq:match4glue}. We remark that the domain curves $C_\nu$ of the
approximate solutions which were constructed by gluing the tropical
nodes in $C$ lie in the neighborhood $U_{\M_\Gamma}$, and
$\pi_\Gamma(C_\nu)=C$. The subset
$U_{\M_\Gamma} \cap \M_{\Gamma_\glue}$ close to the boundary stratum
$\M_{\Gamma}$ is equipped with a metric
\begin{equation}
  \label{eq:gm}
  g_{\Gamma_\glue}:    T(\M_{\Gamma_\glue} \cap
  U_{\M_\Gamma})^{\otimes 2} \to \R 
\end{equation}
that is cylindrical in the fibers of $\pi_\Gamma$. That is, each fiber
of $\pi_\Gamma$ is isometric to a product of cylinders
$\Pi_{e \in \Edge_\trop(\Gamma)}(\R \times S^1)$ parametrized by
gluing parameters $(s_e,t_e)$.

In the neighborhood of each of the glued curves, we give a description
of the complex structures on the surface components. As a preliminary
step, we describe the complex structures on curves close to the domain
$C$ of $u$.  Let the complex curve $(S,j_S)$ be the surface part of
$C$.  A trivialization
\begin{equation}
  \label{eq:sgam-triv}
  \S_\Gamma|U_{\Gamma,C} \simeq S \times U_{\Gamma,C}  
\end{equation}
of the universal curve $\S_\Gamma \to \M_\Gamma$ in a neighborhood
$U_{\Gamma,C} \subset \M_\Gamma$ of $C$ yields a family of complex
structures (as in \eqref{eq:jmoduli})
\[U_{\Gamma,C} \to \J(S), \quad m \mapsto j_\Gamma(m).\]
We write $j_\Gamma$ as a sum
\[j_\Gamma(m)=j_S + \fj_\Gamma(m)\]
and assume that the trivialization in \eqref{eq:sgam-triv} is chosen
so that $ \fj_\Gamma(m)=0$ in neighborhoods of special points.
Consider a glued treed disk $C_\nu \in \M_{\Gamma_\glue}$ whose
surface part $(S_\nu,j_{S_\nu}) \subset C_\nu$ is obtained by gluing
the tropical nodes in $S \subset C$.  In a neighborhood
$U_{C_\nu} \subset \M_{\Gamma_\glue}$, choose a trivialization of the
universal curve $\S_{\Gamma_\glue} \to \M_{\Gamma_\glue}$ so that the
induced family of complex structures
\begin{equation}
  \label{eq:jnu}
  U_{C_\nu} \to \J(S_\nu), \quad m \mapsto  j_\nu(m),  
\end{equation}
satisfies the following : The function $j_\nu$ is a sum
\[j_\nu = j_{S_\nu} + \fj_\black(m) +
  \fj_{\on{neck}}^\nu(m)\]
where
\begin{itemize}
\item $j_{S_\nu}$ is the complex structure on the glued curve $S_\nu$
  and is constant over $U_{C_\nu}$,
\item the function $m \mapsto \fj_\black(m)$ is supported in the
  complement of the neck regions of $S_\nu$ and is equal to
  $\fj_\Gamma(\pi_\Gamma(m))$,
\item and $\fj_{\neck}^\nu(m)$ is supported in the neck regions of
  $S_\nu$, and the support is contained in a uniformly bounded
  neighborhood the boundary of the neck, that is, there is a
  $\nu$-independent constant $L$ such that
  \begin{equation}
    \label{eq:suppfj}
  \supp(\fj^\nu_{\on{neck}}) \subset \bigcup_{e \in
      \Edge_\trop(\Gamma)}\{s:\tfrac {\nu l_e} 2 -L \leq |s| \leq
    \tfrac {\nu l_e} 2\} \times S^1 \subset A(l_\nu) \subset S_\nu.  
  \end{equation}
  Furthermore, there is a $\nu$-independent constant $c$ such that on
  the neck region corresponding to any edge
  $e \in \Edge_\trop(\Gamma)$ and for any $\nu$
  \begin{equation}
    \label{eq:neckC1}
    \Mod{\fj^\nu_{\on{neck}}(m)}_{C^1} \approx c|\nl_e(m) - \nu l_e|,   
  \end{equation}
  where $\nl_e(m) \in \R_+ \times S^1$ is the gluing parameter used at
  the node $e$ to smoothen the node $w_e$ in the curve
  $\pi_\Gamma(m)$.
\end{itemize}
Indeed, \eqref{eq:neckC1} can be satisfied by making suitable choices,
as it amounts to choosing a diffeomorphism from a cylinder of length
$\delta_e(m)$ to a cylinder of length $\nu l_e$ that is a translation
map outside a length $L$ sub-cylinder, and whose derivative is bounded
as in \eqref{eq:neckC1}.  Such a choice of the trivialization of the
universal curve ensures that there is a $\nu$-independent constant $c$
such that for any two curves represented by $m_1$, $m_2$
\begin{equation}
  \label{eq:fjcont}
  |\fj^\nu(m_2)-\fj^\nu(m_1)| \leq c d_{g_{\Gamma_\glue}}(m_1,m_2).   
\end{equation}
These uniform estimates are used in the proof of the quadratic
estimate in Section \ref{sec:quadratic}.

The second space in the domain of \eqref{eq:Fmap} is a space of
$W^{1,p}_\loc$ maps
\[ \Map(C_\nu,X^\nu)_{1,p} \subset \Map(S_\nu,X^\nu)_{1,p} \times
\Map(T,X^\nu)_{1,p}  \]
defined by requiring that the maps from $S_\nu$ and $T$ agree on the
intersection $S_\nu \cap T$.  We recall that $T$ is the disjoint union
of treed segments $\cup_e T_e$ corresponding to boundary edges
$e \in \Edge_\white(\Gamma)$ with positive lengths; and $T_e=[0,1]$ if
the segment is finite, and $\R_{\geq 0}$, $\R_{\leq 0}$ or $\R$ if
$\ell(e)=\infty$.  The tangent space of $\Map(C_\nu,X^\nu)_{1,p}$ at a
map $u:C_\nu \to X^\nu$ is the space of sections
\begin{equation}
  \label{eq:om0cnu}
  \Omega^0(C_\nu, u^* TX) = \Omega^0(S_\nu, (u|_S)^*TX^\nu) \oplus \Omega^0(T, (u|_T)^* TL).   
\end{equation}
As in Abouzaid \cite[Equation (5.38)]{ab:ex} the first summand in
\eqref{eq:om0cnu} is equipped with a weighted Sobolev norm based on
the decomposition of the section into a part constant on the neck and
the difference on the neck corresponding to each edge
$e \in \Edge_\trop(\Gamma)$ described as follows.  Denote by
\[ (s_e,t_e) \in [-\nu l_e/2, \nu l_e/2] \times S^1 \]
the coordinates on the neck region created by the gluing at the node
corresponding to the edge $e \in \Edge_\trop(\Gamma)$.  Let
\[ \lambda \in (0,1) \]
be a \em{Sobolev weight}.  Define a \em{Sobolev weight function}
\begin{equation} \label{kappa} \kappa_\nu: C_\nu \to [0,\infty), \quad
  \kappa_\nu := \ssum_{e \in \Edge_\trop(\Gamma)}\beta(\nu l_e/2 -
  |s_e|) ( \nu l_e/2 - |s_e| ).\end{equation}
Here, the function $\beta(\nu l_e/2 - |s_e|)$ is extended by zero
outside the neck region corresponding to $e$.  As $\nu \to \infty$,
$\kappa_\nu$ converges to the Sobolev weight function $\kappa$ defined
on the punctured curve $C - \{w_e : e \in \Edge_\trop(\Gamma)\}$ in
\eqref{eq:kappainf}.  Given a section
\[ \xi = (\xi_S,\xi_T) \in
\Omega^0(C_\nu, u^* TX) \] 
define
\begin{multline} \label{1pl2} \Vert \xi \Vert_{1,p,\lambda} := \Vert
  \xi_S \Vert_{1,p,\lambda}
  + \Vert \xi_T \Vert^p_{1,p} \\
  \Vert \xi_S \Vert_{1,p,\lambda} := \left( \ssum_e\Vert
    \xi_{S,e}(0,0) \Vert^p + \int_{C_\nu} ( \Vert \nabla \xi_S \Vert^p
  \right. \\ \left. + \Vert \xi_S - \ssum_e\beta(\nu l_e/2 - |s_e|)
    \pT^u_s \xi_{S,e}(0,0) \Vert^p ) \exp( \kappa_\nu \lam p) \d
    \Vol_{C_\nu} \right)^{1/p}
\end{multline}
where $\xi_{S,e}$ is the restriction of $\xi_S$ to the neck region
$[\frac {-\nu l_e} 2, \frac {\nu l_e} 2] \times S^1$ corresponding to
the edge $e$, and $\pT^u_s$ is parallel transport from $u(0,t)$ to
$u(s,t)$ along $u(s',t), s' \in [0,s]$.  Let
$ \Omega^0(C_\nu, u^* TX)_{1,p,\lam}$ be the Sobolev completion of
$W^{1,p}_{\loc}$ sections with finite norm \eqref{1pl2}; these are
sections whose difference from a covariant-constant section on the
neck has an exponential decay behavior governed by the Sobolev
constant $\lambda$.

The target space of \eqref{eq:Fmap} is a space of $(0,1)$-forms, which
we equip with a weighted $L^p$ norm.  For a $(0,1)$-form
$\eta \in \Omega^{0,1}( S_\nu, u^* TX^{\nu})$ define
\[ \Vert \eta \Vert_{0,p,\lambda} = \left( \int_{S_\nu} \Vert \eta
  \Vert^p \exp( \kappa_\nu \lambda p) \d \Vol_{S_\nu} \right)^{1/p}
.\]

Next, we describe a $\delbar$-operator pulled back by an exponential
map on which the implicit function theorem will be applied.  Pointwise
geodesic exponentiation defines a map (using Sobolev multiplication
estimates)
\begin{equation}
  \exp_{u_\nu^{\pre}}: \Omega^0(C_\nu, (u_\nu^{\pre})^*
  TX^\nu)_{1,p,\lambda} \to \Map_{1,p}(C_\nu,X^\nu) \end{equation}
where $\Map_{1,p}(C_\nu,X^\nu)$ denotes maps of class $W_{1,p}^{\loc}$
from $C_\nu$ to $X^\nu$.
We define
\begin{multline}
  \label{eq:cnu-delbar}
  \delbar: U_{C_\nu} \times \Omega^0(C_\nu, (u_\nu^{\pre})^*
  TX^\nu)_{1,p,\lam} \to \Omega^{0,1}(S_\nu,
  (u_\nu^{\pre})^* TX^{\nu})_{0,p} \oplus \Om^1(T,(u_\nu^{\pre})^*TL), \\
  (m,\xi) \mapsto \pT_{u_\nu^{pre},\xi}^{-1}
  \left(\delbar_{j(m),J}(\exp_{u_\nu^{\pre}}\xi)|_{S_\nu},\left(\frac 1
  {\lam_e}\dds  + \grad_F\right)(
\exp_{u_\nu^{\pre}}\xi)|_T \right),
\end{multline}
where
\begin{multline*}
  \pT_{u_\nu^{\pre},\xi} : \Omega^{0,1}(S_\nu, (u_\nu^{\pre})^*TX^{\nu})_{0,p,\lambda} \oplus \Om^1(T, (u_\nu^{\pre})^*TL)_{0,p}\\
  \to
  \Omega^{0,1}(S_\nu, (\exp_{u_\nu^{\pre}}(\xi))^*TX^{\nu})_{0,p,\lambda}
  \oplus \Om^1(T, (\exp_{u_\nu^{\pre}}(\xi))^*TL)_{0,p}
\end{multline*}
is the parallel transport defined using an almost-complex connection,
and we recall that $U_{C_\nu} \subset \M_{\Gamma_\glue}$is a
neighborhood of $C_\nu$; and
\[\lam_e:=
  \begin{cases}
    \ell_e(m), \quad e \in \Edge_{\white,-}^{(0,\infty)}(\Gamma),\\
    1, \quad e \in \Edge_{\white}^\infty(\Gamma).
  \end{cases}
\]
In \eqref{eq:cnu-delbar}, it is necessary to scale the derivative
$\dds$ by $\lambda_e$ because we took $T_e=[0,1]$ for any finite treed
segment.

In order to construct local models for moduli of adapted tree disks,
we require that the treed disks $C_\nu$ have a collection of interior
leaves $e_1,\lldots, e_{d(\black)}$ and
\[ (\exp_{u_\nu^{\pre}} (\xi) )(e_i) \in D, \quad i = 1,\lldots, n .\]
Additionally, we require matching conditions at boundary nodes and
lifts of $S_\nu \cap T_\nu$. Using notation from the proof of
transversality (Theorem \ref{thm:transversality}), these constraints
may be incorporated into $\cF_\nu$ from \eqref{eq:Fmap} to produce a
map
\begin{multline*}
  \cF_\nu: U_{C_\nu} \oplus \Omega^0(S_\nu, (u_\nu^{\pre})^*
  TX^{\nu})_{1,p,\lam} \oplus \Omega^0(T, (u_\nu^{\pre})^*
  TL)_{1,p}  \\
  \to \Omega^{0,1}(C_\nu, (u_\nu^{\pre})^* TX^{\nu})_{0,p, \lam}\oplus
  \Om^1(T, (u_\nu^{\pre})^*TL)_{0,p} \oplus T
  X(\Gamma_{\on{glue}})/\Delta(\Gamma_{\on{glue}}).
\end{multline*}
whose zeroes correspond to \em{adapted} pseudoholomorphic maps near
the pre-glued map $u_\nu^{\pre}$.

\section{Error estimate}\label{sec:errorest}

We estimate the failure of the approximate solution to be an exact
solution in the Banach norms of the previous section.
To derive the
estimate, we split the treed disk $C_\nu$ into neck regions
corresponding to tropical nodes in $C$, namely
\[\Neck_e(S_\nu):=\{(s_e,t_e) \in [-\tfrac {\nu l_e} 2, \tfrac {\nu l_e} 2] \times S^1\} \subset S_\nu, \quad \forall e \in \Edge_\trop(\Gamma); \]
and its complement 
\[C_\nu^\black:=C_\nu \bs \cup_e \Neck_e(S_\nu), \quad\text{and} \quad C_\nu^\black=S_\nu^\black \cup T.\]
The one-form $\cF_\nu(0) $ has contributions from the difference
between the perturbation data $P(C)$ and $P(C_\nu)$ on the complement
of the neck regions, and from the cutoff function and the difference
between $J_{u}$ and $J_{u_\nu^{\pre}}$ on the neck regions :
\begin{multline} \label{eq:err1}
  \Vert \cF_\nu(0) \Vert_{L^{p,\lam}(S_\nu)} = \Mod{\delbar_{J_{u_\nu^{\pre}}} u^\nu_\pre}_{L^{p,\lam}(S_\nu^\black)} 
  + \sum_{e \in \Edge_\white(\Gamma)}\Mod{\tfrac 1 {\lam_e} + (\grad_F)_{u_\nu^\pre}}_{L^p(T_e)}\\
+\ssum_e\Vert \olp \exp_{e^{(s+it)\cT(e)}x_e} ( \beta(-s_e)
  \zeta_e^-(s_e + \nu l_e/2, t_e) \\+ \beta(s_e) \zeta_e^+(s_e - \nu
  l_e/2,t_e)) \Vert_{L^{p,\lam}(\Neck_e(S_\nu))}.
\end{multline}
The first two terms may not vanish because the perturbation is domain
dependent: in the complement of the neck regions, the map $u_\nu^\pre$
is $P(C)$-holomorphic but not $P(C_\nu)$-holomorphic.  For any metric
$d_{\ol \M}$ on the compactified moduli space $\ol \M_{\Gamma_\glue}$
of treed disks, the distance between the domain curves is bounded as
\[d_{\ol \M}(C_\nu, C) \leq \co \max_{e \in \Edge_\trop(\Gamma)} \exp(-\nu l_e).\]
Therefore, the distance between the domain-dependent perturbations has
a similar bound. On the complement of the necks $u_\nu^\pre$ is
$J(C)$-holomorphic, so
\begin{equation}
  \label{eq:nonneckbd}
\Mod{\delbar_{J_{u_\nu^{\pre}}} u^\nu_\pre}_{L^{p,\lam}(S_\nu^\black)} \leq \co\Mod{J(C)-J(C_\nu)}_{L^\infty} \leq \co \max_{e \in \Edge_\trop(\Gamma)} \exp(-\nu l_e).  
\end{equation}
The last term in the right-hand side of \eqref{eq:err1} is equal to  
\begin{multline*}
   \ssum_e \Vert ( D \exp_{e^{(s+it)\cT(e)}x_e} ( \d \beta(-s_e)
  \zeta_e^-(s_e + \nu l_e/2, t_e) + \d \beta(s_e ) \zeta_e^+(s_e - \nu
  l_e/2,t_e)) + \\ 
 ( \beta(-s_e) \d \zeta_e^-(s_e + \nu l_e/2, t_e)
  + \beta(s_e) \d \zeta_e^+(s_e - \nu l_e/2,t_e)) )^{0,1}
  \Vert_{L^{p,\lambda}(\Neck_e(S_\nu))}.
\end{multline*}
On the neck regions, the almost complex structure is
domain-independent since it is equal to the background almost complex
structure (see \eqref{eq:basej0}).  Holomorphicity of $u$ implies that
the terms are non-zero only in the support of $d\beta_e$ which is
contained in the interval
\[[-1,1] \times S^1 \subset [-\tfrac {\nu l_e} 2, \tfrac {\nu l_e} 2] \times S^1 = \Neck_e(S_\nu).\]
Both $\zeta_e^\pm$ and its derivative decay at the rate of $e^{-s_e}$
on the cylindrical end $\R_+ \times S^1$ in $S^\circ$, see
\eqref{eq:enddecay}. As a result both the difference between $J_{u}$
and $J_{u_\nu^{\pre}}$, and the terms containing $d\beta$ are bounded
by $\co e^{-l_e \nu/2}$ where $c$ is a constant independent of
$\nu$. The Sobolev weight function \eqref{kappa} has a multiplicative
factor of $e^{\lam l_e \nu/2}$, and therefore,
\begin{equation}
  \label{zeroth}
  \Mod{\F_\nu(0)} \leq \co\ssum_{e \in \Edge_\trop(\Gamma)}e^{-(1-\lam)l_e\nu/2} ,  
\end{equation}
with $\co$ a constant independent of $\nu$. (See Abouzaid
\cite[Equation (5.10)]{ab:ex}).

\section{Uniform right inverse}

In this Section, we construct a uniformly bounded right inverse for the linearized
operator of the approximate solution. Using the right inverses of the
pieces of the broken map, we construct an approximate inverse.  For
large neck lengths, the error of the approximate inverse is
shown to be small enough that an actual inverse can be constructed.

First, for any element in the target space of the linearized operator of the approximate solution,
we give a nearby element in the  target space of the linearized operator of the broken map.
We denote the linearized operator of the approximate solution by
\[D_{u^\pre_\nu}:=D\F_\nu([C_\nu],0).\]
Given an element
\[\eta=(\eta_S,\eta_T) \in \Omega^{0,1}(S_\nu,
  (u^{\pre}_\nu|S_\nu)^* TX^\nu)_{L^{p,\lam}} \oplus \Om^1(T,
  (u^{\pre}_\nu|T)^*TL)_{L^p} \]
in the target space of $D_{u^\pre_\nu}$ one obtains an element in the
target space of the linearized operator $D_{\cu}$ of the broken map
\[ \tilde{\eta} = (\eta_v)_{v \in \Ver(\Gamma)} \oplus \eta_T, \quad \eta_v \in
\bigoplus_{v \in \Ver(\Gamma)} \Omega^{0,1}(S_v^\circ, (\cu_v)^* T\ol \XC_{
  P(v)}^\circ) \]
as follows.  The element $\tilde \eta$ is equal to $\eta$ in the tree
components $T \subset C$ and in the complement of the neck region on
the surface components $S_v \subset C$. On the neck region, for an
edge $e=(v_+,v_-) \in \Edge_\trop(\Gamma)$, $\tilde \eta$ is defined
by restricting $\eta|\Neck_e(S_\nu)$ to half the neck and extending by
zero :
\[ \eta_{v,+} : (-\infty,0] \times S^1 \to \XC_{P(e)}, \quad (s,t)\mapsto 
  \begin{cases}
    \pT^{+,\nu}_e  \eta \left(s+\frac {\nu l_e} 2,t \right) , \quad s \geq \frac {-\nu l_e} 2 \\
    0, \quad s< \frac {-\nu l_e} 2,
  \end{cases} \]
and 
\[
  \eta_{v,-} : [0,\infty) \times S^1 \to \XC_{P(e)}, \quad (s,t) \mapsto 
  \begin{cases}
    \pT^{-,\nu}_e  \eta \left(s-\frac {\nu l_e} 2,t \right) , \quad s \leq \frac {\nu l_e} 2 \\
    0, \quad s> \frac {\nu l_e} 2,
  \end{cases},\]
where
\begin{equation}
  \label{eq:ptpm}
\pT^{\pm,\nu}_e : \Gamma((u_\nu^\pre)^*T\XC_{P(e)}) \to \Gamma((u_v^\pm)^*T\XC_{P(e)}) 
\end{equation}
is parallel transport along the path
\[\exp_{e^{(s+it)\cT(e)}x_e} ( \rho \zeta^\nu_e(s \pm \tfrac {\nu l_e} 2 ,t) + (1- \rho)
  \zeta_e^\pm(s,t)), \quad \rho \in [0,1] , \]
$\zeta_\nu^e$ is defined in \eqref{eq:preglued} and $\zeta_e^\pm$ is
defined in \eqref{eq:xipmdef}.

We apply the inverse of the linearization of the broken map to the element constructed in the
previous paragraph. 
Since the broken map $\cu$ is regular
and isolated, its linearized operator is bijective.  We recall that
the linearized operator is a map of Banach spaces (see
\eqref{eq:linop})
\begin{multline*}
  D_{\cu} : T_m \M_\Gamma \times \Map(C,\XX)_{1,p,\lam} \to
  \Om^{0,1}(S,(\cu|S)^*T\XX) \\
  \oplus \Om^1(T,(\cu|T)^*TL) \oplus
\ev_\Gamma^* 
T\XX/T\Delta.  
\end{multline*}
Bijectivity of $D_{\cu}$ implies there is an inverse $(m,\xi)$ for the
element $(\tilde \eta, 0 \in \ev_\Gamma^* T\XX/T\Delta)$.  We write
$\xi=((\xi_v)_{v \in \Ver(\Gamma)}, \xi_T)$.  The vanishing of the
last term in $D_{\cu}(m,\xi)$ means that $\xi$ satisfies matching
conditions at tropical and boundary nodes, and the interior markings
$z_i$ satisfy the divisor constraint :
\[ \xi(z_i) \in T_{u_\pm(z_i)} \DD . \]
The matching at tropical nodes implies that for any tropical edge
$e=(v_+,v_-)$, the limit of $\xi_{v_+}$, $\xi_{v_-}$ at the
cylindrical end $e$ is equal :
\[\xi_{v_+,e}=\xi_{v_-,e} =: \xi_e \in
T_{x_e}\XC_{P(e)}  .\]

We now define the approximate inverse by patching the inverse of the
linearization of the broken map.  We denote the approximate inverse of
$D_{u_\nu^\pre}$ by $Q^\nu$. On the complement of the neck regions in
$C_\nu$ we define
\[Q^\nu(\eta):=\xi \quad \text{on }C_\nu \bs \cup_e \Neck_e(S_\nu);\]
and on the neck region corresponding to a tropical edge
$e \in \Edge_\trop(\Gamma)$ we patch the solutions $\xi_{v_\pm}$
together using a cutoff function:
\begin{align} Q^\nu \eta &:= \beta \left(-s + \sqq \nu l_e \right)
  ( ( \pT^{-,\nu}_e)^{-1} \xi_{v_-}(s+\nu l_e/2) - \pT^\nu_e \xi_e)\\
  \nonumber & + \beta \left( s + \sqq \nu l_e \right) ((
  \pT^{+,\nu}_e)^{-1} (\xi_{v_+}(s-\nu l_e/2)) - \pT^\nu_e \xi_e) + \pT^\nu_e
  \xi_e\\
  \nonumber & \in \Omega^0(C_\nu, (u^{\pre}_{\nu})^*
  TX)_{1,p,\lambda},
\end{align}
where $\pT^{\pm,\nu}_e$ is defined in \eqref{eq:ptpm} and $\pT_e^\nu$
is the parallel transport from $\Gamma((u^\ver_{\nu,e})^*T\XC_{P(e)})$
to $\Gamma((u_\nu^\pre)^*T\XC_{P(e)})$.

The approximate inverse $Q^\nu$ is uniformly bounded for all $\nu$: It
follows easily from the construction of $Q^\nu$ that the operations
$\eta \mapsto \tilde \eta$, $\tilde \eta \mapsto \xi$ are uniformly
bounded operators for all $\nu$, and that on the complement of the
neck regions there is a uniform bound on the norm of the operator
$\xi \mapsto (Q_\nu\eta)|C_\nu^\black$. On the neck $\Neck_e(S_\nu)$
corresponding to a tropical edge $e \in \Edge_\trop(\Gamma)$ there is
a uniform bound
\[\Mod{((Q^\nu\eta) - \pT_e\xi_e)e^{\kappa_\nu \lam} }_{L^p(\Neck_e(S_\nu))} \leq c\Mod{\xi}_{W^{1,p,\lam}}.\]
However, we need a $\kappa_\nu$-weighted Sobolev norm bound on $((Q^\nu\eta) - \pT_e(Q^\nu \eta)(0,0)_e)$, which follows from the observation
\[|Q_\nu(0,0)_e - \xi_e| \leq \abs{\xi_{v_+}(\tfrac {-\nu l_e} 2,0)_e - \xi_e} + \abs{\xi_{v_-}(\tfrac {\nu l_e} 2,0)_e - \xi_e} \leq ce^{-\lam \nu l_e/2}\Mod{\eta}_{L^{p,\lam}},\]
which implies that
\[\Mod{(\pT_e Q^\nu(0,0)_e - \pT_e \xi_e)e^{\kappa_\nu \lam}} \leq c\Mod{\eta}_{L^{p,\lam}}. \] 
We have thus shown that there is a $\nu$-independent constant $c$ such that
\[\Mod{Q^\nu} \leq c.\]

Next, we give an error estimate for the approximate inverse.  We need
to bound the quantity $D_{u_{\pre}^\nu} Q^\nu \eta - \eta$.  On the
complement of the neck regions of $C_\nu$ (including treed segments),
this quantity is bounded by the difference in the domain-dependent
perturbation on $(C,j)$ and $(C_\nu,j^\nu)$. Similar to
\eqref{eq:nonneckbd} in the estimate of $\F^\nu(0)$, we have the bound
\begin{equation}
  \label{eq:fest}
  \Mod{D_{u_{\pre}^\nu} Q^\nu \eta - \eta}_{L^{p,\lam}(C_\nu^\black)} \leq \co \max_{e \in \Edge_\trop(\Gamma)} \exp(-\nu l_e).
\end{equation}
We bound $\Mod{D_{u_{\pre}^\nu} Q^\nu \eta - \eta}_{L^{p,\lam}}$ on
$\Neck_e(S_\nu)$ corresponding to an edge
$e=(v_+,v_-) \in \Edge_\trop(\Gamma)$.  We drop the parallel transport
notation in the analysis, noting that both the transport map and its
derivative contribute smooth multiplicative factors that decay as
$ce^{\nu l_e /2 -|s|}$ on the neck.  We recall that
$\Neck_e(S_\nu) \simeq \{(s,t) \in [\frac {-\nu l_e} 2,\frac {\nu l_e}
2] \times S^1\}$, and analyze the half $\{s \geq 0\}$, the analysis on the other half being similar. On $\{s \geq 0\}$, we have
\[\eta=\eta_{v_+}=D_{u^0}\xi_{v_+}, \quad Q^\nu \eta = \xi_{v_+} + \beta(-s + \tfrac {\nu l_e}4)(\xi_{v_-}-\pT_e\xi_e),
\]
and therefore,
\begin{equation}
  \label{eq:estr}
  D_{u^\nu_{\pre}}Q^\nu \eta - \eta = D_{u^\nu_{\pre}}(\beta(-s + \tfrac {\nu l_e}4)(\xi_{v_-}-\pT_e\xi_e)) + (D_{u^0_{v_+}}-D_{u^\nu_{\pre}})\xi_{v_+}.
\end{equation}
We have
\begin{equation}
  \label{eq:dudiff}
\Mod{(D_{u^0_{v_+}}-D_{u^\nu_{\pre}})\xi_{v_+}}_{L^{p,\lam}(S_\nu)} \leq c e^{-\nu l_e/2}\Mod{\xi_{v_+}}_{W^{1,p,\lam}(S_{v_+}^\circ)}  \leq c e^{-\nu l_e/2}\Mod{\eta}_{L^{p,\lam}(C_\nu)}
\end{equation}
since
\[ d_{X^\nu}(u^0_{v_+}(s-\frac {\nu l_e} 2,t),u^\nu_{\pre}(s,t)) \leq c
e^{-\nu l_e/2} . \]
Since $D_{u^0_{v_-}}\xi_{v_-}=0$ on $\{s \geq 0\}$,
by a similar bound to \eqref{eq:dudiff} on
$(D_{u^0_{v_-}}-D_{u^\nu_{\pre}})\xi_{v_-}$, we have
\begin{equation}
  \label{eq:xinegbd}
  \Mod{D_{u^\nu_\pre} \xi_{v_-}}_{L^{p,\lam}(\{s \geq 0\})} \leq  c e^{-\nu l_e/2}\Mod{\xi_{v_-}}_{W^{1,p,\lam}(S_{v_-}^\circ)} \leq c e^{-\nu l_e/2}\Mod{\eta}_{L^{p,\lam}(C_\nu)}.
\end{equation}
From \eqref{eq:estr}, \eqref{eq:dudiff}, \eqref{eq:xinegbd} we conclude 
\begin{equation}
  \label{eq:errorbd}
  \Mod{ D_{u^\nu_{\pre}}Q^\nu \eta - \eta }_{L^{p,\lam}} \leq \Mod{\beta (D_{u^\nu_{\pre}}(\pT_e\xi_e))} +\Mod{d\beta(\xi_{v_-}-\pT_e\xi_e)} + c e^{-\nu l_e/2}\Mod{\eta}.   
\end{equation}
To bound the first term in the right-hand side of \eqref{eq:errorbd} we observe that $\pT_e\xi_e$ is the parallel transport of a covariant constant section on the trivial cylinder $u_e^{\on{ver}}$, and therefore,
\[\Mod{D_{u^\nu_{\pre}}(\pT_e\xi_e)} = \Mod{(D_{u^\nu_{\pre}}-D_{u_e^{\on{ver}}})(\pT_e\xi_e)} \leq c e^{-\nu l_e/2}|\xi_e| \leq c e^{-\nu l_e/2}\Mod{\eta}_{L^{p,\lam}}. \]
It remains to bound the second term which is supported in the unit
interval $[\frac {\nu l_e} 4, \frac {\nu l_e} 4 +1] \times S^1$ in the
neck $[\frac {-\nu l_e} 2, \frac {\nu l_e} 2] \times S^1$. In this
interval, the Sobolev weight in the curve $S_{v_-}$ and the glued
curve $S_\nu$ differ by $\frac {\nu l_e} 2$ :
\[ \kappa_\nu(s,t) = \kappa_{S_{v_-}^\circ}(s+\nu l_e/2) - \nu l_e/2,
\quad \mp s \ge \nu l_e/2. \]
Therefore there is a $\nu$-independent constant $\co$ such that 
\begin{equation} \label{best} \Mod{\d \beta( s - \nu
    l_e/4)\xi_{v_-}}_{L^{p,\lam}(S_\nu)} \leq \co e^{- \lambda \nu
    l_e/2}\Mod{\xi_{v_-}}_{W^{1,p,\lam}(S_v^\circ)} \leq \co e^{- \lambda \nu
    l_e/2}\Mod{\eta}_{L^{p,\lam}(S_\nu)}.\end{equation}

From \eqref{eq:fest}, \eqref{best}, one obtains an estimate as in Fukaya-Oh-Ohta-Ono
\cite[7.1.32]{fooo}, Abouzaid \cite[Lemma 5.13]{ab:ex}:  For some
constant $\co > 0$, for any $\nu$
\begin{equation} \label{eq:first-pre} \Vert D_{ u^{\pre}_\nu } Q^\nu -
  \on{Id} \Vert < \co \min_{e \in \Edge_\trop(\Gamma)}( \exp(-\lambda
  \nu l_e/2) , \exp(-(1 - \lambda)\nu l_e/2)) .\end{equation}
It follows that for $\nu$ sufficiently large an actual inverse may be
obtained from the Taylor series formula
\[ D_{u^{\pre}_\nu}^{-1} = Q^\nu (D_{u^{\pre}_\nu}Q^\nu)^{-1} = Q^\nu
\sum_{k \ge 0} (I - Q^\nu D_{u^{\pre}_\nu})^k .\]
The approximate inverse $Q^\nu$ is uniformly bounded for all
$\nu$.  For large enough $\nu$, \eqref{eq:first-pre} implies that
$\Mod{D_{ u^{\pre}_\nu } Q^\nu - \on{Id}} \leq \hh$, and so,
\begin{equation}
  \label{first}
 \Mod{D_{u^{\pre}_\nu}^{-1}} \leq 2\Mod{Q^\nu} \leq c.
\end{equation}

\section{Uniform quadratic estimate}\label{sec:quadratic}
We obtain a uniform quadratic estimate for the non-linear terms in the
map cutting out the moduli space locally. We will prove that there
exists a constant $\co$ such that for all $\nu$
\begin{equation} \label{second} \Mod{ D_{(m_1,\xi_1)} \cF_\nu
    (m_2,\xi_2) - D_{u^{\pre}_\nu} (m_2,\xi_2)} \leq \co 
  \Mod{(m_1,\xi_1)}_{1,p,\lambda}
  \Mod{(m_2,\xi_2)}_{1,p,\lambda}.
\end{equation}
We prove the quadratic estimate for the $\delbar$ term on surface components. The other terms in the operator $\F_\nu$ are left to the reader as the proof is similar.

As a preliminary step we prove a quadratic estimate on a simpler operator. 
Define 
\begin{multline*}
  \G : W^{1,p,\lam}(\Om^0(S_\nu, (u_\nu^\pre)^*TX^\nu)) \to L^{p,\lam}(\Om^1(S_\nu,(u_\nu^\pre)^*TX^\nu)), \\
  \xi \mapsto \pT_\xi^{-1}d(\exp_{u_\nu^\pre} \xi). 
\end{multline*}

\begin{claim}\label{cl:gbd}
  There is a $\nu$-independent constant $c$ such that
  \[\Mod{D_{\xi_1}\G(\xi_2) - D_0\G(\xi_2)}_{0,p,\lam} \leq c \Mod{\xi_1}_{1,p,\lambda}
  \Mod{\xi_2}_{1,p,\lambda}  \]
if $\Mod{\xi_1}_{1,p,\lambda}$ is small enough.
\end{claim}

\begin{subproof} [Proof of Claim]
  For $x \in X^\nu$, $\xi_1,\xi_2 \in T_x X^\nu$, define
  \[\ol \pT_{-\xi_1}^x : T_x X \to T_x X, \quad \xi_2 \mapsto
    \pT_{\xi_1}^{-1} \tfrac \d {\d\tau}
    \exp_x(\xi_1+\tau\xi_2)|_{\tau=0}.\]
  It extends to a map on sections.  For a map $u: S_\nu \to X^\nu$, and
  $\xi_1 \in \Gamma(S_\nu,u^*T X^\nu)$ define
  \[\ol \pT_{-\xi_1}^u : \Gamma(S_\nu,u^*T X^\nu) \to
    \Gamma(S_\nu,u^*T X^\nu), \quad \xi_2 \mapsto (z \mapsto \ol
    \pT_{-\xi_1(z)}^{u(z)}(\xi_2(z))).\]
  We have
  \begin{align} \label{eq:reverse}
 \nonumber   D_{\xi_1}\G(\xi_2) - D_{u_\nu^\pre}\G(\xi_2)
    =\tfrac \d {\d\tau} \pT_{\xi_1+\tau \xi_2}^{-1} \d_z(\exp_{u_\nu^\pre}(\xi_1+\tau \xi_2)) 
    - \tfrac \d {\d\tau} \pT_{\tau \xi_2}^{-1} \d_z(\exp_{u_\nu^\pre}(\tau \xi_2))\\
    =\nabla_z(\ol \pT^{u_\nu^\pre}_{-\xi_1} \xi_2 - \xi_2),
  \end{align}
  where in the second line $\nabla_z$ is differentiation along the
  domain curve $S_\nu$. In the second equality we switch the order of
  differentiation between $\tfrac \d {\d\tau}$ and $\d_z$.  For any
  $z \in S_\nu$ we have a uniform pointwise estimate
  \begin{align*}
    \abs{D_{\xi_1}\G(\xi_2) - D_{u_\nu^\pre}\G(\xi_2)} 
    &\leq \abs{\nabla_z(\ol \pT^{u(z)}_{-\xi_1(z)} - \Id)} \cdot \abs{\xi_2(z)} + \abs{\ol \pT^{u_\nu^\pre(z)}_{-\xi_1(z)} - \Id} \cdot \abs{(\nabla_z\xi_2)(z)}\\
    &\leq c((\abs{du_\nu^\pre(z)}\cdot \abs{\xi_1(z)} + \abs{\nabla_z\xi_1(z)} )\cdot \abs{\xi_2(z)} + \\ & \quad \abs{\xi_1(z)}\cdot \abs{\nabla_z \xi_2(z)})
  \end{align*}
  where the constant $c$ is $\nu$-independent. Indeed, such a uniform
  constant exists because the complement of the neck regions is
  compact and identical for all $\nu$; and the neck regions have a
  cylindrical metric, and only the lengths of the cylinders vary with
  $\nu$. Then,
  \begin{multline*}
    \Mod{D_{\xi_1}\G(\xi_2) - D_{u_\nu^\pre}\G(\xi_2)} \leq 
    c(\Mod{\d u_\nu^\pre}_{L^\infty}\cdot \Mod{\xi_1}_{L^\infty}\cdot \Mod{\xi_2}_{L^{p,\lam}} + \Mod{\nabla_z\xi_1(z)}_{L^{p,\lam}}\cdot \Mod{\xi_2}_{L^\infty}\\
    + \Mod{\xi_1}_{L^\infty}\cdot \Mod{\nabla_z \xi_2(z)}_{L^{p,\lam}}) 
     \leq c \Mod{\xi_1}_{1,p,\lambda}
  \Mod{\xi_2}_{1,p,\lambda},
\end{multline*}
where for the last inequality, we use the fact that
$\Mod{\d u_\nu^\pre}_{L^\infty}$ is uniformly bounded for all $\nu$ by
construction, and the following bound from Sobolev embedding : For any
section $\xi \in W^{1,p,\lam}(S_\nu)$ there is a $\nu$-independent
constant $c$ such that
$\Mod{\xi}_{L^\infty} \leq c\Mod{\xi}_{W^{1,p,\lam}}$. This proves the
Claim.
\end{subproof}

We obtain the quadratic estimate \eqref{second} for $\F_\nu$ by
adapting the proof of the above claim. We note that, compared to $\G$,
the operator $\F_\nu$ additionally consists of a projection to
$(0,1)$-forms :
 \[\F_\nu : (m,\xi) \mapsto \pT_\xi^{-1}\pi^{0,1}_{j(m),J(\xi)} \d(\exp_{u_\nu^\pre}\xi),\]
 where we abbreviate $J_{\exp_{u_\nu^\pre} \xi}$ as $J(\xi)$.  The
 analog of \eqref{eq:reverse} is
 \begin{multline}
   D_{(m_1,\xi_1)}\F_\nu(\xi_2) - D_{u_\nu^\pre}\F_\nu(\xi_2)  \\
   =\tfrac \d {\d\tau} \pT_{\xi_1+\tau \xi_2}^{-1} \pi^{0,1}_{j_\nu(m_1+\tau m_2), J(\xi_1+\tau\xi_2)}\d_z(\exp_{u_\nu^\pre}(\xi_1+\tau \xi_2)) |_{\tau=0}
   \\ - \tfrac \d {\d\tau} \pT_{\tau \xi_2}^{-1} \pi^{0,1}_{j_\nu(\tau m_2), J(\tau\xi_2)}\d_z(\exp_{u_\nu^\pre}(\tau \xi_2))|_{\tau=0}\\
   =\pi^{0,1}_{j_\nu(m_1),J(\xi_1)}\nabla_z(\ol \pT^u_{-\xi_1} \xi_2) + (\nabla_z(\pi^{0,1}_{j_\nu(m_1),J(\xi_1)}))(\ol \pT^u_{-\xi_1} \xi_2)\\ - \pi^{0,1}_{j(C_\nu),J(0)}(\nabla_z \xi_2) - (\nabla_z \pi^{0,1}_{j(C_\nu),J(0)})\xi_2,
 \end{multline}
 where $\tau m_2$ resp. $m_1 + \tau m_2$, $\tau \in [0,1]$ is a path
 in a neighborhood $U_{C_\nu}$ of $C_\nu$ in the moduli space
 $\M_{\on {glue}}$.  The estimate \eqref{second} can be obtained from
 the above expression in a similar way to the proof of Claim
 \ref{cl:gbd}. We point out that the metric $g_{C_\nu}$ on $U_{C_\nu}$
 is cylindrical on the non-compact ends of $\M_{\on {glue}}$, and with
 respect to this metric
 $\Mod{j_\nu(m_1) - j_\nu(m_2)}_{C^1} \leq
 cd_{g_{\Gamma_\glue}}(m_1,m_2)$, see \eqref{eq:fjcont}.

\section{Picard iteration}
\label{picardit} 

We apply the implicit function theorem to obtain an exact solution for
a glued holomorphic map.  We recall the following version
of the Picard Lemma \cite[Proposition A.3.4]{ms:jh}.

\begin{lemma}{\rm(Picard Lemma)} \label{lem:pic}
  Let $X$ and $Y$ be Banach spaces, $U \subset X$ be an open set
  containing $0$, and $f:U \to Y$ be a smooth map. Suppose $df(0)$ is
  invertible with inverse $Q:Y \to X$. Suppose $\co$ and $\eps>0$ are
  constants such that $\Mod{Q} \leq \co$, $B_\eps \subset U$, and
  \[\Mod{df(x) - df(0)} \leq \tfrac 1 {2\co} \quad \forall x \in
  B_\eps(0).\]
  Suppose $f(0) \leq \frac {\eps} {4\co}$. Then, there is a unique point
  $x_0 \in B_\eps$ satisfying $f(x_0)=0$.
\end{lemma}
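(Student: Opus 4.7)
The plan is to reformulate the equation $f(x)=0$ as a fixed-point problem for the Newton-type map
\[T:B_\eps \to X, \qquad T(x):=x - Qf(x),\]
and then apply the Banach contraction mapping principle. Since $Q$ is invertible (being a bounded right/left inverse of $df(0)$), a point $x_0\in B_\eps$ satisfies $f(x_0)=0$ if and only if $T(x_0)=x_0$, so the lemma reduces to producing a unique fixed point of $T$ in $B_\eps$.

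The first step is the contraction estimate. For $x_1,x_2\in B_\eps$, use the fundamental theorem of calculus to write
\[f(x_1)-f(x_2)=\int_0^1 df\bigl(x_2+t(x_1-x_2)\bigr)(x_1-x_2)\,dt,\]
and, using $Q\,df(0)=\mathrm{Id}$, rewrite
\[T(x_1)-T(x_2)=Q\int_0^1\bigl(df(0)-df(x_2+t(x_1-x_2))\bigr)(x_1-x_2)\,dt.\]
The segment from $x_2$ to $x_1$ stays in the convex set $B_\eps$, so the hypothesis $\|df(x)-df(0)\|\le 1/(2\co)$ together with $\|Q\|\le \co$ yields
\[\|T(x_1)-T(x_2)\|\le \co\cdot\tfrac{1}{2\co}\|x_1-x_2\|=\tfrac12\|x_1-x_2\|.\]

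The second step is to show $T$ preserves a suitable closed ball. For any $x\in \ol{B_\eps}$,
\[\|T(x)\|\le \|T(x)-T(0)\|+\|T(0)\|\le \tfrac12\|x\|+\|Q\|\,\|f(0)\|\le \tfrac{\eps}{2}+\co\cdot\tfrac{\eps}{4\co}=\tfrac{3\eps}{4},\]
so $T$ maps $\ol{B_\eps}$ into $\ol{B_{3\eps/4}}\subset B_\eps$. As a closed subset of the Banach space $X$, $\ol{B_\eps}$ is a complete metric space, so Banach's fixed-point theorem produces a unique fixed point $x_0\in \ol{B_\eps}$; moreover the self-map estimate shows $\|x_0\|\le 3\eps/4<\eps$, hence $x_0\in B_\eps$. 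Uniqueness in $B_\eps$ (not merely in $\ol{B_\eps}$) is automatic from the contraction inequality in Step~1, since any two zeros of $f$ in $B_\eps$ are fixed points of $T$ and must therefore coincide.

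There is no genuine obstacle; the only care needed is bookkeeping of constants and the convexity of $B_\eps$ used to apply the mean-value identity. One should also observe that the smoothness hypothesis on $f$ is not essential — $C^1$ suffices — but is harmless. The iterates $x_{n+1}:=T(x_n)$ starting from $x_0=0$ converge geometrically to the solution with $\|x_n-x_\infty\|\le 2^{-n}\cdot (3\eps/4)$, which (although not needed for the statement) is the quantitative content used later in the gluing argument.
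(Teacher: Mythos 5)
Your argument is correct: it is the standard contraction-mapping proof of this Newton--Picard lemma, and in fact the paper does not prove the statement at all --- it simply cites \cite[Proposition A.3.4]{ms:jh}, whose proof is essentially the one you give. The only (cosmetic) point to tidy is that the hypotheses give $B_\eps \subset U$ but not $\ol{B_\eps} \subset U$, so you should run the fixed-point iteration on a closed ball strictly inside $B_\eps$ --- e.g.\ note that your self-map estimate already shows $T$ maps $\ol{B_{3\eps/4}}$ into itself, and apply Banach's theorem there; uniqueness in all of $B_\eps$ then follows from your contraction estimate exactly as you say.
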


Picard's Lemma and the estimates \eqref{zeroth}, \eqref{first},
\eqref{second} imply the existence of a solution $(m(\nu),\xi(\nu))$
to the equation
\[ \cF^\nu(m(\nu),\xi(\nu)) = 0 \]
for each $\nu$. The map
\[ u_\nu := \exp_{u_\nu^{\pre}}(\xi(\nu)) \]
is a $(j(m(\nu)), J^\nu)$-holomorphic map to $X^\nu$.  Additionally,
there is a $\nu$-independent constant $\eps>0$ such that
$(m(\nu),\xi(\nu))$ is the unique zero of $\cF^\nu$ in an
$\eps$-neighbourhood of $((C_\nu,j^\nu),u_\nu^\pre)$ with respect to
the $g_{\Gamma_\glue}$-norm on $m(\nu)$ and the weighted Sobolev norm
$W^{1,p,\lam}$ on $\xi(\nu)$.

\section{Surjectivity of gluing} \label{surjofgluing}

We show that the gluing construction gives a bijection between rigid
maps in neck-stretched manifolds and rigid broken maps.  Note that any
family $\{u'_\nu: C_\nu' \to X^\nu\}_\nu$ converges to a broken map
$u: C \to \XX$ by Theorem \ref{thm:cpt-breaking}.  To prove the
bijection we must show that any such family of maps is in the image of
the gluing construction.  Since the implicit function theorem used to
construct the gluing gives a unique solution in a neighbourhood, it
suffices to show that the maps $u'_\nu$ are close, in the Sobolev norm
used for the gluing construction, to the approximate solution
$u^\pre_\nu$ defined by \eqref{eq:preglued}.

We first show that the domain curves of the converging sequence of
maps are close enough to the domains of the approximate solution with
respect to the cylindrical metric $g_{\Gamma_\glue}$ from
\eqref{eq:gm}.  In the definition of Gromov convergence, the
convergence of domains implies that $C_\nu' \to C$ in the compactified
moduli space $\ol \M_{\Gamma_\glue}$, which implies
\begin{equation}
  \label{eq:curvconv}
\pi_\Gamma(C_\nu') \to C \quad \text{in $\M_\Gamma$.}
\end{equation}
We additionally need to prove that the distance
$d_{g_{\Gamma_\glue}}(C_\nu',C_\nu) \to 0$ where the metric
$g_{\Gamma_\glue}$ is cylindrical in the non-compact ends of
$\ol \M_{\Gamma_\glue}$.
By assumption the limit map $u$ does not have any tropical
symmetry. Therefore, the translation sequence $t_\nu$ is uniquely
determined by the tropical graph of $u$ and coincides with the
translations used for gluing.  By \hyperref[item:thincyl2]{(Thin
  cylinder convergence)}, for any tropical edge
$e=(v_+,v_-) \in \Edge_\trop(\Gamma)$, the gluing parameters
$l_\nu'(e) + i\theta_\nu'(e)$ of the curves $C_\nu'$ satisfy
\[ \lim_{\nu \to \infty}\theta_\nu'(e)=0, \quad \lim_{\nu \to \infty}(t_\nu(v_+)-t_\nu(v_-) - \cT(e) l_\nu'(e)) = 0\]
The gluing parameter of $C_\nu$ at the edge $e$ is $\nu l_e$, which satisfies
the relation
\[t_\nu(v_+)-t_\nu(v_-) = \cT(e) \nu l_e.\]
Therefore,
$l_\nu'(e) - \nu l_e \to 0$, and $d_{g_{\Gamma_\glue}}(C_\nu,C_\nu') \to 0$ as
$\nu \to \infty$.  In addition there is a biholomorphism
\begin{equation}
  \label{eq:bihol}
  \phi_\nu : (C_\nu,j_\nu([C_\nu'])) \to C_\nu',
\end{equation}
where $j_\nu$ is defined in \eqref{eq:jnu}.
 
Next, we show that the maps in the converging sequence are close
enough to the approximate solutions.  Via the identification
\eqref{eq:bihol}, we view $u_\nu'$ as a map on $C_\nu$.
We need to
bound the section $\xi_\nu' \in \Om^0(C_\nu,(u_\nu^{pre})^*TX^\nu)$
defined by the equation $u_\nu'=\exp_{u_\nu^{pre}}\xi_\nu'$ in the
weighted Sobolev norm \eqref{1pl2}.  Consider the neck region in $C_\nu$
corresponding to an edge $e$ with coordinates
\[(s_e,t_e) \in [-\tfrac {\nu l_e} 2, \tfrac {\nu l_e} 2] \times \R/2 \pi \Z.\]
Denote the midpoint of the neck as 
\[ 0_e:=\{(s_e,t_e)=(0,0)\} \in
C_\nu . \]  
In the neck region, the maps $u_\nu^{pre}$ and $u_\nu'$ are equal to a
vertical cylinder perturbed by a quantity that decays exponentially in
the middle of the cylinder. The vertical cylinder is determined by
$u_\nu^{pre}(0_e)$ resp. $u_\nu'(0_e)$.  The sequence
$u^{pre}_\nu(0_e)$ converges to $x_e$ because of the asymptotic decay
of the sections $\zeta^\pm$. The sequence $u_\nu'(0_e)$ converges to
$x_e$ by \hyperref[item:thincyl2]{(Thin cylinder convergence)}.  Indeed,  since the complex
structure $\fj^\nu(C_\nu')$ is standard on a truncation
$[-\tfrac {\nu l_e} 2 - L, \tfrac {\nu l_e} 2 - L] \times S^1$ of the
neck (see \eqref{eq:suppfj}), and the mid point
of the cylinder is preserved by the biholomorphism $\phi$ in
\eqref{eq:bihol}, \hyperref[item:thincyl2]{(Thin cylinder convergence)} is applicable with the coordinates
$(s_e,t_e)$.  On the neck region, the section $\xi_\nu'$ and its
derivatives decay exponentially : 
\[|D^k\xi_\nu'(s_e,t_e)| \leq \co e^{-(\nu l_e/2 -|s_e|)}, \quad k \in \{0,1\}. \]
This inequality follows from  the decay of the terms $\zeta_\nu^\pm$ in the
definition of $u_\nu^{pre}$, and the breaking annulus lemma applied to
$u_\nu'$. Consequently $\Mod{\xi_\nu'}_{W^{1,p,\lam}}$ can be made
small enough by taking a large $\nu$ and shrinking the neck by a fixed
amount: that is, we decrease the cylinder length to $\nu l_e - C$ where $C$ is a constant independent of $\nu$.  Next we consider the complement of the neck regions. Here,
the sequences $u_\nu'$ and $u_\nu^{pre}$ uniformly converge to
$u$. So, by taking $\nu$ large enough the maps
$u_\nu^{pre}$ and $u_\nu'$ are $W^{1,p}$-close enough in the
complement of the neck regions.
This finishes the proof of Theorem \ref{thm:gluing}.  

\begin{remark}{\rm(Gluing preserves orientations)}
  \label{rem:orientglue}
  From the definition of orientation of moduli spaces in Remark
  \ref{rem:orientmap} it is easily seen that gluing preserves
  orientation in the special case that the Cauchy-Riemann operator on
  each surface component of
  the broken map $u^0$ is complex
  linear. It remains to consider the effect of the interpolation from the
  linearized operator $D_{u^0}$ resp. $D_{u_\nu}$ to a complex linear
  operator $\delbar_{u^0}$ resp. $\delbar_{u_\nu}$.  This interpolation 
  can be accomplished by an interpolation on the Cauchy-Riemann operator one ach component $u |_{S_v}$ to a complex-linear operator; the interpolations can be then be glued to obtain a interpolation of $D_{u_\nu}$ to a complex linear operator.  It follows that the gluing sign is the same as in the complex-linear case, which is to say, positive. 
  \end{remark}

\section{Tubular neighbourhoods and true boundary}
\label{sec:tubular}
In Section \ref{sec:bdry} we proved a convergence result that the
boundaries of one-dimensional moduli spaces of rigid broken maps
consist of maps that either have a length zero boundary edge, or a
broken boundary edge.  Theorem \ref{thm:bdry-glue} is the converse
gluing result, and implies 
that given a broken map $u$ containing a
boundary edge that is broken or has zero length, the map $u$ indeed
occurs as the codimension one boundary of the expected moduli spaces.
The convergence and gluing results, together, allow us to identify the
true boundary of one-dimensional moduli spaces.  A stratum containing
a zero length edge is actually a \em{fake boundary stratum} as it is
the boundary of two different rigid strata with opposite induced
boundary orientations. Thus, the \em{true boundary strata} are those
that have a broken boundary edge, see Remark \ref{rem:true-fake}.

\begin{theorem}\label{thm:bdry-glue}
  Let $\ul \Pe=(\Pe_\Gamma)_\Gamma$ be a coherent regular perturbation
  datum for all types $\Gamma$.  Suppose $\Gamma$ is a type of broken
  treed disks and $\ul x \in \cI(L)^{n+1}$ is a set of limits for
  boundary leaves such that $i(\Gamma,\ul x) = 1$.
  \begin{enumerate}
  \item \label{tubs} {\rm (Tubular neighbourhoods)} If a type
    $\Gamma_{\glue}$ of broken maps is obtained from a type $\Gamma$
    by collapsing an edge $e \in \Edge_{\white,-}(\Gamma')$ with
    $\ell(e)=0$, or by making an edge length finite/non-zero, then the
    stratum ${\M}^\br_{\Gamma}(L, \ul x)$ is a codimension one
    boundary in $\ol{\M}^\br_{\Gamma_\glue}(L, \ul x)$ and has a
    tubular neighborhood in it; and
  \item \label{oriens} {\rm (Orientations)} the orientations are
    compatible with the morphisms \hyperref[item:cuttingedges]{(Cutting edges)} and 
      (\hyperref[item:collapsingedgesmorphism]{Collapsing an edge}/\hyperref[item:makingedgelengthmorph]{Making an edge finite/non-zero}) in the
    following sense:
    \begin{enumerate}
    \item \label{part:orien1} Suppose $\Gamma$, $\Gamma_0$, $\Gamma_c$
      are types of broken maps related by the morphisms
      \[ \Gamma \xrightarrow{\text{Make $\ell(e)$ zero}} \Gamma_0
        \xrightarrow{\text{Collapse $e$}} \Gamma_c,\]
      and $i^\br(\Gamma,\ul x)=i^\br(\Gamma_c, \ul x)=1$,
      $i^\br(\Gamma_0, \ul x)=0$.  Then, the boundary orientation
      induced by $\M_\Gamma^\br(L , \ul x)$ on
      $\M_{\Gamma_0}^\br(L , \ul x)$ is the opposite of the boundary
      orientation induced by $\M_{\Gamma_c}^\br(L , \ul x)$ on
      $\M_{\Gamma_0}^\br(L , \ul x)$.
    \item \label{part:orien2} Suppose $\Gamma_f$, $\Gamma$,
      $\Gamma_1$, $\Gamma_2$ are types of broken maps and
      $\ul x_1 \in \cI(L)^{d_\white(\Gamma_1)+1}$,
      $\ul x_2 \in \cI(L)^{d_\white(\Gamma_2)+1}$ are labels such that
      there are morphisms
      \[ (\Gamma_f,\ul x) \xleftarrow{\text{Make $\ell(e)$ finite}}
        (\Gamma,\ul x) \xrightarrow{\text{Cut $e$}} (\Gamma_1,\ul x_1)
        \times (\Gamma_2,\ul x_2),\]
      and $i^\br(\Gamma_f,\ul x)=1$,
      $i^\br(\Gamma, \ul x)=i^\br(\Gamma_1,\ul x_1)=i^\br (\Gamma_2,
      \ul x_2)=0$.  Then, there is an isomorphism
      \[\M^\br_\Gamma(L,\ul x) \simeq \M^\br_{\Gamma_1}(L,\ul x_1)
        \times \M^\br_{\Gamma_1}(L,\ul x_2),\]
      and the boundary orientation on $\M_{\Gamma}^\br(L , \ul x)$
      induced by $\M_{\Gamma_f}^\br(L , \ul x)$ is related to the
      product orientation by a sign $(-1)^{\diamondsuit}$ 
      that depends only on
      the domain type $\Gamma$ and the labels $\ul x$.
    \end{enumerate}

  \end{enumerate}

\end{theorem}
\begin{proof}
  [Outline of proof] The tubular neighborhood is constructed by gluing
  and there are different cases depending on the morphism:

\vskip .1in \noindent 
  \textsc{Case 1} : \em{Collapsing a boundary edge $e$ of zero length}.\\
  The proof is by gluing a boundary node, which is non-tropical since
  the Lagrangian is disjoint from relative divisors. The gluing proof
  is on the same lines as the gluing of tropical nodes in Theorem
  \ref{thm:gluing}, so we point out the differences, referring the
  reader to \cite[Chapter 7.1]{fooo} for the full proof.
  
  \begin{itemize}
  \item Firstly, we choose strip-like coordinates on a neighborhood of
    the boundary node $w_e$ and we use weighted Sobolev spaces on
    these strips, see Definition 7.1.3 in \cite{fooo}. A difference
    from our Fredholm set-up for broken maps in Section
    \ref{sec:fredholm-broken} is that we do not need cylindrical
    coordinates in the target space since there are no relative
    divisors.
  \item The next difference is that the map gluing parameter $\nu$
    refers to the neck length parameter in the glued curve (see Remark
    \ref{rem:glueparam}), and therefore in the domain of the operator
    $\F_\nu$ in \eqref{eq:Fmap} the component $\M_{\Gamma_\glue}$ is
    replaced by a subspace where the neck length corresponding to the
    node $w_e$ is fixed to be $\nu$. The proof of `surjectivity of
    gluing' also simplifies, since it is enough to show convergence of
    the domain curves away from the neck as in \eqref{eq:curvconv} and
    the arguments following \eqref{eq:curvconv} in that paragraph are
    not necessary.
  \end{itemize}

  Cases 2 and 3 do not involve gluing of surface components.\\

  \vskip .1in \noindent 
  \textsc{Case 2} : \em{Making a boundary edge length $\ell(e)$ non-zero.}\\
  The proof structure of Theorem \ref{thm:gluing} can be used in this
  case also.  However, the map gluing parameter $\nu$ corresponds to
  the edge length $\ell(e)$ in the glued curve and goes to zero in the
  limit. The domain curve for the glued map $u_\nu : C_\nu \to \XX$ is
  allowed to vary among the set of curves of type $\Gamma_\glue$ with
  $\ell(e)=\nu$.  The approximate solution $u_\nu^\pre:C_\nu \to \XX$
  is defined to be equal to $u$, and the map $u_\nu^\pre$ is defined
  to be constant on the treed segment $T_e$, so the error
  $\Mod{D\F_\nu(u_\nu^\pre)}$ is $\leq c\nu$.  The rest of the steps
  are analogous to the proof of Theorem \ref{thm:gluing}.

  \vskip .1in \noindent   \textsc{Case 3} : \em{Making a boundary edge length $\ell(e)$ finite.}\\
  Similar to Case 2, the map gluing parameter $\nu$ is equal to the
  length $\ell(e)$ of the edge $e$, but unlike Case 2, it goes to
  infinity in the limit.  The domain curve for the glued map
  $u_\nu : C_\nu \to \XX$ is allowed to vary among the set
  $\Gamma_\glue$ curves with $\ell(e)=\nu$.  The approximate solution
  $u_\nu^\pre:C_\nu \to \XX$ is defined to be equal to $u$ on all
  components except the treed segment $T_e$, where it is defined by
  pre-gluing the Morse trajectory (see (2.66) in Schwarz
  \cite{schw:morse}) at the edge $e$ in the map $u$. The norm of the error is bounded by $ce^{-\nu}$ and is derived in a similar way to Section \ref{sec:errorest}.  The rest of the steps are analogous to
  the proof of Theorem \ref{thm:gluing}.

  For the orientations result, part \eqref{part:orien1} follows from
  the definition of orientations in Remark \ref{rem:orientmap} and the
  sign computation for part \eqref{part:orien2} is carried out in
  Seidel's book \cite[(12.25)]{se:bo}.
\end{proof}

\begin{figure}[ht]
  \centering \scalebox{.8}{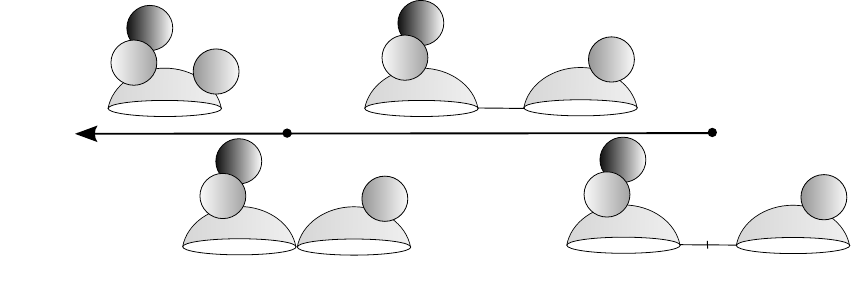}
  \caption{True and fake boundary strata of a one-dimensional
    component of the moduli space of treed holomorphic disks. The
    sphere components lie in different pieces of the tropical
    manifold. }
  \label{fig:true-fake}
\end{figure}

\begin{remark}{\rm(True and fake boundary strata)}
  \label{rem:true-fake}
  There are two types of strata that occur as the codimension one
  boundary of a one-dimensional moduli space -- one with a boundary
  edge of length zero, and the second with a boundary edge containing
  a breaking. The first is a \em{fake} boundary, and the second one
  is a \em{true} boundary as we explain: If a type $\Gamma_0$ of
  broken maps has a boundary edge of length zero, we may make the edge
  length $\ell(e)$ non-zero or we may collapse the edge (by disk
  gluing) to produce the following types
  \[ \Gamma \xrightarrow{\text{Make $\ell(e)$ zero}} \Gamma_0
    \xrightarrow{\text{Collapse $e$}} \Gamma_c.\]
  Let $\ul x$ be a tuple of input and output such that
  $i^\br(\Gamma,\ul x)=0$.  By Theorem \ref{thm:bdry-glue}, the moduli
  space $\M^\br_{\Gamma_0}(\ul x)$ is the boundary of the
  one-dimensional moduli spaces $\M^\br_{\Gamma_c}(\ul x)$ and
  $\M^\br_{\Gamma}(\ul x)$ with opposite orientations.  So,
  $\M_{\Gamma_0}^\br(\ul x)$ does not represent a component in the
  topological boundary of the compactified moduli space
  \[\bigcup_{\Gamma : \Gamma \text{ is rigid, } i^\br(\Gamma, \ul x)=1}\ol \M^\br_\Gamma(L,\ul x).\]
  This is the fake boundary in Figure \ref{fig:true-fake}. The only
  (true) boundary components of one-dimensional strata thus consist of
  maps with a single broken Morse trajectory; see Figure
  \ref{fig:true-fake}.
\end{remark}

\chapter{Broken Fukaya algebras}\label{chap:bfa}

In this Chapter we describe \ainfty algebra structures defined by
counting treed holomorphic disks on broken and unbroken manifolds, and
show that they are equivalent up to \ainfty homotopy.

\section{\ainfty algebras}

The set of treed holomorphic disks has the structure of an \ainfty
algebra. \ainfty algebras were introduced by Stasheff \cite{st:ho} in
order to capture algebraic structures on the space of cochains on loop
spaces.  We follow the sign convention in Seidel \cite{seidel:sub}.  A
\em{$\Z_2$-graded \ainfty algebra} consists of a $\Z_2$-graded vector
space $A$ together with for each $d \ge 0$ a multilinear degree zero
\em{composition map}
\[ m^d: \ A^{\otimes d} \to A[2-d] \]
\index{\ainfty! algebra} \index{\ainfty! composition map}
\index{\ainfty! associativity relations} satisfying the \em{\ainfty
  associativity equations} \cite[(2.1)]{seidel:sub}
\begin{multline} \label{ainftyassoc} 0 = \sum_{j,k\ge 0, j+k
      \leq d} (-1)^{j + \sum_{i=1}^{j} |a_i|}
  m^{d-k+1}(a_1,\ldots,a_{j}, \\ m^k(a_{j+1},\ldots,a_{j+k}),
  a_{j+k+1},\ldots,a_d)
\end{multline}
for any $d \geq 0$ and any tuple of homogeneous elements
$a_1,\ldots,a_d$ with degrees $|a_1|, \ldots, |a_d| \in \Z_2$.  The
notation $[2-d]$ indicates a degree shift of $2-d$, so that the degree
of $m^d(a_1,\dots,a_d)$ \label{noplusses} is
$\sum_i|a_i| + 2-d \in \Z_2$.  The signs appearing in
\eqref{ainftyassoc} are the \em{shifted Koszul signs}, that is, the
Koszul signs for the shifted grading in which the structure maps have
degree one as in Kontsevich-Soibelman \cite{ks:ainfty}.  The first of
these associativity relations is $m^1(m^0(1))=0$, and the second one
is
\begin{equation}
  \label{eq:ainf1}
  m^2(m^0(1),a) - (-1)^{|a|}m^2(a,m^0(1)) + m^1(m^1(a))=0, \quad \forall a \in A
\end{equation}
which may be interpreted as saying that $m^1$ is a differential if
$m^0(1)$ vanishes.  A \em{strict unit} for $A$ is an element
$1_A \in A$ such that
\begin{equation} \label{strictunit} m^2(1_A,a) = a = (-1)^{|a|}
  m^2(a,1_A), \quad m^{d}(\ldots, 1_A, \ldots) = 0, \forall d \neq 2
  .\end{equation}
\index{Unital \ainfty algebra} A \em{strictly unital} \ainfty algebra
is an \ainfty algebra that admits a strict unit.
\label{unitaldef}

The element $m^0(1) \in A$ (where $1 \in \R$ is the unit) is called
the \em{curvature} of the algebra.  The \ainfty algebra $A$ is \em{
  flat} if the curvature vanishes.  The \em{cohomology} of a flat
\ainfty algebra $A$ is defined by
\[ H(m^1) = \frac{ \on{ker}(m^1)}{\on{im}(m^1)} .\]
The algebra structure on $H(m^1)$ is given by
\begin{equation} \label{hcomp} [a_1 a_2] = (-1)^{|a_1|} [m^2(
  a_1,a_2)] .\end{equation}
The \ainfty algebras in our work are \em{curved}, that is $m^0(1)$
typically does not vanish.

The coefficient ring of our 
\ainfty algebras are Novikov rings defined as follows. 
Let $q$ be a formal variable and $\Lambda$ the \em{universal Novikov
  field} of formal sums \label{rep:rational} 
\begin{equation*}  \Lambda = \Set{ \sum_{i \in \Z_{\geq 0}} c_i
    q^{\alpha_i} | \quad c_i \in \C, \ \alpha_i \in \R, \quad
    \alpha_i \to \infty  }. \end{equation*}
The subring
\[\Lam_{\geq 0} :=\Set{ \sum_{i \in \Z_{\geq 0}} c_i  q^{\alpha_i} \in \Lam | \alpha_i \geq 0}\]
  with only the non-negative exponents is called the \em{Novikov ring}.
  \index{Novikov ring}
Denote by 
$\Lambda_{ >0}$ the subring with only  positive exponents, 
and by
\[ \Lambda^\times = ( \C - \{ 0 \}) + \Lambda_{> 0} \subset
\Lambda_{\ge 0}  \] 
the subgroup of formal power series with invertible leading
coefficient and non-negative exponents.
The Novikov ring has natural
adic topology, in which a sequence of elements converges iff it
converges in the quotient by any subring $q^E \Lambda_{\geq 0}$ for any
$E \in \R$.

Cohomology can be defined for a strictly
unital \ainfty algebra if the curvature is a multiple of the unit :
$m^0(1) \in \Lam_{\geq 0} 1_A$, because in this case, $(m^1)^2=0$ by
\eqref{eq:ainf1}.
More generally, the cohomology exists for any solution to the
projective Maurer-Cartan equation \cite{fooo}. The
projective Maurer-Cartan equation for $b \in A$ is
\index{Maurer-Cartan equation}
\begin{equation} \label{eq:mc} 
  m^0(1) + m^1(b) + m^2(b,b) + \ldots \in 
  \Lam_{\geq 0} 1_{A}.
\end{equation}
A solution $b \in A$ of odd degree to the equation \eqref{eq:mc} is
called a \em{weakly bounding cochain} and the set of all the odd
solutions is denoted $MC(A)$. Given a weakly bounding cochain
$b \in MC(A)$, the 
\em{deformed composition map} is defined by 
\begin{multline}
  \label{eq:mnb}
   m^n_b(a_1,\ldots,a_n) = \sum_{i_1,\ldots,i_{n+1}} m^{n + i_1 +
  \ldots + i_{n+1}}(\underbrace{b,\ldots, b}_{i_1}, a_1,\\
\underbrace{b,\ldots, b}_{i_2}, a_2,b, \ldots, b, a_n,
\underbrace{b,\ldots, b}_{i_{n+1}}) 
\end{multline}
over all possible combinations of insertions of the element
$b \in A^+$ between (and before and after) the elements
$a_1,\ldots, a_n$. The maps $m^n_b$ define an \ainfty structure on $A$
if $b$ has odd degree.  By the weakly bounding cochain condition
$m^0_b(1) \in \Lam_{\geq 0} 1_A$, and the \ainfty relations imply
\[ (m^1_b)^2(a) = m^2_b(m^0_b(1), a) -
m^2_b(a,m^0_b(1)) =0.\]
Consequently, the cohomology
\[ H(m^1_b) = \on{ker}(m^1_b)/\on{im}(m^1_b) \]
is well-defined.  The function
\begin{equation} \label{eq:potdef} W: MC(A) \to \Lambda_{\geq 0}, \quad b \mapsto W(b), \end{equation}
where $m^0_b(1) =W(b)1_A$ is called the \em{potential} of
the \ainfty algebra $A$.
\index{Potential}
\index{Potential of an \ainfty algebra|seeonly {Potential}}

  \index{Convergent!\ainfty algebra} An \ainfty algebra $A$
with coefficients in $\Lam_{\geq 0}$ is \em{convergent} if
$m^0(1) \in \Lam_{>0}$. This property ensures that the expression
$m^0_b$ in the Maurer-Cartan equation \eqref{eq:mc} is well-defined
for any $b \in A$ with positive $q$-valuation.

One also has homotopy notions of algebra morphisms.  \index{\ainfty!
  morphism} Let $A_0,A_1$ be \ainfty algebras.  An \em{\ainfty
  morphism} $\F$ from $A_0$ to $A_1$ consists of a sequence of linear
maps
\[\F^d: \ A_0^{\otimes d} \to A_1[1-d], \quad d \ge 0\]
such that the following holds:
\begin{multline} \label{faxiom} \sum_{i + j \leq d} (-1)^{i +
    \sum_{j=1}^i |a_j|} \F^{d - j + 1}(a_1,\ldots,a_i,
  m_{A_0}^j(a_{i+1 },\ldots,a_{i+j}),a_{i+j+1},\ldots,a_d) = \\
  \sum_{i_1 + \ldots + i_m = d} m_{A_1}^m(\F^{i_1}(a_1,\ldots,
  a_{i_1}), \ldots, \F^{i_m}(a_{i_1 + \ldots + i_{m-1} +
    1},\ldots,a_d))
\end{multline}
where the first sum is over integers $i,j$ with $i+j \leq d$, the
second is over partitions $d = i_1 + \ldots + i_m$.  The first
relation in the family of relations in \eqref{faxiom} is
\[\F^1(m_{A_0}^0)=m_{A_1}^0 + m_{A_1}^1(\F^0) + m_{A_1}^2(\F^0,\F^0) + \dots\]
\index{Unital \ainfty morphism} An \ainfty morphism $\F$ is \em{
  unital} if and only if
\begin{equation}
  \label{eq:unital-morph}
  \F^1(1_{A_0}) = 1_{A_1}, \quad \F^k(a_1, \ldots, a_i, 1_{A_0},
  a_{i+2}, \ldots, a_k) = 0  
\end{equation}
for every $k\geq 2$ and every $0\leq i \leq k-1$, where $1_{A_0}$
resp. $1_{A_1}$ is the strict unit in $A_0$ resp. $A_1$.  The
\em{composition} of \ainfty morphisms $\F_0,\F_1$ is defined by
\begin{multline} \label{composefunc}
 (\F_0 \circ \F_1)^d(a_1,\ldots,a_d)
 = \sum_{i_1 + \ldots + i_m =d}
  \F_0^{m}( \F_1^{i_1}(a_1,\ldots,a_{i_1}),  \\ \ldots, \F_1^{i_m}(a_{d -
    i_m + 1},\ldots,a_d)), \quad d \geq 0. \end{multline}

There is also a natural notion \ainfty homotopy equivalence of
morphisms.  Homotopies are defined as pre-natural transformations of
\ainfty morphisms, which we define following Seidel \cite{se:bo} and
Charest-Woodward \cite{cw:flips}: Given \ainfty algebras $A_0$, $A_1$,
the set of morphisms is an \ainfty category denoted by $\cQ$ whose
objects are \ainfty morphisms $\F : A_0 \to A_1$, and whose morphisms
are \em{pre-natural transformations}.  For \ainfty morphisms
$\F_0, \F_1: A_0 \to A_1$, a pre-natural transformation
$\cT=(\cT^d)_{d \geq 0} \in \Hom_{\cQ}(\F_0,\F_1)$ is a family of maps
\[\cT^d : A_0^{\tensor d} \to A_1.\]
Pre-natural transformations are equipped with composition maps
$m_{\cQ}^e$ for all $e \geq 1$ defined as follows: For any $d \geq 0$,
\begin{multline} \label{mu1}
 (m^1_{\cQ} \TT)^d (a_1,\ldots,a_d) = \sum_{k,l}
  \sum_{i_1,\ldots,i_l} (-1)^\dagger m^l_{A_2}(
  \F_0^{i_1}(a_1,\ldots,a_{i_1}), \F_0^{i_2}(a_{i_1 + 1},\ldots, a_{i_1+i_2}), 
  \ldots, \\ \TT^{i_k}(a_{i_1 + \ldots + i_{k-1} + 1},\ldots, a_{i_1 +
    \ldots + i_k}), \F_1^{i_{k+1}}( a_{i_1 + \ldots + i_k + 1},\ldots,
  ) ,\ldots, \F_1^{i_l}(a_{d - i_l},\ldots, a_d)) \\ - \sum_{i,e}
  (-1)^{i + \sum_{j=1}^i |a_j| + |\TT| - 1} \TT^{d - e +
    1}(a_1,\ldots,a_i, m^e_{A_1}(a_{i+1},\ldots,
  a_{i+e}),a_{i+e+1},\ldots,a_d), \end{multline}
where the sign $\dagger$ is from \cite[p82]{cw:flips}. For \ainfty
morphisms $\F_0, \F_1, \F_2 : A_0 \to A_1$, and pre-natural
transformations $\TT_0 \in \Hom(\F_0,\F_1)$,
$\TT_1 \in \Hom(\F_1,\F_2)$, the binary composition
$m^2_{\cQ}(\TT_0,\TT_1) \in \Hom(\F_0,\F_2)$ is defined as
\begin{multline}  \label{T2T1}
 (m^2_{\cQ}(\TT_1,\TT_2))^d(a_1,\ldots,a_d) =
\sum_{m,k,l} \sum_{i_1,\ldots,i_m} 
(-1)^\ddag m^l_{A_2}( \F_0^{i_1}(a_1,\ldots,a_{i_1}),
\ldots, \F_0^{i_{k-1}}(\ldots),
 \\
\TT_1^{i_k}(a_{i_1 + \ldots + i_{k-1} + 1},\ldots, a_{i_1 + \ldots + i_k}),
\F_1^{i_{k+1}}(\ldots),\ldots, \F_1^{i_{l-1}}(\ldots), 
\\
\TT_2^{i_l} (a_{i_1 + \ldots + i_{l-1} + 1},\ldots, a_{i_1 + \ldots + i_{l}}),
\F_2^{i_{l+1}}(\ldots),\ldots, \F_2^{i_l}(a_{d - i_l},\ldots,a_d)),
\end{multline}
where the sign $\ddag$ is from \cite[p82]{cw:flips}. The higher order
composition maps $m^e_{\cQ}$ are defined analogously.

Finally, we define a homotopy between \ainfty morphisms. A pre-natural
transformation $\cT$ between \ainfty morphisms
$\F_0, \F_1 : A_0 \to A_1$ is a \em{homotopy} if
\begin{equation} \label{eq:diffmor}
\F_1 - \F_0=m^1_{\cQ}(\cT).\end{equation}
Furthermore, if $\F_1 = (\F_{1,d})_{d \ge 0}$ is only a collection of
maps from $A_0^{\otimes d}$ to $A_1[1-d]$ and $\cF_0$ satisfies the
\ainfty morphism axiom then so does $\cF_1$.  Homotopy is an
equivalence relation in $\cQ$, with transitivity seen as follows: If
$\cT_0 \in \Hom(\F_0, \F_1)$ and $\cT_1 \in \Hom(\F_1, \F_2)$ are
homotopies, then,
\begin{equation}
  \label{eq:homotopy-compose}
\TT_1 \circ \TT_0 :=   \cT_0 + \cT_1 + m^2_{\cQ}(\cT_0, \cT_1) \in \Hom(\F_0,\F_2)
\end{equation}
is a homotopy, see \cite[p16]{se:bo}. 
The \ainfty algebras $A_0$, $A_1$ are \em{homotopy equivalent} if there are \ainfty morphisms $\F_0 : A_0 \to A_1$, $\F_1 : A_0 \to A_1$ such that $\F_0 \circ \F_1 : A_1 \to A_1$ and $\F_1 \circ F_0 : A_0 \to A_0$ are both homotopic to the identity \ainfty morphism.    

Homotopies are preserved by compositions with \ainfty morphisms. Given \ainfty morphisms
\[A_0 \xrightarrow{\F_0} A_1 \xrightarrow{\F_1, \F_2} A_2 \xrightarrow{\F_3} A_3,\]
and a pre-natural transformation $\cT \in \Hom(\F_1, \F_2)$, left and right compositions
as in \cite[p11]{se:bo} 
give pre-natural transformations. That is, $L_{\F_3} \cT \in \Hom(\F_3 \circ \F_1, \F_3 \circ F_2)$
is defined as 
\begin{multline} \label{eq:lcomp}
 (L_{\F_3} \TT)^d (a_1,\ldots,a_d) = \sum_{k,l}
  \sum_{i_1,\ldots,i_l} (-1)^\dagger \F_3^l(
  \F_0^{i_1}(a_1,\ldots,a_{i_1}), \F_0^{i_2}(a_{i_1 + 1},\ldots, a_{i_1+i_2}), 
  \ldots, \\ \TT^{i_k}(a_{i_1 + \ldots + i_{k-1} + 1},\ldots, a_{i_1 +
    \ldots + i_k}), \F_1^{i_{k+1}}( a_{i_1 + \ldots + i_k + 1},\ldots,
  ) ,\ldots, \F_1^{i_l}(a_{d - i_l},\ldots, a_d)), \end{multline}

where $\dagger$ is as in \eqref{mu1}, 
and $R_{\F_0} \cT \in \Hom(\F_1 \circ \F_0, \F_2 \circ \F_0)$
is defined as
\begin{multline}
  \label{eq:rcomp}
   (R_{\F_0} \cT)^d(a_1,\ldots,a_d)
 = \sum_{i_1 + \ldots + i_m =d}
  \cT^{m}( \F_0^{i_1}(a_1,\ldots,a_{i_1}),  \\ \ldots, \F_0^{i_m}(a_{d -
    i_m + 1},\ldots,a_d)).
\end{multline}
Furthermore, a straightforward verification shows that if $\cT$ is a
homotopy from $\F_1$ to $\F_2$, then, $L_{\F_3} \cT$ is a homotopy
from $\F_3 \circ \F_1$ to $\F_3 \circ \F_2$, and $R_{\F_0} \cT$ is a
homotopy from $\F_1 \circ \F_0$ to $\F_2 \circ \F_0$.

\section{Composition maps}

In this section, we describe \ainfty algebras, called \em{Fukaya algebras}
 whose composition maps
are given by counts of treed holomorphic maps, both in an ordinary
symplectic manifold and in a broken manifold. The boundary of the
disks map to a Lagrangian, which in the broken case, is contained in
the complement of relative divisors.

Lagrangians will be equipped with additional data called \em{brane
  structures}.  \index{Brane structure on the Lagrangian} We assume
Lagrangians are compact, connected and oriented.  A brane structure
consists of \label{page:brane}:
\begin{itemize}
\item a relative spin structure (see \cite[Chapter 44]{fooo} for the definition),
\item and a local system, which is an element 
\[\Hol_L \in \RR(L) = \Hom(\pi_1(L), \Lam^\times).\]
\end{itemize}
Typically, a grading in the sense of Seidel \cite{se:gr} is included
in the definition of brane; however, since gradings are needed only
while working with multiple Lagrangians, we do not use them in this
book.

The Fukaya algebra of a Lagrangian brane involves counts of disks that
are holomorphic with respect to coherent perturbation data
$\ul \Pe:=(\Pe_\Gamma)_\Gamma$ for all types $\Gamma$ of stable treed
disks.  The data $\ul{\Pe}$ consists of perturbations of a background
almost complex structure $J_0$ on the manifold $X$ and a Morse
function $F : L \to \R$ on the Lagrangian.  Theorem
\ref{thm:transversality} constructs coherent perturbations for a
broken manifold $\XX$; for the corresponding result on a smooth
manifold, see \cite[Theorem 4.20]{cw:flips}.  We first define
composition maps giving a (unbroken) Fukaya algebra without strict
units called the \em{geometric Fukaya algebra}; we later explain how
to upgrade the construction to include strict units.

\begin{definition} {\rm (Geometric Fukaya algebra)}
  \label{def:munbroken}
  For a Lagrangian brane $L$ and coherent perturbation datum
  $\ul \Pe:=(\Pe_\Gamma)_\Gamma$ on the manifold $X$, the
  \em{geometric Fukaya algebra} is an \ainfty algebra consisting of
  the space of \em{Floer cochains} over the Novikov ring $\Lam_{\geq 0}$
  \index{Fukaya algebra!Unbroken $CF(L,\ul \Pe)$}
\[ CF^{\on{geom}}(L, \ul \Pe) := \bigoplus_{d \in \Z_2} CF^d(L, \ul \Pe), \quad
CF^d(L, \ul \Pe) := \bigoplus_{x \in \cI_d(L) } \Lambda_{\geq 0} \bran {x} \]
where $ \cI_d(L)$ is the set of index $d$ critical points of the
Morse function $F:L \to \R$, see Definition
\ref{def:treedholdisk}; \label{page:morseref} and equipped with
\em{composition maps}
  \[ m^{d(\white)}: (CF^{\on{geom}}(L))^{\otimes d(\white)} \to
    CF^{\on{geom}}(L), \quad d(\white) \geq 0 \]
  defined 
  on generators $x_i \in \crit(F)$
  by
  \begin{equation} \label{eq:munbroken}
    m^{d(\white)}(x_1,\ldots,x_{d(\white)}) =
    \sum_{\Gamma,x_0,u \in \tM_{\Gamma}(X, L, 
\Pe_\Gamma, \ul{x})_0} w(u)  x_0 \end{equation}
  where the sum is over all rigid types $\Gamma$ of maps from Definition 
  \ref{def:unbrokenrigidtype} 
   with $d(\white)$
  incoming boundary edges, and
  \begin{equation}
    \label{eq:wtwu}
    w(u):=(-1)^{\heartsuit}(d_\black(\Gam)!)^{-1} \Hol_L([\partial u]) \eps(u)
    q^{A(u)}.
  \end{equation}
  The factors in \eqref{eq:wtwu} are as
  below:
  \begin{enumerate}
  \item $\heartsuit = {\ssum_{i=1}^{d(\white)} i|x_i|}$;
  \item $\Hol_L([\partial u]) \in \Lam^\times$ is the evaluation of the local system
    $\Hol_L \in \RR(L)$ on the homotopy class of loops
    $[\partial u] \in \pi_1(L)$ defined by going around the
    boundary of each disk component in the treed disk once;
  \item $d_\black(\Gamma)$ the number of interior markings on the map $u$; and
  \item $\eps(u) \in \{\pm 1\}$ is the orientation sign, as in Remark \ref{rem:orientmap}.
  \end{enumerate}
  This ends the Definition.
\end{definition}

\noindent In the count of disks \eqref{eq:munbroken}, we only consider rigid types, because types of maps that are not rigid occur in the boundary of a compactification of a larger dimensional stratum.

The \ainfty relation follows from the description of the boundary of
the moduli space. We recall from Section \ref{sec:tubular}, that the
\em{true boundary} of one-dimensional moduli spaces of treed
holomorphic disks consists of maps with a broken edge.
\label{truerem} 
Indeed, as a stratified space, the closures of the one-dimensional
moduli spaces have zero-dimensional strata which include both strata
where edge lengths have become zero and strata where edge lengths have
become infinity; however, only the latter types form topological
boundary strata, since the length-zero strata lie in the closures of
two one-dimension strata either by deforming to positive length or
performing the gluing construction for holomorphic disks.
\begin{theorem} \label{thm:yields-unbr} {\rm (\ainfty algebra for a Lagrangian in a symplectic manifold)} 
For a coherent 
  perturbation system $\ul{\Pe} = (\Pe_\Gamma)_\Gamma$ on the manifold
  $(X,\om_X)$, the maps $(m^{d(\white)})_{d(\white) \ge 0}$ on
  $CF^{\on{geom}}(L)$ satisfy the axioms of a convergent (possibly
  curved) \ainfty algebra $CF^{\on{geom}}(L)$.
\end{theorem}

\begin{proof}
  We prove the \ainfty associativity relations by counting the ends of
  one-dimensional moduli spaces of treed holomorphic disks. We perform
  the count for a fixed number of interior and boundary markings, and
  fixed limits on the input and output treed segments, and then sum
  over all choices.  Consider integers $d(\white), d(\black) \geq 0$
  and a tuple $\ul x \in (\cI(L))^{d(\white)+1}$ of inputs and
  outputs. We denote the one-dimensional component of the moduli space
  of treed disks of rigid type with $d(\white)$ boundary inputs and
  $d(\black)$ interior markings by
  \[\M_{d(\black),d(\white)}(L,\ul x)_1:=\cup_{\Gamma : i(\Gamma,\ul
      x)=1}\M_\Gamma(L,\ul x),\]
  where $\Gamma$ ranges over rigid types containing $d(\black)$
  interior markings and $d(\white)$ boundary inputs.  By the unbroken
  version of Proposition \ref{prop:truebdry} and Remark
  \ref{rem:true-fake} \label{rep:unbrokenversion} (obtained by
  replacing broken maps with unbroken maps in the statement; this was
  proved in \cite[Remark 4.22]{cw:flips}), the true boundary of
  $\M_{d(\black),d(\white)}(L,\ul x)_1$ consists of disks with a single
  broken edge, that is, an $e \in \Edge_{\white,-}$ with
  $\ell(e)=\infty$. The boundary points of one-dimensional moduli
  spaces occur in pairs, and therefore,
  \begin{equation}
    \label{eq:1d-ends}
    \sum_{u \in \M_\Gamma(L,\ul x)}
    \eps_\partial(u)(d(\black)!)^{-1}=0,   
  \end{equation}
  where, in the summation, $\Gamma$ ranges over all types in the true
  boundary of the moduli space $\M_{d(\black),d(\white)}(L,\ul x)_1$, and the sign
  $\eps_\partial(u) \in \{\pm 1\}$ is given by the induced orientation
  on the boundary of $\M_{d(\black),d(\white)}(L,\ul x)_1$.  Consider
  one such type $\Gamma$, and suppose $\Gamma_+$, $\Gamma_-$ are the
  treed disk types obtained by cutting the broken edge, each
  containing $d_\pm(\black):=d_\black(\Gamma_\pm)$ interior markings.
  For any critical point $x \in \cI(L)$, denote by $(\ul x, x)_+$,
  $(\ul x, x)_-$ the input-output labelling on $\Gamma_+$, $\Gamma_-$
  where both ends of the broken edge are labelled $x$. There is a
  bijection
  \begin{equation}
    \label{eq:bij-break}
    \M_\Gamma(L,\ul x) \simeq \bigcup_{x \in \cI(L)} \M_{\Gamma_+}(L,(\ul x,x)_+))_0 \times \M_{\Gamma_-}(L,(\ul x,x)_-))_0.  
  \end{equation}
  However, any map in the right-hand side corresponds to
  $\binom {d(\black)}{d_+(\black)}$ maps in the true boundary of
  $\M_{d(\black),d(\white)}(L,\ul x)_1$ as follows: There are
  $\binom {d(\black)}{d_+(\black)}$ bijections
  \[f:\{1,\dots,d_+(\black)\} \sqcup \{1,\dots,d_-(\black)\} \to
    \{1,\dots,d(\black)\}\]
  whose restrictions to $\{1,\dots,d_+(\black)\}$,
  $\{1,\dots,d_-(\black)\}$ are order-preserving. Given a pair in
  $(u_+,u_-)$ in the right-hand side of \eqref{eq:bij-break}, each
  bijection $f$ corresponds to a map $u$ in the true boundary of
  $\M_{d(\black),d(\white)}(L,\ul x)_1$, where the $i$-th interior
  marking in $u_\pm$ is labelled $f(i)$ in $u$. Therefore, the
  expression in \eqref{eq:1d-ends} is equal to
  \begin{multline}\label{eq:1d-ends-prod}
    0=\sum_{(\Gamma_+,\Gamma_-),x} \left\{\left(\sum_{u_+ \in \M_{\Gamma_+}(L,(\ul x,x)_+)}\eps(u_+) (d_\black(\Gamma_+)!)^{-1}\right) \cdot \right.\\
    \left.  \left(\sum_{u_- \in \M_{\Gamma_-}(L,(\ul x,x)_-)}\eps(u_-)
        (d_\black(\Gamma_-)!)^{-1}\right)
      \frac{\eps_\partial(u)}{\eps(u_+)\eps(u_-)}\right\},
  \end{multline}
  where $x$ ranges over critical points in $\cI(L)$ and
  $(\Gamma_+,\Gamma_-)$ range over all pairs of types obtained by
  cutting an edge in a treed disk type occurring in the boundary of
  $\M_{d(\black),d(\white)}(L,\ul x)_1$.  The sign contribution
  $\tfrac{\eps_\partial(u)}{\eps(u_+)\eps(u_-)}$ is equal to the
  difference in the orientation of the moduli space of treed disks
  with a broken edge when viewed as a boundary of a larger stratum,
  and when viewed as a product, and is equal to the shifted Koszul
  sign in the \ainfty associativity relation \eqref{ainftyassoc} by
  \cite[(12.25)]{se:bo}.  Adding the equations \eqref{eq:1d-ends-prod}
  corresponding to all $d(\black) \geq 0$, we obtain the associativity
  relation on the maps $(m^d)_{d \geq 0}$ on $CF^{\on{geom}}(L)$
  corresponding to $d=d(\white)$.  \label{convprop} The convergence
  property follows from the fact that any treed disk with no incoming
  edges must, by stability, have a disk component with interior
  markings.  Indeed, the disk components that are furthest from the
  outgoing edge have a single adjoining boundary edge, and so must
  intersect the stabilizing divisor.  By \eqref{eq:mult}, the area of
  the holomorphic disk is proportional to the number of its
  intersections with the stabilizing divisor, and as a result, the
  $q$-valuation of $m^0(1)$ is positive.
  \end{proof}
 
  The broken Fukaya algebra is defined analogously by counts of broken
  disks.

\begin{definition} {\rm (Geometric broken Fukaya algebra)}
  Let $\XX$ be a broken manifold, let $L \subset \XX$ be a Lagrangian
  brane that is contained in a single piece of $\XX$ and does not
  intersect relative divisors, and let $\ul \Pe:=(\Pe_\Gamma)_\Gamma$
  be coherent perturbation data on $\XX$ (constructed by Theorem
  \ref{thm:transversality}).  The \em{geometric broken Fukaya algebra}
  is an \ainfty algebra consisting of the space of \em{Floer
    cochains} over the Novikov ring $\Lam_{\geq 0}$
  \index{Fukaya algebra!Broken $CF_\br(L,\ul \Pe)$}
  \[ CF^{\on{geom}}_\br(L, \ul \Pe) := \bigoplus_{d \in \Z_2}
    CF^d_\br(L, \ul \Pe), \quad CF^d_\br(L, \ul \Pe) := \bigoplus_{x \in
      \cI_d(L, \ul \Pe) }
    \Lambda_{\geq 0} \bran {x}, \]
  where $ \cI_d(L)$ is as in Definition \ref{def:munbroken}; and
  equipped with \em{composition maps}
  \[ m^{d(\white)}_\br: (CF^{\on{geom}}_\br(L, \ul \Pe))^{\otimes d(\white)}
    \to CF^{\on{geom}}_\br(L, \ul \Pe), \quad d(\white) \geq 0 \]
  defined on generators $x_i \in \crit(F)$ by
  \begin{equation} m^{d(\white)}_\br(x_1,\ldots,x_{d(\white)}) =
    \sum_{x_0,\Gamma,u \in \tM^\br_{\Gamma}(\XX, L,D,\ul{x})_0}
    w(u) x_0.
  \end{equation}
%
  Here, $x_0$ ranges over critical points of the Morse function $F$,
  the combinatorial type $\Gamma$ of the broken map $u$ ranges over
  all rigid types (see Definition \ref{def:rigidtype}) with
  $d(\white)$ boundary inputs, and $w(u)$ is as in
  \eqref{eq:wtwu}. The orientation sign $\eps(u) \in \{\pm 1\}$, which
  is a factor in $w(u)$, is determined as in Remark
  \ref{rem:orientmap}.
\end{definition}

\begin{theorem} \label{thm:yields-br} {\rm (\ainfty algebra for a
    Lagrangian in a broken manifold)} For any coherent perturbation
  system $\ul{\Pe} = (\Pe_\Gamma)_\Gamma$ on the broken manifold
  $\XX$, the maps $(m_{d(\white)}^\br)_{d(\white) \ge 0}$ on
  $CF^{\on{geom}}_\br(L)$ satisfy the axioms of a convergent
  (possibly curved) \ainfty algebra $CF^{\on{geom}}_\br(L)$.
\end{theorem}

We sketch two proofs.  The first proof is an almost verbatim repeat of
the proof of the unbroken \ainfty relation in Theorem
\ref{thm:yields-unbr} from the description of the true boundary of the
moduli space in Proposition \ref{prop:truebdry}. The combinatorial
factors arising from the distribution of interior markings are
accounted exactly as in the unbroken case.  Another proof uses the
fact that the structure maps for the broken Fukaya algebra are the
limits of those in the unbroken case, by Proposition 
\ref{prop:breakingpert}, and is completed after Corollary
\ref{mdconv}.

\section{Homotopy units}\label{sec:units}
\index{Unital \ainfty algebra!Homotopy units|(}
\index{Homotopy units}

In the Fukaya algebra constructed in the previous section, a homotopy
unit construction can be applied to produce a strictly unital \ainfty
algebra. Recall that the Morse function $F:L \to \R$ used in the
construction of $CF^{\on{geom}}(L)$ is assumed to have a unique
maximum point denoted $x^{\blackt} \in \crit(F)$. In an idealized
situation where domain-dependent perturbations are not required,
$\bran{x^{\blackt}}$ is a strict unit for $CF^{\on{geom}}(L)$. This
is because a boundary marked point mapping to the unstable locus of
$x^{\blackt}$ is an empty constraint, and such a marking can be
forgotten without affecting the disk. In our setting, marked points
can not be forgotten because domain-dependent perturbations depend on
them. The homotopy unit construction is a way of enhancing the Fukaya
algebra so that the perturbation system admits forgetful maps, so that
the algebra admits a strict unit and the potential of
\eqref{eq:potdef} is well-defined.

We outline the idea of the homotopy units construction: For the Fukaya
algebra to have a unit, we would like the perturbations on the Morse
function to be domain-independent on the treed segments asymptotic to
the maximum point $x^{\blackt} \in \crit(F)$. However, we can not
impose such a condition on the perturbation, because the perturbation
depends only on the ``domain'' which does not include the information
about which critical point a Morse trajectory asymptotes to. One can
not include the critical point label into the domain data, because
Morse trajectories may break in the limit, and the label at the
breaking point is not known beforehand.  Therefore, we add a new kind
of boundary marking to the domain, which is ``forgettable'', and will
serve as a unit. The treed segment at a forgettable leaf is required
to asymptote to the maximum of the Morse function $F$, but is labelled
$x^\whitet$ (to distinguish it from $x^\blackt$).  We also add
`weighted boundary markings'', labelled $x^{\greyt}$, to the domain
which give us a way of homotoping between treed segments with
domain-dependent perturbations asymptoting to $x^\blackt$ and those
with domain-independent perturbations asymptoting to $x^\whitet$. The
data of the domain now includes the information about whether a leaf
is forgettable, weighted or unforgettable. This new kind of domain is
called a \em{weighted treed disk} and is defined below after the
statement of the main Theorem.

\begin{theorem} \label{thm:wts}{\rm(Homotopy unit construction)}
  Suppose $\ul{\Pe}$ is a coherent perturbation datum for treed
  holomorphic disks, and suppose
  $CF^{\on{geom}}(L):=CF^{\on{geom}}(L,\ul{\Pe})$ is the \ainfty
  algebra whose composition maps count $\ul{\Pe}$-holomorphic disks.  Then
  there exists a convergent strictly unital \ainfty structure on the
  vector space
  \begin{equation}
    \label{eq:genunit}
    CF(L) := CF^{\on{geom}}(L) \oplus \Lam_{\geq 0} x^{\greyt}[1] \oplus \Lam_{\geq 0} x^{\whitet},   
  \end{equation}
  with gradings 
  \[|x^{\whitet}|  = 0, \quad |x^{\greyt}| = -1,  \] 
  whose composition maps count weighted $\ul{\Pe}^{\on{wt}}$-holomorphic disks (see Definition \ref{def:wt-treed-hol}),
  where $\ul{\Pe}^{\on{wt}}$ is an extension of the perturbation datum
  $\Pe$ to weighted disks; and in the resulting \ainfty algebra,
  \begin{enumerate}
  \item \label{part:wts2}
    $x^{\whitet}$ is a strict unit,
  \item \label{part:wts1} $CF^{\on{geom}}(L) \subset CF(L)$ is a
    \ainfty sub-algebra, and
  \item \label{part:wts3}
    \begin{equation}
      \label{eq:m1grey}
      m^1(x^{\greyt})=x^{\blackt} - x^{\whitet} \mod \Lam_{>0}.     
    \end{equation}
  \end{enumerate}
\end{theorem}
\noindent The theorem is proved later in the section after defining
weighted treed holomorphic disks.  We remark that the expression
\eqref{eq:m1grey} for $m^1(x^{\greyt})$ is similar to that in
Fukaya-Oh-Ohta-Ono \cite[(3.3.5.2)]{fooo}.
\index{Unital \ainfty algebra!Homotopy units|)}

The condition that $x^{\whitet}$ is a strict unit determines all
\ainfty structure maps involving occurrences of $x^{\whitet}$. In the
following geometric construction of a homotopy unit, the axioms are
designed keeping this fact in mind.

\begin{definition} \label{wdef}
  \begin{enumerate}
  \item {\rm (Weightings)} A \em{weighting} of a treed disk
    $C= S \cup T$ of type $\Gamma$, with $S \neq \emptyset$, consists
    of a partition of the boundary semi-infinite edges
    \[\Edge^{\blackt}(\Gamma) \sqcup \Edge^{\greyt}(\Gamma) \sqcup
      \Edge^{\whitet}(\Gamma) = \Edge_{\white,\rightarrow}(\Gamma) \]
    into \em{unforgettable} resp. \em{weighted} resp. \em{
      forgettable}, and a \em{weight} on semi-infinite edges
    $\rho: \Edge_{\white,\rightarrow}(\Gamma) \to [0,\infty]$
    satisfying
    \[
    \rho(e) \in
    \begin{cases} \{ 0 \} & e \in \Edge^{\blackt}(\Gamma) \\
      [0,\infty] & e \in
      \Edge^{\greyt}(\Gamma) \\
      \{ \infty \} & e \in \Edge^{\whitet}(\Gamma).
    \end{cases}
    \]
    The weighting $\rho$ satisfies the following axiom:
    \begin{itemize}
    \item[] \label{item:outgoing} {\rm(Outgoing edges axiom)}
      \index{Outgoing edges axiom for weighted disks} A disk output
      $e_0 \in \Edge_{\white}(\Gamma)$ can be weighted only if the
      disk has exactly one weighted input
      $e_1 \in \Edge^{\greyt}(\Gamma)$, all the other inputs
      $e_i \in \Edge(\Gamma), i \neq 1$ are forgettable, and there are
      no interior leaves, $\Edge_{\black}(\Gamma) = \emptyset$.  In
      this case, the output $e_1$ has the same weight
      $\rho(e_1) = \rho(e_0)$ as the weighted input $e_0$.  A disk
      output $e_0$ can be forgettable only if all the inputs are
      forgettable, and there are no interior leaves.  In all the other
      cases, the output of a disk is unforgettable.
    \end{itemize}
    In the exceptional case that the treed disk $C$ is an infinite
    tree segment and does not have surface components, the only
    possible labels are
    \begin{equation}
      \label{eq:only-labels}
      \greyt \to \whitet, \quad \greyt \to \blackt, \quad \text{or}
      \quad \blackt \to \blackt .
    \end{equation}
    In the first two cases, the input has weight $\rho(e)$ equal to
    $\infty$ resp. $0$.
  \item {\rm (Stability)} A weighted treed disk $C=S \cup T$ with
    $S \neq \emptyset$ is \em{stable} if $C$ is stable as a treed
    disk.  In case $S=\emptyset$ and $C$ is an infinite segment, then
    $C$ is stable iff the labels are $\greyt \to \whitet$ or
    $\greyt \to \blackt$.
  \item {\rm (Isomorphism)} Two weighted treed disks $C$ and $C'$ are
    isomorphic if there is an isomorphism of treed disks
    $\phi:C \to C'$, the edge labels are identical, and the following
    is true.
    \begin{enumerate}
    \item If the output edge is not weighted, then the weights on the
      inputs of $C_1$ and $\phi(C_1)$ are equal;
    \item if the output edge $e_0$ is weighted, then the weights on
      the inputs are equal up to scalar multiplication, i.e.
      \begin{equation} \label{scalar} \exists \lambda\in (0,\infty): \
        \forall e \in \Edge_{\circ,\rightarrow}(C) \bs \{e_0\} \quad \rho(e)
        = \lambda \rho'(\phi(e)).
      \end{equation}
    \end{enumerate}
    Consequently, if the output edge is weighted, since there is
    exactly one incoming edge by the
    \hyperref[item:outgoing]{(Outgoing edges axiom)}, graphs with
    different weights may be isomorphic.
  \item{\rm(Combinatorial type)}
\index{Combinatorial type! of a weighted treed disk}
    The \em{type} of a weighted treed disk is
    given by the type of the treed disk, and the labels
    $\{\whitet,\greyt,\blackt\}$ at the inputs and outputs, and
    whether the weight at any vertex is zero, infinite or neither. Thus the combinatorial type of a weighted treed disk includes a partition of weighted edges
    \[\Edge^{\greyt}(\Gamma)=\Edge^{\greyt}_0(\Gamma) \cup \Edge^{\greyt}_{(0,\infty)}(\Gamma) \cup \Edge^{\greyt}_\infty(\Gamma)\]
    into edges of weight $0$, non-zero finite, and infinity.
  \end{enumerate}
\end{definition}

The moduli space $\M_\Gamma$ of weighted treed disks can be identified
with
\begin{equation*}
  \begin{cases}
    \M_{\Gamma'} \times [0,\infty]^{|\Edge^{\greyt}(\Gamma)|}, \quad \text{if the output label is not $\greyt$}\\
    \M_{\Gamma'}, \quad \text{if the output edge is $\greyt$},
  \end{cases}
\end{equation*}
where $\Gamma'$ is the type of treed disk obtained by forgetting the
weighting. If the type $\Gamma$ is $\greyt \to \whitet$ resp.
$\greyt \to \blackt$, then $\M_\Gamma$ is a point.

The (Cutting edges) morphism has some additional features for weighted
treed disks.
\begin{definition}\label{item:cutedgeinweighted} 
  {\rm(Cutting edges in weighted treed disks)} Given a type $\Gamma$
  of a weighted treed disk, we say that the weighted disk types
  $\Gamma_+$, $\Gamma_-$ (here $\Gamma_+$ contains the root of
  $\Gamma$) are produced by cutting $\Gamma$ at an infinite edge
  $e \in \Edge_{\white,-}(\Gamma)$ if $\Gamma_+$, $\Gamma_-$ (with
  weights forgotten) are the treed disk types produced by cutting the
  edge $e$ in $\Gamma$ as in Definition \ref{def:pertops}, and labels
  and weights assigned as follows: Assuming that
$e_\pm \in \Edge_{\white,\rightarrow}(\Gamma_\pm)$ are the pair of leaves 
created by the cutting,  the label ($\blackt$,
$\greyt$ or $\whitet$) and the weight at $e_+$ are the same as that of
$e_-$, and the label and weight at $e_-$ is determined by the
\hyperref[item:outgoing]{(Outgoing edges axiom)} applied to
$\Gamma_-$.
\end{definition}

There is a new type of morphism for weighted disk types,
wherein we cut a weighted incoming edge of weight $0$ or $\infty$.
\begin{itemize}
\item[] \label{item:cuttingweightedinput} {\rm(Cutting a weighted input
    edge)} Suppose $e \in \Edge^{\greyt}(\Gamma)$ is an input, and
  $\rho(e)=0$ resp. $\infty$. Cutting $e$ produces two types :
  $\Gamma_-$ is an infinite segment $\greyt \to \blackt$ resp.
  $\greyt \to \whitet$, and $\Gamma_+$ is $\Gamma$ with $e$ as an
  unforgettable resp. forgettable edge; see Figure \ref{fig:eqwt}.
  Additionally, if $\Gamma$ has a weighted output $e_0$, we make the
  output edge $e_0$ in $\Gamma_+$ unforgettable resp. forgettable.
\end{itemize}  

\begin{figure}[ht]
  \centering \scalebox{.8}{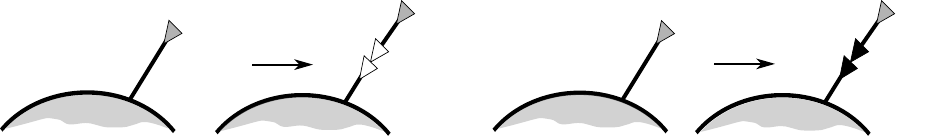}
  \caption{Cutting a weighted input edge.   
  The arrows on each input and output edge indicate whether
  the weighting is infinite (white) 
  finite and non-zero (grey) or zero (black).}
  \label{fig:eqwt}
\end{figure}

Perturbation data defined on the moduli space of weighted treed disks
are required to be coherent with respect to
\hyperref[item:collapsingedgesmorphism]{(Collapsing edges)},
\hyperref[item:makingedgelengthmorph]{(Making an edge length finite or non-zero)},
\hyperref[item:makingedgewt]{(Making an edge weight finite or non-zero)},
\hyperref[item:cutedgeinweighted]{(Cutting edges)},
\hyperref[item:cuttingweightedinput]{(Cutting a weighted input edge)},
and the
\hyperref[item:localityaxiom]{(Locality axiom)}.
The perturbation vanishes on infinite segments
$\greyt \to \blackt$ and $\greyt \to \whitet$.  Additionally, the
following coherence conditions are satisfied:
\begin{enumerate}
  \index{Making an edge weight finite/non-zero} 
\item {\rm(Making an edge weight finite/non-zero)}
  \label{item:makingedgewt}
  If the weighted
  treed disk type $\Gamma$ is obtained from $\Gamma'$ by making the
  weight of an edge $e \in \Edge^{\greyt}(\Gamma')$ finite or
  non-zero, then $\Pe_{\Gamma'}$ is the pullback of $\Pe_{\Gamma}$ by
  the inclusion of the universal moduli space
  $\ol \U_{\Gamma'} \to \ol \U_\Gamma$.
  \index{Forgetting edges morphism}
\item \label{item:forgettingedges} {\rm (Forgetting edges)} Suppose
  $e$ is an input edge in a weighted treed type $\Gamma$ that is
  either forgettable or weighted with infinite weight, and $\Gamma'$
  is the type obtained by forgetting $e$. Then $\Pe_\Gamma$ is the
  pullback of $\Pe_{\Gamma'}$.
  %
\end{enumerate}

\begin{definition}\label{def:wt-treed-hol}
  {\rm(Weighted treed holomorphic disks)}
  \index{Holomorphic disks!Weighted treed holomorphic disks}
  Let $\ul \Pe=(\Pe_\Gamma)_\Gamma$ be a coherent perturbation datum for
  weighted treed disks. A \em{weighted $\Pe_\Gamma$-holomorphic disk}
  is a map $u:C \to X$ that is
  a $\Pe_\Gamma$-holomorphic disk, and additionally satisfies the following:
\begin{itemize}
\item[] {\rm(Label axiom)} A treed input or output segment labelled
  $\greyt$ resp. $\whitet$ asymptotes to the maximum point
  $x^{\blackt} \in \crit(F)$.
\end{itemize}
\noindent An weighted holomorphic disk $u: C \to X$ is \em{
  stable} if for any component of the domain $C_v$, either the map
$u_v$ is non-constant, or the domain $C_v$ is stable in the sense of
weighted treed disks. The new feature of stability for weighted
holomorphic disks is that a stable map $u$ may be constant on an
infinite tree segment labelled $\greyt \to \blackt$ or
$\greyt \to \whitet$.  The \em{combinatorial type}
\index{Combinatorial type! of a weighted holomorphic disk} of a
weighted treed holomorphic disk consists of the type of the domain
weighted treed disk, together with the combinatorial type data of the
treed holomorphic map, namely homology classes of the maps on surface
components, and intersection multiplicities with the stabilizing
divisor at the markings. The type $\Gamma$ of a weighted treed
holomorphic disk is \em{rigid} \index{Rigid! weighted map} if all the
edges $e \in \Edge_-(\Gamma)$ are boundary edges with finite non-zero
length, and in case of weighted inputs or output, the weight $\rho(e)$
is finite and non-zero.

There is an additional notion of equivalence for holomorphic weighted
treed disks. Two weighted treed holomorphic disks $u$, $u'$ are equivalent,
if $u$ has a weighted leaf $e$ with weighting $\rho(e_i) = \infty$ resp. $0$,
 and $u'$ is obtained from $u$ by attaching to $e$ a constant trajectory
$u'': \R \to L$ with weighted incoming $e^-$ and forgettable
resp. unforgettable outgoing edge $e^+$. See Figure \ref{fig:eqwt}. 
This ends the Definition.
\end{definition}

The expanded set of labels on the ends of treed segments is denoted by
\begin{equation}
  \label{eq:Ihat}
  \hat \cI(L):=\cI(L) \cup \{x^{\greyt},
  x^{\whitet}\},
\end{equation}
where we recall from \eqref{eq:Igeom} that $\cI(L)$ is the set of critical points of the Morse function on the Lagrangian $L$.

\begin{proposition}\label{prop:wt-transv}
  {\rm(Transversality for weighted treed holomorphic disks)} Given a
  regular coherent perturbation datum $\ul{\Pe}$ for treed holomorphic
  disks, $\ul{\Pe}$ extends to a regular perturbation datum
  $\ul{\Pe}^{\on{wt}}$ on weighted treed holomorphic disks such that
  the following holds: For an uncrowded type $\Gamma$ of weighted
  holomorphic maps, and a prescribed tuple of inputs
  $\ul x:=(x_1,\dots,x_{d(\white)}) \in \hat \cI(L)^{d(\white)}$ and
  an output $x_0 \in \hat \cI(L)$ respecting the (Label axiom) and for
  which $i(\Gamma,\ul x) \leq 1$, the moduli space
\[\M_{\Gamma}(L,\ul \Pe, \ul{x})\]
of weighted $\ul \Pe$-holomorphic treed disks with limits $\ul{x}$ is a
smooth manifold of expected dimension. The moduli space
$\M_{\Gamma}(L,\ul \Pe, \ul{x})$ is compact if $i(\Gamma,\ul x)=0$,
and if $i(\Gamma,\ul x)=1$, $\M_{\Gamma}(L,\ul \Pe, \ul{x})$ has a
compactification whose boundary consists of configurations with a
boundary node with length $0$ or $\infty$, or a weighted edge with
weight $0$ or $\infty$.
\end{proposition}
\begin{proof}[Proof of Proposition \ref{prop:wt-transv}]
  We extend the perturbation datum $\ul \Pe=\{\Pe_\Gamma\}_\Gamma$ on
  domain curves that only contain unforgettable leaves to those that
  have other kinds of leaves as well.  First, consider domain types
  whose output is unforgettable.  Suppose that the domain type
  $\Gamma$ has a single weighted/forgettable input leaf $e$, since the
  other cases follow inductively.
  Let $\Gamma_0$ resp. $\Gamma_\infty$ be the domain type obtained by
  making the weight of $e$ in $\Gamma$ zero resp. infinity.  By the
  \hyperref[item:cuttingweightedinput]{(Cutting a weighted edge)} morphism,
  on the subset $\{\rho(e)=0\}$ resp. $\{\rho(e)=\infty\}$ of
  $\M_\Gamma$, the perturbation $\tilde \Pe_\Gamma$ is given by
  $\tilde \Pe_{\Gamma_0}$ resp.  $\tilde \Pe_{\Gamma_\infty}$. The
  perturbation $\tilde \Pe_{\Gamma_0}$ is equal to
  $\Pe_{\Gamma_0}$. The perturbation $\tilde \Pe_{\Gamma_\infty}$ is
  defined via the \hyperref[item:forgettingedges]{(Forgetting edges)}
  axiom, which means that the perturbation is independent of
  forgettable treed segment on the domain curve.  By standard
  arguments, the perturbation $\tilde \Pe_\Gamma$ can be extended over
  $\{\rho(e)\in (0,\infty)\}$ while satisfying regularity.

  For domain curves whose output is weighted or forgettable, we define
  the perturbation to be domain-independent. Indeed, on such domains
  maps are constant and lie in the maximum point in
  $\crit(F) \subset L$; and strata with only unforgettable leaves and
  root do not occur in the compactification of strata where the output
  is forgettable/weighted.

  Next we prove the statement on compactification.  We analyze the
  cases where a sequence of weighted maps of type $\Gamma$ that are
  non-constant on either the surface or tree part converges to a limit
  that has a breaking on a weighted/forgettable leaf $e$, and the map
  is non-constant on both segments incident at the breaking. The other
  cases are covered either by the compactification result for treed
  holomorphic maps without weights; or, if the maps in the sequence
  are constant, by straightforward formal arguments.

  If $e$ is weighted, for dimension reasons, the infinite segment
  $(-\infty,\infty)$ in the broken segment maps to the maximum point
  in $L$, and therefore, the map is constant on this segment. If $e$
  is forgettable, by the \hyperref[item:forgettingedges]{(Forgetting
    edges)} axiom, the only case to be considered is when
  $i(\Gamma,\ul x)=1$ and $\Gamma$ is of the form shown in Figure
  \ref{fig:forget-eg1}.
\begin{figure}[ht]
    \scalebox{.8}{
\begingroup%
  \makeatletter%
  \providecommand\color[2][]{%
    \errmessage{(Inkscape) Color is used for the text in Inkscape, but the package 'color.sty' is not loaded}%
    \renewcommand\color[2][]{}%
  }%
  \providecommand\transparent[1]{%
    \errmessage{(Inkscape) Transparency is used (non-zero) for the text in Inkscape, but the package 'transparent.sty' is not loaded}%
    \renewcommand\transparent[1]{}%
  }%
  \providecommand\rotatebox[2]{#2}%
  \newcommand*\fsize{\dimexpr\f@size pt\relax}%
  \newcommand*\lineheight[1]{\fontsize{\fsize}{#1\fsize}\selectfont}%
  \ifx\svgwidth\undefined%
    \setlength{\unitlength}{117.74408031bp}%
    \ifx\svgscale\undefined%
      \relax%
    \else%
      \setlength{\unitlength}{\unitlength * \real{\svgscale}}%
    \fi%
  \else%
    \setlength{\unitlength}{\svgwidth}%
  \fi%
  \global\let\svgwidth\undefined%
  \global\let\svgscale\undefined%
  \makeatother%
  \begin{picture}(1,0.31834121)%
    \lineheight{1}%
    \setlength\tabcolsep{0pt}%
    \put(0,0){\includegraphics[width=\unitlength,page=1]{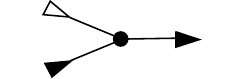}}%
    \put(-0.00468968,0.030698){\color[rgb]{0,0,0}\makebox(0,0)[lt]{\lineheight{1.25}\smash{\begin{tabular}[t]{l}$x_1$\end{tabular}}}}%
    \put(0.83511636,0.13145561){\color[rgb]{0,0,0}\makebox(0,0)[lt]{\lineheight{1.25}\smash{\begin{tabular}[t]{l}$x_0$\end{tabular}}}}%
  \end{picture}%
\endgroup%
}
\caption{A type of map. The black arrowheads mean that the input/output is unforgettable. The white attowhead means that the input/output is forgettable.}
\label{fig:forget-eg1}
\end{figure}
The map is constant on the surface, and the Morse index of $x_0$,
$x_1 \in \crit(F)$ differ by $1$. We may assume the domain-dependent
Morse perturbation on the unforgettable edges is small enough that the
ends of the moduli space are given by the breaking of the
unforgettable input or output edge as in Figure \ref{fig:forget-eg2}
below.
\begin{figure}[ht]
  \begin{equation*}
    \scalebox{.8}{
\begingroup%
  \makeatletter%
  \providecommand\color[2][]{%
    \errmessage{(Inkscape) Color is used for the text in Inkscape, but the package 'color.sty' is not loaded}%
    \renewcommand\color[2][]{}%
  }%
  \providecommand\transparent[1]{%
    \errmessage{(Inkscape) Transparency is used (non-zero) for the text in Inkscape, but the package 'transparent.sty' is not loaded}%
    \renewcommand\transparent[1]{}%
  }%
  \providecommand\rotatebox[2]{#2}%
  \newcommand*\fsize{\dimexpr\f@size pt\relax}%
  \newcommand*\lineheight[1]{\fontsize{\fsize}{#1\fsize}\selectfont}%
  \ifx\svgwidth\undefined%
    \setlength{\unitlength}{304.31340119bp}%
    \ifx\svgscale\undefined%
      \relax%
    \else%
      \setlength{\unitlength}{\unitlength * \real{\svgscale}}%
    \fi%
  \else%
    \setlength{\unitlength}{\svgwidth}%
  \fi%
  \global\let\svgwidth\undefined%
  \global\let\svgscale\undefined%
  \makeatother%
  \begin{picture}(1,0.14579494)%
    \lineheight{1}%
    \setlength\tabcolsep{0pt}%
    \put(0,0){\includegraphics[width=\unitlength,page=1]{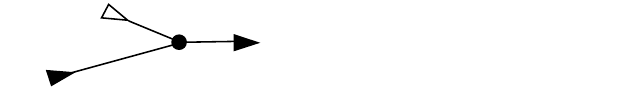}}%
    \put(-0.00181452,0.01574557){\color[rgb]{0,0,0}\makebox(0,0)[lt]{\lineheight{1.25}\smash{\begin{tabular}[t]{l}$x_1$\end{tabular}}}}%
    \put(0.41523554,0.06842307){\color[rgb]{0,0,0}\makebox(0,0)[lt]{\lineheight{1.25}\smash{\begin{tabular}[t]{l}$x_0$\end{tabular}}}}%
    \put(0,0){\includegraphics[width=\unitlength,page=2]{forget-eg2.pdf}}%
    \put(0.19490727,0.01863118){\color[rgb]{0,0,0}\makebox(0,0)[lt]{\lineheight{1.25}\smash{\begin{tabular}[t]{l}$x_0$\end{tabular}}}}%
    \put(0,0){\includegraphics[width=\unitlength,page=3]{forget-eg2.pdf}}%
    \put(0.65483619,0.01084986){\color[rgb]{0,0,0}\makebox(0,0)[lt]{\lineheight{1.25}\smash{\begin{tabular}[t]{l}$x_1$\end{tabular}}}}%
    \put(0.93620369,0.07348569){\color[rgb]{0,0,0}\makebox(0,0)[lt]{\lineheight{1.25}\smash{\begin{tabular}[t]{l}$x_0$\end{tabular}}}}%
    \put(0,0){\includegraphics[width=\unitlength,page=4]{forget-eg2.pdf}}%
    \put(0.83099653,0.03854806){\color[rgb]{0,0,0}\makebox(0,0)[lt]{\lineheight{1.25}\smash{\begin{tabular}[t]{l}$x_1$\end{tabular}}}}%
  \end{picture}%
\endgroup%
},
  \end{equation*}
\caption{Breaking an edge}
\label{fig:forget-eg2}
\end{figure}
This rules out the possibility of breaking on the forgettable edge.
\end{proof}

For a coherent perturbation system $\ul \Pe=(\Pe_\Gamma)_\Gamma$ for
weighted treed types define composition maps on the generators of the
Fukaya algebra
\[CF(L,\ul \Pe):=CF^{\on{geom}}(L,\ul \Pe) \oplus
  \Lam_{\geq 0}\bran{x^{\greyt}} \oplus \Lam_{\geq 0}\bran{x^{\whitet}}\]
as
\begin{equation} \label{ongenerators-wt}
  m^{d(\white)}(x_1,\ldots,x_{d(\white)}) = \sum_{x_0,u
    \in\ol{\M}_{\Gamma}(L,\ul \Pe,\ul{x})_0}  w(u) x_0
\end{equation} 
where 
\[w(u)=(-1)^{\heartsuit} (d(\black)!)^{-1} \Hol_L([ \partial u]) q^{A(u)} \eps(u),
  \quad \heartsuit = {\sum_{i=1}^{d(\white)} i|x_i|}.\]
The sum ranges over all rigid types $\Gamma$ of weighted treed
holomorphic disks as in Definition \ref{def:wt-treed-hol}. That is,
$\Gamma$ is a rigid type for broken maps (Definition
\ref{def:rigidtype}) after forgetting the weighting, and additionally,
the weight for any weighted input is finite non-zero in the type
$\Gamma$. Rigidity includes the latter condition because a weighted
edge with weight $0$ or $\infty$ may be deformed to a weighted edge
with finite non-zero weight.

We now prove the main Theorem of this section.

\begin{proof}
  [Proof of Theorem \ref{thm:wts}] By Proposition
  \ref{prop:wt-transv}, the coherent perturbation data $\ul \Pe$ for
  treed holomorphic disks extend to coherent data $\ul{\Pe}^{\on{wt}}$
  for weighted treed holomorphic disks.  We first prove properties
  \eqref{part:wts1}-\eqref{part:wts3}, and then proceed to show the
  \ainfty associativity relations.  The geometric part
  $CF^{\on{geom}}(L)$ is a sub-algebra because if the inputs to a
  treed disk are unforgettable, the output is also unforgettable,
  proving \eqref{part:wts1}.

  The element $x^{\whitet}$ is a strict unit for the following
  reasons. For $d(\white)>2$, we have
  \[ m^{d(\white)}(\dots,x^{\whitet},\dots)=0 \]
  because the input $x^{\whitet}$ is an empty constraint, and can be
  forgotten because the perturbation satisfies the
  \hyperref[item:forgettingedges]{(Forgetting edges)} axiom. The term
  $m^1(x^{\whitet})$ is also zero : any disk that is counted has
  interior markings, and therefore, placing the marked point
  $x^{\whitet}$ adds one to the dimension, and therefore the moduli
  space is not zero-dimensional. Finally, by the same argument,
  $m^2(x^{\whitet},y)$ and $m^2(y,x^{\whitet})$ do not count any disk
  with interior markings. The only contributions are from constant
  disks.  We conclude that both terms are equal to $\pm y$, for any
  generator $y$, proving \eqref{part:wts2}.

  For \eqref{part:wts3}, we recall from \eqref{eq:only-labels} that
  the infinite tree segments with labels $\greyt \to \blackt$,
  $\greyt \to \whitet$ are rigid configurations with orientations $+1$
  and $-1$ (see \cite[Remark 4.24]{cw:flips}). These contribute
  $x^{\blackt}-x^{\whitet}$ to $m^1(x^{\greyt})$. The other terms in
  $m^1(x^{\greyt})$ necessarily arise from configurations with at
  least one non-constant disk component, by the definition of
  stability, and so have positive $q$-valuation.

  To show that the \ainfty associativity relationships are satisfied,
  we consider one-dimensional moduli space of weighted maps of rigid
  type.  By Proposition \ref{prop:wt-transv}, the true boundary strata
  contain one of the following configurations : a configuration with a
  weight $0$ or $\infty$ at a weighted input which is equivalent to
  the broken configuration in Figure \ref{fig:eqwt}, a boundary node
  with a broken segment, or a broken Morse trajectory. These
  configurations exactly correspond to the terms in the \ainfty
  associativity relations, and so $CF(L)$ is an \ainfty algebra.
\label{rep:addedassoc}
In particular, the former boundary configuration contributes
$m^i(\dots, x^{\blackt}, \dots)$, $m^i(\dots, x^{\whitet}, \dots)$ to
terms of the form $m^i(\dots,m^1(x^{\greyt},\dots))$ in the
associativity relation.  Any other kind of contribution to a term of
the form $m^i(\dots,m^j(\dots),\dots)$ in the associativity relation,
comes from a non-constant disk breaking off, that is, the map is
constant on either the surface or the tree part.
  \end{proof}

  We recall from \cite{cw:flips} a useful criterion for a Lagrangian
  brane $L$ to be weakly unobstructed.

  \begin{lemma}\label{lem:unobs-cond}
    {\rm(A criterion for unobstructedness \cite[Lemma
      4.43]{cw:flips})} Suppose for a Fukaya algebra $CF(L)$ of a Lagrangian submanifold
    $L \subset X$, 
    $m^0(1)=W x^\blackt$ for some $W \in \Lam_{\geq 0}$ and every non-constant
    disk has positive Maslov index. Then, $b:=Wx^{\greyt} \in MC(L)$ and
    $m^0_b(1)=W x^\whitet$.
\end{lemma}

\section{Quilted disks}
\label{qdisks} 

Morphisms between Fukaya algebras are defined by counts of quilted
holomorphic disks.  The domains of such disks are \em{quilted disks},
which are ordinary disks with a \em{quilting circle} as defined in
Ma'u-Woodward \cite{mau:mult}. In this
Section, we describe moduli spaces of treed nodal quilted disks.

\index{Disk!Quilted disk}

\begin{definition} {\rm (Quilted disks)}\label{def:qdisk}
\label{def:tqdisk}
\begin{enumerate}
\item A \em{quilted disk} $(S, Q, \ul z)$ is a complex disk
  $S \simeq \D^2 \subset \C$ with a collection $\ul z$ of interior and
  boundary markings, and a \em{quilting circle}, which is a circle
  $Q \subset S$ tangent to the $0$-th boundary marking $z_0$. Two
  quilted disks $(S_0,Q_0, \ul z^0)$, $(S_1,Q_1,\ul z^1)$ are \em{
    isomorphic} if there is a biholomorphism which maps each marking
  in $S_0$ to the corresponding marking in $S_1$, and which maps the
  quilting circle $Q_0 \subset S_0$ to the quilting circle
  $Q_1 \subset S_1$.
\item{\rm(Treed quilted disk)}\label{part:tqdisk} A \em{treed quilted
    disk} is a treed nodal disk $C$, a subset of whose components are
  quilted components.  The subset of quilted components satisfies the
  following:
  \begin{itemize}
  \item A path in $C$ from any (non-root) boundary or interior marking
    $z_e$, $e \in \Edge_{\to} \bs \{0\}$ to the root marking $z_0$
    intersects exactly one quilted disk.
  \item \label{item:equal-lengths} {\rm(Equal lengths condition)} For the disk
    component $S_{v_0}$ containing the outgoing leaf, and quilted
    components $S_{v_1}, S_{v_2}$, the sum of lengths $\ell(e)$ of
    treed segments $T_e$ connecting $S_{v_0}$ to
    $S_{v_k}, k \in \{1,2\}$ is independent of $k$.
  \end{itemize}
    
\item {\rm (Combinatorial types)} \index{Combinatorial type! of a
    quilted treed disk} The combinatorial type $\Gamma$ of a quilted
  treed disk is the combinatorial type of the treed disk (without the
  quilting datum) together with a subset of the boundary vertex set
  \[\Ver^\quilt(\Gamma) \subset \Ver_\white(\Gamma),\]
  corresponding to quilted disks, and are called \em{quilted
    vertices}.
  \index{Vertex! Quilted vertex}
\item{\rm(Stability)} A treed quilted disk $C=S \cup T$ is stable if
  the automorphism group of any surface component
  $S_v, v \in \Ver(\Gamma)$ is trivial, and there are no treed
  segments both of whose ends are infinite. This means a quilted disk is
  stable if it has at least two special points.  
\end{enumerate}
\end{definition}

\begin{remark}
  {\rm(Interior markings on quilted disks)}
   An interior point in a quilted disk component
  may lie inside, outside or on the quilting circle; the relative position of
  interior points with respect to the quilting circle does not affect the
  combinatorial type of the treed quilted
  disk. \label{rep:footnote-qs} \footnote{In \cite{cw:flips}, the
    compactification of the moduli space of quilted disks contains
    configurations with ``quilted spheres''. Quilted spheres arise in
    the compactification only if we impose a requirement that markings
    can not lie on dark components, that is, on unquilted components
    lying below the quilting circle.  }
\end{remark}

\begin{remark}\label{rem:qcircle}
  {\rm(Quilted disks via affine structures)}
 A quilted disk may alternately be defined as 
  a disk $S \simeq \D^2 \subset \C$ with markings $\ul z$, and a
    biholomorphism
    \[\phi:(S \bs \{z_0\},\partial S) \xrightarrow{\phi} (\H, \partial \H),\]
    which we call an \em{affine structure}.  Two affine structures
    $\phi_0$, $\phi_1$ are equivalent if there exists $\xi \in \R$
    such that $\phi_1(z)=\phi_0(z)+\xi$.  Two quilted disks
    $(S_0,\ul z^0,\phi_0)$, $(S_1,\ul z^1,\phi_1)$ are \em{
      isomorphic} if there exists $\xi \in \R$ such that, defining
    $\tau(z):=z+\xi$, the biholomorphism
    $\phi_1^{-1} \circ \tau \circ \phi_0 : S_0 \to S_1$ maps each
    marking in $S_0$ to the corresponding marking in $S_1$.  A choice
    of a quilting circle is equivalent to an affine structure, by
    taking the quilting circle to be $Q = \{ \on{Im}(z) = 1 \}$.
\end{remark}

The moduli space of stable quilted disks with interior and boundary
markings is homeomorphic to a compact cell complex.  More precisely,
the moduli space is a compact topological space admitting a
Thom-Mather stratification into smooth submanifolds with toric
singularities.  As the interior and boundary markings go to infinity,
the markings bubble off onto either quilted disks or unquilted disks
or spheres.  The case of combined boundary and interior markings is a
straight-forward generalization of Ma'u-Woodward \cite{mau:mult}; see
also Bottman-Oblomkov \cite{bottman:marked} where a generalization to
markings on arbitrary number of quilting circles is constructed.  In the case with no
interior markings, the moduli spaces are cell complexes called the
``multiplihedra'' introduced by Stasheff \cite{st:ho}.  \index{Moduli
  space!of quilted disks $\M^\quilt_{\Gamma}$} We denote by
\[{\M}^\quilt_{d(\black), d(\white)}\]
the moduli space of stable marked quilted treed disks with $d(\white)$
boundary leaves and $d(\black)$ interior leaves.

\begin{remark}\label{rem:shading}
  {\rm(Shadings on a treed quilted disk)} In Figures, we represent
  quilted disks using shadings.  The subset of a quilted disk lying
  inside resp. outside the quilting circle is shaded dark
  resp. light. An unquilted surface component $S_v$ of a treed quilted
  disk is shaded light resp. dark if the path from $v$ to the root
  vertex contains resp. does not contain a quilted vertex. See Figures
  \ref{fig:MWc}, \ref{fig:quilt-02} for examples.
\end{remark}

\begin{example}
  Figure \ref{fig:MWc} depicts the moduli space ${\M}^\quilt_{0,2}$
  of treed quilted disks with two incoming leaves, one outgoing leaf,
  and no interior marking; and Figure \ref{fig:quilt-02} depicts the
  moduli space ${\M}_{1,0}^q$ of treed quilted disks with one
  outgoing leaf and one interior marking. In both cases, there are 3
  open strata, interspersed with four strata of codimension one. 
  Figure \ref{fig:MWc} shows 
  the type of each of these strata. 
  The hashes on the line segments $T_e$ indicate breakings.  We
  justify the compactification of an open stratum in the moduli space
  of quilted disks with one interior marking.  Let $\Gamma$ be a type
  of quilted disk with a single interior marking, a boundary output
  and no boundary inputs, and no boundary nodes (fourth figure from
  the left in Figure \ref{fig:quilt-02}).  The stratum
  $\M^q_\Gamma \subset {\M}_{1,0}^q$ is an open interval
  $(0,\infty)$ parametrizing disks with markings
  \begin{equation}
    \label{eq:diskfamily}
  z^\white_{0,\nu}=\infty, \quad z^\black_{0,\nu}=i\nu, \quad \nu \in (0,\infty),    
  \end{equation}
  and quilting circle $\{ \on{Im}(z) = 1 \}$, where the disk is
  identified to $\H \cup \{\infty\}$.  In the limit $\nu \to \infty$
  resp. $\nu \to 0$, we obtain the strata to the left resp. right of
  $\Gamma$ in Figure \ref{fig:quilt-02}. In particular, in the limit
  $\nu \to \infty$, the limit of the markings in \eqref{eq:diskfamily}
  gives the quilted disk in the limit, and the unquilted dark disk is
  given by the reparametrization $z \mapsto \frac z \nu$, so that the
  markings in the sequence are $z^\white_{0,\nu}=\infty$,
  $z^\black_{0,\nu}=i$, and the quilting circle is
  $\{ \on{Im}(z) = \frac 1 \nu \}$.  This gives
  rise to a limit disk which is fully in the dark region.  This finishes the example. 
\end{example}
\begin{figure}[ht]
  \includegraphics[width=4.5in]{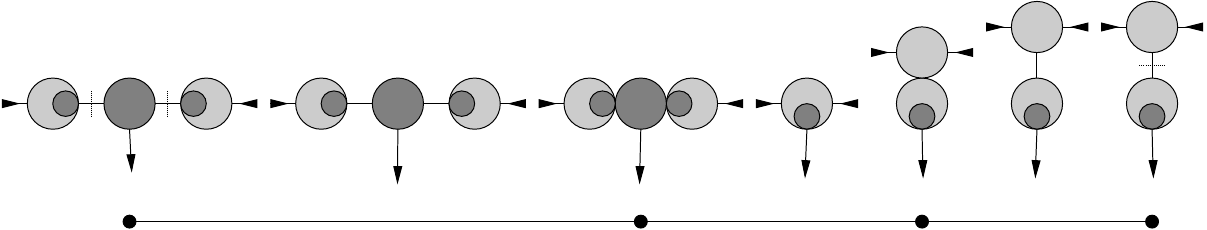}
  \caption{Moduli space ${\M}^\quilt_{0,2}$ of stable quilted treed disks.}
  \label{fig:MWc}
\end{figure}
\begin{figure}[ht]
  \centering\scalebox{.8}{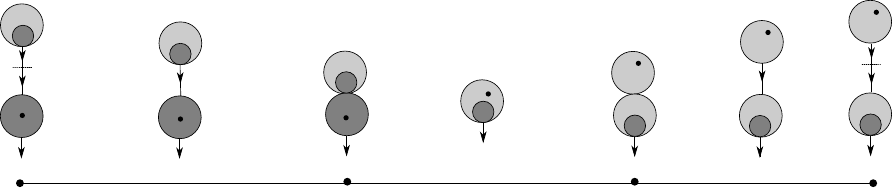}
  \caption{Moduli space ${\M}^\quilt_{1,0}$ of stable quilted treed disks.}
  \label{fig:quilt-02}
\end{figure}

As in the unquilted case, the top-dimensional cells in
${\M}^\quilt_{d(\black),d(\white)}$ consist of strata $\M_\Gamma$
which do not have any interior edges, that is,
$\Edge_{\black,-}(\Gamma)=\emptyset$, and all boundary edges
$e \in \Edge_{\white,-}(\Gamma)$ have finite non-zero length, and the
dimension of these strata is equal to
\[d(\white) + 2d(\black) - 1. \]
The true boundary of ${\M}^\quilt_{d(\black),d(\white)}$ consists
of strata $\M_\Gamma$ where either 
\begin{itemize}
\item $\Gamma$has a single broken edge
  $e \in \Edge_\white(\Gamma)$, or
\item $\Gamma$ has a collection of broken edges $e_1,\ldots, e_k$,
  such that the types $\Gamma_0,\dots,\Gamma_k$ obtained by
  disconnecting $\Gamma$ at the breakings at $e_1,\ldots, e_k$ consist
  of an unquilted disk type $\Gamma_0$, all whose vertices have
  negative distance from the quilting circle in $\Gamma$, and a collection
  $\Gamma_1,\dots,\Gamma_k$ of quilted disk types.
\end{itemize}
For example, the combinatorial types in Figure \ref{fig:qbroken} occur
in the codimension one stratum of $\M^\quilt_{0,4}$,
$\M^\quilt_{0,5}$.

The following is a useful quantity defined on treed quilted disks.
\begin{definition}\label{def:dist-seam}
  {\rm(Distance to the quilting circle)} Let $C=S \cup T$ be a treed quilted
  disk of type $\Gamma$. 
  Given a point $x \in C$
  and a surface component $S_v \subset C$, 
  define the \em{distance} between $x$ and $S_v$ as
  \begin{equation}
    \label{eq:dsv}
    \ol d(x,S_v):= \ell(\gamma \cap T)= \sum_{e \in \Edge_{\white,-}(\Gamma)} \ell(\gamma \cap T_e) \in [0,\infty], 
  \end{equation}
  where $\gamma$ is any non self-intersecting path from $x$ to any point in $S_v$.
  For a point $z \in C$,
  the \em{distance
    to the quilting circle}
is the signed distance to the closest quilted surface component, that is, 
  \begin{equation}
    \label{eq:seamdist}
    d(z) := \pm \ol d(z,S_v) \in    [-\infty,\infty],
  \end{equation}
  where $S_v$, $v \in \Ver^\quilt(\Gamma)$ is the closest quilted
  surface component to $z$ with respect to $\ol d$ in \eqref{eq:dsv},
  and the sign is $+$ resp. $-$ if $z$ is above resp.  Below the
  quilted disk components (that is, further from resp. closer to the
  root than the quilted disk components).  Note that $d$ is constant
  on any surface component of $C$. Therefore, the distance
  $d(v)$ is defined for any vertex $v$ in the graph underlying $C$.
  The distance function $d$ is zero on a quilted disk component and
  the spheres attached to it. To define perturbations, $d$ is composed
  with an increasing diffeomorphism
  \begin{equation}
    \label{eq:deltadef}
    \delta: [-\infty,\infty] \to [0,1]  
  \end{equation}
  which is assumed to be fixed.
\end{definition}

\begin{remark}{\rm(Morphisms for quilted disk types)}
  \label{rem:quiltmorph}
Morphisms of graphs (Cutting an edge, collapsing edges, making edge
lengths finite or non-zero)
from Definition \ref{def:pertops} 
induce morphisms of moduli spaces of
stable quilted treed disks as in the unquilted case. The new feature
is that (Cutting edges) is done
such that one of the pieces is quilted and the other unquilted. This
implies that output edges of quilted disks are cut simultaneously, and
therefore the output has a disconnected type.

\index{Cutting an edge!for a quilted disk type}

For example, in Figure \ref{fig:qbroken}, in the left picture, one can
cut the $e$ at the breaking to obtain an unquilted disk with positive
distance from the quilting circle, and a quilted disk. In the picture to the
right, the edges $e_1$ and $e_2$ get cut simultaneously to yield an
unquilted disk with negative distance from the quilting circle and a disconnected
type consisting of two quilted disks.
\begin{figure}[ht]
  \centering \scalebox{.8}{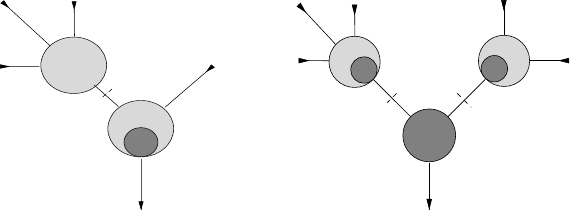}
  \caption{A quilted treed disk with edges of infinite length.}
  \label{fig:qbroken}
\end{figure}

\index{Collapsing an edge!for a quilted disk type} \index{Making an edge length finite/non-zero!for a quilted disk type} In a similar
vein, the morphisms
\hyperref[item:collapsingedgesmorphism]{(Collapsing edges)} and
\hyperref[item:makingedgelengthmorph]{(Making an edge length finite or non-zero)} 
may involve several edges
instead of a single one. For example, in the quilted disk with three
boundary edges of length zero shown in Figure \ref{fig:collapse-q},
\begin{figure}[ht]
  \scalebox{.8}{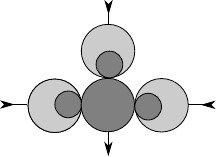}
\caption{A quilted disk with length zero edges. } 
\label{fig:collapse-q}
\end{figure}
there is no \hyperref[item:collapsingedgesmorphism]{(Collapsing
  edges)} or \hyperref[item:makingedgelengthmorph]{(Making an edge length finite or non-zero)}
morphism for
one of the edges alone; these operations can only be performed for all
three edges simultaneously in order to respect the
\hyperref[item:equal-lengths]{(Equal lengths condition)}.

For any combinatorial type $\Gamma$ of quilted disk there is a \em{
  universal quilted treed disk} $\ol{\U}_\Gamma \to \ol{\M}_\Gamma$
which is a cell complex whose fiber over $[C]$ is isomorphic to $C$.
The universal disk splits into surface and tree parts
$ \ol{\U}_\Gamma = \ol{\S}_\Gamma \cup \ol{\T}_{\white,\Gamma} \cup
\ol{\T}_{\black,\Gamma}$, where the last two sets are the boundary and
interior parts of the tree respectively. This ends the Remark.
\end{remark}

Weights can be added to the inputs and output of quilted treed disks
as in the case of treed disks.  We suppose there is a partition of the
boundary markings
\[\Edge^{\greyt}(T) \sqcup \Edge^{\whitet}(T) \sqcup \Edge^{\blackt}(T) =
\Edge_{\white,\rightarrow}(T) \]
into \em{weighted} resp. \em{forgettable} resp.  \em{unforgettable}
edges as in the unquilted case.  The outgoing edge axiom is the same as in
the unquilted case. In the quilted case, the trees in Figure
\ref{trivq} are stable.  Isomorphism of weighted quilted disks is the same
as the unquilted case, and therefore, the moduli space with a single
weighted leaf and no markings is a point.

\begin{figure}[ht]
  \begin{picture}(0,0)%
    \includegraphics{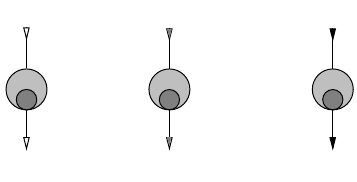}%
  \end{picture}%
  \setlength{\unitlength}{4144sp}%
  \begingroup\makeatletter\ifx\SetFigFont\undefined%
  \gdef\SetFigFont#1#2#3#4#5{%
    \reset@font\fontsize{#1}{#2pt}%
    \fontfamily{#3}\fontseries{#4}\fontshape{#5}%
    \selectfont}%
  \fi\endgroup%
  \begin{picture}(2700,1371)(4216,-2041)
    \put(4231,-1986){\makebox(0,0)[lb]{\smash{{\SetFigFont{8}{9.6}{\rmdefault}{\mddefault}{\updefault}{\color[rgb]{0,0,0}$x^{\whitet}$}%
          }}}}
    \put(5352,-1986){\makebox(0,0)[lb]{\smash{{\SetFigFont{8}{9.6}{\rmdefault}{\mddefault}{\updefault}{\color[rgb]{0,0,0}$x^{\greyt}$}%
          }}}}
    \put(6597,-1986){\makebox(0,0)[lb]{\smash{{\SetFigFont{8}{9.6}{\rmdefault}{\mddefault}{\updefault}{\color[rgb]{0,0,0}$x^{\blackt}$}%
          }}}}
    \put(4259,-785){\makebox(0,0)[lb]{\smash{{\SetFigFont{8}{9.6}{\rmdefault}{\mddefault}{\updefault}{\color[rgb]{0,0,0}$x^{\whitet}$}%
          }}}}
    \put(5417,-799){\makebox(0,0)[lb]{\smash{{\SetFigFont{8}{9.6}{\rmdefault}{\mddefault}{\updefault}{\color[rgb]{0,0,0}$x^{\greyt}$}%
          }}}}
    \put(6662,-799){\makebox(0,0)[lb]{\smash{{\SetFigFont{8}{9.6}{\rmdefault}{\mddefault}{\updefault}{\color[rgb]{0,0,0}$x^{\blackt}$}%
          }}}}
    \put(5515,-1082){\makebox(0,0)[lb]{\smash{{\SetFigFont{8}{9.6}{\rmdefault}{\mddefault}{\updefault}{\color[rgb]{0,0,0}$\rho$}%
          }}}}
    \put(5523,-1653){\makebox(0,0)[lb]{\smash{{\SetFigFont{8}{9.6}{\rmdefault}{\mddefault}{\updefault}{\color[rgb]{0,0,0}$\rho$}%
          }}}}
  \end{picture}%

  \caption{Unmarked stable treed quilted disks.}
  \label{trivq}
\end{figure}

Orientations of the moduli space of quilted treed disks are defined as
follows.  Each main stratum of ${\M}^\quilt_{d(\black), d(\white)}$
can be oriented using the isomorphism of the stratum made of quilted
treed disks having a single disk with $\R$ times
${\M}_{d(\black), d(\white)}$, the extra factor corresponding to
the quilting parameter.  The boundary of the moduli space is naturally
isomorphic to a union of moduli spaces:
\begin{multline} \label{unions} \partial \ol{\M}^\quilt_{d(\black),
    d(\white)} \cong \bigcup_{\substack{i,j \\
      I_1 \cup I_2=[d(\black)]}} \left(
    \ol{\M}^\quilt_{|I_1|,d(\white)-i+1} \times
    \ol{\M}_{|I_2|,i}\right)\\ \cup \bigcup_{\substack{r\geq 1, I_1 \cup \dots I_r= [d(\black)],\\ m_1 + \dots + m_r=d(\white)}} \left( \ol{\M}_{|I_0|,r} \times
    \prod_{j=1}^r \ol{\M}^\quilt_{|I_j|,m_j} \right) .\end{multline}
Here $[d(\black)]:=\{1,\dots,d(\black)\}$.  In the first union, $j$ is
the index of the attaching leaf in the quilted tree and so ranges from
$1$ to $n -i + 1$.  The first union ranges over all partitions of the
set $[d(\black)]$ into $I_1$, $I_2$.  The second union ranges over
partitions of $[d(\black)]$ into $I_0,\dots,I_r$.  By construction,
for the facet of the first type, the sign of the inclusions of
boundary strata are the same as that for the corresponding inclusion
of boundary facets of the moduli space
$\ol{\M}_{d(\black), d(\white)}$ of unquilted disks, that is,
$(-1)^{i(n-i-j) + j} $.  For facets of the second type, the gluing map
is
\[ (0,\infty) \times \M_{r,m_0} \times \prod_{j=1}^r
\M^\quilt_{|I_j|,m_j} \to \M^\quilt_{d(\black), d(\white)}\]
given for boundary markings by
\begin{multline} \label{gluemap2} (\delta,x_1,\ldots, x_r,
  (x_{1,j} = 0, x_{2,j},\ldots, x_{m_j,j} )_{j=1}^r) \mapsto \\
  (x_1, x_1 + \delta^{-1}x_{2,1}, \ldots, x_1 +
  \delta^{-1}x_{m_1,1}, \ldots, x_r, x_r + \delta^{-1}x_{2,r}
  ,\ldots, x_r + \delta^{-1}x_{m_r,r}).
\end{multline}
This map views the markings as lying in the affine half plane
$\H \subset \C$; an interior $x \in I_j$ is mapped to
$x_j+\delta^{-1}x$; and the map is well-defined for $\delta$ that is
large enough to ensure that the ordering of the boundary markings is
preserved.  This map changes orientations by
$\ssum_{j=1}^r (r-j) (m_j - 1);$ in case of non-trivial weightings,
$m_j$ should be replaced by the number of incoming markings
plus non-trivial but finite weightings on the $j$-th component.

\section{Quilted pseudoholomorphic disks}

In this Section and the next, we prove that the Fukaya algebra of a
Lagrangian submanifold is independent of the choice of perturbation
data up to homotopy equivalence.  Given \ainfty algebras defined using
perturbation data $\ul \Pe_0$, $\ul \Pe_1$ whose underlying
stabilizing divisor has the same degree, in this Section, we construct
an \ainfty morphism between them.  The \ainfty morphism is defined by
counts of quilted treed holomorphic disks.  The perturbation system
corresponding to such disks is
 called a \em{perturbation morphism}, and is 
defined by extending the perturbation
systems $\ul \Pe_0$, $\ul \Pe_1$; that is, the perturbation system for
quilted holomorphic disks restricts to $\ul \Pe_0$ and $\ul \Pe_1$ on
light and dark unquilted components respectively.

The following is the main result
which shows \ainfty homotopy equivalence for  \ainfty algebras defined using two different perturbations whose stabilizing divisors have the same degree. We refer the reader to \cite[Section 5.6]{cw:flips} for the case when the stabilizing divisors have different degrees. 
In the statement of the result,
a unital \ainfty morphism is defined in \eqref{eq:unital-morph},
a convergent \ainfty morphism is defined in Remark \ref{rem:conv-morph}, and perturbation morphisms
 are from Definition \ref{def:pert-morph}.

\begin{proposition}
  \label{prop:samedegree}
  Suppose $\ul \Pe^0$, $\ul \Pe^1$ are regular perturbation data that are
  defined using stabilizing pairs $(J^0,D^0)$ and $(J^1,D^1)$, which
  are connected by a path of stabilizing pairs $\{(J^t,D^t)\}_{t \in
    [0,1]}$. There exists a coherent perturbation
  morphism 
  %
  $\ul \Pe^{01}$
  which induces a convergent unital \ainfty morphism 
  \[\phi : CF(L,\ul \Pe^0) \to CF(L,\ul{\Pe}^1), \]
  which is a convergent unital \ainfty homotopy equivalence.
\end{proposition}

 Proposition \ref{prop:samedegree} is used in Section
 \ref{sec:homo-u2b} to show that the Fukaya algebra of a Lagrangian in
 a neck-stretched manifold is independent of the neck length parameter
 up to homotopy equivalence.  The \ainfty morphism $\phi$
 required by Proposition \ref{prop:samedegree} is
 constructed later in this Section after defining quilted holomorphic
 disks. The fact that $\phi$ is a homotopy equivalence is proved
 in Section \ref{sec:homotopies} as part of Corollary \ref{cor:hequiv}.

\begin{remark}
  \label{rem:conv-morph}
  {\rm(Convergent \ainfty morphisms)} An \ainfty morphism
  $\F: A_0 \to A_1$ between \ainfty algebras with Novikov coefficients
  is said to be \em{convergent} \index{Convergent!\ainfty morphism} if
  $\F^0$ has positive $q$-valuation.  A convergent \ainfty morphism
  $\F: A_0 \to A_1$, induces a well-defined map
  $MC(\F) : MC(A_0) \to MC(A_1)$ on the space of Maurer-Cartan
  solutions and a map on cohomology
  $H(\F) : H(A_0,b) \to H(A_1,\F(b))$ for any $b \in MC(A_0)$,
  see \cite[Lemma 5.2, 5.3]{cw:flips}.  In Proposition
  \ref{prop:samedegree}, the \ainfty morphism $\phi$ being convergent
  means that $\phi^0(1) \in \Lam_{>0}\bran{\hat \cI(L)}$.
\end{remark}

\begin{definition}\label{def:pert-morph}
  Given perturbation data
  \[\ul{\Pe}^0=(J^0_\Gamma, F^0_\Gamma)_\Gamma, \quad  \ul{\Pe}^1=(J^1_\Gamma, F^1_\Gamma)_\Gamma\]
  on unquilted
  treed disks with respect to stabilizing divisors $D^0$ and $D^1$
  that have the same degree and a path $\{D^t\}_{t \in [0,1]}$ of Donaldson divisors,
  a \em{perturbation morphism}
  $\ul{\Pe}^{01}$ from $\ul{\Pe}^0$ to $\ul{\Pe}^1$ for the  type $\Gamma$ of a quilted treed disk consists of
  \begin{enumerate}
  \item \label{part:pert-morph2} a domain-dependent Morse function
    \[ F_\Gamma^{01}: \ol{\T}_{\white,\Gamma} \to \R, \]
    which for $k=0,1$, is equal to the domain-independent Morse
    function $F^k$ on the neighbourhood
    $\ol{{\T}}_{\white,\Gamma} - \ol{{\T}}^{\on{cp}}_{\white,\Gamma}$
    of the endpoints for which the distance to the quilting circle
    $\delta \circ d$ is $k$, where $F^k:L \to \R$ is the background
    Morse function for $F^k_\Gamma$ for $k=0,1$;
  \item \label{part:pert-morph3} and a domain-dependent almost complex structure
    \[ J_\Gamma^{01}: \ol{\S}_\Gamma \to \bigsqcup_{t \in [0,1]}\{J \in \J(X,D^t): J \text{ is $\om$-tamed}\}, \]
    with the property that
    \begin{itemize}
    \item on the curve $\S_{\Gamma,m} \subset \ol \S_\Gamma$
      associated to any point $m \in \M_\Gamma$, $J_\Gamma^{01}$ is
      locally constant on $\S_{\Gamma,m} - \S^{\on{cp}}_{\Gamma,m}$,
      where the compact set $\S^{\on{cp}}$ is as defined in Section
      \ref{sec:domdep}; and on a component
      $\S_{v,m} \subset \S_{\Gamma,m}$ corresponding to
      $v \in \Ver(\Gamma)$, $J_\Gamma^{01}$ is adapted to the divisor
      $D^{\delta \circ d(v)}$ where $d$ is the distance from the quilting circle
      function \eqref{eq:seamdist}, that is,
      $J_\Gamma^{01}(\S_{v,m}) \subset \J(X,D^{\delta \circ d(v)})$.
    \item Furthermore,  for $k=0,1$, denoting by $\Gamma_k \subset \Gamma$
      the sub-tree where the distance from the quilting circle $\delta \circ d$
      is $k$, $J_\Gamma^{01}$ is equal to the domain-dependent complex
      structure $J_{\Gamma_k}^k$ on the (unquilted) treed disk
      component of type $\Gamma_k$.
    \end{itemize}
  \end{enumerate}
  A collection of perturbation morphisms $\ul \Pe=(\Pe_\Gamma)_\Gamma$
  defined on moduli spaces of quilted disks is \em{coherent} if it is
  compatible with the (Cutting edges), (Making an edge length/weight
  finite or non-zero) and {(Forgetting
    edges)} morphisms on weighted quilted disk types as in Remark \ref{rem:quiltmorph},
  and satisfies the (Locality axiom) from Definition \ref{def:coherent}.
\end{definition}
 %

Next, we define the quilted version of perturbed holomorphic treed disks (Definition
\ref{def:pdisks}).
\begin{definition}\label{def:hol-quilt}
  {\rm(Holomorphic quilted treed disk)}
\index{Holomorphic disks! Holomorphic quilted treed disks}
  Let
  $\ul \Pe=(\Pe_\Gamma)_\Gamma$ be a coherent perturbation morphism.
  A holomorphic quilted treed disk is a map $u : C \to X$ where $C$ is
  a quilted disk of type $\Gamma$, and $u$ is $\Pe_\Gamma$-holomorphic
  in the sense of ordinary perturbed holomorphic disks (see Definition
  \ref{def:pdisks}), and \em{adapted} to a family of divisors
  $\{D^t\}_{t \in [0,1]}$ in the sense that
  \begin{itemize}
  \item each 
  interior marking $z_i$ maps to
    $D^{\delta \circ d(z_i)}$, where $d$ is the distance from the quilting circle 
    \eqref{eq:seamdist} and $\delta$ is from
    \eqref{eq:deltadef},
  \item and for $t \in [0,1]$, each component of
    $u^{-1}(D^t) \cap (\delta \circ d)^{-1}(t)$ contains a marking.
  \end{itemize}
  If the domain quilted treed disk $C$ is unstable, we obtain a stable
  quilted treed disk $C'$ by collapsing unstable surface and tree
  components, and we denote the collapsing map by $f:C \to C'$.
  Pulling back by $f$ we obtain a datum on $C$, still denoted
  $(J^{01}_\Gamma, F^{01}_\Gamma)$.
  A quilted holomorphic treed disk $u: C \to X$ is \em{stable} if
  every (surface or tree) component of $C$ on which $u$ is
  non-constant is stable in the sense of weighted quilted disks. The
  \em{type} of a quilted holomorphic disk consists of the
  combinatorial type of the domain quilted treed disk, and the
  tangency and homology data of the map components as in the type of a
  treed holomorphic disk, see Definition \ref{def:type-broken}.
  \index{Combinatorial type! of a quilted holomorphic disk}
\end{definition}
\begin{remark}{\rm(Symplectic area and the number of markings)}
  For a quilted treed disk $C=S \cup T$, on any surface component
  $S_v \subset S$, the function $\delta \circ d|S_v$ is constant and
  therefore a quilted holomorphic disk $u:C \to X$ is adapted to the
  divisor $D^{\delta \circ d(S_v)}$ on the component $S_v$.
  Furthermore, the divisor $D^{\delta \circ d(S_v)}$ is holomorphic
  with respect to the background almost complex structure
  $J^{\delta \circ d(S_v)}$, and therefore all intersections between
  the map $u|S_v$ and the divisor $D^{\delta \circ d(S_v)}$ are
  positive. The intersection number of $u$ with $D^{\delta}$ is
  $k\om[u]$, where the divisor $D_0$ (and $D_1$) is Poincar\'{e} dual
  to $k\om$.
\end{remark}

\begin{remark}\label{rem:jtpert}
  {\rm(Quilted holomorphic disks for a path of perturbations)}
  \begin{enumerate}
  \item Often a perturbation morphism is constructed using a generic
    path of perturbations $\Pe^t_\Gamma=(J^t_\Gamma,F_\Gamma)$,
    $t \in [0,1]$, whose Morse datum $F_\Gamma$ is $t$-independent.
    Let $\tGam$ be a quilted disk type, for which forgetting the
    quilting yields the disk type $\Gamma$.  One may define a
    perturbation morphism $\Pe_\tGam^{01}$ connecting $\Pe^0_\Gamma$
    and $\Pe^1_\Gamma$ by setting the domain-dependent data to be
  \begin{equation}
    \label{eq:path-pert}
  J_\tGam^{01}(z) := J^{\delta \circ
    d(z)}_{\Gamma}(z), \quad F_\tGam^{01}(z) := F_{\Gamma}(z)    
  \end{equation}
  for any $z \in \S_\tGam$ where $d$ is the ``distance from the quilting circle'
  function from \eqref{eq:seamdist}, and
  $\delta: [-\infty,\infty] \to [0,1]$ is a fixed increasing
  diffeomorphism from \eqref{eq:deltadef}.  Thus on any surface
  component of a $J_\tGam^{01}$-holomorphic quilted disk, the map is
  $J^t_\Gamma$-holomorphic for some $t \in [0,1]$.
\item {\rm(A path of regular perturbations)}\label{part:jtpert2} In
  some special cases, a one-dimensional component of quilted disks is
  made up of a family of unquilted $\Pe_t$-holomorphic disks for
  $t \in [0,1]$. Such a special case arises when the perturbation
  $\Pe^t_\Gamma=(J^t_\Gamma,F_\Gamma)$ is regular for a disk homology
  class $\beta \in H_2(X,L)$ and input/output tuple
  $\ul x=(x_0,\dots,x_d)$ for all $t$. Then the zero dimensional
  component of the moduli space of perturbed holomorphic maps
  $\M_\beta(\ul{\Pe}^t,\ul x)_0$ is in bijection with
  $\M_\beta(\ul{\Pe}^0,\ul x)_0$ for any $t \in [0,1]$.  Furthermore,
  for a perturbation morphism as in \eqref{eq:path-pert}, a path of
  moduli spaces
  $\cup_{t \in [0,1]}\M_\Gamma(\ul{\Pe}^t,\ul x)^{\leq E_0}_0$ of
  $\ul{\Pe}^t$-holomorphic disks gives a one-dimensional component
  $(u_t)_{t \in [0,1]}$ of quilted $\ul{\Pe}^{01}$-holomorphic disks
  where
  \begin{itemize}
  \item if $\delta^{-1}(t)>0$ (where
    $\delta:[-\infty,\infty] \to [0,1]$ is a fixed increasing
    diffeomorphism), $u_t$ is of the form in Figure \ref{fig:hol-q}
    (a), $u_t$ is constant on quilted components, $J_t$-holomorphic on
    the dark shaded disk, and boundary edges have length
    $\tau:=\delta^{-1}(t)$;
  \item if $\delta^{-1}(t)<0$, $u_t$ is of the form in Figure
    \ref{fig:hol-q} (c), $u_t$ is constant on quilted components and
    $J_t$-holomorphic on the light shaded disk, and boundary edges
    have length $\tau:=-\delta^{-1}(t)$;
  \item if $\delta^{-1}(t)=0$, $u_t$ is of the form in Figure
    \ref{fig:hol-q} (b), on the quilted components $u_t$ is
    $J_t$-holomorphic.
  \end{itemize}

  \begin{figure}[ht]
  \centering \scalebox{.8}{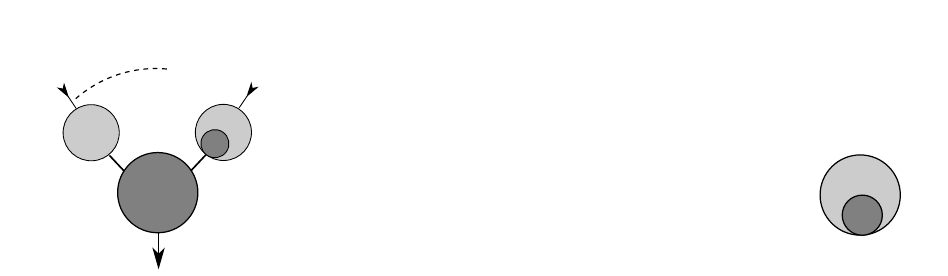}
  \caption{Types of quilted holomorphic disks occurring in the one-dimensional moduli space described in Remark \ref{rem:jtpert} \eqref{part:jtpert2}. 
    In (a) and (c) the map is constant on the quilted component.}
  \label{fig:hol-q}
\end{figure}

The analysis of the special case shows that in an idealized setting
moduli spaces of quilted holomorphic disks interpolate between moduli
spaces of unquilted holomorphic disks.  But in general cases, one has
to account for disk bubbling in the moduli spaces, leading to a
variety of configurations occurring in the codimension one boundary of
moduli spaces of quilted holomorphic disks, see Remark
\ref{rem:qtypes}.
\end{enumerate}
\end{remark}

For any combinatorial type $\Gamma$ of quilted holomorphic disks we
denote by $\ol{\M}^\quilt_\Gamma(X, L)$ the compactified moduli
space of equivalence classes of  treed quilted holomorphic 
disks.  The moduli space of quilted disks breaks into components
depending on the limits along the root and leaf edges.  Denote by
\index{Moduli space!of quilted holomorphic disks
  $\M_\Gamma^\quilt(X,L)$}
$\M_\Gamma^\quilt(X,L,\ul{x}) \subset {\M}_{\Gamma}^\quilt(X,L)$
the moduli space of isomorphism classes of stable 
holomorphic
quilted treed disks of type $\Gamma$ with boundary in $L$ and limits
$\ul{x}$ along the root and leaf edges, where
$\ul{x} = (x_0,\ldots, x_{d(\white)}) \in \widehat{\cI}(L)$.
A quilted holomorphic disk type $\Gamma$ is \em{rigid} if
it satisfies all the conditions for a holomorphic map type to be rigid, as in Definition
\ref{def:unbrokenrigidtype}.  \index{Rigid! quilted map}
For a rigid quilted holomorphic disk type $\Gamma$, the expected dimension of the moduli space
$\M_\Gamma^\quilt(X,L,\ul{x})$ is \index{Index! of a quilted map $i^\quilt$}
\begin{equation}
  \label{eq:expdim-q}
  i^\quilt(\Gamma, \ul x)=i(\Gamma', \ul x) + 1,  
\end{equation}
where $i$ is the index for types of treed holomorphic disks (see
\eqref{eq:expdim}), and $\Gamma'$ is the unquilted type obtained by
forgetting the quilting in $\Gamma$. The extra dimension for the
moduli space of quilted disks arises from the quilting datum. In case
$n$ quilted disk components, there are $n-1$ equations in the
\hyperref[item:equal-lengths]{(Equal lengths condition)}, leading to an extra
dimension of $1$ compared to the unquilted case.

For a comeager subset of perturbation morphisms extending those chosen
for unquilted disks, the uncrowded moduli spaces of expected dimension
at most one are smooth and of expected dimension.  For sequential
compactness, it suffices to consider a sequence $u_\nu: C_\nu \to X$
of quilted treed disks of fixed combinatorial type $\Gamma=\Gamma_\nu$
for all $\nu$.  Coherence of the perturbation morphism implies the
existence of a stable limit $u: C \to X$ which we claim is adapted.
If a sequence of markings $z_{i,\nu} \in C_\nu$ converges to
$z_i \in C$, then, $u(z_i) \in D^{\delta \circ d(z_i)}$. Indeed, since
the distance from the quilting circle $d(z_{i,\nu})$ converges to $d(z_i)$, the
divisor $D^{\delta \circ d(z_i)}$ is the limit of the divisors
$D^{\delta \circ d(z_{i,\nu})}$.  For types of index at most one, each
component of $u^{-1}(D^{\delta \circ d(z_i)})$ is a limit of a unique
component of $u^{-1}_\nu( D^{\delta \circ d(z_{i,\nu})})$, otherwise
the intersection degree would be more than one which is a codimension
two condition. Therefore, every marking in $C$ is a transverse divisor
intersection. There are no other divisor intersections because the
intersection number with $D^{\delta \circ d}$ is preserved in the
limit for topological reasons.  The moduli space of quilted broken disks then has the same
transversality and compactness property as in the unquilted case, by
similar arguments.

Counts of quilted disks define \ainfty morphisms. 
Given a regular, stabilized and coherent perturbation morphism
$\ul{\Pe}^{01}$ from $\ul{\Pe}^0$ to $\ul{\Pe}^1$, define an \ainfty
morphism
$\phi=(\phi^d)_{d \geq 0} : CF(L,\ul{\Pe}^0) \to CF(L,\ul{\Pe}^1)$ as
\begin{multline}\label{eq:phid-def}
  \phi^{d}: CF(L;\ul{\Pe}^0)^{\otimes d} \to CF(L;
  \ul{\Pe}^1) \\ (x_1,\ldots,x_{d}) \mapsto \sum_{x_0,u \in
    \M_\Gamma(L,D,x_0,\ldots,x_{d})_0} (-1)^{\heartsuit} w(u)
 x_0
\end{multline}
where the weight $w(u)$ is given by
\begin{equation}\label{eq:wu}
 w(u) = 
\eps([u]) 
  ( d_\black(\Gamma)!)^{-1} q^{ E([u])} \Hol_L([\partial u]) 
 x_0 \end{equation}
the sum is over uncrowded strata $\Gamma$ of weighted treed quilted
holomorphic disks whose boundary edges
$e \in \Edge_{\white,-}(\Gamma)$ have finite non-zero length
$\ell(e) \in (0,\infty)$ and whose input and output labels are
compatible with $(x_0,\dots,x_{d})$ in terms of the (Label axiom) in
Definition \ref{def:wt-treed-hol}, and $\eps([u]) =\pm 1$ is the
orientation sign.

\begin{remark} {\rm (Lowest area terms)} For any
  $x \in \crit(F^0) \cup \{x^{\greyt}, x^{\whitet}\}$, the element
  $\phi_1(x)$ contains zero area terms coming from the count of a
  quilted treed disk with no interior marking, that is, a treed disk
  with only one disk that is quilted and mapped to a point. The domain
  is one of those in Figure \ref{trivq}.  If $x$ is $x^{\greyt}$
  resp. $x^{\whitet}$ there is one such configuration whose output is
  weighted resp. forgettable.  In the latter case, it will be the only
  term with a forgettable output.
\end{remark}

\begin{remark} \label{rem:qtypes} {\rm(Codimension one boundary
    strata)} The codimension one strata are of several possible types:
  either there is one (or a collection of) edge(s) $e$ of length
  $\ell(e)$ infinity, there is one (or a collection of) edge(s) $e$ of
  length $\ell(e)$ zero, or equivalently, boundary nodes, or there is
  an edge $e$ with zero or infinite weight $\rho(e)$.  The case of an
  edge of zero or infinite weighting is equivalent to breaking off a
  constant trajectory, and so may be ignored.  In the case of edges of
  infinite length(s), then either $\Gamma$ is
  \begin{enumerate}
  \item {\rm (Breaking off an unquilted tree)} a pair
    $\Gamma_1 \sqcup \Gamma_2$ consisting of a quilted tree $\Gamma_1$
    and an unquilted tree $\Gamma_2$ as in the left side of Figure
    \ref{fig:qbroken}; necessarily the breaking must be a leaf of
    $\Gamma_1$; or
  \item {\rm (Breaking off quilted trees)} a collection consisting of
    an unquilted tree $\Gamma_0$ containing the root and a collection
    $\Gamma_1,\ldots,\Gamma_r$ of quilted trees attached to each of
    its $r$ leaves as in the right side of Figure \ref{fig:qbroken}.
    Such a stratum $\M_\Gamma$ is codimension one because of the
     \hyperref[item:equal-lengths]{(Equal lengths condition)} 
     which implies
    that if the length of any edge between $e_0$ to $e_i$ is infinite
    for some $i$ then the path from $e_0$ to $e_i$ for any $i$ has the
    same property.
  \end{enumerate}

  In the case of a zero length(s), one obtains a fake boundary
  component with normal bundle $\R$, corresponding to either deforming
  the edge(s) to have non-zero length or deforming the node(s). This
  ends the Remark.
\end{remark}

We can now prove the first part of Proposition \ref{prop:samedegree}.
\begin{proof}
  [Proof of Proposition \ref{prop:samedegree}]
  We have so far chosen a regular coherent perturbation morphism $\ul \Pe^{01}$, and defined
  a collection $\phi=(\phi^d)_{d \geq 0}$
  of maps 
  in \eqref{eq:phid-def}, which is a convergent unital \ainfty morphism for the following reason: 
The true boundary strata of one-dimensional moduli spaces of quilted
holomorphic disks are those described in Remark \ref{rem:qtypes} and
correspond to the terms in the axiom for \ainfty morphisms
\eqref{faxiom}.
The signs are similar to those in \cite{cw:flips} and
omitted.  The assertion on the strict units is a consequence of the
existence of forgetful maps for infinite values of the weights.  By
assumption the terms involving $\phi_{d(\white)}$ \label{rep:phid} and
$x^{\whitet}$ as inputs involve counts of quilted treed disks using
perturbation that are independent of the disk boundary incidence
points of the first leaf marked $x^{\whitet}$ asymptotic to
$x_M \in X$.  Since forgetting that semi-infinite edge gives a
configuration of negative expected dimension, if non-constant, the
only configurations contributing to these terms must be the constant
maps.  Hence
 \[ \phi^1(x^{\whitet}) = x^{\whitet},\quad \phi^{d(\white)}( \ldots,
 x^{\whitet}, \dots) = 0, n \ge 2 .\]
In other words, the only regular quilted trajectories from the
maximum, considered as $x^{\whitet}$, being regular are the ones
reaching the other maximum that do not have interior markings
(i.e. non-constant disks).  The convergence property follows from the
fact that any treed quilted disk with no incoming edges must, by
stability, have a disk component with interior markings. The \ainfty
morphism $\phi$ is an \ainfty homotopy equivalence by Corollary
\ref{cor:hequiv}.
\end{proof}

\section{Homotopies}
\label{sec:homotopies}


In this Section, we define homotopies between \ainfty morphisms via
counts of twice-quilted disks.  The main result is that the \ainfty
morphism between Fukaya algebras constructed in the previous Section
is a homotopy equivalence.

 A \em{twice-quilted disk} $(C,Q_1,Q_2)$ is defined
in the same way as once-quilted disks, but with two quilting circles
$Q_1,Q_2 \subset C$ that are either equal $Q_1 = Q_2$ or with the
second $Q_2 \subset \on{int}(Q_1)$ contained inside the first, say
with radii $\rho_1 < \rho_2$ satisfying certain balancing conditions. 
\index{Disk! Twice-quilted disk}
The combinatorial type $\Gamma$ of a nodal 
quilted disk comes with the data of subsets of the boundary vertex set
\[\Ver^\quilt_1(\Gamma), \Ver^\quilt_2(\Gamma) \subset \Ver_\white(\Gamma), \quad \Ver^\qq_{12}(\Gamma)=\Ver^\quilt_1(\Gamma) \cap \Ver^\quilt_2(\Gamma),\]
where $\Ver^\quilt_k(\Gamma)$ corresponds to disk components which contain the quilting circle $Q_i$. 
 \index{Combinatorial type! of a twice-quilted treed disk} 
The vertices in $\Ver^\qq_{12}(\Gamma)$ are \em{twice-quilted vertices}. 
\index{Vertex! Twice-quilted vertex}
See Figure \ref{fig:vert-q2}. 
For a twice-quilted vertex $v \in \Ver^\qq_{12}(\Gamma)$, we denote by
\begin{equation}
  \label{eq:lamSdef}
  \lam_S(v):=\rho_2(v)/\rho_1(v)  
\end{equation}
the ratio 
of the radii of the quilting circles.

\begin{figure}[ht]
  \centering \scalebox{.8}{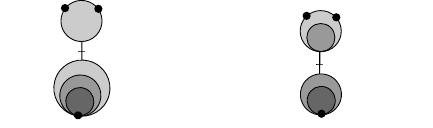}
  \caption{For $k=1,2$, components corresponding to vertices in $\Ver_k^\quilt \subset \Ver_\white$ contain the quilting circle $Q_i$. Components corresponding to vertices
  in $\Ver_{12}^\qq \subset \Ver_\white$ are twice-quilted.}
  \label{fig:vert-q2}
\end{figure}

Treed twice-quilted disks are defined by replacing disk nodes
by treed segments, whose lengths satisfy two equal lengths conditions:
The lengths of edges occurring in a path $P$ from the root vertex to
a quilted disk vertex $v \in \Ver_2^\quilt(\Gamma)$ containing the inner quilting circle $Q_2$ is independent of $v$:
 For any two vertices $v_1,v_2 \in \Ver^\quilt_2(\Gamma)$ connected to the root vertex $v_0$ by non self-crossing paths $P(v_0,v_1)$, $P(v_0,v_2)$ in $\Gamma$,  
 \begin{equation} \label{eq:balanced2}
   {\text{\rm\textup{(Equal lengths to inner circle)}}} \quad 
\sum_{e \in P(v_0,v_1)} \ell(e) = 
\sum_{e \in  P(v_0,v_2)} \ell(e).
 \end{equation}
 Secondly, for a fixed disk vertex $v_2 \in \Ver_2^\quilt(\Gamma)$ containing an inner quilting circle,
 and any two vertices $v_1, v_1' \in \Ver_1^\quilt(\Gamma)$ containing an outer quilting circle and for which the paths $P(v_1,v_2)$, $P(v_1',v_2)$ do not containing the root vertex (unless $v_2$ is the root vertex),
 \begin{equation} \label{eq:balanced2b}
   {\text{\rm\textup{(Equal lengths from inner to outer circle)}}} \quad 
\sum_{e \in P(v_2,v_1)} \ell(e) = 
\sum_{e \in  P(v_2,v_1')} \ell(e).
 \end{equation}
Note that if $v_2$ is a twice-quilted vertex, then \eqref{eq:balanced2b} is automatically satisfied since all path lengths occurring in \eqref{eq:balanced2b} are zero. 
 A twice-quilted disk is \em{stable} if it has at least two special points.

 The treed disks
 with two quilting circles defined so far are called \em{unbalanced twice-quilted disks}. The moduli space of unbalanced treed twice quilted disks of type $\Gamma$ is denoted by $M^{\qq,\univ}_\Gamma$.
 An unbalanced twice-quilted disk $C \in \M^{\qq,\univ}_\Gamma$ 
 is a \em{twice-quilted disk} 
 \index{Disk! Twice-quilted disk}
 if it satisfies the \em{balanced ratio of radii} condition, which is as follows:
 \begin{itemize}
 \item Either all quilted disks are twice-quilted, that is $\Ver_1^\quilt(\Gamma)=\Ver_2^\quilt(\Gamma)=\Ver^\qq_{12}(\Gamma)$, and the 
    ratio \eqref{eq:lamSdef} of radii 
is required to be the same for all twice-quilted vertices
$v \in \Ver^\qq_{12}(\Gamma)$, that is,
\begin{equation}
  \label{eq:lamS}
   {\text{\rm\textup{(Balanced ratios, surfaces)}}} \quad
 \lam_S(v)=\lam_S(v') \quad \forall v, v' \in \Ver^\qq_{12}(\Gamma); 
\end{equation}
or
\item there are no twice-quilted disks and the following treed version of the balanced ratio condition holds:  For a disk vertex 
  $v_2 \in \Ver^2(\Gamma)$ containing the inner circle, and a vertex
  $v_1 \in \Ver^1(\Gamma)$ containing the outer quilting circle, and
  whose path to the root vertex contains $v_2$,  denoting
 \begin{equation}
   \label{eq:lamTdef}
   \lam_T(v_2):=\sum_{e \in P(v_1,v_2)}\ell(e),  
 \end{equation}
 where $P(v_1,v_2)$ is the path in $\Gamma$ from $v_1$ to $v_2$, 
 we require that
 \begin{equation}
   \label{eq:lamT}
   {\text{\rm\textup{(Balanced ratios, trees)}}}  \quad \textup{$\lam_T$ is independent of $v_2$.}
 \end{equation}
 \end{itemize}
 Note that by \eqref{eq:balanced2b}, the quantity $\lam_T(v_2)$ does
 not depend on the choice of $v_1$.  The balanced ratio conditions
 \eqref{eq:lamS}, \eqref{eq:lamT} on surfaces and trees can be stated
 in a unified way as follows.  First, we fix an identification
\[\phi: ((\{1 \leq \lam_S \leq \infty\} \sqcup \{0 \leq \lam_T \leq \infty\})/(\lam_S=\infty) \sim (\lam_T=0)) \xrightarrow{\simeq} [0,\infty]. \]
Define a \em{generalized ratio of radii} as
\[\lam_\Gamma : \M^{\qq, \univ}_\Gamma \to [0,\infty]^{\Ver^\quilt_2(\Gamma)}, \quad \lam_\Gamma(C)=(\lam_\Gamma(C)_v)_{v \in \Ver^\quilt_2(\Gamma)}, \]
where, for any vertex $v \in \Ver^\quilt_2(\Gamma)$ containing an
inner quilting circle,  
\[\lam_\Gamma(C)_v:=\phi \left(
    \begin{rcases}
      \begin{dcases}
      \lam_S(v), \quad \text{$v$ is twice-quilted},\\
      \lam_T(v). \quad \text{$v$ is not twice-quilted}
      \end{dcases}
    \end{rcases}
\right),\]
 An unbalanced twice-quilted curve $C$ of type $\Gamma$ then satisfies the balanced ratio condition if  $\lam_\Gamma(C)$ maps to the diagonal $\Delta$, that is, $\lam_\Gamma(C)_v$ is the same for all $v \in \Ver^\quilt_1(\Gamma)$. The moduli space of twice quilted disks of type $\Gamma$ is denoted by
\[\M_\Gamma^\qq:=\lam_\Gamma^{-1}(\Delta). \]

The moduli space of twice-quilted treed
disks is a cell complex constructed in a similar way to the space of
once-quilted treed disks. 
Denote the moduli space
of twice-quilted disks 
with  $d(\black)$ interior leaves and $d(\white)$ non-root boundary leaves by $\M^\qq_{d(\black),d(\white)}$.  
The moduli space admits a
stratification by combinatorial type, that is,
\[\M^\qq_{d(\black),d(\white)}=\cup_\Gamma \M^\qq_\Gamma,\]
where $\Gamma$ ranges over all twice-quilted disk types with
 $d(\black)$ interior leaves and $d(\white)$ non-root boundary leaves. The open strata in
$\M^\qq_{d(\black),d(\white)}$ have dimension 
\(d(\white)+2d(\black). \)
A twice-quilted holomorphic disk type $\Gamma$ is \em{rigid}
 if it satisfies all the conditions for a holomorphic map type to be rigid, as in Definition
\ref{def:unbrokenrigidtype}. \index{Rigid! twice-quilted map}

For example, the moduli space $\M^\qq_{0,1}$ of twice-quilted disks
with a single non-root boundary marking is one-dimensional. It has two
open strata interspersed by three strata of codimension one, whose
types are shown in Figure \ref{fig:oneleaf}. The generalized ratio $\lam_\Gamma:\M^\qq_{0,1} \to [0,\infty]$ is a diffeomorphism.  The moduli space $\M^\qq_{0,2}$ of twice-quilted disks
with two non-root boundary markings is two-dimensional, and Figure \ref{fig:pent-2q} shows some level sets of $\lam_\Gamma$ in $\M^\qq_{0,2}$. 
\begin{figure}[ht]
\begin{picture}(0,0)%
\includegraphics{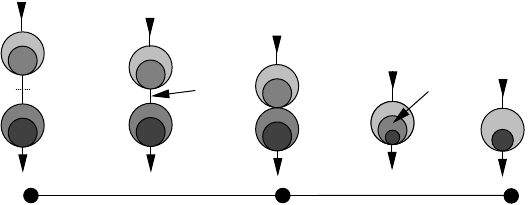}%
\end{picture}%
\setlength{\unitlength}{4144sp}%
\begingroup\makeatletter\ifx\SetFigFont\undefined%
\gdef\SetFigFont#1#2#3#4#5{%
  \reset@font\fontsize{#1}{#2pt}%
  \fontfamily{#3}\fontseries{#4}\fontshape{#5}%
  \selectfont}%
\fi\endgroup%
\begin{picture}(4001,1552)(-16950,-2025)
\put(-15423,-1220){\rotatebox{360.0}{\makebox(0,0)[lb]{\smash{{{$\lambda_T$}%
}}}}}
\put(-13644,-1173){\rotatebox{360.0}{\makebox(0,0)[lb]{\smash{{{$\lambda_S$}%
}}}}}
\end{picture}%
\caption{Moduli space $\M^\qq_{0,1}$ of treed twice-quilted disks with one leaf. 
}
\label{fig:oneleaf}
\end{figure} 

\begin{figure}[h]
  \centering \scalebox{.8}{
\begingroup%
  \makeatletter%
  \providecommand\color[2][]{%
    \errmessage{(Inkscape) Color is used for the text in Inkscape, but the package 'color.sty' is not loaded}%
    \renewcommand\color[2][]{}%
  }%
  \providecommand\transparent[1]{%
    \errmessage{(Inkscape) Transparency is used (non-zero) for the text in Inkscape, but the package 'transparent.sty' is not loaded}%
    \renewcommand\transparent[1]{}%
  }%
  \providecommand\rotatebox[2]{#2}%
  \newcommand*\fsize{\dimexpr\f@size pt\relax}%
  \newcommand*\lineheight[1]{\fontsize{\fsize}{#1\fsize}\selectfont}%
  \ifx\svgwidth\undefined%
    \setlength{\unitlength}{267.13313065bp}%
    \ifx\svgscale\undefined%
      \relax%
    \else%
      \setlength{\unitlength}{\unitlength * \real{\svgscale}}%
    \fi%
  \else%
    \setlength{\unitlength}{\svgwidth}%
  \fi%
  \global\let\svgwidth\undefined%
  \global\let\svgscale\undefined%
  \makeatother%
  \begin{picture}(1,0.72495488)%
    \lineheight{1}%
    \setlength\tabcolsep{0pt}%
    \put(0,0){\includegraphics[width=\unitlength,page=1]{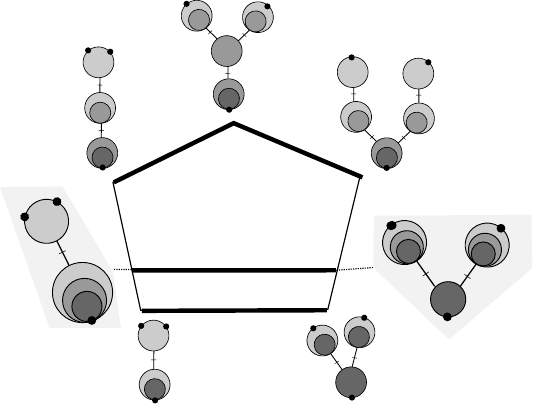}}%
    \put(0.37092193,0.17444363){\makebox(0,0)[lt]{\lineheight{1.25}\smash{\begin{tabular}[t]{l}$\lam=0$\end{tabular}}}}%
    \put(0.05901417,0.16862402){\makebox(0,0)[lt]{\lineheight{1.25}\smash{\begin{tabular}[t]{l}$\lam_S$\end{tabular}}}}%
    \put(0.73561617,0.33599565){\makebox(0,0)[lt]{\lineheight{1.25}\smash{\begin{tabular}[t]{l}$\lam_S$\end{tabular}}}}%
    \put(0.45597335,0.49545854){\makebox(0,0)[lt]{\lineheight{1.25}\smash{\begin{tabular}[t]{l}$\lam=\infty$\end{tabular}}}}%
    \put(0.3635523,0.25636317){\makebox(0,0)[lt]{\lineheight{1.25}\smash{\begin{tabular}[t]{l}$\lam=\lam_S$\end{tabular}}}}%
    \put(0.91120775,0.25573756){\makebox(0,0)[lt]{\lineheight{1.25}\smash{\begin{tabular}[t]{l}$\lam_S$\end{tabular}}}}%
  \end{picture}%
\endgroup%
}
  \caption{Moduli space $\M_{0,2}^\qq$ of twice-quilted disks with two boundary inputs. The figure shows the configurations at the codimension two corners, and also the end-points of the moduli space $\M_{0,2}^{\qq,\lam_S}$ with fixed ratio of radii.}
  \label{fig:pent-2q}
\end{figure}

Coherent domain-dependent perturbations for twice-quilted disks are defined in
a similar way to quilted disks, see \cite[Definition
5.9]{cw:flips}. That is, given coherent perturbations $\ul \Pe_0$,
$\ul \Pe_1$, $\ul \Pe_2$ on $X$, and coherent perturbation morphisms
$\ul \Pe_{01}$, $\ul \Pe_{12}$, perturbation data $\ul \Pe_{012}$ for
twice quilted disks are required to interpolate between these.  In
particular, if in a stratum, (Cutting edges) produces an unquilted disk
type $\Gamma$ of shading $k$ resp. a once-quilted disk type $\Gamma$
of shading $k$ and $k+1$, then the domain-dependent perturbation on
these strata is defined by the pullback of $(\Pe_k)_\Gamma$
resp. $(\Pe_{k,k+1})_\Gamma$.

The following is the main result of the Section. 

\begin{theorem} \label{thm:twicequilted} {\rm (\ainfty homotopies via
    twice-quilted disks)} Given coherent perturbations 
  $\ul{\Pe}_{01},\ul{\Pe}_{12},\ul{\Pe}_{02}$ defining morphisms
\[\phi_{ij}:
CF(L,\ul{\Pe}_i) \to CF(L,\ul{\Pe}_j), \quad 0 \leq i
< j \leq 2 \] 
and coherent perturbation data $\ul \Pe_{012}$ for
twice-quilted disks, there is a convergent \ainfty homotopy
\[\cT^{02} \in \Hom(\phi_{02}, \phi_{01} \circ \phi_{12}).\]
%
\end{theorem} 
\begin{remark}
  {\rm(Convergent \ainfty homotopies)} Let $A_0$, $A_1$ be \ainfty
  algebras with Novikov coefficients, and let
  $\F_0, \F_1 : A_0 \to A_1$ be \ainfty morphisms An \ainfty homotopy
  $\cT \in \Hom(\F_0,\F_1)$ from $\F_0$ to $\F_1$ is said to be
  \em{convergent} \index{Convergent!\ainfty homotopy} if $\cT^0$ has
  positive $q$-valuation.  For a convergent \ainfty algebra $A$, if a
  convergent morphism $\F:A \to A$ is convergent homotopy equivalent
  to the identity morphism, then the map on the space of Maurer-Cartan
  solutions $MC(\F) : MC(A) \to MC(A)$, and the map on cohomology
  $H(\F) : H(A,b) \to H(A,\F(b))$ for any $b \in MC(A)$ are
  isomorphisms modulo gauge transformations, see \cite[Lemma 5.2,
  5.3]{cw:flips}.
\end{remark}
\begin{proof}
  [Proof of Theorem \ref{thm:twicequilted}]
  We first introduce the necessary moduli spaces of holomorphic twice-quilted
  disks. 
  For a generic perturbation $\ul \Pe_{012}$ that extends perturbation data $\ul \Pe_i$, $i=0,1,2$, and perturbation morphisms
  $\ul \Pe_{01}$, $\ul \Pe_{12}$, $\ul \Pe_{02}$, and $\lam$ lying in a comeager subset $\R_{\geq 0}^{reg} \subset [0,\infty]$, for any type $\Gamma$ of twice-quilted disks, and any collection $\ul x$ of labels on the leaves,  
%
  \begin{itemize}
  \item 
 the moduli
    space $\M^{\qq,\lam}_\Gamma(X, L, \ul \Pe_{012}, \ul x)_d$ of twice quilted
    disks of type $\Gamma$ with ratio of radii is $\lam$ and expected dimension $d \leq 1$, and
  \item the moduli space $\M^{\qq}_\Gamma(X, L, \ul \Pe_{012}, \ul x)_d$ of twice
    quilted disks with the ratio of radii unprescribed with expected dimension $d \leq 0$
  \end{itemize}
  are transversally cut out. 
  For any $\lambda \in \R_{\geq 0}^{reg}$ as above,
counts of rigid twice-quilted holomorphic disks with ratio of
  radii $\lambda$ and index zero defines an
  \ainfty morphism
  \[ \phi_{02}^\lambda = ((\phi_{02}^\lambda)^d)_d,\qquad
    (\phi_{02}^\lambda)^d: CF(L, \ul \Pe_0)^{\otimes d} \to CF(L, \ul
    \Pe_2). \]
 For $\lam=0$,
  the \ainfty morphism $\phi_{02}^0$ corresponds to $\phi_{02}$, and for $\lam=\infty$, 
  $\phi_{02}^\infty$ corresponds to $\phi_{12} \circ \phi_{01}$.

  The homotopy between $\phi_{02}^0$ and $\phi_{02}^\infty$ is defined by combining homotopies constructed from
  small variations of ratio.  We aim to define a pre-natural
  transformation
  $\cT^{02}=((\cT^{02})^d)_{d \geq 0} \in \Hom(\phi_{02}^0,
  \phi_{02}^\infty)$ which will later be shown to be a homotopy.
  Given an energy bound $E>0$, we first define $\cT^{02}$ modulo terms involving $q^{\geq E}$. 
  We may divide $[0,\infty]$ into a finite number of intervals
  \begin{equation}
    \label{eq:def-lami}
  [\lambda_i, \lambda_{i+1}], \quad i=1,\dots, k,  
  \end{equation}
  so that there is at
  most one rigid twice-quilted disk with ratio $\lambda$ and
  symplectic area $\leq E$ (which, we recall from \eqref{eq:mult}, is
  proportional to the number of interior markings $d_\black$) in each
  such interval $[\lambda_i, \lambda_{i+1}]$. We define $\cT_{02}^{\lambda_i,\lambda_{i+1},\leq E}$
  by counting twice-quilted disks with ratio in the specified
  interval,
  \begin{multline} \label{twiceq} (\cT_{02}^{\lambda_i,
      \lam_{i+1},\leq E})^{d(\white)}: CF(L,\ul \Pe_0)^{\otimes
      d(\white)} \to CF(L, \ul \Pe_2) \\ (x_1,\ldots,x_{d(\white)})
    \mapsto \sum_{x_0,u \in \M^{\qq,\lam}_\Gamma(X,L,\ul \Pe, \ul x)_0}
    w(u) x_0,
  \end{multline}
  where $\ul x=(x_0,x_1,\dots,x_{d(\white)})$ and the sum is over
  rigid combinatorial types $\Gamma$ of twice-quilted disks with
  symplectic area $\leq E$.  We will prove in Lemma \ref{lem:homotopy}
  below that for any $d(\white) \geq 0$
  and any tuple of leaf labels $(x_1,\dots,x_{d(\white)}) \in \hat \cI(L)^{d(\white)}$, 
  \[ (\phi_{02}^{\lambda_i} -
    \phi_{02}^{\lambda_{i+1}})^{d(\white)}(x_1,\ldots,x_{d(\white)})
    =(m^1_{\cQ} \cT_{02}^{\lambda_i,\lam_{i+1},\leq E})^{d(\white)}(x_1,\ldots,x_{d(\white)}) \mod q^E.  \]
Composition using \eqref{eq:homotopy-compose} produces a homotopy
\[ \TT_{02}^{\leq E} := \TT_{02}^{\lambda_{k-1},\lambda_k,\leq E}\circ (\ldots \circ (\TT_{02}^{\lambda_{1},\lambda_{2}, \leq E}\circ 
\TT_{02}^{\lambda_{0},\lambda_1, \leq E} ) )\]
%
between $\phi_{12} \circ \phi_{01}$ and $\phi_{02}$ modulo terms
involving powers $q^{\geq E}$, that is, 
\[\phi_{02} - (\phi_{12} \circ \phi_{01}) = \TT_{02}^{\leq E} \mod q^E.\]
Taking the limit $E \to \infty$ defines a
homotopy
\[\TT_{02} := \lim_{E \to \infty} \TT_{02}^{\leq E} \] 
between $\phi_{02}$ and $\phi_{12} \circ \phi_{01}$.
The limit exists because for any $E_1>E$, $\TT_{02}^{\leq E}=\TT_{02}^{\leq E_1} \on{mod} q^E$, and therefore terms in $\TT_{02}$ with valuation $<E$ are given by $\TT_{02}^{\leq E}$. 
Convergence,
that is, that $\T^0_{02}(1)$ has coefficients with positive
$q$-valuation, holds since any treed disk with no incoming
  edges must, by stability, have a disk component with interior
  markings, and therefore has positive area. 
\end{proof} 

\begin{lemma}\label{lem:homotopy}
  For an area bound $E>0$, let the interval $[\lam_i,\lam_{i+1}]$ be as in \eqref{eq:def-lami}.
  Then,
  \begin{equation}
    \label{eq:homotopy-id}
  \phi_{02}^{\lambda_i,\leq E} - \phi_{02}^{\lambda_{i+1},\leq E} =
  m^1_{\cQ}(\cT_{02}^{\lambda_i,\lambda_{i+1},\leq E}) \mod q^E.   
  \end{equation}
\end{lemma}
Configurations consisting of multiple twice-quilted disks attached to
an unquilted (dark shaded) disk by broken edges are not transversally
cut out, and they occur in the boundary of one-dimensional moduli
spaces. Indeed, our current definition of holomorphic twice quilted
disks require the ratios of radii (the surface version \eqref{eq:lamS}
or the treed version \eqref{eq:lamT}) to be equal for all twice
quilted components. This requirement may be viewed as a fiber product,
and the resulting moduli space is not transversally cut out.

In order to obtain transversality, the fiber products
\eqref{eq:lamS}, \eqref{eq:lamT} 
involved in the definition of the twice-quilted disks 
must be perturbed, using \em{delay functions}, as
in Seidel \cite{se:bo} and Ma'u-Wehrheim-Woodward \cite{ainfty}.
\label{ainftyrefp}
In a twice-quilted disk type, 
denote by
\[\Edge_{\white,-}^2(\Gamma) \subset \Edge_{\white,-}(\Gamma)\]
the subset of boundary edges that lie below the inner quilting circle. In terms of Figure \ref{fig:oneleaf}, edges in $\Edge_{\white,-}^2(\Gamma)$ connect vertices with the darkest shading.
A \em{delay function} is a collection of maps 
\[\tau_{d(\black),d(\white)}=(\tau_\Gamma)_\Gamma, \quad \tau_\Gamma=(\tau_{\Gamma, e})_{e \in \Edge_{\white,-}^2(\Gamma)}: \ol \M^{\qq,\univ}_\Gamma \to \R^{\Edge_{\white,-}^2(\Gamma)},  \]
where $\Gamma$ ranges over all rigid twice-quilted disk types with $d(\black)$ interior leaves and $d(\white)$ non-root boundary leaves, and $\tau_\Gamma$ satisfies the following:
\begin{enumerate}
\item {\rm(Locality)} The map $\tau_\Gamma$ depends only on the edge lengths
  of edges in $\Edge_{\white,-}^2(\Gamma)$,
\item {\rm(Zero edges)} If for a curve $C \in \ol \M^{\qq,\univ}_\Gamma$ the edge length of $e \in \Edge_{\white,-}^2(\Gamma)$ is zero, then $\tau_{\Gamma,e}(C)=0$.
\item {\rm(Continuity)} If $\Gamma'$ is a stratum containing an edge $e$ of length zero, and $\Gamma_0$ and $\Gamma_1$ are open stratum in $\M_{d(\black),d(\white)}^{\qq,\univ}$ obtained by collapsing the edge $e$ and making $\ell(e)$ non-zero respectively, then,
  \[\tau_{\Gamma_0}|\M^{\qq,\univ}_\Gamma=\tau_{\Gamma_1}|\M^{\qq,\univ}_\Gamma.\]
\item {\rm(Infinite edges)} \label{part:infedge}
  For any rigid twice quilted disk type $\Gamma'$ which has $\leq d(\white)$ boundary leaves and $\leq d(\black)$ interior leaves, with at least one of the inequalities being strict, 
  there exist delay maps
  \[\tau_{\Gamma,\Gamma'}=(\tau_{\Gamma,\Gamma',e})_e : \M^{\qq,\univ}_{\Gamma'} \to \R^{\Edge_{\white,-}^2(\Gamma')}  \] 
such that the following holds: 
If $\M^{\qq,\univ}_\Gamma$ is a stratum with a collection of edges of infinite length, that separate $\Gamma$ into an unquilted type $\Gamma_0$ with the darkest shading, and a set $\{\Gamma_1,\dots,\Gamma_k\}$ of twice-quilted disk types connected to $\Gamma_0$ by edges $e_1, \dots,e_k$, so that $\M^{\qq,\univ}_\Gamma$ is a product
\[\M^{\qq,\univ}_\Gamma \xrightarrow{(\pi_0, \dots,\pi_k)} \M_{\Gamma_0} \times \prod_{i=1}^k\M^{\qq,\univ}_{\Gamma_i},\]

then for any edge $e \in \Edge_{\white,-}^2(\Gamma_i)$, $i=1,\dots,k$,  or $e \in \Edge_\white(\Gamma_0)$ and $i=0$, 
\begin{equation}
  \label{eq:tau-cut-edge}
  \tau_{\Gamma,\Gamma_i,e} \circ \pi_i = \tau_{\Gamma,e}.  
\end{equation}
Furthermore, for any $i=1,\dots,k$,
\begin{equation}
  \label{eq:const-on-i}
\tau_{\Gamma,e_i}=\text{constant} \quad \text{on } \ol \M_\Gamma^{\qq,\univ}.  
\end{equation}

\item {\rm(Increasing condition)}
  Let $\gamma_1$, $\gamma_2$ be paths in 
  $\Gamma$ starting from the root vertex $v_0$ and such that neither of them is contained in the other. If $\gamma_1<\gamma_2$ (in the sense that if, for $i=1,2$, $\gamma_i$ is contained in the path from the root vertex $v_0$ to a non-root leaf $e_i$, then $e_1$ occurs before $e_2$), then
  \begin{equation}
    \label{eq:tau-inc}
    \sum_{e \in \gamma_1} \tau_{\Gamma, e} < \sum_{e \in \gamma_2} \tau_{\Gamma, e}.
  \end{equation}
\end{enumerate}

We now define 
\em{delayed twice-quilted disks} where the fiber product is replaced by a perturbed version.
Define
\[\lam_{\tau_\Gamma} : \M^{\qq,\univ}_\Gamma \to [0,\infty]^{\Ver_{12}(\Gamma)}, \quad \lam_{\tau_\Gamma}(C)_v:=(\lam_\Gamma(C))_v + \sum_{e \in P(v_0,v)}(\tau_\Gamma(C))_e, \]
where $P(v_0,v)$ is the path in $\Gamma$ from the root vertex $v_0$ to $v$. 
An unbalanced twice-quilted disk $C$ 
is a \em{$\tau$-delayed twice-quilted disk} if $\lam_{\tau_\Gamma}(C)_v$ is equal for all vertices $v \in \Ver^{12}(C)$ containing an inner quilting circle $Q_2$,
and thus, the moduli space of $\tau$-delayed twice-quilted disks of type $\Gamma$ is
\[\M^{\qq,\lam_0}_{\Gamma,\tau}=\lam_{\tau_\Gamma}^{-1}(\Delta),\]
equipped with a \em{delayed ratio of radius} map
\begin{equation}
  \label{eq:psi-delayed}
  \psi:\M^{\qq,\lam_0}_{\Gamma,\tau} \to \R, \quad C \mapsto \lam_{\tau_\Gamma}(v),  
\end{equation}
where $v \in \Ver^{12}(\Gamma)$ can be any disk vertex containing an inner quilting circle. Delayed twice-quilted disks which have specific values of the delayed ratio of radii are denoted by 
\[\M^{\qq,\lam_0}_{\Gamma,\tau}:=\psi^{-1}(\lam), \quad \M^{\qq,[\lam_i,\lam_{i+1}]}_{\Gamma,\tau}:=\psi^{-1}([\lam_i,\lam_{i+1}]).\]

\begin{proof}
  [Proof of Lemma \ref{lem:homotopy}] To define delayed holomorphic
  twice-quilted disks, we choose perturbations close to the ones used
  to define ordinary holomorphic twice-quilted disks.  Given
  perturbation data $\ul \Pe_{012}$ for holomorphic twice quilted
  disks, we take the delay functions to be small enough and define
  coherent perturbations
  \[\ul \Pe_{012}^\tau=(\Pe_{012, \Gamma}^\tau)_\Gamma\]
  so that the moduli spaces
  \[\M^{\qq, [\lam_i,\lam_{i+1}]}_{\Gamma,\tau}(X,L,\ul \Pe_{012})_d, \quad
    \M^{\qq, \lam_i}_{\Gamma,\tau}(X,L,\ul \Pe_{012})_d, \quad \M^{\qq,
      \lam_{i+1}}_{\Gamma,\tau}(X,L,\ul \Pe_{012})_d\]
  with expected dimension $d \leq 1$ are transversally cut out, and
  there are cobordisms of zero-dimensional moduli spaces
  \begin{equation}
    \label{eq:cobord1}
    \M^{\qq, \lam_i}_\Gamma(X,L,\ul \Pe_{012})_0 \sim \M^{\qq, \lam_i}_{\Gamma,\tau}(X,L,\ul \Pe_{012}^\tau)_0, \quad
    \M^{\qq, \lam_{i+1}}_\Gamma(X,L,\ul \Pe_{012})_0 \sim \M^{\qq, \lam_{i+1}}_{\Gamma,\tau}(X,L,\ul \Pe_{012}^\tau)_0,
  \end{equation}
  and
  \begin{equation}
    \label{eq:cobord2}
    \M^{\qq, [\lam_i, \lam_{i+1}]}_{\Gamma'}(X,L,\ul \Pe_{012})_0 \sim \M^{\qq, [\lam_i,\lam_{i+1}]}_{\Gamma',\tau}(X,L,\ul \Pe_{012}^\tau)_0
  \end{equation}
  for all types $\Gamma' \leq \Gamma$.

  We finally prove the identity required by the Lemma. For any
  $d(\white)$, we consider input labels $(x_1,\dots,x_{d(\white)})$
  and output label $x_0$, and we aim to prove
  \begin{multline}
    \label{eq:homot-x}
    (\phi_{02}^{\lambda_i,\leq E} - \phi_{02}^{\lambda_{i+1},\leq E})^{d(\white)}(x_1,\dots,x_{d(\white)}) =\\
    (m^1_{\cQ}(\cT_{02}^{\lambda_i,\lambda_{i+1}}))^{\leq E,
      d(\white)}(x_1,\dots,x_{d(\white)}).
  \end{multline}
  We will show that \eqref{eq:homot-x} 
  is obtained by summing the
  contributions from boundaries of one-dimensional moduli spaces
  $\ol \M^{\qq, [\lam_i,\lam_{i+1}]}_{\Gamma,\tau}(X,L,\ul
  \Pe_{012}^\tau, \ul x)_1$ where $\Gamma$ ranges over rigid types of
  twice-quilted holomorphic disks with $d(\white)$ boundary inputs and
  $\leq \frac {E}{k}$ interior leaves as follows: A configuration
  occurring in the boundary of
  \[\ol \M^{\qq, [\lam_i,\lam_{i+1}]}_{\Gamma,\tau}(X,L,\ul
    \Pe_{012}^\tau, \ul x)_1\]
  is
  \begin{enumerate}
  \item\label{part:tq1} either a twice-quilted disk in
    $\M^{\qq, \lam_i}_{\Gamma,\tau}(X,L,\ul x)_0$ or 
    $\M^{\qq, \lam_{i+1}}_{\Gamma,\tau}(X,L,\ul x)_0$; or
  \item \label{part:tq2} it has an unquilted disk with the lightest shading breaking off (as
    in (A), (B) in Figure \ref{fig:bdry-q2}); or 
  \item \label{part:tq3} it has a collection of twice-quilted disks
    breaking off (as in (C), (D) in Figure \ref{fig:bdry-q2}).
  \end{enumerate}
  \begin{figure}[ht]
    \centering \scalebox{.8}{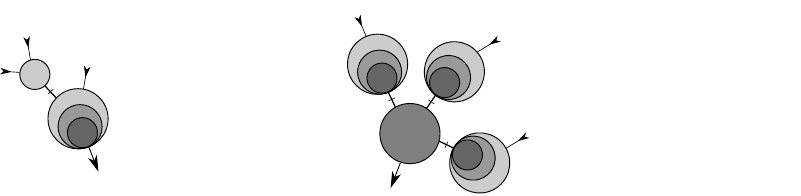}
    \caption{Configurations occurring in codimension one boundaries of
      moduli spaces of delayed twice-quilted disks with a fixed
      (generalized) ratio of radii. In (A) and (B), an unquilted disk
      breaks off outside the outer quilting circle. In (C) and (D), a
      collection of twice quilted disks break off.}
    \label{fig:bdry-q2}
  \end{figure}
  \noindent A count of configurations of type \eqref{part:tq1}, by the cobordism
  in \eqref{eq:cobord1}, is equal to the left hand side of
  \eqref{eq:homot-x}. 
   A configuration $u=(u_0,u_1)$ of type
   \eqref{part:tq2},
   consists of an unquilted $\ul \Pe_0$-holomorphic disk $u_0$ and a 
    twice-quilted disk $u_1$ whose ratio of radius
  $\lam$ lies in $(\lam_i,\lam_{i+1})$, and thus, $u_1$ lies in
  $\M^{\qq, [\lam_i,\lam_{i+1}]}_{\Gamma',\tau}(X,L,\ul x)_0$ for some
  type $\Gamma' \leq \Gamma$.  By the cobordism \eqref{eq:cobord2}, the count of configurations
  of type \eqref{part:tq2} is equal to the sum of terms of the form 
  \[\cT_{02}^{\lam_i,\lam_{i+1},\leq E}(\dots,m^e_{CF(L,\ul
      \Pe_0)}(\dots),\dots),\]
 occurring in 
 $(m^1_{\cQ} \cT_{02}^{\lam_i, \lam_{i+1}})^{\leq E,d(\white)}$, which is the right hand side of \eqref{eq:homot-x}.
 Finally, a configuration
  $u=(u_0,\dots, u_r)$ of type \eqref{part:tq3} consists of an
  unquilted $\ul \Pe_2$-holomorphic disk, and a collection
  $u_1,\dots, u_r$ of twice-quilted disks, with $u_i$ ($i \geq 1$)
  connected to $u_0$ by a broken edge $e_i$.
  Let
  \[\Lam_0:= \psi(u)\]
  be the delayed ratio of radius (defined in \eqref{eq:psi-delayed}). 
  For degree reasons,
  because of the fiber product with the diagonal, exactly one of the 
  twice-quilted disks in the set $\{u_1,\dots,u_r\}$, say the $k$-th, lies in the moduli space of
  expected dimension zero, while the rest have index one. The interval
  $[\lam_i, \lam_{i+1}]$ was chosen so that there is at most one
  twice-quilted disk in the zero-dimensional moduli space
  $\M^{\qq, [\lam_i,\lam_{i+1}], \leq E}(X,L)_0$ with at most
  $d(\white)$ inputs. That is, $u_k$ is the only element in this set, and its $\tau$-delayed ratio
  is $\psi(u_k)=\Lam_0+\tau_{\Gamma,e_k}(C_u)$, where $C_u$ is the domain curve of $u$. 
  For any $j \neq k$,
  \[\psi(u_j)=\Lam_0 +\tau_{\Gamma,e_j}(C_u),\] 
  $u_j$ has index $1$, and lies in the one-dimensional moduli space
  \[\ol \M(u_j):=\M^{\qq, [\lam_i,\lam_{i+1}]}_{\Gamma_j,\tau}(X,L,\ul
    \Pe_{012}^\tau, \ul x_j)_1\]
  for some input labels $\ul x_j$. We then have the following Claim.
  \begin{claim}
    \label{claim:wherebdry}
    For $j<k$, $\ol \M(u_j)$ does not have end-points in $\psi^{-1}(\lam_i, \Lam_0 + \tau_{\Gamma,e_j}(C_u))$
    and for $j<k$, $\ol \M(u_j)$ does not have end-points in $\psi^{-1}(\Lam_0 + \tau_{\Gamma,e_j}(C_u),\lam_{i+1})$.
  \end{claim}
\noindent  Assuming Claim \ref{claim:wherebdry}, we finish the proof of the Lemma. The Claim implies that for $j<k$, 
  there is a cobordism
  \begin{equation}
    \label{eq:cobord-j1}
  \psi^{-1}(\Lam_0 + \tau_{\Gamma,e_j}(C_u)) \cap \M(u_j)_1 \sim  \M^{\qq, \lam_i}_{\Gamma_j,\tau}(X,L,\ul \Pe_{012}^\tau,\ul x_j)_0,  
  \end{equation}
  since the disjoint union of both the spaces in \eqref{eq:cobord-j1} form the boundary of 
  the one-dimensional moduli space $\ol \M(u_j)_1 \cap \{\psi \geq (\Lam_0 + \tau_{\Gamma,e_j}(C_u))\}$. 
  Similarly, for $j>k$, there is a cobordism
  \begin{equation}
    \label{eq:cobord-j2}
  \psi^{-1}(\Lam_0 + \tau_{\Gamma,e_j}(C_u)) \cap \M(u_j)_1 \sim  \M^{\qq, \lam_{i+1}}_{\Gamma_j,\tau}(X,L,\ul \Pe_{012}^\tau)_0.
  \end{equation}
%
  In the case when $j<k$ resp. $j>k$, the cobordism \eqref{eq:cobord-j1}
  resp. \eqref{eq:cobord-j2} combined with the cobordism in
  \eqref{eq:cobord1} implies that 
  the signed count of maps $u_j$, with 
  inputs $\ul x_j^{in}$ and delayed ratio of radius given by $\psi(u_j)=\Lam_0+\tau_{\Gamma,e_j}(C_u)$,  
  is equal to
  \[\phi_{02}^{\lam_i}(\ul x_j^{in}) \quad \text{resp.} \quad  \phi_{02}^{\lam_{i+1}}(\ul x_j^{in}).\]
  We conclude that the count of configurations
  $(u_0,\dots,u_r)$
  occurring in the boundary
  of
  $\ol \M^{\qq, [\lam_i,\lam_{i+1}]}_{\Gamma,\tau}(X,L,\ul
  \Pe_{012}^\tau, \ul x)_1$ that are of type \eqref{part:tq3}, and for
  which the $k$-th twice-quilted disk $u_k$ has index $0$ is equal to the count of terms
  \[m^r_{CF(L,\ul \Pe_2)}(\underbrace{\phi_{02}^{\lam_i}(\dots), \dots, \phi_{02}^{\lam_i}(\dots)}_{k-1}, \cT^{02}(\dots), \underbrace{\phi_{02}^{\lam_{i+1}}(\dots), \dots, \phi_{02}^{\lam_{i+1}}(\dots)}_{r-k}) \mod q^E, \]
  occurring in
  $(m^1_{\cQ} \cT^{\lam_i,\lam_{i+1}})^{\leq E, d(\white)}$, which is the right hand side of \eqref{eq:homot-x}, finishing the proof of the Lemma.  It remains to prove Claim \ref{claim:wherebdry}.

  \begin{subproof}[Proof of Claim \ref{claim:wherebdry}]
  We consider the
  case $j<k$, the proof of the other case, $j>k$, being similar.  A map
  $v$ occurring in the boundary of the moduli space $\ol \M(u_j)_1$ in
  $\psi^{-1}(\lam_i, \Lam_0 + \tau_{\Gamma,e_j}(C_u))$ is of one of the two
  forms:
  \begin{itemize}
  \item Either $v$ has a collection of twice-quilted disks breaking
    off, that is, $v=(v_0,\dots,v_{r'})$, where $v_0$ is an unquilted
    disk with darkest shading, the others are $\tau$-delayed
    twice-quilted disks with one of them, say $v_\ell$, having index
    $0$. Since $u_k$ is the only delayed twice-quilted disk of index
    $0$ in $\psi^{-1}([\lam_i,\lam_{i+1}])$, we have that $v_\ell$ is
    the same as $u_k$. In particular,
    $\psi(v_\ell)=\Lam_0 + \tau_{\Gamma,e_k}(C_u)$. Suppose the root vertex
    of $v_k$ is connected to the root vertex of $v$ by a path
    $\gamma$.  By our assumption that $\psi(v) \in (\lam_i, \Lam_0 + \tau_{\Gamma,e_j})$, we have 
    \begin{equation}
      \label{eq:contra-C}
    \Lam_0 + \tau_{\Gamma,e_j}(C_u)>\psi(v)=\Lam_0 + \tau_{\Gamma,e_k}(C) - \sum_{e \in \gamma}\tau_{\Gamma,e}(C),   
    \end{equation}    
    where 
    the curve $C$ is obtained by replacing the subdomain of $u_j$ in $C_u$ with the domain of $v$. 
    Note that \eqref{eq:tau-cut-edge} implies $\tau_{\Gamma, e_k}(C)=\tau_{\Gamma, e_k}(C_u)$, and 
    \eqref{eq:const-on-i} implies $\tau_{\Gamma, e_j}(C)=\tau_{\Gamma, e_j}(C_u)$. Therefore,  \eqref{eq:contra-C} violates the increasing condition \eqref{eq:tau-inc} at the curve $C$.    
  \item Otherwise, $v$ has an unquilted disk bubbling off, that is $v=(v_0,v_1)$ where $v_0$ is twice-quilted and $v_1$ is unquilted. Since $u_k$ is the only delayed twice-quilted disk
    of index $0$ in $\psi^{-1}([\lam_i,\lam_{i+1}])$, we have
    $\psi(u_k)=\psi(v_1)=\Lam_0 + \tau_{\Gamma,e_k}(C_u)$, which can not lie in the interval
    $(\lam_i, \Lam_0+\tau_{\Gamma,e_j}(C_u))$, since the increasing condition \eqref{eq:tau-inc} implies
    that $\tau_{\Gamma,e_j}(C_u)<\tau_{\Gamma,e_k}(C_u)$.
  \end{itemize}
  This proves Claim \ref{claim:wherebdry}, and hence, also proves the  Lemma.
    \end{subproof}
  \end{proof}

  \begin{corollary}\label{cor:hequiv}
    For any two coherent convergent collections of perturbation data
  $\ul{\Pe}_0, \ul{\Pe}_1$ the Fukaya algebras $CF(L,\ul{\Pe}_0)$
  and $CF(L,\ul{\Pe}_1)$ are homotopy equivalent via 
  unital convergent \ainfty morphisms
  \[\phi: CF(L, \ul \Pe_0) \to CF(L, \ul \Pe_1), \quad \psi: CF(L, \ul \Pe_1) \to CF(L, \ul \Pe_0) \]
  and unital convergent homotopies $h$, $g$ satisfying
  \[\phi \circ \psi - \Id = m^1_\cQ(h), \quad \psi \circ \phi - \Id = m^1_\cQ(g).\]
\end{corollary} 

\begin{proof}
  The \ainfty morphisms $\phi$, $\psi$ are as constructed in the proof
  of Proposition \ref{prop:samedegree}, and the morphisms are unital
  and convergent.  We apply Theorem \ref{thm:twicequilted} taking
  $\Pe_2=\Pe_0$, $\phi_{01}=\phi$, $\phi_{12}=\psi$, $\phi_{02}=\Id$,
  and obtain a unital convergent homotopy $h$.  The homotopy $g$ is obtained by
  switching the role of
  $\Pe_0$ and $\Pe_1$. 
  \end{proof}

\section{Homotopy equivalence: unbroken to
  broken}\label{sec:homo-u2b}
In this section, we show that the Fukaya algebra of the broken
manifold $\XX$ is homotopy equivalent to the unbroken Fukaya algebra
on the manifold $X$.  A rough outline of the proof is as follows. An \ainfty morphism from
an unbroken Fukaya algebra $CF(L, \ul \Pe^\nu)$, where $\ul \Pe^\nu$ is a perturbation system on the neck-stretched manifold $X^\nu$, 
to a broken one
$CF^\br(L , \ul \Pe^\infty)$ is defined as a limit of \ainfty morphisms
$\phi_n:CF(L, \ul \Pe^\nu) \to CF(L, \ul \Pe^{\nu +n})$ as $n \to \infty$. The
\ainfty morphism $\phi_n$ is defined via a count of quilted
holomorphic disks in $(X,L)$. The existence of the limit relies on the
bijection between moduli spaces of broken and unbroken disks obtained
from the compactness and gluing theorems (Theorem
\ref{thm:cpt-breaking} and Theorem \ref{thm:gluing}).

We use a breaking perturbation datum on neck-stretched manifolds, so
that the bijection between moduli spaces of broken and unbroken disks
holds.  We recall from Definition \ref{def:breakpert} that given a
perturbation datum $\ul{\Pe}^\infty$ on the broken manifold $\XX$, by
gluing on the neck we obtain a perturbation datum
$\rho_\nu(\ul{\Pe}^\infty)$ on any $X^\nu$.  By Proposition
\ref{prop:stab-broken} there is a stabilizing pair $(\JJ,\DD)$
consisting of a cylindrical almost complex structure $\JJ$ on $\XX$
and a $\JJ$-holomorphic cylindrical stabilizing divisor $\DD$, for
which the glued family $(J^\nu, D^\nu)$ is a stabilizing pair on the
neck-stretched manifold $X^\nu$ for all $\nu$.  For the perturbation
datum $\ul{\Pe}^\infty$ we use $\DD$ as the stabilizing divisor and
$\JJ$ as the background almost complex structure.

 \begin{proposition}
   \label{prop:breakingpert}
   Let $\ul{\Pe}^\infty$ be a regular perturbation datum on $\XX$. For any
   $E_0>0$, there exists $\nu_0(E_0)$ such that the following holds.
   \begin{enumerate}
   \item
     \label{part:breaking1}
     There is a bijection between the zero-dimensional moduli spaces
     of rigid broken maps and rigid unbroken maps:
     \[\tM^\br(\XX,L, \ul{\Pe}^\infty)_0^{<E_0} \simeq
     \M(X^\nu,L,\rho_\nu(\ul{\Pe}^\infty))_0^{<E_0}, \quad \nu
     \geq \nu_0(E_0).\]
   Furthermore, the moduli spaces of rigid quilted and twice-quilted
   disks relating $X^{\nu}$ and $X^{\nu+1}$ with energy at most $E_0$
   are empty for $\nu \geq \nu_0(E_0)$.
 \item \label{part:breaking2} For any $\nu \in \Z_{>0}$, there exists
   a regular perturbation datum $\ul{\Pe}^\nu$, and a perturbation
   morphism $\ul{\Pe}^{\nu,\nu+1}$ extending $\ul{\Pe}^\nu$ and
   $\ul{\Pe}^{\nu+1}$ such that for all $E_0>0$ and for all
   $\nu \geq \nu_0(E_0)$, the (possibly curved) \ainfty homotopy
   equivalence
induced by $\ul{\Pe}^{\nu,\nu+1}$
     \[\phi_\nu^{\nu+1}: CF(L,\ul{\Pe}^\nu) \to CF(L,    \ul{\Pe}^{\nu+1})\]
   is the identity map 
   modulo $q^{E_0}$. Similarly, there is a (possibly curved)  \ainfty
   homotopy equivalence 
    \[\psi_{\nu+1}^\nu: CF(L,\ul \Pe^{\nu+1}) \to CF(L,\ul
      \Pe^{\nu})\]
that is  the identity map 
modulo $q^{E_0}$.  
%
Furthermore, the homotopy $h_\nu$ between 
$\phi_\nu^{\nu+1} \circ \psi_{\nu+1}^\nu$ and the identity,
and the homotopy $g_\nu$ between 
%
$ \psi_{\nu+1}^\nu \circ \phi_\nu^{\nu+1}$ and the identity
may be taken to be zero modulo $q^{E_0}$.
  \end{enumerate}
 \end{proposition}
 
 \begin{proof}
   The proof of bijection of moduli spaces is a consequence of the
   compactness and gluing theorems -- Theorem \ref{thm:cpt-breaking}
   and Theorem \ref{thm:gluing}.  By the gluing Theorem
   \ref{thm:gluing}, any regular $\ul{\Pe}^\infty$-disk $u: C \to \XX$
   can be glued to yield regular disks $u_\nu: C_\nu \to X^\nu$.
   Conversely, any sequence $(u_\nu)_\nu$ of maps with area $\leq E_0$
   converges to a broken disk $u_\infty$.  By surjectivity of gluing
   in Theorem \ref{thm:gluing}, for large enough $\nu$, $u_\nu$ is
   contained in the image of the gluing map for $u_\infty$.  Since the
   moduli space $\tM(\XX,L)^{\leq E_0}_0$ has a finite number of
   points, the constant $\nu_0(E_0)$ can be chosen as the maximum
   obtained from gluing each of the broken maps.
   
   To prove the statements about quilted disks and part
   \eqref{part:breaking2}, we construct regular perturbations and
   perturbation morphisms on neck-stretched manifolds.  A regular
   perturbation datum is constructed by extending the breaking
   perturbation datum.  For any $E_0>0$ and $\nu > \nu_0(E_0)$, the
   perturbation datum is defined as
   \begin{equation}
     \label{eq:pert-low-area}
     \Pe^\nu_\Gamma := \rho_\nu(\Pe^\infty_\Gamma)=(J^\nu_\Gamma, F^\infty_\Gamma)  
   \end{equation}
   if $E(\Gamma) \leq E_0$.  For the other strata, a regular
   perturbation $\Pe^\nu_\Gamma$ can be chosen using the
   transversality result Theorem \ref{thm:transversality} for all
   integers $\nu$.

   For strata with low enough area, perturbation morphisms are
   constructed using the breaking perturbation data. For
   $\nu \in \Z_+$, $\Gamma$ satisfying $\nu \geq \nu_0(E(\Gamma))$,
   the perturbation morphism is defined using the recipe in Remark
   \ref{rem:jtpert} as
   \[\Pe^{\nu,\nu+1}_\Gamma=(J_\Gamma^{\nu,\nu+1},F^\infty_{\Gamma'})\]
   where $F^\infty_{\Gamma'}$ is part of the perturbation data
   $\ul \Pe^\infty = (\ul J^\infty, \ul F^\infty)$ for broken disks,
   $\Gamma'$ is the type of broken disk obtained by forgetting the
   quilting, and for any $z \in \S_\Gamma$,
   %
   $J_\Gamma^{\nu,\nu+1}(z)$ is the neck-stretched domain-dependent
   almost complex structure from \eqref{eq:pert-low-area} with
   neck-length parameter $\nu + d \circ \delta_\Gamma(z)$.  The
   perturbation morphisms are extended to higher strata so that they
   are coherent and regular.

   Perturbations for twice-quilted disks
   are defined similarly: On low area strata, we use the
   neck-stretched domain-dependent almost complex structure from
   \eqref{eq:pert-low-area} with neck length parameter given by the
   domain-dependent function $\nu + d \circ
   \delta_\Gamma(z)$. Here $\delta_\Gamma$ is a version of
   the ``distance from the quilting circle'' for twice-quilted disks from \cite[Definition
   5.9]{cw:flips}.

   We claim that for an area bound $E_0$, for sufficiently large
   $\nu \geq \nu_0(E_0)$, moduli spaces of holomorphic quilted and
   twice-quilted disks are empty, with the exception of constant
   quilted holomorphic disks with a single input and output with the
   same label (as in Figure \ref{trivq}). More precisely, we show that
   the moduli space of non-constant quilted
   $\ul \Pe^{\nu, \nu+1}$-disks and twice-quilted disks
   contributing to the homotopies $g_\nu$, $h_\nu$ with area
   $\leq E_0$ are empty if $\nu \geq \nu_0(E_0)$.
   %
   %
   %
   Suppose, by way of contradiction, that there is a sequence $u_\nu$
   of quilted $\ul{\Pe}^{\nu,\nu+1}$-disks with bounded area.  Any
   surface component of $u_\nu$ is $J^{\nu'}$-holomorphic for some
   $\nu' \in [\nu,\nu+1]$. Therefore, after passing to a subsequence,
   the surface part of the map $u_\nu$ converge to a limit
   $J^\infty$-holomorphic broken map $u$.  For large enough $\nu$, the
   Maslov index $I(u_\nu)$ is $\nu$-independent, and therefore the
   number of disk inputs is bounded. After passing to a subsequence,
   we can assume that the disk input/output tuple $\ul x$ and the type
   $\Gamma$ is the same for all the quilted disks $u_\nu$. Since the
   \ainfty morphism counts quilted holomorphic disks with index $0$,
   we may assume that the index of the quilted disks is zero:
   $i^\quilt(\Gamma,\ul x)=0$.  The sequence $u_\nu$ converges to a
   $\ul{\Pe}^\infty$-unquilted broken disk $u$ of type $\Gamma_\tau$
   and input/output labels given by $\ul x$.  Here $\Gamma_\tau$ is
   the type of a broken treed holomorphic map for which collapsing the
   tropical edges yields a type $\Gamma'$ of a treed holomorphic map,
   which is the same as the type obtained by forgetting the quilting
   in $\Gamma$.  The unquilted map $u$ is stable in all cases except
   the one contributing to the identity term, that is, when there is a
   single input and output, and the map is constant on the surface and
   tree part of the domain. In all other cases, the index of the
   unquilted disk is one less (see \eqref{eq:expdim-q}), and we have
   $i(\Gamma',\ul x)=-1$.  Since collapsing tropical edges does not
   affect the dimension, we have $i(\Gamma_\tau,\ul x)=-1$.  The
   existence of such a disk $u$ is a contradiction, since
   $\ul{\Pe}^\infty$ is a regular perturbation.  The argument for
   twice-quilted disks is similar.
   
   Finally, the \ainfty morphisms $\phi_\nu^{\nu+1}, \psi_{\nu+1}^\nu$
   are defined by counting $\ul{\Pe}^{\nu,\nu+1}$-holomorphic quilted
   holomorphic disks.  The previous paragraph shows that if
   $\nu \geq \nu_0(E_0)$,
   %
   \[(\phi_\nu^{\nu+1})^1=\Id \mod q^{E_0}, \quad  (\phi_\nu^{\nu+1})^d=0 \mod q^{E_0}, d \neq 1,\]
   and similarly for $\psi_{\nu+1}^\nu$.
   The homotopies $h_\nu$, $g_\nu$ are defined by counts of
   twice-quilted disks, and therefore, they vanish modulo $q^{E_0}$ if
   $\nu \geq \nu_0(E_0)$.
 \end{proof}

 \begin{corollary} \label{mdconv} Let $CF(\ul \Pe^\nu,L)$ and
   $CF_\br(\ul \Pe^\infty,L)$ be the unbroken and broken Fukaya
   algebras defined by the perturbation data $\ul \Pe^\nu$,
   $\ul \Pe^\infty$ from Proposition \ref{prop:breakingpert}.  The
   structure maps $m^d_\nu$ defining $CF(\ul \Pe^\nu,L)$ converge as
   $\nu \to \infty$ to the structure maps $m^d_\br$ defining
   $CF_\br(\ul \Pe^\infty,L)$, in the sense that for any $E_0$ there
   exists $\nu(E_0)$ so that $m^d_\nu$ agrees with $m^d_\br$ up to
   terms divisible by $q^{E_0}$ for $\nu > \nu(E_0)$.
 \end{corollary}

\begin{proof}[Completion of the second proof of Theorem \ref{thm:yields-br}]
  It suffices to prove that for any $E_0>0$ that
  homotopy-associativity holds up to order $q^{E_0}$.  This follows
  from Corollary \ref{mdconv} and Theorem \ref{thm:yields-unbr}, since
  the broken and unbroken composition maps are equal to up arbitrary
  order, for sufficiently large neck length.
\end{proof}

 \begin{proposition}\label{prop:unbreak-break}
   For any $\nu_0$, let $\ul \Pe_{\nu_0}$ be the perturbation defined in
   Proposition \ref{prop:breakingpert}.  Then
   there are  convergent strictly  unital \ainfty morphisms 
   \[\phi:CF(L,\ul \Pe^{\nu_0}) \to CF_\br(L,\ul
     \Pe^\infty), \quad \psi:CF_\br(\ul \Pe^\infty) \to
     CF(L,\ul \Pe^{\nu_0})\]
   such that $\psi \circ \phi$ and $\phi \circ \psi$ are \ainfty
   homotopy equivalent by a convergent \ainfty homotopy.
 \end{proposition}

 \begin{proof}
   The \ainfty morphisms $\phi$ and $\psi$ required by the Proposition
   are defined to be the limits of the compositions of the \ainfty
   morphisms from Proposition \ref{prop:breakingpert}
   \eqref{part:breaking2}, namely,
  \[ \phi_{\nu}^{\nu+1}: CF(L,\ul{\Pe}^\nu) \to
  CF(L,\ul{\Pe}^{\nu+1}), \quad \psi_{\nu+1}^{\nu}:
  CF(L,\ul{\Pe}^{\nu+1}) \to CF(L,\ul{\Pe}^\nu). \]
For any $E_0$, if $\nu \geq \nu'(E_0)$, the \ainfty morphisms
$\psi_{\nu}^{\nu+1}$ and $\psi_{\nu+1}^{\nu}$  are equal to $\Id$ modulo $q^{E_0}$.  Therefore, there exist limits of the successive compositions:
Let
\[ \phi_{n} := \phi_{\nu_0+n-1}^{\nu_0 + n} \circ \ldots \circ
  \phi_{\nu_0 + 1}^{\nu_0+2} \circ \phi_{\nu_0}^{\nu_0+1} :
  CF(L,\ul{\Pe}^{\nu_0}) \to CF(L,\ul{\Pe}^{\nu_0+n }) \]
  for any choice of $\nu_0$. The limit
  \[ \phi = \lim_{n \to \infty} \phi_{n}:
  CF(L,\ul{\Pe}^{\nu_0}) \to \lim_{n \to \infty} CF(L,\ul{\Pe}^{\nu_0+n})= 
  CF_\br(L,\ul{\Pe}^{\infty}) \]
exists, and similarly, for
\[ \psi_{n} := \psi_{\nu_0+1}^{\nu_0} \circ \ldots \circ
   \psi_{\nu_0+n}^{\nu_0+n-1} :
  CF(L,\ul{\Pe}^{\nu_0+n}) \to CF(L,\ul{\Pe}^{\nu_0}), \]
the limit 
  \[  \psi = \lim_{n \to \infty} \psi_{n}: CF_\br(L,\ul{\Pe}^{\infty}) \to CF(L,\ul{\Pe}^{\nu_0})  \]
  exists. 
Since the composition of strictly unital morphisms is strictly unital,
the composition $\psi$ is strictly unital mod terms divisible by $q^E$
for any $E$, hence strictly unital. The limit map $\psi$ resp. $\phi$
is an \ainfty morphism whose domain resp. target is $CF_\br(L, \ul \Pe^\infty)$ 
because the composition maps in $CF(L,\ul{\Pe}^\nu)$ converge to
$CF_\br(L,\ul{\Pe}^\infty)$.  The composition maps of
$CF(L,\ul{\Pe}^\nu)$ converge to those of
$CF_\br(L,\ul \Pe^\infty)$ by the bijection of the moduli spaces of
disks in Proposition \ref{prop:breakingpert}
\eqref{part:breaking1}. Indeed, the bijection preserves the orientation
$\eps(u)$, area $A(u)$, and holonomy $\Hol_L([\partial u])$ since gluing
preserves these quantities.

We claim that $\phi$ and $\psi$ are homotopy equivalences.  First, we inductively construct
homotopies  $h_n$, $g_n$ satisfying
  \[\phi_n \circ \psi_n - \Id = m_{\cQ}^1(h_n), 
  \quad \psi_n \circ \phi_n - \Id = m_{\cQ}^1(g_n). \]
Starting from $h_n$, the homotopy $h_{n+1}$ is defined as follows:
By Proposition \ref{prop:breakingpert}
\eqref{part:breaking2},
there is a homotopy $H_{n+1}$ satisfying

\[(\phi_{\nu_0 +n}^{\nu_0+n+1} \circ \psi_{\nu_0+n+1}^{\nu_0+n}) - \Id=m^1_\cQ(H_{n+1})\]
that vanishes modulo $q^{E(n)}$ for some $E(n)>0$. Define $H_{n+1}':=L_{\phi_{\nu_0 +n}^{\nu_0+n+1}}(R_{\psi_{\nu_0+n+1}^{\nu_0+n}}(H_{n+1}))$ by composing $H_{n+1}$ with \ainfty morphisms as in \eqref{eq:lcomp}, \eqref{eq:rcomp}. Then, $H_{n+1}'$ is a homotopy from $\phi_{n+1} \circ \psi_{n+1}$ to $\phi_n \circ \psi_n$ that also vanishes  modulo $q^{E(n)}$. The required homotopy
is then defined as 
\[h_{n+1}:=H_{n+1}' \circ h_n,\]
where the composition of homotopies is as in \eqref{eq:homotopy-compose}, and 
 it has the property that
\[h_{n+1}=h_n \mod q^{E(n)}.\]
Since $E(n) \to \infty$ as $n \to \infty$  by Proposition \ref{prop:breakingpert}
\eqref{part:breaking2}, the limit $h:=\lim_{n \to \infty} h_n$ is well-defined. The homotopies $g_n$ are defined
similarly, and the limit \(g:=\lim_{n \to \infty}g_n\) is well-defined.
%
Since the \ainfty algebra $CF_\br(L,\ul \Pe^\infty)$ is a limit
  $\lim_{\nu \to \infty}CF(L,\ul{\Pe}^\nu)$ (in the sense that composition maps converge),
  and the \ainfty morphism $\psi_n$ resp. $\phi_n$ limits to $\psi$ resp. $\phi$, we conclude
  \[\phi \circ \psi - \Id = m_{\cQ}^1(h), 
  \quad \psi \circ \phi - \Id = m_{\cQ}^1(g). \]
The \ainfty homotopies $h,g$ are convergent
 since each of them is the limit of convergent homotopies.
\end{proof}

\section{Independence of perturbations in broken manifolds}
\label{sec:brokenind}

In this Section, we show that broken Fukaya algebras are independent
of perturbations up to \ainfty homotopy equivalence,
where the perturbations are not required to be gluable.
For the proof of Theorem \ref{thm:bfuk} (equivalently Proposition \ref{prop:unbreak-break}) we used $X$-cylindrical almost complex structures(Definition \ref{def:brokenJ}\eqref{part:brokenJ-cyl}) on broken manifolds, which were gluable. As a consequence of the result of this Section,  broken Fukaya algebras can be defined using tamed almost complex structures on broken symplectic manifolds, called $\XX$-cylindrical almost complex
structures (Definition \ref{def:omxxcyl}). 
Therefore, in applications, we need not distinguish between the spaces $X_P$ 
and $X_P^\om$, and the superscript $\om$ may
be dropped.

In order to interpolate between perturbations taking values in
$X$-cylindrical and $\XX$-cylindrical almost complex structures, we
consider a generalized kind of almost complex structures called
\em{$\cG$-cylindrical}. The underlying collection of inner products,
denoted by
\[g:=\{g_Q^P: \t_Q \times \t_Q \to \R:Q \subset P, \enspace P \in
  \PP^0\}\]
satisfies the \hyperref[item:restrict]{(Restriction)} condition, and
the collection of connection one-forms $(\alpha_Q^P)_{Q \subset P}$
satisfies the consistency condition \eqref{eq:consis} with respect to
$g$.

As in the unbroken case, independence of broken Fukaya algebras of
perturbation data is proved by constructing an \ainfty morphism by
counts of quilted broken holomorphic disks which we now define.  Given
regular perturbation data $\ul \Pe_0$, $\ul \Pe_1$ on $\XX$ that
take values in $\cG$-cylindrical almost complex structures, a
perturbation morphism $\ul \Pe^{01}$ connecting $\ul \Pe_0$ and
$\ul \Pe_1$ is as in the unbroken case in Definition \ref{def:pert-morph}
with the following differences:   The domain-dependent almost complex structures are on $\XX$, and $\ul \Pe^{01}$ is adapted to a path of stabilizing pairs
$(\JJ_t, \DD_t)_{t \in [0,1]}$ on $\XX$.

\begin{definition} {\rm(Quilted broken holomorphic disks)}
  Given a perturbation morphism
  $\ul \Pe^{01}=(\Pe^{01}_\Gamma)_\Gamma$ on $\XX$, a \em{quilted
    broken holomorphic disk} consists of a quilted treed disk $C$ of
  type $\Gamma$ (Definition \ref{def:qdisk}), a tropical structure
  $\cT$ on $\Gamma$, and a $\Pe^{01}_\Gamma$-holomorphic map
  $u:C \to \XX$ with tropical structure $\cT$, boundary mapping to the
  Lagrangian submanifold $L \subset X_{P_0}$, and the map being
  adapted to the family of broken divisors $\{\DD_t\}_t$ as in
  Definition \ref{def:hol-quilt}.
\end{definition}

The following Proposition is analogous to Proposition
\ref{prop:samedegree}, and the proof of that Theorem carries over.
\begin{proposition}
  \label{prop:pd3} Given regular perturbation data
  $\ul \Pe^0$, $\ul \Pe^1$, there is a comeager set
  of regular perturbation morphisms extending $\ul \Pe^0$,
  $\ul \Pe^1$. Any such perturbation morphism induces a convergent unital \ainfty
  morphism
  %
  \[ \phi=(\phi^r)_{r \ge 0} : CF^\br(L,\ul \Pe^0) \to
    CF^\br(L,\ul \Pe^1)\]
%
defined by counts of quilted broken holomorphic disks. The \ainfty morphism $\phi$
is a \ainfty homotopy equivalence
  and has zero-th composition map $\phi_0(1)$ with positive
  $q$-valuation, that is
  %
  $\phi^0(1) \in \Lam_{>0}\bran{\hat \cI(L)}$.
  %
\end{proposition}

\chapter{Potentials for semi-Fano toric surfaces}

In this concluding Chapter,
we return to fill in the technical gaps in the 
computations of disk potentials in semi-Fano toric manifolds in
Chapter  \ref{chap:apps}. The perturbations used to compute the disk
potentials in the cubic surface and the second Hirzebruch surface
depend not  just on the domain treed curve, 
but also on the tropical graph.  We rigorously define this new kind
of perturbation, called \em{split perturbation}, and show that the
broken Fukaya algebras it produces are homotopy equivalent to the ones
we have defined so far.  We also prove a technical result used in the
computations of disk potentials in semi-Fano toric manifolds in
Chapter  \ref{chap:apps}, namely that, the multiple cut of a semi-Fano toric
manifold, under some hypothesis, has the same disk potential as the
original smooth manifold.

\section{Split perturbations} 
\label{sec:decouple}

A split perturbation depends not just on the domain
curve but also on an accompanying tropical structure on it, recorded
by the \em{base tropical graph}.
The domain type for maps for split perturbations, denoted by $\tGam$, 
may have additional vertices than those in the base tropical graph
$\Gamma$. However, for generic perturbations, for maps of index
$\leq 1$, the domain type $\tGam$ is equal to the base tropical graph $\Gamma$.
\begin{definition} \label{def:basedcurve} {\rm(Based curves)} A
  \em{curve with base type} consists of 
  \begin{enumerate}
    \index{Disk! Treed disk with base type}
  \item a stable treed disk $C$ of type $\tGam$,
  \item a \em{base tropical graph} $\bGam$,
    \index{Base tropical graph}
    \index{Tropical graph!Base tropical graph}
  \item and an edge collapse morphism $\kappa:\tGam \to \bGam$ that
    necessarily collapses all disk edges $e \in \Edge_\white(\tGam)$
    and treed segments $T_e$, $e \in \Edge_\white(\tGam)$, and
    possibly some interior edges. (The map $\kappa$ is an edge
    collapse of graphs, and not of tropical graphs, because $\tGam$
    does not have a tropical structure.)
  \end{enumerate}
  The \em{type} of such a curve consists of the datum
  \index{Combinatorial type! of a curve with base} \index{Based!curve
    type}
  \index{Moduli space!of treed disks with base $\M_{\tGam,\Gamma}$}
  $\tGam \xrightarrow{\kappa} \bGam$ together with a root vertex 
   $v_\white\in \Ver(\Gamma)$. 
   The \em{ root vertex}
    $v_\white\in \Ver(\Gamma)$ of the base tropical graph is the vertex 
    containing the disk
    components of $\tGam$, that is, $\kappa(v)=v_{\white}$ for all
    $v \in \Ver_\white(\tGam)$.
    
  The moduli space of stable treed curves of type $\tGam$ with base type
  $\bGam$ is denoted by $\M_{\tGam,\Gamma}$, or simply $\M_{\tGam}$
  when the context allows it.  The topology on the space of based
  curves  is the standard topology on the space of stable curves with the
  additional axiom that the base tropical graph is preserved in the
  limit.  This ends the Definition.
\end{definition}

%
Morphsisms on treed disk types (Definition \ref{def:pertops}) extend naturally to morphisms of treed disk types with base.
\begin{definition} {\rm (Based graph morphisms)}
  \label{def:bgm}
  The following morphisms are defined on types of curves with a base :
  \index{Based!graph morphisms}
  \index{Collapsing an edge!for a based curve type}
    \index{Making an edge length finite/non-zero!for a based curve type}
  \begin{enumerate}
  \item
    A based curve type $\tGam_1 \xrightarrow{\kappa_1} \Gamma$ is obtained by \em{(Collapsing edges)}
    resp. \em{(Making an edge length/weight
      finite/non-zero)} in the based curve type $\tGam_0 \xrightarrow{\kappa_0} \Gamma$ if there is a (Collapsing edges) resp. (Making an edge length/weight
    finite/non-zero) morphism $\tGam_0 \xrightarrow{\tilde\kappa} \tGam_1$ of treed curve types (see Definition \ref{def:coherent}) and $\tilde \kappa \circ  \kappa_0=\kappa_1$.
Note that the (Collapsing edges) morphism does not collapse an edge $e$ in $\tGam_0$ that is present in the base graph $\Gamma$.
\item \index{Cutting an edge!for a based curve type}
  A type with base
  $\tGam \xrightarrow{\kappa} \bGam$ is obtained by \em{(Cutting an
  edge)} in $\tGam' \xrightarrow{\kappa'} \bGam'$ if
  \begin{itemize}
  \item $\tGam$ is obtained by cutting an
    $e=(v_+,v_-) \in \Edge_{\white,-}(\tGam')$ containing a breaking,
  \item $\bGam'$ is obtained by identifying the vertices
    $\kappa(v_+)$, $\kappa(v_-)$ in $\bGam$.
  \end{itemize}
  \end{enumerate}
\end{definition}

\index{Background almost complex structure!for a split perturbation}
In a split perturbation datum (Definition \ref{def:basedpert}), the
stabilizing divisor and background almost complex structure can be
chosen to be different for curve components corresponding to different
vertices of the base tropical graph.  However,
the set of stabilizing pairs
ranging over all based curve types are required to satisfy the following coherence
condition, whose necessity arises from the coherence condition on split perturbation data. 
\begin{definition}{\rm(Coherence for stabilizing pairs)} 
  \label{def:trg-iso}
  \begin{enumerate}
  \item {\rm(Inclusion of base tropical graphs)} An \em{inclusion} 
    $i:\Gamma_0 \to \Gamma_1$ between base tropical graphs
    is a map of graphs that
    preserves the root vertex, edge directions, and polytope assignments on vertices.
  \item {\rm(Coherence condition)}
    A collection of stabilizing pair assignments 
    \[(\ul \JJ, \ul \DD)=(\JJ_v, \DD_v)_{v \in \Ver(\Gamma),\Gamma}\]
    for all vertices $v$ in all base tropical graphs $\Gamma$ is \em{coherent}, if
    for any inclusion $i:\Gamma_0 \to \Gamma_1$ of base tropical graphs,
    \[(\JJ_{i(v)}, \DD_{i(v)})=(\JJ_v, \DD_v) \quad \forall v \in \Ver(\Gamma_0).\]
  \end{enumerate}
\end{definition}
The coherence condition in Definition \ref{def:trg-iso} implies that 
the stabilizing pair for a root vertex is the same
for all base tropical graphs $\Gamma$.
Furthermore, 
$(\ul \JJ, \ul \DD)$ satisfies the above coherence condition for stabilizing
pairs if the condition is satisfied for inclusions of base tropical graphs
whose root vertex has a single edge incident on it.

\begin{definition} \label{def:basedpert}
  {\rm(Split perturbation datum)}
  \index{Perturbation! Split perturbation datum}
  A domain-dependent perturbation $\Pe_{\tGam,\Gamma}$ for a 
  based curve type $(\tGam,\Gamma)$ consists of
   a stabilizing pair $(\JJ_v, \DD_v)$ corresponding to each vertex $v$ of the base tropical graph $\Gamma$, and  perturbations 
  \[J_{\tGam,\Gamma}:\S_\tGam \to \J^\cyl_{\om_\XX, \tau}(\XX), \quad F_{\tGam,\Gamma} : \T_\tGam \to C^\infty(L,\R) \]
  such that on $\S_{\tGam,\kappa^{-1}(v)} \subset \S_\tGam$, namely the components corresponding to
  $\kappa^{-1}(v)$, $\J_{\tGam,\Gamma}$ is adapted to $(\JJ_v, \DD_v)$. Here, $\J^\cyl_{\om_\XX, \tau}(\XX)$ is the space of $\om_\XX$-tamed cylindrical almost complex structures on $\XX$ (Definition \ref{def:omxxcyl} \eqref{part:omxxcyl2}).
  %
  %
  A \em{coherent split perturbation datum}
  adapted to a coherent collection $(\ul \JJ, \ul \DD)$ of stabilizing pairs 
  is a collection
  \[\ul \Pe=(\Pe_{(\tGam,\Gamma)})_{(\tGam,\Gamma)}, \]
  consisting of a perturbation datum
  \[\Pe_{(\tGam,\Gamma)}=(J_{(\tGam,\Gamma)}, F_{(\tGam,\Gamma)}), \quad J_{\tGam,\Gamma}:\S_\tGam \to \J^\cyl_{\om_\XX, \tau}(\XX), \quad F_{\tGam,\Gamma} : \T_\tGam \to C^\infty(L,\R) \]
  for each  based curve type $(\tGam,\Gamma)$, that is coherent under the based graph morphisms of (Collapsing edges), (Making an edge length or weight finite/non-zero) and (Cutting edges) from Definition \ref{def:bgm}, and the (Locality axiom) from Definition \ref{def:coherent}. 
\end{definition}

The coherence condition for split perturbation data necessitates the coherence condition for stabilizing pairs as follows: Let $\tGam \xrightarrow{\kappa} \Gamma$ be a based curve type, and suppose cutting a broken boundary edge $e$ in $\tGam$ produces types whose base tropical graphs are $\Gamma_0$, $\Gamma_1$. Coherence of perturbations under cutting the edge $e$ requires that stabilizing pairs satisfy the coherence condition corresponding to the inclusions $\Gamma_0 \to \Gamma$ and $\Gamma_1 \to \Gamma$.

\begin{remark}
  An important difference between ordinary perturbations (from
  Definition \ref{def:coherent}) on $\XX$ and the split perturbations
  in the above Definition \ref{def:basedpert} is that the split
  perturbation datum $\Pe_{\tGam,\Gamma}$ is not required to be
  coherent under collapsing edges that are present in the base
  tropical graph $\Gamma$. Indeed, for a based graph
  $\tGam \xrightarrow{\kappa} \Gamma$, the
  \hyperref[item:collapsingedgesmorphism]{(Collapsing edges)} morphism
  is only applicable on edges $e \in \Edge(\tGam) \bs \Edge(\Gamma)$.
\end{remark}
%



\begin{definition}\label{def:basemap}
  {\rm(Broken maps with base)}
  \index{Based! Broken map with base}
  \index{Map! Broken map with base}
  Let $\ul \Pe$ be a coherent split perturbation datum.
  \begin{enumerate}
  \item \label{part:basemap1}  A \em{$\ul \Pe$-holomorphic broken map with base} consists of
    \begin{itemize}
    \item a based type $\kappa:\tGam \to \Gamma$,
    \item and a broken map $u$ whose domain is of type $\tGam$ and $u$
    is $\Pe_{\tGam,\Gamma}$-holomorphic in the ordinary sense (as in
    Definition \ref{def:pdisks}).
    Furthermore, if
    $\tGam_\tr$ is the tropical graph of $u$ then $\kappa$ factors
    through $\tGam_\tr$ as
    \[\kappa=\kappa_1 \circ \tr, \quad \tGam \xrightarrow{\tr}
      \tGam_\tr \xrightarrow{\kappa_1} \Gamma,\]
    where $\kappa_1$ is a tropical edge collapse.
  \end{itemize}
\item {\rm(Type)} \label{part:basemap2} The \em{type} of an 
  broken map with base consists of the tropical graph $\tGam$, the
  tropical edge collapse map $\tGam \to \bGam$, and the homology and
  tangency data for the map $u$ (as in the type of a broken map, see
  Definition \ref{def:type-broken}). Whenever it is possible, the base
  tropical type $\bGam$ is suppressed in the
  notation. \index{Combinatorial type! of broken maps with
    base}
    \index{Tropical symmetry! for a broken map with base}
    \index{Based! broken map type}
  \item {\rm(Tropical symmetry)} For a type $\tGam$ of a broken map
    with base, the tropical symmetry group $T_\trop(\tGam)$ is the
    symmetry group obtained by viewing $\tGam$ as a type of broken map
    (by Definition \ref{def:tsym}) and forgetting the base type
    $\Gamma$.
  \item {\rm(Rigidity)} \label{part:basemap3} \index{Rigid! broken map
      with base} The type $\tGam \to \bGam$ of a broken map with base
    is \em{rigid} if $\tGam$ is rigid as a type of broken map.  Thus
    for a rigid type, the base tropical graph $\Gamma$ is rigid, the
    tropical graph $\tGam_\tr$ of the map is equal to $\Gamma$, and
    the morphism $\tGam \to \bGam$ only collapses treed components and
    boundary nodes.
  \end{enumerate}
  
\end{definition}

Split perturbations can be used to define broken Fukaya algebras.
Given a coherent split perturbation datum $\ul \Pe$ on $\XX$,  define the  Fukaya algebra 
as the graded vector space
\[CF_\br(L,\ul \Pe):=CF^{\on{geom}}(L) \oplus \Lam_{\geq 0} x^{\greyt}[1]
  \oplus \Lam_{\geq 0} x^{\whitet}\]
  \index{Fukaya algebra!Broken, with split perturbations $CF_\br(L,\ul \Pe)$}
with composition maps
\begin{equation}\label{eq:mnbase}
  m^{d}_\br(x_1,\dots,x_{d})= \sum_{x_0,u \in {\tM_{\tGam,\Gamma}(\ul \Pe,\ul \eta, \ul{x})_0}} w_s(u) x_0,
\end{equation}
where $u$ ranges over all rigid types $(\tGam,\Gamma)$ of broken maps
with base that have $d$ inputs (see Definition \ref{def:basemap}
\eqref{part:basemap3} for rigidity), and
\begin{equation}
  \label{eq:wtwsubase}
  w_s(u):=(-1)^{\heartsuit}(d_\black(\Gam)!)^{-1}  \Hol_L([\partial u]) \eps(u)
  q^{A(u)},
\end{equation}
where the symbols in \eqref{eq:wtwsubase} are as in \eqref{eq:wtwu},
and in particular the orientation is computed in the same way as for
broken map without base, see Remark \ref{rem:orientmap}.  As in the
case of broken maps (without base), for a one-dimensional component of
the moduli space, the configurations with a boundary edge of length
zero constitute a fake boundary, whereas those with a broken boundary
edge constitute the true boundary of the moduli space.  Setting the
counts of the maps occurring in the true boundary to zero yields the
\ainfty associativity relations, and we conclude that $CF(L,\ul \Pe)$
is an \ainfty algebra.
 
Broken Fukaya algebras defined by counts of broken maps with base are
independent of the choice of perturbation up to homotopy equivalence.
The proof is similar to the proof of independence of perturbations for
unbroken Fukaya algebras shown in Proposition \ref{prop:samedegree}.
Assuming that the stabilizing divisors have the same degree, the proof
is by constructing an \ainfty morphism by counts of quilted broken
holomorphic disks with base.  These are holomorphic disks with base,
with the additional datum of a quilting circle on disks.  Split
perturbations differ in the sense that they are not coherent under
collapsing the edges in the base tropical graph. The extension is
straightforward because quilting phenomena take place on disk
components, and all disk components are collapsed into a single vertex in the base tropical graph. 
We obtain the following result, which is an extension of Proposition
\ref{prop:pd3}.

\begin{proposition}\label{prop:samedegree-base}
  {\rm(Independence of perturbations, split version)} Suppose
  $\ul \Pe^0$, $\ul \Pe^1$ are regular split perturbation data that
  are defined using coherent collections of stabilizing pairs $(\ul \JJ^0,\ul \DD^0)$ and $(\ul \JJ^1, \ul \DD^1)$,
  which are connected by a path of a coherent collection of stabilizing pairs
  $\{(\ul \JJ^t,\ul \DD^t)\}_{t \in [0,1]}$. 
  There exists a coherent perturbation
  datum $\ul \Pe^{01}$ which induces a convergent unital \ainfty
  morphism (as in \eqref{eq:unital-morph})
  \[\phi_{01} : CF_\br(L,\ul \Pe^0) \to CF_\br(L,\ul \Pe^1) \]
  which is a homotopy equivalence. 
\end{proposition}

In fact, Fukaya algebras defined using ordinary perturbation data are
\ainfty homotopy equivalent to those defined using split perturbation
data:

\begin{proposition}
  Let $\ul \Pe$ be a regular coherent perturbation datum for the
  broken manifold $(\XX,L)$ and let
  $\ul \Pe'=(\ul \Pe'_\Gamma)_\Gamma$ be a collection of regular
  coherent split perturbation data for all rigid tropical graphs
  $\Gamma$.
Then the broken Fukaya algebras $CF_\br(L,\ul \Pe)$, $CF_\br(L,\ul \Pe')$ are homotopy equivalent.
\end{proposition}

\begin{proof}
  The result follows from the homotopy equivalence of any two broken
  Fukaya algebras defined by split perturbations, and the fact that a
  non-split perturbation may be regarded as a split perturbation in
  the following way: Given a perturbation $\ul \Pe$ without base, we
  define a split perturbation $\ul \Pe'$ by forgetting the base
  tropical graph. The resulting curve counts defining the composition
  maps are the same for $\ul \Pe$ and $\ul \Pe'$. Indeed, a
  $\ul \Pe$-holomorphic map of type $\Gamma$ corresponds to a unique
  $\ul \Pe'$-holomorphic map of type $\Gamma \xrightarrow{\tr} \Gamma_\tr$
  where $\tr$ is the tropicalization map; the uniqueness follows from
  the fact that since $\Gamma$ is rigid, there is no other tropical
  graph that can be obtained by edge collapses.
\end{proof}

\section{Potentials for semi-Fano toric surfaces}

Recall that in Section \ref{sec:cubic-intro} we counted disks in a
cubic surface by deforming the symplectic form on the cubic surface to
that of a \em{semi-Fano} toric surface.  \index{Semi-Fano toric surface}
A complex manifold $X$ is semi-Fano if the first Chern
number $c_1(TX|S)$ is non-negative on any holomorphic curve
$S \subset X$.  In this section, we show that the disk potential of a
toric Lagrangian in a semi-Fano toric surface is well-defined if we
use a perturbation datum for which the almost complex structure is
close enough to a divisor-preserving almost complex structure -- these
are almost complex structures for which the torus-invariant divisors
are holomorphic. We also show that under some homological hypotheses,
the potential is preserved by a multiple cut on a semi-Fano toric
surface. These results were used in 
 Sections \ref{sec:a1} and 
\ref{sec:cubic-intro}
to obtain the disk potential by counting broken
disks.

\begin{definition}
  Let $X$ be a toric symplectic manifold.  A tamed almost complex
  structure $J$ on $X$ is called \em{divisor-preserving} if all toric
  divisors $Y$ of $X$ are $J$-holomorphic, that is, $J(TY)=TY$.
\end{definition}

%
We may associate a disk potential $W \in \Lam_{\geq 0}$ to a non-regular almost complex structure $J_0$,
if $W$ is the disk potential for any regular perturbation taking values in a small enough neighborhood of $J_0$.
\begin{definition}\label{def:nonregpot}
  {\rm(Potential for a non-regular almost complex structure)} Suppose
  $J_0$ is a compatible almost complex structure on a symplectic
  manifold $(M,\om)$ and $L \subset M$
  is
  a Lagrangian with a brane
  structure.  Let $\ul F=(F_\Gamma)_\Gamma$ be a coherent regular
  domain-dependent Morse function on $L$. The manifold $(X,J_0,L)$
  with Morse datum $\ul F$ has potential
  \begin{equation}
    \label{eq:J0pot}
    W_0(J_0) \in \Lam_{>0}  
  \end{equation}
  if the following holds: Given an area level $E_0$, there exists
  $\eps>0$ such that for any coherent regular perturbation
  $\ul \Pe=(\ul J, \ul F)$ such that $\ul J$ maps to an
  $\eps$-neighborhood of $J_0$,
  \[m^0_{CF(L,\ul \Pe)}(1)= W_0 1^\blackt \mod q^{E_0}. \]
\end{definition}

\begin{proposition}{\rm(Potential on semi-Fano toric manifolds)}
  \label{prop:semiFanopot}
  Let $X$ be a semi-Fano symplectic toric manifold and $L \subset X$
  is a toric Lagrangian.  Let $J_0$ be a divisor-preserving almost
  complex structure, and let $\ul F$ be a generic coherent
  domain-dependent Morse function on $L$.  The potential for the
  manifold $(X,J_0,L)$ is well-defined in the sense of Definition
  \ref{def:nonregpot}.
\end{proposition}
\begin{proof}
As a first step, we define the element 
$m^0_{J_0}(1)  \in \Lam_{\geq 0} \bran{\Crit(F)}$ for the
  divisor-preserving almost complex structure $J_0$.
  \begin{claim}\label{claim:m0}
    There is an element $m^0_{J_0}(1) \in \Lam_{\geq 0} \bran{\Crit(F)}$ such
    that for any area level $E_0$ there exists $\eps>0$ such that for
    any coherent regular perturbation $\ul \Pe$ that is $\eps$-close
    to $J_0$,
    \[m^0_{CF(L,\ul \Pe)}(1)= m^0_{J_0}(1)  \mod q^{E_0}.\]
  \end{claim}
  \begin{subproof}
  To prove the Claim, let us assume the opposite, that is,  there is
  a sequence of coherent regular perturbations $\ul \Pe^\nu$ that
  converge uniformly to the constant $J_0$, and such that
  \[m^0_{CF(L,\ul \Pe^\nu)}(1) \neq m^0_{CF(L,\ul \Pe^{\nu+1})}(1) \mod
    q^{E_0}\]
  for all $\nu$. Choose a regular perturbation morphism
  \[\ul \Pe^{\nu,\nu+1}=(\ul J^{\nu,\nu+1}, \ul F)\]
  with end points $\ul \Pe^\nu$, $\ul \Pe^{\nu+1}$, such that the
  sequence $\ul J^{\nu,\nu+1}$ converges uniformly to the constant
  $J_0$ as $\nu \to \infty$. Since the potentials are unequal for
  $\ul \Pe^\nu$ and $\ul \Pe^{\nu+1}$, there is a
  $\ul \Pe^{\nu,\nu+1}$-quilted disk $u_\nu$ with no inputs, having
  index zero, and area at most $E_0$, which contains a non-constant
  quilted surface component $S_v$.  Indeed, the boundary of the
  one-dimensional part of the moduli space of quilted disks with no
  inputs consists of quilted disks with a broken edge $e$ (that is,
  $\ell(e)=\infty$), and the only configurations where the map is
  constant on the quilted disk are those counted in
  $m^0_{CF(L,P^\nu)}(1)$ and $m^0_{CF(L,P^{\nu+1})}(1)$.

  We now rule out the appearance of quilted disks by a   dimension argument.   The quilted disk $u_\nu$ from the previous paragraph is of one of the forms shown in Figure \ref{fig:qbroken0}.  A subsequence of the sequence $u_\nu$ converges
  to a $J_0$-holomorphic treed disk $u_\infty$ of index $-1$. The
  limit $u_\infty$ also has a broken edge, and let $u_\infty^i$,
  $i=0,1,\cdots$, be the maps obtained by cutting the broken edges in
  $u_\infty$. One of these maps, say $u_\infty^j$, has index $-1$ and
  the map on the surface part is non-constant.  The surface component
  in $u_\infty^j$ is $J_0$-holomorphic, and so, it consists of
  \begin{itemize}
  \item disks that have positive isolated intersections with the toric
    divisors, and so, have Maslov index at least $2$;
  \item and spheres that have $c_1 \geq 0$ by the semi-Fano condition.
  \end{itemize}
  Consequently, if $u_\infty^j$ does not have inputs, the index
  $i(u_\infty^j) \geq 0$, which rules out configurations of the form
  (b) in Figure \ref{fig:qbroken0} for large enough $\nu$.  Suppose an
  input leaf of $u_\infty^j$ is the output of a treed disk
  $u_\infty^k$ as in Figure \ref{fig:qbroken0} (a).  By the same
  reasoning used for $u_\infty^j$ the surface part of $u_\infty^k$ has
  Maslov index at least two, and so, the output is necessarily the maximum
  point $x^\blackt$. Therefore, in Figure \ref{fig:qbroken0} (a), any
  input to the $(-1)$-index disk $u_\infty^{j}$ is $x^\blackt$, and
  consequently, the disk $u_\infty^j$ does not exist for degree
  reasons since the surface component has Maslov index at least $2$.
  Therefore the configuration (a) is also ruled out for large
  $\nu$. Thus, Claim \ref{claim:m0} is proved. 
\end{subproof}

  Finally,  we show that $m^0_{J_0}(1)$ is a multiple of $x^\blackt$: For
  any sequence of regular perturbations
  $\ul \Pe^\nu=(\ul J^\nu, \ul F)$ that uniformly converge to
  $(J_0,\ul F)$, and a sequence of treed disks $\ul \Pe_\nu$ disks
  $u_\nu$ with no inputs, of index zero, and uniformly bounded area,
  a subsequence converges to a $(J_0,\ul F)$-disk $u_\infty$ of index
  $0$.  By the semi-Fano condition the Maslov index of $u_\infty$ is
  at least $2$, 
  and since there are no inputs, a dimension count dictates
  that $x^\blackt$ is the only possible output.
\end{proof}

\begin{figure}[t]
 {\tiny \centering\scalebox{.8}{
\begingroup%
  \makeatletter%
  \providecommand\color[2][]{%
    \errmessage{(Inkscape) Color is used for the text in Inkscape, but the package 'color.sty' is not loaded}%
    \renewcommand\color[2][]{}%
  }%
  \providecommand\transparent[1]{%
    \errmessage{(Inkscape) Transparency is used (non-zero) for the text in Inkscape, but the package 'transparent.sty' is not loaded}%
    \renewcommand\transparent[1]{}%
  }%
  \providecommand\rotatebox[2]{#2}%
  \newcommand*\fsize{\dimexpr\f@size pt\relax}%
  \newcommand*\lineheight[1]{\fontsize{\fsize}{#1\fsize}\selectfont}%
  \ifx\svgwidth\undefined%
    \setlength{\unitlength}{197.87285614bp}%
    \ifx\svgscale\undefined%
      \relax%
    \else%
      \setlength{\unitlength}{\unitlength * \real{\svgscale}}%
    \fi%
  \else%
    \setlength{\unitlength}{\svgwidth}%
  \fi%
  \global\let\svgwidth\undefined%
  \global\let\svgscale\undefined%
  \makeatother%
  \begin{picture}(1,0.25806653)%
    \lineheight{1}%
    \setlength\tabcolsep{0pt}%
    \put(0,0){\includegraphics[width=\unitlength,page=1]{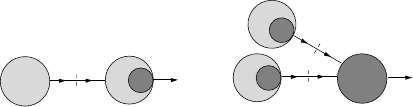}}%
    \put(0.13220209,0.18869623){\color[rgb]{0,0,0}\makebox(0,0)[lt]{\lineheight{1.25}\smash{\begin{tabular}[t]{l}(a)\end{tabular}}}}%
    \put(0.83611734,0.1995256){\color[rgb]{0,0,0}\makebox(0,0)[lt]{\lineheight{1.25}\smash{\begin{tabular}[t]{l}(b)\end{tabular}}}}%
  \end{picture}%
\endgroup%
}}
   \caption{Codimension one strata in a moduli space of quilted disks with no inputs.}
  \label{fig:qbroken0}
\end{figure}

\begin{proposition}
  For a semi-Fano toric symplectic manifold, the potential $W(J_0)$
  does not depend on the choice of divisor-preserving almost complex
  structure $J_0$.
\end{proposition}
\begin{proof}
  [Outline of proof]  Assume the opposite, so that there is a path $\{J_t\}_{t \in [0,1]}$ of divisor-preserving almost complex structures on which the terms in the potential with area level
  $\leq E_0$ are not $t$-independent.  Then, there is a sequence of
  quilted disks with area $\leq E_0$ that converge to a
  $J_t$-holomorphic disk of index $-1$ for some $t \in [0,1]$.  The
  existence of such a disk is ruled out as in the proof of Proposition
  \ref{prop:semiFanopot}.
\end{proof}

The following Proposition gives a sufficient condition under which the potential is not altered by multiple cutting of a semi-Fano toric surface.

\begin{proposition}\label{prop:semiFano-break}{\rm(When breaking does not alter the potential)}
  Let $X$ be a semi-Fano toric surface, $\PP$ a toric multiple cut,
  and $L \subset \ol X_{P_0}$ a toric Lagrangian in a component
  $\ol X_{P_0}$ of the broken manifold $\XX_{\PP}$.  Let $\JJ_0$ be a
  broken divisor-preserving tamed almost complex structure on
  $\XX$. Suppose for any broken $\JJ_0$-holomorphic disk $u$ in $\XX$,
  $I(u_{\glue}) \geq 2$.  Then the potential for $(\XX,\JJ_0,L)$ is
  well-defined and is the same as the toric potential for $X$ equipped
  with a divisor preserving almost complex structure.
\end{proposition}
\begin{proof}
We argue by contradiction, using Gromov compactness.  
  Let $J_0^\nu$ be the divisor preserving almost complex structure
  obtained from $\JJ_0$ by gluing the cylindrical ends of $\XX$.
  Following the proof of Proposition \ref{prop:semiFanopot}, if the
  Proposition were not true, there would exist
  \begin{itemize}
  \item a sequence of regular perturbations $\ul \Pe_\nu$ on $X^\nu$
    that converge to the constant broken almost complex structure
    $J_0$,
  \item perturbation morphisms $\ul \Pe^{\nu,\nu+1}$ that also
    converge to $J_0$,
  \item  a sequence $u_\nu$ of
    $\ul \Pe^{\nu,\nu+1}$-quilted disks of index zero, uniformly
    bounded area, each having a broken edge $e$ (that is, $\ell(e)=\infty$), and a quilted surface component on which the map is non-constant.
  \end{itemize}
  Consequently, after passing to a subsequence, the sequence
 $\{u_\nu\}_\nu$ converges to a $\JJ_0$-holomorphic broken treed disk
  $u_\infty$ of index $-1$.  Indeed, this is the only case to
  consider, since in the proof of Proposition \ref{prop:semiFanopot}
  we have already ruled out quilted disks of the above form that 
  converge to a $J^\nu_0$-holomorphic disk for a finite $\nu$.  The
  quilted disks in the sequence $\nu$ are of one of the two forms
  shown in Figure \ref{fig:qbroken0}. Copying the arguments from the proof of
  Proposition \ref{prop:semiFanopot}, each of the cases is ruled out
  using the fact that for a broken $\JJ_0$-holomorphic disk $u$ can not have
  index $i^\br(u)=-1$ as follows: By Proposition \ref{prop:expdim} \eqref{part:glue-dim}, the glued type has the same index, that is, 
  $i^\br(u)=i(u_{\on{glue}})=-1$, which is not possible, since by the hypothesis, 
 the Maslov
  index $I(u_{\on{glue}})$ is at least two.
\end{proof}

\begin{figure}[h]
  \centering \scalebox{.8}{
\begingroup%
  \makeatletter%
  \providecommand\color[2][]{%
    \errmessage{(Inkscape) Color is used for the text in Inkscape, but the package 'color.sty' is not loaded}%
    \renewcommand\color[2][]{}%
  }%
  \providecommand\transparent[1]{%
    \errmessage{(Inkscape) Transparency is used (non-zero) for the text in Inkscape, but the package 'transparent.sty' is not loaded}%
    \renewcommand\transparent[1]{}%
  }%
  \providecommand\rotatebox[2]{#2}%
  \newcommand*\fsize{\dimexpr\f@size pt\relax}%
  \newcommand*\lineheight[1]{\fontsize{\fsize}{#1\fsize}\selectfont}%
  \ifx\svgwidth\undefined%
    \setlength{\unitlength}{114.92474533bp}%
    \ifx\svgscale\undefined%
      \relax%
    \else%
      \setlength{\unitlength}{\unitlength * \real{\svgscale}}%
    \fi%
  \else%
    \setlength{\unitlength}{\svgwidth}%
  \fi%
  \global\let\svgwidth\undefined%
  \global\let\svgscale\undefined%
  \makeatother%
  \begin{picture}(1,0.36744514)%
    \lineheight{1}%
    \setlength\tabcolsep{0pt}%
    \put(0,0){\includegraphics[width=\unitlength,page=1]{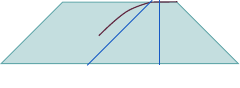}}%
    \put(0.32657437,0.01616046){\color[rgb]{0.08627451,0.34901961,0.77254902}\makebox(0,0)[lt]{\lineheight{1.25}\smash{\begin{tabular}[t]{l}$Y_1$\end{tabular}}}}%
    \put(0.63248108,0.02839636){\color[rgb]{0.08627451,0.34901961,0.77254902}\makebox(0,0)[lt]{\lineheight{1.25}\smash{\begin{tabular}[t]{l}$Y_2$\end{tabular}}}}%
    \put(0,0){\includegraphics[width=\unitlength,page=2]{h2-fail.pdf}}%
  \end{picture}%
\endgroup%
}
  \caption{A broken disk with Maslov index $0$ in the second Hirzebruch surface with the multiple cut $\PP_1$ from Example \ref{ex:h2}.}
  \label{fig:h2-fail}
\end{figure}

\begin{remark}
  \label{rem:ctrh2}
  {\rm(A counter-example)} In Example \ref{ex:h2} we saw that there is
  a multiple cut $\PP_2$ on the second Hirzebruch surface $X:=F_2$ for
  which the potential of the broken manifold $\XX_{\PP_2}$ differs
  from that of the unbroken one $X$.  The broken manifold
  $\XX_{\PP_2}$ does not satisfy the hypothesis of Proposition
  \ref{prop:semiFano-break} because there is a broken disk $u$ (shown
  in Figure \ref{fig:h2-fail}) whose glued Maslov index $I(u_\glue)$
  is $0$. The map $u$ contains a disk of Maslov index $4$ and two
  spheres with self-intersection $-1$.
\end{remark}

In the following result, we verify that the multiple cut $\PP$ on the
cubic surface given in Figure \ref{fig:stretchingcubic} satisfies the
hypothesis of Proposition \ref{prop:semiFano-break}, and use it to
conclude that the multiple cut preserves the potential of the toric
Lagrangian in the cubic surface.

\begin{figure}[t]
\begin{center}
\scalebox{.8}{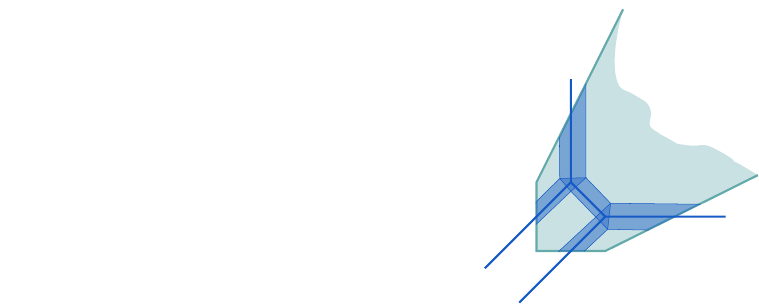} \end{center}
  \caption{Left: Multiple cut $\PP$ on the cubic surface $X$. Right: Components of the broken cubic surface $\XX$.}
  \label{fig:stretchingcubic}
\end{figure}

\begin{proposition}\label{prop:cubic0disks}
  In the multiply cut cubic surface $\XX$ as in Figure
  \ref{fig:stretchingcubic} equipped with a divisor-preserving
  domain-dependent almost complex structure $\JJ_0$ and a toric
  Lagrangian $L \subset \ol X_{P_0}$, for any $\JJ_0$-holomorphic
  broken disk $u$, the glued Maslov index $I(u_{\glue})$ is at least
  $2$.  Consequently, the multiple cut of Figure
  \ref{fig:stretchingcubic} preserves the disk potential of the toric
  Lagrangian $L$.
\end{proposition}

\begin{proof}
  The proof is combinatorial and proceeds by analyzing various cases.
  We first consider disks whose tropical nodes do not map to orbifold
  singularities, and prove that their glued Maslov index is at least
  $2$.  For a rigid broken map $u : C \to \XX$ of type $\Gamma$, the
  Maslov index of the glued type $\Gamma_\glue$ is
  \begin{equation}
    \label{eq:mas-formula}
    I(\Gamma_\glue)=\sum_{v \in \Ver(\Gamma)}\ol I(u_v), \quad \ol
    I(u_v):=I(u_v) - 
    \sum_{e \in \Edge_{\trop}(\Gamma): e \ni v}
    2\mu_{e,v},   
  \end{equation}
  where $\mu_{e,v}$ is the sum of the intersection multiplicities with
  all the relative divisors at the lift of the node $w_e$ on $C_v$,
  and $I(u_v)$ is the Maslov index of the disk/sphere $u_v$.  Indeed,
  \eqref{eq:mas-formula} follows from \eqref{eq:maslovglue} by
  observing that $\ol I(u_v)=I_\adj(u_v) - 2|\Edge_\trop(\Gamma_v)|$
  where $\Edge_\trop(\Gamma_v)$ is the set of tropical edges incident
  on $v$ (see also the related Remark \ref{rem:on-adj}).  Via a
  case-by-case analysis of the components of the map, we will show
  that $I(\Gamma_\glue) \geq 2$. In the sequel, we say that
  $v \in \Ver(\Gamma)$ is a \em{descendent} of $w \in \Ver(\Gamma)$ if
  there is an edge between $v$ and $w$, and $v$ is further from the
  root than $w$.

  \textsc{ Case 1}: If the map $u_v$ does not map to a toric divisor
  of $\ol \XC_{P(v)}$ then $\ol I(u_v) \geq 0$. Indeed, in this case
  $\ol I(u_v)$ is equal to the number of intersections of the map
  $u_v$ with non-boundary toric divisors counted with multiplicity,
  which is a non-negative integer. An inspection of the multiply cut
  cubic surface $\XX$ tells us that this case covers all maps $u_v$
  where $\codim (P(v))>0$.

  \textsc{ Case 2}: Suppose $P(v)$ is top-dimensional,
  $P(v) \neq P_0$, and $u_v$ maps to a non-boundary toric divisor
  $Y \subset \ol X_{P(v)}$. Then $Y$ is a $(-1)$-curve and one of the
  ends of $Y$ intersects a non-boundary toric divisor
  $Y_1 \subset \ol X_{P(v)}$. Therefore, $\ol I(u_v)=0$.
    
  \textsc{ Case 3}: Suppose $P(v)=P_0$ and $u_v$ maps to a long
  divisor, say $D_1'$. In this case $\ol I(u_v)$ is negative, but we
  show that its contribution is cancelled by positive contributions
  from some descendent vertices of $v$.  For the moment, we assume
  $u_v$ is a simple cover of $D_1'$. Then $v$ has a descendent $v_1$
  such that
  \begin{center}
    (Case A) $P(v_1)=P_1$ or $P_{01}$ \quad or \quad (Case B)
    $P(v_1)=P_4$ or $P_{04}$.
  \end{center}
  Since Cases (A) and (B) are symmetric, we only consider (A).  If
  $P(v_1)=P_1$, $u_{v_1}$ intersects either $D_1''$ or $E_1'$, both of
  which are non-relative toric divisors of $\ol X_{P_1}$; and if
  $P(v_1)=P_{01}$, $u_{v_1}$ has an intersection with the thickening
  of $D_1' \cap D_1''$, which is a non-boundary toric divisor of
  $\ol \XC_{P_{01}}$. In both cases $I(u_{v_1})=2$, which cancels the
  negative contribution $I(u_v)=-2$. In case $u_v$ is a $k$-cover of
  $D_1'$, then a similar cancellation argument applies, using
  descendent vertices of $v$.

  So far, we have shown that the sum of $\ol I(u_v)$ over all vertices
  is non-negative. We now show that the sum is positive.  Consider a
  disk component $v \in \Ver_\white(\Gamma)$ on which the map is
  non-constant.  The disk $u_v$ either intersects a long divisor, in
  which case $\ol I(u_v) \geq 2$; otherwise it intersects the relative
  divisor, say $\ol X_{P_{01}}$. In the latter case, there is a
  descendent vertex $v_1$ of $v$ for which $\ol I(u_{v_1}) \geq 2$
  using the same argument as in Case 3. This finishes the proof in the
  case when the tropical nodes of the broken disk $u$ do not map to
  orbifold singularities.

  In the case when a tropical node on a component $u_v$ maps to an
  orbifold singularity in $\ol \XX_{P(v)}$, we consider a toric
  resolution $\widetilde \XX_v$ of $\ol \XX_{P(v)}$ and a lift
  $\tilde u_v : C_v \to \widetilde \XX_v$. The formula
  \eqref{eq:mas-formula} is applicable if the Maslov index $I(u_v)$
  and the sum of intersection multiplicities $\mu_{e,v}$ are replaced
  by the corresponding quantities for the lift $\tilde u_v$. The other
  arguments in the proof now carry over. In fact, the components $u_v$
  mapping to orbifold points are covered by Case 1, since in the
  multiply cut cubic surface, toric divisors in $\XB_{P(v)}$ do not
  contain orbifold singularities in their closure.

  We have shown that in the multiply cut cubic surface $\XX$, the
  glued Maslov index of any broken disk is at least $2$. Therefore, we
  may apply Proposition \ref{prop:semiFano-break} on $\XX$ and
  conclude that the potential of the toric Lagrangian $L$ in the cubic
  surface is preserved under the multiple cut $\PP$.  This finishes
  the proof of Proposition \ref{prop:cubic0disks}.
\end{proof}

\begin{remark}\label{rem:otherP0}
  A curious reader may wonder whether the potential has the same terms
  if the Lagrangian is in some other top-dimensional piece of the
  multiply cut cubic surface.  The potential has the same terms as the
  unbroken case when $L$ is a toric Lagrangian in any top-dimensional
  piece $\ol X_{P_i}$, $i=0,\dots,9$ in the broken manifold. Indeed,
  the proof of Proposition \ref{prop:cubic0disks} can be replicated,
  and there are no disks of Maslov index less than $2$ in each of the
  cases. However, if $L \subset \ol X_{P_i}$, $i \neq 0$, the disks
  can not be counted in a straightforward way, since some of the
  broken disks contain components that are multiple covers of
  $(-1)$-spheres.  For example, for the disk $u$ in Figure
  \ref{fig:wall1} the gluing $u_\glue$ has homology class
  $\delta_{E_5}$. The component $u_{v_3}$ in $u$ is homologous to a
  double cover $2[D_1']$; the count of such a configuration is $-\hh$
  by the Graber-Pandharipande multiple cover formula \cite{bryan:lgw}
  (and for which we provide an alternate proof in \cite{vw:torus}
  without using virtual localization). Therefore, after
  accounting for the multiplicity of the edge $e$, the map $u$
  contributes $-1$ to the count.  There is another broken disk $u'$ in
  the same homology class, where the sphere $u_{v_3}$ is replaced by
  two spheres $u_{v_3}$, $u_{v_3'}$, each of homology class $[D_1']$
  (see Figure \ref{fig:wall2}); there are two broken maps of this type
  obtained by interchanging the sub-trees attached at the vertices
  $v_3$, $v_3'$, adding $+2$ to the curve count. Together, the two
  types of maps give a contribution of $+1$, which is the contribution
  of a disk of class $\delta_{E_5}$ in the unbroken case.
\end{remark}
\begin{figure}[h]
  \centering\scalebox{.8}{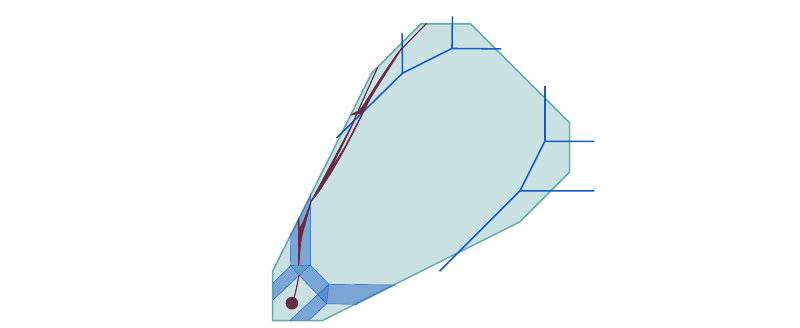}
  \caption{A broken map $u$ whose gluing is in the class $\delta_{E_5}$, and which contains a multiple cover. The node $(v_1,v_2)$ (whose $v_1$-end maps to an orbifold singularity) is similar to the node in Example \ref{ex:adj-mas} \eqref{part:adj-mas3}.}
  \label{fig:wall1}
\end{figure}

\begin{figure}[h]
  \centering\scalebox{.8}{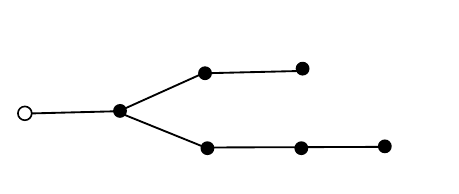}
  \caption{The graph of a broken map $u'$ whose gluing is in the class
    $\delta_{E_5}$, but which does not have any component that is
    homologous to a multiple cover.}
  \label{fig:wall2}
\end{figure}

\bibliography{trop}{}
\bibliographystyle{plain}
\printindex


\end{document}